\numberwithin{section}{chapter}
\theoremstyle{plain}
\newtheorem{theorem}{\sc Theorem}[chapter]
\newtheorem{lemma}[theorem]{\sc Lemma}
\newtheorem{proposition}[theorem]{\sc Proposition}
\newtheorem{corollary}[theorem]{\sc Corollary}
\newtheorem{claim}[theorem]{\sc Claim}
\newtheorem{l3thm}[theorem]{\sc Lie's Third Theorem}
\newtheorem{cform}[theorem]{\sc Campbell formulas}
\newtheorem{frobthm}[theorem]{\sc Frobenius Theorem}
\newtheorem{hrthm}[theorem]{\sc Hopf--Rinow Theorem}
\newtheorem{bmthm}[theorem]{\sc Bonnet--Myers Theorem}
\newtheorem{lcthm}[theorem]{\sc Levi--Civita Theorem}
\newtheorem{msthm}[theorem]{\sc Myers--Steenrod Theorem}
\newtheorem{cthm}[theorem]{\sc Cartan Theorem}
\newtheorem{tnthm}[theorem]{\sc Tubular Neighborhood Theorem}
\newtheorem{slthm}[theorem]{\sc Slice Theorem}
\newtheorem{mtthm}[theorem]{\sc Maximal Torus Theorem}
\newtheorem{pothm}[theorem]{\sc Principal Orbit Theorem}
\newtheorem{ptconjecture}[theorem]{\sc Conjecture (Palais and Terng)}
\newtheorem{mconj}[theorem]{\sc Conjecture (Molino)}
\theoremstyle{definition}
\newtheorem{definition}[theorem]{\sc Definition}
\newtheorem{example}[theorem]{\sc Example}
\theoremstyle{remark}
\newtheorem{remark}[theorem]{\it Remark}
\newtheorem{exercise}[theorem]{\sc Exercise}
\numberwithin{equation}{section}
\renewenvironment{proof}[1][\proofname]{\par
  \pushQED{\qed}%
  \normalfont \topsep6\p@\@plus6\p@\relax
  \trivlist
  \item[\hskip\labelsep
        \it
    #1\@addpunct{.}]\ignorespaces
}{%
  \popQED\endtrivlist\@endpefalse
}
\newcommand{\metric}{\ensuremath{g}}
\newcommand{\dd}{\mathrm{d}}
\newcommand{\Ad}{\ensuremath{\mathrm{Ad} }}
\newcommand{\ad}{\ensuremath{\mathrm{ad} }}
\newcommand{\grad}{\ensuremath{\mathrm{grad}\ }}
\newcommand{\tub}{\ensuremath{\mathrm{Tub} }}
\newcommand{\F}{\ensuremath{\mathcal{F}}}
\newcommand{\Iso}{\ensuremath{\mathrm{Iso}}}
\newcommand{\TO}{\ensuremath{\mathcal{O}}}
\newcommand{\singularF}{\ensuremath{\mathcal{X}_{\mathcal{F}}}}
\newcommand{\imaginario}{i}
\newcommand{\h}{\mathfrak h}
\newcommand{\R}{\mathds{R}}
\newcommand{\Hr}{\mathds{H}}
\newcommand{\Z}{\mathds{Z}}
\newcommand{\N}{\mathds{N}}
\newcommand{\C}{\mathds{C}}
\newcommand\SO{{\rm SO}}
\renewcommand\O{{\rm O}}
\newcommand\SU{{\rm SU}}
\def\Sp{{\rm Sp}}
\newcommand\U{{\rm U}}
\newcommand\GL{{\rm GL}}
\newcommand\SL{{\rm SL}}
\newcommand{\id}{\operatorname{id}}
\newcommand{\Aut}{\operatorname{Aut}}
\newcommand{\End}{\operatorname{End}}
\newcommand{\Zentrum}{\operatorname{Z}}
\newcommand{\Ric}{\operatorname{Ric}}
\newcommand{\scal}{\operatorname{S}}
\newcommand{\trace}{\operatorname{tr}}
\newcommand{\hol}{\mathrm{Hol}}
\def\K{\mbox{$\mathds{K}$}}
\def\CP{\mathds{C}P}
\title{Introduction to Lie groups, isometric and adjoint actions and some generalizations}
\author{Marcos M. Alexandrino \\ Renato G. Bettiol (IME USP, Brazil)}
\def\thickhrulefill{\leavevmode \leaders \hrule height 1pt\hfill \kern \z@}
\renewcommand{\maketitle}{\begin{titlepage}
\let\footnotesize\small
\let\footnoterule\relax
\parindent \z@
\reset@font
\null
\vskip 10\p@
\hbox{\mbox{\hspace{3em}}
\vrule depth 0.6\textheight
\mbox{\hspace{2em}}
\vbox{
\vskip 45\p@
\begin{flushleft}
\huge \bfseries \@title \par
\end{flushleft}
\vskip 30\p@
\begin{flushleft}
\Large Marcos M. Alexandrino\par
Renato G. Bettiol\par
%Leonardo Biliotti\par
\vskip 10\p@
%IME USP, Brazil
\end{flushleft}
\vfil
}}
\null
\end{titlepage}
\setcounter{footnote}{0}
}
\definecolor{red}{rgb}{1,0,0}
\begin{document}

\frontmatter
\maketitle

\renewcommand{\thepage}{\roman{page}}

\setcounter{page}{2}
\tableofcontents

\mainmatter
\renewcommand{\thepage}{\roman{page}}

%%%%%%%%%%%%%%%%%%%%%%%%%%%
%%% PREFACE             %%%
%%%%%%%%%%%%%%%%%%%%%%%%%%%

\setcounter{page}{5}
\phantomsection
\chapter*{Preface}

The main purpose of the present lecture notes is to provide an introduction to Lie groups, Lie algebras, and isometric and adjoint actions. We mostly aim at an audience of advanced undergraduate and graduate students. These topics are presented straightforward using tools from Riemannian geometry, which enriches the text with interrelations between these theories and makes it  shorter than usual textbooks on this subject. In addition, the connection between such classic results and the research area of the first author is explored in the final chapters. Namely, generalizations to isoparametric submanifolds, polar actions and singular Riemannian foliations are mentioned, and a brief survey on current research is given in the last chapter. In this way, the text is  divided in two parts.

The goal of the first part (Chapters \ref{chap1} and \ref{chap2}) is to introduce the concepts of Lie groups, Lie algebras and adjoint representation, relating such structures. Moreover, we give basic results on closed subgroups, bi--invariant metrics, Killing forms and splitting in simple ideals. This is done concisely due to the use of Riemannian geometry techniques. To this aim we devote a preliminary section quickly reviewing its fundamental concepts, in order to make the text self--contained.

The second part (Chapters \ref{chap3} to \ref{chap5}) is slightly more advanced, and a few research comments are presented. We begin by recalling some results on proper and isometric actions in Chapter \ref{chap3}. In Chapter \ref{chap4}, classic results from the theory of adjoint action are mentioned, especially regarding maximal tori and roots of compact Lie groups, exploring its connection with isoparametric submanifolds and polar actions. Furthermore, in the last chapter, a few recent research results on such areas are given, together with a discussion of singular Riemannian foliations with sections, which is a generalization of the previous topics.

Prerequisites for the first half of the text are a good knowledge of advanced calculus and linear algebra, together with rudiments of calculus on manifolds. Nevertheless, a brief review of the main definitions and essential results is given in the Appendix \ref{appendix}. In addition, a complete guideline for the second half of the text is given by the first chapters.

The present text evolved from preliminary versions
used by the authors to teach several courses and short courses.
In 2007, 2009 and 2010, graduate courses on Lie groups and proper actions
were taught at the University of S\~ao Paulo, Brazil, exploring mostly the first two parts
of the text. Graduate students working in various fields such as Algebra, Geometry, Topology, Applied Mathematics
and even Physics followed these courses, bringing very positive results. Other relevant contributions to this final
version came from short courses given by the authors during the XV Brazilian School of Differential Geometry (Fortaleza, Brazil, July 2008), the Second S\~ao Paulo Geometry Meeting (S\~ao Carlos, Brazil, February 2009), and the Rey Pastor Seminar at the University of Murcia (Murcia, Spain, July 2009).

There are several important research areas related to the content of this text that will not be treated here. We would like to point out two of these, for which we hope to give the necessary basis in the present notes. Firstly, \emph{representation theory} and \emph{harmonic analysis}, for which we recommend Knapp \cite{Knapp-representation}, Varadarajan \cite{Varadarajan-harmonic}, Gangolli and Varadarajan \cite{Gangolli-Varadarajan}, Katznelson \cite{Katznelson} and Deitmar \cite{Deitmar}. Secondly, \emph{differential equations} and \emph{integrable systems}, for which we recommend Bryant \cite{Bryant}, Guest \cite{Guest}, Olver \cite{Olver}, Noumi \cite{Noumi} and Feh\'er and Pusztai \cite{Feher-Pusztai-1, Feher-Pusztai-2}.

We stress that these lecture notes are still in a \emph{preliminary version}. We expect to improve them in the future and would be grateful for any suggestions, that can be emailed to the authors:

\begin{center}
M. M. Alexandrino: \htmladdnormallink{\tt marcosmalex@yahoo.de}{mailto:marcosmalex@yahoo.de};

\smallskip
R. G. Bettiol: \htmladdnormallink{\tt renatobettiol@gmail.com}{mailto:renatobettiol@gmail.com}.
\end{center}

\noindent
We thank Ren\'ee Abib, L\'aszl\'o Feh\'er, F\'abio Simas and Ion Moutinho for emailed comments on the previous version of these lecture notes.

The first author was supported by CNPq and partially supported by Fapesp. He is very grateful to Gudlaugur Thorbergsson for his consistent support as well as for many helpful discussions along the last years. He also thanks his coauthors Dirk T\"{o}ben, Miguel Angel Javaloyes  and Claudio Gorodski with whom it is a joy to work. He also thanks Rafael Briquet, Leandro Lichtenfelz and Martin Weilandt for their multiplous
comments, corrections and suggestions; and Flausino Lucas Spindola for his many contributions during
his MSc. project in 2008.

The second author was supported by Fapesp, grant 2008/07604-0. He would like to express his profound appreciation
by the constant encouragement given by Paolo Piccione (his former advisor) and Daniel Victor Tausk. Both introduced him
to research in differential geometry and continue providing a unique inspiration with elegant results. Many very special
thanks are also due to the first author and Leonardo Biliotti.

\

\begin{flushright}
S\~{a}o Paulo,

August 2010.
\end{flushright}

\newpage
\setcounter{page}{1}
\renewcommand{\thepage}{\arabic{page}}

%%%%%%%%%%%%%%%%%%%%%%%%%%%
%%% Part I              %%%
%%%%%%%%%%%%%%%%%%%%%%%%%%%

\part{Lie groups and Riemannian geometry}
\chapter{Basic results on Lie groups}
\label{chap1}

This chapter gives an introduction to the Lie groups theory, presenting main concepts and proving basic results. For this, some knowledge of group theory, linear algebra and advanced calculus is assumed. However, for the reader's convenience a few facts about differentiable manifolds are recalled in the Appendix~\ref{appendix}, which can be used as a preparation reading.

Further readings for the content of this chapter are Duistermaat and Kolk~\cite{duistermaat}, Gorbatsevich et al.~\cite{gov0}, Helgason~\cite{helgason}, Spivak~\cite{spivak1}, Varadarajan~\cite{varadarajan} and Warner~\cite{warner}.

\section{Lie groups and Lie algebras}
\label{sec:11}

\begin{definition}\label{def:liegroup}
A smooth, respectively analytic, manifold $G$ is said to be a {\it smooth}, respectively {\it analytic}, {\it Lie\footnote{Sophus Lie was a nineteenth century Norwegian mathematician, who laid the foundations of continuous transformation groups. The name ``Lie group'' was introduced by \'Elie Cartan in 1930.} group}\index{Lie group} if $G$ is a group and the maps
\begin{eqnarray}
G\times G\owns (x,y) &\longmapsto & xy\in G\label{eq:mult} \\
G\owns x &\longmapsto & x^{-1}\in G\label{eq:invers}
\end{eqnarray}
are smooth, respectively analytic.
\end{definition}

\begin{remark}\label{re:redundantdef}
The requirement that the maps \eqref{eq:mult} and \eqref{eq:invers} be smooth, respectively analytic, often appears as a requirement that the map $$G\times G\owns (x,y)\longmapsto xy^{-1}\in G$$ be smooth, respectively analytic. Indeed, it is easy to prove that these are equivalent conditions.

Moreover, smoothness, respectively analyticity, of \eqref{eq:invers} is a redundant requirement. In fact, it is possible to use the Implicit Function Theorem to verify that it follows from smoothness, respectively analyticity, of \eqref{eq:mult}, see Exercise~\ref{ex:teorfuncimplicita}.
\end{remark}

In this text we will only deal with smooth Lie groups. Nevertheless, in this direction it is important to mention the next result, which is explored in detail in Duistermaat and Kolk~\cite{duistermaat}.

\begin{theorem}
Each $C^2$ Lie group admits a unique {\it analytic structure}, turning $G$ into an analytic Lie group.
\end{theorem}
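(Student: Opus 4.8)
The plan is to reduce the statement to a classical rigidity result about solutions of real-analytic ODEs and to the fact that a transitive real-analytic action of a Lie group can be used to propagate an analytic structure from a neighborhood of the identity to all of $G$. More precisely, I would proceed in three stages: first construct an analytic structure near $e$, then spread it over $G$ by left translations, and finally prove uniqueness.

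\emph{Step 1 (analytic chart near the identity).} Fix a $C^2$ atlas on $G$ and work in a chart around $e$, so that multiplication is given by a $C^2$ map $m(x,y)$ with $m(x,0)=m(0,x)=x$. The key is to build the one-parameter subgroups: for each tangent vector $X\in T_eG$ one wants the integral curve $t\mapsto\exp(tX)$ of the corresponding left-invariant vector field. The left-invariant vector fields have only $C^1$ coefficients a priori, which is too weak for the Cauchy--Kovalevskaya / analytic dependence machinery, so the heart of the matter is a bootstrapping argument: one shows that the flow of these vector fields, hence the map $(t,X)\mapsto\exp(tX)$, is actually real-analytic. This is done by writing down the functional equation $\exp((s+t)X)=m(\exp(sX),\exp(tX))$ and the Campbell-type formulas, and using them to upgrade the regularity of $\exp$ step by step; the associativity identity $m(m(x,y),z)=m(x,m(y,z))$, differentiated, forces the structure functions to satisfy an analytic PDE system whose solutions are analytic. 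Pulling back a coordinate system by $\exp$ (canonical coordinates of the first kind) then gives an analytic chart at $e$ in which $m$ is analytic on a neighborhood of $(e,e)$.

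\emph{Step 2 (globalizing via left translations).} Declare a chart around an arbitrary $g\in G$ to be $x\mapsto \varphi(g^{-1}x)$, where $\varphi$ is the analytic chart at $e$ from Step~1. To see this is an analytic atlas, one checks that the transition maps between two such charts, $x\mapsto\varphi\bigl(g_1^{-1}g_2\,\varphi^{-1}(x)\bigr)$ restricted to where both are defined, are analytic; this follows because they are, locally, compositions of the analytic map $m$ (and $x\mapsto x^{-1}$, which is analytic since it is $x\mapsto m(\,\cdot\,)$ solved by the implicit function theorem in the analytic category, cf. Remark~\ref{re:redundantdef}) with the fixed analytic chart $\varphi$. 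In this atlas, both $m$ and inversion are analytic in a neighborhood of any point by the same translation trick, so $G$ becomes an analytic Lie group, and the underlying topological manifold (with its original $C^2$, hence smooth, structure) is unchanged.

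\emph{Step 3 (uniqueness).} Suppose $G$ carries two analytic Lie group structures compatible with the given $C^2$ structure. Both share the same one-parameter subgroups (these are characterized $C^2$-intrinsically as solutions of the group's ODEs), hence the same exponential map, hence—via canonical coordinates—the same analytic charts at $e$, and then via Step~2 the same analytic atlas everywhere; so the identity map $G\to G$ is an analytic diffeomorphism between the two structures.

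\emph{Main obstacle.} The crux is Step~1: promoting the a~priori merely $C^2$ (so that the vector fields are $C^1$) data to genuine real-analyticity of $\exp$. Naive ODE theory only yields $C^\infty$ regularity at best; one must exploit the \emph{group} structure—the associativity constraint and the resulting Maurer--Cartan/Campbell identities—to run the analytic bootstrap. This is precisely the technical content carried out in detail in Duistermaat and Kolk~\cite{duistermaat}, and it is the step I expect to be hardest to make fully rigorous.
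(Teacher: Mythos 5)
The paper does not supply a proof of this statement; it simply records the theorem and refers the reader to Duistermaat and Kolk~\cite{duistermaat}, the very reference you lean on in your last paragraph, so there is nothing internal to the paper against which to compare your argument. On its own merits, your three-step outline does follow that reference's strategy (canonical coordinates at the identity, globalization by left translation, uniqueness via the intrinsically defined exponential map), and Steps~2 and~3 are correct as stated.

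The weak spot, which you rightly flag as the crux, is the engine of Step~1, and there it is worth being more precise than your sketch is. The one-parameter functional equation and the Campbell formulas do not by themselves force analyticity, and ``differentiating associativity'' is, on its own, a mechanism for climbing from $C^2$ to $C^\infty$ --- no amount of iterated differentiation crosses from finite smoothness into $C^\omega$. What actually lands the argument in the analytic category is that the Lie bracket on $T_eG$, read off from the second-order Taylor coefficient of the $C^2$ commutator map $(x,y)\mapsto xyx^{-1}y^{-1}$, is a fixed bilinear tensor: constant in the relevant sense, hence tautologically analytic. In canonical coordinates the group law is then governed by Lie's ODE system whose right-hand side is a universal analytic (in fact rational) expression in those structure constants, and the analytic-dependence theorem for analytic ODEs yields analyticity of $\exp$ and of the multiplication near $e$. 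Your phrase ``an analytic PDE system whose solutions are analytic'' gestures at exactly this, but without singling out the structure constants as the sole analytic input the bootstrap has no seed to start from. Fill that in and your outline is a faithful sketch of the standard proof.
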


We stress that every result proved in this text on smooth Lie groups is hence automatically valid to any analytic Lie group. Henceforth, $G$ will always denote a smooth Lie group, and the word \emph{smooth} will be omitted.

\begin{remark}
One of the outstanding problems in the area was that of determining whether a connected locally Euclidean topological group has a smooth structure. This problem was known as the {\it 5$^\mathrm{th}$ Hilbert's problem}, posed by Hilbert at the International Congress of Mathematics in 1900 and solved by von Neumann in 1933 in the compact case. Only in 1952 the general case was solved, in a joint work of Gleason, Montgomery and Zippen~\cite{montgomery}.
\end{remark}

As almost trivial examples of Lie groups we may consider $(\R^n,+)$, $(S^1,\cdot)$, where the group operation is $e^{i\theta}\cdot e^{i\eta}=e^{i(\theta+\eta)}$; and the $n$--torus $T^n=\underbrace{S^1\times\dots\times S^1}_n$ as a product group.

\begin{example}\label{classicalliegroups}
More interesting examples are the so--called {\it classical Lie groups}\index{Lie group!classical}, which form four families of matrix Lie groups closely related to symmetries of Euclidean spaces. The term {\it classical} appeared in 1940 in Weyl's monograph, probably referring to the {\it classical} geometries in the spirit of Klein's Erlangen program.

We begin with $\GL(n,\R)$, the {\it general linear group} of non singular\footnote{i.e., invertible.} $n\times n$ real matrices. Similarly, $\GL(n,\C)$ and $\GL(n,\Hr)$ are the groups of non singular square matrices over the complex numbers and over the quaternions\footnote{The {\it quaternions} are a non commutative extension of complex numbers, first considered by the Irish mathematician Sir William Hamilton, after whom the usual notation is $\Hr$. It can be thought as a four dimensional normed division algebra over $\R$, or simply as the $\R$-vector space $\R^4$, with the quaternionic multiplication. The canonical basis is $(1,i,j,k)$ and the product of basis elements is given by the equations $i^2=j^2=k^2=ijk=-1$. Any elements of $\Hr$ are of the form $v=a+bi+cj+dk\in\Hr$, and their product is determined by the equations above and the distributive law. Conjugation, norm and division can be defined as natural extensions of $\C$.}, respectively. Furthermore, the following complete the list of classical Lie groups, where $I$ denotes the identity matrix.\index{$\GL(n,\R),\GL(n,\C)$}

\begin{itemize}\index{$\SO(n)$}\index{$\O(n)$}\index{$\U(n)$}\index{$\SU(n)$}\index{$\SL(n,\R),\SL(n,\C)$}
\item[(i)] $\SL(n,\R)=\{M\in\GL(n,\R):\det M=1\}$, $\SL(n,\C)$ and $\SL(n,\Hr)$, the {\it special linear groups};
\item[(ii)] $\O(n)=\{M\in\GL(n,\R):M^tM=I\}$, the {\it orthogonal group}, and $\SO(n)=\O(n)\cap\SL(n,\R)$, the {\it special orthogonal group};
\item[(iii)] $\U(n)=\{M\in\GL(n,\C):M^*M=I\}$, the {\it unitary group}, and $\SU(n)=\U(n)\cap\SL(n,\C)$, the {\it special unitary group};
\item[(iv)] $\Sp(n)=\{M\in\GL(n,\Hr):M^*M=I\}$, the {\it symplectic group}.\index{$\Sp(n)$}
\end{itemize}

In order to verify that those are indeed Lie groups, see Exercise~\ref{ex-classiclie}. For now, we only encourage the reader to bare them in mind as important examples of Lie groups.
\end{example}

\begin{remark}
Another class of examples of Lie groups is constructed by quotients of Lie groups by their normal and closed subgroups (see Corollary~\ref{cor-quocienteLieGroup}). In this class of examples, there are Lie groups that \emph{are not} matrix groups. In fact, consider $$G= \left\{\left( \begin{array}{l l l}
                          1 & a & b\\
                          0&1&c\\
                          0&0& 1\end{array}
                          \right), \; a,b,c\in \R \right\},$$ and $$N= \left\{\left( \begin{array}{l l l}
                          1 & 0 & n\\
                          0&1&0\\
                          0&0& 1\end{array}
                          \right), \; n\in\Z \right\}.$$ Then $G/N$ is a Lie group. It is possible to prove that there are no injective homomorphisms $\varphi: G/N\rightarrow \Aut(V)$, where $\Aut(V)$ denotes the group of linear automorphisms of a finite--dimensional vector space $V$ (see Carter, Segal and MacDonald~\cite{carterSegalMacdonald}).
\end{remark}

\begin{definition}
A {\it Lie algebra}\index{Lie algebra} $\mathfrak{g}$ is a real vector space endowed with a bilinear map $[\cdot,\cdot]:\mathfrak{g}\times\mathfrak{g}\rightarrow\mathfrak{g}$, called the {\it Lie bracket}\index{Lie algebra!bracket}, satisfying for all $X,Y,Z\in\mathfrak g$,
\begin{itemize}
\item[(i)] $[X,Y]=-[Y,X]$ (skew--symmetric property);
\item[(ii)] $[[X,Y],Z]+[[Y,Z],X]+[[Z,X],Y]=0$ (\emph{Jacobi identity}).
\end{itemize}
\end{definition}

\begin{example}\index{$\mathfrak{gl}(n,\R),\mathfrak{gl}(n,\C)$}
Basic examples of Lie algebras are the vector spaces of $n\times n$ square matrices over $\R$ and $\C$, respectively denoted $\mathfrak{gl}(n,\R)$ and $\mathfrak{gl}(n,\C)$, endowed with the Lie bracket given by the commutator of matrices, $[A,B]=AB-BA$.
\end{example}

\begin{exercise}\label{ex-sobre-SO3}
Let $\mathfrak{so}(3)=\{A\in\mathfrak{gl}(3,\R):A^t+A=0\}$.

\begin{itemize}
\item[(i)] Verify that $\mathfrak{so}(3)$ a Lie algebra with Lie bracket given by the matrix commutator;
\item[(ii)] Let $A_{\xi}=\left(\begin{array}{r r r}
                          0 & -\xi_{3} & \xi_{2}\\
                          \xi_{3}&0&-\xi_{1}\\
                          -\xi_{2}&\xi_{1}&0\end{array}
                          \right)$.
Prove that $A_{\xi}v=\xi\times v$.
\item[(iii)] Verify that $A_{\xi\times\eta}=[A_{\xi},A_{\eta}]=A_{\xi}A_{\eta}-A_{\eta}A_{\xi}$. Using the fact that $(\R^3,\times)$ is a Lie algebra endowed with the cross product of vectors, conclude that the map $(\R^3,\times)\owns\xi\mapsto A_{\xi}\in\mathfrak{so}(3)$ is a {\it Lie algebra isomorphism}, i.e., a map that preserves the Lie brackets.
\end{itemize}
\end{exercise}

We now proceed to associate to each Lie group a Lie algebra by considering left--invariant vector fields. For each $g\in G$, denote $L_g$ and $R_g$ the diffeomorphisms of $G$ given by {\it left} and {\it right translation}, respectively. More precisely, $$L_g(x)=gx \;\;\mbox{ and }\;\; R_g(x)=xg.$$

A vector field $X$ on $G$, not assumed a priori to be smooth, is said to be {\it left--invariant}\index{Left--invariant!vector field} if for all $g\in G$, $X$ is $L_g$--related to itself, i.e. $\dd L_g\circ X=X\circ L_g$. This means that $X(gh)=\dd (L_g)_hX(h)$, or shortly $X=\dd L_gX$, for all $g\in G$. Similarly, a vector field is called {\it right--invariant}\index{Right--invariant!vector field} if for all $g\in G$ it is $R_g$--related to itself, meaning that $X=\dd R_gX$, for all $g\in G$. A simultaneously left--invariant and right--invariant vector field is said to be {\it bi--invariant}\index{Bi--invariant!vector field}.

\begin{lemma}
Left--invariant vector fields are smooth.
\end{lemma}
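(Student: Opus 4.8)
The plan is to show smoothness of a left--invariant vector field $X$ by exhibiting it locally as the pushforward of a smooth vector field on a product, so that smoothness follows from smoothness of the multiplication map \eqref{eq:mult}. First I would fix $g\in G$ and set $v=X(g)\in T_gG$. By left--invariance, $X(gh)=\dd(L_g)_h v$ fails to make sense directly, so instead I would anchor at the identity: writing $\xi=X(e)$, left--invariance gives $X(h)=\dd(L_h)_e\,\xi$ for all $h\in G$. Thus $X$ is completely determined by the single vector $\xi\in T_eG$, and the task reduces to checking that $h\mapsto \dd(L_h)_e\,\xi$ is a smooth section of $TG$.

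Next I would pick a smooth curve $\gamma\colon(-\varepsilon,\varepsilon)\to G$ with $\gamma(0)=e$ and $\dot\gamma(0)=\xi$, which exists since $G$ is a smooth manifold. Consider the map
\[
\Phi\colon G\times(-\varepsilon,\varepsilon)\longrightarrow G,\qquad \Phi(h,t)=h\,\gamma(t),
\]
which is smooth because it is the composition of $(h,t)\mapsto(h,\gamma(t))$ with the multiplication \eqref{eq:mult}. For each fixed $h$, the curve $t\mapsto\Phi(h,t)$ passes through $h$ at $t=0$, and its velocity there is $\dd(L_h)_e\dot\gamma(0)=\dd(L_h)_e\,\xi=X(h)$. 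Hence $X(h)=\frac{\partial}{\partial t}\big|_{t=0}\Phi(h,t)$. The right--hand side is the image of the constant section $h\mapsto(h,0,\tfrac{\partial}{\partial t})$ of $T(G\times(-\varepsilon,\varepsilon))$ under $\dd\Phi$, restricted to the slice $t=0$; since $\Phi$ is smooth, $\dd\Phi$ is a smooth bundle map, and composing smooth maps of manifolds yields a smooth map. Therefore $h\mapsto X(h)$ is a smooth section of $TG$, i.e.\ $X$ is smooth. The same argument applies verbatim to right--invariant (and hence bi--invariant) vector fields using $\Phi(h,t)=\gamma(t)h$.

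The main obstacle is conceptual rather than computational: one must resist the temptation to differentiate the expression $X(gh)=\dd(L_g)_h X(h)$, which presupposes smoothness of $X$ at $h$ — precisely what is to be proved. The key idea circumventing this is to express $X$ through translations of a single fixed tangent vector at $e$ together with a smooth curve realizing that vector, so that all the regularity is imported from the smoothness of the group multiplication. A minor technical point to be careful about is that the identification of $\frac{\partial}{\partial t}\big|_{t=0}\Phi(h,\cdot)$ with a smooth section requires working in a chart around $h$; this is routine, since in local coordinates $\Phi$ has smooth component functions and $\partial_t\Phi(h,t)\big|_{t=0}$ is then manifestly smooth in $h$.
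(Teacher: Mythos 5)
Your proof is correct and takes essentially the same approach as the paper: both reduce to the observation that $X$ is determined by its value $X(e)$ at the identity and import regularity from the smoothness of the multiplication map. The paper writes $X=\dd\varphi\circ s$ where $s(g)=(0_g,X(e))$ is a smooth section of $TG\times TG$, while you realize $X(e)$ by a smooth curve $\gamma$ and differentiate $\Phi(h,t)=h\gamma(t)$ in $t$ at $t=0$ — a curve-level rendering of the same composition (and indeed your final paragraph recasts it in terms of $\dd\Phi$ applied to a constant section, matching the paper's formalism).
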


\begin{proof}
Let $X$ be a left--invariant vector field on $G$. Suppose $g\in G$ is an element of an open neighborhood of the identity element $e\in G$ and consider the group operation $$\varphi:G\times G\owns (g,h)\longmapsto gh\in G.$$ Differentiating $\varphi$ we obtain $\dd\varphi:T(G\times G)\simeq TG\times TG\rightarrow TG$, a smooth map. Define $s:G\owns g\mapsto (0_g,X(e))\in TG\times TG$, where $g\mapsto 0_g$ is the null section of $TG$. Since $X=\dd\varphi\circ s$, it follows that $X$ is smooth.
\end{proof}

\begin{remark}
The above result is clearly also valid for \emph{right--invariant} vector fields.
\end{remark}

Given two Lie algebras $\mathfrak{g}_{1}$ and $\mathfrak{g}_{2}$, a linear map $\psi:\mathfrak{g}_{1}\rightarrow\mathfrak{g}_{2}$ is called \emph{Lie algebra homomorphism}\index{Lie algebra!homomorphism} if $$[\psi(X),\psi(Y)]=\psi([X,Y]), \;\; \mbox{ for all }X, Y\in \mathfrak{g}_{1}.$$

\begin{theorem}\label{teo-defLieAlgebraLieGroup}
Let $\mathfrak{g}$ be the set of left--invariant vector fields on the Lie group $G$. Then the following hold.

\begin{itemize}
\item[(i)] $\mathfrak{g}$ is a Lie algebra, endowed with the Lie bracket of vector fields;
\item[(ii)] Consider the tangent space $T_eG$ with the bracket defined as follows. If $X^1,X^2\in T_eG$,  set $[X^1,X^2]= [\widetilde{X^1},\widetilde{X^2}]_{e}$ where $\widetilde{X^i_g}= \dd (L_g)_eX^i$. Define $\psi:\mathfrak{g}\ni X\mapsto X_e\in T_{e}G$. Then $\psi$ is a Lie algebra isomorphism, where $\mathfrak{g}$ is endowed with the Lie bracket of vector fields and $T_{e}G$ with the bracket defined above.
\end{itemize}
\end{theorem}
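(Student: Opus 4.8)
The plan is to establish (i) and (ii) in sequence, with (ii) essentially following once the structure in (i) is in place.

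\smallskip

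\emph{Proof of (i).} First I would show that $\mathfrak{g}$ is a vector subspace of the space of all vector fields on $G$: if $X,Y$ are left--invariant and $a,b\in\R$, then $\dd L_g\circ(aX+bY) = a(\dd L_g\circ X)+b(\dd L_g\circ Y) = aX\circ L_g+bY\circ L_g = (aX+bY)\circ L_g$, using linearity of $\dd L_g$ on each tangent space. Next, and this is the one genuine point requiring care, I would verify that $\mathfrak{g}$ is closed under the Lie bracket of vector fields. The key tool is the naturality of the bracket under diffeomorphisms (or more generally under $\varphi$--relatedness of vector fields): if $X_i$ is $L_g$--related to $Y_i$ for $i=1,2$, then $[X_1,X_2]$ is $L_g$--related to $[Y_1,Y_2]$. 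Applying this with $X_i = Y_i$ left--invariant and the diffeomorphism $L_g$ gives that $[X_1,X_2]$ is $L_g$--related to itself for every $g\in G$, i.e.\ $[X_1,X_2]\in\mathfrak{g}$. I should recall (or cite from the Appendix) the statement of this naturality property, since it is the crux of the argument. Finally, skew--symmetry and the Jacobi identity for $[\cdot,\cdot]$ on $\mathfrak{g}$ are inherited verbatim from the corresponding well--known identities for the bracket of vector fields on any smooth manifold, so $\mathfrak{g}$ is a Lie algebra.

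\smallskip

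\emph{Proof of (ii).} I would first check that the bracket on $T_eG$ is well defined: given $X^1,X^2\in T_eG$, the extensions $\widetilde{X^i}$ defined by $\widetilde{X^i}_g = \dd(L_g)_e X^i$ are left--invariant vector fields (indeed $\dd(L_h)_g \widetilde{X^i}_g = \dd(L_h)_g\dd(L_g)_e X^i = \dd(L_{hg})_e X^i = \widetilde{X^i}_{hg}$ by the chain rule and $L_h\circ L_g = L_{hg}$), hence smooth by the Lemma, so $[\widetilde{X^1},\widetilde{X^2}]_e$ makes sense; and the assignment $X^i\mapsto\widetilde{X^i}$ is manifestly linear. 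Now $\psi:\mathfrak{g}\to T_eG$, $X\mapsto X_e$, is linear. It is injective because a left--invariant vector field is determined by its value at $e$: if $X_e = 0$ then $X_g = \dd(L_g)_e X_e = 0$ for all $g$. It is surjective because, given $v\in T_eG$, the field $\widetilde{v}$ above is left--invariant with $\widetilde{v}_e = v$. Finally, $\psi$ intertwines the brackets essentially by construction: the bracket on $T_eG$ was defined precisely so that $[\psi(X),\psi(Y)]_{T_eG} = [\widetilde{X_e},\widetilde{Y_e}]_e$, and since $X$ is left--invariant one has $\widetilde{X_e} = X$ (both are left--invariant fields with the same value at $e$, so they agree by injectivity just shown), hence $[\psi(X),\psi(Y)]_{T_eG} = [X,Y]_e = \psi([X,Y])$. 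Thus $\psi$ is a Lie algebra isomorphism.

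\smallskip

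The main obstacle is the closure of $\mathfrak{g}$ under the bracket in part (i); everything else is a matter of unwinding definitions and invoking the uniqueness of a left--invariant extension of a tangent vector at $e$. The cleanest route through the obstacle is to isolate the naturality of the Lie bracket of vector fields under smooth maps as a cited lemma and apply it to the diffeomorphisms $L_g$.
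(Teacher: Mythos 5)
Your proposal is correct and follows essentially the same route as the paper: closure of $\mathfrak{g}$ under the bracket via naturality of the Lie bracket under $L_g$--relatedness (the paper's equation on $f$--related fields), injectivity and surjectivity of $\psi$ by the left--invariant--extension argument, and bracket preservation by unwinding the definition on $T_eG$. The only difference is cosmetic: you spell out that $\widetilde{X_e}=X$ for $X$ already left--invariant, a step the paper leaves implicit.
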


\begin{proof}
First, note that $\mathfrak{g}$ has an $\R$--vector space structure, which indeed follows directly from linearity of $\dd (L_g)_e$. It is not difficult to see that the Lie bracket of vector fields is a Lie bracket, i.e. it is skew--symmetric and satisfies the Jacobi identity. Equation \eqref{bracketrelated} implies that the Lie bracket of left--invariant vector fields is still left--invariant. Hence $\mathfrak{g}$ is a Lie algebra, proving (i).

To prove item (ii), we first claim that the map $\psi$ is injective. Indeed, if $\psi(X)=\psi(Y)$, for each $g\in G$, $X(g)=\dd L_g(X(e))=\dd L_g(Y(e))=Y(g)$. Furthermore, it is also surjective, since for each $v \in T_eG$, $X(g)=\dd L_g(v)$ is such that $\psi(X)=v$, and $X$ is clearly left--invariant. Therefore, $\psi$ is a linear bijection between two $\R$--vector spaces, hence an isomorphism. From the definition of Lie bracket on $T_{e}G$ we have $[\psi(X),\psi(Y)]= [X,Y]_{e}=\psi([X,Y])$. Thus $\psi$ is a Lie algebra isomorphism.
\end{proof}

%FIGURA: plano tangente ao grupo na identidade e com vetores
\begin{definition}
The {\it Lie algebra of the Lie group $G$}\index{Lie algebra!of a Lie group} is the Lie algebra $\mathfrak{g}$ of left--invariant vector fields on $G$. According to the above theorem, $\mathfrak{g}$ could be equivalently defined as the tangent space $T_eG$ with the bracket defined as in (ii).
\end{definition}

In this way, a Lie group $G$ gives rise to a canonically determined Lie algebra $\mathfrak{g}$. A converse is given in the following interesting and important result, proved in Duistermaat and Kolk~\cite{duistermaat}.

\begin{l3thm}\label{liethird}\index{Theorem!Lie's Third}
Let $\mathfrak{g}$ be a Lie algebra. Then there exists a unique connected and simply connected Lie group $G$ with Lie algebra isomorphic to $\mathfrak{g}$.
\end{l3thm}

\begin{remark}
We will not prove the existence of $G$, but uniqueness will follow from Corollary~\ref{cor-gruposisomorfosSeAlgebrasisomorfas}.
\end{remark}

We end this section with two exercises. The first is related to Remark~\ref{re:redundantdef}, and the second completes Exercise~\ref{ex-sobre-SO3}, verifying that $\mathfrak{so}(3)$ is the Lie algebra of $\SO(3)$.

\begin{exercise}\label{ex:teorfuncimplicita}
Consider $G$ a smooth manifold with a smooth group structure, i.e., such that $$\mu:G\times G\owns (x,y) \longmapsto xy\in G$$ is smooth.

\begin{itemize}
\item[(i)] Fix $x_0,y_0\in G$ and verify that $\frac{\partial\mu}{\partial y}(x_0,y_0):T_{y_0}G\to T_{x_0y_0}G$ is an isomorphism, computing it in terms of the derivative of $L_{x_0}:G\to G$;
\item[(ii)] Use the Implicit Function Theorem to solve $xy=e$ in a neighborhood of $x_0\in G$, obtaining a map $y=y(x)$ between open neighborhoods of $x_0$ and $y_0$, such that $xy=e$ if and only if $y=y(x)$;
\item[(iii)] Conclude that $G\owns x\mapsto x^{-1}\in G$ is smooth.
\end{itemize}
\end{exercise}

\begin{exercise}\label{ex2-sobre-SO3}\index{$\SO(n)$}
Assume the following result to be seen in Exercise~\ref{ex-ad-matrizes}. The tangent space at the identity to a Lie subgroup of $\GL(n,\R)$ endowed with the matrix commutator is isomorphic to its Lie algebra.

Consider $\mathcal{S}\subset\GL(3,\R)$ the subspace formed by symmetric matrices. Define the map $\varphi:\GL(3,\R)\rightarrow\mathcal{S}$ given by $\varphi(A)=AA^t$ and denote $I$ the identity.

\begin{itemize}
\item[(i)] Verify that the the kernel of $\dd\varphi(I):\mathfrak{gl}(3,\R)\rightarrow\mathcal{S}$ is the subspace of skew--symmetric matrices in $\mathfrak{gl}(3,\R)$;
\item[(ii)] Prove that, for all $A\in\GL(3,\R)$, $$\ker \dd\varphi(A)=\{H\in\mathfrak{gl}(3,\R) : A^{-1}H\in\ker \dd\varphi(I)\}$$ and conclude that $\dim\ker\dd\varphi(A)=3$, for all $A\in\GL(3,\R)$;
\item[(iii)] Prove that $I$ is a regular value of $\varphi$;
\item[(iv)] Conclude that the Lie group $\O(3)$ may be written as preimage of this element by $\varphi$ and calculate its dimension.
\item[(v)] Recall that $\SO(3)$ is the subgroup of $\O(3)$ given by the connected component of $I$. Prove that $T_I\SO(3)=\ker\dd\varphi(I)=\mathfrak{so}(3)$. Finally, conclude that $\mathfrak{so}(3)$ is the Lie algebra\index{Lie algebra!of $\SO(3)$} of $\SO(3)$.
\end{itemize}
Observe that analogous results hold for the $n$--dimensional case, using the same techniques as above.
\end{exercise}

\section{Lie subgroups and Lie homomorphisms}

The aim of this section is to establish some basic relations between Lie algebras and Lie groups, and their sub objects and morphisms. 

A group homomorphism between Lie groups $\varphi:G_1\rightarrow G_2$ is called a {\it Lie group homomorphism}\index{Lie group!homomorphism} if it is also smooth. In the sequel, we will prove that continuity is in fact a sufficient condition for a group homomorphism between Lie groups to be smooth, see Corollary~\ref{homosmooth}. Recall that given two Lie algebras $\mathfrak{g}_1,\mathfrak{g}_2$, a linear map $\psi:\mathfrak{g}_1\rightarrow\mathfrak{g}_2$ is a {\it Lie algebra homomorphism}\index{Lie algebra!homomorphism} if $\psi([X,Y])=[\psi(X),\psi(Y)]$, for all $X,Y\in\mathfrak{g}_1$.

A {\it Lie subgroup}\index{Lie group!subgroup} $H$ of a Lie group $G$ is an abstract subgroup such that $H$ is an immersed submanifold of $G$ and $$H\times H\owns (x,y)\longmapsto xy^{-1}\in H$$ is smooth. In addition, if $\mathfrak{g}$ is a Lie algebra, a subspace $\mathfrak{h}$ is a {\it Lie subalgebra}\index{Lie algebra!subalgebra} if it is closed with respect to the Lie bracket.

\begin{proposition}\label{Lsubgroup}
Let $G$ be a Lie group and $H\subset G$ an embedded submanifold of $G$ that is also a group with respect to the group operation of $G$. Then $H$ is a closed Lie subgroup of $G$.
\end{proposition}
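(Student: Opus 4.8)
The plan is to show two things: first, that the embedded submanifold $H$ is closed in $G$ as a topological subspace, and second, that the group operations restricted to $H$ are smooth, so that $H$ qualifies as a Lie subgroup. Since $H$ is an embedded submanifold, it is locally closed, i.e.\ open in its closure $\overline{H}$; the main work is to upgrade ``locally closed'' to ``closed'' using the group structure.

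The key steps, in order, would be the following. First, I would fix a point $h_0 \in \overline{H}$ and aim to show $h_0 \in H$. By the local-closedness of an embedded submanifold, there is an open neighborhood $U$ of the identity $e$ in $G$ such that $H \cap U$ is closed in $U$. Second, I would use a limit-point argument: since $h_0 \in \overline{H}$, pick a sequence (or net) $h_n \in H$ with $h_n \to h_0$. Then $h_n^{-1} h_m \to h_0^{-1} h_0 = e$ as $n,m \to \infty$ (using continuity of the operations on $G$), so for $n,m$ large the elements $h_n^{-1} h_m$ lie in $U$; since they also lie in $H$ (as $H$ is an abstract subgroup of $G$), and $H \cap U$ is closed in $U$, one shows that $h_n$ is a Cauchy-type sequence whose ``limit direction'' stays in $H$. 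Concretely: fix one such large index $N$; then $h_N^{-1} h_n \in H \cap U$ for all large $n$, and $h_N^{-1} h_n \to h_N^{-1} h_0$, which therefore lies in the closed set $H \cap U \subset H$; hence $h_0 = h_N (h_N^{-1} h_0) \in H$. This proves $\overline{H} = H$, so $H$ is closed in $G$. Third, with $H$ now a closed embedded submanifold, the inclusion $H \hookrightarrow G$ is a proper embedding, so the restriction of the smooth map $(x,y) \mapsto xy^{-1}$ on $G \times G$ to the embedded submanifold $H \times H$ is smooth and takes values in $H$; since $H$ is embedded, corestriction to $H$ is again smooth. Thus $H$ is a Lie subgroup, and being closed, it is a closed Lie subgroup.

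The main obstacle is the second step — turning the purely topological fact ``embedded $\Rightarrow$ locally closed'' into global closedness. The crucial insight is that translations are homeomorphisms of $G$, so the ``locally closed near $e$'' property can be transported near any point, and the subgroup property lets one relate an arbitrary limit point to a neighborhood of $e$ via $h_n^{-1} h_m$. One technical point to handle carefully is that $G$ need not be metrizable a priori, so the argument should be phrased with nets, or one should first reduce to a second-countable (hence metrizable) connected component; alternatively, one can argue entirely with neighborhoods: if $h_0 \in \overline H$, then for the symmetric neighborhood $V$ of $e$ with $V \cdot V^{-1} \subset U$, the set $h_0 V$ meets $H$ in some point $h$, and then $h^{-1}(h_0 V) \cap H = h^{-1} h_0 V \cap H$ is a neighborhood-piece on which one runs the same closedness argument to conclude $h^{-1} h_0 \in \overline{H \cap U} = H \cap U$, whence $h_0 \in H$. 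Either phrasing works; the net version is cleanest to write.
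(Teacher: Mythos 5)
Your proof is correct and follows essentially the same route as the paper's: exploit the slice property of an embedded submanifold near $e$ (equivalently, local closedness), then translate a convergent sequence in $H$ by the inverse of one of its terms so that the limit lands in that slice neighborhood and hence in $H$, and finally use the embedded/quasi-embedded property to corestrict $(x,y)\mapsto xy^{-1}$ smoothly to $H$. The only cosmetic differences are the ordering (the paper establishes smoothness before closedness, you do the reverse) and that the paper manipulates a double-indexed quantity $h_n^{-1}h_m$ whereas you fix a single large translate $h_N^{-1}$; the core mechanism is identical.
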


\begin{proof}
Consider the map $f:H\times H\owns (x,y)\mapsto xy^{-1}\in G$. Then $f$ is smooth and $f(H\times H)\subset H$. Since $H$ is embedded in $G$, from Proposition~\ref{quasiembedded}, $f:H\times H\rightarrow H$ is smooth. Hence $H$ is a Lie subgroup of $G$.

It remains to prove that $H$ is a closed subgroup of $G$. Consider a sequence $\{h_n\}$ in $H$ that converges to $g_0\in G$. Since $H$ is an embedded submanifold, there exists a neighborhood $W$ of $e$ and a chart $\varphi=(x_1,\ldots,x_k):W\to\R^k$ such that
$$W\cap H=\{g\in G\cap W:x_i(g)=0, i=1,\ldots k\}.$$ The fact that the map $G\times G\ni (x,y)\mapsto x^{-1}y\in G$ is continuous implies the existence of neighborhoods $U$ and $V$ of $e$ such that $U^{-1}V\subset W$. For each $n$ and $m$, set $u_n=g_{0}^{-1}h_{n}\in U$ and $v_{m}=g_{0}^{-1}h_{m}\in V$. By construction, $u^{-1}_{n}v_{m}\in W\cap H$ and hence $x_{i}(u^{-1}_{n}v_{m})=0$. Therefore, $x_{i}(u^{-1}_{n})=0$, since $\{v_{m}\}$ converges to $e$, and $u_{n}^{-1}=h_{n}^{-1}g_{0}\in H$. Thus, $g_{0}\in H$, concluding the proof.
\end{proof}

Another important condition under which subgroups are Lie subgroups will be given in Section~\ref{sec-closedsubgroups}. We now investigate the nature of the Lie algebra of a Lie subgroup.

\begin{lemma}\label{uniquerelated}
Let $G_1$ and $G_2$ be Lie groups and $\varphi:G_1\rightarrow G_2$ be a Lie group homomorphism. Then given any left--invariant vector field $X\in \mathfrak{g}_1$, there exists a unique left--invariant vector field $Y\in\mathfrak{g}_2$ that is $\varphi$--related to $X$.
\end{lemma}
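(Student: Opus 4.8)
The plan is to construct $Y$ explicitly as the left--invariant extension of $\dd\varphi_e(X_e)$ and then verify that it has the required property, using the fact that a homomorphism intertwines left translations.

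First I would recall from Theorem~\ref{teo-defLieAlgebraLieGroup} that a left--invariant vector field on $G_2$ is uniquely determined by its value at the identity, and conversely every vector $v\in T_eG_2$ extends to a unique left--invariant vector field $g\mapsto\dd(L_g)_e v$. Accordingly, I set $Y\in\mathfrak{g}_2$ to be the left--invariant vector field on $G_2$ with $Y_e=\dd\varphi_e(X_e)$, that is, $Y_g=\dd(L_g)_e\bigl(\dd\varphi_e(X_e)\bigr)$.

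Next I would check that $Y$ is $\varphi$--related to $X$, i.e.\ that $\dd\varphi_g(X_g)=Y_{\varphi(g)}$ for all $g\in G_1$. The key observation is that since $\varphi$ is a group homomorphism, $\varphi\circ L_g=L_{\varphi(g)}\circ\varphi$ for every $g\in G_1$. Differentiating this identity at $e\in G_1$ and evaluating on $X_e$ yields
\[
\dd\varphi_g(X_g)=\dd\varphi_g\bigl(\dd(L_g)_e X_e\bigr)=\dd(L_{\varphi(g)})_e\bigl(\dd\varphi_e(X_e)\bigr)=\dd(L_{\varphi(g)})_e(Y_e)=Y_{\varphi(g)},
\]
where the first equality uses left--invariance of $X$ and the last uses left--invariance of $Y$. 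This establishes existence.

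Finally, for uniqueness, suppose $Y'\in\mathfrak{g}_2$ is also $\varphi$--related to $X$. Since $\varphi(e)=e$, evaluating the relation at $g=e$ gives $Y'_e=\dd\varphi_e(X_e)=Y_e$, and two left--invariant vector fields that agree at $e$ coincide, so $Y'=Y$. I expect the only point requiring a moment's care is that $\varphi$ need not be surjective, so $\varphi$--relatedness a priori constrains $Y$ only along $\varphi(G_1)$; but since $e\in\varphi(G_1)$ this already pins down $Y_e$ and hence all of $Y$, so there is no real obstacle here.
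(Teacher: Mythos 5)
Your proof is correct and matches the paper's argument essentially verbatim: both construct $Y$ as the left--invariant extension of $\dd\varphi_e(X_e)$, verify $\varphi$--relatedness via the intertwining identity $\varphi\circ L_g=L_{\varphi(g)}\circ\varphi$, and obtain uniqueness by noting a $\varphi$--related field must have $Y_e=\dd\varphi_e X_e$ and is then pinned down by left--invariance. The only difference is cosmetic (the paper proves uniqueness first, and your closing remark on non-surjectivity of $\varphi$ is a nice clarification the paper leaves implicit).
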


\begin{proof}
First, if $Y\in\mathfrak{g}_2$ is $\varphi$--related to $X$, since $\varphi$ is a Lie group homomorphism, $Y_e=\dd\varphi_e X_e$. Hence, uniqueness follows from left--invariance of $Y$.

Define $Y=\dd (L_g)_e(\dd\varphi_e(X_e))$. It remains to prove that $Y$ is $\varphi$--related to $X$. Observing that $\varphi$ is a Lie group homomorphism, $\varphi\circ L_g=L_{\varphi(g)}\circ \varphi$, for all $g\in G_1$. Therefore, for each $g\in G_1$,
\begin{eqnarray*}
\dd(\varphi)_g(X_g) &=& \dd\varphi_g(\dd(L_g)_e X_e)\\
&=& \dd(\varphi \circ L_g)_eX_e\\
&=& \dd(L_{\varphi(g)}\circ\varphi)_eX_e\\
&=& \dd(L_{\varphi(g)})_e(\dd\varphi_eX_e)\\
&=& Y(\varphi(g)),
\end{eqnarray*}

\noindent
which proves that $Y$ is $\varphi$--related to $X$, completing the proof.
\end{proof}

\begin{proposition}\label{dphi}
Let $G_1$ and $G_2$ be Lie groups and $\varphi:G_1\rightarrow G_2$ be a Lie group homomorphism. Then $\dd\varphi_e:\mathfrak{g}_1\rightarrow\mathfrak{g}_2$ is a Lie algebra homomorphism.
\end{proposition}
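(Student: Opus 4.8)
The plan is to combine Lemma~\ref{uniquerelated} with the classical naturality of the Lie bracket under related vector fields. Recall first that, under the identification $\mathfrak{g}_i\cong T_eG_i$ of Theorem~\ref{teo-defLieAlgebraLieGroup}, the left--invariant field $Y\in\mathfrak{g}_2$ that Lemma~\ref{uniquerelated} associates to a given $X\in\mathfrak{g}_1$ satisfies $Y_e=\dd\varphi_e(X_e)$; in other words, this $Y$ is exactly the left--invariant extension of the vector $\dd\varphi_e(X)$. Since $\dd\varphi_e$ is linear (being the differential of a smooth map at a point), the whole content of the statement is that $\dd\varphi_e$ preserves brackets.

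To see this, I would fix $X^1,X^2\in\mathfrak{g}_1$ and let $Y^1,Y^2\in\mathfrak{g}_2$ be the unique left--invariant vector fields that are $\varphi$--related to $X^1$ and $X^2$, respectively, as furnished by Lemma~\ref{uniquerelated}; thus $Y^i_e=\dd\varphi_e(X^i_e)$. Next I would invoke the standard fact that if $X^i$ is $\varphi$--related to $Y^i$ for $i=1,2$, then $[X^1,X^2]$ is $\varphi$--related to $[Y^1,Y^2]$ — this is precisely the relation already used in the proof of Theorem~\ref{teo-defLieAlgebraLieGroup}. Now $[Y^1,Y^2]$ is again left--invariant by Theorem~\ref{teo-defLieAlgebraLieGroup}(i), so it is a left--invariant vector field $\varphi$--related to $[X^1,X^2]$; by the uniqueness clause of Lemma~\ref{uniquerelated} it is \emph{the} such field, and evaluating at $e$ gives $[Y^1,Y^2]_e=\dd\varphi_e\bigl([X^1,X^2]_e\bigr)$. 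Unwinding the identifications $\mathfrak{g}_i\cong T_eG_i$ (with the bracket on $T_eG_2$ defined as in Theorem~\ref{teo-defLieAlgebraLieGroup}(ii)), this says $\dd\varphi_e([X^1,X^2])=[Y^1,Y^2]=[\dd\varphi_e(X^1),\dd\varphi_e(X^2)]$, which is exactly what we want.

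The only point that requires a little care, and the one I would write out most explicitly, is the bookkeeping between left--invariant vector fields and their values at $e$: one must check that the fields $Y^i$ produced by Lemma~\ref{uniquerelated} genuinely represent $\dd\varphi_e(X^i)$ under the canonical isomorphism, so that the equality $[Y^1,Y^2]=[\dd\varphi_e(X^1),\dd\varphi_e(X^2)]$ of left--invariant fields is literally the bracket identity on $T_eG_2$. Apart from that translation, no computation is needed; the argument is a direct assembly of Lemma~\ref{uniquerelated}, Theorem~\ref{teo-defLieAlgebraLieGroup}, and the naturality of brackets under related maps.
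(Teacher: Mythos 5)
Your proof is correct and follows essentially the same route as the paper: take the left--invariant extensions, use Lemma~\ref{uniquerelated} to get $\varphi$--relatedness, use \eqref{bracketrelated} to see that the brackets are also $\varphi$--related, and then evaluate at the identity. The only cosmetic difference is that you invoke the uniqueness clause of Lemma~\ref{uniquerelated} to identify $[Y^1,Y^2]$ as the left--invariant extension of $\dd\varphi_e[X^1,X^2]$, whereas the paper extracts the same equality directly from $\varphi$--relatedness of the brackets at $e$; both give the identical conclusion.
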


\begin{proof}
We want to prove that $\dd\varphi_{e}:(T_{e}G_{1},[\cdot,\cdot])\rightarrow (T_{e}G_{2},[\cdot,\cdot])$ is a Lie algebra homomorphism, where the Lie bracket $[\cdot,\cdot]$ on $T_{e}G_{i}$ was defined in Theorem~\ref{teo-defLieAlgebraLieGroup}. For each $X^{1}, X^{2}\in T_{e}G_{1}$, define the vectors $Y^{i}=\dd\varphi_{e}X^{i}\in T_{e}G_{2}$ and extend them to left--invariant vector fields $\widetilde{X}^{i}\in\mathfrak{X}(G_{1})$ and $\widetilde{Y}^{i}\in\mathfrak{X}(G_{2})$, by setting $\widetilde{X}^{i}_{g}=\dd L_{g}X^{i}$ and $\widetilde{Y}^{i}_{g}=\dd L_{g}Y^{i}$.

On the one hand, it follows from the definition of the Lie bracket in $T_{e}G_{2}$ that $$[\dd\varphi_{e}X^{1},\dd\varphi_{e}X^{2}]=[Y^{1},Y^{2}]=[\widetilde{Y}^{1},\widetilde{Y}^{2}]_{e}.$$ On the other hand, it follows from Lemma~\ref{uniquerelated} that $\widetilde{X}^i$ and $\widetilde{Y}^i$ are $\varphi$--related, and from \eqref{bracketrelated}, $[\widetilde{X}^1,\widetilde{X}^2]$ and $[\widetilde{Y}^1,\widetilde{Y}^2]$ are $\varphi$--related. Therefore $$[\widetilde{Y}^{1},\widetilde{Y}^{2}]_{e}=\dd\varphi_{e}[\widetilde{X}^{1},\widetilde{X}^{2}]_{e}=\dd\varphi_{e}[X^{1},X^{2}].$$ The two equations above imply that $[\dd\varphi_{e}X^{1},\dd\varphi_{e}X^{2}]=\dd\varphi_{e}[X^{1},X^{2}]$, concluding the proof.
\end{proof}

\begin{corollary}\label{subalgebrainclusion}
Let $G$ be a Lie group and $H\subset G$ a Lie subgroup. Then the inclusion map $i:H\hookrightarrow G$ induces an isomorphism $\dd i_e$ between the Lie algebra $\mathfrak h$ of $H$ and a Lie subalgebra $\dd i_e(\mathfrak h)$ of $\mathfrak{g}$.
\end{corollary}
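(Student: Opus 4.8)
The plan is to invoke the machinery already developed for Lie group homomorphisms, applied to the inclusion map $i\colon H\hookrightarrow G$. First I would observe that, by definition of a Lie subgroup, $H$ is an immersed submanifold of $G$ and $i$ is both a smooth immersion and a group homomorphism, hence a Lie group homomorphism in the sense used above. Therefore Proposition~\ref{dphi} applies directly and tells us that $\dd i_e\colon \mathfrak h \to \mathfrak g$ is a Lie algebra homomorphism. This takes care of the bracket-preserving part for free.

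Next I would establish injectivity of $\dd i_e$. Since $i$ is an immersion, $\dd i_e\colon T_eH \to T_eG$ is by definition an injective linear map; identifying $\mathfrak h \cong T_eH$ and $\mathfrak g \cong T_eG$ via Theorem~\ref{teo-defLieAlgebraLieGroup}, this says precisely that $\dd i_e\colon \mathfrak h \to \mathfrak g$ is injective. Consequently $\dd i_e$ maps $\mathfrak h$ bijectively onto its image $\dd i_e(\mathfrak h) \subset \mathfrak g$, and since $\dd i_e$ is a Lie algebra homomorphism, this image is closed under the bracket of $\mathfrak g$, i.e.\ it is a Lie subalgebra of $\mathfrak g$. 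Restricting the codomain, $\dd i_e\colon \mathfrak h \to \dd i_e(\mathfrak h)$ is then a bijective Lie algebra homomorphism, hence a Lie algebra isomorphism, which is exactly the claimed statement.

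I do not expect any serious obstacle here; the proof is essentially an assembly of Proposition~\ref{dphi} (bracket preservation) with the elementary fact that an immersion has injective differential, together with the identification of the abstract Lie algebra of a Lie group with its tangent space at the identity. The only point requiring a word of care is the verification that the inclusion $i$ genuinely qualifies as a Lie group homomorphism as defined in the text (a smooth group homomorphism): smoothness of $i$ is part of the definition of an immersed submanifold, and the homomorphism property is immediate since the group operation on $H$ is the restriction of that on $G$. Once this is noted, everything else is formal.
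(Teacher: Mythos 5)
Your proof is correct and follows exactly the route the paper intends: the statement is presented as a corollary of Proposition~\ref{dphi} (with no separate proof given), and your argument — apply that proposition to the inclusion $i$, which is a Lie group homomorphism, and combine with injectivity of $\dd i_e$ coming from $i$ being an immersion — is precisely the reasoning the authors leave to the reader.
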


We will see below  a converse result, on conditions under which a Lie subalgebra gives rise to a Lie subgroup.
Before that, we give a result on how an open neighborhood of the identity element can \emph{generate} the whole Lie group.

%We now approach a converse result, on conditions under which a Lie subalgebra gives rise to a Lie subgroup.
%We now give a result on how an open neighborhood of the identity element can \emph{generate} the whole Lie group.

\begin{proposition}\label{vizger}
Let $G$ be a Lie group and $G^0$ be the connected component of $G$ to which the identity element $e\in G$ belongs. Then $G^0$ is a normal Lie subgroup of $G$ and connected components of $G$ are of the form $gG^0$, for some $g\in G$. Moreover, given an open neighborhood $U$ of $e$, then $G^0=\bigcup_{n\in\N} U^n$, where $U^n=\{g_1^{\pm 1}\cdots g_n^{\pm 1}:g_i\in U\}$.
\end{proposition}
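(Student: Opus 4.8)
The plan is to establish three separate assertions: that $G^0$ is a normal Lie subgroup, that the connected components are its cosets, and that a neighborhood of the identity generates $G^0$. I would take these in a slightly different order than stated, beginning with the generation statement since it yields the subgroup property almost for free.

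\textbf{Step 1: A neighborhood of $e$ generates a subgroup inside $G^0$.} Let $U$ be an open neighborhood of $e$. By shrinking, I may assume $U = U^{-1}$ (replace $U$ by $U \cap U^{-1}$, which is open since inversion is a homeomorphism). Set $V = \bigcup_{n \in \N} U^n$. Since multiplication is continuous, each $U^n$ is open, so $V$ is open. It is closed under products by construction, and under inverses because $U = U^{-1}$ forces $(U^n)^{-1} = U^n$; hence $V$ is an abstract subgroup of $G$. Now $V$ is also closed in $G$: if $g \in \overline V$, the open set $gU$ meets $V$, so $gu \in V$ for some $u \in U$, whence $g = (gu)u^{-1} \in V \cdot U \subset V$. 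Thus $V$ is a nonempty subset of $G$ that is both open and closed, and it is connected because each $U^n$ is connected (continuous image of $U^{\times n}$, which I should note is connected — this needs $U$ connected, so I additionally shrink $U$ to be connected, e.g. a coordinate ball around $e$) and all the $U^n$ share the point $e$. Therefore $V$ is a connected open-and-closed set containing $e$, so $V = G^0$. This simultaneously proves $G^0 = \bigcup_n U^n$ and that $G^0$ is an abstract subgroup.

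\textbf{Step 2: $G^0$ is a Lie subgroup, and the components are the cosets.} Being a connected component, $G^0$ is closed in $G$; being a component of a manifold, it is also open, hence an embedded submanifold of $G$. By Proposition~\ref{Lsubgroup}, an embedded submanifold that is a subgroup is automatically a (closed) Lie subgroup, so $G^0$ is a Lie subgroup. For the components: left translation $L_g$ is a homeomorphism of $G$ taking $G^0$ onto a connected set containing $g$, so $gG^0$ lies in the component of $g$; applying the same to $L_{g^{-1}}$ gives the reverse inclusion, so the component of $g$ is exactly $gG^0$.

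\textbf{Step 3: Normality.} For any $g \in G$, conjugation $c_g : x \mapsto gxg^{-1}$ is a homeomorphism of $G$ fixing $e$, so it maps $G^0$ (the component of $e$) onto itself; hence $gG^0g^{-1} = G^0$, i.e. $G^0$ is normal. (Alternatively, $c_g$ restricted to $G^0$ shows $gG^0g^{-1}$ is a connected subgroup containing $e$, forcing it into $G^0$, and equality by symmetry.)

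The only genuinely delicate point is the reduction in Step 1 to a $U$ that is connected and symmetric — the statement is phrased for an \emph{arbitrary} neighborhood $U$, so I must argue that shrinking $U$ is harmless: once $G^0 = \bigcup_n U_0^n$ for some smaller $U_0 \subseteq U$, monotonicity $U_0^n \subseteq U^n$ gives $G^0 \subseteq \bigcup_n U^n$, while the reverse inclusion $\bigcup_n U^n \subseteq G^0$ follows because $G^0$ is a subgroup (Step 2, which does not depend on this) containing $U \subseteq G^0$. So the circularity is only apparent, provided I prove the subgroup property of $G^0$ first (Step 2 uses only that $G^0$ is an embedded submanifold that happens to be a subgroup — but "subgroup" there is exactly what Step 1 with a good $U$ delivers). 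To avoid any real circularity I would instead prove the abstract subgroup property of $G^0$ directly and elementarily at the very start: $G^0 \cdot G^0$ is connected and meets $G^0$, hence equals $G^0$, and $(G^0)^{-1}$ is connected and contains $e$, hence equals $G^0$. With that in hand, Steps 1–3 go through cleanly and the main obstacle dissolves into bookkeeping about shrinking neighborhoods.
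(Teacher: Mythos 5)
Your overall strategy (translation argument for the subgroup property, conjugation for normality, open-and-closedness for the generation claim) matches the paper's, but the generation argument in Step 1 takes a genuinely different, and less economical, route. The paper never needs $U$ connected or symmetric: it shows $\bigcup_n U^n$ is open (trivially) and closed \emph{in $G^0$} (via the sequence argument, identical in spirit to your $g = (gu)u^{-1}$ trick), and then invokes connectedness of $G^0$ itself to conclude $G^0 = \bigcup_n U^n$. You instead try to prove the set $V=\bigcup_n U^n$ is itself connected, which forces you to shrink $U$ to a connected neighborhood, and then to undo that shrinking at the end via a monotonicity argument. The paper's tactic makes that entire detour vanish: you don't need $V$ connected, you only need $V$ open and closed inside the connected set $G^0$, which you already have. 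Note also that your worry about replacing $U$ by $U\cap U^{-1}$ is unnecessary given the statement's definition $U^n=\{g_1^{\pm 1}\cdots g_n^{\pm 1}:g_i\in U\}$: inverses are built into the notation, so $(U^n)^{-1}=U^n$ automatically. Finally, your ordering (generation first, subgroup structure second) creates the apparent circularity you then have to disarm in your closing paragraph; the paper's ordering (abstract subgroup via maximality of components $\to$ Lie subgroup via Proposition~\ref{Lsubgroup} $\to$ normality and cosets $\to$ generation) is linear and avoids the discussion entirely. Your own final remark — prove the abstract subgroup property of $G^0$ directly at the start — is exactly the paper's starting move, so in the end you land on the same proof, just after taking the long way around.
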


\begin{proof}
Since $G^0$ is a connected component of $G$, it is an open and closed subset of $G$. In order to verify that it is also a Lie subgroup, let $g_0\in G^0$ and consider $g_0G^0=L_{g_0}(G^0)$. Note that $g_0G^0$ is a connected component of $G$, once $L_{g_0}$ is a diffeomorphism. Since $g_0\in G^0\cap g_0G^0$, it follows from the maximality of the connected component that $g_0G^0=G^0$. Similarly, since the inversion map is also a diffeomorphism, the subset $G^0_{-1}=\{g_0^{-1}:g_0\in G^0\}$ is connected, with $e\in G^0_{-1}$. Hence, $G^0_{-1}=G^0$, using the same argument. Therefore, $G^0$ is a subgroup of $G$ and an embedded submanifold of $G$. From Proposition~\ref{Lsubgroup}, it follows that $G^0$ is a Lie subgroup of $G$.

In addition, for each $g\in G$, consider the diffeomorphism given by the conjugation $x\mapsto gxg^{-1}$. Using the same argument of maximality of the connected component, one may conclude that $gG^0g^{-1}=G^0$, for all $g\in G$, hence $G^0$ is normal. The proof that the connected component of $G$ containing $g$ is $gG^{0}$ can be similarly done.

Finally, being $G^0$ connected, to show that $G^0=\bigcup_{n\in\N} U^n$ it suffices to check that $\bigcup_{n\in\N} U^n$ is open and closed in $G^0$. It is clearly open, since $U$ (hence $U^n$) is  open. To verify that it is also closed, let $h\in G^0$ be the limit of a sequence $\{h_j\}$  in $\bigcup_{n\in\N} U^n$, i.e., $\lim h_j=h$. Since $U^{-1}=\{u^{-1}:u\in U\}$ is an open neighborhood of $e\in G$, $hU^{-1}$ is an open neighborhood of $h$. From the convergence of the sequence $\{h_j\}$, there exists $j_0\in\N$ such that $h_{j_0}\in hU^{-1}$, that is, there exists $u\in U$ such that $h_{j_0}=hu^{-1}$. Hence $h=h_{j_0}u\in\bigcup_{n\in\N} U^n$. Therefore this set is closed in $G^0$, concluding the proof.
\end{proof}

\begin{theorem}\label{integralsubgroup}
Let $\mathfrak{g}$ be the Lie algebra of $G$ and $\mathfrak{h}$ a Lie subalgebra of $\mathfrak{g}$. There exists a unique connected Lie subgroup $H\subset G$ with Lie algebra $\mathfrak{h}$.
\end{theorem}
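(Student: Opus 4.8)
The plan is to realize the desired $H$ as the leaf through the identity of a foliation of $G$ canonically attached to $\mathfrak h$, and then to verify that this leaf is an (abstract) subgroup, that it is in fact a Lie subgroup, and that its Lie algebra is $\mathfrak h$. First I would define a distribution $\mathcal D$ on $G$ by left translation, $\mathcal D_g = \dd(L_g)_e(\mathfrak h)\subset T_gG$; extending a basis of $\mathfrak h$ to left--invariant vector fields exhibits $\mathcal D$ as a smooth distribution of constant rank $\dim\mathfrak h$. Because $\mathfrak h$ is closed under the bracket, the bracket of two such left--invariant vector fields is again left--invariant and lies in $\mathfrak h$ at $e$, hence is a section of $\mathcal D$; so $\mathcal D$ is involutive and, by the Frobenius Theorem, integrable. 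Let $H$ denote the maximal integral manifold (leaf) of $\mathcal D$ through $e$, equipped with its natural immersed submanifold structure.

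Next I would show $H$ is a subgroup. From the definition and $L_g\circ L_x = L_{gx}$, each left translation $L_g$ carries $\mathcal D$ to itself, hence maps leaves to leaves; in particular, for $h\in H$ the leaf $L_{h^{-1}}(H)$ passes through $L_{h^{-1}}(h)=e$, so $h^{-1}H=H$, whence $h_1^{-1}h_2\in H$ for all $h_1,h_2\in H$. To upgrade this to a Lie subgroup, I consider $m\colon H\times H\to G$, $m(x,y)=xy^{-1}$, which is smooth into $G$ with image contained in $H$. Since leaves of a foliation are quasi--embedded (initial) submanifolds --- every smooth map into $G$ whose image lies in $H$ is automatically smooth as a map into $H$ (cf.\ Proposition~\ref{quasiembedded}) --- the map $m\colon H\times H\to H$ is smooth, so $H$ is a Lie subgroup.

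It then remains to identify $T_eH$, with its bracket, with $\mathfrak h$: by construction $T_eH=\mathcal D_e=\mathfrak h$, and since the inclusion $i\colon H\hookrightarrow G$ is a Lie group homomorphism, $\dd i_e$ is a Lie algebra homomorphism (Proposition~\ref{dphi}); as $\dd i_e$ is the inclusion $\mathfrak h\hookrightarrow\mathfrak g$, the bracket of $H$ agrees with that of $\mathfrak g$, so the Lie algebra of $H$ is $\mathfrak h$. For uniqueness, let $H'$ be any connected Lie subgroup with Lie algebra $\mathfrak h$. For $h'\in H'$, left translation by $h'$ in $G$ restricts to left translation in $H'$, so $T_{h'}H'=\dd(L_{h'})_e(\mathfrak h)=\mathcal D_{h'}$; thus $H'$ is a connected integral manifold of $\mathcal D$ through $e$, hence an open submanifold of the leaf $H$. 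Being an open subgroup of the connected group $H$, Proposition~\ref{vizger} forces $H'=H$, with matching smooth structures.

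The step I expect to be the main obstacle is the passage from ``$H$ is an abstract subgroup which happens to be a leaf'' to ``$H$ is a Lie subgroup'': this relies on the quasi--embeddedness of leaves of a foliation --- essentially the only input here that is not a formal manipulation --- and on being careful that it is the leaf topology, rather than the subspace topology inherited from $G$, that makes $H$ into a manifold.
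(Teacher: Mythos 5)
Your proposal is correct and follows essentially the same route as the paper's proof: build the left--invariant distribution from $\mathfrak h$, invoke Frobenius, take the leaf through $e$, use the fact that left translations permute leaves to see that $H$ is a subgroup, appeal to quasi--embeddedness of leaves (from the Frobenius Theorem, which is the more apt reference here than Proposition~\ref{quasiembedded}) to get smoothness of the group operations, and deduce uniqueness from the integral-manifold property together with Proposition~\ref{vizger}. You fill in the uniqueness step in more detail than the paper does, but the argument is the same.
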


\begin{proof}
Define the distribution $D_q=\{X_q:X_q=\dd L_qX \mbox{ for } X\in\mathfrak{h}\}$. Since $\mathfrak{h}$ is a Lie algebra, $D$ is involutive. It follows from the Frobenius Theorem (see Theorem~\ref{frobbis}) that there exists a unique foliation $\F=\{F_q:q\in G\}$ tangent to the distribution, i.e, $D_q=T_qF_q$, for all $q\in G$. Define $H\subset G$ to be the leaf passing through the identity, $H=F_e$.

Note that the map $L_g$ takes leaves to leaves, once $\dd L_gD_a=D_{ga}$. Furthermore, for each $h\in H$, $L_{h^{-1}}(H)$ is the leaf passing through the identity. Therefore $L_{h^{-1}}(H)=H$, which means that $H$ is a group. Finally, consider $\psi:H\times H\owns (x,y)\mapsto x^{-1}y\in H$. It also follows from the Frobenius Theorem that $H$ is quasi--embedded. Therefore, since the inclusion $i:H\hookrightarrow G$ and $i\circ \psi$ are smooth, $\psi$ is also smooth. Hence $H$ is a connected Lie subgroup of $G$ with Lie algebra $\mathfrak{h}$. The uniqueness follows from the Frobenius Theorem and Proposition~\ref{vizger}.
\end{proof}

\begin{definition}
A smooth surjective map $\pi:E\rightarrow B$ is a {\it covering map}\index{Covering map} if for each $p\in B$ there exists an open neighborhood $U$ of $p$ such that $\pi^{-1}(U)$ is a disjoint union of open sets $U_\alpha\subset E$ mapped diffeomorphically onto $U$ by $\pi$. That is, $\pi|_{U_\alpha}:U_\alpha\rightarrow U$ is a diffeomorphism for each $\alpha$.
\end{definition}

\begin{theorem}\label{liecovering}
Given $G$ a connected Lie group, there exist a unique simply connected Lie group $\widetilde{G}$ and a Lie group homomorphism $\pi:\widetilde{G}\rightarrow G$ which is also a covering map.
\end{theorem}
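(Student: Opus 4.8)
The plan is to build $\widetilde{G}$ as the universal covering space of the smooth manifold $G$, and then to transport the group structure. First I would invoke the standard topological fact that a connected, locally path-connected, semi-locally simply connected space admits a universal covering $\pi:\widetilde{G}\to G$; since $G$ is a manifold, all these hypotheses hold, and $\widetilde{G}$ inherits a natural smooth structure making $\pi$ a local diffeomorphism (hence a smooth covering map in the sense of the definition just given). I would fix a basepoint $\tilde e\in\pi^{-1}(e)$ once and for all.

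Next I would endow $\widetilde{G}$ with a group structure by the lifting criterion for covering maps. Consider the smooth map $G\times G\to G$, $(x,y)\mapsto xy^{-1}$; precomposing with $\pi\times\pi$ gives a map $\widetilde{G}\times\widetilde{G}\to G$. Since $\widetilde{G}\times\widetilde{G}$ is simply connected, this map lifts uniquely through $\pi$ to a smooth map $\mu:\widetilde{G}\times\widetilde{G}\to\widetilde{G}$, normalized by $\mu(\tilde e,\tilde e)=\tilde e$. I would then check that $\mu$ satisfies the group axioms: associativity and the identity/inverse laws each assert the equality of two smooth maps out of a connected (indeed simply connected) space into $\widetilde{G}$ which become equal after composing with $\pi$ and agree at the basepoint; by uniqueness of lifts they coincide. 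This makes $\widetilde{G}$ a Lie group, with $\pi$ a homomorphism because $\pi\circ\mu$ and the corresponding composition of the multiplication on $G$ agree by construction.

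For uniqueness, suppose $\pi':\widetilde{G}'\to G$ is another simply connected Lie group covering $G$. Using the lifting criterion again (both total spaces simply connected, $\pi$ and $\pi'$ covering maps), lift $\pi'$ through $\pi$ and $\pi$ through $\pi'$, normalizing to send identity to identity; the compositions are deck transformations fixing the basepoint, hence the identity, so the two lifts are mutually inverse diffeomorphisms. Finally, the resulting diffeomorphism $\widetilde{G}'\to\widetilde{G}$ intertwines the two multiplications: it is a basepoint-preserving lift, and again the relevant pair of maps agree after projecting to $G$, so by uniqueness of lifts it is a group isomorphism. One could alternatively deduce uniqueness from Lie's Third Theorem together with Corollary~\ref{subalgebrainclusion}, since $\dd\pi_e$ identifies the Lie algebras.

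I expect the main obstacle to be purely bookkeeping rather than conceptual: carefully arranging the uniqueness-of-lifts arguments so that each group axiom — and the homomorphism property, and the final isomorphism — is phrased as an equality of two continuous maps out of a connected space agreeing at one point and over $G$. The existence of the universal cover and its smooth structure is standard, and the only mild subtlety is to make sure the lift $\mu$ is taken on the simply connected product $\widetilde{G}\times\widetilde{G}$, not on $\widetilde{G}$ alone, so that the lifting criterion applies.
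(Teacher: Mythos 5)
The paper itself offers no proof of Theorem~\ref{liecovering}: immediately after the statement it refers the reader to Boothby or Duistermaat--Kolk and then takes the result for granted, so there is no in-paper argument to compare against. Your proposal is the standard proof and it is correct: take the universal smooth cover of the manifold $G$, lift the group structure by applying the unique-lifting property on the simply connected spaces $\widetilde{G}\times\widetilde{G}$ and $\widetilde{G}^{3}$, verify each group axiom as an equality of two lifts that agree at $\tilde e$ and project to the same map on $G$, and prove uniqueness by lifting each covering homomorphism through the other and recognizing the compositions as identity lifts.

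Two small remarks. Since you lift $(x,y)\mapsto xy^{-1}$ rather than multiplication directly, you still have to extract multiplication and inversion from $\mu$ (say $\iota(\tilde y)=\mu(\tilde e,\tilde y)$ and $m(\tilde x,\tilde y)=\mu(\tilde x,\iota(\tilde y))$) and then verify the identity and inverse laws by further unique-lifting arguments; lifting $m\circ(\pi\times\pi)$ and the inversion $y\mapsto y^{-1}$ separately is marginally cleaner and avoids this layering, though your route works. Also, your parenthetical appeal to ``Lie's Third Theorem together with Corollary~\ref{subalgebrainclusion}'' for uniqueness cites the wrong auxiliary fact: what identifies the two Lie algebras is Proposition~\ref{coveringisomorphism}, that a covering homomorphism between connected Lie groups has $\dd\pi_e$ an isomorphism, not the statement about Lie subalgebras of Lie subgroups. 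With that correction the alternative route via Corollary~\ref{cor-gruposisomorfosSeAlgebrasisomorfas} does go through, though one must still check that the resulting isomorphism can be chosen to commute with the two projections to $G$, which your direct lifting argument gives for free.
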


A proof of this theorem can be found in Boothby~\cite{boothby} or Duistermaat and Kolk~\cite{duistermaat}. An example of covering map that is also a Lie group homomorphism is the usual covering $\pi:\R^n\rightarrow T^n$ of the $n$--torus by Euclidean space. Furthermore, in Chapter~\ref{chap3}, others results on covering maps among Lie groups will be given, such as the classic example of $\SU(2)$ covering $\SO(3)$, see Exercise~\ref{su2so3}.

We assume Theorem~\ref{liecovering} in order to continue towards a more precise description of the relation between Lie groups and Lie algebras.

\begin{proposition}\label{coveringisomorphism}
Let $G_1$ and $G_2$ be connected Lie groups and $\pi:G_1\rightarrow G_2$ be a Lie group homomorphism. Then $\pi$ is a covering map if, and only if, $\dd\pi_e$ is an isomorphism.
\end{proposition}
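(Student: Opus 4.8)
The plan is to prove both implications separately, using the covering theory developed so far together with the Frobenius-type argument behind Theorem~\ref{integralsubgroup}.

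First I would dispose of the easy direction. Suppose $\pi:G_1\rightarrow G_2$ is a covering map. Then by definition $\pi$ is a local diffeomorphism at every point of $G_1$; in particular its differential at $e\in G_1$ is a linear isomorphism $\dd\pi_e:\mathfrak g_1\rightarrow\mathfrak g_2$. (One could also note that $\dd\pi_g$ is an isomorphism for every $g$, but we only need it at the identity.) Since $\dd\pi_e$ is in addition a Lie algebra homomorphism by Proposition~\ref{dphi}, it is a Lie algebra isomorphism, though only the linear isomorphism statement is asserted.

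The substantive direction is the converse: assume $\dd\pi_e:\mathfrak g_1\rightarrow\mathfrak g_2$ is an isomorphism and deduce that $\pi$ is a covering map. The key observation is that, because $\pi$ is a homomorphism, it is $\pi$-equivariant in the sense $\pi\circ L_g=L_{\pi(g)}\circ\pi$, so $\dd\pi_g=\dd(L_{\pi(g)})_e\circ\dd\pi_e\circ\dd(L_g)^{-1}_e$ is an isomorphism at \emph{every} $g\in G_1$; hence $\pi$ is a local diffeomorphism everywhere, and in particular an open map. Next I would show $\pi$ is surjective: its image $\pi(G_1)$ is an open subgroup of the connected group $G_2$ (open subgroups are also closed, being complements of unions of their cosets), hence equals $G_2$. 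The remaining point is the even covering property. Here I would argue that $\ker\pi$ is a discrete normal subgroup: it is a closed subgroup, and since $\pi$ is a local diffeomorphism near $e$ there is a neighborhood $U$ of $e$ in $G_1$ with $U\cap\ker\pi=\{e\}$, so $\ker\pi$ has no accumulation points. One then checks that for a suitably small neighborhood $V$ of $e$ on which $\pi|_V$ is a diffeomorphism onto an open set $W\subset G_2$, and with $V$ chosen so that $V^{-1}V\cap\ker\pi=\{e\}$, the preimage $\pi^{-1}(gW)=\bigsqcup_{k\in\ker\pi}kV'$ (where $V'$ is $V$ translated to a chosen preimage of $g$) is a disjoint union of open sets each mapped diffeomorphically onto $gW$; the disjointness is exactly the condition $V^{-1}V\cap\ker\pi=\{e\}$, and translating by $L_g$ reduces the general base point to the case of a neighborhood of $e$.

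I expect the main obstacle to be the bookkeeping in the last step: verifying that the sheets $kV'$ over a neighborhood of an arbitrary point are genuinely \emph{disjoint} and that they exhaust the full preimage. The disjointness hinges on choosing $V$ small enough that $V^{-1}V$ meets $\ker\pi$ only in $e$ (possible by discreteness), and the exhaustion uses that any preimage of a point of $W$ differs from the one in $V'$ by an element of $\ker\pi$. An alternative, perhaps cleaner, route is to invoke Theorem~\ref{liecovering}: let $q:\widetilde{G_1}\rightarrow G_1$ be the universal cover and consider $\pi\circ q:\widetilde{G_1}\rightarrow G_2$; its differential at the identity is still an isomorphism, so by Lie's Third Theorem (uniqueness, via Corollary~\ref{cor-gruposisomorfosSeAlgebrasisomorfas}) the map $\pi\circ q$ factors as the universal covering of $G_2$, whence $\pi$ itself, sitting between two covers of $G_2$, is a covering. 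I would present the direct kernel argument as the main proof and mention this factorization as a remark, since the direct argument is self-contained given only what has been assumed.
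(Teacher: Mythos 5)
Your argument is correct and follows essentially the same strategy as the paper's proof: $\dd\pi_e$ being an isomorphism together with the homomorphism identity $\pi\circ L_g=L_{\pi(g)}\circ\pi$ makes $\pi$ a local diffeomorphism everywhere, surjectivity then follows from connectedness of $G_2$, the kernel is a discrete normal subgroup, and the sheets over a small neighborhood are the kernel-translates of a fundamental patch. Your cleaner phrasing of surjectivity (an open subgroup of a connected group is everything) and your explicit shrinking of $V$ to ensure $V^{-1}V\cap\ker\pi=\{e\}$ (equivalently $VV^{-1}\cap\ker\pi=\{e\}$; take $V$ symmetric) merely spell out details the paper delegates to the reader via Proposition~\ref{vizger} and the unproved assertion that $\pi^{-1}(e_2)$ is discrete, while the alternative factorization through the universal cover is a nice supplementary remark that, as you say, leans more heavily on Theorem~\ref{liecovering} and Corollary~\ref{cor-gruposisomorfosSeAlgebrasisomorfas}.
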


\begin{proof}
Suppose that $\dd\pi_{e_1}:T_{e_1}G_1\rightarrow T_{e_2}G_2$ is an isomorphism, where $e_i\in G_i$ is the identity element of $G_i$. We claim that $\pi$ is surjective.

Indeed, since $\dd\pi_{e_1}$ is an isomorphism, from the Inverse Function Theorem, there exist open neighborhoods $U$ of the identity $e_1\in G_1$ and $V$ of the identity $e_2\in G_2$, such that $\pi(U)=V$ and $\pi|_U$ is a diffeomorphism. Let $h\in G_2$. From Proposition~\ref{vizger}, there exist $h_i\in V$ such that $h=h_1^{\pm 1}\cdots h_n^{\pm 1}$. Since $\pi|_U$ is a diffeomorphism, for each $1\leq i\leq n$, there exists a unique $g_i\in U$ such that $\pi(g_i^{\pm 1})=h_i^{\pm 1}$. Hence \begin{eqnarray*}
\pi(g_1^{\pm 1}\cdots g_n^{\pm 1}) &=& \pi(g_1^{\pm 1})\cdots\pi(g_n^{\pm 1})\\
&=& h_1^{\pm 1}\cdots h_n^{\pm 1}\\
&=& h.
\end{eqnarray*}

Therefore $\pi$ is a surjective Lie group homomorphism. Hence given $q\in G_2$, there exists $p\in G_1$ with $\pi(p)=q$.

Let $\{g_\alpha\}=\pi^{-1}(e_2)$. Using the fact that $\pi$ is a Lie group homomorphism and $\dd\pi_{e_1}$ is an isomorphism, one can prove that $\{g_\alpha\}$ is discrete. Therefore $\pi$ is a covering map, since
\begin{itemize}
\item[(i)] $\pi^{-1}(qV)=\bigcup_\alpha g_\alpha p\cdot U$;
\item[(ii)] $(g_\alpha pU)\cap (pU)=\emptyset$, if $g_{\alpha}\neq e_{1}$;
\item[(iii)] $\pi\big|_{g_\alpha pU}:g_\alpha pU\rightarrow qV$ is a diffeomorphism.
\end{itemize}
Conversely, if $\pi$ is a covering map, it is locally a diffeomorphism, hence $\dd\pi_{e_1}$ is an isomorphism.
\end{proof}

Let $G_1$ and $G_2$ be Lie groups and $\theta:\mathfrak{g}_1\rightarrow\mathfrak{g}_2$ be a Lie algebra homomorphism. We will prove that if $G_1$ is connected and simply connected then $\theta$ induces a Lie group homomorphism. 
We begin by proving uniqueness in the following lemma.

\begin{lemma}\label{connectedunique}
Let $G_1$ and $G_2$ be Lie groups, with identities $e_1$ and $e_2$ respectively, and $\theta:\mathfrak{g}_1\rightarrow\mathfrak{g}_2$ be a fixed Lie algebra homomorphism. If $G_1$ is connected, and $\varphi,\psi:G_1\rightarrow G_2$ are Lie group homomorphisms with $\dd\varphi_{e_1}=\dd\psi_{e_1}=\theta$, then $\varphi=\psi$.
\end{lemma}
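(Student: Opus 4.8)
The plan is to exploit connectedness of $G_1$ together with the fact that a Lie group homomorphism is determined near the identity by its differential, and then to propagate this local agreement to all of $G_1$ using Proposition~\ref{vizger}. First I would observe that the statement reduces to showing $\varphi$ and $\psi$ agree on a neighborhood of $e_1$, together with the multiplicative structure. The natural device is the exponential map: one expects a relation of the form $\varphi(\exp X) = \exp(\dd\varphi_{e_1} X)$ for $X \in \mathfrak{g}_1$, so that $\dd\varphi_{e_1} = \dd\psi_{e_1} = \theta$ forces $\varphi$ and $\psi$ to coincide on the image of $\exp$. However, since the exponential map has not yet been introduced in the excerpt at this point, I would instead argue directly with one-parameter subgroups or, more cleanly, with left-invariant vector fields and their integral curves.

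Concretely, here is the route I would take. Fix $X \in \mathfrak{g}_1$ and let $\widetilde X \in \mathfrak{g}_2$ be the left-invariant vector field that is $\varphi$-related to $X$ (Lemma~\ref{uniquerelated}); since $\dd\varphi_{e_1} X_{e_1} = \theta(X_{e_1}) = \dd\psi_{e_1} X_{e_1}$, the same $\widetilde X$ is also $\psi$-related to $X$. Let $\gamma$ be the integral curve of $X$ through $e_1$. Because $\widetilde X$ is $\varphi$-related to $X$, the curve $\varphi \circ \gamma$ is an integral curve of $\widetilde X$ through $e_2$; likewise $\psi \circ \gamma$ is an integral curve of $\widetilde X$ through $e_2$. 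By uniqueness of integral curves, $\varphi \circ \gamma = \psi \circ \gamma$ on the common domain. Thus $\varphi$ and $\psi$ agree on the set $S$ of all points lying on integral curves of left-invariant vector fields emanating from $e_1$. One then checks that $S$ contains an open neighborhood $U$ of $e_1$: this follows because the map $\mathfrak{g}_1 \ni X \mapsto \gamma_X(1)$ (when defined) is a local diffeomorphism near $0$, being the identity on $T_{e_1}G_1$ after the obvious identification — this is essentially the statement that $\exp$ is a local diffeomorphism at the identity.

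Having shown $\varphi = \psi$ on an open neighborhood $U$ of $e_1$, I would invoke Proposition~\ref{vizger}: since $G_1$ is connected, $G_1 = \bigcup_{n \in \N} U^n$, so every $g \in G_1$ can be written as a finite product $g = g_1^{\pm 1} \cdots g_n^{\pm 1}$ with $g_i \in U$. Using that $\varphi$ and $\psi$ are group homomorphisms and agree on $U$ (hence also on $U^{-1}$, since $\varphi(g^{-1}) = \varphi(g)^{-1} = \psi(g)^{-1} = \psi(g^{-1})$ for $g \in U$), one gets $\varphi(g) = \varphi(g_1)^{\pm 1}\cdots\varphi(g_n)^{\pm 1} = \psi(g_1)^{\pm 1}\cdots\psi(g_n)^{\pm 1} = \psi(g)$, so $\varphi = \psi$ on all of $G_1$.

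The main obstacle is the first stage: establishing that $\varphi$ and $\psi$ coincide on a full neighborhood of $e_1$, which is where the real content lies. This requires either the exponential map (together with the naturality relation $\varphi \circ \exp = \exp \circ\, \dd\varphi_{e_1}$) or, as sketched above, a careful argument with integral curves of left-invariant vector fields plus the fact that the "time-one flow" map is a local diffeomorphism at $e_1$. If the exponential map is developed later in the text, the cleanest write-up would simply cite it; otherwise the integral-curve argument is self-contained given Lemma~\ref{uniquerelated} and the Frobenius/ODE machinery already in play. Everything after that — the passage from a neighborhood to the whole group — is a routine application of Proposition~\ref{vizger} and presents no difficulty.
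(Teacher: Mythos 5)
Your argument is correct, but it takes a genuinely different route from the paper's proof. You establish local agreement by observing that, by Lemma~\ref{uniquerelated}, the same left--invariant field $\widetilde X$ on $G_2$ is simultaneously $\varphi$--related and $\psi$--related to a given $X \in \mathfrak{g}_1$ (since $\dd\varphi_{e_1} = \dd\psi_{e_1} = \theta$), so $\varphi$ and $\psi$ carry the integral curve of $X$ through $e_1$ to the same integral curve of $\widetilde X$ through $e_2$; you then use the local--diffeomorphism property of $X \mapsto \gamma_X(1)$ to obtain an open set of agreement, and finally Proposition~\ref{vizger} to propagate it to all of $G_1$. The paper instead works inside $G_1 \times G_2$: it forms the graph Lie subalgebra $\mathfrak{h} = \{(X,\theta(X)) : X \in \mathfrak{g}_1\}$, applies Theorem~\ref{integralsubgroup} to obtain the unique connected Lie subgroup $H$ with Lie algebra $\mathfrak{h}$, and notes that the graph of any $\varphi$ with $\dd\varphi_{e_1} = \theta$ is an embedded connected Lie subgroup (Proposition~\ref{Lsubgroup}) with that same Lie algebra, hence equals $H$ by uniqueness; since a map is determined by its graph, $\varphi = \psi$. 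The paper's choice is essentially organizational: the graph argument uses only Frobenius--based material (Theorem~\ref{integralsubgroup}) already available at this point, whereas your route requires that $X \mapsto \gamma_X(1)$ be a local diffeomorphism at $0$ (i.e.\ $\dd(\exp)_0 = \id$), a fact the paper only develops in the following section on the exponential map --- a dependence you correctly flag yourself. A further payoff of the paper's approach is that the subgroup $H$ it constructs is reused verbatim in the existence proof (Theorem~\ref{homo}), so uniqueness and existence share the same scaffolding. Your version is the classical textbook argument and is perfectly valid once $\exp$ has been set up.
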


\begin{proof}
It is easy to see that the direct sum of Lie algebras $\mathfrak{g}_1\oplus \mathfrak{g}_2$, respectively product of Lie groups $G_1\times G_2$, has a natural Lie algebra, respectively Lie group, structure.

Consider the Lie subalgebra of $\mathfrak{g}_1\oplus\mathfrak{g}_2$ given by $$\mathfrak{h}=\{(X,\theta(X)):X\in\mathfrak{g}_1\},$$ i.e., the graph of $\theta:\mathfrak{g}_1\rightarrow\mathfrak{g}_2$. It follows from Theorem~\ref{integralsubgroup} that there exists a unique connected Lie subgroup $H\subset G_1\times G_2$ with Lie algebra $\mathfrak{h}$.

Suppose now that $\varphi:G_1\rightarrow G_2$ is a Lie group homomorphism with $\dd\varphi_{e_1}=\theta$. Then $$\sigma:G_1\owns g\longmapsto (g,\varphi(g))\in G_1\times G_2$$ is a Lie group homomorphism, and $$\dd\sigma_{e_1}:\mathfrak{g}_1\owns X\longmapsto (X,\theta(X))\in\mathfrak{g}_1\oplus\mathfrak{g}_2$$ is a Lie algebra homomorphism. Note that $\sigma(G_1)$ is the graph of $\varphi$, hence embedded in $G_1\times G_2$. From Proposition~\ref{Lsubgroup}, $\sigma(G_1)$ is a Lie subgroup of $G_1\times G_2$, with Lie algebra $\mathfrak{h}=\dd\sigma_{e_1}(\mathfrak{g}_1)$. Therefore, from Theorem~\ref{integralsubgroup}, $\sigma(G_1)=H$. In other words, $H$ is the graph of $\varphi$. If $\psi:G_1\rightarrow G_2$ is another Lie group homomorphism with $\dd\psi_{e_1}=\theta$, following the same construction above, the graph of $\psi$ and $\varphi$ would be both equal to $H$, hence $\varphi=\psi$.
\end{proof}

\begin{theorem}\label{homo}
Let $G_1$ and $G_2$ be Lie groups and $\theta:\mathfrak{g}_1\rightarrow\mathfrak{g}_2$ be a Lie algebra homomorphism. There exist an open neighborhood $V$ of $e_1$ and a smooth map $\varphi:V\rightarrow G_2$ that is a local homomorphism\footnote{This means that $\varphi(ab)=\varphi(a)\varphi(b)$, for all $a,b\in V$ such that $ab\in V$.}, with $\dd\varphi_{e_1}=\theta$. In addition, if $G_1$ is connected and simply connected, there exists a unique Lie group homomorphism $\varphi:G_1\rightarrow G_2$ with $\dd\varphi_{e_1}=\theta$.
\end{theorem}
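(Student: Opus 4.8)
The plan is to build the local homomorphism $\varphi$ from the integral subgroup associated to the graph of $\theta$, exactly as in the proof of Lemma~\ref{connectedunique}, and then to promote it to a global homomorphism using simple connectedness. First I would consider the Lie subalgebra $\mathfrak h=\{(X,\theta(X)):X\in\mathfrak g_1\}\subset\mathfrak g_1\oplus\mathfrak g_2$ (it is a subalgebra precisely because $\theta$ preserves brackets), and invoke Theorem~\ref{integralsubgroup} to produce the unique connected Lie subgroup $H\subset G_1\times G_2$ with Lie algebra $\mathfrak h$. Let $p_i\colon G_1\times G_2\to G_i$ denote the projections, restricted to $H$. Since $\dd(p_1|_H)_{(e_1,e_2)}\colon\mathfrak h\to\mathfrak g_1$ is the isomorphism $(X,\theta(X))\mapsto X$, the Inverse Function Theorem gives open neighborhoods $W\subset H$ of the identity and $V\subset G_1$ of $e_1$ such that $p_1|_W\colon W\to V$ is a diffeomorphism. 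Define $\varphi:=p_2\circ(p_1|_W)^{-1}\colon V\to G_2$. Then $\dd\varphi_{e_1}=\dd(p_2)\circ\dd(p_1|_H)^{-1}_{(e_1,e_2)}$ sends $X\mapsto(X,\theta(X))\mapsto\theta(X)$, so $\dd\varphi_{e_1}=\theta$. That $\varphi$ is a local homomorphism follows because $H$ is a group: for $a,b\in V$ with $ab\in V$, the points $(a,\varphi(a)),(b,\varphi(b))$ lie in $W\subset H$, so their product $(ab,\varphi(a)\varphi(b))$ lies in $H$ and projects to $ab\in V$, whence (shrinking $V$ if necessary so that $W$ contains the relevant product, or arguing that the two elements of $H$ over $ab$ near the identity coincide) $\varphi(ab)=\varphi(a)\varphi(b)$.

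Next, assuming $G_1$ connected and simply connected, I would upgrade this local homomorphism to a global one. The key is to show that $p_1|_H\colon H\to G_1$ is a covering map: by Proposition~\ref{coveringisomorphism} it suffices that $\dd(p_1|_H)_{(e_1,e_2)}$ be an isomorphism, which we verified above. Since $G_1$ is simply connected and $H$ is connected, a connected covering of $G_1$ must be a diffeomorphism; hence $p_1|_H\colon H\to G_1$ is an isomorphism of Lie groups. I would then set $\varphi:=p_2\circ(p_1|_H)^{-1}\colon G_1\to G_2$, which is a composition of Lie group homomorphisms, hence a Lie group homomorphism, and by the computation above $\dd\varphi_{e_1}=\theta$. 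Uniqueness is immediate from Lemma~\ref{connectedunique}, since $G_1$ is connected.

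The main obstacle, I expect, is the use of simple connectedness to conclude that a connected covering of $G_1$ is trivial — more precisely, justifying that $p_1|_H$ is genuinely a covering map (one must know the total space $H$ is connected and that $\dd(p_1|_H)$ being an isomorphism forces the covering property, which is exactly the content of Proposition~\ref{coveringisomorphism}) and that a connected covering space of a simply connected manifold is a one-sheeted cover, i.e.\ a diffeomorphism. A secondary subtlety is the bookkeeping in the local-homomorphism step: one should be slightly careful that the product $(ab,\varphi(a)\varphi(b))$ is close enough to the identity to lie in the chart $W$ over which $p_1$ is a diffeomorphism, which is why the statement only claims the homomorphism property for $a,b\in V$ with $ab\in V$ and why one may need to shrink $V$ at the outset. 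Everything else — that $\mathfrak h$ is a subalgebra, that the projections are Lie group homomorphisms, that $\dd\varphi_{e_1}=\theta$ — is routine.
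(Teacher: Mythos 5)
Your proposal follows the paper's proof essentially step for step: the graph $\mathfrak h$ of $\theta$ as a Lie subalgebra, the integral subgroup $H$ from Theorem~\ref{integralsubgroup}, the Inverse Function Theorem to invert $\pi_1\circ i$ locally, and Proposition~\ref{coveringisomorphism} plus simple connectedness to invert it globally, with uniqueness from Lemma~\ref{connectedunique}. The only place you add anything is the careful remark about ensuring $(ab,\varphi(a)\varphi(b))$ lands back in the chart $W$ (by shrinking $V$ or enlarging the injectivity neighborhood), a bookkeeping point the paper leaves implicit but which you handle correctly.
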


\begin{proof}
Using the same notation from the proof of Lemma~\ref{connectedunique}, let $\mathfrak{h}=\{(X,\theta(X)):X\in\mathfrak{g}_1\}$ be a Lie algebra given by the graph of $\theta$. Then, according to Theorem~\ref{integralsubgroup}, there exists a unique connected Lie subgroup $H\subset G_1\times G_2$ with Lie algebra $\mathfrak{h}$, whose identity element will be denoted $\widetilde{e}\in H$. Consider the inclusion map $i:H\hookrightarrow G_1\times G_2$ and the natural projections $\pi_1:G_1\times G_2\rightarrow G_1$, $\pi_2:G_1\times G_2\rightarrow G_2$. The map $\pi_1\circ i:H\rightarrow G_1$ is a Lie group homomorphism, such that $\dd(\pi_1\circ i)_{\widetilde{e}}(X,\theta(X))=X$, for all $X\in T_{e_1}G_1$. It follows from the Inverse Function Theorem that there exist open neighborhoods $U$ of $\widetilde{e}$ in $H$ and $V$ of $e_1$ in $G_1$, such that $(\pi_1\circ i)|_U:U\rightarrow V$ is a diffeomorphism.

Define $\varphi=\pi_2\circ(\pi_1\circ i)^{-1}:V\rightarrow G_2$. Then $\varphi$ is a local homomorphism and $\dd\varphi_{e_1}=\theta$. In fact, for each $X\in T_{e_1}G_1$, \begin{eqnarray*}
\dd\varphi_{e_1}(X) &=&\dd(\pi_2\circ (\pi_1\circ i)^{-1})_{e_1}(X)\\
&=& \dd(\pi_2)_{\widetilde{e}} \ \dd[(\pi_1\circ i)^{-1}]_{e_1} X \\
&=& \dd(\pi_2)_{\widetilde{e}}(X,\theta(X)) \\
&=& \theta(X).
\end{eqnarray*}

Furthermore, $\pi_1\circ i$ is a Lie group homomorphism and $\dd(\pi_1\circ i)_{\widetilde{e}}$ is an isomorphism. From Proposition~\ref{coveringisomorphism}, it follows that $\pi_1\circ i:H\rightarrow G_1$ is a covering map. Supposing $G_1$ simply connected, since a covering map onto a simply connected space is a diffeomorphism, it follows that $\pi_1\circ i$ is a diffeomorphism. Hence it makes sense to invert $\pi_1\circ i$ globally, and we obtain a global homomorphism $\varphi=\pi_2\circ(\pi_1\circ i)^{-1}:G_1\rightarrow G_2$, with $\dd\varphi_{e_1}=\theta$. The uniqueness of $\varphi$ follows from Lemma~\ref{connectedunique}, under the assumption that $G_1$ is simply connected.
\end{proof}

\begin{corollary}\label{cor-gruposisomorfosSeAlgebrasisomorfas}
If $G_1$ and $G_2$ are connected and simply connected and $\theta:\mathfrak{g}_1\rightarrow\mathfrak{g}_2$ is an isomorphism, there exists a unique Lie group isomorphism $\varphi:G_1\rightarrow G_2$, with $\dd\varphi_{e_1}=\theta$. In other words, if $G_1$ and $G_2$ are as above, then $G_1$ and $G_2$ are isomorphic if, and only if, $\mathfrak{g}_1$ and $\mathfrak{g}_2$ are isomorphic.
\end{corollary}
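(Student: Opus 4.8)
The plan is to deduce this corollary directly from Theorem~\ref{homo} together with Lemma~\ref{connectedunique}, by applying the existence part of Theorem~\ref{homo} twice (once to $\theta$ and once to $\theta^{-1}$) and then checking that the two resulting homomorphisms are mutually inverse. Concretely, first I would apply Theorem~\ref{homo} to the Lie algebra homomorphism $\theta:\mathfrak{g}_1\to\mathfrak{g}_2$, using that $G_1$ is connected and simply connected, to obtain a unique Lie group homomorphism $\varphi:G_1\to G_2$ with $\dd\varphi_{e_1}=\theta$. Symmetrically, since $\theta$ is an isomorphism, $\theta^{-1}:\mathfrak{g}_2\to\mathfrak{g}_1$ is also a Lie algebra homomorphism, and because $G_2$ is connected and simply connected, Theorem~\ref{homo} yields a unique Lie group homomorphism $\psi:G_2\to G_1$ with $\dd\psi_{e_2}=\theta^{-1}$.

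Next I would show $\psi\circ\varphi=\id_{G_1}$ and $\varphi\circ\psi=\id_{G_2}$. For the first, note that $\psi\circ\varphi:G_1\to G_1$ is a Lie group homomorphism whose differential at $e_1$ is $\dd\psi_{e_2}\circ\dd\varphi_{e_1}=\theta^{-1}\circ\theta=\id_{\mathfrak{g}_1}$, by the chain rule and Proposition~\ref{dphi}. But $\id_{G_1}:G_1\to G_1$ is also a Lie group homomorphism with differential $\id_{\mathfrak{g}_1}$ at $e_1$. Since $G_1$ is connected, the uniqueness statement of Lemma~\ref{connectedunique} (applied with the Lie algebra homomorphism $\id_{\mathfrak{g}_1}$) forces $\psi\circ\varphi=\id_{G_1}$. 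The identity $\varphi\circ\psi=\id_{G_2}$ follows in exactly the same way, using that $G_2$ is connected. Hence $\varphi$ is a bijective Lie group homomorphism with smooth inverse $\psi$, i.e., a Lie group isomorphism, and $\dd\varphi_{e_1}=\theta$. Uniqueness of such a $\varphi$ is immediate from Lemma~\ref{connectedunique}, since any Lie group isomorphism is in particular a Lie group homomorphism.

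Finally, for the ``in other words'' reformulation: if $G_1\cong G_2$ as Lie groups via some isomorphism $f$, then $\dd f_{e_1}:\mathfrak{g}_1\to\mathfrak{g}_2$ is a Lie algebra homomorphism by Proposition~\ref{dphi}, and it is a linear isomorphism because $f$ is a diffeomorphism, so $\mathfrak{g}_1\cong\mathfrak{g}_2$; the converse is precisely the first part of the corollary just proved. I do not anticipate any serious obstacle here — the content is entirely contained in the previously established results, and the only point requiring a little care is the bookkeeping with differentials and the correct invocation of Lemma~\ref{connectedunique} (note that Lemma~\ref{connectedunique} is stated for a \emph{fixed} Lie algebra homomorphism $\theta$, so one must be careful to apply it with $\theta$ replaced by $\id_{\mathfrak{g}_1}$ and by $\id_{\mathfrak{g}_2}$ respectively when comparing the composites to the identity maps).
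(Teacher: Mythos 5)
Your proof is correct, but it takes a genuinely different route from the paper. Where you invoke Theorem~\ref{homo} twice — once for $\theta$ and once for $\theta^{-1}$ — to build $\varphi$ and a candidate inverse $\psi$, and then use the uniqueness statement of Lemma~\ref{connectedunique} to identify $\psi\circ\varphi$ and $\varphi\circ\psi$ with the respective identity maps, the paper applies Theorem~\ref{homo} only once to get $\varphi$, then observes via Proposition~\ref{coveringisomorphism} that $\varphi$ must be a covering map (because $\dd\varphi_{e_1}=\theta$ is an isomorphism), and concludes it is a diffeomorphism because $G_2$ is simply connected. Both arguments use simple connectedness of both groups, just in different places: you use it on $G_1$ and $G_2$ to run Theorem~\ref{homo} in both directions; the paper uses it on $G_1$ for existence/uniqueness and on $G_2$ to promote the covering map to a diffeomorphism. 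Your argument has the advantage of being purely ``categorical,'' exploiting only the universal property encoded in Theorem~\ref{homo} and Lemma~\ref{connectedunique}, and it produces the smooth inverse explicitly; the paper's route is slightly shorter and showcases the covering-map machinery that it has just developed in Proposition~\ref{coveringisomorphism}. Your remark about the careful instantiation of Lemma~\ref{connectedunique} with $\id_{\mathfrak{g}_1}$ and $\id_{\mathfrak{g}_2}$ is exactly the right point of care.
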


\begin{proof}
From Theorem~\ref{homo}, there exists a unique Lie group homomorphism $\varphi:G_1\rightarrow G_2$ with $\dd\varphi_{e_1}=\theta$. From Proposition~\ref{coveringisomorphism}, since $\dd\varphi_{e_1}=\theta$ is an isomorphism, $\varphi$ is a covering map. Since $G_2$ is simply connected, $\varphi$ is also a diffeomorphism. Hence $\varphi$ is a Lie group homomorphism and a diffeomorphism, therefore an isomorphism.
\end{proof}

\section{Exponential map and adjoint representation}

In this section, we introduce the concept of Lie exponential map and adjoint representation, discussing a few results.

Let $G$ be a Lie group and $\mathfrak{g}$ its Lie algebra. We recall that a Lie group homomorphism $\varphi:\R\rightarrow G$ is called a {\it $1$--parameter subgroup}\index{1--parameter subgroup} of $G$. Let $X\in\mathfrak{g}$ and consider the Lie algebra homomorphism $$\theta:\R\owns t\longmapsto tX\in\R\cdot X.$$ From Theorems~\ref{integralsubgroup} and~\ref{homo}, there exists a unique $1$--parameter subgroup $\lambda_X:\R\rightarrow G$, such that $\lambda_X'(0)=X$.

\begin{remark}\label{remark-expCamposInvariantesEsq}
Note that $\lambda_X$ is an integral curve of the left--invariant vector field $X$ passing through $e$. In fact, \begin{eqnarray*}
\lambda'_X(t) &=& \frac{\dd}{\dd s} \lambda_X (t+s)\Big|_{s=0} \\
&=& \dd L_{\lambda_X(t)}\lambda'_X(0) \\
&=& \dd L_{\lambda_X(t)}X  \\
&=& X(\lambda_X(t)).
\end{eqnarray*}
\end{remark}

\begin{definition}\label{exp}
The {\it (Lie) exponential map}\index{Exponential map}\index{$\exp$} of $G$ is given by $$\exp:\mathfrak{g}\owns X\longmapsto \lambda_X(1)\in G,$$ where $\lambda_X$ is the unique $1$--parameter subgroup of $G$ such that $\lambda_X'(0)=X$.
\end{definition}

\begin{proposition}
The exponential map satisfies the following properties, for all $X\in\mathfrak g$ and $t\in\R$.
\begin{itemize}
\item[(i)] $\exp(tX)=\lambda_X(t)$;
\item[(ii)] $\exp(-tX)=\exp(tX)^{-1}$;
\item[(iii)] $\exp(t_1X+t_2X)=\exp(t_1X)\exp(t_2X)$;
\item[(iv)] $\exp:T_eG\rightarrow G$ is smooth and $\dd(\exp)_0=\id$, hence $\exp$ is a diffeomorphism of an open neighborhood of the origin of $T_{e}G$ onto an open neighborhood of $e\in G$.
\end{itemize}
\end{proposition}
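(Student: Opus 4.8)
The plan is to deduce (i)--(iii) formally from the characterisation of $\lambda_X$ as the unique $1$--parameter subgroup with $\lambda_X'(0)=X$, and to obtain (iv) from smooth dependence of ODE solutions on their initial conditions together with the Inverse Function Theorem. The key preliminary observation is a rescaling identity: for every $s\in\R$ one has $\lambda_X(st)=\lambda_{sX}(t)$ for all $t$. Indeed, $t\mapsto\lambda_X(st)$ is the composition of the Lie group homomorphism $t\mapsto st$ of $\R$ with $\lambda_X$, hence a $1$--parameter subgroup of $G$, and differentiating it at $t=0$ gives $sX$; by the uniqueness statement underlying Theorems~\ref{integralsubgroup} and~\ref{homo} it must coincide with $\lambda_{sX}$. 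Setting $t=1$ yields $\exp(sX)=\lambda_{sX}(1)=\lambda_X(s)$, which is precisely (i).

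Items (ii) and (iii) then follow at once from (i) and the fact that $\lambda_X\colon\R\to G$ is a group homomorphism:
$$\exp(-tX)=\lambda_X(-t)=\lambda_X(t)^{-1}=\exp(tX)^{-1},$$
and
$$\exp(t_1X+t_2X)=\lambda_X(t_1+t_2)=\lambda_X(t_1)\,\lambda_X(t_2)=\exp(t_1X)\exp(t_2X).$$

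For (iv), the substantive point is smoothness of $\exp$. I would consider the smooth vector field $Z$ on the product manifold $G\times\mathfrak g$ given by $Z(g,X)=(\widetilde X(g),0)$, where $\widetilde X$ is the left--invariant vector field determined by $X$; fixing a basis $X_1,\dots,X_n$ of $\mathfrak g$ and writing $X=\sum x_iX_i$ shows $Z$ is smooth in $(g,X)$, being linear in the $x_i$ with the smooth $\widetilde X_i$ as coefficients. By Remark~\ref{remark-expCamposInvariantesEsq}, the integral curve of $Z$ through $(e,X)$ is $t\mapsto(\lambda_X(t),X)$, and it is defined for all $t\in\R$ since $\lambda_X$ is; hence smooth dependence of flows on time and on initial conditions gives that $(t,X)\mapsto\lambda_X(t)$ is smooth on $\R\times\mathfrak g$, and in particular $X\mapsto\exp(X)=\lambda_X(1)$ is smooth. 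Next, under the canonical identification $T_0(T_eG)\cong T_eG$, for $X\in T_eG$ one computes
$$\dd(\exp)_0(X)=\frac{\dd}{\dd t}\Big|_{t=0}\exp(tX)=\frac{\dd}{\dd t}\Big|_{t=0}\lambda_X(t)=\lambda_X'(0)=X,$$
so $\dd(\exp)_0=\id$, and the Inverse Function Theorem produces an open neighbourhood of $0\in T_eG$ mapped diffeomorphically by $\exp$ onto an open neighbourhood of $e\in G$.

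The only non-formal step is the smoothness assertion in (iv): one must upgrade ``$\lambda_X$ is smooth for each fixed $X$'' to ``$(t,X)\mapsto\lambda_X(t)$ is jointly smooth''. Packaging the whole family $\{\widetilde X\}_{X\in\mathfrak g}$ into a single vector field $Z$ on $G\times\mathfrak g$ (equivalently, appealing to the theorem on smooth dependence of ODE solutions on parameters) is exactly the device that makes this routine; everything else is bookkeeping with the homomorphism property of $\lambda_X$.
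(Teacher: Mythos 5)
Your proof is correct and takes essentially the same route as the paper: the rescaling identity $\lambda_X(st)=\lambda_{sX}(t)$ via uniqueness of $1$--parameter subgroups gives (i), the homomorphism property of $\lambda_X$ gives (ii) and (iii), and smoothness of $\exp$ is obtained by packaging the left--invariant fields into a single vector field on $G\times\mathfrak g\cong TG$ and invoking smooth dependence of flows on initial conditions, followed by the Inverse Function Theorem for (iv). Your spelled-out verification of smoothness of $Z$ via a basis and the explicit computation of $\dd(\exp)_0$ are welcome small elaborations of steps the paper leaves implicit, but the argument is the same.
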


\begin{proof}
We claim that $\lambda_X(t)=\lambda_{tX}(1)$. Consider the $1$--parameter subgroup $\lambda(s)=\lambda_X(st)$. Deriving at $s=0$, it follows that
\begin{eqnarray*}
\lambda'(0) &=& \frac{\dd}{\dd s}\lambda_X(st)\Big|_{s=0} \\
&=& t\lambda'_X(0) \\
&=& tX.
\end{eqnarray*}

Hence, from uniqueness of the $1$--parameter subgroup in Definition~\ref{exp}, $\lambda_X(st)=\lambda_{tX}(s)$. Choosing $s=1$, we obtain the expression in (i). Items (ii) and (iii) are immediate consequences of (i), since $\lambda_X$ is a Lie group homomorphism.

In order to prove item (iv), we will construct a vector field $V$ on the tangent bundle $TG$. This tangent bundle can be identified with $G\times T_eG$, since $G$ is parallelizable. By construction, the projection of the integral curve of $V$ passing through $(e,X)$ will coincide with the curve $t\to\exp(tX)$. From Theorem~\ref{flow}, the flow of $V$ will depend smoothly on the initial conditions, hence its projection (i.e., the exponential map) will also be smooth.

Consider $G\times T_{e}G\simeq TG$. Note that for all $(g,X)\in G\times T_{e}G$, the tangent space $T_{(g,X)}(G\times T_{e}G)$ can be identified with $T_gG\oplus T_{e}G$. Define a vector field $V\in\mathfrak{X}(G\times T_{e}G)$ by $$V(g,X)= \widetilde{X}(g)\oplus 0\in T_gG\oplus T_{e}G,$$ where $\widetilde{X}(g)=\dd L_{g}X$. It is not difficult to see that $V$ is a smooth vector field. Since $t\mapsto\exp(tX)$ is the unique integral curve of $\widetilde{X}$ for which $\lambda_X(0)=e$, being $\widetilde{X}$ left--invariant, $L_g\circ\lambda_X$ is the unique integral curve of $\widetilde{X}$ that takes value $g$ at $t=0$. Hence, the integral curve of $V$ through $(g,X)$ is $t\mapsto (g\exp(tX),X)$.

In other words, the flow of $V$ is given by $\varphi_{t}^{V}(g,X)=(g\exp(tX),X)$ and, in particular, $V$ is complete. Let $\pi_1:G\times T_{e}G \rightarrow G$ be the projection onto $G$. Then $\exp(X)=\pi_1\circ\varphi_{1}^{V}(e,X)$, hence $\exp$ is given by composition of smooth maps, therefore it is smooth.

Finally, item (i) and Remark~\ref{remark-expCamposInvariantesEsq} imply that $\dd(\exp)_0=\id$. 
\end{proof}

An interesting fact about the exponential map is that, in general, it may not be surjective. The classic example of this situation is given by $\SL(2,\R)$, see Duistermaat and Kolk~\cite{duistermaat}.\index{$\mathfrak{sl}(n,\R),\mathfrak{sl}(n,\C)$}\index{$\SL(n,\R),\SL(n,\C)$}

\begin{remark}\index{$\GL(n,\R),\GL(n,\C)$}\index{$\mathfrak{gl}(n,\R),\mathfrak{gl}(n,\C)$}\index{$\mathfrak{gl}(n,\R),\mathfrak{gl}(n,\C)$}
Considering Lie groups of matrices $\GL(n,\K)$, for $\K=\C$ or $\K=\R$, one may inquire whether the Lie exponential map $\exp:\mathfrak{gl}(n,\K)\rightarrow\GL(n,\K)$ coincides with the usual exponentiation of matrices, given for each $A\in\mathfrak{gl}(n,\K)$ by \begin{equation}\label{exponentiation}
e^A=\sum_{k=0}^\infty \frac{A^k}{k!}.
\end{equation}

We now prove that indeed this equality holds.
\end{remark}

To this aim, we recall two well--known properties of the exponentiation of matrices. First, the right--hand side of the expression \eqref{exponentiation} converges uniformly for $A$ in a bounded region of $\mathfrak{gl}(n,\K)$. This can be easily verified using the Weierstrass $M$--test. In addition, given $A,B\in\GL(n,\K)$, it is true that $e^{A+B}=e^Ae^B$ if, and only if, $A$ and $B$ commute. We will see a generalization of this result in Remark~\ref{exphomo}.

Consider the map $\R\owns t\mapsto e^{tA}\in\GL(n,\K)$. Since each entry of $e^{tA}$ is a power series in $t$ with infinite radius of convergence, it follows that this map is smooth. Differentiating the power series term by term, it is easy to see that its tangent vector at the origin of $\mathfrak{gl}(n,\K)$ is $A$, and from the properties above, this map is also a Lie group homomorphism, hence a $1$--parameter subgroup of $\GL(n,\K)$.

Since $\exp(A)$ is the {\it unique} $1$--parameter subgroup of $\GL(n,\K)$ whose tangent vector at the origin is $A$, it follows that $e^A=\exp(A)$, for all $A\in\mathfrak{gl}(n,\K)$.

\begin{remark}
Similarly, it can be proved that the exponential map $\exp:\End(V)\rightarrow\Aut(V)$, where $V$ is a real or complex vector space, is given by the exponentiation of endomorphisms, defined exactly as in \eqref{exponentiation}, with the usual conventions regarded.
\end{remark}

\begin{proposition}\label{expcomm}
Let $G_1$ and $G_2$ be Lie groups and $\varphi:G_1\rightarrow G_2$ a Lie group homomorphism. Then $\varphi\circ\exp^1=\exp^2\circ\dd\varphi_e$, that is, the following diagram commutes. \begin{displaymath}
\xymatrix@+20pt{
\mathfrak{g}_1 \ar[r]^{\dd\varphi_e} \ar[d]_{\exp^1} & \mathfrak{g}_2 \ar[d]^{\exp^2} \\
G_1 \ar[r]_{\varphi} & G_2
}
\end{displaymath}
\end{proposition}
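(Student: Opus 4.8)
The plan is to use the uniqueness of $1$--parameter subgroups, which is the key tool available to us. Fix $X\in\mathfrak{g}_1$ and let $\lambda_X\colon\R\to G_1$ be the unique $1$--parameter subgroup with $\lambda_X'(0)=X$, so that $\exp^1(tX)=\lambda_X(t)$ by item (i) of the previous proposition. The idea is to show that the curve $t\mapsto \varphi(\lambda_X(t))$ is precisely the $1$--parameter subgroup of $G_2$ associated to $\dd\varphi_e(X)$, and then evaluate at $t=1$.

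First I would set $\mu := \varphi\circ\lambda_X\colon\R\to G_2$. Since $\varphi$ is a Lie group homomorphism and $\lambda_X$ is a Lie group homomorphism, $\mu$ is a composition of Lie group homomorphisms, hence a Lie group homomorphism, i.e.\ a $1$--parameter subgroup of $G_2$. Next I would compute its initial velocity by the chain rule:
\[
\mu'(0)=\dd\varphi_{e_1}\bigl(\lambda_X'(0)\bigr)=\dd\varphi_{e_1}(X).
\]
By the uniqueness of $1$--parameter subgroups (the statement preceding Definition~\ref{exp}, coming from Theorems~\ref{integralsubgroup} and~\ref{homo}), $\mu$ must coincide with $\lambda_{\dd\varphi_{e_1}(X)}$, the unique $1$--parameter subgroup of $G_2$ with velocity $\dd\varphi_{e_1}(X)$ at $0$. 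Evaluating at $t=1$ gives
\[
\varphi\bigl(\exp^1(X)\bigr)=\varphi\bigl(\lambda_X(1)\bigr)=\mu(1)=\lambda_{\dd\varphi_{e_1}(X)}(1)=\exp^2\bigl(\dd\varphi_{e_1}(X)\bigr),
\]
which is exactly the commutativity of the diagram, since $X\in\mathfrak{g}_1$ was arbitrary.

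I do not expect any serious obstacle here: the argument is a direct application of the defining universal property of $\exp$. The only point requiring minor care is to justify that $\varphi\circ\lambda_X$ is genuinely a $1$--parameter subgroup (smoothness and the homomorphism property), but both are immediate from $\varphi$ and $\lambda_X$ being Lie group homomorphisms, and the chain rule computation of $\mu'(0)$ is routine. One could alternatively phrase the same proof in terms of $\varphi$-relatedness of the left-invariant vector fields $X$ and $\dd\varphi_e(X)$ (via Lemma~\ref{uniquerelated}, noting that $\varphi$ maps integral curves to integral curves), but the $1$--parameter subgroup formulation is the cleanest.
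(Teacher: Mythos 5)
Your proof is correct and takes essentially the same approach as the paper: both arguments compare the two sides of the identity as $1$--parameter subgroups of $G_2$ with the same initial velocity $\dd\varphi_{e_1}(X)$ and invoke the uniqueness guaranteed by Theorem~\ref{homo}. The only difference is cosmetic — you spell out why $\varphi\circ\lambda_X$ is a $1$--parameter subgroup and then evaluate at $t=1$, while the paper writes both sides directly as curves $\alpha(t)$, $\beta(t)$ and concludes $\alpha=\beta$.
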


\begin{proof}
Consider the $1$--parameter subgroups of $G_2$ given by $\alpha(t)=\varphi\circ\exp^1(tX)$ and $\beta(t)=\exp^2\circ\dd\varphi_e(tX)$. Then $\alpha'(0)=\beta'(0)=\dd\varphi_e X$, hence, it follows from Theorem~\ref{homo} that $\alpha=\beta$, that is, the diagram above is commutative.
\end{proof}

\begin{remark}\label{expsubgroup}\index{$\exp$}
Using Proposition~\ref{expcomm}, it is possible to prove that if $H$ is a Lie subgroup of $G$, then the exponential map $\exp^H$ of $H$ coincides with the restriction to $H$ of the exponential map $\exp^G$ of $G$. Consider the inclusion $i:H\hookrightarrow G$, which is a Lie group homomorphism, and its differential $\dd i_e:\mathfrak{h}\hookrightarrow\mathfrak{g}$. According to Corollary~\ref{subalgebrainclusion}, this is an isomorphism between the Lie algebra $\mathfrak{h}$ and a Lie subalgebra of $\mathfrak{g}$. Then the following diagram is commutative \begin{displaymath}
\xymatrix@+20pt{
\mathfrak{h} \ar[r]^{\dd i_e} \ar[d]_{\exp^H} & \mathfrak{g} \ar[d]^{\exp^G} \\
H \ar[r]_{i} & G
}
\end{displaymath}

Hence, with the appropriate identifications, $$\exp^H(X)=i(\exp^H(X))=\exp^G(\dd i_e(X))=\exp^G(X),$$ for all $X\in\mathfrak{h}$, which proves the assertion.
\end{remark}

We proceed with three identities known as the {\it Campbell\footnote{Also known as the {\it Campbell-Baker-Hausdorff formulas}.} formulas}. A proof of the following result can be found in Spivak~\cite{spivak1}.

\begin{cform}\label{campbell}\index{Campbell formulas}
Let $G$ be a Lie group and $X,Y\in\mathfrak{g}$. Then there exists $\varepsilon>0$ such that, for all $|t|<\varepsilon$, the following hold.
\begin{itemize}
\item[(i)] $\exp(tX)\exp(tY)=\exp(t(X+Y)+\frac{t^2}{2}[X,Y]+O(t^3))$;
\item[(ii)] $\exp(tX)\exp(tY)\exp(-tX)=\exp(tY+t^2[X,Y]+O(t^3))$;
\item[(iii)] $\exp(-tX)\exp(-tY)\exp(tX)\exp(tY)=\exp(t^2[X,Y]+O(t^3))$;
\end{itemize}
\noindent
where $\frac{O(t^3)}{t^3}$ is bounded.
\end{cform}

We now pass to the second part of this section, discussing some properties of the adjoint representation, or adjoint action.

\begin{definition}
Let $G$ be any group and $V$ a vector space. A {\it linear representation}\index{Representation} of $G$ on $V$ is a group homomorphism $\varphi:G\rightarrow\Aut(V)$. Recall that $\Aut(V)$ denotes the group of all vector space isomorphisms of $V$ to itself.
\end{definition}

Consider the action of $G$ on itself by conjugations, i.e., $$a:G\times G\owns (g,h)\longmapsto ghg^{-1}=a_g(h)\in G.$$

\begin{definition}\label{defadjoint}
Let $G$ be a Lie group, and $\mathfrak{g}$ its Lie algebra. The linear representation \index{$\Ad$}$\Ad:G\rightarrow\Aut(\mathfrak{g})$ defined by $$g\longmapsto\dd (a_g)_e=(\dd L_g)_{g^{-1}} \circ (\dd R_{g^{-1}})_e$$ is called the {\it adjoint representation}\index{Adjoint!representation}\index{Adjoint!action} of $G$. We will see in the next chapters that it defines an action $\Ad:G\times\mathfrak{g}\rightarrow\mathfrak{g}$ of $G$ on its Lie algebra $\mathfrak{g}$, hence $\Ad$ will also be called the \emph{adjoint action}\index{Action!adjoint} of $G$.
\end{definition}

It follows from the definition above that \begin{equation}\label{Ad}
\Ad(g)X=\frac{\dd}{\dd t}\left(g\exp(tX)g^{-1}\right)\Big|_{t=0}.
\end{equation}

Applying Proposition~\ref{expcomm} to the automorphism $a_g$, it follows that \begin{equation}\label{tad}
\exp(t\Ad(g)X)=a_g(\exp(tX))=g\exp(tX)g^{-1},
\end{equation} and in particular, for $t=1$, \begin{equation}\label{gexpg}
g\exp(X)g^{-1}=\exp(\Ad(g)X).
\end{equation}

The differential of the adjoint representation $\Ad$ is denoted $\ad$,\index{$\ad$}
\begin{equation}\label{ad}
\begin{aligned}
\ad:\mathfrak{g}\owns X & \longmapsto \dd\Ad_eX\in\End(\mathfrak{g}) \\
\ad(X)Y &=\frac{\dd}{\dd t}\left(\Ad\left(\exp(tX)\right)Y\right)\Big|_{t=0}
\end{aligned}
\end{equation}

Once more, it follows from Proposition~\ref{expcomm} that $\Ad(\exp(tX))=\exp(t\ad(X))$, that is, the following diagram is commutative. \begin{eqnarray}\label{expad}
\xymatrix@+22pt{
\mathfrak{g} \ar[d]_\exp \ar[r]^{\ad} & \End(\mathfrak{g})\ar[d]^\exp \\
G \ar[r]_{\Ad} & \Aut(\mathfrak{g})
}
\end{eqnarray}

Hence, for $t=1$ we obtain \begin{equation}\label{adexp}
\Ad(\exp(X))=\exp(\ad(X))
\end{equation}

\begin{proposition}\label{ad[]}
If $X,Y\in\mathfrak{g}$, then $\ad(X)Y=[X,Y]$.
\end{proposition}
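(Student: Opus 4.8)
The plan is to identify the abstract bracket on $\mathfrak g \cong T_eG$ (defined via left--invariant extensions in Theorem~\ref{teo-defLieAlgebraLieGroup}) with the infinitesimal conjugation operator $\ad(X)$ applied to $Y$. The natural route is through the Campbell formulas, which already package the second--order behavior of products of exponentials in terms of $[X,Y]$. Specifically, I would start from the commutator formula, Campbell formula~\ref{campbell}(iii), together with the conjugation formula~\ref{campbell}(ii), rewritten using \eqref{tad}: $\exp(tX)\exp(sY)\exp(-tX) = \exp\big(s\,\Ad(\exp(tX))Y\big)$.

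First I would fix $X,Y\in\mathfrak g$ and consider the curve $t\mapsto \Ad(\exp(tX))Y \in \mathfrak g$. By definition \eqref{ad}, its derivative at $t=0$ is $\ad(X)Y$. On the other hand, using \eqref{tad} and Campbell formula~\ref{campbell}(ii) with $s$ and $t$ decoupled (apply (ii) with the roles adjusted, or differentiate (iii) in the second slot), one gets
$$\exp\big(t\,\ad(X)Y + O(t^2)\big) = \exp(tX)\exp(Y)\exp(-tX)\big|_{\text{linearized in } Y} $$
— more carefully, I would write $\Ad(\exp(tX))Y = Y + t[X,Y] + O(t^2)$ directly from Campbell~\ref{campbell}(ii): since $\exp(tX)\exp(sY)\exp(-tX)=\exp(sY + st[X,Y] + O((s^2+t^2)^{3/2}s))$ equals $\exp(s\,\Ad(\exp(tX))Y)$, comparing the coefficients of $s$ (using that $\exp$ is a diffeomorphism near $0$, Proposition on $\exp$, item (iv)) yields $\Ad(\exp(tX))Y = Y + t[X,Y] + O(t^2)$. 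Differentiating at $t=0$ gives $\ad(X)Y = [X,Y]$, which is exactly the claim.

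The main obstacle is bookkeeping the error terms carefully enough to justify extracting the mixed $st$--coefficient: the Campbell formula as stated is a one--parameter statement ($|t|<\varepsilon$), so I would either invoke a two--parameter version or, more cleanly, set $s=t$ in Campbell~\ref{campbell}(iii) to get $\exp(-tX)\exp(-tY)\exp(tX)\exp(tY) = \exp(t^2[X,Y]+O(t^3))$ and separately expand the left side using $\exp(tY)=e+tY+O(t^2)$ in a chart, matching the $t^2$ terms. An alternative, fully self--contained approach avoiding Campbell altogether: compute $\ad(X)Y = \frac{\dd}{\dd t}\Ad(\exp tX)Y|_{t=0}$ by recognizing, via \eqref{Ad} and Remark~\ref{remark-expCamposInvariantesEsq}, that $\Ad(\exp tX)Y$ is the pushforward of the left--invariant field $Y$ by the flow $R_{\exp(-tX)}$ conjugated appropriately, so that its $t$--derivative is the Lie derivative $\mathcal L_{\widetilde X}\widetilde Y = [\widetilde X,\widetilde Y]$ evaluated at $e$ — this is the standard "$\ad$ is the Lie derivative along the flow" argument and reduces the statement to the definition of the bracket of vector fields. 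I would present the Campbell--formula version as the primary proof since the excerpt has just introduced those formulas, and remark on the flow interpretation as the conceptual reason.
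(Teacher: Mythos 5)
Your overall route is the same as the paper's (Campbell formula (ii) combined with the conjugation identity $g\exp(\cdot)g^{-1}=\exp(\Ad(g)\cdot)$ and the local invertibility of $\exp$), but your primary argument has a real gap that you correctly sense and then work around in a heavier way than necessary. You want to decouple $s$ and $t$ so that $\exp(tX)\exp(sY)\exp(-tX)=\exp(sY+st[X,Y]+\cdots)$, but the Campbell formulas as stated (Theorem~\ref{campbell}) only give a one--parameter expansion, so the mixed $st$--term is not literally available. Your proposed repair, expanding Campbell~(iii) in a chart and matching $t^2$ coefficients, would work but is more delicate bookkeeping than the situation demands.

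The missing observation is simply \emph{linearity of $\Ad(\exp(tX))$}. Keep $s=t$. Campbell~(ii) gives $\exp(tX)\exp(tY)\exp(-tX)=\exp(tY+t^2[X,Y]+O(t^3))$, while \eqref{gexpg} with $g=\exp(tX)$ gives $\exp(tX)\exp(tY)\exp(-tX)=\exp\bigl(\Ad(\exp(tX))\,tY\bigr)$. Since $\exp$ is a diffeomorphism near $0$, the arguments agree for small $t$:
\[
\Ad(\exp(tX))\,tY = tY + t^2[X,Y] + O(t^3).
\]
Now $\Ad(\exp(tX))(tY)=t\,\Ad(\exp(tX))Y$ because $\Ad(\exp(tX))$ is linear, so dividing by $t$ gives $\Ad(\exp(tX))Y = Y + t[X,Y] + O(t^2)$, and differentiating at $t=0$ and using \eqref{ad} yields $\ad(X)Y=[X,Y]$. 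This is exactly what the paper does, and it makes the two--parameter version unnecessary.

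Your second, Campbell--free route (recognizing $\ad(X)Y$ as the Lie derivative of the left--invariant extension $\widetilde Y$ along the flow of $\widetilde X$, hence equal to $[\widetilde X,\widetilde Y]_e$) is a genuinely different and also valid argument; it is conceptually cleaner but requires relating $\Ad(\exp(tX))$ to the pushforward by the right flow, which you only sketch. Either path closes the gap; the linearity shortcut is the minimal fix to the proof you wrote.
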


\begin{proof}
From the Campbell formulas (Theorem~\ref{campbell}), it follows that $$\exp(tX)\exp(tY)\exp(-tX)=\exp(tY+t^2[X,Y]+O(t^3)).$$

Using \eqref{gexpg} with $g=\exp(tX)$, $$\exp\left(\Ad(\exp(tX))tY\right)=\exp(tY+t^2[X,Y]+O(t^3)).$$ Hence, for sufficiently small $t$, the arguments of $\exp$ in the expression above are identical. This means that $$\Ad(\exp(tX))tY=tY+t^2[X,Y]+O(t^3).$$ Therefore, deriving and applying \eqref{ad}, it follows that $\ad(X)Y=[X,Y]$.
\end{proof}

\begin{exercise}\label{ex-ad-matrizes}\index{Adjoint!representation in $\GL(n,\R)$}
Let $G$ be a Lie subgroup of $\GL(n,\R)$. For each $g\in G$ and $X,Y\in\mathfrak{g}$, verify the following properties.

\begin{itemize}
\item[(i)] $\dd L_gX=gX$ and $\dd R_gX=Xg$;
\item[(ii)] $\Ad(g)Y=gYg^{-1}$;
\item[(iii)] Using Proposition~\ref{ad[]} prove that $[X,Y]=XY-YX$ is the matrix commutator.
\end{itemize}
\end{exercise}

Let us now give a result relating commutativity and the Lie bracket. As mentioned before, the vector fields $X,Y\in\mathfrak{X}(M)$ are said to commute if $[X,Y]=0$.

\begin{proposition}\label{abelianiff}
Let $G$ be a connected Lie group with Lie algebra $\mathfrak{g}$. The Lie algebra $\mathfrak{g}$ is abelian if, and only if, $G$ is abelian.
\end{proposition}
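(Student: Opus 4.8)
The plan is to prove both implications using the relationship between the adjoint action, the exponential map, and the Lie bracket established above. The forward direction (if $G$ is abelian then $\mathfrak{g}$ is abelian) is the more routine one. If $G$ is abelian, then conjugation $a_g$ is the identity map for every $g\in G$, so $\Ad(g)=\dd(a_g)_e=\id$ for all $g$. Differentiating the constant map $g\mapsto\Ad(g)$ at $e$ along $\exp(tX)$ and using \eqref{ad}, we get $\ad(X)=0$ for all $X\in\mathfrak{g}$. By Proposition~\ref{ad[]}, $[X,Y]=\ad(X)Y=0$ for all $X,Y\in\mathfrak{g}$, so $\mathfrak{g}$ is abelian.

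For the converse, suppose $\mathfrak{g}$ is abelian, i.e.\ $[X,Y]=0$ for all $X,Y\in\mathfrak{g}$. First I would show that $\exp(X)\exp(Y)=\exp(X+Y)$ for all $X,Y\in\mathfrak{g}$. One clean way: since $\ad(X)=0$ for all $X$ by Proposition~\ref{ad[]}, equation \eqref{adexp} gives $\Ad(\exp(X))=\exp(\ad(X))=\id$, hence by \eqref{gexpg}, $\exp(X)\exp(Y)\exp(X)^{-1}=\exp(\Ad(\exp(X))Y)=\exp(Y)$, so $\exp(X)$ commutes with $\exp(Y)$. Thus the image $\exp(\mathfrak{g})$ consists of mutually commuting elements. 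Since $\dd(\exp)_0=\id$, the image of $\exp$ contains an open neighborhood $U$ of $e$, and by Proposition~\ref{vizger}, the connected group $G$ is generated by $U$: every element is a product $g_1^{\pm1}\cdots g_n^{\pm1}$ with $g_i\in U$. Since all elements of $U$ commute with one another (and inverses of commuting elements commute), any two such products commute, so $G$ is abelian.

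The only subtlety to handle carefully is the claim that elements of $U\subset\exp(\mathfrak{g})$ commute: this follows because each $u\in U$ is of the form $\exp(X)$ and we showed $\exp(X)\exp(Y)=\exp(Y)\exp(X)$ for \emph{all} $X,Y\in\mathfrak{g}$, not just for $X,Y$ near $0$. The main (minor) obstacle is thus simply organizing the argument so that connectedness of $G$ is invoked at the right place — the abelianness of $\mathfrak{g}$ only controls $G$ locally near $e$, and Proposition~\ref{vizger} is precisely what upgrades this to a global statement. Alternatively, one could use Campbell formula (iii) of Theorem~\ref{campbell} to see directly that $\exp(-tX)\exp(-tY)\exp(tX)\exp(tY)=\exp(O(t^3))$ collapses when $[X,Y]=0$, but the $\Ad$-based argument is cleaner since it gives commutativity on the nose rather than only to third order.
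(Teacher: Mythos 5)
Your proof is correct and follows essentially the same route as the paper: both directions use the chain $\ad(X)=0 \Leftrightarrow [X,Y]=0 \Leftrightarrow \Ad(\exp(X))=\id \Leftrightarrow \exp(X)$ commutes with $\exp(Y)$ via equations \eqref{adexp} and \eqref{gexpg}, and then Proposition~\ref{vizger} to pass from a neighborhood of $e$ to all of $G$. You correctly flagged the one genuine subtlety — that $\exp$ need not be surjective, so connectedness and Proposition~\ref{vizger} are needed to globalize — which is exactly the caveat the paper makes explicit.
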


\begin{proof}
Fix any $X,Y\in\mathfrak g$. From \eqref{adexp}, supposing that $[X,Y]=0$, it follows from Proposition~\ref{ad[]} that \begin{eqnarray*}
\Ad(\exp(X))Y &=& \exp(\ad(X))Y \\
&=& \sum_{k=0}^{\infty} \frac{\ad(X)^k}{k!} Y \\
&=& Y.
\end{eqnarray*}

Therefore, from \eqref{gexpg}, $\exp(X)\exp(Y)\exp(-X)=\exp(Y)$, hence $\exp(X)\exp(Y)=\exp(Y)\exp(X)$. This means that there exists an open neighborhood $U$ of $e$ such that if $g_1,g_2\in U$, then $g_1g_2=g_2g_1$. It follows from Proposition~\ref{vizger}, that $G=\bigcup_{n\in\N} U^n$, where $U^n=\{g_1^{\pm 1}\cdots g_n^{\pm 1}:g_i\in U\}$. Therefore $G$ is abelian. Note that one can not infer that $G$ is abelian directly from the commutativity of $\exp$, since $\exp$ might not be surjective.

Conversely, suppose $G$ abelian. In particular, for all $s,t\in\R$ and $X,Y\in\mathfrak{g}$, $\exp(sX)\exp(tY)\exp(-sX)=\exp(tY)$. Deriving at $t=0$, $$\frac{\mathrm d}{\mathrm dt}\exp(sX)\exp(tY)\exp(-sX)\Big|_{t=0}=Y.$$ From \eqref{Ad}, it follows that $\Ad(\exp(sX))Y=Y$. Hence, deriving at $s=0$, it follows from \eqref{ad} and Proposition~\ref{ad[]} that $\ad(X)Y=[X,Y]=0$, which means that $\mathfrak{g}$ is abelian.
\end{proof}

\begin{remark}\label{exphomo}
If $X,Y\in\mathfrak{g}$ commute, that is, $[X,Y]=0$, then $$\exp(X+Y)=\exp(X)\exp(Y).$$

Note that this does not hold in general. To verify this identity, consider $\alpha:\R\owns t\mapsto \exp(tX)\exp(tY)$. From Proposition~\ref{abelianiff}, $\alpha$ is a $1$--parameter subgroup, and deriving $\alpha$ at $t=0$, we have $\alpha'(0)=X+Y$. Hence $\alpha(t)=\exp(t(X+Y))$, and we get the desired equation setting $t=1$.
\end{remark}

We end this section with a result on connected abelian Lie groups.

\begin{theorem}\label{cpctabeliantorus}
Let $G$ be a connected $n$--dimensional abelian Lie group. Then $G$ is isomorphic to $T^k\times\R^{n-k}$, where $T^k=S^1\times\dots\times S^1$ is a $k$--torus. In particular, an abelian connected and compact Lie group is isomorphic to a torus.
\end{theorem}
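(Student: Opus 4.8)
The plan is to exploit the previously developed machinery relating connected simply connected Lie groups to their Lie algebras (Corollary~\ref{cor-gruposisomorfosSeAlgebrasisomorfas}), together with the structure of covering maps (Theorem~\ref{liecovering}, Proposition~\ref{coveringisomorphism}). First I would observe that since $G$ is connected and $n$-dimensional abelian, Proposition~\ref{abelianiff} gives that $\mathfrak{g}$ is an abelian Lie algebra, hence $\mathfrak{g}\cong\R^n$ with trivial bracket. The Lie group $\R^n$ (with vector addition) is connected, simply connected, and has this same abelian Lie algebra, so by Corollary~\ref{cor-gruposisomorfosSeAlgebrasisomorfas} the universal cover $\widetilde{G}$ of $G$ (furnished by Theorem~\ref{liecovering}) is isomorphic to $\R^n$. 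Thus there is a covering homomorphism $\pi:\R^n\to G$, and by the standard covering-space correspondence $G\cong\R^n/\Gamma$ where $\Gamma=\ker\pi$ is a discrete subgroup of $\R^n$. One should note here, via Proposition~\ref{coveringisomorphism}, that $\dd\pi_0$ is an isomorphism, so $\Gamma$ is discrete and $\pi$ is exactly a covering with deck group $\Gamma$; and since $\Gamma$ is normal (indeed central, $\R^n$ being abelian) the quotient $\R^n/\Gamma$ is a Lie group.

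The crux of the argument is then the purely algebraic classification of discrete subgroups of $\R^n$: every such $\Gamma$ is a free abelian group of rank $k\le n$ admitting a $\Z$-basis $v_1,\dots,v_k$ that can be completed to an $\R$-basis $v_1,\dots,v_n$ of $\R^n$. I would prove this by induction on $n$ (or on the rank): pick a nonzero vector of $\Gamma$ of minimal norm to generate a rank-one sublattice along a line $L$, check that $\Gamma\cap L$ is exactly $\Z v_1$ (minimality rules out intermediate points), pass to the quotient $\R^n/L\cong\R^{n-1}$, verify the image of $\Gamma$ there is still discrete, and apply the inductive hypothesis; then lift a basis back. Applying the linear automorphism of $\R^n$ sending $v_1,\dots,v_n$ to the standard basis $e_1,\dots,e_n$ — which is a Lie group automorphism of $(\R^n,+)$ — we may assume $\Gamma=\Z e_1\oplus\cdots\oplus\Z e_k$. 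Then
\[
\R^n/\Gamma \;\cong\; (\R/\Z)^k\times\R^{n-k} \;\cong\; T^k\times\R^{n-k},
\]
using that $\R/\Z\cong S^1$ as Lie groups via $t\mapsto e^{2\pi i t}$, and that the quotient map is compatible with the product decomposition. This yields the desired isomorphism $G\cong T^k\times\R^{n-k}$.

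Finally, for the last sentence: if $G$ is moreover compact, then $T^k\times\R^{n-k}$ is compact, which forces $n-k=0$ (since $\R^{n-k}$ is noncompact whenever $n-k>0$); hence $G\cong T^n$ is a torus. The main obstacle I anticipate is the discrete-subgroup classification step — it is elementary but genuinely requires a careful compactness/minimality argument to produce the lattice basis and to check discreteness is preserved under the quotient $\R^n\to\R^{n-1}$; everything else is a direct application of results already established in the text. One subtlety to handle with care is confirming that $\ker\pi$ is discrete: this follows because $\pi$ is a local diffeomorphism at $e$ (as $\dd\pi_0$ is an isomorphism), so $e$ is isolated in $\ker\pi$, and translating by elements of the kernel shows every point of $\ker\pi$ is isolated.
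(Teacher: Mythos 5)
Your proof is correct and takes essentially the same approach as the paper: reduce to $G\cong\R^n/\Gamma$ for a discrete subgroup $\Gamma$, then invoke the classification of discrete subgroups of $\R^n$ as lattices. The only variation is that the paper constructs the covering $\R^n\to G$ explicitly as the Lie exponential map $\exp:\mathfrak g\to G$ (a homomorphism because $\mathfrak g$ is abelian, and a covering by Proposition~\ref{coveringisomorphism}), whereas you pass through the abstract universal cover via Corollary~\ref{cor-gruposisomorfosSeAlgebrasisomorfas}; you also sketch the lattice classification, which the paper simply cites as a well--known fact.
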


\begin{proof}
Using Proposition~\ref{abelianiff}, since $G$ is abelian, $\mathfrak{g}$ is also abelian. Thus $\mathfrak{g}$ can be identified with $\R^n$. Since $G$ is connected, it follows from Remark~\ref{exphomo} that $\exp:\mathfrak{g}\rightarrow G$ is a Lie group homomorphism. From Proposition~\ref{coveringisomorphism} it is a covering map.

Consider the normal subgroup given by $\Gamma=\ker\exp$. We will now use two results to be proved in the sequel, which will be essential in this proof. The first result (Theorem~\ref{closedsubgroup}) asserts that any closed subgroup of a Lie group is a Lie subgroup. Being $\exp$ continuous, $\Gamma$ is closed, hence a Lie subgroup of $\R^n$.
The second result (Corollary~\ref{cor-quocienteLieGroup}) asserts that the  quotient of a Lie group with a normal Lie subgroup is also a Lie group. Thus $\R^n/\Gamma$ is a Lie group. 

Note that $G$ is isomorphic to $\R^n/\Gamma$, because $\exp:\R^n\rightarrow G$ is a surjective Lie group homomorphism and $\Gamma=\ker\exp$.
\begin{displaymath}
\xymatrix@+20pt{
\R^n \ar[d]_{\pi}\ar[r]^{\exp} & G \\
\R^n/\Gamma \ar[ru]_\simeq
}
\end{displaymath}
Using the fact that $\exp$ is a covering map, it is possible to prove that the isomorphism between $\R^n/\Gamma$ and $G$ defined above is in fact smooth, i.e., a Lie group isomorphism (this also follows from Corollary~\ref{homosmooth}). 

On the other hand, it is a well--known fact that the only non trivial discrete subgroups of $\R^n$ are integral lattices. In other words, there exists a positive integer $k$ less or equal to $n$, and linearly independent vectors $e_1,\dots,e_k\in\R^n$ such that $\Gamma=\left\{\sum_{i=1}^k n_ie_i : n_i\in\Z\right\}$. Therefore $G$ is isomorphic to $\R^n/\Gamma=T^k\times\R^{n-k}$, where $T^k=S^1\times\dots\times S^1$ is a $k$--torus.
\end{proof}

%% parei aqui ( e fiquei.. )

\section{Closed subgroups}\label{sec-closedsubgroups}

The goal of this section is to prove that closed subgroups of a Lie group are Lie subgroups, and briefly explore some corollaries. This fact is a very useful tool to prove that a subgroup is a Lie subgroup. For instance, it applies to all subgroups of $\GL(n,\K)$, for $\K=\C$ or $\K=\R$, defined in Section~\ref{sec:11}.

\begin{theorem}\label{closedsubgroup}
Let $G$ be a Lie group and $H\subset G$ a closed subgroup of $G$. Then $H$ is an embedded Lie subgroup of $G$.
\end{theorem}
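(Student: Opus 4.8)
The plan is to prove the Closed Subgroup Theorem by the standard strategy: identify a candidate Lie algebra $\mathfrak h$ for $H$ inside $\mathfrak g$, show it is a subalgebra, and then use $\exp$ to produce a chart near $e$ in which $H$ looks like a linear subspace. Concretely, I would define
\[
\mathfrak h=\{X\in\mathfrak g : \exp(tX)\in H \text{ for all } t\in\R\}.
\]
The first task is to show $\mathfrak h$ is a linear subspace of $\mathfrak g$: closure under scalars is immediate from the definition, while closure under addition uses the Campbell formula (i) from Theorem~\ref{campbell}, namely $\exp(\tfrac{t}{n}X)\exp(\tfrac{t}{n}Y)=\exp(\tfrac{t}{n}(X+Y)+O(1/n^2))$, so that $\big(\exp(\tfrac{t}{n}X)\exp(\tfrac{t}{n}Y)\big)^n\to\exp(t(X+Y))$; since each term on the left lies in the subgroup $H$ and $H$ is closed, the limit lies in $H$. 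An identical argument with formula (iii) shows $[X,Y]\in\mathfrak h$, so $\mathfrak h$ is a Lie subalgebra, though for the embedding statement only the vector space structure is strictly needed.

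Next I would pick a complementary subspace $\mathfrak m$ with $\mathfrak g=\mathfrak h\oplus\mathfrak m$ and consider the map $\Phi:\mathfrak h\oplus\mathfrak m\to G$, $\Phi(X,Y)=\exp(X)\exp(Y)$. Since $\dd\Phi_0=\id$ (using that $\dd(\exp)_0=\id$), $\Phi$ is a diffeomorphism from a neighborhood of $0$ onto a neighborhood of $e$. The crux is to show that, shrinking the neighborhood if necessary, $\exp(Y)\in H$ (for $Y$ in a small ball of $\mathfrak m$) forces $Y=0$; granting this, $\Phi$ restricts to a chart in which $H\cap($neighborhood of $e)$ is exactly $\exp(\mathfrak h\cap\text{ball})$, i.e.\ the slice $\mathfrak m=0$, which is precisely the embedded submanifold condition near $e$. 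Translating this chart by left multiplication $L_h$ for $h\in H$ gives the embedded submanifold structure near every point of $H$, and the group operation $(x,y)\mapsto xy^{-1}$ is smooth since it is the restriction of the smooth operation on $G$ (via Proposition~\ref{Lsubgroup}, once $H$ is known to be embedded). So $H$ is an embedded Lie subgroup.

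The main obstacle is exactly that crux step: showing no small $\exp(Y)$ with $Y\in\mathfrak m\setminus\{0\}$ lies in $H$. I would argue by contradiction: if not, there is a sequence $Y_k\in\mathfrak m$, $Y_k\to 0$, $Y_k\neq0$, with $\exp(Y_k)\in H$. Fix a norm on $\mathfrak g$, pass to a subsequence so that $Y_k/|Y_k|\to Z\in\mathfrak m$ with $|Z|=1$. For each fixed $t\in\R$, choose integers $n_k$ with $n_k|Y_k|\to t$ (possible since $|Y_k|\to0$); then $n_kY_k\to tZ$, so $\exp(Y_k)^{n_k}=\exp(n_kY_k)\to\exp(tZ)$ by continuity of $\exp$, and $\exp(Y_k)^{n_k}\in H$, so $\exp(tZ)\in H$ for all $t$. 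This means $Z\in\mathfrak h$, contradicting $Z\in\mathfrak m$, $|Z|=1$. This little compactness-and-integer-approximation argument is the heart of the proof; everything else is formal manipulation of $\exp$ and charts. Finally, $H$ being embedded plus the earlier structure theory shows $H$ is closed as a subgroup in the Lie-subgroup sense, completing the statement.
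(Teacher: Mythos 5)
Your proof follows essentially the same strategy as the paper's: define $\mathfrak h=\{X:\exp(tX)\in H\text{ for all }t\}$, show it is a linear subspace via Campbell's formula, choose a complement $\mathfrak m$, use $\Phi(X,Y)=\exp(X)\exp(Y)$ as a local diffeomorphism, and isolate as the crux the sequential/integer-approximation argument showing no nonzero small $Y\in\mathfrak m$ has $\exp(Y)\in H$. The paper packages the ``sequence $\exp(t_iX_i)\in H$, $t_i\to 0$, $X_i\to X$ $\Rightarrow$ $X\in\mathfrak h$'' step as a reusable lemma and invokes it both for the subspace closure and for the crux, whereas you prove subspace closure with the $n$-th-power limit trick and then re-derive the sequence argument inline; these are cosmetic variations on the same argument, and your extra observation that $\mathfrak h$ is closed under brackets (via Campbell~(iii)) is correct but not needed for the embedding conclusion.
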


\begin{proof}
We will prove this result through a sequence of five claims. The central idea of the proof is to reconstruct the Lie algebra of $H$ as a Lie subalgebra $\mathfrak{h}\subset\mathfrak{g}$. A natural candidate is $$\mathfrak{h}=\{X\in T_eG: \exp(tX)\in H,\mbox{ for all } t\in\R\}.$$

\begin{claim}\label{cl:closed1}
Let $\{X_i\}$ be a sequence in $T_eG$ with $\lim X_i=X$, and $\{t_i\}$ a sequence of real numbers, with $\lim t_i=0$. If $\exp(t_iX_i)\in H$, for all $i\in\N$, then $X\in\mathfrak{h}$.
\end{claim}

Since $\exp(-t_iX_i)=[\exp(t_iX_i)]^{-1}$, without loss of generality one can assume $t_i>0$. Define $R_i(t)$ to be the largest integer less or equal to $\frac{t}{t_i}$. Then $$\tfrac{t}{t_i}-1< R_i(t)\leq\tfrac{t}{t_i},$$ hence $\lim t_iR_i(t)=t$. Therefore $\lim t_iR_i(t)X_i=tX$. On the one hand, it follows from continuity of $\exp$ that $\lim\exp(t_iR_i(t)X_i)=\exp(tX)$. On the other hand, $\exp(t_iR_i(t)X_i)=\left[\exp(t_iX_i)\right]^{R_i(t)}\in H$. Since $H$ is closed, $\exp(tX)\in H$. Therefore $X\in\mathfrak{h}$.

\begin{claim}\label{cl:closed2}
$\mathfrak{h}\subset T_eG$ is a vector subspace of $\mathfrak{g}$.
\end{claim}

Let $X,Y\in\mathfrak{h}$. It is clear that for all $s\in\R$, $sX\in\mathfrak{h}$. Moreover, from the Campbell formulas (Theorem~\ref{campbell}), $$\exp\left[t_i(X+Y)+\frac{t_i^2}{2}[X,Y]+O(t_i^3)\right]=\exp(t_iX)\exp(t_iY)\in H,$$ therefore $\exp\left[t_i(X+Y+\frac{t_i}{2}[X,Y]+O(t_i^2))\right]\in H$ and $$\left(X+Y+\frac{t_i}{2}[X,Y]+O(t_i^2)\right)$$ tends to $X+Y$ when $t_i$ tends to $0$. From Claim~\ref{cl:closed1}, $X+Y\in\mathfrak{h}$.

\begin{claim}\label{cl:closed3}
Let $\mathfrak{k}$ be a vector space such that $T_eG=\mathfrak{h}\oplus\mathfrak{k}$, and $$\psi:\mathfrak{h}\oplus\mathfrak{k}\owns (X,Y)\longmapsto \exp(X)\exp(Y)\in G.$$ Then there exists an open neighborhood $U$ of the origin $(0,0)\in\mathfrak{h}\oplus\mathfrak{k}$, such that $\psi|_U$ is a diffeomorphism.
\end{claim}

Deriving $\psi$ with respect to each component, it follows that
\begin{eqnarray*}
\dd\psi_{(0,0)}(X,0) &=& \dd(\exp)_0 X \\
&=& X,
\end{eqnarray*}
\begin{eqnarray*}
\dd\psi_{(0,0)}(0,Y) &=& \dd(\exp)_0 Y \\
&=& Y.
\end{eqnarray*}
Hence $\dd\psi_0=\id$, and from the Inverse Function Theorem, there exists $U$ an open neighborhood of the origin, such that $\psi|_U$ is a diffeomorphism.

\begin{claim}\label{cl:closed4}
There exists an open neighborhood $V$ of the origin of $\mathfrak{k}$, such that $\exp(Y)\notin H$ for $Y\in V\setminus\{0\}$.
\end{claim}

Suppose that there exists a sequence $\{Y_i\}$ with $Y_i\in\mathfrak{k}$, $\lim Y_i=0$ and $\exp(Y_i)\in H$. Choose an inner product on $\mathfrak{k}$ and define $t_i=\|Y_i\|$ and $X_i=\tfrac{1}{t_i}Y_i$. Since $\{X_i\}$ is a sequence in the unit sphere of $\mathfrak{k}$, which is compact, up to subsequences one can assume it is convergent. Hence $\lim X_i=X$, $\lim t_i=0$ and $\exp(t_iX_i)\in H$. Therefore, from Claim~\ref{cl:closed1}, $X\in\mathfrak{h}$. This contradicts $\mathfrak{h}\cap\mathfrak{k}=\{0\}$.

\begin{claim}\label{cl:closed5}
There exists an open neighborhood $W$ of the origin in $T_eG$ such that $H\cap\exp(W)=\exp(\mathfrak{h}\cap W)$.
\end{claim}

It follows from the construction of $\mathfrak{h}$ that $H\cap\exp(W)\supset\exp(\mathfrak{h}\cap W)$. According to Claims~\ref{cl:closed3} and \ref{cl:closed4}, there exists a sufficiently small open neighborhood $W$ of the origin of $T_eG$ such that $\exp|_W$ and $\psi|_W$ are diffeomorphisms, and $(W\cap\mathfrak{k})\subset V$.

Let $a\in H\cap\exp(W)$. Being $\psi|_W$ a diffeomorphism, there exist a unique $X\in\mathfrak{h}$ and a unique $Y\in\mathfrak{k}$ such that $a=\exp(X)\exp(Y)$. Hence $\exp(Y)=\left[\exp(X)\right]^{-1}a\in H$. From Claim \ref{cl:closed4}, $Y=0$, that is, $a=\exp(X)$, with $X\in\mathfrak{h}$. Therefore $H\cap\exp(W)\subset\exp(\mathfrak{h}\cap W)$.

From Claim~\ref{cl:closed5}, $H$ is an embedded submanifold of $G$ in a neighborhood of the identity $e\in G$. Hence, being $H$ a group, it is an embedded submanifold. Finally, from Proposition~\ref{Lsubgroup}, $H$ is (an embedded) Lie subgroup of $G$.
\end{proof}

In order to explore two corollaries of the above theorem, we start by recalling  the {\it Rank Theorem}\index{Theorem!Rank}. It states that if a smooth map $f:M\rightarrow N$ is such that $\dd f_x$ has constant rank, then for each $x_0\in M$ there exists a neighborhood $U$ of $x_0$ such that the following hold.

\begin{itemize}
\item[(i)] $f(U)$ is an embedded submanifold of $N$;
\item[(ii)] The partition $\{f^{-1}(y)\cap U\}_{y\in f(U)}$ is a foliation of $U$;
\item[(iii)] For each $y\in f(U)$, $\ker\dd f_x=T_xf^{-1}(y)$, for all $x\in f^{-1}(y)$.
\end{itemize}

\begin{lemma}\label{constantrank}
Let $G_1$ and $G_2$ be Lie groups and $\varphi:G_1\rightarrow G_2$ be a Lie group homomorphism. Then the following hold.
\begin{itemize}
\item[(i)] $\dd\varphi_g$ has constant rank;
\item[(ii)] $\ker\varphi$ is a Lie subgroup of $G_1$;
\item[(iii)] $\ker\dd\varphi_e=T_e\ker\varphi$.
\end{itemize}
\end{lemma}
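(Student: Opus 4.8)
The plan is to prove the three items in order, each following quickly from the previous ones together with the Rank Theorem and Theorem~\ref{closedsubgroup}.

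\smallskip
\textbf{Item (i).} First I would show that $\dd\varphi_g$ has the same rank for every $g\in G_1$, hence constant rank. The key is that $\varphi$ intertwines left translations: since $\varphi$ is a group homomorphism, $\varphi\circ L_g=L_{\varphi(g)}\circ\varphi$ for all $g\in G_1$. Differentiating this identity at $e_1$ gives
\begin{equation*}
\dd\varphi_g\circ\dd(L_g)_{e_1}=\dd(L_{\varphi(g)})_{e_2}\circ\dd\varphi_{e_1}.
\end{equation*}
Because $L_g$ and $L_{\varphi(g)}$ are diffeomorphisms, $\dd(L_g)_{e_1}$ and $\dd(L_{\varphi(g)})_{e_2}$ are linear isomorphisms; therefore $\dd\varphi_g$ and $\dd\varphi_{e_1}$ have the same rank. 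Since $g$ was arbitrary, $\dd\varphi$ has constant rank equal to $\operatorname{rank}\dd\varphi_{e_1}$.

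\smallskip
\textbf{Item (iii).} Next I would apply the Rank Theorem to $\varphi$ at $x_0=e_1$, which is licit by item (i). Taking $y=e_2\in\varphi(U)$, part (iii) of the Rank Theorem gives $\ker\dd\varphi_{e_1}=T_{e_1}\varphi^{-1}(e_2)$. But $\varphi^{-1}(e_2)=\ker\varphi$ as a set, so $\ker\dd\varphi_{e_1}=T_{e_1}\ker\varphi$, which is exactly (iii). (Strictly, this identifies the tangent space of the local submanifold $\varphi^{-1}(e_2)\cap U$ with $\ker\dd\varphi_{e_1}$; since near $e_1$ the set $\ker\varphi$ coincides with this submanifold, the statement follows once we know in (ii) that $\ker\varphi$ is a genuine Lie subgroup with this tangent space.)

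\smallskip
\textbf{Item (ii).} Finally, $\ker\varphi$ is clearly an abstract subgroup of $G_1$, being the kernel of a group homomorphism. Moreover $\varphi$ is continuous (being smooth) and $\{e_2\}$ is closed in $G_2$, so $\ker\varphi=\varphi^{-1}(e_2)$ is a closed subset of $G_1$. By Theorem~\ref{closedsubgroup}, every closed subgroup of a Lie group is an embedded Lie subgroup; hence $\ker\varphi$ is a Lie subgroup of $G_1$. This is the cleanest route and avoids reproving embeddedness by hand from the Rank Theorem; the latter is then only needed for the tangent-space identification in (iii).

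\smallskip
I do not expect a serious obstacle here: each step is a short deduction from a result already available in the text. The only point requiring mild care is bookkeeping in (iii), namely checking that the local submanifold produced by the Rank Theorem agrees, near $e_1$, with the closed Lie subgroup $\ker\varphi$ from (ii), so that their tangent spaces at $e_1$ coincide; this is immediate since both are embedded submanifolds containing $e_1$ and locally equal as sets.
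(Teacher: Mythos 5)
Your argument is correct and follows essentially the same path as the paper's own proof: the intertwining identity $\varphi\circ L_g=L_{\varphi(g)}\circ\varphi$ conjugates $\dd\varphi_g$ to $\dd\varphi_{e_1}$ by isomorphisms to give (i), Theorem~\ref{closedsubgroup} applied to the closed subgroup $\ker\varphi=\varphi^{-1}(e_2)$ gives (ii), and part (iii) of the Rank Theorem gives (iii). The only difference is cosmetic: you present (iii) before (ii) and flag the dependence, whereas the paper keeps the natural order; the mathematical content is identical.
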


\begin{proof}
Since $\varphi$ is a Lie group homomorphism, $\varphi\circ L_g^1=L_{\varphi(g)}^2\circ\varphi$, where $L_g^i$ denotes the left multiplication by $g$ on $G_i$. Hence, for all $X\in T_gG_1$, \begin{eqnarray*}
\dd\varphi_gX &=& \dd\varphi_g \dd(L_g^1)_e \dd(L_{g^{-1}}^1)_g X \\
&=& \dd L^2_{\varphi(g)} \dd\varphi_e \dd(L_{g^{-1}}^1)_g X.
\end{eqnarray*}
Since $L_{\varphi(g)}^2$ is a diffeomorphism, it follows that $\dd\varphi_gX=0$ if, and only if, $\dd\varphi_e d(L_{g^{-1}}^1)X=0$. Hence $\dim\ker\dd\varphi_g=\dim\ker\dd\varphi_e$, therefore $\dd\varphi_g$ has constant rank. This proves (i).

Item (ii) follows immediately from Theorem~\ref{closedsubgroup}, since $\ker\varphi=\varphi^{-1}(e)$ is closed. Finally, (iii) follows directly from the Rank Theorem.
\end{proof}

\begin{corollary}\label{homosmooth}
Let $G_1$ and $G_2$ be Lie groups and $\varphi:G_1\rightarrow G_2$ a continuous homomorphism. Then $\varphi$ is smooth.
\end{corollary}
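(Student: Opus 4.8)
The plan is to reduce everything to one--parameter subgroups and then bootstrap via the exponential map, finally spreading smoothness around by left translations.

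\textbf{Step 1 (continuous one--parameter subgroups are smooth).} First I would prove that any continuous homomorphism $\gamma:\R\to G_2$ is automatically smooth, and in fact $\gamma(t)=\exp(tX)$ for a unique $X\in\mathfrak{g}_2$. Choose a symmetric, star--shaped open neighborhood $B$ of $0\in\mathfrak{g}_2$ on which the exponential map of $G_2$ restricts to a diffeomorphism onto an open neighborhood of $e_2$ (this is the local diffeomorphism property of $\exp$ near the origin). By continuity of $\gamma$, pick $\varepsilon>0$ with $\gamma([-\varepsilon,\varepsilon])\subset\exp(\tfrac12 B)$, and write $\gamma(\varepsilon)=\exp(X_0)$ with $X_0\in\tfrac12 B$. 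Since $\gamma(\varepsilon/2)\in\exp(\tfrac12 B)$, write $\gamma(\varepsilon/2)=\exp(Z)$ with $Z\in\tfrac12 B$; then $\exp(2Z)=\gamma(\varepsilon/2)^2=\gamma(\varepsilon)=\exp(X_0)$ with $2Z,X_0\in B$, so injectivity of $\exp$ on $B$ forces $Z=X_0/2$. Iterating gives $\gamma(\varepsilon/2^k)=\exp(X_0/2^k)$, hence $\gamma(m\varepsilon/2^k)=\exp(mX_0/2^k)$ for all dyadic $m/2^k\in[-1,1]$ (using property (iii) of $\exp$). Density and continuity extend this to $\gamma(t\varepsilon)=\exp(tX_0)$ for $|t|\le 1$, and the homomorphism property then yields $\gamma(s)=\exp((s/\varepsilon)X_0)$ for all $s\in\R$. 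Thus $\gamma$ is smooth, with $X=X_0/\varepsilon=\gamma'(0)$.

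\textbf{Step 2 (smoothness near $e_1$).} Fix a basis $X_1,\dots,X_n$ of $\mathfrak{g}_1$. For each $i$, the curve $t\mapsto\varphi(\exp(tX_i))$ is a continuous homomorphism $\R\to G_2$, so by Step 1 it equals $t\mapsto\exp(tY_i)$ for some $Y_i\in\mathfrak{g}_2$. Consider $\Phi:\R^n\to G_1$, $\Phi(t_1,\dots,t_n)=\exp(t_1X_1)\cdots\exp(t_nX_n)$; a direct computation of $\dd\Phi_0$ shows it sends $(t_1,\dots,t_n)$ to $\sum_i t_iX_i$, hence is an isomorphism, so by the Inverse Function Theorem $\Phi$ restricts to a diffeomorphism from an open neighborhood $\Omega$ of $0$ onto an open neighborhood $V$ of $e_1$. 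On $V$,
$$\varphi\bigl(\Phi(t_1,\dots,t_n)\bigr)=\varphi(\exp(t_1X_1))\cdots\varphi(\exp(t_nX_n))=\exp(t_1Y_1)\cdots\exp(t_nY_n),$$
which is a smooth function of $(t_1,\dots,t_n)$; composing with $(\Phi|_\Omega)^{-1}$ shows $\varphi|_V$ is smooth.

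\textbf{Step 3 (propagation).} Since $\varphi$ is a homomorphism, $\varphi=L_{\varphi(g)}\circ\varphi\circ L_{g^{-1}}$ for every $g\in G_1$. On the open neighborhood $gV$ of $g$, the diffeomorphism $L_{g^{-1}}$ carries $gV$ to $V$, where $\varphi$ is smooth, and $L_{\varphi(g)}$ is a diffeomorphism of $G_2$; hence $\varphi$ is smooth on $gV$. As $g$ was arbitrary, $\varphi$ is smooth on all of $G_1$ (no connectedness hypothesis is needed). I expect Step 1 to be the main obstacle: it is the only place where genuine analysis rather than formal manipulation enters, and the dyadic--subdivision argument must be arranged so that every application of $\exp$ that is inverted stays inside the region $B$ on which $\exp$ is injective.
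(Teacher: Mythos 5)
Your proof is correct, but it takes a genuinely different route from the paper's. The paper's argument is short and leans on heavy machinery it has just built: it observes that the graph $R=\{(g,\varphi(g))\}$ is a closed subgroup of $G_1\times G_2$ (using continuity of $\varphi$ and Hausdorffness of $G_2$), invokes the Closed Subgroup Theorem~\ref{closedsubgroup} to conclude $R$ is an embedded Lie subgroup, then uses the constant rank lemma~\ref{constantrank} and a measure-zero argument to show $\pi_1\circ i:R\to G_1$ is a bijective local diffeomorphism, so $\varphi=\pi_2\circ(\pi_1\circ i)^{-1}$ is smooth. Your argument is the more classical and more elementary one: first establish that continuous one-parameter subgroups are smooth via the dyadic-subdivision trick (taking care to keep the exponential inside its injectivity chart, which you do correctly by working in $\tfrac12 B$), then use coordinates of the second kind $\Phi(t)=\exp(t_1X_1)\cdots\exp(t_nX_n)$ to transfer this to a neighborhood of $e_1$, and finally spread smoothness by left translation. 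What the paper's proof buys is brevity and the satisfaction of showcasing the Closed Subgroup Theorem; what yours buys is self-containedness (no need for that theorem or the Rank Theorem) and a cleaner picture of \emph{why} the statement is true, namely that a homomorphism is entirely controlled by its one-parameter subgroups. Both correctly avoid any connectedness hypothesis, yours explicitly in Step 3, the paper's implicitly because the graph construction and translation argument work componentwise.
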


\begin{proof}
Let $R=\{(g,\varphi(g)):g\in G_1\}$ be the graph of $\varphi$. Then $R$ is a closed subgroup of $G_1\times G_2$, hence, from Theorem~\ref{closedsubgroup}, $R$ is an embedded Lie subgroup of $G_1\times G_2$. Consider $i:R\hookrightarrow G_1\times G_2$ the inclusion map, and the projections $\pi_1$ and $\pi_2$, onto $G_1$ and $G_2$, respectively. Then $\pi_1\circ i$ is a Lie group homomorphism, and from Lemma~\ref{constantrank}, $\dd(\pi_1\circ i)_g$ has constant rank. On the other hand, $R$ is a graph, hence by the Rank Theorem, $\pi_1\circ i$ is an immersion.

In addition, $\dim R=\dim G_1$, otherwise $(\pi_1\circ i)(R)$ would have measure zero, contradicting $(\pi_1\circ i)(R)=G_1$. From the Inverse Function Theorem, $\pi_1\circ i$ is a local diffeomorphism. Since it is also bijective, it is a global diffeomorphism, therefore $\varphi=\pi_2\circ (\pi_1\circ i)^{-1}$ is smooth.
\end{proof}

\begin{exercise}\label{su2s3}%% AQUI, INSERIR QUE TAMBEM SAO ISOMORFOS A Sp(1)!!!
Identifying\footnote{More precisely, this identification is given by $$S^3\owns z\simeq \underbrace{(z_0+z_1k)}_{w_1}+j\underbrace{(z_2+z_3k)}_{w_2}\simeq (w_1,w_2)\in\C\times\C\simeq\Hr.$$} $S^3\subset\Hr\simeq\C\times\C$, consider the map $\varphi:S^3\rightarrow\SU(2)$ given by $$\varphi(w_1,w_2)=\left(\begin{array}{l r}w_1 & -\overline{w_2} \\ w_2 & \overline{w_1}\end{array} \right)$$

Verify that $\varphi$ is a continuous group homomorphism, hence a Lie group homomorphism. Conclude that $S^3$ and $\SU(2)$ are isomorphic Lie groups.
\end{exercise}

\begin{exercise}\label{ex-classiclie}\index{$\SO(n)$}\index{$\O(n)$}\index{$\U(n)$}\index{$\SU(n)$}\index{$\SL(n,\R),\SL(n,\C)$}\index{$\Sp(n)$}
Prove that $\GL(n,\R)$, $\GL(n,\C)$, $\SL(n,\R)$, $\SL(n,\C)$, $\O(n)$, $\SO(n)$, $\U(n)$, $\SU(n)$ and $\Sp(n)$ are Lie groups (recall definitions in Example~\ref{classicalliegroups}). Verify that their Lie algebras are, respectively,
\begin{itemize}
\item[(i)] $\mathfrak{gl}(n,\R)$, the space of $n\times n$ square matrices over $\R$;\index{$\GL(n,\R),\GL(n,\C)$}\index{$\mathfrak{gl}(n,\R),\mathfrak{gl}(n,\C)$}
\item[(ii)] $\mathfrak{gl}(n,\C)$, the space of $n\times n$ square matrices over $\C$;
\item[(iii)] $\mathfrak{sl}(n,\R)=\{X\in\mathfrak{gl}(n,\R):\trace X=0\}$;\index{$\mathfrak{sl}(n,\R),\mathfrak{sl}(n,\C)$}
\item[(iv)] $\mathfrak{sl}(n,\C)=\{X\in\mathfrak{gl}(n,\C):\trace X=0\}$;
\item[(v)] $\mathfrak{o}(n)=\mathfrak{so}(n)=\{X\in\mathfrak{gl}(n,\R):X^t+X=0\}$;\index{$\mathfrak{so}(n),\mathfrak{o}(n)$}\index{$\SO(n)$}
\item[(vi)] $\mathfrak{u}(n)=\{X\in \mathfrak{gl}(n,\C):X^*+X=0\}$;\index{$\mathfrak{su}(n),\mathfrak{u}(n)$}\index{$\SU(n)$}
\item[(vii)]$\mathfrak{su}(n)=\mathfrak{u}(n)\cap \mathfrak{sl}(n,\C);$
%\{X\in\mathfrak{sl}(n,\C):X^*+X=0\}$;\index{$\mathfrak{su}(n),\mathfrak{u}(n)$}\index{$\SU(n)$}
\item[(viii)] $\mathfrak{sp}(n)=\{X\in\mathfrak{gl}(n,\Hr):X^*+X=0\}$.\index{$\mathfrak{sp}(n)$}
\end{itemize}
Compare (v) with Exercise~\ref{ex2-sobre-SO3}.

\medskip
\noindent {\small \emph{Hint:} Prove directly that $\GL(n,\R)$ and $\GL(n,\C)$ are Lie subgroups and verify that the other are closed subgroups of these. One can use Remark~\ref{expsubgroup} to prove that if $X\in\mathfrak g$ and $\mathfrak h\subset \mathfrak g$, then $X\in\mathfrak h$ if and only if $\exp(tX)\in H$, for all $t\in\R$. One can also use  Lemma~\ref{constantrank} to help the computation of the above Lie algebras, and the fact that for any $A\in\mathfrak{gl}(n,\K)$, $\det e^A=e^{\trace A}$.}
\end{exercise}

We recall that the {\it center}\index{Lie group!center}\index{$\Zentrum(G),\Zentrum(\mathfrak g)$} of a Lie group $G$ is the subgroup given by $$\Zentrum(G)=\{g\in G: gh=hg, \mbox{ for all } h\in G \},$$ and the {\it center}\index{Lie algebra!center} of a Lie algebra $\mathfrak{g}$ is defined as $$\Zentrum(\mathfrak{g})=\{X\in\mathfrak{g} : [X,Y]=0, \mbox{ for all } Y\in\mathfrak{g}\}.$$ The following result relates the centers of a Lie group and of its Lie algebra.

\begin{corollary}\label{zentrum}
Let $G$ be a connected Lie group. Then the following hold.

\begin{itemize}
\item[(i)] $\Zentrum(G)=\ker\Ad$;
\item[(ii)] $\Zentrum(G)$ is a Lie subgroup of $G$;
\item[(iii)] $\Zentrum(\mathfrak{g})=\ker\ad$;
\item[(iv)] $\Zentrum(\mathfrak{g})$ is the Lie algebra of $\Zentrum(G)$.
\end{itemize}
\end{corollary}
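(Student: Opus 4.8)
The plan is to prove (i) first, since the other three items follow from it together with Theorem~\ref{closedsubgroup} and Proposition~\ref{ad[]}. For (i), I would show both inclusions. If $g\in\Zentrum(G)$, then $g\exp(tX)g^{-1}=\exp(tX)$ for all $X\in\mathfrak g$ and all $t\in\R$; differentiating at $t=0$ and using \eqref{Ad} gives $\Ad(g)X=X$, so $g\in\ker\Ad$. Conversely, if $\Ad(g)=\id$, then by \eqref{gexpg} we get $g\exp(X)g^{-1}=\exp(\Ad(g)X)=\exp(X)$ for every $X\in\mathfrak g$, so $g$ commutes with $\exp(\mathfrak g)$. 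Since $\exp$ is a diffeomorphism near the origin, $g$ commutes with an open neighborhood $U$ of $e$, and by Proposition~\ref{vizger} the connected group $G$ is generated by $U$; hence $g$ commutes with all of $G$, i.e. $g\in\Zentrum(G)$. This is where connectedness of $G$ is essential — exactly as in the proof of Proposition~\ref{abelianiff}.

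For (ii), observe that $\Ad\colon G\to\Aut(\mathfrak g)$ is continuous (indeed smooth), so $\ker\Ad$ is a closed subgroup of $G$; by Theorem~\ref{closedsubgroup} it is an embedded Lie subgroup, and by (i) this is $\Zentrum(G)$.

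For (iii), the argument mirrors (i) at the infinitesimal level: $X\in\ker\ad$ means $\ad(X)=0$, and by Proposition~\ref{ad[]} this says $[X,Y]=\ad(X)Y=0$ for all $Y\in\mathfrak g$, which is precisely $X\in\Zentrum(\mathfrak g)$; the two conditions are literally equivalent. For (iv), I would identify the Lie algebra of $\Zentrum(G)=\ker\Ad$ using Lemma~\ref{constantrank}(iii) applied to the homomorphism $\Ad$: its Lie algebra is $\ker\dd\Ad_e=\ker\ad$, which by (iii) equals $\Zentrum(\mathfrak g)$. Alternatively, one can characterize the Lie algebra of $\Zentrum(G)$ directly via Exercise~\ref{ex-classiclie}'s criterion ($X$ lies in the subalgebra iff $\exp(tX)\in\Zentrum(G)$ for all $t$) combined with \eqref{adexp}: $\exp(tX)\in\ker\Ad$ for all $t$ iff $\exp(t\,\ad(X))=\Ad(\exp(tX))=\id$ for all $t$ iff $\ad(X)=0$.

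The only genuinely delicate point is the second inclusion in (i), where passing from "$g$ commutes with a neighborhood of $e$" to "$g$ commutes with $G$" requires invoking connectedness through Proposition~\ref{vizger}; everything else is a routine unwinding of the identities \eqref{Ad}, \eqref{gexpg}, \eqref{adexp} and the constant-rank lemma. I expect no serious obstacle beyond bookkeeping.
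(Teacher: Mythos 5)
Your proposal is correct and follows essentially the same route as the paper: (i) via \eqref{gexpg} and Proposition~\ref{vizger}, (iii) directly from Proposition~\ref{ad[]}, and (iv) from Lemma~\ref{constantrank}(iii) applied to $\Ad$, noting $\dd(\Ad)_e=\ad$. The only cosmetic difference is in (ii), where you invoke Theorem~\ref{closedsubgroup} directly on the closed subgroup $\ker\Ad$ while the paper cites Lemma~\ref{constantrank}(ii) --- but that lemma is itself proved by exactly that application of Theorem~\ref{closedsubgroup}, so the argument is identical once unfolded.
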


\begin{proof}
First, we verify that $\Zentrum(G)=\ker\Ad$. If $g\in\Zentrum(G)$, clearly $\Ad(g)=\id$. Conversely, let $g\in\ker\Ad$. It follows from \eqref{gexpg} that $g\exp(tX)g^{-1}=\exp(tX)$, for all $X\in\mathfrak{g}$. Hence $g$ commutes with the elements of a neighborhood of $e\in G$. Applying Proposition~\ref{vizger} one concludes that $g\in\Zentrum(G)$, and this proves (i).

Item (ii) follows from item (i) and from Lemma~\ref{constantrank}. Item (iii) follows directly from Proposition~\ref{ad[]}. Since $\dd(\Ad)_e=\ad$, item (iv) follows directly of item (iii) of Lemma~\ref{constantrank}.
\end{proof}

\begin{remark}
This result  gives an alternative proof of an assertion in Proposition~\ref{abelianiff} that states that if $\mathfrak{g}$ is abelian, then $G$ is abelian. Indeed, if $[X,Y]=0$ for all $X,Y\in\mathfrak{g}$, then $\Zentrum(\mathfrak{g})=\mathfrak{g}$. Hence, from item (iv) of Corollary~\ref{zentrum}, $\Zentrum(G)$ is open in $G$ and it follows from Proposition~\ref{vizger} that $G=\Zentrum(G)$. Hence $G$ is abelian.
\end{remark}

\begin{exercise}\index{$\SU(n)$}\index{$\U(n)$}
Verify the following.
\begin{itemize}
\item[(i)] $\Zentrum(\U(3))=\{z\id : z\in\C, |z|=1\}$;
\item[(ii)] $\SU(3)\cap\Zentrum(\U(3))=\{z\id:z^3=1\}$.
\end{itemize}
\end{exercise}

\chapter{Lie groups with bi--invariant metrics}
\label{chap2}

This chapter deals with Lie groups with a special Riemannian metric, a \emph{bi--invariant} metric. These metrics play an important role in the study of compact Lie groups, since each compact Lie group admits a such metric (see Proposition~\ref{cptbi}). In the sequel, we will use tools from Riemannian geometry to give concise proofs of several classic results on compact Lie groups.

We begin by reviewing some auxiliary facts of Riemannian geometry. Then basic results on bi--invariant metrics and Killing forms are discussed. For example, we prove that a semisimple Lie group is compact if and only if its Killing form is negative--definite. We also prove that a simply connected Lie group admits a bi--invariant metric if and only if it is a product of a compact Lie group with a vector space. Finally, we also prove that if the Lie algebra of a compact Lie group $G$ is simple, then the bi--invariant metric on $G$ is unique up to multiplication by constants.

Further readings on this chapter contents are Milnor~\cite{milnorLeftInvariantMetric}, Ise and Takeuchi~\cite{IseTakeuchi} and Fegan~\cite{Fegan}.

\section{Basic facts of Riemannian geometry}\label{sec:riemgeom}

The main objective of this section is to review basic definitions and introduce some results of Riemannian geometry that will be used in the next sections and chapters. A proof of most results in this section can be found in any textbook in Riemannian geometry, as do Carmo~\cite{manfredo}, Jost~\cite{Jost}, Petersen~\cite{petersen} and Lee~\cite{lee}.

Recall that a {\it Riemannian manifold}\index{Riemannian!manifold} is a smooth manifold $M$ endowed with a {\it (Riemannian) metric}\index{Riemannian!metric}\index{Metric}, that is, a $(0,2)$--tensor field $g$ on $M$ that is
\begin{itemize}
\item[(i)] Symmetric: $g(X,Y)=g(Y,X)$, for all $X,Y\in TM$;
\item[(ii)] Positive--definite: $g(X,X)>0$, if $X\neq 0$.
\end{itemize}

This means that a metric determines an inner product $\langle\cdot,\cdot\rangle_p$ on each tangent space $T_pM$, by $\langle X,Y\rangle_p = g_p(X,Y)$, for all $X,Y\in T_pM$. Using partitions of the unity, it is not difficult to prove that every manifold can be endowed with a metric.

Consider $M$ and $N$ manifolds and $f:M\rightarrow N$ a smooth map. Any $(0,s)$--tensor $\tau$ on $N$ may be {\em pulled back} by $f$, resulting a $(0,s)$--tensor $f^*\tau$ on $M$, as explained in Section~\ref{sec:appdiffint} of the Appendix~\ref{appendix}. A diffeomorphism $f:(M,g^M)\rightarrow (N,g^N)$ satisfying $f^*g^N=g^M$ is called a {\it (Riemannian) isometry}.\index{Riemannian!isometry}\index{Isometry} This means that $$g^M_p(X,Y)=g^N_{f(p)}(\dd f_pX,\dd f_pY),$$ for all $p\in M$ and $X,Y\in T_pM$.

It is possible to associate to each given metric a map called {\it connection}. Such map allows to {\it parallel translate} vectors along curves, {\it connecting} tangent spaces of $M$ at different points. It is actually possible to define connections on any vector bundles over a manifold, however this is beyond the objectives of this text.

\begin{definition}\index{$\nabla$}
Let $(M,g)$ be a Riemannian manifold. A {\it linear connection}\index{Connection} on $M$ is a map $$\nabla:\mathfrak{X}(M)\times\mathfrak{X}(M)\owns (X,Y)\longmapsto\nabla_X Y\in\mathfrak{X}(M),$$ satisfying the following properties:
\begin{itemize}
\item[(i)] $\nabla_X Y$ is $C^\infty(M)$-linear in $X$, i.e., for all $f,g\in C^\infty(M)$, $$\nabla_{fX_1+gX_2}Y=f\nabla_{X_1}Y+g\nabla_{X_2}Y;$$
\item[(ii)] $\nabla_X Y$ is $\R$-linear in $Y$, i.e., for all $a,b\in\R$, $$\nabla_X(aY_1+bY_2)=a\nabla_X Y_1+b\nabla_X Y_2;$$
\item[(iii)] $\nabla$ satisfies the Leibniz rule, i.e., for all $f\in C^\infty(M)$, $$\nabla_X (fY)=f\nabla_X Y+(Xf)Y.$$
\end{itemize}
Moreover, a linear connection is said to be {\it compatible with the metric}\index{Connection!compatible with metric} $g$ of $M$ if $$Xg(Y,Z)=g(\nabla_X Y,Z)+g(Y,\nabla_X Z),$$ for all $X,Y,Z\in\mathfrak{X}(M)$.
\end{definition}

It turns out that requiring a connection to be compatible with the metric does not determine a unique connection on $(M,g)$. To this purpose, define the {\it torsion tensor}\index{Torsion} of the connection to be the $(1,2)$--tensor field given by $$T(X,Y)=\nabla_X Y -\nabla_Y X -[X,Y].$$ A connection is said to be {\it symmetric}\index{Connection!symmetric} if its torsion vanishes identically, that is, if $[X,Y]=\nabla_X Y-\nabla_Y X$ for all $X,Y\in\mathfrak{X}(M)$.

\begin{lcthm}\index{Theorem!Levi--Civita}
Let $(M,g)$ be a Riemannian manifold. There exists a unique linear connection $\nabla$ on $M$ that is compatible with $g$ and symmetric, called the {\it Levi--Civita connection}.\index{Connection!Levi--Civita}
\end{lcthm}

The key fact on the proof of this theorem is the equation known as {\it connection formula}\index{Connection!formula}, or Koszul formula. It exhibits the natural candidate to the {\it Levi--Civita} connection and shows that it is uniquely determined by the metric, \begin{eqnarray}\label{connectionformula}
\langle \nabla_Y X, Z\rangle &=& \tfrac{1}{2}\Big( X\langle Y,Z\rangle -Z\langle X,Y\rangle+Y\langle Z,X\rangle \nonumber \\
&& \hspace{0.5cm}-\langle[X,Y],Z \rangle-\langle [X,Z],Y\rangle-\langle[Y,Z],X \rangle \Big).
\end{eqnarray}

This classic result is due to the Italian mathematician Tullio Levi--Civita in the beginning of the twentieth century. The unique symmetric linear connection compatible with the metric is also called {\it Riemannian connection},\index{Connection!Riemannian} and we will refer to it simply as {\it connection}.

Using the connection $\nabla$ given above one can differentiate vector fields on a Riemannian manifold $(M,g)$ as described in the next result.

\begin{proposition}\index{Covariant derivative}
Let $M$ be a manifold with linear connection $\nabla$. There exists a unique correspondence that to each vector field $X$ along a smooth curve $\gamma:I\rightarrow M$ associates another vector field $\frac{\mathrm D}{\dd t}X$ along $\gamma$, called the {\it covariant derivative} of $X$ along $\gamma$, satisfying the following properties.
\begin{itemize}
\item[(i)] $\R$--linearity, i.e., for all $X,Y\in\mathfrak{X}(M)$, $$\frac{\mathrm D}{\mathrm dt}(X+Y)=\frac{\mathrm D}{\mathrm dt}X+\frac{\mathrm D}{\mathrm dt}Y;$$
\item[(ii)] Leibniz rule, i.e., for all $X\in\mathfrak{X}(M),f\in C^\infty(I)$, $$\frac{\mathrm D}{\mathrm dt}(fX)=\frac{\mathrm df}{\mathrm dt}X+f\frac{\mathrm D}{\mathrm dt}X;$$
\item[(iii)] If $X$ is induced from a vector field $\widetilde{X}\in\mathfrak{X}(M)$, that is $X(t)=\widetilde{X}(\gamma(t))$, then $\frac{\mathrm D}{\mathrm dt}X=\nabla_{\gamma'}\widetilde{X}$.
\end{itemize}
\end{proposition}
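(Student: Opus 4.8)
The plan is to establish the covariant derivative by first working in local coordinates, where the formula is forced, and then proving that these local definitions patch together to a well-defined global operator satisfying (i)--(iii). This is the standard argument, and the same technique as in the Levi--Civita Theorem: uniqueness is proved by deriving an explicit formula, and existence follows by checking that formula does the job.

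First I would prove uniqueness. Suppose such a correspondence exists. Fix a coordinate chart $(x_1,\dots,x_n)$ around a point $\gamma(t_0)$, write $\gamma(t)=(\gamma_1(t),\dots,\gamma_n(t))$ in these coordinates, and expand $X(t)=\sum_j X^j(t)\,\partial_j|_{\gamma(t)}$. Applying properties (i) and (ii) gives
\begin{equation*}
\frac{\mathrm D}{\mathrm dt}X=\sum_j \frac{\mathrm dX^j}{\mathrm dt}\,\partial_j+\sum_j X^j\,\frac{\mathrm D}{\mathrm dt}\partial_j,
\end{equation*}
and since $\partial_j$ along $\gamma$ is induced from the coordinate vector field $\partial_j\in\mathfrak{X}(M)$, property (iii) gives $\frac{\mathrm D}{\mathrm dt}\partial_j=\nabla_{\gamma'}\partial_j=\sum_i \frac{\mathrm d\gamma_i}{\mathrm dt}\,\nabla_{\partial_i}\partial_j$. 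Writing $\nabla_{\partial_i}\partial_j=\sum_k\Gamma_{ij}^k\partial_k$ for the Christoffel symbols of $\nabla$, one obtains the closed expression
\begin{equation*}
\frac{\mathrm D}{\mathrm dt}X=\sum_k\left(\frac{\mathrm dX^k}{\mathrm dt}+\sum_{i,j}\Gamma_{ij}^k\,\frac{\mathrm d\gamma_i}{\mathrm dt}\,X^j\right)\partial_k.
\end{equation*}
This shows that $\frac{\mathrm D}{\mathrm dt}X$ is completely determined by $\nabla$, hence unique on the chart domain; since charts cover $M$, uniqueness is global.

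For existence, I would \emph{define} $\frac{\mathrm D}{\mathrm dt}X$ on each chart by the boxed formula above and verify the three properties, which is a direct computation: $\R$--linearity is immediate from linearity of $\frac{\mathrm d}{\mathrm dt}$ and of the coefficients in $X^j$; the Leibniz rule follows from the product rule applied to $\frac{\mathrm d}{\mathrm dt}(fX^k)$; and property (iii) follows by substituting $X^j(t)=\widetilde X^j(\gamma(t))$, applying the chain rule to get $\frac{\mathrm dX^k}{\mathrm dt}=\sum_i\frac{\mathrm d\gamma_i}{\mathrm dt}\,\partial_i\widetilde X^k$, and recognizing the result as the coordinate expression of $\nabla_{\gamma'}\widetilde X$.

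The main obstacle — and really the only nontrivial point — is \textbf{well-definedness under change of coordinates}: the formula is written in a chart, so one must check that on the overlap of two charts the two expressions agree. I would handle this via the uniqueness argument itself: on the intersection of two chart domains, both locally defined operators satisfy (i)--(iii), so by the uniqueness already proved (applied to the restricted curve segment) they coincide. This lets the local definitions be glued into a single global operator. Alternatively one can verify directly that the Christoffel symbols transform so as to make the bracketed expression a genuine tensorial object, but invoking uniqueness is cleaner and avoids the transformation-law computation.
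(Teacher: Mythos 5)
The paper states this proposition without proof, merely citing standard Riemannian geometry references (do Carmo, Jost, Petersen, Lee) at the start of the section. Your proposal supplies the standard textbook argument, and it is correct: uniqueness is forced in local coordinates by expanding $X=\sum_j X^j\partial_j$, applying (i) and (ii) to isolate $\frac{\mathrm D}{\mathrm dt}\partial_j$, and then using (iii) together with $C^\infty$-linearity of $\nabla$ in its first slot to reduce everything to Christoffel symbols; existence follows by taking the resulting formula as a definition and verifying (i)--(iii) directly, the only content being the chain rule for (iii); and compatibility across overlapping charts is handled cleanly by invoking the uniqueness already proved on the intersection. One small implicit step worth making explicit: properties (i) and (ii) alone imply that $\frac{\mathrm D}{\mathrm dt}X\big|_{t_0}$ depends only on the values of $X$ in a neighborhood of $t_0$ (localize with a bump function in $t$), which is what justifies restricting to a single chart in the first place. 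With that remark added, the argument is complete and is exactly the standard one the paper's references would give.
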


Note that to each linear connection on $M$ the proposition above gives a covariant derivative operator for vector fields along $\gamma$. As mentioned before, we will only consider the Levi--Civita connection, hence the covariant derivative is uniquely defined. 

Armed with this notion, it is possible to define the {\it acceleration}\index{Acceleration} of a curve as the covariant derivative of its tangent vector field. Moreover, {\it geodesics}\index{Geodesic} are curves with null acceleration. More precisely, $\gamma:I\rightarrow M$ is a {\it geodesic}   if $\frac{\mathrm D}{\mathrm dt}\gamma'=0$. Writing a local expression for the covariant derivative, it is easy to see that a curve is a geodesic if, and only if, it satisfies a second--order system of ODEs, called the {\it geodesic equation}. Hence, applying the classic ODE theorem that guarantees existence and uniqueness of solutions, one can prove the following result.

\begin{theorem}
For any $p\in M$, $t_0\in\R$ and $v\in T_pM$, there exist an open interval $I\subset\R$ containing $t_0$ and a geodesic $\gamma:I\rightarrow M$ satisfying the initial conditions $\gamma(t_0)=p$ and $\gamma'(t_0)=v$. In addition, any two geodesics with those initial conditions agree on their common domain.
\end{theorem}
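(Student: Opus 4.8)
The plan is to pass to local coordinates, recognize the geodesic equation as a second--order system of ODEs with smooth coefficients, and invoke the classical existence and uniqueness theorem for ordinary differential equations; the uniqueness on the common domain will then follow from a connectedness argument.

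First I would fix a chart $(U,\varphi)$ with $\varphi=(x^1,\dots,x^n)$ around $p$, and write a curve $\gamma$ taking values in $U$ as $\gamma(t)=(x^1(t),\dots,x^n(t))$. Using the coordinate frame $\partial_i=\partial/\partial x^i$ and the Christoffel symbols $\Gamma^k_{ij}$ of the Levi--Civita connection, defined by $\nabla_{\partial_i}\partial_j=\sum_k\Gamma^k_{ij}\,\partial_k$, a direct computation from property (iii) of the covariant derivative together with its Leibniz rule gives
$$\frac{\mathrm D}{\mathrm dt}\gamma'=\sum_k\Big(\ddot x^k+\sum_{i,j}\Gamma^k_{ij}(\gamma(t))\,\dot x^i\dot x^j\Big)\partial_k .$$
Hence $\gamma$ is a geodesic in $U$ if and only if the functions $x^k$ satisfy $\ddot x^k+\sum_{i,j}\Gamma^k_{ij}(x(t))\,\dot x^i\dot x^j=0$ for every $k$. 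Since the $\Gamma^k_{ij}$ are smooth on $\varphi(U)$, introducing the auxiliary variables $y^k=\dot x^k$ rewrites this as a first--order system on $\varphi(U)\times\R^n$ with smooth right--hand side. The classical existence and uniqueness theorem for ODEs then yields, for the initial data $x(t_0)=\varphi(p)$, $y(t_0)=\dd\varphi_p(v)$, an open interval $I\ni t_0$ and a solution on $I$, unique there; pulling it back by $\varphi^{-1}$ produces the desired geodesic with $\gamma(t_0)=p$ and $\gamma'(t_0)=v$.

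For the last assertion I would argue by connectedness. Let $\gamma_1\colon I_1\to M$ and $\gamma_2\colon I_2\to M$ be geodesics with $\gamma_j(t_0)=p$ and $\gamma_j'(t_0)=v$, and put $J=I_1\cap I_2$, an interval (hence connected) containing $t_0$. Set $A=\{t\in J:\gamma_1(t)=\gamma_2(t)\text{ and }\gamma_1'(t)=\gamma_2'(t)\}$. Then $A\neq\emptyset$ since $t_0\in A$, and $A$ is closed in $J$ by continuity of the curves and their velocities. To see that $A$ is open, take $t_1\in A$ and a chart around $q=\gamma_1(t_1)=\gamma_2(t_1)$: in these coordinates both curves solve the same first--order system obtained above with identical initial conditions at $t_1$, so the local uniqueness just proved forces them, and their derivatives, to agree on a neighborhood of $t_1$ in $J$. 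Thus $A$ is nonempty, open and closed in the connected set $J$, whence $A=J$ and $\gamma_1=\gamma_2$ on $I_1\cap I_2$.

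The computations are entirely routine; the only mild subtlety is rewriting the second--order geodesic equation as a first--order system so that the ODE theorem applies verbatim, and the standard open--closed bookkeeping for uniqueness on the common domain. I do not anticipate any real obstacle.
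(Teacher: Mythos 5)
Your proof is correct and follows exactly the route the paper sketches (it states the theorem after noting that a curve is a geodesic if and only if it satisfies a second--order ODE system in local coordinates, and that the classic ODE existence and uniqueness theorem then applies). Your expansion --- the coordinate computation with Christoffel symbols, the reduction to a first--order system, and the open--closed connectedness argument on $I_1\cap I_2$ for global uniqueness --- is the standard fill-in of those details and contains no gaps.
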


Furthermore, from uniqueness of the solution, it is possible to obtain a maximal geodesic with this prescribed initial data.

Another construction that involves covariant differentiation along curves is parallel translation. A vector field $X\in\mathfrak{X}(M)$ is said to be {\it parallel along} $\gamma$ if $\frac{\mathrm D}{\mathrm dt}X=0$. Thus, a geodesic $\gamma$ can be characterized as a curve whose tangent field $\gamma'$ is parallel along $\gamma$. A vector field is called {\it parallel} if it is parallel\index{Vector field!parallel} along every curve.

\begin{proposition}
Let $\gamma:I\rightarrow M$ be a curve, $t_0\in I$ and $v_0\in T_{\gamma(t_0)}M$. There exists a unique parallel vector field $X$ along $\gamma$ such that $X(t_0)=v_0$.
\end{proposition}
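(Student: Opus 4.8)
The plan is to reduce the defining equation $\frac{\mathrm D}{\mathrm dt}X=0$ to a linear system of ordinary differential equations, invoke the standard existence and uniqueness theory for such systems, and then patch the local solutions together along $\gamma$.

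First I would work locally. Fix $s\in I$ and a coordinate chart $(U,(x^1,\dots,x^n))$ with $\gamma(s)\in U$; there is a subinterval $J\subset I$ containing $s$ with $\gamma(J)\subset U$. Write $\gamma(t)=(x^1(t),\dots,x^n(t))$ and a vector field $X$ along $\gamma$ as $X(t)=\sum_j X^j(t)\,\partial_j$. Using the Leibniz rule and property (iii) of the previous proposition, together with the local expression $\nabla_{\partial_i}\partial_j=\sum_k\Gamma^k_{ij}\partial_k$, one computes
$$\frac{\mathrm D}{\mathrm dt}X=\sum_k\Big(\dot X^k+\sum_{i,j}\Gamma^k_{ij}(\gamma(t))\,\dot x^i(t)\,X^j(t)\Big)\partial_k.$$
Hence $X$ is parallel along $\gamma|_J$ if and only if
$$\dot X^k(t)=-\sum_{i,j}\Gamma^k_{ij}(\gamma(t))\,\dot x^i(t)\,X^j(t),\qquad k=1,\dots,n,$$
a linear homogeneous system with smooth coefficients on $J$. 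By the existence and uniqueness theorem for linear ODEs, for any prescribed initial value at any $t_1\in J$ there is a unique solution, and since the system is \emph{linear} this solution extends to all of $J$.

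Next I would globalize. Given $v_0\in T_{\gamma(t_0)}M$, let $A$ be the set of $t\in I$ for which there exists a parallel field along $\gamma$ restricted to the closed interval between $t_0$ and $t$ taking value $v_0$ at $t_0$. Then $A$ is nonempty and is both open and closed in $I$: openness is clear since one may always extend a short distance inside a chart, and closedness follows by choosing a chart about $\gamma(t_\ast)$ for a boundary point $t_\ast$, solving the linear system on the whole subinterval of $I$ mapped into that chart with initial data read off from the already-constructed field at an interior point, and using local uniqueness to see the two fields agree on the overlap. Since $I$ is connected, $A=I$, yielding a global parallel field $X$ with $X(t_0)=v_0$. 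Uniqueness is immediate: two such fields agree near $t_0$ by the ODE uniqueness, and the locus where they agree is open and closed in $I$, hence all of $I$. The only mildly delicate point is the gluing of solutions defined in overlapping charts, but this is handled entirely by local uniqueness and a connectedness argument, so no ideas beyond the local ODE theory are required.
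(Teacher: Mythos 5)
The paper does not actually prove this proposition; it merely remarks that ``the proof depends on basic ODE results'' and, at the start of the section, that proofs of results stated there may be found in standard Riemannian geometry textbooks. Your argument is correct and is precisely the standard proof the paper is pointing to: reduce $\frac{\mathrm D}{\mathrm dt}X=0$ in local coordinates (via the Leibniz rule and property (iii) of the covariant-derivative proposition) to a first-order linear homogeneous ODE system in the component functions $X^k$, invoke global existence and uniqueness for linear systems on the chart domain, and then patch together and prove uniqueness by the open-closed connectedness argument on $I$.
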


This vector field is called the {\it parallel translate}\index{Parallel translation} of $v_0$ along $\gamma$. Once more, the proof depends on basic ODE results. One can also prove that parallel translation is an isometry. 

Having existence and uniqueness of geodesics with prescribed initial data, an important question is how do geodesics change under perturbations of initial data. We now proceed to define a similar concept to the Lie exponential map, which will coincide with it when considering an appropriate metric on the Lie group, turning it into a Riemannian manifold.

For this, we consider the {\it geodesic flow}\index{Geodesic!flow} of a Riemannian manifold $(M,g)$. This is the flow $\varphi^\Gamma:U\subset \R\times TM\to TM$ defined in an open subset $U$ of $\R\times TM$ that contains $\{0\}\times TM$, of the unique vector field on the tangent bundle $\Gamma\in\mathfrak{X}(TM)$, whose integral curves are of the form $t\mapsto(\gamma(t),\gamma'(t))$, where $\gamma$ is a geodesic in $M$. This means that:
\begin{itemize}
\item[(i)] $\gamma(t)=\pi\circ \varphi^\Gamma(t,(p,v))$ is the geodesic with initial conditions $\gamma(0)=p$ and $\gamma'(0)=v$, where $\pi$ is the canonical projection;
\item[(ii)] $\varphi^\Gamma(t,(p,cv))=\varphi^\Gamma(ct,(p,v))$, for all $c\in\R$ such that this equation makes sense.
\end{itemize}

In fact, supposing that such vector field $\Gamma$ exists, one can obtain conditions in local coordinates that this field must satisfy (corresponding to the geodesic equation). Defining the vector field as its solutions, one may use Theorem~\ref{flow} to guarantee existence and smoothness of the geodesic flow $\varphi^\Gamma$.

\begin{definition}
Let $p\in M$. The {\it (Riemannian) exponential map}\index{Riemannian!exponential map} $\exp_p:B_\epsilon(0)\subset T_pM\rightarrow M$ is the map given by $$\exp_p v= \varphi^\Gamma(1,(p,v)).$$
\end{definition}

From Theorem~\ref{flow}, it is immediate that $\exp_p$ is smooth. Using the Inverse Function Theorem one can verify that for any $p\in M$, there exist a neighborhood $V$ of the origin in $T_pM$ and a neighborhood $U$ of $p\in M$, such that $\exp_p|_V:V\rightarrow U$ is a diffeomorphism. Such neighborhood $U$ is called a {\it normal neighborhood} of $p$.

The {\it length}\index{Length}\index{Riemannian!length} of a curve segment $\gamma:[a,b]\rightarrow M$ is defined as $$\ell_g(\gamma)=\int_a^b \sqrt{g(\gamma'(t),\gamma'(t))}\;\mathrm{d}t.$$

Define the {\it (Riemannian) distance}\index{Riemannian!distance} $d(p,q)$ for any pair of points $p,q\in M$ to be the infimum of lengths of all piecewise regular curve segments joining $p$ and $q$. Then $(M,d)$ is a metric space, and the topology induced by this distance coincides with the original topology from the atlas of $M$. Moreover, every geodesic locally minimizes $\ell$.

A Riemannian manifold is called {\it geodesically complete}\index{Complete!geodesically} if every maximal geodesic is defined for all $t\in\R$. It is not difficult to see that a sufficient condition for a manifold to be complete is to be compact. An important result is that the completeness notions above mentioned are indeed equivalent.

\begin{hrthm}\label{hopfrinow}\index{Theorem!Hopf--Rinow}
Let $M$ be a connected Riemannian manifold and $p\in M$. The following statements are equivalent.
\begin{itemize}
\item[(i)] $\exp_p$ is globally defined, that is, $\exp_p:T_pM\rightarrow M$;
\item[(ii)] Every closed bounded set in $M$ is compact;
\item[(iii)] $(M,d)$ is a complete metric space;
\item[(iv)] $M$ is geodesically complete.
\end{itemize}
If $M$ satisfies any (hence all) of the above items, each two points of $M$ can be joined by a minimal segment of geodesic. In particular, for each $x\in M$ the exponential map $\exp_x:T_xM\rightarrow M$ is surjective.
\end{hrthm}

An interesting result due to Myers and Steenrod~\cite{myerssteenrod} is that the group of isometries of a Riemannian manifold is a finite--dimensional Lie group that acts smoothly on $M$.

\begin{msthm}\label{theorem-Myers-Steenrod}\index{Theorem!Myers--Steenrod}\index{Isometry!group}\index{$\Iso(M)$}
Let $M$ be a Riemannian manifold and denote by $\Iso(M)$ its isometry group. Then every closed isometry subgroup of $\Iso(M)$ in the compact--open topology is a Lie group. In particular, $\Iso(M)$ is a Lie group.
\end{msthm}

\begin{remark}\index{Compact--open topology}
A subset $G\subset\Iso(M)$ is closed in the compact--open topology if the following condition holds. 
Let $\{f_{n}\}$ be a sequence of isometries in $G.$ Assume that, for each compact subset $K\subset M,$ the sequence $\{f_{n}\}$  converges uniformly in $K$ to a continuous map $f:M\rightarrow M$ (with respect to the distance $d$). Then $f\in G$.  
%Let $K\subset M$ be a compact subset and $\{f_{n}\}$ a sequence of isometries in $G$ that converges uniformly in $K$ to a continuous map %$f:M\rightarrow M$ (with respect to the distance $d$). Then $f\in G$.
\end{remark}

Let us recall a special class of vector fields on $M$ that is closely related to $\Iso(M)$. A \emph{Killing vector field}\index{Killing!vector field}\index{Vector field!Killing} is a vector field whose local flow is a local isometry.

\begin{theorem}
The set $\mathfrak{iso}(M)$ of Killing fields on $M$ is a Lie algebra. In addition, if $M$ is complete, then $\mathfrak{iso}(M)$ is the Lie algebra of  $\mathrm{Iso}(M)$.
\end{theorem}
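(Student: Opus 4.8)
The plan is to establish the two assertions separately. For the first, I would use the infinitesimal characterization of Killing fields: $X\in\mathfrak{X}(M)$ is a Killing field if and only if $\mathcal{L}_X g=0$, where $\mathcal{L}_X$ denotes the Lie derivative. One implication follows by differentiating $(\varphi^X_t)^*g=g$ at $t=0$; the other from the identity $\frac{\dd}{\dd t}(\varphi^X_t)^*g=(\varphi^X_t)^*(\mathcal{L}_X g)$, which shows that $\mathcal{L}_X g=0$ makes $(\varphi^X_t)^*g$ constant, hence equal to $g$. Granting this characterization, $\R$--linearity of $X\mapsto\mathcal{L}_X g$ shows at once that $\mathfrak{iso}(M)$ is a linear subspace of $\mathfrak{X}(M)$, and the operator identity $\mathcal{L}_{[X,Y]}=\mathcal{L}_X\circ\mathcal{L}_Y-\mathcal{L}_Y\circ\mathcal{L}_X$ on tensor fields shows that $\mathcal{L}_X g=\mathcal{L}_Y g=0$ implies $\mathcal{L}_{[X,Y]}g=0$; thus $\mathfrak{iso}(M)$ is closed under the bracket of vector fields, and skew--symmetry together with the Jacobi identity are inherited from $\mathfrak{X}(M)$.

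For the second assertion, by the Myers--Steenrod Theorem (Theorem~\ref{theorem-Myers-Steenrod}) the group $G:=\Iso(M)$ is a Lie group acting smoothly on $M$. I would define a linear map $\Phi:\mathrm{Lie}(G)\to\mathfrak{iso}(M)$ sending $\xi$ to the fundamental vector field $\xi^*$, determined by $\xi^*(p)=\frac{\dd}{\dd t}\big|_{t=0}\exp(t\xi)\cdot p$, whose flow is $(t,p)\mapsto\exp(t\xi)\cdot p$; since each $\exp(t\xi)$ is an isometry, $\xi^*$ is a Killing field, and $\Phi$ is injective because $\xi^*\equiv 0$ forces $\exp(t\xi)=\id$ for all $t$, hence $\xi=0$. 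The remaining point --- surjectivity of $\Phi$ --- is where completeness of $M$ is used. Given $X\in\mathfrak{iso}(M)$, I would rewrite $\mathcal{L}_X g=0$ by means of the metric compatibility of $\nabla$ as the Killing equation $g(\nabla_Y X,Z)+g(Y,\nabla_Z X)=0$, and specialize to $Y=Z=X$ to obtain that $|X|$ is constant along each integral curve of $X$. Hence a maximal integral curve defined on a bounded interval has finite length, so by the Hopf--Rinow Theorem (Theorem~\ref{hopfrinow}) it converges at the endpoint and can be prolonged, forcing it to be defined on all of $\R$; thus $X$ is complete. Then $t\mapsto\varphi^X_t$ is a one--parameter group of isometries, continuous in the compact--open topology since the flow of $X$ is smooth, hence a smooth homomorphism $\R\to G$ by Corollary~\ref{homosmooth}, and therefore of the form $t\mapsto\exp(t\xi)$ for a unique $\xi\in\mathrm{Lie}(G)$; comparing flows gives $\Phi(\xi)=X$. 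Finally, the standard identity $[\xi^*,\eta^*]=\pm[\xi,\eta]^*$ for fundamental vector fields of a left action shows $\Phi$ is a Lie algebra (anti--)isomorphism, which is the asserted identification of $\mathfrak{iso}(M)$ with the Lie algebra of $\Iso(M)$.

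I expect the genuine obstacle to be exactly the completeness of a Killing field on a complete manifold: the rest is formal work with Lie derivatives, flows, and the Lie group theory already developed, but this step is where the hypothesis on $M$ really enters, through the constancy of $|X|$ along integral curves combined with Hopf--Rinow.
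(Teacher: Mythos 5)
The paper itself offers no proof of this theorem: it appears in the review section on Riemannian geometry, where the authors defer all proofs to standard references (do Carmo, Jost, Petersen, Lee). So there is no internal argument to compare against; your proposal stands or falls on its own merits, and it is correct.

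Your argument is the standard one and it is sound throughout. The first half correctly reduces the Lie algebra closure of $\mathfrak{iso}(M)$ to the derivation identity $\mathcal{L}_{[X,Y]}=[\mathcal{L}_X,\mathcal{L}_Y]$ applied to the metric tensor. For the second half, you correctly identify the crux: a Killing field on a complete manifold is complete, which follows from the constancy of $|X|$ along integral curves (your specialization $Y=Z=X$ in Proposition~\ref{proposition-eq-Killing}) combined with Hopf--Rinow to prolong any maximal integral curve defined on a bounded interval. Once completeness of $X$ is known, the flow is a genuine one--parameter group of isometries, and the passage from a continuous to a smooth homomorphism $\R\to\Iso(M)$ via Corollary~\ref{homosmooth} is exactly the right tool from the paper; comparing flows then identifies $X$ as a fundamental vector field. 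Your remark that the fundamental-vector-field map is an anti-homomorphism is consistent with the paper's own observation following Proposition~\ref{proposition-campoXxi}. The only small point worth making explicit is that injectivity of $\Phi$ uses effectiveness of the $\Iso(M)$-action on $M$, which of course holds by construction. Your closing diagnosis — that completeness of the Killing field is the genuine obstacle and everything else is formal — is accurate.
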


\begin{proposition}\label{proposition-eq-Killing}
Let $M$ be a Riemannian manifold. A vector field $X\in\mathfrak{X}(M)$ is a Killing field if and only if $$g(\nabla_{Y}X,Z)=-g(\nabla_{Z}X,Y),$$ for all $Y,Z \in \mathfrak{X}(M)$.
\end{proposition}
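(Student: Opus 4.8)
The plan is to show that $X$ being a Killing field is equivalent to the vanishing of the Lie derivative $\mathcal{L}_X\metric$, and then to rewrite $\mathcal{L}_X\metric$ in terms of the Levi--Civita connection, which will produce exactly the displayed skew--symmetry.

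First I would recall that, by definition, $X$ is a Killing field precisely when its local flow $\varphi_t$ consists of local isometries, i.e.\ $\varphi_t^*\metric=\metric$ wherever $\varphi_t$ is defined. Using $\varphi_0=\id$ together with the standard identity $\frac{\dd}{\dd t}\varphi_t^*\metric=\varphi_t^*(\mathcal{L}_X\metric)$, this is equivalent to $\mathcal{L}_X\metric=0$: differentiating $\varphi_t^*\metric=\metric$ at $t=0$ gives one implication, and conversely if $\mathcal{L}_X\metric=0$ then $t\mapsto\varphi_t^*\metric$ has vanishing derivative and initial value $\metric$, hence is constantly equal to $\metric$. So it suffices to prove that for all $Y,Z\in\mathfrak{X}(M)$,
\[
(\mathcal{L}_X\metric)(Y,Z)=\metric(\nabla_Y X,Z)+\metric(\nabla_Z X,Y).
\]

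To do this I would expand the left side by the Leibniz rule for the Lie derivative of a $(0,2)$--tensor, recalling $\mathcal{L}_X Y=[X,Y]$, to get $(\mathcal{L}_X\metric)(Y,Z)=X\,\metric(Y,Z)-\metric([X,Y],Z)-\metric(Y,[X,Z])$. Then I substitute $X\,\metric(Y,Z)=\metric(\nabla_X Y,Z)+\metric(Y,\nabla_X Z)$ (compatibility of $\nabla$ with $\metric$) and $[X,Y]=\nabla_X Y-\nabla_Y X$, $[X,Z]=\nabla_X Z-\nabla_Z X$ (vanishing torsion of $\nabla$); the terms containing $\nabla_X Y$ and $\nabla_X Z$ cancel, and using symmetry of $\metric$ what remains is precisely $\metric(\nabla_Y X,Z)+\metric(\nabla_Z X,Y)$. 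Hence $\mathcal{L}_X\metric=0$ if and only if $\metric(\nabla_Y X,Z)=-\metric(\nabla_Z X,Y)$ for all $Y,Z$, which is the assertion. This last algebraic step is entirely routine; the only point deserving care — and thus the main obstacle — is the equivalence between $X$ being Killing and $\mathcal{L}_X\metric=0$, which rests on the flow identity $\frac{\dd}{\dd t}\varphi_t^*\metric=\varphi_t^*(\mathcal{L}_X\metric)$ and on the observation that, although $\varphi_t$ is in general only a locally defined flow, the whole argument is carried out pointwise and so is unaffected by this.
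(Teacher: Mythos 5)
The paper does not prove this proposition; it is stated in Section~\ref{sec:riemgeom} among the Riemannian-geometry preliminaries, with a blanket pointer to standard textbooks, so there is no internal argument to compare against. Your proof is correct and is the standard one: the equivalence between the paper's definition of a Killing field (local flow consists of local isometries) and $\mathcal{L}_X g=0$ rests, as you say, on the identity $\frac{\dd}{\dd t}\varphi_t^*g=\varphi_t^*(\mathcal{L}_X g)$ applied on a small time interval and neighborhood around each point, and the computation
$$(\mathcal{L}_X g)(Y,Z)=X\,g(Y,Z)-g([X,Y],Z)-g(Y,[X,Z])=g(\nabla_Y X,Z)+g(\nabla_Z X,Y)$$
follows exactly as you describe from metric compatibility and vanishing torsion. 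You correctly flag the only point requiring care, namely that $\varphi_t$ is only a local flow, and correctly note that the argument is local/pointwise so this causes no trouble. No gaps.
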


We now recall a few results on submanifolds that will be used later. Let $(M,g)$ be a Riemannian manifold. A submanifold $i:L\hookrightarrow M$ with the pull--back metric\footnote{Also called \emph{induced} metric.} $g_{L}=i^{*}g$ is called a \emph{Riemannian submanifold}\index{Riemannian!submanifold}\index{Submanifold!Riemannian} of $M$.

Two vector fields $X$ and $Y$ of $L$ can be locally extended to vector fields $\widetilde{X}$ and $\widetilde{Y}$ of $M$. It is then possible to prove that the Riemannian connection associated to $g_{L}$ coincides with $(\nabla_{\widetilde{X}}\widetilde{Y})^\top$, i.e., the component of $\nabla_{\widetilde{X}}\widetilde{Y}$ that is tangent to $L$. It is possible to define a bilinear symmetric form $B_\xi$ that measures the difference between such connections, called the \emph{second fundamental form}\index{Second fundamental form} of $L$. For each normal\footnote{The \emph{normal} space to $L$ will be denoted $\nu_{p}L$} vector $\xi$ to $L$ at $p$, $$B_{\xi}(X,Y)_{p}=g_{p}(\xi,\nabla_{\widetilde{X}}\widetilde{Y}-(\nabla_{\widetilde{X}}\widetilde{Y})^\top).$$

If $B_{\xi}$ of $L$ vanishes for all $\xi$, $L$ is called a \emph{totally geodesic}\index{Totally geodesic submanifold}\index{Submanifold!totally geodesic} submanifold. It is not difficult to prove that this is equivalent to each geodesic of $L$ being a geodesic of $M$. In general, to be totally geodesic is a very strong property. Nevertheless, this class of submanifolds will appear in a natural way along these notes (see Exercise~\ref{exercise-subgrouptotalmentegeoesico}, Chapter~\ref{chap4} and~\ref{chap5}).

Since $B_{\xi}$ is symmetric, there exists a self--adjoint operator $\mathcal{S}_{\xi}$ with respect to $g$, called the \emph{shape operator},\index{Shape operator} satisfying $$g(\mathcal{S}_{\xi}X, Y)=B_{\xi}(X,Y).$$ It is not difficult to prove that $\mathcal{S}_{\xi}(X)=(-\nabla_{X}\widetilde{\xi})^\top$ where $\widetilde{\xi}$ is any normal field that extends $\xi$. Eigenvalues and eigenvectors of $\mathcal{S}_{\eta}(X)$ are respectively called \emph{principal curvatures}\index{Curvature!principal} and \emph{principal directions}\index{Principal direction} (see Remark~\ref{remark-gausscurvature}).

An essential concept in Riemannian geometry is {\it (Riemannian) curvature}\index{Riemannian!curvature}\index{Curvature}. Recall that $\nabla$ denotes the Levi--Civita connection of a given metric $g$ on $M$. Then the curvature tensor on $(M,g)$ is defined as the $(1,3)$--tensor field given by the following expression, for all $X,Y,Z\in\mathfrak{X}(M)$.
\begin{eqnarray*}
R(X,Y)Z &=&\nabla_{[X,Y]}Z -\nabla_X\nabla_Y Z+\nabla_Y\nabla_X Z \\
&=&\nabla_{[X,Y]}Z -[\nabla_X,\nabla_Y]Z.
\end{eqnarray*} Furthermore, it is possible to use the metric to deal with this tensor as a $(0,4)$--tensor, given for each $X,Y,Z,W\in\mathfrak{X}(M)$ by $$R(X,Y,Z,W)= g(R(X,Y)Z,W).$$

There are several possible interpretations of curvature. The first one is that it measures second covariant derivatives' failure to commute.

There are other less trivial interpretations.
The curvature tensor is  part of the so--called {\it Jacobi equation}\index{Jacobi equation} along a geodesic $\gamma$, given by$$\frac{\mathrm D}{\mathrm dt}\frac{\mathrm D}{\mathrm dt}J+R(\gamma'(t),J(t))\gamma'(t)=0.$$ This is an ODE whose solutions $J$ are vector fields along $\gamma$ (called {\it Jacobi fields}). Such vector fields describe how quickly two geodesics with the same starting point move away one from each other.
%FIGURA: CURVATURA POSITIVA, FLAT E NEGATIVA, COM GEODESICAS SE AFASTANDO E CAMPOS VARIACIONAIS
Riemannian curvature can also be used to describe how parallel transport along a loop differs from the identity.
Finally, it also measures non integrability of a special kind of distribution defined in the frame bundle. These fundamental interpretations of Riemannian curvature are explained for instance in Jost~\cite{Jost} and Bishop and Crittenden~\cite{Bishop}.

There are many important symmetries of this tensor that we recall below. For each $X,Y,Z,W\in\mathfrak{X}(M)$,
\begin{itemize}
\item[(i)] $R$ is skew--symmetric in the first two and last two entries: $$R(X,Y,Z,W)=-R(Y,X,Z,W)=R(Y,X,W,Z);$$
\item[(ii)] $R$ is symmetric between the first two and last two entries: $$R(X,Y,Z,W)=R(Z,W,X,Y);$$
\item[(iii)] $R$ satisfies a cyclic permutation property, the {\it Bianchi first identity}:\index{Bianchi first identity} $$R(X,Y)Z+R(Z,X)Y+R(Y,Z)X=0.$$
\end{itemize}

Using the curvature tensor, we can define the {\it sectional curvature}\index{Curvature!sectional} of the plane spanned by (linearly independent) vectors $X$ and $Y$ as
\begin{equation}\label{seccurvature}
K(X,Y)=\frac{R(X,Y,X,Y)}{g(X,X)g(Y,Y)-g(X,Y)^2},
\end{equation} and it is possible to prove that $K(X,Y)$ depends only on the {\em plane spanned} by $X$ and $Y$, and not directly on the vectors $X$ and $Y$.

\begin{remark}\label{remark-gausscurvature}
Let $M$ be an embedded surface in $\R^3$ with the induced metric. According to the Gauss equation, sectional curvature $K$ of $M$ coincides with Gaussian curvature of $M$. Recall that {\it Gaussian curvature}\index{Curvature!Gaussian} is given by the product of eigenvalues $\lambda_1\lambda_2$ (principal curvatures) of the shape operator $\mathcal{S}_{\xi}(\cdot)=-\nabla_{(\cdot)} \xi$, where $\xi$ is a unitary normal vector to $M$.
%FIGURA DO PARABOLOIDE QUE APROXIMA
An important fact in differential geometry is that each embedded surface with nonzero Gaussian curvature is, up to rigid motions, locally given by the graph of $$f(x_1,x_2)=\tfrac{1}{2}\left(\lambda_1 x_1^2+\lambda_2 x_2^2\right)+O(\|x\|^3).$$ In other words,
if $K>0$, respectively $K<0$, $M$ is locally a small perturbation of elliptic, respectively hyperbolic, paraboloid.
\end{remark}

%Let $M$ be an embedded hypersurface in $\R^{n+1}$ with the induced metric. According to the Gauss equation, sectional curvature $K$ of $M$ coincides with Gaussian curvature of $M$. Recall that {\it Gaussian curvature}\index{Curvature!Gaussian} is given by the product of eigenvalues $\lambda_i$ of the shape operator $\mathcal{S}_{\xi}(\cdot)=-\nabla_{(\cdot)} \xi$, where $\xi$ is a unitary normal vector to $M$. These are the principal curvatures of $M$ as a submanifold of $\R^{n+1}$.

%An important fact in submanifold geometry is that such an embedded hypersurface is, up to rigid motions, locally given by the graph of $$f(x_1,\dots,x_n)=\tfrac{1}{2}\sum_{i=1}^n\lambda_i x_i^2+O(\|x\|^3).$$ In particular, if $M$ is an embedded surface in $\R^3$, then if $K>0$, respectively $K<0$, $M$ is locally a small perturbation of elliptic, respectively hyperbolic, paraboloid.

It would be interesting to summarize information contained in the curvature $(0,4)$--tensor $R$ constructing simpler tensors. The next curvature we recall is {\it Ricci curvature}. It should be thought as an approximation of the Laplacian of the metric, i.e., a measure of the volume distortion on $M$ (see Morgan and Tian~\cite{MorganTian}). It is named after the Italian mathematician Gregorio Ricci--Curbastro that had Levi--Civita as student and collaborator in the beginning of the twentieth century.

Ricci curvature\index{Curvature!Ricci}\index{$\Ric$} is a $(0,2)$--tensor field given by the trace of the curvature endomorphism on its first and last indexes. More precisely, if $(e_1,\dots,e_n)$ is an orthonormal basis of $T_pM$ and $X,Y\in T_pM$, \begin{eqnarray*}
\Ric(X,Y) &=& \trace R(X,\cdot\,)Y \\
&=& \sum_{i=1}^n g(R(X,e_i)Y,e_i).
\end{eqnarray*}

A Riemannian metric is called an {\it Einstein metric}\index{Metric!Einstein}\index{Einstein!manifold} if its Ricci tensor is a scalar multiple of the metric at each point, that is, if for all $p\in M$, $$\Ric_p(X,Y)=\lambda(p) g_p(X,Y), \; \mbox{ for all } X,Y\in T_pM.$$ %It is possible to prove that $\lambda$ does not depend on $p$, and this constant is called the {\it Einstein constant}\index{Einstein!constant} of $g$.

It is easy to see that if $(M,g)$ has constant sectional curvature $k$, then it is Einstein, with constant $\lambda(p)=(\dim M -1)k$. The converse is not true. Indeed there are very important Einstein metrics that do not have constant curvature, e.g., $\C P^n$ with the Fubini--Study metric. As we will see later, compact Lie groups with simple Lie algebras admit Einstein metrics.

\begin{remark}
Einstein metrics appear in general relativity as solutions of the Einstein equation\index{Einstein!equation in vacuum} in vacuum $$\Ric=\left(\tfrac{1}{2}\scal-\Lambda\right)g,$$ where $\scal$ is the {\it scalar curvature} and $\Lambda$ is the cosmological constant. More about Einstein equation and its impact in Physics can be found in Misner, Thorne and Wheeler~\cite{gravitation} or Besse~\cite{besse}.
\end{remark}

We end this section stating an important result that connects Ricci curvature and compactness.

\begin{bmthm}\label{bonnetmyers}\index{Theorem!Bonnet--Myers}
Let $(M,g)$ be a connected complete $n$--dimensional Riemannian manifold, with $n\geq 2$. Suppose that there exists $r>0$ such that $\Ric\geq\frac{n-1}{r^{2}} g$. Then the following hold.
\begin{itemize}
\item[(i)] The diameter of $M$ is less or equal to $\pi r$. In particular, $M$ is compact;
\item[(ii)] The universal cover of $M$ is compact, hence $\pi_1(M)$ is finite.
\end{itemize}
\end{bmthm}

\section{Bi--invariant metrics}

The main goal of this section is to study a special Riemannian structure on Lie groups given by \emph{bi--invariant} metrics. Notation for the Riemannian metric (on Lie groups) will be that of inner product $\langle\cdot,\cdot\rangle_p$ in the tangent space $T_pG$ instead of the tensorial notation $g$, since $g$ will be used as the typical element of the group $G$.

\begin{definition}
A Riemannian metric $\langle\cdot,\cdot\rangle$ on a Lie group $G$ is {\it left--invariant}\index{Metric!left--invariant}\index{Left--invariant!metric} if $L_g$ is an isometry for all $g\in G$, that is, if for all $g,h\in G$ and $X,Y\in T_hG$, $$\langle \dd(L_g)_h X,\dd(L_g)_h Y\rangle_{gh}=\langle X,Y\rangle_h.$$
\end{definition}

Similarly, {\it right--invariant}\index{Metric!right--invariant}\index{Right--invariant!metric} metrics are those that turn the right translations $R_g$ into isometries. Note that given an inner product $\langle\cdot,\cdot\rangle_e$ in $T_{e}G$, it is possible to define a left--invariant metric on $G$ by setting for all $g\in G$ and $X,Y\in T_gG$, $$\langle X,Y\rangle_g=\langle \dd(L_{g^{-1}})_gX,\dd(L_{g^{-1}})_gY\rangle_e,$$ and the right--invariant case is analogous.

\begin{definition}
A {\it bi--invariant metric}\index{Metric!bi--invariant}\index{Bi--invariant!metric} on a Lie group $G$ is a Riemannian metric that is simultaneously left and right--invariant.
\end{definition}

The natural extension of these concepts to $k$--forms is that a $k$--form $\omega\in\Omega^k(G)$ is {\it left--invariant}\index{Left--invariant!$k$--form} if it coincides with its pullback by left translations, i.e., $L_g^*\omega=\omega$ for all $g\in G$. {\it Right--invariant}\index{Right--invariant!$k$--form} and {\it bi--invariant}\index{Bi--invariant!$k$--form} forms are analogously defined. Once more, given any $\omega_e\in\wedge^k T_{e}G$, it is possible to define a left--invariant $k$--form $\omega\in\Omega(G)$ by setting for all $g\in G$ and $X_i\in T_gG$, $$\omega_g(X_1,\dots,X_k)=\omega_e(\dd(L_{g^{-1}})_g X_1,\dots,\dd(L_{g^{-1}})_g X_k),$$ and the right--invariant case is once more analogous.

\begin{proposition}\label{cptbi}
Let $G$ be a compact $n$--dimensional Lie group. Then $G$ admits a bi--invariant metric.
\end{proposition}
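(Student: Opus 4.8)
The plan is to obtain a bi-invariant metric by \emph{averaging} over $G$, which is possible precisely because $G$ is compact. The underlying reduction is the standard fact that a left-invariant metric on $G$ is bi-invariant if and only if the inner product it induces on $\mathfrak{g}=T_eG$ is invariant under the adjoint action $\Ad\colon G\to\Aut(\mathfrak{g})$; so it suffices to produce an $\Ad(G)$-invariant inner product on $\mathfrak{g}$ and then spread it around $G$ by left translations.

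First I would fix a volume form. Choosing any nonzero $\omega_e\in\wedge^n T_eG$ and extending it left-invariantly, as in the construction of left-invariant $k$-forms above, yields a nowhere-vanishing $n$-form $\omega\in\Omega^n(G)$ with $L_g^*\omega=\omega$ for all $g\in G$; it determines a density on $G$, and since $G$ is compact, $\int_G F\,\omega$ is a finite real number for every continuous $F\colon G\to\R$, strictly positive when $F>0$. (If $G$ is not connected one simply does this on each component.) Then, starting from an arbitrary inner product $\langle\cdot,\cdot\rangle_0$ on $\mathfrak{g}$, I would set
$$Q(X,Y)=\int_G \langle \Ad(g^{-1})X,\Ad(g^{-1})Y\rangle_0\,\omega,\qquad X,Y\in\mathfrak{g}.$$
The integrand is smooth in $g$ because $\Ad$ is smooth, so $Q$ is well defined; it is symmetric and positive-definite, being an average of inner products ($Q(X,X)>0$ for $X\neq0$, since then the integrand is a positive continuous function). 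For $\Ad(G)$-invariance, fix $h\in G$; since $\Ad$ is a homomorphism, $Q(\Ad(h)X,\Ad(h)Y)=\int_G\langle\Ad(g^{-1}h)X,\Ad(g^{-1}h)Y\rangle_0\,\omega(g)$, and the substitution $g=hg'$ together with left-invariance of $\omega$ (so $\int_G F(hg')\,\omega(g')=\int_G F(g)\,\omega(g)$) plus $(hg')^{-1}h=g'^{-1}$ gives $Q(\Ad(h)X,\Ad(h)Y)=Q(X,Y)$.

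Finally I would transport $Q$ to a left-invariant metric $\langle\cdot,\cdot\rangle$ on $G$ via $\langle u,v\rangle_g=Q\big(\dd(L_{g^{-1}})_g u,\dd(L_{g^{-1}})_g v\big)$, and check right-invariance. By left-invariance it is enough to show that $\dd(R_h)_e\colon T_eG\to T_hG$ is an isometry for each $h\in G$. Writing $L_{h^{-1}}\circ R_h=a_{h^{-1}}$, conjugation by $h^{-1}$, the chain rule gives $\dd(L_{h^{-1}})_h\circ\dd(R_h)_e=\dd(a_{h^{-1}})_e=\Ad(h^{-1})$, hence
$$\langle \dd(R_h)_e u,\dd(R_h)_e v\rangle_h=Q\big(\Ad(h^{-1})u,\Ad(h^{-1})v\big)=Q(u,v)=\langle u,v\rangle_e,$$
using the $\Ad$-invariance of $Q$ established above. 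Therefore $\langle\cdot,\cdot\rangle$ is simultaneously left- and right-invariant, i.e., bi-invariant.

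The main obstacle is the integration step: one must set up integration against a nowhere-vanishing top form (or its associated density, to sidestep orientability questions) on the manifold $G$, and—crucially—invoke compactness of $G$ to guarantee that the integral defining $Q$ converges and that $Q$ is strictly positive-definite. Everything else is a routine change-of-variables manipulation exploiting the left-invariance of $\omega$ and the fact that conjugation differentiates to $\Ad$.
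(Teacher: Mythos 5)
Your proof is correct, and it takes a genuinely different route from the paper's. The paper starts from a \emph{right}-invariant metric and a right-invariant volume form, and averages the metric tensor over left translations, $\leqslant X,Y\geqslant_x=\int_G\langle\dd L_gX,\dd L_gY\rangle_{gx}\,\omega$; it then checks left-invariance by a change of variables (using right-invariance of $\omega$) and right-invariance directly (using right-invariance of the original metric). You instead invoke the standard reduction that a left-invariant metric is bi-invariant if and only if the inner product it induces on $\mathfrak g=T_eG$ is $\Ad(G)$-invariant, manufacture such an inner product by averaging an \emph{arbitrary} inner product on $\mathfrak g$ over the adjoint action, and then spread it left-invariantly over $G$. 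The conceptual gain of your version is that all the geometry is localized at the identity: you need no starting invariance for the inner product, the bi-invariance check collapses to the single identity $\dd(L_{h^{-1}})_h\circ\dd(R_h)_e=\Ad(h^{-1})$, and the $\Ad$-invariance of $Q$ is verified by a one-line change of variables. The paper's version, by contrast, works tensorially across all of $G$ and requires tracking the base point $gx$ throughout, but it avoids having to state (or use) the $\Ad$-invariance criterion explicitly. Both proofs ultimately rest on the same two ingredients: a bi-invariant averaging mechanism supplied by an invariant volume form, and compactness of $G$ to guarantee convergence and strict positive-definiteness.

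Two small remarks on your write-up. First, when you say ``by left-invariance it is enough to show that $\dd(R_h)_e$ is an isometry,'' the full justification is that $L_{(gh)^{-1}}\circ R_h\circ L_g=a_{h^{-1}}$ for every $g$, so $\dd(L_{(gh)^{-1}})_{gh}\circ\dd(R_h)_g=\Ad(h^{-1})\circ\dd(L_{g^{-1}})_g$, and then left-invariance plus $\Ad$-invariance of $Q$ transports the computation at $e$ to an arbitrary base point; it is worth spelling this out. Second, your choice of $\Ad(g^{-1})$ inside the integrand is exactly what is needed to match the \emph{left}-invariance of $\omega$ under the substitution $g=hg'$; had you used $\Ad(g)$ you would have needed a right-invariant $\omega$ instead. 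You made the consistent choice, but it is a point where sign conventions can silently break the argument.
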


\begin{proof}
Let $\omega$ be a right--invariant volume form\footnote{i.e., $\omega\in\Omega^n(G)$ is a non zero $n$--form, see Section~\ref{sec:appdiffint}.} on $G$ and $\langle\cdot,\cdot\rangle$ a right--invariant metric. Define for all $X,Y\in T_xG$, $$\leqslant X,Y\geqslant_x=\int_G \langle \dd L_gX,\dd L_gY\rangle_{gx}\omega.$$

First, we claim that $\leqslant\cdot,\cdot\geqslant$ is left--invariant. Indeed,
\begin{eqnarray}\label{linv1}
\leqslant \dd L_hX,\dd L_hY\geqslant_{hx} &=& \int_G \langle \dd L_g(\dd L_hX),\dd L_g(\dd L_hY)\rangle_{g(hx)}\omega \nonumber \\
&=& \int_G \langle \dd L_{gh}X,\dd L_{gh}Y\rangle_{(gh)x}\omega.
\end{eqnarray}

Fix $X,Y\in T_xG$ and let $f(g)=\langle \dd L_gX,\dd L_gY\rangle_{gx}$. Then
{\allowdisplaybreaks
\begin{eqnarray}\label{linv2}
\int_G \langle \dd L_{gh}X,\dd L_{gh}Y\rangle_{(gh)x}\omega &=& \int_G f(gh)\omega \nonumber \\
&=& \int_G R^*_h(f\omega) \nonumber \\
&=& \int_G f\omega \\
&=& \int_G \langle \dd L_gX,\dd L_gY\rangle_{gx}\omega \nonumber \\
&=& \leqslant X,Y\geqslant_x. \nonumber
\end{eqnarray}}From \eqref{linv1} and \eqref{linv2}, it follows that $\leqslant\cdot,\cdot\geqslant$ is left--invariant. In addition, we claim that $\leqslant\cdot,\cdot\geqslant$ is also right--invariant. Indeed,
{\allowdisplaybreaks
\begin{eqnarray*}
\leqslant \dd R_h X,\dd R_h Y \geqslant_{xh}
&=& \int_G \langle \dd L_g(\dd R_h X), \dd L_g(\dd R_h Y) \rangle_{g(xh)} \ \omega \\
&=& \int_G \langle \dd R_h \dd L_g X, \dd R_h \dd L_g Y \rangle_{(gx) h} \ \omega \\
&=& \int_G \langle \dd L_gX, \dd L_{g} Y \rangle_{gx} \ \omega \\
&=& \leqslant X,Y\geqslant_x.\qedhere
\end{eqnarray*}}
\end{proof}

\begin{exercise}\label{ex-metrica-su}\index{Bi--invariant!metric of $\SU(n)$}\index{Lie algebra!of $\SU(n)$}\index{$\SU(n)$}\index{$\mathfrak{su}(n),\mathfrak{u}(n)$}
Consider $$\mathfrak{su}(n)=\{A\in\mathfrak{gl}(n,\C):A^*+A=0, \trace A=0\},$$ the Lie algebra of $\SU(n)$ (see Exercise~\ref{ex-classiclie}). Verify that the inner product in $T_e\SU(n)$ defined by $$\langle X,Y\rangle=\mathrm{Re}\trace(XY^*)$$ can be extended to a bi--invariant metric.
\end{exercise}

\begin{proposition}\label{curvatureLie}
Let $G$ be a Lie group endowed with a bi--invariant metric $\langle\cdot,\cdot\rangle$, and $X,Y,Z\in\mathfrak{g}$. Then the following hold.
\begin{itemize}
\item[(i)] $\langle [X,Y],Z\rangle=-\langle Y,[X,Z]\rangle$;
\item[(ii)] $\nabla_X Y=\tfrac{1}{2}[X,Y]$;
\item[(iii)] $R(X,Y)Z=\tfrac{1}{4}[[X,Y],Z]$;
\item[(iv)] $R(X,Y,X,Y)=\tfrac{1}{4}\|[X,Y]\|^2$.
\end{itemize}
In particular, the sectional curvature \eqref{seccurvature} is non negative.
\end{proposition}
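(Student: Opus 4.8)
The plan is to prove the four identities in the order stated, since each feeds into the next, and then read off the sign of the sectional curvature from (iv). For (i) I would first observe that a bi--invariant metric is precisely one whose inner product on $\mathfrak{g}\cong T_eG$ is $\Ad(g)$--invariant for every $g\in G$: since $\Ad(g)=\dd(L_g)_{g^{-1}}\circ\dd(R_{g^{-1}})_e$, applying invariance of the metric under $L_g$ and then under $R_{g^{-1}}$ gives $\langle\Ad(g)X,\Ad(g)Y\rangle=\langle X,Y\rangle$ for all $X,Y\in\mathfrak{g}$. Taking $g=\exp(tX)$ and differentiating at $t=0$, using \eqref{ad} together with Proposition~\ref{ad[]} (so that the derivative of $t\mapsto\Ad(\exp(tX))$ at $t=0$ is $\ad(X)=[X,\cdot]$), one obtains $\langle[X,Y],Z\rangle+\langle Y,[X,Z]\rangle=0$, which is (i).

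For (ii) I would apply the Koszul formula \eqref{connectionformula} to \emph{left--invariant} vector fields $X,Y,Z\in\mathfrak{g}$. Because the metric is left--invariant and the fields are left--invariant, each of $\langle X,Y\rangle$, $\langle Y,Z\rangle$, $\langle Z,X\rangle$ is a constant function on $G$, so the three directional--derivative terms drop out and \eqref{connectionformula} reduces to $\langle\nabla_Y X,Z\rangle=-\tfrac12\big(\langle[X,Y],Z\rangle+\langle[X,Z],Y\rangle+\langle[Y,Z],X\rangle\big)$. By (i) the first two bracket terms cancel, and a second application of (i) identifies the remaining term with $\tfrac12\langle[Y,X],Z\rangle$. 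As $Z$ is arbitrary this yields $\nabla_Y X=\tfrac12[Y,X]$, i.e.\ $\nabla_X Y=\tfrac12[X,Y]$.

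For (iii) I would substitute (ii) into the definition $R(X,Y)Z=\nabla_{[X,Y]}Z-\nabla_X\nabla_Y Z+\nabla_Y\nabla_X Z$ for left--invariant fields, obtaining $\tfrac12[[X,Y],Z]-\tfrac14[X,[Y,Z]]+\tfrac14[Y,[X,Z]]$, and then use the Jacobi identity in the form $[X,[Y,Z]]-[Y,[X,Z]]=[[X,Y],Z]$ to collapse this to $\tfrac14[[X,Y],Z]$. For (iv), using (iii) I get $R(X,Y,X,Y)=\langle R(X,Y)X,Y\rangle=\tfrac14\langle[[X,Y],X],Y\rangle$, and a final use of (i) (with $X$ in the derivation slot) rewrites $\langle[[X,Y],X],Y\rangle=-\langle[X,[X,Y]],Y\rangle=\langle[X,Y],[X,Y]\rangle=\|[X,Y]\|^2$. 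Finally, for linearly independent $X,Y$ the denominator $\langle X,X\rangle\langle Y,Y\rangle-\langle X,Y\rangle^2$ in \eqref{seccurvature} is positive by Cauchy--Schwarz, so $K(X,Y)=\tfrac14\|[X,Y]\|^2/\big(\langle X,X\rangle\langle Y,Y\rangle-\langle X,Y\rangle^2\big)\ge 0$.

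I do not anticipate a genuine obstacle here. The only points needing care are the sign bookkeeping in the first two steps and making sure the curvature expansion in the third step uses exactly the sign convention for $R$ fixed earlier in Section~\ref{sec:riemgeom}; once (i) is in hand, everything else is a short computation.
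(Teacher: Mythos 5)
Your proof is correct and follows essentially the same route as the paper: differentiating $\Ad$-invariance of the metric for (i), the Koszul formula with constant inner products of left-invariant fields for (ii), expanding $R$ via (ii) and the Jacobi identity for (iii), and combining (iii) with (i) for (iv). Your step (ii) is just a more explicit spelling-out of the Koszul cancellation that the paper leaves terse.
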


\begin{proof}
Deriving the formula $\langle \Ad(\exp(tX))Y,\Ad(\exp(tX))Z\rangle=\langle Y,Z\rangle$ it follows from Proposition~\ref{ad[]} and \eqref{ad} that $\langle[X,Y],Z\rangle+\langle Y,[X,Z]\rangle=0$, which proves (i).

Furthermore, (ii) follows from the Koszul formula \eqref{connectionformula} using (i) and the fact that $\nabla$ is symmetric.

To prove (iii), we use (ii) to compute $R(X,Y)Z$ as follows.
{\allowdisplaybreaks
\begin{eqnarray*}
R(X,Y)Z &=& \nabla_{[X,Y]}Z - \nabla_X \nabla_Y Z +\nabla_Y \nabla_X Z \\
&=& \tfrac{1}{2}[[X,Y],Z] -\tfrac{1}{2}\nabla_X [Y,Z] +\tfrac{1}{2}\nabla_Y [X,Z]\\
&=& \tfrac{1}{2}[[X,Y],Z] -\tfrac{1}{4}[X,[Y,Z]] +\tfrac{1}{4}[Y,[X,Z]] \\
&=& \tfrac{1}{4}[[X,Y],Z] +\tfrac{1}{4}\big([[X,Y],Z]+[[Z,X],Y]+[[Y,Z],X] \big)\\
&=& \tfrac{1}{4}[[X,Y],Z].
\end{eqnarray*}}

Finally, to prove (iv), we use (i) to verify that
{\allowdisplaybreaks
\begin{eqnarray*}
\langle R(X,Y)X,Y \rangle &=& \tfrac{1}{4}\langle [[X,Y],X],Y \rangle \\
&=& -\tfrac{1}{4}\langle [X,[X,Y]],Y \rangle \\
&=& \tfrac{1}{4}\langle [X,Y],[X,Y] \rangle \\
&=& \tfrac{1}{4}\|[X,Y]\|^2.\qedhere
\end{eqnarray*}}
\end{proof}

\begin{theorem}\label{expagree}
The Lie exponential map and the Riemannian exponential map at identity agree in Lie groups endowed with bi--invariant metrics. In particular, Lie exponential map of a compact Lie group is surjective.
\end{theorem}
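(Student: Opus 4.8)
The plan is to show that the one--parameter subgroups of $G$ are exactly the geodesics through the identity, and then to conclude by uniqueness of geodesics together with the Hopf--Rinow Theorem.

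First I would recall that, by Remark~\ref{remark-expCamposInvariantesEsq}, the curve $\lambda_X(t)=\exp(tX)$ is the integral curve through $e$ of the left--invariant vector field $X\in\mathfrak g$; in particular $\lambda_X'(t)=X(\lambda_X(t))$, so the velocity field of $\lambda_X$ is the restriction to $\lambda_X$ of the globally defined field $X$. Therefore, by item (iii) of the Proposition on covariant derivatives, the acceleration of $\lambda_X$ is
\[
\frac{\mathrm{D}}{\dd t}\lambda_X'(t)=\nabla_{\lambda_X'(t)}X=(\nabla_X X)(\lambda_X(t)).
\]
The crucial ingredient is Proposition~\ref{curvatureLie}(ii): for a bi--invariant metric one has $\nabla_X Y=\tfrac12[X,Y]$ for all $X,Y\in\mathfrak g$, so taking $Y=X$ and using skew--symmetry of the bracket gives $\nabla_X X=\tfrac12[X,X]=0$. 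Hence $\frac{\mathrm{D}}{\dd t}\lambda_X'\equiv 0$, i.e.\ $\lambda_X$ is a geodesic with $\lambda_X(0)=e$ and $\lambda_X'(0)=X$.

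By uniqueness of geodesics with prescribed initial data, $\lambda_X$ agrees on its domain with the Riemannian geodesic $\gamma$ satisfying $\gamma(0)=e$, $\gamma'(0)=X$; evaluating at $t=1$ yields $\exp(X)=\lambda_X(1)=\gamma(1)=\exp_e(X)$, which is the first assertion. Since $\lambda_X$ is defined on all of $\R$, this incidentally shows such a $G$ is geodesically complete. For the final statement: a compact Lie group $G$ carries a bi--invariant metric by Proposition~\ref{cptbi}, and being compact it is complete, so by the Hopf--Rinow Theorem~\ref{hopfrinow} the map $\exp_e\colon T_eG\to G$ is surjective; by the identification just established, the Lie exponential map $\exp\colon\mathfrak g\to G$ is surjective as well.

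I do not expect a real obstacle here, because the hard computational content — the formula $\nabla_X Y=\tfrac12[X,Y]$ — was already proved in Proposition~\ref{curvatureLie}. The only point that needs a little care is the bookkeeping identifying the covariant derivative of $\lambda_X'$ along $\lambda_X$ with $(\nabla_X X)$ evaluated along that curve, which relies precisely on $\lambda_X'$ being induced by the left--invariant vector field $X$ defined on all of $G$.
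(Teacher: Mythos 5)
Your proof is correct and follows essentially the same route as the paper: identify $\lambda_X(t)=\exp(tX)$ as the integral curve of the left--invariant field $X$, compute $\frac{\mathrm D}{\dd t}\lambda_X'=\nabla_X X=\tfrac12[X,X]=0$ via Proposition~\ref{curvatureLie}(ii), and conclude the two exponential maps agree, then invoke Proposition~\ref{cptbi} and Hopf--Rinow for surjectivity in the compact case. The only cosmetic difference is that you deduce completeness from compactness directly, while the paper notes the Lie exponential is globally defined and cites Hopf--Rinow, but these are interchangeable.
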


\begin{proof}
Let $G$ be a Lie group endowed with a bi--invariant metric and $X\in\mathfrak{g}$. To prove that the exponential maps coincide, it suffices to prove that $\gamma:\R\rightarrow G$ given by $\gamma(t)=\exp(tX)$ is the geodesic with $\gamma(0)=e$ and $\gamma'(0)=X$. First, recall that $\gamma$ is the integral curve of the left--invariant vector field $X$ passing through $e\in G$ at $t=0$, that is, $\gamma'(t)=X(\gamma(t))$ and $\gamma(0)=e$. Furthermore, from Proposition~\ref{curvatureLie},
{\allowdisplaybreaks
\begin{eqnarray*}
\frac{\mathrm D}{\mathrm dt}\gamma' &=& \frac{\mathrm D}{\mathrm dt} X(\gamma(t)) \\
&=& \nabla_{\gamma'} X \\
&=& \nabla_X X \\
&=& \tfrac{1}{2}[X,X] \\
&=& 0.
\end{eqnarray*}}Hence $\gamma$ is a geodesic. Therefore the Lie exponential map coincides with the Riemannian exponential map.

To prove the second assertion, if $G$ is compact, from Proposition~\ref{cptbi}, it admits a bi--invariant metric. Using that exponential maps coincide and that the Lie exponential map is defined for all $X\in\mathfrak{g}$, it follows from Hopf--Rinow Theorem~\ref{hopfrinow} that $G$ is a complete Riemannian manifold. Therefore $\exp=\exp_e:T_{e}G\rightarrow G$ is surjective.
\end{proof}

\begin{exercise}
Prove that $\SL(2,\R)$ does not admit a metric such that the Lie exponential map and the Riemannian exponential map coincide in $e$. For this, use the fact that $\exp$ is not surjective in $\SL(2,\R)$.
\end{exercise}

In the next result, we prove that each Lie group $G$ with bi--invariant metric is a \emph{symmetric space},\index{Symmetric space} i.e., for each $a\in G$ there exists an isometry $I^{a}$ that reverses geodesics through $a$ (see Remark~\ref{polarsymmetric}).

\begin{theorem}\label{liegeodesic}
Let $G$ be a connected Lie group endowed with a bi--invariant metric. For each $a\in G$ define $$I^a:G\owns g\longmapsto ag^{-1}a\in G.$$ Then $I^a$ is an isometry that reverses geodesics through $a$. In other words, $I^a\in\Iso(G)$ and if $\gamma$ is a geodesic with $\gamma(0)=a$, then $I^a(\gamma(t))=\gamma(-t)$.
\end{theorem}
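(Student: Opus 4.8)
The plan is to reduce everything to the case $a=e$, where $I^e$ is the inversion map $g\mapsto g^{-1}$, using the two map identities $I^a = L_a\circ I^e\circ L_{a^{-1}}$ and $I^e = R_{g^{-1}}\circ I^e\circ L_{g^{-1}}$ (the latter valid for every fixed $g$, as one checks by evaluating on an arbitrary element). Since the metric is bi--invariant, all the translations appearing here are isometries, so the whole statement hinges on understanding $I^e$.

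First I would show $I^e\in\Iso(G)$. Differentiating $t\mapsto\exp(tX)$, whose image under $I^e$ is $\exp(tX)^{-1}=\exp(-tX)$, gives $\dd(I^e)_e=-\id$ on $T_eG=\mathfrak g$, which is a linear isometry. Feeding this into $\dd(I^e)_g=\dd(R_{g^{-1}})_e\circ\dd(I^e)_e\circ\dd(L_{g^{-1}})_g$ shows that $\dd(I^e)_g$ is a linear isometry for every $g$; as $I^e$ is a diffeomorphism (it is its own inverse), $I^e\in\Iso(G)$. Then $I^a=L_a\circ I^e\circ L_{a^{-1}}$ is a composition of isometries, hence $I^a\in\Iso(G)$. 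The same factorization also gives $\dd(I^a)_a=\dd(L_a)_e\circ(-\id)\circ\dd(L_{a^{-1}})_a=-\id$ on $T_aG$, and clearly $I^a(a)=aa^{-1}a=a$.

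For the geodesic--reversing property I would invoke Theorem~\ref{expagree} together with the fact that the isometry $L_a$ carries geodesics to geodesics: by uniqueness of geodesics with prescribed initial data, any geodesic $\gamma$ with $\gamma(0)=a$ has the form $\gamma(t)=a\exp(tX)$ with $X=\dd(L_{a^{-1}})_a\gamma'(0)\in\mathfrak g$. A direct computation then finishes it:
\[
I^a(\gamma(t)) = a\,\big(a\exp(tX)\big)^{-1}a = a\exp(-tX)a^{-1}a = a\exp(-tX) = \gamma(-t).
\]
(One could equally argue that $I^a\circ\gamma$ and $t\mapsto\gamma(-t)$ are both geodesics through $a$ with velocity $-\gamma'(0)$ at $t=0$, hence equal on their common domain by uniqueness, using $\dd(I^a)_a=-\id$.) I do not anticipate a real obstacle; the only delicate point is checking that $\dd(I^e)_g$ is an isometry away from $e$, which is exactly what the conjugation identity $I^e=R_{g^{-1}}\circ I^e\circ L_{g^{-1}}$ is designed to handle, transporting the computation back to the already-known value $\dd(I^e)_e=-\id$.
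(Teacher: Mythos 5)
Your proof is correct and follows the same strategy as the paper: both show $\dd(I^e)_e = -\id$, use the identity $I^e = R_{g^{-1}}\circ I^e\circ L_{g^{-1}}$ to conclude that $\dd(I^e)_g$ is a linear isometry at every $g$, and then pass to $I^a$ by conjugating $I^e$ with a translation (the paper conjugates by $R_a$ via $I^a = R_a\circ I^e\circ R_a^{-1}$, you by $L_a$; both are correct and essentially interchangeable). Your explicit computation of the geodesic-reversing property using Theorem~\ref{expagree} and $\gamma(t)=a\exp(tX)$ merely spells out what the paper leaves to the reader with ``it clearly reverses geodesics through $e$.''
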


\begin{proof}
Since $I^e(g)=g^{-1}$, the map $\dd(I^e)_e:T_{e}G\rightarrow T_{e}G$ is the multiplication by $-1$, i.e., $\dd(I^e)_e=-\id$. Hence it is an isometry of $T_{e}G$. Since $\dd(I^e)_{a}=\dd(R_{a^{-1}})_{e}\circ \dd(I^e)_{e}\circ \dd(L_{a^{-1}})_{a}$, for any $a\in G$, the map $\dd(I^e)_a:T_aG\rightarrow T_{a^{-1}}G$ is also an isometry. Hence $I^e$ is an isometry. It clearly reverses geodesics through $e$, and since $I^a=R_aI^eR_a^{-1}$, it follows that $I^a$ is also an isometry that reverses geodesics through $a$.
\end{proof}

\begin{exercise}\label{exercise-subgrouptotalmentegeoesico}
Let $G$ be a compact Lie group endowed with a bi--invariant metric. Prove that each closed subgroup $H$ is a totally geodesic submanifold.
\end{exercise}

\section{Killing form and semisimple Lie algebras}

To continue our study of bi--invariant metrics we introduce the Killing form, named after the German mathematician Wilhelm Killing. Using it, we establish classic algebraic conditions under which a Lie group is compact. Equivalent definitions of semisimple Lie algebras are also discussed.

\begin{definition}
Let $G$ be a Lie group and $X,Y\in\mathfrak{g}$. The {\it Killing form}\index{Killing!form} of $\mathfrak{g}$ (also said Killing form of $G$) is defined as the symmetric bilinear form given by $$B(X,Y)=\trace \left[\ad(X)\ad(Y)\right].$$ If $B$ is non degenerate, $\mathfrak{g}$ is said to be {\it semisimple}.\index{Lie algebra!semisimple}\index{Semisimple!Lie algebra}\index{Semisimple!Lie group}
\end{definition}

We will see in Theorem~\ref{teo-semisimples-equivalencias}, other equivalent definitions of semisimplicity for a Lie algebra. A Lie group $G$ is said to be {\it semisimple}\index{Lie group!semisimple} if its Lie algebra $\mathfrak{g}$ is semisimple.

\begin{proposition}
The Killing form is $\Ad$--invariant, i.e., $B(X,Y)=B(\Ad(g)X,\Ad(g)Y)$.
\end{proposition}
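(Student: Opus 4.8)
The plan is to reduce the claimed identity to the conjugation formula $\ad(\Ad(g)X)=\Ad(g)\circ\ad(X)\circ\Ad(g)^{-1}$ together with the cyclic invariance of the trace. First I would observe that for each $g\in G$ the inner automorphism $a_g:G\to G$, $a_g(h)=ghg^{-1}$, is a Lie group homomorphism, so by Proposition~\ref{dphi} its differential at $e$, which is exactly $\Ad(g)$, is a Lie algebra homomorphism; since $\Ad(g)$ is invertible with inverse $\Ad(g^{-1})$, it is in fact a Lie algebra automorphism. Hence $\Ad(g)[X,Y]=[\Ad(g)X,\Ad(g)Y]$ for all $X,Y\in\mathfrak g$.

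Next I would use this automorphism property to derive the conjugation formula. For any $X,Z\in\mathfrak g$, using Proposition~\ref{ad[]},
\begin{align*}
\ad(\Ad(g)X)\,Z &= [\Ad(g)X,\,Z] \\
&= [\Ad(g)X,\,\Ad(g)\Ad(g)^{-1}Z] \\
&= \Ad(g)\big[X,\,\Ad(g)^{-1}Z\big] \\
&= \big(\Ad(g)\circ\ad(X)\circ\Ad(g)^{-1}\big)Z,
\end{align*}
so $\ad(\Ad(g)X)=\Ad(g)\,\ad(X)\,\Ad(g)^{-1}$ as endomorphisms of $\mathfrak g$.

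Finally I would plug this into the definition of $B$:
\begin{align*}
B(\Ad(g)X,\Ad(g)Y) &= \trace\big[\ad(\Ad(g)X)\,\ad(\Ad(g)Y)\big] \\
&= \trace\big[\Ad(g)\,\ad(X)\,\Ad(g)^{-1}\,\Ad(g)\,\ad(Y)\,\Ad(g)^{-1}\big] \\
&= \trace\big[\Ad(g)\,\ad(X)\,\ad(Y)\,\Ad(g)^{-1}\big] \\
&= \trace\big[\ad(X)\,\ad(Y)\big] = B(X,Y),
\end{align*}
where the penultimate equality is the cyclic property of the trace. This completes the argument. There is no real obstacle here; the only point requiring a little care is justifying that $\Ad(g)$ is a Lie algebra automorphism, which as noted is immediate from Proposition~\ref{dphi} applied to $a_g$. (One could alternatively prove the conjugation formula by differentiating $\Ad(\exp(tX))$ after conjugating by $\Ad(g)$ and using \eqref{adexp}, but the automorphism route is cleaner.)
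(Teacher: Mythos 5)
Your proof is correct and follows essentially the same route as the paper: the paper proves the identity for an arbitrary Lie algebra automorphism $\varphi$ via the same conjugation formula $\ad(\varphi(X))=\varphi\circ\ad(X)\circ\varphi^{-1}$ and the cyclic property of the trace, then notes that $\Ad(g)$ is such an automorphism. You simply specialize to $\varphi=\Ad(g)$ from the outset and supply the (correct) justification that $\Ad(g)$ is a Lie algebra automorphism.
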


\begin{proof}
Let $\varphi:\mathfrak{g}\rightarrow\mathfrak{g}$ be a Lie algebra automorphism. Then $$\ad(\varphi(X))\varphi(Y)=\varphi\circ\ad(X)Y.$$ Hence $\ad(\varphi(X))=\varphi\circ\ad(X)\circ\varphi^{-1}$. Therefore 
{\allowdisplaybreaks
\begin{eqnarray*}
B(\varphi(X),\varphi(Y)) &=& \trace[\ad(\varphi(X))\ad(\varphi(Y))] \\
&=& \trace[\varphi\cdot\ad(X)\ad(Y)\cdot\varphi^{-1}] \\
&=& \trace[\ad(X)\ad(Y)] \\
&=& B(X,Y).
\end{eqnarray*}}Since $\Ad(g)$ is a Lie algebra automorphism, the proof is complete.
\end{proof}

\begin{corollary}\label{killingmetric}
Let $G$ be a semisimple Lie group with negative--definite Killing form $B$. Then $-B$ is a bi--invariant metric.
\end{corollary}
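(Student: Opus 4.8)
The plan is to produce the metric in the standard way — spread the inner product $-B$ over $G$ by left translations — and then recognize right--invariance as a consequence of the $\Ad$--invariance of $B$ proved in the preceding proposition.

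First I would note that, since $B$ is negative--definite and (being defined by a trace of a product of endomorphisms) symmetric, the bilinear form $\langle X,Y\rangle_e := -B(X,Y)$ is a symmetric positive--definite form on $\mathfrak g = T_eG$, i.e.\ an inner product. As recalled just after the definition of left--invariant metric, this datum determines a left--invariant Riemannian metric on $G$ by $$\langle X,Y\rangle_g = \langle \dd(L_{g^{-1}})_g X,\ \dd(L_{g^{-1}})_g Y\rangle_e,\qquad g\in G,\ X,Y\in T_gG;$$ smoothness and the fact that every $L_h$ is an isometry for this metric are immediate from the formula.

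It then remains to show this metric is right--invariant, i.e.\ that $R_g$ is an isometry for every $g\in G$. I would reduce this to the conjugations $a_g\colon G\to G$, $a_g(x)=gxg^{-1}$, used in Definition~\ref{defadjoint}: since $a_g = L_g\circ R_{g^{-1}}$, we get $R_g = L_g\circ a_{g^{-1}}$, and as $L_g$ is already an isometry, it suffices to prove that every $a_g$ is an isometry. For this, fix $h\in G$; a direct check gives the identity $a_g = L_{ghg^{-1}}\circ a_g\circ L_{h^{-1}}$, whence, differentiating at $h$ and using $\dd(a_g)_e=\Ad(g)$, $$\dd(a_g)_h = \dd(L_{ghg^{-1}})_e\circ \Ad(g)\circ \dd(L_{h^{-1}})_h.$$ The two left--translation differentials are linear isometries by left--invariance, so $\dd(a_g)_h$ is a linear isometry for every $h$ exactly when $\Ad(g)\colon (T_eG,\langle\cdot,\cdot\rangle_e)\to(T_eG,\langle\cdot,\cdot\rangle_e)$ is one.

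Finally, that $\Ad(g)$ preserves $\langle\cdot,\cdot\rangle_e$ is precisely the $\Ad$--invariance of $B$ established in the previous proposition: $\langle\Ad(g)X,\Ad(g)Y\rangle_e = -B(\Ad(g)X,\Ad(g)Y) = -B(X,Y) = \langle X,Y\rangle_e$. Hence each $a_g$, and therefore each $R_g$, is an isometry, so $-B$ is bi--invariant. The only mildly delicate step is the conjugation identity $a_g = L_{ghg^{-1}}\circ a_g\circ L_{h^{-1}}$, which is what lets one propagate the isometry property of $\Ad(g)$ from $T_eG$ to all tangent spaces; the rest is unwinding definitions.
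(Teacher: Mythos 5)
Your proof is correct and follows exactly the route the paper intends: the corollary is stated directly after the proposition that $B$ is $\Ad$--invariant, with no proof given because it is meant to be immediate from that fact, and your argument is precisely the standard filling-in of the details (extend $-B$ to a left--invariant metric, then propagate right--invariance from the identity using the conjugation identity and $\Ad$--invariance).
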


\begin{remark}\label{riccikilling}
Let $G$ be a Lie group endowed with a bi--invariant metric. From Proposition~\ref{curvatureLie}, it follows that
{\allowdisplaybreaks
\begin{eqnarray*}
\Ric(X,Y) &=& \trace R(X,\cdot)Y \\
&=& \trace \tfrac{1}{4}[[X,\cdot],Y] \\
&=& -\tfrac{1}{4}\trace [Y,[X,\cdot]] \\
&=& -\tfrac{1}{4} B(Y,X) \\
&=& -\tfrac{1}{4} B(X,Y).
\end{eqnarray*}}Hence $\Ric(X,Y)=-\tfrac{1}{4}B(X,Y)$. Therefore, the Ricci curvature of $G$ is independent of the bi--invariant metric.
\end{remark}

\begin{theorem}\label{compactkilling}
Let $G$ be a $n$--dimensional semisimple connected Lie group. Then $G$ is compact if, and only if, its Killing form $B$ is negative--definite.
\end{theorem}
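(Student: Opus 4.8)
The plan is to prove the two implications separately, exploiting the dictionary between the Killing form and bi--invariant metrics established above. For the ``if'' direction, assume $B$ is negative--definite. By Corollary~\ref{killingmetric}, $g := -B$ is a bi--invariant metric on $G$, and by Remark~\ref{riccikilling} its Ricci curvature satisfies $\Ric(X,Y) = -\tfrac14 B(X,Y) = \tfrac14\, g(X,Y)$ for all $X,Y\in\mathfrak{g}$; thus $(G,g)$ is Einstein with positive Einstein constant $\tfrac14$. Note that $n = \dim\mathfrak{g}\ge 2$, since a $1$--dimensional Lie algebra has $\ad\equiv 0$, hence vanishing Killing form, which would not be negative--definite. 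Moreover $(G,g)$ is complete: by Theorem~\ref{expagree} the Riemannian exponential map at $e$ coincides with the Lie exponential map, which is defined on all of $\mathfrak{g}$, so the Hopf--Rinow Theorem~\ref{hopfrinow} gives geodesic completeness (alternatively, any Lie group with a left--invariant metric is homogeneous, hence complete). Taking $r = 2\sqrt{n-1}$ we have $\Ric = \tfrac14 g \ge \tfrac{n-1}{r^2}\,g$, so the Bonnet--Myers Theorem~\ref{bonnetmyers} applies and $G$ is compact.

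For the ``only if'' direction, assume $G$ is compact. By Proposition~\ref{cptbi}, $G$ admits a bi--invariant metric $\langle\cdot,\cdot\rangle$, and by Proposition~\ref{curvatureLie}(i) each operator $\ad(X)$ is skew--symmetric with respect to it: $\langle \ad(X)Y,Z\rangle = \langle[X,Y],Z\rangle = -\langle Y,[X,Z]\rangle = -\langle Y,\ad(X)Z\rangle$. Fixing an orthonormal basis of $\mathfrak{g}$ and writing $A = \ad(X)$, skew--symmetry means $A^t = -A$, so $B(X,X) = \trace(A^2) = -\trace(A^t A) = -\sum_{i,j} A_{ij}^2 \le 0$, with equality if and only if $A = \ad(X) = 0$. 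If $B(X,X) = 0$ for some $X$, then $\ad(X) = 0$, whence $B(X,Y) = \trace(\ad(X)\ad(Y)) = 0$ for all $Y\in\mathfrak{g}$; since $\mathfrak{g}$ is semisimple, $B$ is non--degenerate, forcing $X = 0$ (equivalently, $X\in\Zentrum(\mathfrak{g}) = \ker\ad$ by Corollary~\ref{zentrum}(iii), and $\Zentrum(\mathfrak{g})\subset\ker B = \{0\}$). Hence $B(X,X) < 0$ for all $X\ne 0$, i.e. $B$ is negative--definite.

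The routine ingredients---positivity of the Hilbert--Schmidt norm, and the completeness remark---are straightforward; the real content is simply packaging the earlier results (Corollary~\ref{killingmetric}, Remark~\ref{riccikilling}, Bonnet--Myers) in the first direction and observing the $\ad$--skew--symmetry of a bi--invariant metric together with non--degeneracy of $B$ in the second. The point that I expect to require the most care is verifying the hypotheses of Bonnet--Myers, namely completeness of $(G,-B)$ and the bound $n\ge 2$.
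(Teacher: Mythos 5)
Your proof is correct and follows essentially the same route as the paper's: in the forward direction, use Corollary~\ref{killingmetric} to get the bi--invariant metric $-B$, Remark~\ref{riccikilling} for the Ricci identity, completeness from the coincidence of exponential maps (the paper invokes Theorem~\ref{liegeodesic} plus Hopf--Rinow, while you cite Theorem~\ref{expagree}, but this is the same fact), and Bonnet--Myers; in the reverse direction, both compute $B(X,X)=-\sum_i\|\ad(X)e_i\|^2\le 0$ from skew--symmetry of $\ad(X)$ with respect to a bi--invariant metric and then use non--degeneracy of $B$ (semisimplicity) to conclude strict negativity. Your explicit verification that $n\ge 2$ (so that the Bonnet--Myers hypothesis is met) is a small but worthwhile refinement that the paper glosses over.
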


\begin{proof}
First, suppose that $B$ is negative--definite. From Corollary~\ref{killingmetric}, $-B$ is a bi--invariant metric on $G$. Hence, Theorem~\ref{liegeodesic} and Hopf--Rinow Theorem~\ref{hopfrinow} imply that $(G,-B)$ is a complete Riemannian manifold whose Ricci curvature satisfies the equation in Remark~\ref{riccikilling}. It follows from Bonnet--Myers Theorem~\ref{bonnetmyers} that $G$ is compact.

Conversely, suppose $G$ is compact. From Proposition~\ref{cptbi}, it admits a bi--invariant metric. Hence, using item (i) of Proposition~\ref{curvatureLie} and Proposition~\ref{ad[]}, it follows that if $(e_1,\dots,e_n)$ is an orthonormal basis of $\mathfrak{g}$, then
{\allowdisplaybreaks
\begin{eqnarray*}
B(X,X) &=& \trace(\ad(X)\cdot\ad(X)) \\
&=& \sum_{i=1}^n \langle \ad(X)\ad(X)e_i,e_i\rangle \\
&=& -\sum_{i=1}^n \langle \ad(X)e_i,\ad(X)e_i\rangle \\
&=& -\sum_{i=1}^n \|\ad(X)e_i\|^2\leq 0.
\end{eqnarray*}}

Note that if there exists a $X\neq 0$ such that $\|\ad(X)e_i\|^2= 0$ for all $i$, then by definition of the Killing form, $B(Y,X)=0$ for each $Y$. This would imply that $B$ is degenerate, contradicting the fact that $\mathfrak{g}$ is semisimple. Therefore, for each $X\neq 0$ we have $B(X,X)<0$. Hence $B$ is negative--definite.
\end{proof}

The next result is an immediate consequence of Corollary~\ref{killingmetric}, Remark~\ref{riccikilling} and Theorem~\ref{compactkilling}.

\begin{corollary}
Let $G$ be a semisimple compact connected Lie group with Killing form $B$. Then $(G,-B)$ is an Einstein manifold.
\end{corollary}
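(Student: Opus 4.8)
The plan is to chain together the three results quoted just before the statement, with no new computation. First I would invoke Theorem~\ref{compactkilling}: since $G$ is semisimple, connected and compact, its Killing form $B$ is negative--definite. Consequently, by Corollary~\ref{killingmetric}, the symmetric bilinear form $-B$ is a genuine (bi--invariant) Riemannian metric on $G$; write $g=-B$ for it.

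Next I would apply Remark~\ref{riccikilling} to this bi--invariant metric $g$. That remark --- which is itself obtained from Proposition~\ref{curvatureLie}(iii) by tracing the curvature endomorphism against an orthonormal basis of $\mathfrak g$ --- yields the identity $\Ric(X,Y)=-\tfrac14 B(X,Y)$ for all $X,Y\in\mathfrak g$. Rewriting the right--hand side in terms of $g$, this becomes $\Ric(X,Y)=\tfrac14(-B)(X,Y)=\tfrac14\, g(X,Y)$, so that on left--invariant vector fields (equivalently, at the identity on a basis of $T_eG$) the Ricci tensor of $(G,g)$ equals $\tfrac14$ times the metric.

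To finish I would globalize: both $\Ric$ and $g$ are left--invariant tensor fields on $G$, so the pointwise identity $\Ric_p=\tfrac14\, g_p$, once checked at $e$, propagates to every $p\in G$ by pushing forward with a left translation $L_h$ (an isometry of $g$) sending $e$ to $p$. Hence $\Ric=\tfrac14\, g$ on all of $G$, i.e.\ $(G,-B)$ is an Einstein manifold with Einstein constant $\tfrac14>0$.

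There is essentially no obstacle here: the substantive content was already packaged into Theorem~\ref{compactkilling} and Remark~\ref{riccikilling}. The only point deserving a word of care is the final globalization step, and even that is routine given left--invariance; in particular, unlike the constant--curvature case mentioned earlier in the text, no curvature or sectional--curvature computation beyond Proposition~\ref{curvatureLie} is required.
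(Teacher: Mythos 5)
Your argument is correct and is precisely what the paper intends: the corollary is stated there as an immediate consequence of Corollary~\ref{killingmetric}, Remark~\ref{riccikilling} and Theorem~\ref{compactkilling}, which is exactly the chain you invoke (with Einstein constant $\tfrac14$). The extra word you spend on globalizing from $T_eG$ by left--invariance is harmless and in fact already implicit in Remark~\ref{riccikilling}, which computes $\Ric$ on left--invariant fields.
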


We conclude this section with a discussion on equivalent definitions of semisimple Lie algebras.

Recall that a Lie subalgebra $\mathfrak{h}$ is an {\it ideal}\index{Lie algebra!ideal}\index{Ideal} of a Lie algebra $\mathfrak {g}$ if $[X,Y]\in\mathfrak{h}$, for all $X \in\mathfrak{h},Y\in\mathfrak{g}$. If an ideal $\mathfrak{h}$ has no ideals other than the trivial, $\{0\}$ and $\mathfrak{h}$, it is called {\it simple}.\index{Lie algebra!simple}\index{Simple!Lie algebra} Following the usual convention, by {\it simple Lie algebras} we mean Lie sub algebras that are non commutative simple ideals. We stress that simple ideals, which may be commutative, will not be called Lie algebras, but simply referred to as {\it simple ideals}.\index{Ideal!simple}\index{Simple!ideal}

Given a Lie algebra $\mathfrak{g}$, consider the decreasing sequence of ideals $$\mathfrak{g}^{(1)}=[\mathfrak{g},\mathfrak{g}], \mathfrak{g}^{(2)}=[\mathfrak{g}^{(1)},\mathfrak{g}^{(1)}], \ldots, \mathfrak{g}^{(k)}=[\mathfrak{g}^{(k-1)},\mathfrak{g}^{(k-1)}],\ldots$$ If there exists a positive integer $m$ such that $\mathfrak{g}^{(m)}=\{0\}$, then $\mathfrak{g}$ is said to be {\it solvable}.\index{Lie algebra!solvable}

\begin{example}
Consider the ideal of all $n\times n$ matrices $(a_{ij})$ over $K=\R$ or $K=\C$, with $a_{ij}=0$ if $i>j$. One can easily verify that this is a solvable Lie algebra. Other trivial examples are nilpotent Lie algebras.
\end{example}

It is also possible to prove that every Lie algebra admits a maximal solvable ideal $\tau$, called its {\it radical}.\index{Lie algebra!radical} We recall some results whose prove can be found in Ise and Takeuchi~\cite{IseTakeuchi}.

\begin{proposition}\label{prop-decomposicao-formaKilling-ideais}
The following hold.
\begin{enumerate}
\item[(i)] If $\mathfrak{h}$ is an ideal of $\mathfrak{g}$, then the Killing form $B_{\mathfrak{h}}$ of $\mathfrak{h}$ satisfies $B(X,Y)=B_{\mathfrak{h}}(X,Y)$, for all $X,Y\in\mathfrak{h}$;
\item[(ii)] If $\mathfrak{g}=\mathfrak{g}_{1}\oplus\mathfrak{g}_{2}$ is direct sum of ideals, then $\mathfrak{g}_{1}$ is orthogonal to $\mathfrak{g}_{2}$ with respect to $B$. Thus $B$ is the sum of the Killing forms $B_{1}$ and $B_{2}$ of $\mathfrak{g}_{1}$ and $\mathfrak{g}_{2}$, respectively.
\end{enumerate}
\end{proposition}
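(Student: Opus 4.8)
The plan is to reduce both statements to one elementary observation: if $\mathfrak{h}$ is an ideal of $\mathfrak{g}$ and $X\in\mathfrak{h}$, then the operator $\ad(X)\colon\mathfrak{g}\to\mathfrak{g}$ actually takes values in $\mathfrak{h}$, because $[X,Z]\in\mathfrak{h}$ for every $Z\in\mathfrak{g}$. Consequently, for $X,Y\in\mathfrak{h}$ the composition $\ad(X)\ad(Y)$ maps $\mathfrak{g}$ into $\mathfrak{h}$ and restricts on $\mathfrak{h}$ to $\ad_{\mathfrak{h}}(X)\ad_{\mathfrak{h}}(Y)$, since $\ad(X)\ad(Y)v=[X,[Y,v]]$ is computed by the same brackets.

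For part (i), I would fix $X,Y\in\mathfrak{h}$ and pick a basis of $\mathfrak{g}$ obtained by completing a basis of $\mathfrak{h}$. In this adapted basis the last $\dim\mathfrak{g}-\dim\mathfrak{h}$ rows of the matrix of $\ad(X)\ad(Y)$ vanish, because the image lies in $\mathfrak{h}$; and the top-left block of that matrix is exactly the matrix of $\ad_{\mathfrak{h}}(X)\ad_{\mathfrak{h}}(Y)$ by the remark above. Taking traces then gives $B(X,Y)=\trace(\ad(X)\ad(Y))=\trace(\ad_{\mathfrak{h}}(X)\ad_{\mathfrak{h}}(Y))=B_{\mathfrak{h}}(X,Y)$, which is the claim.

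For part (ii), I would first note that $[\mathfrak{g}_1,\mathfrak{g}_2]\subset\mathfrak{g}_1\cap\mathfrak{g}_2=\{0\}$, since each $\mathfrak{g}_i$ is an ideal and the sum is direct. Hence, for $X_1\in\mathfrak{g}_1$ and $X_2\in\mathfrak{g}_2$, the operator $\ad(X_1)\ad(X_2)$ is identically zero: it annihilates $\mathfrak{g}_1$ because $\ad(X_2)$ already does, and it annihilates $\mathfrak{g}_2$ because $\ad(X_2)$ preserves $\mathfrak{g}_2$ and then $\ad(X_1)$ kills $[X_1,\mathfrak{g}_2]$. Therefore $B(X_1,X_2)=0$, i.e.\ $\mathfrak{g}_1\perp\mathfrak{g}_2$ with respect to $B$. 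Expanding $B$ bilinearly on arbitrary elements $X=X_1+X_2$, $Y=Y_1+Y_2$ with $X_i,Y_i\in\mathfrak{g}_i$, the cross terms vanish, and applying part (i) to the ideals $\mathfrak{g}_1$ and $\mathfrak{g}_2$ identifies the surviving terms $B(X_i,Y_i)$ with $B_i(X_i,Y_i)$; thus $B=B_1+B_2$, with each $B_i$ understood via its extension by zero off $\mathfrak{g}_i$.

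I do not expect a genuine obstacle here. The only points needing a little care are the bookkeeping with the adapted basis in (i) — verifying that the complementary rows of the matrix of $\ad(X)\ad(Y)$ really do vanish and that the remaining block is the $\ad_{\mathfrak{h}}$-composition — and, in (ii), being explicit that ``$B_1+B_2$'' means the sum of the zero-extensions of the two Killing forms. Everything else is formal manipulation of traces and brackets.
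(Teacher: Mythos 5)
Your argument is correct. Note, however, that the paper itself does not prove Proposition~\ref{prop-decomposicao-formaKilling-ideais}: it is stated with a pointer to Ise and Takeuchi~\cite{IseTakeuchi}, so there is no in-text proof to compare against. Your adapted-basis argument is the standard one and fills this gap cleanly. The two load-bearing observations are exactly right: for (i), that $\ad(X)$ maps all of $\mathfrak{g}$ into $\mathfrak{h}$ when $X\in\mathfrak{h}$ is in an ideal, so $\ad(X)\ad(Y)$ has block-upper-triangular form $\left(\begin{smallmatrix} A_{11} & A_{12}\\ 0 & 0 \end{smallmatrix}\right)$ in a basis adapted to $\mathfrak{h}$, with $A_{11}$ the matrix of $\ad_{\mathfrak{h}}(X)\ad_{\mathfrak{h}}(Y)$, so the traces agree; and for (ii), that $[\mathfrak{g}_1,\mathfrak{g}_2]\subset\mathfrak{g}_1\cap\mathfrak{g}_2=\{0\}$ forces $\ad(X_1)\ad(X_2)=0$, giving orthogonality, after which bilinearity and part (i) yield $B=B_1+B_2$. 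One small slip in wording: in (ii) you wrote that ``$\ad(X_1)$ kills $[X_1,\mathfrak{g}_2]$''; you mean that $\ad(X_1)$ kills $\mathfrak{g}_2$ (equivalently, $[X_1,\mathfrak{g}_2]=0$), which is what the brackets actually give and what your argument uses.
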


\begin{cthm}\label{teo-condicaoSolubilidadeCartan}\index{Theorem!Cartan}
A Lie algebra $\mathfrak{g}$ is solvable if and only if $B(\mathfrak{g},\mathfrak{g}^{(1)})=\{0\}$. In particular, if $B$ vanishes identically, then $\mathfrak{g}$ is solvable.
\end{cthm}

We are now ready to present a theorem that gives equivalent definitions of semisimple Lie algebras.

\begin{theorem}\label{teo-semisimples-equivalencias}
Let $\mathfrak{g}$ be a Lie algebra with Killing form $B$. Then the following are equivalent.

\begin{enumerate}
\item[(i)] $\mathfrak{g}=\mathfrak{g}_1\oplus\dots\oplus\mathfrak{g}_n$ is the direct sum of simple Lie algebras $\mathfrak{g}_i$ (i.e., non commutative simple ideals);
\item[(ii)] $\mathfrak{g}$ has trivial radical $\tau=\{0\}$;
\item[(iii)] $\mathfrak{g}$ has no commutative ideal other than $\{0\}$;
\item[(iv)] $B$ is non degenerate, i.e., $\mathfrak{g}$ is semisimple.
\end{enumerate}
\end{theorem}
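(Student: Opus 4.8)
The plan is to prove the cycle of implications (iv)$\Rightarrow$(iii)$\Rightarrow$(ii)$\Rightarrow$(iv), which establishes the equivalence of the last three conditions, and then to close the loop with (i)$\Rightarrow$(iv) and (iv)$\Rightarrow$(i). All three steps of the cycle are cleanest done by contraposition. For (iv)$\Rightarrow$(iii): if $\mathfrak{a}\subset\mathfrak{g}$ is a nonzero commutative ideal, then for $X\in\mathfrak{a}$ and $Y\in\mathfrak{g}$ the endomorphism $\ad(X)\ad(Y)$ sends $\mathfrak{g}$ into $\mathfrak{a}$ and $\mathfrak{a}$ into $[\mathfrak{a},\mathfrak{a}]=\{0\}$, so it is nilpotent, hence has vanishing trace, so $B(X,Y)=0$ for all $Y$ and $B$ is degenerate. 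For (iii)$\Rightarrow$(ii): if the radical $\tau\neq\{0\}$, its derived series has a last nonzero term $\tau^{(k)}$ with $[\tau^{(k)},\tau^{(k)}]=\{0\}$; using the Jacobi identity one checks that the terms of the derived series of an ideal of $\mathfrak{g}$ are again ideals of $\mathfrak{g}$, so $\tau^{(k)}$ is a nonzero commutative ideal. For (ii)$\Rightarrow$(iv): if $B$ is degenerate, its kernel $\mathfrak{s}=\{X\in\mathfrak{g}:B(X,\mathfrak{g})=\{0\}\}$ is a nonzero ideal by the $\ad$-invariance identity $B([Z,X],Y)=-B(X,[Z,Y])$, and $B$ vanishes on $\mathfrak{s}$, so by Proposition~\ref{prop-decomposicao-formaKilling-ideais}(i) the Killing form of $\mathfrak{s}$ is zero and the Cartan Theorem~\ref{teo-condicaoSolubilidadeCartan} makes $\mathfrak{s}$ solvable; hence $\tau\neq\{0\}$.

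To close the loop, (i)$\Rightarrow$(iv) follows from Proposition~\ref{prop-decomposicao-formaKilling-ideais}(ii): $B$ is the orthogonal sum of the Killing forms $B_i$ of the simple summands $\mathfrak{g}_i$, and each $B_i$ is nondegenerate because $\ker B_i$ is an ideal of the simple algebra $\mathfrak{g}_i$, hence either $\{0\}$ or all of $\mathfrak{g}_i$; the latter would force $\mathfrak{g}_i$ solvable by Cartan while also $[\mathfrak{g}_i,\mathfrak{g}_i]=\mathfrak{g}_i$ (since $\mathfrak{g}_i$ is a non-commutative simple ideal), a contradiction. For (iv)$\Rightarrow$(i) I would induct on $\dim\mathfrak{g}$, the case $\mathfrak{g}=\{0\}$ being the empty sum. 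If $\mathfrak{g}\neq\{0\}$, choose a nonzero ideal $\mathfrak{a}$ of minimal dimension and let $\mathfrak{a}^{\perp}$ be its $B$-orthogonal complement, again an ideal; then $\mathfrak{a}\cap\mathfrak{a}^{\perp}$ is an ideal on which $B$ vanishes, hence solvable by Cartan, hence $\{0\}$ by the already-proved (iv)$\Rightarrow$(ii). Nondegeneracy of $B$ then gives $\mathfrak{g}=\mathfrak{a}\oplus\mathfrak{a}^{\perp}$ as a direct sum of ideals, so $[\mathfrak{a},\mathfrak{a}^{\perp}]\subseteq\mathfrak{a}\cap\mathfrak{a}^{\perp}=\{0\}$ and every ideal of $\mathfrak{a}$ is in fact an ideal of $\mathfrak{g}$. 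By minimality $\mathfrak{a}$ has no nonzero proper ideals, so it is simple, and it is non-commutative by (iv)$\Rightarrow$(iii); moreover $B|_{\mathfrak{a}^{\perp}}$ is nondegenerate, so $\mathfrak{a}^{\perp}$ is semisimple of strictly smaller dimension and the induction hypothesis decomposes it.

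The main obstacle is the direction (iv)$\Rightarrow$(i): one has to run the orthogonal-complement induction carefully, verify that the summands are again semisimple ideals, and in particular check that a minimal ideal of $\mathfrak{g}$ is actually simple as a Lie algebra in its own right — which rests on the vanishing of $[\mathfrak{a},\mathfrak{a}^{\perp}]$ — and that it is non-commutative. The remaining implications are short applications of Cartan's solvability criterion and the compatibility of the Killing form with ideal decompositions from Proposition~\ref{prop-decomposicao-formaKilling-ideais}.
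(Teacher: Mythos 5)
Your proof is correct, and it uses the same toolkit as the paper's (Cartan's solvability criterion, the $B$-orthogonal complement of an ideal, the derived series of the radical, the trace computation for $\ad(X)\ad(Y)$ with $X$ in a commutative ideal, and the restriction of the Killing form to ideals via Proposition~\ref{prop-decomposicao-formaKilling-ideais}). The difference is purely in how the implication graph is organized. The paper hubs everything through condition (i): it proves (i)$\Leftrightarrow$(ii), (i)$\Leftrightarrow$(iii), (i)$\Leftrightarrow$(iv) one at a time, and in particular gets (i)$\Rightarrow$(ii) by projecting the radical onto each simple factor $\mathfrak{g}_i$ and using $[\mathfrak{g}_i,\mathfrak{g}_i]=\mathfrak{g}_i$. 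You instead close a cycle (iv)$\Rightarrow$(iii)$\Rightarrow$(ii)$\Rightarrow$(iv) first, which decouples the three ``intrinsic'' characterizations from the structural decomposition (i) entirely, and avoids the projection step altogether. Your (iv)$\Rightarrow$(i) is also a bit cleaner than the paper's (ii)$\Rightarrow$(i): by choosing a \emph{minimal} nonzero ideal $\mathfrak{a}$ and first observing $[\mathfrak{a},\mathfrak{a}^{\perp}]=\{0\}$, you get that ideals of $\mathfrak{a}$ are ideals of $\mathfrak{g}$, so simplicity of $\mathfrak{a}$ as an abstract Lie algebra drops out of minimality immediately; the paper's version, which takes an arbitrary ideal $\mathfrak{h}$ and says ``by induction,'' leaves more to the reader. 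Two small remarks for polish: in the nilpotency argument of (iv)$\Rightarrow$(iii), it is worth saying explicitly that $(\ad(X)\ad(Y))^2=0$, since that is what makes the nilpotency visible; and in (ii)$\Rightarrow$(iv) you should note, as you implicitly do, that the kernel $\mathfrak{s}$ of $B$ being nonzero is exactly the statement that $B$ is degenerate, so that $\mathfrak{s}\subset\tau$ gives $\tau\neq\{0\}$.
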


Before proving this theorem, we present two important properties of semisimple Lie algebras.

\begin{remark}\label{rem-teo-semisimples-equivalencias}
If the Lie algebra $\mathfrak{g}$ is the direct sum of non commutative simple ideals $\mathfrak{g}=\mathfrak{g}_1\oplus\dots\oplus\mathfrak{g}_n$, then $[\mathfrak{g},\mathfrak{g}]=\mathfrak{g}$.

Indeed, on the one hand, if $i\neq j$, $[\mathfrak{g}_{i},\mathfrak{g}_{j}]=\{0\}$, since $\mathfrak{g}_{i}$ and $\mathfrak{g}_{j}$ are ideals. On the other hand, $[\mathfrak{g}_{i},\mathfrak{g}_{i}]=\mathfrak{g}_{i}$, since $\mathfrak{g}_i^{(1)}=[\mathfrak{g}_{i},\mathfrak{g}_{i}]$ is an ideal and $\mathfrak{g}_{i}$ is a non commutative simple ideal.
\end{remark}

\begin{remark}
If $\mathfrak{g}=\mathfrak{g}_1\oplus\dots\oplus\mathfrak{g}_n$ is the direct sum of simple Lie algebras, then this decomposition is unique up to permutations.

In fact, consider another decomposition $\mathfrak{g}=\widetilde{\mathfrak{g}}_1\oplus\dots\oplus\widetilde{\mathfrak{g}}_m$ and let $\widetilde{X}\in\widetilde{\mathfrak{g}}_{k}$. Then $\widetilde{X}=\sum_{i} X_{i}$, where $X_{i}\in\mathfrak{g}_{i}$. Since $\mathfrak{g}_{i}$ is a non commutative simple ideal, for each $i$ such that $X_{i}\neq 0$, there exists $V_{i}\in\mathfrak{g}_{i}$ different from $X_{i}$, such that $[V_{i},X_{i}]\neq 0$. Hence $[\widetilde{X},V_{i}]\neq 0$ is a vector that belongs to both $\mathfrak{g}_{i}$ and $\widetilde{\mathfrak{g}}_{k}$. Therefore, the ideal $\mathfrak{g}_{i}\cap \widetilde{\mathfrak{g}}_{k}$ is different from $\{0\}$. Since $\mathfrak{g}_{i}$ and $\widetilde{\mathfrak{g}}_{k}$ are simple ideals, it follows that $\mathfrak{g}_{i}=\mathfrak{g}_{i}\cap \widetilde{\mathfrak{g}}_{k}=\widetilde{\mathfrak{g}}_{k}$.
\end{remark}

We now give a proof of Theorem~\ref{teo-semisimples-equivalencias}, based on Ise and Takeuchi~\cite{IseTakeuchi}.

\begin{proof}
We proceed proving the equivalences (i) $\Leftrightarrow$ (ii), (i) $\Leftrightarrow$ (iii) and (i) $\Leftrightarrow$ (iv) one by one.

(i) $\Leftrightarrow$ (ii). Assume that $\mathfrak{g}=\mathfrak{g}_1\oplus\dots\oplus\mathfrak{g}_n$ is the direct sum of non commutative simple ideals. Let $\pi_{i}:\mathfrak{g}\rightarrow\mathfrak{g}_{i}$ denote the projection onto each factor and note that $\pi_{i}$ is a Lie algebra homomorphism. Thus, the projection $\tau_{i}=\pi_{i}(\tau)$ is a solvable ideal of $\mathfrak{g}_{i}$. Since $\mathfrak{g}_{i}$ is simple, then either $\tau_{i}$ is equal to $\{0\}$ or to $\mathfrak{g}_{i}$. However, the solvable ideal $\tau_{i}$ cannot be equal to $\mathfrak{g}_{i}$, since $\mathfrak{g}_{i}=[\mathfrak{g}_{i},\mathfrak{g}_{i}]$ (see Remark~\ref{rem-teo-semisimples-equivalencias}). Therefore, $\mathfrak{\tau}_{i}=\{0\}$. Hence $\mathfrak{\tau}=0$.

Conversely, assume that $\tau=\{0\}$. For any ideal $\mathfrak{h}$ of $\mathfrak{g}$, set $\mathfrak{h}^{\perp}=\{X\in\mathfrak{g} : B(X,\mathfrak{h})=0\}$. Note that $\mathfrak{h}^{\perp}$ and $\mathfrak{h}^{\perp}\cap\mathfrak{h}$ are ideals, and $B$ restricted to $\mathfrak{h}^{\perp}\cap\mathfrak{h}$ vanishes identically. It then follows from Proposition~\ref{prop-decomposicao-formaKilling-ideais} and the Cartan Theorem~\ref{teo-condicaoSolubilidadeCartan} that $\mathfrak{h}^{\perp}\cap\mathfrak{h}$ is solvable. Since the radical is trivial, we conclude that $\mathfrak{h}^{\perp}\cap\mathfrak{h}=\{0\}$. Therefore $\mathfrak{g}=\mathfrak{h}\oplus\mathfrak{h}^{\perp}$. Being $\mathfrak{g}$ finite--dimensional, by induction, $\mathfrak{g}$ is the direct sum of simple ideals. Moreover, the fact that $\tau=\{0\}$ implies that each simple ideal is non commutative.

(i) $\Leftrightarrow$ (iii). Suppose that there exists a nontrivial commutative ideal $\mathfrak{a}$. Then, the radical must contain $\mathfrak{a}$, hence it is nontrivial. It then follows from (i) $\Leftrightarrow$ (ii) that $\mathfrak{g}$ is not a direct sum of non commutative simple ideals.

Conversely, assume that $\mathfrak{g}$ is not a direct sum of non commutative simple ideals. Then, from (i) $\Leftrightarrow$ (ii), the radical $\tau$ is nontrivial, and there exists a positive integer $m$ such that $\tau^{(m-1)}\neq\{0\}$. Set $\mathfrak{a}=\tau^{(m-1)}$ and note that $\mathfrak{a}$ is a nontrivial commutative ideal.

(i) $\Leftrightarrow$ (iv). Assume that $\mathfrak{g}=\mathfrak{g}_1\oplus\dots\oplus\mathfrak{g}_n$ is the direct sum of non commutative simple ideals. From Proposition ~\ref{prop-decomposicao-formaKilling-ideais}, it suffices to prove that $B|_{\mathfrak{g}_{i}}$ is nondegenerate for each $i$. Consider a fixed $i$ and let $$\mathfrak{h}=\{X\in \mathfrak{g}_{i}: B(X,\mathfrak{g}_{i})=0\}.$$ Note that $\mathfrak{h}$ is an ideal of $\mathfrak{g}_{i}$. Since $\mathfrak{g}_{i}$ is a simple ideal, either $\mathfrak{h}=\mathfrak{g}_{i}$ or $\mathfrak{h}=\{0\}$. If $\mathfrak{h}=\mathfrak{g}_{i}$, then the Cartan Theorem~\ref{teo-condicaoSolubilidadeCartan} implies that $\mathfrak{g}_{i}$ is solvable, contradicting the fact that $[\mathfrak{g}_{i},\mathfrak{g}_{i}]=\mathfrak{g}_{i}$. Therefore $\mathfrak{h}=\{0\}$, hence $B|_{\mathfrak{g}_{i}}$ is nondegenerate.

Conversely, assume that $B$ is non degenerate. From the last equivalence, it suffices to prove that $\mathfrak{g}$ has no commutative ideals other than $\{0\}$. Let $\mathfrak{a}$ be a commutative ideal of $\mathfrak{g}$. For each $X\in\mathfrak{a}$ and $Y\in\mathfrak{g}$ we have $\ad(X)\ad(Y)(\mathfrak{g})\subset\mathfrak{a}$. Therefore $B(X,Y)=\trace[\ad(X)\ad(Y)]|_{\mathfrak{a}}$. On the other hand, since $\mathfrak{a}$ is commutative, $\ad(X)\ad(Y)|_{\mathfrak{a}}=0$. Therefore $B(X,Y)=0$ for each $X\in\mathfrak{a}$ and $Y\in\mathfrak{g}$. Since $B$ is non degenerate, it follows that $\mathfrak{a}=\{0\}$.
\end{proof}

\section{Splitting Lie groups with bi--invariant metrics}

In Proposition~\ref{cptbi}, we proved that each compact Lie group admits a bi--invariant metric. In this section we will prove that the only simply connected Lie groups that admit bi--invariant metrics are products of compact Lie groups with vector spaces. We will also prove that if the Lie algebra of a compact Lie group $G$ is simple, then the bi--invariant metric on $G$ is unique up to multiplication by constants.

%We start by recalling that a Lie subalgebra $\mathfrak{h}$ of a Lie algebra $\mathfrak{g}$ is an {\it ideal} if $[X,Y]\in\mathfrak{h}$, for all $X\in\mathfrak{h},Y\in\mathfrak{g}$. If an ideal $\mathfrak{h}$ has no ideals other than the trivial, $\{0\}$ and $\mathfrak{h}$, it is called {\it simple}.

\begin{theorem}
Let $\mathfrak{g}$ be a Lie algebra endowed with a bi--invariant metric. 
Then $$\mathfrak{g}=\mathfrak{g}_1\oplus\ldots\oplus\mathfrak{g}_n$$ is the direct orthogonal sum of simple ideals $\mathfrak{g}_i$.

In addition, let $\widetilde{G}$ be the connected and simply connected Lie group with Lie algebra isomorphic to $\mathfrak{g}$. Then $\widetilde{G}$ is isomorphic to the product of normal Lie subgroups $$G_1\times\ldots\times G_n,$$ such that $G_i=\R$ if $\mathfrak{g}_i$ 
is commutative and $G_i$ is compact if $\mathfrak g_i$ is non commutative\footnote{For each $i$, $\mathfrak g_i$ denotes the Lie algebra of the normal subgroup $G_i$.}.
\end{theorem}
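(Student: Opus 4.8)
The plan is to first split $\mathfrak{g}$ algebraically using orthogonality with respect to the bi--invariant metric, then to identify the simply connected group attached to each factor, and finally to glue these together via the uniqueness part of Lie's Third Theorem. Throughout, a bi--invariant metric on $\mathfrak{g}$ means an inner product $\langle\cdot,\cdot\rangle$ on $\mathfrak{g}$ with $\langle[X,Y],Z\rangle=-\langle Y,[X,Z]\rangle$ for all $X,Y,Z\in\mathfrak{g}$, as in Proposition~\ref{curvatureLie}(i); equivalently, each $\ad(X)$ is skew--symmetric.

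\textbf{Step 1: the algebraic splitting.} The key observation is that the orthogonal complement of an ideal is an ideal: if $\mathfrak{h}\subset\mathfrak{g}$ is an ideal, $X\in\mathfrak{h}^{\perp}$, $Y\in\mathfrak{g}$ and $Z\in\mathfrak{h}$, then $\langle[X,Y],Z\rangle=\langle X,[Y,Z]\rangle=0$ because $[Y,Z]\in\mathfrak{h}$, so $[X,Y]\in\mathfrak{h}^{\perp}$. Since $\langle\cdot,\cdot\rangle$ is positive--definite, $\mathfrak{h}\cap\mathfrak{h}^{\perp}=\{0\}$, so $\mathfrak{g}=\mathfrak{h}\oplus\mathfrak{h}^{\perp}$ is a direct sum of ideals, and $[\mathfrak{h},\mathfrak{h}^{\perp}]\subset\mathfrak{h}\cap\mathfrak{h}^{\perp}=\{0\}$. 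Inducting on $\dim\mathfrak{g}$ (each summand carrying the restricted, still bi--invariant, metric, and any ideal of a summand being an ideal of $\mathfrak{g}$ since the summands commute) yields the desired orthogonal decomposition $\mathfrak{g}=\mathfrak{g}_1\oplus\dots\oplus\mathfrak{g}_n$ into simple ideals $\mathfrak{g}_i$, with $[\mathfrak{g}_i,\mathfrak{g}_j]=\{0\}$ for $i\neq j$.

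\textbf{Step 2: each factor.} Fix $i$. If $\mathfrak{g}_i$ is commutative it must be one--dimensional (in a commutative Lie algebra every subspace is an ideal), and by Theorem~\ref{cpctabeliantorus} its connected, simply connected Lie group is $\R$. If $\mathfrak{g}_i$ is non--commutative, its center $\Zentrum(\mathfrak{g}_i)$ is an ideal, hence $\{0\}$ by simplicity; moreover $\ad(X)$ is skew--symmetric for the restricted metric, so the Killing form of $\mathfrak{g}_i$ satisfies $B_{\mathfrak{g}_i}(X,X)=\trace(\ad(X)^2)=-\|\ad(X)\|^2\le 0$, with equality only if $\ad(X)=0$, i.e. $X=0$. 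Thus $B_{\mathfrak{g}_i}$ is negative--definite, in particular non degenerate, so $\mathfrak{g}_i$ is semisimple; letting $G_i$ be the connected, simply connected Lie group with Lie algebra $\mathfrak{g}_i$, Theorem~\ref{compactkilling} shows $G_i$ is compact.

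\textbf{Step 3: assembling the product, and the main difficulty.} Since the $\mathfrak{g}_i$ are pairwise commuting ideals, $\mathfrak{g}$ is isomorphic as a Lie algebra to the direct product $\mathfrak{g}_1\times\dots\times\mathfrak{g}_n$. The Lie group $G_1\times\dots\times G_n$ is connected, is simply connected (a finite product of simply connected spaces), and has Lie algebra $\mathfrak{g}_1\times\dots\times\mathfrak{g}_n\cong\mathfrak{g}$; by the uniqueness in Lie's Third Theorem (equivalently Corollary~\ref{cor-gruposisomorfosSeAlgebrasisomorfas}) it is isomorphic to $\widetilde{G}$. The image of each factor $G_i$ is a normal Lie subgroup of $\widetilde{G}$ with Lie algebra $\mathfrak{g}_i$, and by Step~2 it equals $\R$ when $\mathfrak{g}_i$ is commutative and is compact when $\mathfrak{g}_i$ is non--commutative. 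Everything except Step~2 is formal manipulation of orthogonal complements and an appeal to Lie's Third Theorem; the one point requiring an idea is observing that a bi--invariant metric forces each $\ad(X)$ to be skew, hence makes the Killing form of a non--commutative simple factor negative--definite, which is exactly what is needed to invoke Theorem~\ref{compactkilling}.
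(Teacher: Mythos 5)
Your proof is correct and follows essentially the same route as the paper's: orthogonal complement of an ideal is an ideal (via skew--symmetry of $\ad$), so $\mathfrak{g}$ splits into orthogonal simple ideals; commutative ones are $\R$, non--commutative ones have trivial center, so the skew--symmetric $\ad(X)$'s make the Killing form negative--definite and Theorem~\ref{compactkilling} gives compactness; uniqueness in Lie's Third Theorem then identifies $\widetilde{G}$ with $G_1\times\dots\times G_n$. The one streamlining you make is to get normality of each $G_i$ for free because a factor of a direct product is automatically normal, whereas the paper reproves this with the $\Ad(\exp(Y))=\exp(\ad(Y))$ computation and Proposition~\ref{vizger}.
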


\begin{proof}
In order to verify that $\mathfrak{g}$ is direct orthogonal sum of simple ideals, it suffices to prove that if $\mathfrak{h}$ is an ideal, then $\mathfrak{h}^\perp$ is also an ideal. Let $X\in\mathfrak{h}^\perp$, $Y\in\mathfrak{g}$ and $Z\in\mathfrak{h}$. Then, using Proposition~\ref{curvatureLie}, it follows that
{\allowdisplaybreaks
\begin{eqnarray*}
\langle [X,Y],Z \rangle &=& -\langle [Y,X],Z \rangle \\
&=& \langle X,[Y,Z] \rangle \\
&=& 0.
\end{eqnarray*}}Hence $[X,Y]\in\mathfrak{h}^\perp$, and this proves the first assertion.

In addition, from Lie's Third Theorem~\ref{liethird}, given $\mathfrak{g}_i$, there exists a unique connected and simply connected Lie group $G_i$ with Lie algebra isomorphic to $\mathfrak{g}_i$. Hence $G_1\times\ldots\times G_n$ is a connected and simply connected Lie group, with Lie algebra $\mathfrak{g}_1\oplus\ldots\oplus\mathfrak{g}_n=\mathfrak{g}$. From uniqueness in Lie's Third Theorem, it follows that $G_1\times\ldots\times G_n=\widetilde{G}$.

Moreover, if $\mathfrak{g}_i$ is commutative and simple, then $\mathfrak{g}_i=\R$. Hence, since $G_i$ is connected and simply connected, $G_i\simeq\R$. Else, if $\mathfrak{g}_i$ is non commutative, we observe that there does not exist $X\in\mathfrak{g}_i, X\neq 0$ such that $[X,Y]=0$, for all $Y\in\mathfrak{g}_i$. Indeed, if there existed such $X$, then $\{\R X\}\subset\mathfrak{g}_i$ would be a non trivial ideal. From the proof of Theorem~\ref{compactkilling}, the Killing form of $\mathfrak{g}_i$ is negative--definite, and from the same theorem, $G_i$ is compact.

Finally, to verify that $G_i$ is a normal subgroup, let $X\in\mathfrak{g}_i$, and $Y\in\mathfrak{g}$. Then $[X,Y]\in\mathfrak{g}_i$, and from \eqref{adexp}, \begin{eqnarray*}
\Ad(\exp(Y))X &=& \exp(\ad(Y))X \\
&=& \sum_{k=0}^{\infty} \frac{\ad(Y)^k}{k!} X \in\mathfrak{g}_i.
\end{eqnarray*}
On the other hand, from \eqref{gexpg}, $$\exp(Y)\exp(X)\exp(Y)^{-1}=\exp(\Ad(\exp(Y))X),$$ and hence $\exp(Y)\exp(X)\exp(Y)^{-1}\in G_i$. From Proposition~\ref{vizger}, $G_i$ is normal.
\end{proof}

The above theorem and Remark~\ref{rem-teo-semisimples-equivalencias} imply the next corollary.

\begin{corollary}
Let $\mathfrak{g}$ be a Lie algebra with a bi--invariant metric. Then $\mathfrak{g}=\Zentrum(\mathfrak{g})\oplus \widetilde{\mathfrak{g}}$ is the direct sum of ideals, where $\widetilde{\mathfrak{g}}$ is a semisimple Lie algebra. In particular, $[\widetilde{\mathfrak{g}},\widetilde{\mathfrak{g}}]=\widetilde{\mathfrak{g}}$.
\end{corollary}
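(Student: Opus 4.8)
The plan is to apply the preceding theorem to split $\mathfrak{g}$ into an orthogonal direct sum of simple ideals and then merely sort those ideals according to whether they are commutative. First I would write $\mathfrak{g}=\mathfrak{g}_1\oplus\dots\oplus\mathfrak{g}_n$ as the direct orthogonal sum of simple ideals furnished by the theorem above, and, after reindexing, assume $\mathfrak{g}_1,\dots,\mathfrak{g}_r$ are the commutative summands (each then necessarily one-dimensional) and $\mathfrak{g}_{r+1},\dots,\mathfrak{g}_n$ the non-commutative ones. Set $\mathfrak{a}=\mathfrak{g}_1\oplus\dots\oplus\mathfrak{g}_r$ and $\widetilde{\mathfrak{g}}=\mathfrak{g}_{r+1}\oplus\dots\oplus\mathfrak{g}_n$, so that $\mathfrak{g}=\mathfrak{a}\oplus\widetilde{\mathfrak{g}}$ is already a direct sum of ideals. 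It then remains to identify $\mathfrak{a}$ with $\Zentrum(\mathfrak{g})$ and to record that $\widetilde{\mathfrak{g}}$ is semisimple with $[\widetilde{\mathfrak{g}},\widetilde{\mathfrak{g}}]=\widetilde{\mathfrak{g}}$ (the extreme cases $r=0$ and $r=n$ being covered automatically, with $\Zentrum(\mathfrak{g})=\{0\}$ or $\widetilde{\mathfrak{g}}=\{0\}$ respectively).

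For the inclusion $\mathfrak{a}\subset\Zentrum(\mathfrak{g})$, I would use that distinct summands are ideals, whence $[\mathfrak{g}_i,\mathfrak{g}_j]\subset\mathfrak{g}_i\cap\mathfrak{g}_j=\{0\}$ for $i\neq j$, while $[\mathfrak{g}_i,\mathfrak{g}_i]=\{0\}$ for $i\leq r$ by commutativity; hence every element of $\mathfrak{a}$ brackets trivially against all of $\mathfrak{g}$. For the reverse inclusion, take $X\in\Zentrum(\mathfrak{g})$ and write $X=\sum_i X_i$ with $X_i\in\mathfrak{g}_i$; for $Y\in\mathfrak{g}_i$ the same ideal argument gives $[X,Y]=[X_i,Y]$, so $X_i\in\Zentrum(\mathfrak{g}_i)$. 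For $i>r$, the center $\Zentrum(\mathfrak{g}_i)$ is an ideal of the simple ideal $\mathfrak{g}_i$, hence equals $\{0\}$ or $\mathfrak{g}_i$, and the latter is impossible since $\mathfrak{g}_i$ is non-commutative; thus $X_i=0$ for all $i>r$ and $X\in\mathfrak{a}$. This gives $\Zentrum(\mathfrak{g})=\mathfrak{a}$ and therefore $\mathfrak{g}=\Zentrum(\mathfrak{g})\oplus\widetilde{\mathfrak{g}}$.

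Finally, $\widetilde{\mathfrak{g}}$ is a direct sum of non-commutative simple ideals, hence semisimple by the equivalence $(\mathrm{i})\Leftrightarrow(\mathrm{iv})$ of Theorem~\ref{teo-semisimples-equivalencias}, and $[\widetilde{\mathfrak{g}},\widetilde{\mathfrak{g}}]=\widetilde{\mathfrak{g}}$ by Remark~\ref{rem-teo-semisimples-equivalencias}. There is no genuine obstacle here; the only point that needs a moment's care is the second inclusion above, namely that a central element of $\mathfrak{g}$ can have no component along a non-commutative factor, which rests on the triviality of the center of a simple non-commutative ideal. Everything else is bookkeeping on top of the splitting theorem.
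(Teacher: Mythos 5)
Your proof is correct and follows the same route the paper intends: apply the splitting theorem to decompose $\mathfrak{g}$ orthogonally into simple ideals, group the commutative summands into a candidate for $\Zentrum(\mathfrak{g})$ and the non-commutative ones into $\widetilde{\mathfrak{g}}$, identify the former with the center (using that the center of a non-commutative simple ideal is trivial), and invoke the characterization of semisimplicity and Remark~\ref{rem-teo-semisimples-equivalencias}. The paper merely asserts the corollary as an immediate consequence; your write-up supplies the routine verification that the sum of commutative factors is exactly $\Zentrum(\mathfrak{g})$, which is the only point needing a moment's care.
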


\begin{proposition}\label{prop-Liegroup-uniqueMetric-Einstein}
Let $G$ be a compact simple Lie group with Killing form $B$ and bi--invariant metric $\langle\cdot,\cdot\rangle$. Then the bi--invariant metric is unique up to multiplication by constants. In addition, $(G,\langle\cdot,\cdot\rangle)$ is an Einstein manifold. More precisely, there exists $\lambda\in\R$ such that $$\Ric(X,Y)=\lambda\langle X,Y\rangle.$$\index{$\Ric$}
\end{proposition}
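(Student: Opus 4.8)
The plan is to translate everything into linear algebra on $\mathfrak g=T_eG$. A bi--invariant metric on $G$ is the same datum as an $\Ad$--invariant inner product on $\mathfrak g$: restriction to $T_eG$ uses left-- and right--invariance, and conversely the left--translation construction of Section~2.2 produces a bi--invariant metric from an $\Ad$--invariant inner product (right--invariance is exactly $\Ad$--invariance). Differentiating the relation $\langle\Ad(\exp tZ)X,\Ad(\exp tZ)Y\rangle=\langle X,Y\rangle$ at $t=0$ — equivalently, using Proposition~\ref{curvatureLie}(i), which holds for any bi--invariant metric — shows that every $\ad(Z)$ is skew--adjoint with respect to any such inner product.

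The heart of the argument is a real Schur--type lemma. First note a bi--invariant metric exists: $G$ is compact and simple, hence semisimple, so by Theorem~\ref{compactkilling} its Killing form $B$ is negative--definite and by Corollary~\ref{killingmetric} the metric $-B$ is bi--invariant. Fix one bi--invariant metric $\langle\cdot,\cdot\rangle$ and let $\langle\cdot,\cdot\rangle'$ be another. There is a unique $\langle\cdot,\cdot\rangle$--self--adjoint positive--definite operator $P$ on $\mathfrak g$ with $\langle X,Y\rangle'=\langle PX,Y\rangle$. Using that each $\ad(Z)$ is skew--adjoint for \emph{both} inner products, one computes $\langle P\,\ad(Z)X,Y\rangle=\langle\ad(Z)X,Y\rangle'=-\langle X,\ad(Z)Y\rangle'=-\langle PX,\ad(Z)Y\rangle=\langle\ad(Z)PX,Y\rangle$ for all $Y$, whence $[P,\ad(Z)]=0$ for every $Z\in\mathfrak g$. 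Since $P$ is self--adjoint it is diagonalizable with real eigenvalues, and because $P$ commutes with all $\ad(Z)$, each eigenspace $V_\mu$ of $P$ is $\ad(\mathfrak g)$--invariant, i.e. an ideal of $\mathfrak g$. As $\mathfrak g$ is simple its only ideals are $\{0\}$ and $\mathfrak g$; since $\mathfrak g=\bigoplus_\mu V_\mu\neq\{0\}$, a single eigenspace equals $\mathfrak g$ and $P=\mu\,\id$ with $\mu>0$. Hence $\langle\cdot,\cdot\rangle'=\mu\langle\cdot,\cdot\rangle$, which is the asserted uniqueness up to a positive constant.

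For the Einstein conclusion, the previous step gives $\langle\cdot,\cdot\rangle=-c\,B$ for some $c>0$, i.e. $B(X,Y)=-\tfrac1c\langle X,Y\rangle$. Substituting into the identity $\Ric(X,Y)=-\tfrac14 B(X,Y)$ of Remark~\ref{riccikilling}, valid for every bi--invariant metric, yields $\Ric(X,Y)=\tfrac1{4c}\langle X,Y\rangle$, so $(G,\langle\cdot,\cdot\rangle)$ is Einstein with $\lambda=\tfrac1{4c}$. I expect the main obstacle to be the Schur step: one must verify carefully that the comparison operator $P$ commutes with the entire adjoint action and that simplicity of $\mathfrak g$ (in the paper's sense of a non--commutative simple ideal) forces the adjoint representation to be $\R$--irreducible, so that the self--adjoint $P$ is forced to be a scalar; the remaining computations are routine.
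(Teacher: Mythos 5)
Your proof is correct and matches the paper's argument essentially step for step: both introduce the $\langle\cdot,\cdot\rangle$--self--adjoint positive--definite comparison operator (your $P$, the paper's $A$), show it commutes with every $\ad(Z)$ via skew--adjointness of $\ad$ in both metrics, deduce from simplicity that its eigenspaces are ideals so it must be a scalar, and then finish using $-B$ being bi--invariant (Theorem~\ref{compactkilling}, Corollary~\ref{killingmetric}) together with the identity $\Ric=-\tfrac14 B$ of Remark~\ref{riccikilling}. The only cosmetic difference is that you make explicit the reduction of bi--invariant metrics to $\Ad$--invariant inner products on $\mathfrak g$, which the paper leaves implicit.
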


\begin{proof}
Let $(\cdot,\cdot)$ be another bi--invariant metric on $G$. Then there exists a symmetric positive--definite matrix $A$, such that $(X,Y)=\langle AX,Y\rangle$. We claim that $A\ad(X)=\ad(X)A$. Indeed,
{\allowdisplaybreaks
\begin{eqnarray*}
\langle A\ad(X)Y,Z \rangle &=& (\ad(X)Y,Z) \\
&=& -(Y,\ad(X)Z) \\
&=& -\langle A Y,\ad (X)Z \rangle \\
&=& \langle\ad(X)A Y,Z \rangle.
\end{eqnarray*}}

Furthermore, eigenspaces of $A$ are $\ad(X)$-invariant, that is, they are ideals. In fact, let $Y\in\mathfrak{g}$ be an eigenvector of $A$ associated to an eigenvalue $\mu$. Then \begin{eqnarray*}
A\ad(X)Y &=& \ad(X)AY \\
&=& \mu \ad(X)Y.
\end{eqnarray*} Since $\mathfrak{g}$ is simple, it follows that $A=\mu\id$, hence $(X,Y)=\mu\langle X,Y\rangle$, for all $X,Y\in\mathfrak{g}$.

Since $G$ is compact, it follows from Theorem~\ref{compactkilling} and Corollary~\ref{killingmetric} that $-B$ is a bi--invariant metric. Hence, there exists $\lambda$ such that $-B(X,Y)=4\lambda\langle X,Y\rangle$. Therefore, from Remark~\ref{riccikilling}, $G$ is Einstein, \begin{eqnarray*}
\Ric(X,Y) &=& -\tfrac{1}{4}B(X,Y) \\
&=& \tfrac{1}{4} 4\lambda \langle X,Y \rangle \\
&=& \lambda \langle X,Y \rangle.\qedhere
\end{eqnarray*}
\end{proof}

\begin{exercise}
Let $G$ be a compact semisimple Lie group with Lie algebra $\mathfrak{g}=\mathfrak{g}_1\oplus\dots\oplus\mathfrak{g}_n$ given by the direct sum of non commutative simple ideals. Consider $\langle\cdot,\cdot\rangle$ a bi--invariant metric on $G$. Prove that there exist positive numbers $\lambda_j$ such that $$\langle\cdot,\cdot\rangle =\sum_j -\lambda_j B_j(\cdot,\cdot),$$ where $B_j=B|_{\mathfrak{g}_j}$ is the restriction of the Killing form to $\mathfrak{g}_j$.
\end{exercise}

\begin{exercise}\index{Killing!form of $\SU(n)$}\index{$\SU(n)$}\index{$\mathfrak{su}(n),\mathfrak{u}(n)$}
In order to compute the Killing form $B$ of $\SU(n)$, recall that its Lie algebra is $$\mathfrak{su}(n)=\{A\in\mathfrak{gl}(n,\C):A^*+A=0, \trace A=0\}$$ (see Exercise~\ref{ex-classiclie}). Consider special diagonalizable matrices in $\mathfrak{su}(n)$, $$X=\left[ \begin{array}{l l l}
                          i\theta_1 & & \\
                           & \ddots & \\
			  & & i\theta_n \end{array}\right] \mbox{ and } Y=\left[ \begin{array}{l l l}
                          i\zeta_1 & & \\
                           & \ddots & \\
			  & & i\zeta_n \end{array}\right].$$

Use the fact that $\trace X=\trace Y=0$, hence $\sum_{i=1}^n \theta_i=\sum_{i=1}^n \zeta_i=0$, and Exercise~\ref{ex-ad-matrizes} to verify that $B$ calculated in $X$ and $Y$ gives $$B(X,Y)=\trace\left(\ad(X)\ad(Y)\right)=-2n\sum_{i=1}^n \theta_i\zeta_i.$$

From Exercise~\ref{ex-metrica-su}, the inner product $\langle X,Y \rangle=\mathrm{Re} \trace(XY^*)$ in $T_e\SU(n)$ can be extended to a bi--invariant metric $\langle\cdot,\cdot\rangle$. Since $\SU(n)$ is simple, from Proposition~\ref{prop-Liegroup-uniqueMetric-Einstein}, there exists a constant $c\in\R$ such that $B(\cdot,\cdot)=c\langle\cdot,\cdot\rangle$. Conclude that $c=-2n$ and that the Killing form of $\SU(n)$ is $$B(Z,W)=-2n\mathrm{Re}\trace(ZW^*), \; \mbox{ for all } Z,W\in\mathfrak{su}(n).$$
\end{exercise}

%%%%%%%%%%%%%%%%%%%%%%%%%%%
%%% Part II             %%%
%%%%%%%%%%%%%%%%%%%%%%%%%%%
\part{Isometric and adjoint actions and some generalizations}
\chapter{Proper and isometric actions}
\label{chap3}

In this chapter, we present a concise introduction to the theory of proper and isometric actions. We begin with some results on fiber bundles, among which the most important are the Slice Theorem~\ref{slicethm} and the Tubular Neighborhood Theorem~\ref{theorem-tubularneighborhood}. These will be used to establish strong relations between proper and isometric actions in Section~\ref{sec32}. As a consequence of Proposition~\ref{proposition-acaoisometrica-eh-propria} and Theorem~\ref{theorem-acaopropria-isometrica}, these are essentially {\em the same} actions, in the sense explained in Remark~\ref{re:isopropsame}. Furthermore, some properties of the so--called principal orbits are explored. Finally, orbit types and the correspondent stratification are studied in the last section.

Further references on the content of this chapter are Palais and Terng~\cite{PalaisTerng}, Duistermaat and Kolk~\cite{duistermaat}, Kawakubo~\cite{livrokawakubo}, Pedrosa~\cite[Part I]{escola} and Spindola~\cite{lucas}.

\section{Proper actions and fiber bundles}\label{Section-properaction-fiberbundle}

In this section, proper actions are introduced together with a preliminary study of principal and associated fiber bundles. The main results are the Slice Theorem~\ref{slicethm} and the Tubular Neighborhood Theorem~\ref{theorem-tubularneighborhood}, that will play an essential role in the theory.

\begin{definition}
Let $G$ be a Lie group and $M$ a smooth manifold. A smooth map $\mu:G\times M\rightarrow M$ is called a {\it left action}\index{Action!left} of $G$ on $M$ if
\begin{itemize}
\item[(i)] $\mu(e,x)=x$, for all $x\in M$;
\item[(ii)] $\mu(g_1,\mu(g_2,x))=\mu(g_1g_2,x)$, for all $g_1,g_2\in G, x\in M$.
\end{itemize}
{\it Right actions}\index{Action!right} of $G$ on $M$ are analogously defined.
\end{definition}

The simplest examples of actions are seen in linear algebra courses, for instance $\GL(n,\R)$ acting on $\R^n$ by multiplication. An important example is the \emph{adjoint action}\index{Adjoint!action}\index{Action!adjoint}\index{$\Ad$} of a Lie group $G$ on its Lie algebra $\mathfrak{g}$, given by the adjoint representation $$\Ad:G\times\mathfrak{g}\ni (g,X)\longmapsto \Ad(g)X\in\mathfrak{g}.$$ This particular action will be studied in detail in Chapter~\ref{chap4}. Other typical examples are actions of a Lie subgroup $H\subset G$ on $G$ by left multiplication or conjugation.

\begin{definition}
Let $\mu:G\times M\rightarrow M$ be a left action and $x\in M$. The subgroup $G_x=\{g\in G:\mu(g,x)=x\}$ is called {\it isotropy group}\index{Isotropy group}\index{$G_x$} or {\it stabilizer}\index{Stabilizer} of $x\in M$ and $G(x)=\{\mu(g,x):g\in G\}$ is called the {\it orbit}\index{Orbit}\index{$G(x)$} of $x\in M$.

In addition, if $\bigcap_{x\in M} G_x=\{e\}$, the action is said to be {\it effective}\index{Action!effective} and if $G_x=\{e\}$, for all $x\in M$, it is said to be {\it free}.\index{Action!free} Finally, if given $x,y\in M$ there exists $g\in G$ with $\mu(g,x)=y$, $\mu$ is said to be {\it transitive}.\index{Action!transitive}
\end{definition}

\begin{remark}\label{remark-effective-action}
Every action can be reduced to an effective action. This is done by taking the quotient of $G$ by the normal subgroup $\bigcap_{x\in M} G_x$, that corresponds to the {\em kernel} of the action.
\end{remark}

If $G(x)$ and $G(y)$ have nontrivial intersection, then they coincide. Hence, orbits of an action of $G$ on $M$ constitute a partition of $M$ and one can consider the quotient space $M/G$, called the {\it orbit space}.\index{Orbit!space}

\begin{exercise}\label{ex-conjisotropy}
Let $\mu:G\times M\rightarrow M$ be any left action. Prove that $G_{\mu(g,x)}=gG_xg^{-1}$.
\end{exercise}

The next result asserts that given a smooth action, we can associate to each element of $\mathfrak{g}$ a vector field on $M$. 

\begin{proposition}
\label{proposition-campoXxi}
Consider a smooth action $\mu:G\times M\rightarrow M$.
\begin{itemize}
\item[(i)] Each $\xi\in\mathfrak{g}$ induces a smooth vector field $X^\xi$ on $M$, given by $$X^\xi(p)=\frac{\mathrm d}{\mathrm dt}\mu(\exp(t\xi),p)\Big|_{t=0};$$
\item[(ii)] The flow of $X^\xi$ is given by $$\varphi^{X^\xi}_{t}(\cdot)=\mu(\exp(t\xi),\cdot).$$
\end{itemize}
\end{proposition}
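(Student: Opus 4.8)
The plan is to first verify smoothness of $X^\xi$ in part (i), essentially mimicking the argument (used earlier) that left--invariant vector fields on a Lie group are smooth, and then in part (ii) to show directly that the curves $t\mapsto\mu(\exp(t\xi),p)$ are integral curves of $X^\xi$, so that by uniqueness of integral curves they constitute precisely its flow.

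For (i), I would introduce the map $\Phi\colon\R\times M\to M$, $\Phi(t,p)=\mu(\exp(t\xi),p)$, which is smooth since it is the composition of $\mu$ with the smooth map $(t,p)\mapsto(\exp(t\xi),p)$. Differentiating $\Phi$ gives a smooth map $\dd\Phi\colon T(\R\times M)\simeq T\R\times TM\to TM$. Composing it with the smooth section $p\mapsto(\partial_t|_{0},0_p)\in T_0\R\times T_pM\simeq T_{(0,p)}(\R\times M)$ yields exactly $p\mapsto X^\xi(p)$, which is therefore smooth. Note that for each fixed $p$ the value $X^\xi(p)$ lies in $T_pM$, since $\mu(\exp(0\cdot\xi),p)=\mu(e,p)=p$, so $X^\xi$ is genuinely a vector field.

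For (ii), fix $p\in M$ and set $\gamma(t)=\mu(\exp(t\xi),p)$, a smooth curve defined for all $t\in\R$ with $\gamma(0)=\mu(e,p)=p$. To compute $\gamma'(t)$, I would use property (iii) of the exponential map, $\exp((t+s)\xi)=\exp(s\xi)\exp(t\xi)$, together with axiom (ii) of actions: for all $s$,
$$\gamma(t+s)=\mu\big(\exp(s\xi)\exp(t\xi),p\big)=\mu\big(\exp(s\xi),\mu(\exp(t\xi),p)\big)=\mu\big(\exp(s\xi),\gamma(t)\big).$$
Differentiating at $s=0$ gives $\gamma'(t)=\tfrac{\dd}{\dd s}\mu(\exp(s\xi),\gamma(t))\big|_{s=0}=X^\xi(\gamma(t))$. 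Hence $\gamma$ is the maximal integral curve of $X^\xi$ through $p$; in particular $X^\xi$ is complete, and by uniqueness of integral curves (Theorem~\ref{flow}) its flow is $\varphi^{X^\xi}_t(p)=\gamma(t)=\mu(\exp(t\xi),p)$, as claimed.

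The argument is essentially routine; the only point requiring care is the choice of which factor to differentiate in the reparametrization $\gamma(t+s)=\mu(\exp(s\xi),\gamma(t))$. Writing $\exp((t+s)\xi)=\exp(s\xi)\exp(t\xi)$, rather than $\exp(t\xi)\exp(s\xi)$, is what places the moving parameter $s$ in the outer slot and directly reproduces $X^\xi$ evaluated at $\gamma(t)$; this step relies on the fact that $\exp(s\xi)$ and $\exp(t\xi)$ commute, being elements of the same one--parameter subgroup.
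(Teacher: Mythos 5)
Your proof is correct and follows essentially the same strategy as the paper's: for (i), realizing $X^\xi$ as the composition of a smooth section of a tangent bundle with a smooth derivative map (the paper uses $\dd\mu\circ\sigma^\xi$ with $\sigma^\xi(p)=(\xi_e,0_p)$, while you insert the intermediate map $\Phi(t,p)=\mu(\exp(t\xi),p)$, which amounts to the same thing after the chain rule); and for (ii), writing $\gamma(t+s)=\mu(\exp(s\xi),\gamma(t))$ via the one-parameter-subgroup property together with the action axiom and differentiating at $s=0$. Your explicit remark about choosing the factorization $\exp((t+s)\xi)=\exp(s\xi)\exp(t\xi)$ so that the moving parameter sits in the outer slot is a useful clarification of a step the paper leaves implicit.
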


\begin{proof}
Define \begin{eqnarray*} \sigma^{\xi}:M &\longrightarrow & TG\times TM \\ p &\longmapsto & (\xi_{e},0_{p}). \end{eqnarray*} Note that $\dd\mu\circ\sigma^\xi$ is smooth. Item (i) follows from \begin{eqnarray*}
\dd\mu\circ\sigma^\xi (p) &=& \dd\mu\left(\frac{\mathrm d}{\mathrm dt}(\exp(t\xi),p)\Big|_{t=0}\right)\\
                        &=&\frac{\mathrm d}{\mathrm dt}\left(\mu(\exp(t\xi),p)\Big|_{t=0}\right)\\
                        &=& X^\xi(p).
\end{eqnarray*}

In order to prove (ii), for each $p\in M$ set $\alpha_{p}(t)= \mu(\exp(t\xi),p)$. Then $\alpha_{p}(0)=p$ and \begin{eqnarray*}
\frac{\mathrm d}{\mathrm dt}\alpha_{p}(t)\Big|_{t_{0}}&=&\frac{\mathrm d}{\mathrm ds}\alpha(s+t_{0})\Big|_{s=0}\\
                              &=& \frac{\mathrm d}{\mathrm ds}\mu(\exp(s\xi)\cdot\exp(t_{0}\xi) ,p))\Big|_{s=0}\\
                              &=& X^\xi(\mu(\exp(t_{0}X),p))\\
                              &=& X^\xi(\alpha_{p}(t_{0})).\qedhere
\end{eqnarray*}
\end{proof}

\begin{remark}
The association $\mathfrak{g}\ni\xi\mapsto X^\xi\in\mathfrak{X}(M)$ defined in item (i) is a Lie anti--homomorphism (see Spindola~\cite{lucas}).
\end{remark}

\begin{remark}
\label{remark-proposition-campoXxi}
As we will see in Proposition~\ref{orbitsubmanifold}, each orbit $G(x)$ is an immersed submanifold. Moreover, for each $v\in T_{x}G(x)$ there is a vector field $X^\xi$ such that $X^\xi(x)=v$. Assuming this fact, it follows that the partition $\mathcal{F}=\{G(x)\}_{x\in M}$ by orbits of an action is a foliation, provided that all orbits have the same dimension (see Definition~\ref{definition-foliation}). More generally, without this dimensional assumption, $\F$ is a \emph{singular} foliation (see Remark~\ref{re:dimsemicont} and Definition~\ref{definition-s.r.f}).
\end{remark}

\begin{remark}
Vector fields induced by an action play an important role in the study of \emph{equations of Lie type}. As a reference for this subject we indicate Bryant~\cite{Bryant}.
\end{remark}

\begin{exercise}\index{$\SO(n)$}
\label{ex-rotacaoS0}
Consider the action $\mu:\SO(3)\times\R^{3}\rightarrow \R^{3}$ by multiplication.
\begin{itemize}
\item[(i)] Determine the isotropy groups and orbits of $\mu$;
\item[(ii)] Verify that if $\xi\in\mathfrak{so}(3)$, then $X^\xi(p)=\xi\times p$;
\item[(iii)] Consider $A_{\xi}$ defined in Exercise~\ref{ex-sobre-SO3}. Verify that $e^{tA_{\xi}}$ is a rotation on the axis $\xi$ with angular speed $\|\xi\|$.
\end{itemize}
\end{exercise}

In the sequel, we will need the following technical result, whose proof can be found in Spindola~\cite{lucas}.

\begin{proposition}\label{proposition-kern-mu}
The following hold.

\begin{itemize}
\item[(i)] Let $\mu:G\times M\rightarrow M$ be a left action and $\mu_{x}(\cdot)=\mu(\,\cdot,x)$. Then $\ker\dd(\mu_{x})_{g_{0}}=T_{g_{0}}g_{0}G_{x};$
\item[(ii)] Let $\mu:M\times G\rightarrow M$ be a right action and $\mu_{x}(\cdot)=\mu(x,\cdot)$. Then $\ker\dd(\mu_{x})_{g_{0}}=T_{g_{0}}G_{x}g_{0}.$
\end{itemize}
\end{proposition}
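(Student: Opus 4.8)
The plan is to treat the two items symmetrically, reducing each to a computation of the differential of an orbit map at an arbitrary point by translating back to the identity. For item (i), fix $g_0\in G$ and consider $\mu_x:G\to M$, $\mu_x(g)=\mu(g,x)$. The key observation is the equivariance relation $\mu_x\circ L_{g_0}=\mu_{\mu(g_0,x)}\circ\,?$; more precisely, for $h\in G$ one has $\mu_x(g_0 h)=\mu(g_0 h,x)=\mu(g_0,\mu(h,x))=\mu_{g_0\cdot x}(h)$ composed with left translation by $g_0$ on the source. Thus $\mu_x\circ L_{g_0}=L_{g_0}^{M}\circ\mu_x$ is not quite right; the correct statement is $\mu_x(g_0 h)=\Theta_{g_0}(\mu_x(h))$ where $\Theta_{g_0}=\mu(g_0,\cdot):M\to M$ is the diffeomorphism given by the action of $g_0$. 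Differentiating at $h=e$ gives $\dd(\mu_x)_{g_0}\circ\dd(L_{g_0})_e=\dd(\Theta_{g_0})_x\circ\dd(\mu_x)_e$. Since $\dd(\Theta_{g_0})_x$ and $\dd(L_{g_0})_e$ are isomorphisms, this identifies $\ker\dd(\mu_x)_{g_0}=\dd(L_{g_0})_e\big(\ker\dd(\mu_x)_e\big)$, so the problem is reduced to computing $\ker\dd(\mu_x)_e$.

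To compute $\ker\dd(\mu_x)_e$, I would argue that it equals $\mathfrak{g}_x:=T_e G_x$, the Lie algebra of the isotropy group $G_x$. One inclusion is immediate: if $\xi\in\mathfrak{g}_x$ then $\exp(t\xi)\in G_x$ for all $t$ (using Remark~\ref{expsubgroup}, since $G_x$ is a closed subgroup of $G$ by continuity of $\mu_x$, hence a Lie subgroup by Theorem~\ref{closedsubgroup}), so $\mu_x(\exp(t\xi))=x$ is constant and its derivative at $t=0$, namely $X^\xi(x)=\dd(\mu_x)_e\xi$, vanishes. For the reverse inclusion, I would use that $\dd(\mu_x)_e$ has constant rank in a suitable sense — or more directly, invoke the fact (to be established around Proposition~\ref{orbitsubmanifold}) that $\mu_x$ descends to an injective immersion $G/G_x\to M$, so that $\ker\dd(\mu_x)_e$ is exactly the tangent space to the fiber $G_x$ through $e$. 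Combining with the translation argument, $\ker\dd(\mu_x)_{g_0}=\dd(L_{g_0})_e(\mathfrak{g}_x)=T_{g_0}(g_0 G_x)$, which is item (i). Item (ii) is entirely analogous, using right translations $R_{g_0}$ in place of left translations, and noting that for a right action $\mu_x(h g_0)=\Theta_{g_0}(\mu_x(h))$ with the roles of source multiplication reversed, yielding $\ker\dd(\mu_x)_{g_0}=\dd(R_{g_0})_e(\mathfrak{g}_x)=T_{g_0}(G_x g_0)$.

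The main obstacle is the reverse inclusion $\ker\dd(\mu_x)_e\subset\mathfrak{g}_x$, since the easy direction and the translation bookkeeping are routine. The cleanest honest route is to first establish that $\mu_x$ factors through $G/G_x$ as an injective immersion (the orbit–stabilizer correspondence at the manifold level), which is precisely the content of the cited Proposition~\ref{orbitsubmanifold}; granting that, the kernel computation is immediate. Since the paper defers this technical proposition to Spindola~\cite{lucas}, I would either cite it or, if a self-contained argument is preferred here, prove the rank of $\dd(\mu_x)$ is constant along $G$ (via the translation identity above) and then use that a constant-rank map has $\ker\dd(\mu_x)_g$ of dimension $\dim G-\operatorname{rank}$, matching $\dim G_x$ by the closed-subgroup structure, which forces equality of the two subspaces of equal dimension once the $\supset$ inclusion is known. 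I would present the translation lemma first, then the identity-point computation, then assemble both items.
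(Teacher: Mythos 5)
The paper does not give its own proof of Proposition~\ref{proposition-kern-mu}; it simply defers to Spindola~\cite{lucas}. So the comparison must be against the paper's logical structure rather than an explicit proof. Your translation step (the identity $\mu_x\circ L_{g_0}=\mu^{g_0}\circ\mu_x$, hence $\dd(\mu_x)_{g_0}\circ\dd(L_{g_0})_e=\dd(\mu^{g_0})_x\circ\dd(\mu_x)_e$ and $\ker\dd(\mu_x)_{g_0}=\dd(L_{g_0})_e\,\ker\dd(\mu_x)_e$) is correct, and so is the easy inclusion $T_eG_x\subset\ker\dd(\mu_x)_e$ via $\exp$. The problem is the reverse inclusion. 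Your preferred route is to quote Proposition~\ref{orbitsubmanifold}, but that is circular inside this paper: the proof of Proposition~\ref{orbitsubmanifold} explicitly invokes Proposition~\ref{proposition-kern-mu} to get that $\dd\widetilde{\mu}_x$ is injective at every point. You acknowledge that the paper defers the result, but you do not flag that citing Proposition~\ref{orbitsubmanifold} would be circular, so the weight falls entirely on your fallback.

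The fallback (constant rank) is the right idea, but as written it has a gap. After the translation identity, $\dd\mu_x$ indeed has constant rank along $G$. But the step ``$\ker\dd(\mu_x)_g$ has dimension $\dim G-\operatorname{rank}$, matching $\dim G_x$ by the closed-subgroup structure'' does not follow: rank--nullity gives the kernel dimension, but nothing in Theorem~\ref{closedsubgroup} tells you that $\dim G_x=\dim G-\operatorname{rank}$; that equality is exactly what is in question. What closes this is item (iii) of the Rank Theorem recalled in Section~\ref{sec-closedsubgroups}: for a constant-rank map, $\ker\dd f_p=T_pf^{-1}(f(p))$. Applied at $p=e$, this gives $\ker\dd(\mu_x)_e=T_e\mu_x^{-1}(x)=T_eG_x$ directly, with no dimension matching. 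Alternatively, a route that avoids the Rank Theorem and uses only the earlier Proposition~\ref{proposition-campoXxi}: if $\xi\in\ker\dd(\mu_x)_e$ then the induced field $X^\xi$ vanishes at $x$, so $x$ is a fixed point of the flow $\varphi_t^{X^\xi}(\cdot)=\mu(\exp(t\xi),\cdot)$; uniqueness of integral curves gives $\mu(\exp(t\xi),x)=x$ for all $t$, hence $\exp(t\xi)\in G_x$ for all $t$ and $\xi\in T_eG_x$. Either repair makes the argument sound, and your bookkeeping for item (ii), with $R_{g_0}$ in place of $L_{g_0}$, then goes through as stated.
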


In the sequel, the notation $\mu_x$ introduced above will be constantly used. More precisely, given a left action $\mu:G\times M\to M$, define two auxiliary maps\index{$\mu^g$}\index{$\mu_x$}
\begin{equation*}
\begin{aligned}
\mu^g:M&\longrightarrow M\quad& \mu_x:G&\longrightarrow M \\
x&\longmapsto\mu(g,x)\quad& g&\longmapsto\mu(g,x).
\end{aligned}
\end{equation*}

\begin{definition}
An action $\mu:G\times M\rightarrow M$ is {\it proper}\index{Action!proper} if the map $G\times M\owns (g,x)\mapsto (\mu(g,x),x)\in M\times M$ is proper, i.e., the preimage of any compact set is compact.
\end{definition}

It follows directly from the above definition that each isotropy group of a proper action is compact.

\begin{proposition}\label{proposition-def-equivalente-propria}
An action $\mu:G\times M\rightarrow M$ is proper if, and only if, the following property is satisfied. Let $\{g_n\}$ be any sequence in $G$ and $\{x_n\}$ be a convergent sequence in $M$, such that $\{\mu(g_n,x_n)\}$ also converges. Then $\{g_n\}$ admits a convergent subsequence.
\end{proposition}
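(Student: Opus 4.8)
The plan is to prove the equivalence directly from the definition of proper action, translating the topological condition about preimages of compact sets into the stated sequential condition. This is essentially the standard equivalence between properness of a map and the sequential characterization of properness, valid because $G$ and $M$ are (second countable, locally compact) manifolds, hence first countable, so sequences suffice to detect compactness and closedness.

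First I would prove the forward implication. Suppose $\mu$ is proper, so that the map $\Phi\colon G\times M\owns (g,x)\mapsto (\mu(g,x),x)\in M\times M$ has compact preimages of compact sets. Let $\{g_n\}$ be a sequence in $G$ and $\{x_n\}$ a convergent sequence in $M$ with $x_n\to x$, such that $\{\mu(g_n,x_n)\}$ converges, say $\mu(g_n,x_n)\to y$. Then the set $K=\{(\mu(g_n,x_n),x_n):n\in\N\}\cup\{(y,x)\}$ is a compact subset of $M\times M$, being a convergent sequence together with its limit. By properness, $\Phi^{-1}(K)$ is compact in $G\times M$. Since $(g_n,x_n)\in\Phi^{-1}(K)$ for all $n$, the sequence $\{(g_n,x_n)\}$ admits a convergent subsequence $(g_{n_k},x_{n_k})\to(g,x')$; in particular $\{g_{n_k}\}$ converges, which is what we wanted.

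Next I would prove the converse. Assume the sequential property holds; I want to show $\Phi^{-1}(K)$ is compact for every compact $K\subset M\times M$. Since $G\times M$ is a manifold, hence metrizable, it suffices to show $\Phi^{-1}(K)$ is sequentially compact. Take a sequence $\{(g_n,x_n)\}$ in $\Phi^{-1}(K)$, so $(\mu(g_n,x_n),x_n)\in K$. By compactness of $K$, after passing to a subsequence we may assume $(\mu(g_n,x_n),x_n)\to(y,x)\in K$; in particular $x_n\to x$ and $\mu(g_n,x_n)\to y$. By hypothesis, $\{g_n\}$ has a convergent subsequence $g_{n_k}\to g$. Then along that subsequence $(g_{n_k},x_{n_k})\to(g,x)$, and by continuity of $\mu$ we get $\mu(g,x)=y$, so $\Phi(g,x)=(y,x)\in K$, i.e.\ $(g,x)\in\Phi^{-1}(K)$. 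Thus every sequence in $\Phi^{-1}(K)$ has a subsequence converging within $\Phi^{-1}(K)$, so this set is compact.

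The only subtle point — and the one I would be careful to state explicitly — is the reduction of (topological) properness to the sequential condition, which relies on the fact that manifolds are first countable and that in a metrizable space compactness coincides with sequential compactness; I would mention this once at the outset. Everything else is a routine unwinding of definitions together with continuity of $\mu$. There is no real obstacle here beyond being careful about which space the various subsequences live in and invoking continuity of $\mu$ to pass the limit through in the converse direction.
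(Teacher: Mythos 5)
The paper states this proposition without proof, treating the sequential characterization of properness as a standard topological fact, so there is no written argument to compare against. Your proof is correct and complete: the forward direction packages the two convergent sequences together with their limits into a compact set $K$ and extracts a convergent subsequence from the compact set $\Phi^{-1}(K)$; the reverse direction shows $\Phi^{-1}(K)$ is sequentially compact, with the continuity of $\mu$ used exactly where needed to land the limit back inside $\Phi^{-1}(K)$. You also correctly flag the only nontrivial ingredient — that $G\times M$ and $M\times M$ are metrizable, so compactness and sequential compactness coincide and sequences detect properness — which is precisely the point that makes this characterization legitimate for proper actions on manifolds.
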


\begin{corollary}
Actions of compact groups are always proper.
\end{corollary}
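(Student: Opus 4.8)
Actions of compact groups are always proper.

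The plan is to deduce this directly from the characterization of properness given in Proposition~\ref{proposition-def-equivalente-propria}, which is the natural tool since it recasts properness as a sequential condition. First I would take $G$ to be a compact Lie group acting on a manifold $M$ via $\mu\colon G\times M\to M$. To verify the criterion, let $\{g_n\}$ be an arbitrary sequence in $G$ and let $\{x_n\}$ be a convergent sequence in $M$ such that $\{\mu(g_n,x_n)\}$ also converges. I must produce a convergent subsequence of $\{g_n\}$.

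The key observation is that $\{g_n\}$ is a sequence in the compact space $G$, and hence, by sequential compactness (which holds since $G$ is a manifold, thus metrizable, so compactness and sequential compactness coincide), it automatically admits a convergent subsequence $\{g_{n_k}\}$ with limit $g_0\in G$. That is literally all that is required: the hypotheses on $\{x_n\}$ and $\{\mu(g_n,x_n)\}$ are not even needed. So the proof is essentially immediate once Proposition~\ref{proposition-def-equivalente-propria} is invoked.

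There is no real obstacle here; the only thing to be careful about is citing the right equivalence and noting that the sequential criterion is what makes the argument trivial, rather than trying to work directly with the definition of properness as a map with compact preimages (which would force one to argue that $G\times K$ is compact for $K\subset M\times M$ compact, followed by a continuity/closedness argument — still easy, but slightly more verbose). I would present the one-line version.

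\begin{proof}
Since $G$ is compact, it is sequentially compact, being a metrizable space. Hence, given any sequence $\{g_n\}$ in $G$, it admits a convergent subsequence. In particular, the criterion in Proposition~\ref{proposition-def-equivalente-propria} is trivially satisfied: if $\{g_n\}$ is a sequence in $G$ and $\{x_n\}$ is a convergent sequence in $M$ such that $\{\mu(g_n,x_n)\}$ converges, then $\{g_n\}$ has a convergent subsequence regardless of these assumptions. Therefore $\mu$ is proper.
\end{proof}
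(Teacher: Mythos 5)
Your proof is correct and is exactly the argument the paper intends, since the statement is placed as an immediate corollary of Proposition~\ref{proposition-def-equivalente-propria}: compactness of $G$ makes the sequential criterion hold vacuously. The remark that the convergence hypotheses on $\{x_n\}$ and $\{\mu(g_n,x_n)\}$ play no role is a nice touch.
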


\begin{exercise}\label{exercise-rightmulitiplication-is-proper}
Let $H$ be a closed subgroup of the Lie group $G$. Prove that $G\times H\owns (g,h)\mapsto gh\in G$ is a free proper right action.
\end{exercise}

\begin{exercise}
An action $G\times M\rightarrow M$ is called \emph{properly discontinuous}\index{Action!properly discontinuous} if for all $x\in M$ there exists a neighborhood $U\owns x$ such that for all $g\in G, g\neq e$, then $gU\cap U=\emptyset$. Let $G$ be a discrete group that acts on a manifold $M$. Prove that this action is properly discontinuous if and only if it is free and proper.
\end{exercise}

Proper actions are closely related to principal fiber bundles. Before describing this relation, we give some basic definitions.

\begin{definition}
Let $E$, $B$ and $F$ be manifolds and $G$ a Lie group. Assume that $G\times F\rightarrow F$ is an effective left action and $\pi: E\rightarrow B$ a smooth submersion. Suppose that $B$ admits an open covering $\{U_{\alpha}\}$ and that there exist diffeomorphisms $\psi_{\alpha}:U_{\alpha}\times F\rightarrow \pi^{-1}(U_{\alpha})$ satisfying
\begin{itemize}
\item[(i)] $\pi\circ\psi_{\alpha}=\pi_{1}$, where $\pi_{1}(b,f)=b$;
\item[(ii)] if $U_{\alpha}\cap U_{\beta}\neq\emptyset$, then $\psi_{\beta}^{-1}\circ\psi_{\alpha}(b,f)=(b,\theta_{\alpha, \beta}(b)f)$, where $\theta_{\alpha,\beta}(b)\in G$ and $\theta_{\alpha,\beta}:U_{\alpha}\cap U_{\beta}\rightarrow G$ is smooth.
\end{itemize}
Then, $(E,\pi,B,F,G,U_{\alpha},\psi_{\alpha})$ is called \emph{coordinate bundle}. Moreover, $(E,\pi,B,F,G,\{U_{\alpha}\},\{\psi_{\alpha}\})$ and $(E,\pi,B,F,G,\{V_{\beta}\},\{\varphi_{\beta}\})$ are said to be \emph{equivalent} if $\varphi_{\beta}^{-1}\circ\psi_{\alpha}(b,f)=(b,\widetilde{\theta}_{\alpha, \beta}(b)f)$, where $\widetilde{\theta}_{\alpha,\beta}:U_{\alpha}\cap V_{\beta}\rightarrow G$ is smooth.

An equivalence class of coordinate bundles, denoted $(E,\pi,B,F,G)$, is called a \emph{fiber bundle}.\index{Fiber bundle} $E$ is called the \emph{total space},\index{Fiber bundle!total space} $\pi$ the \emph{projection},\index{Fiber bundle!projection} $B$ the \emph{base space},\index{Fiber bundle!base space} $F$ the \emph{fiber}\index{Fiber bundle!fiber} and $G$ the \emph{structure group}.\index{Fiber bundle!structure group} For each $b\in B$, $\pi^{-1}(b)$ is called the \emph{fiber over $b$} and is often denoted $E_{b}$. Furthermore, $\psi_{\alpha}$ are called \emph{coordinate functions}\index{Fiber bundle!coordinate function} and $\theta_{\alpha,\beta}$ \emph{transition functions}.\index{Fiber bundle!transition function} Fiber bundles are usually denoted only by its total space $E$ if the underlying structure is evident from the context.
\end{definition}
%FIGURA DE FIBRADO PADRAO

\begin{remark}
The total space $E$ of a fiber bundle can be reconstructed, in rough terms, \emph{gluing} trivial products $U_{\alpha}\times F$ according to transition functions $\theta_{\alpha,\beta}$. More precisely, consider the disjoint union $\bigsqcup_{\alpha} U_{\alpha}\times F$. Whenever $x\in U_{\alpha}\cap U_{\beta}$, identify $(x,f)\in U_{\alpha}\times F$ with $(x,\theta_{\alpha,\beta}(x)(f))\in U_{\beta}\times F$. Then the quotient space $\bigsqcup_{\alpha} U_{\alpha}\times F/\sim$ corresponds to $E$ and the projection on the first factor to the bundle's projection.
\end{remark}

In the previous chapters, we have already seen examples of fiber bundles, such as the tangent bundle $TM$ of a manifold $M$. This is a particular example of a special class of fiber bundles called \emph{vector bundles}.\index{Fiber bundle!vector bundle}\index{Vector bundle} Vector bundles are fiber bundles with fiber $\R^{n}$ and structure group $\GL(n,\R)$.

Another example of fiber bundle that was previously presented is $(\widetilde{M},\rho,M,F,F)$, where $\widetilde{M}$ denotes the universal covering of a manifold $M$, $\rho$ the associated covering map and $F$ a discrete group isomorphic to the fundamental group $\pi_{1}(M)$. This is a particular example of an important class of bundles called principal bundles. A fiber bundle $(P,\rho,B,F,G)$ is a \emph{principal fiber bundle}\index{Fiber bundle!principal} if $F=G$ and the action of $G$ on itself is by left translations. Principal fiber bundles are also called \emph{principal $G$--bundles} or simply \emph{principal bundles}.\index{Principal bundle}

\begin{example}\label{exbtm}
Important examples of principal fiber bundles are the so--called frame bundles. The \emph{frame bundle}\index{Frame bundle} of a manifold $M$ is given by $$B(TM)=\bigcup_{x\in M} B(T_{x}M),$$ where $B(T_{x}M)$ denotes the set of all frames (ordered bases) of the vector space $T_{x}M$. It is easy to see that $B(T_{x}M)$ is diffeomorphic to $\GL(n,\R)$. Then $(B(TM),\rho,M,\GL(n,\R))$ is a principal bundle, where $\rho:B(TM)\rightarrow M$ is the \emph{footpoint projection}.\footnote{This means that if $\xi_{x}$ is a frame of $T_{x}M$, then $\rho(\xi_{x})=x$.}
\end{example}

\begin{proposition}
Principal fiber bundles $(P,\rho,B,G)$ have an underlying free proper right action $\mu:P\times G\rightarrow P$, whose orbits are the fibers.
\end{proposition}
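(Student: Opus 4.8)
The plan is to define the action in local trivializations and then verify that the gluing data make it globally well defined. Fix a representative coordinate bundle $(P,\rho,B,G,\{U_\alpha\},\{\psi_\alpha\})$ of the principal bundle, so that $\rho\circ\psi_\alpha=\pi_1$ and $\psi_\beta^{-1}\circ\psi_\alpha(b,f)=(b,\theta_{\alpha,\beta}(b)f)$ on overlaps. For $p\in P$ and $h\in G$, choose $\alpha$ with $\rho(p)\in U_\alpha$, write $\psi_\alpha^{-1}(p)=(b,g)$, and set
\[
\mu(p,h)=p\cdot h:=\psi_\alpha(b,gh).
\]
The first thing I would check is that this is independent of the chosen chart: if also $\rho(p)\in U_\beta$ and $\psi_\beta^{-1}(p)=(b,g')$, then $g'=\theta_{\alpha,\beta}(b)g$ by the transition condition, and applying $\psi_\beta^{-1}$ to $\psi_\alpha(b,gh)$ gives $(b,\theta_{\alpha,\beta}(b)(gh))=(b,(\theta_{\alpha,\beta}(b)g)h)=(b,g'h)$, so $\psi_\alpha(b,gh)=\psi_\beta(b,g'h)$. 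This uses precisely that left translations of $G$ (the way the structure group acts on the fiber $F=G$) commute with right translations of $G$ (the action being defined); this is really the only nontrivial point in the well-definedness, everything else being routine.

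Next I would record that $\mu$ is smooth and is a right action. In the chart $\psi_\alpha$, the map $\mu$ reads $(p,h)\mapsto\psi_\alpha\bigl(\pi_1(\psi_\alpha^{-1}(p)),\,\pi_2(\psi_\alpha^{-1}(p))\,h\bigr)$, a composition of smooth maps (here smoothness of multiplication in $G$ is used), so $\mu$ is smooth; and $\mu(p,e)=\psi_\alpha(b,g)=p$, $\mu(\mu(p,h_1),h_2)=\psi_\alpha(b,gh_1h_2)=\mu(p,h_1h_2)$, so the action axioms hold. Freeness is immediate: $\mu(p,h)=p$ gives $\psi_\alpha(b,gh)=\psi_\alpha(b,g)$, hence $gh=g$ and $h=e$ since $\psi_\alpha$ is injective. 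For the orbits, $\rho(\mu(p,h))=\pi_1(b,gh)=b=\rho(p)$ shows each $G$-orbit lies in a fiber, and conversely if $\rho(p)=\rho(q)=b$ we may use a common chart $U_\alpha\ni b$, write $\psi_\alpha^{-1}(p)=(b,g)$ and $\psi_\alpha^{-1}(q)=(b,\tilde g)$, and get $q=\mu(p,g^{-1}\tilde g)$; thus the orbits are exactly the fibers $\rho^{-1}(b)$.

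Finally, properness I would obtain from the sequential criterion in Proposition~\ref{proposition-def-equivalente-propria}. Suppose $p_n\to p$ in $P$ and $\mu(p_n,h_n)\to q$ in $P$. Continuity of $\rho$ gives $\rho(q)=\lim\rho(\mu(p_n,h_n))=\lim\rho(p_n)=\rho(p)=:b$, so $p$ and $q$ lie in the same fiber; picking $\alpha$ with $b\in U_\alpha$, for all large $n$ both $p_n$ and $\mu(p_n,h_n)$ lie in the open set $\rho^{-1}(U_\alpha)$. Writing $\psi_\alpha^{-1}(p_n)=(b_n,g_n)$ and $\psi_\alpha^{-1}(\mu(p_n,h_n))=(b_n,g_nh_n)$ and using that $\psi_\alpha^{-1}$ is a homeomorphism, $g_n\to g$ and $g_nh_n\to g'$, where $(b,g)=\psi_\alpha^{-1}(p)$ and $(b,g')=\psi_\alpha^{-1}(q)$; then $h_n=g_n^{-1}(g_nh_n)\to g^{-1}g'$ by continuity of the group operations, so $\{h_n\}$ converges and in particular has a convergent subsequence. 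Hence $\mu$ is proper. I expect this last step to be the main obstacle: the delicate point is that one must pass to a single trivialization containing the tails of both sequences $\{p_n\}$ and $\{\mu(p_n,h_n)\}$, which is legitimate exactly because $p$ and $q$ turn out to lie over the same point of $B$.
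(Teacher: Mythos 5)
Your proposal is correct and follows essentially the same approach as the paper: define $\mu$ in a local trivialization by right multiplication in the fiber factor, check chart independence via the transition functions (exactly the ``left structure group commutes with right action'' point), and then verify the action properties. The only difference is that where the paper delegates freeness and properness to Exercise~\ref{exercise-rightmulitiplication-is-proper}, you spell them out directly, and your observation that $\rho(p_n)=\rho(\mu(p_n,h_n))$ forces the tails of both sequences into a single trivialization is precisely the detail that makes the sequential properness argument go through globally.
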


\begin{proof}
Consider coordinate functions $\psi_{\alpha}:U_{\alpha}\times G\rightarrow \rho^{-1}(U_{\alpha})$ and define $$\mu(x,g)=\psi_{\alpha}(\psi_{\alpha}^{-1}(x)\cdot g),$$ where $(b,f)\cdot g=(b,f\cdot g),$ for all $b\in B, f,g\in G$. Let us first check that this action is well--defined, i.e., the definition does not depend on the choice of $\psi_{\alpha}.$ This follows from the fact that the structure group acts on the \emph{left} and the action defined above is a \emph{right} action. In fact, \begin{eqnarray*}
\psi_{\alpha}(\psi_{\alpha}^{-1}(x)\cdot g)&=&\psi_{\alpha}(b,fg)\\
                                           &=&\psi_{\beta}(b,\theta_{\alpha, \beta}(b)fg)\\
                                           &=&\psi_{\beta}((b,\theta_{\alpha, \beta}(b)f)\cdot g)\\
                                           &=&\psi_{\beta}(\psi_{\beta}^{-1}(x)\cdot g).
\end{eqnarray*}

The definition of $\mu$ and Exercise~\ref{exercise-rightmulitiplication-is-proper} imply that $\mu$ is a free proper right action. The observation that its orbits coincide with fibers is immediate from the definition of $\mu$.
\end{proof}

The next theorem provides a converse to the above result, and a method to build principal bundles.

\begin{theorem}\label{theorem-acaoproprialivre-eh-fibrado}
Let $\mu:M\times G\rightarrow M$ be a proper free right action. Then $M/G$ admits a smooth structure such that $(M,\rho,M/G,G)$ is a principal fiber bundle, where $\rho:M\rightarrow M/G$ is the canonical projection.
\end{theorem}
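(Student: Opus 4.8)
The plan is to reduce the whole statement to the construction, at each point $x\in M$, of a \emph{slice}: a submanifold $S\ni x$ transverse to the orbit $xG$ such that the map $F\colon S\times G\to M$, $F(s,g)=\mu(s,g)$, is a diffeomorphism onto an open $G$--invariant neighborhood of $xG$. Granting this, the smooth structure on $M/G$, the local triviality of $\rho$ and the transition functions all fall out formally. Before building the slice I would record the geometry of a single orbit: since $\mu$ is free, Proposition~\ref{proposition-kern-mu} shows that $\mu_x\colon G\to M$ is an injective immersion, and since $\mu$ is proper, $\mu_x$ is a proper map; a proper injective immersion is an embedding, so every orbit is a closed embedded submanifold of $M$ diffeomorphic to $G$, in particular of dimension $\dim G$, which indicates that $\dim M/G$ should be $\dim M-\dim G$.

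For the slice, I would fix a Riemannian metric on $M$ and set $S=\exp_x\big(V\cap\nu_x(xG)\big)$ for a small ball $V$ around $0\in T_xM$; this is an embedded submanifold through $x$ of dimension $\dim M-\dim G$, transverse to $xG$ at $x$. Using this transversality together with Proposition~\ref{proposition-kern-mu} (at $g=e$), $\dd F_{(x,e)}$ is an isomorphism, so by the Inverse Function Theorem $F$ is a local diffeomorphism near $(x,e)$. The identity $F(s,gh)=\mu\big(F(s,g),h\big)$, combined with the fact that $R_h$ on $G$ and $\mu^h$ on $M$ are diffeomorphisms, then propagates this: after shrinking $S$ once, $F$ is a local diffeomorphism at \emph{every} point of $S\times G$, and its image $U_x:=F(S\times G)$ is open and $G$--invariant. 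The hard part, and the only place where properness and freeness are both genuinely needed, will be shrinking $S$ so that $F$ becomes \emph{injective}. Here I would argue by contradiction: a failure produces sequences $s_n,s'_n\to x$ in $S$ and $g_n\in G$ with $\mu(s_n,g_n)=s'_n$ and $(s_n,g_n)\neq(s'_n,e)$; Proposition~\ref{proposition-def-equivalente-propria} then yields a subsequence $g_n\to g_0$, passing to the limit gives $\mu(x,g_0)=x$, freeness forces $g_0=e$, and for large $n$ this contradicts local injectivity of $F$ near $(x,e)$. Hence for a sufficiently small slice, $F\colon S\times G\to U_x$ is a diffeomorphism, $\rho^{-1}(\rho(S))=U_x$, and $\rho|_S\colon S\to\rho(S)$ is a bijection onto an open subset of $M/G$.

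Next I would settle the point-set topology of $M/G$. The map $\rho$ is open, since the saturation of an open set $W$ is the union of the diffeomorphic images $\mu^g(W)$ and hence open; therefore $M/G$ is second countable because $M$ is. For the Hausdorff property I would use properness directly: the set $R=\{(\mu(x,g),x):x\in M,\ g\in G\}$ is the image of the proper map $G\times M\to M\times M$, $(g,x)\mapsto(\mu(g,x),x)$, hence closed in $M\times M$; combined with openness of $\rho$, this gives that distinct orbits have disjoint saturated open neighborhoods. Then I would declare each $\rho|_S$ to be a chart: on an overlap $\rho(S)\cap\rho(S')$ the change of charts sends $\rho(s)$ to the unique $s'\in S'$ with $\rho(s')=\rho(s)$, which is the composition of $s$ with the inverse of the slice diffeomorphism for $S'$ followed by projection onto $S'$, hence smooth; so $M/G$ acquires a smooth structure for which $\rho$ is a smooth submersion.

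Finally I would assemble the principal bundle. For each slice $S$ define $\psi_S\colon\rho(S)\times G\to\rho^{-1}(\rho(S))=U_S$ by $\psi_S(\rho(s),g)=\mu(s,g)$; this is $F$ read through the chart $\rho|_S$, hence a diffeomorphism, and it satisfies $\rho\circ\psi_S=\pi_1$. For two overlapping slices $S,S'$, the relation $\psi_S(b,g)=\psi_{S'}\big(b,\theta_{S,S'}(b)g\big)$ defines $\theta_{S,S'}(b)\in G$, which is well defined and unique precisely because the action is free, and smooth because it is built from the inverse slice diffeomorphism for $S'$ and the group operations; thus $\theta_{S,S'}\colon\rho(S)\cap\rho(S')\to G$ is a smooth transition function, and the structure group $G$ acts on the fiber $G$ by left translations. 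This verifies all the axioms and shows $(M,\rho,M/G,G)$ is a principal $G$--bundle. In summary, I expect the routine parts (topology of the quotient, smoothness of transitions, the bundle axioms) to go through mechanically, while the real work is the slice construction — specifically making $F$ injective after shrinking $S$, which is exactly where the properness hypothesis enters.
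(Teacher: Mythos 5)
Your proposal follows the same route as the paper's sketch: construct a transverse slice $S$, use Proposition~\ref{proposition-kern-mu} and the equivariance identity $F(s,gh)=\mu(F(s,g),h)$ to propagate the Inverse Function Theorem from $(x,e)$ to all of $S\times G$, invoke properness (together with freeness and the local injectivity near $(x,e)$) to shrink $S$ until $F$ is globally injective, and then read off the quotient charts and the graph-type transition functions from the slice trivializations. The paper merely asserts that properness gives injectivity and that $M/G$ is Hausdorff, and your sequence-contradiction argument and the closedness of the orbit relation supply exactly those elided details, so the two proofs coincide in all essentials.
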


\begin{remark}
The smooth structure on $M/G$ has the following pro\-per\-ties that guarantee its uniqueness:
\begin{itemize}
\item[(i)] $\rho: M\rightarrow M/G$ is smooth;
\item[(ii)] For any manifold $N$ and any map $h:M/G\rightarrow N$, $h$ is smooth if and only if $h\circ\rho$ is smooth. 
\end{itemize}
\end{remark}

\begin{proof}
We will only sketch the main parts of this proof. The first is to find \emph{trivializations}\footnote{That is, diffeomorphisms between open neighborhoods on $M$ and products of open subsets of the base and fibers.}\index{Trivialization} of the desired fiber bundle. Consider a submanifold $S$ containing $x$ that is transverse to the orbit $G(x)$, i.e., such that $T_{x}M=T_{x}S\oplus \dd(\mu_x)_{e}\mathfrak{g}.$
%Given an orbit $G(x)$, consider a transversal manifold $S$ to $G(x)$, so that $\dim S=\codim G(x)$. Define $\varphi:S\times G\rightarrow M$ by %$\varphi(s,g)=\mu(s,g)$.

\begin{claim}\label{cl:thmacaoproprialivrefibr1}
Up to reducing $S$, there exists a $G$--invariant neighborhood $U$ of $G(x)$ such that $\varphi:S\times G\rightarrow U$ is a diffeomorphism and $\varphi(s,ga)=\mu(\varphi(s,g),a)$.
\end{claim}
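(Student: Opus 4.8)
The plan is to define $\varphi\colon S\times G\to M$ by $\varphi(s,g)=\mu(s,g)$ and to verify in turn that it is $G$-equivariant, a local diffeomorphism, and --- after shrinking $S$ --- injective with open, $G$-invariant image $U:=\varphi(S\times G)$; the asserted diffeomorphism $S\times G\to U$ then follows formally. Equivariance is immediate from the action axioms, since $\varphi(s,ga)=\mu(s,ga)=\mu(\mu(s,g),a)=\mu(\varphi(s,g),a)$.

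For the local structure of $\varphi$, I would first compute $\dd\varphi_{(x,e)}$. Under the identification $T_{(x,e)}(S\times G)\cong T_xS\oplus\mathfrak g$, its restriction to $T_xS$ is the inclusion $T_xS\hookrightarrow T_xM$ (because $\varphi(\cdot,e)$ is the inclusion of $S$), while its restriction to $\mathfrak g$ is $\dd(\mu_x)_e$. Since the action is free, $G_x=\{e\}$, so $\dd(\mu_x)_e$ is injective by Proposition~\ref{proposition-kern-mu}; together with the transversality hypothesis $T_xM=T_xS\oplus\dd(\mu_x)_e\mathfrak g$ and a dimension count, this shows $\dd\varphi_{(x,e)}$ is an isomorphism. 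By the Inverse Function Theorem there are open neighborhoods $S_0\ni x$ in $S$ and $W\ni e$ in $G$ such that $\varphi|_{S_0\times W}$ is a diffeomorphism onto an open subset of $M$. Since each translation $p\mapsto\mu(p,a)$ is a diffeomorphism of $M$ and $\varphi(s,wa)=\mu(\varphi(s,w),a)$, the restriction $\varphi|_{S_0\times Wa}$ is likewise a diffeomorphism onto its image; as $\{Wa\}_{a\in G}$ covers $G$, the map $\varphi|_{S_0\times G}$ is a local diffeomorphism, hence an open map.

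The main obstacle is injectivity, and this is where properness and freeness enter. I would argue by contradiction: if no neighborhood of $x$ in $S_0$ makes $\varphi$ injective, choose neighborhoods $S_n$ of $x$ with $\bigcap_n S_n=\{x\}$, all contained in $S_0$, and distinct pairs $(s_n,g_n)\neq(s_n',g_n')$ in $S_n\times G$ with $\varphi(s_n,g_n)=\varphi(s_n',g_n')$. Setting $h_n=g_n(g_n')^{-1}$, equivariance gives $\mu(s_n,h_n)=s_n'$, where $s_n\to x$ and $s_n'\to x$. By the sequential characterization of properness (Proposition~\ref{proposition-def-equivalente-propria}), a subsequence $h_{n_k}$ converges to some $h\in G$; passing to the limit in $\mu(s_{n_k},h_{n_k})=s_{n_k}'$ yields $\mu(x,h)=x$, so $h\in G_x=\{e\}$ by freeness, i.e.\ $h_{n_k}\to e$. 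Then for $k$ large both $(s_{n_k},h_{n_k})$ and $(s_{n_k}',e)$ lie in $S_0\times W$ and have the same image under $\varphi$, so injectivity of $\varphi|_{S_0\times W}$ forces $s_{n_k}=s_{n_k}'$ and $h_{n_k}=e$, whence $g_{n_k}=g_{n_k}'$, contradicting distinctness.

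Finally, replacing $S$ by the neighborhood $S_0$ just produced, $\varphi\colon S\times G\to U:=\varphi(S\times G)$ is a bijective local diffeomorphism, hence a diffeomorphism onto the open set $U$. Moreover $U\supseteq\varphi(\{x\}\times G)=G(x)$, and $U$ is $G$-invariant since $\mu(U,a)=\varphi(S\times Ga)=\varphi(S\times G)=U$. This provides the desired trivialization and completes the proof of the claim.
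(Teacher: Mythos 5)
Your argument is correct and mirrors the route sketched in the paper: $G$-equivariance from the action axioms, the Inverse Function Theorem at $(x,e)$ (using freeness through Proposition~\ref{proposition-kern-mu} together with the transversality of $S$), propagation over $S_0\times G$ via the equivariance identity (the paper records this at the derivative level, you at the map level --- equivalent), and injectivity from properness. You additionally spell out the properness-and-freeness contradiction argument for injectivity, which the paper only asserts in a single sentence.
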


In order to prove this claim, one must first prove that $\varphi$ is a local diffeomorphism in a neighborhood of $S\times\{e\}$, up to reducing $S$. This can be done using the fact that the action is free, and Proposition~\ref{proposition-kern-mu}. It then follows from $$\dd\varphi_{(s,g)}(X,\dd R_{g}Y)=\dd(\mu^{g})_{s}\circ\dd\varphi_{(s,e)}(X,Y)$$ that $\varphi$ is a local diffeomorphism around each point of $S\times G$. Since the action is proper, $\varphi$ is injective and this concludes the proof of this first assertion. Note that $\varphi$ is not a coordinate function, since $S$ is contained in $M$ and not in $M/G$.

The next part of the proof is to identify $S$ with an open subset of $M/G$. To this aim, we recall basic facts from topology. First, the projection $\rho:M\rightarrow M/G$ is continuous if $M/G$ is endowed with the quotient topology.\footnote{In this topology, a subset $U\subset M/G$ is open if $\rho^{-1}(U)$ is open.} It is possible to prove that $M/G$ is Hausdorff if the action is proper, and that $\rho:M\rightarrow M/G$ is an open map. These considerations and Claim~\ref{cl:thmacaoproprialivrefibr1} prove the following.

\begin{claim}
$\rho(S)$ is an open subset of $M/G$ and $\rho|_{S}:S\rightarrow\rho(S)$ is a homeomorphism.
\end{claim}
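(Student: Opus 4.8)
The plan is to deduce both assertions from Claim~\ref{cl:thmacaoproprialivrefibr1} together with the recalled topological facts: $\rho$ is continuous and open and $M/G$ is Hausdorff. The point to keep in mind throughout is that $S$ is a (typically lower--dimensional) submanifold, hence \emph{not} open in $M$, so openness statements cannot be read off $S$ directly; instead one must route every such argument through the $G$--saturation of $S$, which by Claim~\ref{cl:thmacaoproprialivrefibr1} is exactly the open set $U$.

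First I would record that $\varphi(s,e)=s$ for all $s\in S$ (this is how $\varphi$ was built, $\varphi(s,g)=\mu(s,g)$) and, combined with the equivariance $\varphi(s,ga)=\mu(\varphi(s,g),a)$, conclude that $U=\varphi(S\times G)=G\cdot S$, the orbit of $S$. Then $\rho^{-1}(\rho(S))=G\cdot S=U$ is open in $M$, so by definition of the quotient topology $\rho(S)$ is open in $M/G$. Next, $\rho|_S$ is continuous as a restriction of $\rho$, and it is surjective onto $\rho(S)$ by definition; for injectivity, if $\rho(s_1)=\rho(s_2)$ with $s_i\in S$, write $s_2=\mu(s_1,g)=\mu(\varphi(s_1,e),g)=\varphi(s_1,g)$, and since also $s_2=\varphi(s_2,e)$ with $\varphi$ a diffeomorphism (in particular injective), we get $(s_1,g)=(s_2,e)$, hence $s_1=s_2$.

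It then remains to check that the continuous bijection $\rho|_S\colon S\to\rho(S)$ is open, which makes it a homeomorphism. For $V\subset S$ open, I would compute $\rho^{-1}(\rho(V))=G\cdot V=\mu(V\times G)=\varphi(V\times G)$, which is open in $U$, hence in $M$, because $V\times G$ is open in $S\times G$ and $\varphi$ is a diffeomorphism; therefore $\rho(V)$ is open in $M/G$, and a fortiori in $\rho(S)$. No real obstacle is expected here: the whole argument is a straightforward bookkeeping exercise once Claim~\ref{cl:thmacaoproprialivrefibr1} and the standard properties of $\rho$ are in hand, the only subtlety being the consistent use of $\varphi$ and the saturation $U$ in place of $S$ itself.
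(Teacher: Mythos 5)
Your proposal is correct and follows the same route the paper intends: all three pieces (openness of $\rho(S)$, injectivity, and openness of $\rho|_S$) are deduced by identifying saturations of subsets of $S$ with their images under the diffeomorphism $\varphi$ from Claim~\ref{cl:thmacaoproprialivrefibr1}, which is exactly what the paper means by ``these considerations and Claim~\ref{cl:thmacaoproprialivrefibr1} prove the following.'' Your care in noting that $S$ is not open in $M$ and that every openness argument must pass through the saturation $U=\varphi(S\times G)$ is precisely the right point.
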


We are now ready to exhibit an atlas of $M/G$ and coordinate functions of $(M,\rho,M/G,G)$.

\begin{claim}\label{cl:claimcomtheta}
Consider two submanifolds $S_1$ and $S_2$ transverse to two different orbits, with $W=\rho(S_{1})\cap\rho(S_{2})\neq\emptyset$. Set $\rho_{i}=\rho|_{S_{i}}:S_{i}\rightarrow \rho(S_{i})$ and $V_{i}=\rho_{i}^{-1}(W).$ Then
\begin{itemize}
\item[(i)]$\rho_{1}^{-1}\circ\rho_{2}:V_{2}\rightarrow V_{1}$ is a diffeomorphism.
\item[(ii)] $\psi_{2}^{-1}\circ\psi_{1}(b,g)=(b,\theta(b)g),$ where $\psi_{i}:\rho(S_{i})\times G\rightarrow \rho^{-1}(\rho(S_{i}))$ is given by $\psi_{i}(b,g)=\varphi_{i}(\rho_{i}^{-1}(b),g)$ and $b\in W$.
\end{itemize}
\end{claim}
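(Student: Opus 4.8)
The plan is to prove the two items in order, using the fact that both $S_1$ and $S_2$ are slices transverse to orbits and that the maps $\varphi_i: S_i \times G \to \rho^{-1}(\rho(S_i))$ from Claim~\ref{cl:thmacaoproprialivrefibr1} are $G$--equivariant diffeomorphisms. First I would set up notation carefully: write $U_i = \rho^{-1}(\rho(S_i))$, so that $\varphi_i : S_i \times G \to U_i$ satisfies $\varphi_i(s, ga) = \mu(\varphi_i(s,g), a)$, and note that by construction $\rho \circ \varphi_i(s,g) = \rho_i(s)$, i.e.\ $\varphi_i$ intertwines the second projection with $\rho$. The restriction $\rho_i = \rho|_{S_i}$ is a diffeomorphism onto the open set $\rho(S_i)$ by the previous claim.

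For item (i): the composite $\rho_1^{-1}\circ \rho_2 : V_2 \to V_1$ is a bijection because $\rho_1$ and $\rho_2$ are both homeomorphisms onto $W$ when restricted appropriately, so the only content is smoothness. Here I would invoke the universal property of the quotient smooth structure recorded in the Remark after Theorem~\ref{theorem-acaoproprialivre-eh-fibrado}: a map into a manifold is smooth iff its composite with $\rho$ is smooth. Concretely, $\rho_2 : V_2 \to W$ is smooth (restriction of $\rho$), and $\rho_1^{-1}: W \to V_1 \subset S_1$ is smooth because $\rho_1^{-1}$ composed with the inclusion $S_1 \hookrightarrow M$ can be realized, via $\varphi_1$, as $s \mapsto \varphi_1(\rho_1^{-1}(\rho(s)), e)$ followed by a smooth retraction; alternatively, one shows $\rho_1^{-1}\circ\rho_2 = \pi_{S_1}\circ \varphi_1^{-1}\circ \iota_2$ where $\iota_2: S_2 \hookrightarrow M$ and $\pi_{S_1}$ is the first-factor projection $S_1 \times G \to S_1$, all of which are smooth; the symmetric argument gives smoothness of the inverse, so $\rho_1^{-1}\circ\rho_2$ is a diffeomorphism.

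For item (ii): define $\psi_i : \rho(S_i)\times G \to \rho^{-1}(\rho(S_i))$ by $\psi_i(b,g) = \varphi_i(\rho_i^{-1}(b), g)$; these are the candidate coordinate functions, and one checks they are equivariant ($\psi_i(b, ga) = \mu(\psi_i(b,g),a)$) and satisfy $\rho\circ\psi_i = \pi_1$ directly from the corresponding properties of $\varphi_i$. For $b \in W$, both $\psi_1(b,g)$ and $\psi_2(b,g)$ lie in the same fiber $\rho^{-1}(b)$, and since the action is free and transitive on fibers there is a unique $\theta(b) \in G$ with $\psi_1(b,e) = \mu(\psi_2(b,e), \theta(b))$; equivalently $\psi_2^{-1}(\psi_1(b,e)) = (b, \theta(b))$. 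Applying equivariance in the $G$--variable then gives $\psi_2^{-1}\circ\psi_1(b,g) = (b,\theta(b)g)$ for all $g$. The remaining point is smoothness of $b \mapsto \theta(b)$, which follows because $\psi_2^{-1}\circ\psi_1$ is a composition of smooth maps (using item (i) to handle the change of slice), hence its second component $(b,g)\mapsto \theta(b)g$ is smooth, and then $\theta(b) = (\theta(b)g)g^{-1}$ evaluated at $g = e$ is smooth as well.

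The main obstacle I anticipate is purely the smoothness bookkeeping in item (i): one must be careful that $\rho_i^{-1}$, a priori only a homeomorphism $W \to S_i$, is genuinely smooth, and the cleanest route is to express it through the already-established diffeomorphism $\varphi_i$ rather than appealing directly to the abstract quotient structure. Once item (i) is in hand, item (ii) is essentially formal, reducing to the freeness of the action (uniqueness of $\theta(b)$) and equivariance (to propagate from $g=e$ to all $g$).
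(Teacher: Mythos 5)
Your primary route to item (i) — invoking the universal property of the quotient smooth structure — is circular. At the point in the proof where this claim appears, the smooth atlas on $M/G$ has not yet been defined; the whole purpose of item (i) is to show that the transition maps between the candidate charts $\rho_\alpha^{-1}:\rho(S_\alpha)\to S_\alpha$ are smooth, which is exactly what makes those charts into an atlas. So one cannot yet speak of $W\subset M/G$ as a smooth manifold, nor of $\rho_1^{-1}:W\to V_1$ being smooth, nor of maps out of $M/G$ being smooth iff their composition with $\rho$ is. You do anticipate precisely this obstacle at the end and correctly diagnose the cure: work entirely inside $M$ and $S_i\times G$, where you already have smooth structures, and express the transition map through the diffeomorphisms $\varphi_i$. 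Your ``alternative'' formula
\[
\rho_1^{-1}\circ\rho_2 \;=\; \pi_{S_1}\circ\varphi_1^{-1}\circ\iota_2 : V_2\longrightarrow V_1,
\]
together with its symmetric counterpart $\pi_{S_2}\circ\varphi_2^{-1}\circ\iota_1$ for the inverse, is exactly the right argument and should be your main argument, not a fallback.

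The paper packages the same idea slightly differently: it observes that $\varphi_2^{-1}(V_1)\subset S_2\times G$ is a graph $\{(s,\widetilde\theta(s)):s\in V_2\}$ of a smooth map $\widetilde\theta$, and then writes $\rho_1^{-1}\circ\rho_2(s)=\mu(s,\widetilde\theta(s))$. This order of operations has a small dividend for (ii): the transition function is immediately $\theta(b)=\widetilde\theta(\rho_2^{-1}(b))$, and no separate argument for $\theta$ is needed. Your (ii) instead derives $\theta(b)$ from freeness of the action on the fiber $\rho^{-1}(b)$ and propagates the formula $\psi_2^{-1}\circ\psi_1(b,g)=(b,\theta(b)g)$ by equivariance of $\varphi_i$ in the $G$--variable; that is correct, and your observation that smoothness of $\theta$ then follows once (i) is in hand (so that $\psi_1$ and $\psi_2^{-1}$ are smooth and $\theta$ can be read off the second component at $g=e$) closes the loop. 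Net: the proposal is correct once you drop the circular universal-property detour and promote the $\varphi_i$-composition argument to the main proof; the residual difference from the paper is cosmetic.
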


In order to prove this last claim, note that $\varphi_{2}^{-1}(V_{1})$ is a graph, i.e., $\varphi_{2}^{-1}(V_{1})=\{(s,\widetilde{\theta}(s)):s\in V_{2}\}$. This implies that $\widetilde{\theta}$ is smooth. Therefore $\rho_{1}^{-1}\circ\rho_{2}(s)=\mu(s,\widetilde{\theta}(s))$ is smooth and this proves (i).

As for (ii), the map $\theta$ is defined by $\theta(b)=\widetilde{\theta}(\rho_{2}^{-1}(b))$.
\end{proof}

\begin{corollary}\label{cor-quocienteLieGroup}
Let $G$ be a Lie group and $H\subset G$ a closed subgroup acting by right multiplication on $G$. Then $G/H$ is a manifold and $(G,\rho, G/H, H)$ is a principal fiber bundle, where $\rho:G\rightarrow G/H$ denotes the canonical projection. In addition, if $H$ is  a normal subgroup then $G/H$ is a Lie group and $\rho$ is a Lie group homomorphism. 
\end{corollary}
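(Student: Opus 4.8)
The plan is to deduce this corollary directly from Theorem~\ref{theorem-acaoproprialivre-eh-fibrado}, together with Exercise~\ref{exercise-rightmulitiplication-is-proper} and Corollary~\ref{homosmooth}. First I would observe that the action $G\times H\ni(g,h)\mapsto gh\in G$ is a free proper right action: freeness is immediate, since $gh=g$ forces $h=e$, and properness is exactly the content of Exercise~\ref{exercise-rightmulitiplication-is-proper} (alternatively, one checks the sequential criterion of Proposition~\ref{proposition-def-equivalente-propria} using that $H$ is closed). Applying Theorem~\ref{theorem-acaoproprialivre-eh-fibrado} with $M=G$ then yields at once that $G/H$ carries a smooth structure for which $(G,\rho,G/H,H)$ is a principal fiber bundle, $\rho:G\to G/H$ the canonical projection.

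The second assertion requires more work. Assume now that $H$ is a normal subgroup. Then the usual quotient group structure on $G/H$ is well defined, and I must check that multiplication and inversion on $G/H$ are smooth with respect to the manifold structure produced above. The key tool is the universal property of $G/H$ recorded in the Remark after Theorem~\ref{theorem-acaoproprialivre-eh-fibrado}: a map $h:G/H\to N$ into any manifold is smooth if and only if $h\circ\rho$ is smooth. For the multiplication map $m:G/H\times G/H\to G/H$, I would consider the composite $G\times G\xrightarrow{\text{mult}}G\xrightarrow{\rho}G/H$, which is smooth, and note that it descends through $\rho\times\rho:G\times G\to G/H\times G/H$ to give $m$; here one uses that $\rho\times\rho$ is itself a principal $H\times H$-bundle projection (the product of two such bundles), so the universal property applies to the product and shows $m$ is smooth. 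Inversion is handled the same way, using that $G\ni g\mapsto \rho(g^{-1})\in G/H$ is smooth and $H$-invariant. This makes $G/H$ a Lie group, and by construction $\rho$ is a group homomorphism; it is smooth, hence a Lie group homomorphism. (Alternatively, once $G/H$ is a topological group and a manifold and $\rho$ is continuous, one may invoke Corollary~\ref{homosmooth} to upgrade $\rho$, and the group operations, to smooth maps.)

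The main obstacle I anticipate is the bookkeeping around the universal property when applied to the \emph{product} $G/H\times G/H$: one needs to know that this product, with the product smooth structure, again satisfies ``$h$ is smooth iff $h\circ(\rho\times\rho)$ is smooth.'' This is true because a product of principal bundles is a principal bundle (for the product group) and the relevant submersion-slice argument from the proof of Theorem~\ref{theorem-acaoproprialivre-eh-fibrado} localizes, but it should be stated carefully rather than invoked as obvious. Everything else is routine: freeness and properness are one-line verifications, and the descent of smooth maps is a direct application of the stated universal property.
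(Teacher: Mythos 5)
Your proposal is correct and follows essentially the same route as the paper: the first assertion comes immediately from Theorem~\ref{theorem-acaoproprialivre-eh-fibrado} together with Exercise~\ref{exercise-rightmulitiplication-is-proper}, and the second assertion is proved by descending a smooth map through $\rho\times\rho$, the only cosmetic difference being that the paper packages multiplication and inversion into a single map $\alpha(a,b)=ab^{-1}$ and its induced map $\widetilde\alpha$ on $G/H\times G/H$, rather than treating them separately. The caveat you raise about the universal property for the product $G/H\times G/H$ is real but implicitly present in the paper's argument as well; it is resolved by noting that $\rho\times\rho$ is itself a surjective submersion (indeed the projection of the product principal bundle), so descent of smoothness holds there just as for $\rho$.
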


\begin{proof}
The first part of the above result follows directly from Theorem~\ref{theorem-acaoproprialivre-eh-fibrado} and Exercise~\ref{exercise-rightmulitiplication-is-proper}.

To prove that $G/H$ is a Lie group when $H$ is a normal subgroup, define \begin{eqnarray*}\alpha:G\times G\ni (a,b) &\longmapsto & ab^{-1}\in G \\ \widetilde{\alpha}: G/H\times G/H\ni(aH,bH) &\longmapsto &ab^{-1}H\in G/H.\end{eqnarray*} Note that $\widetilde{\alpha}$ is well--defined since $H$ is normal. The fact that $\rho:G\rightarrow G/H$ is a projection of a bundle and  $\rho\circ\alpha=\widetilde{\alpha}\circ(\rho\times\rho)$ imply that $\widetilde{\alpha}$ is smooth. Therefore $G/H$ is a Lie group.
\end{proof}

\begin{exercise}
Let $G$ be a Lie group and consider $\rho:\widetilde{G}\rightarrow G$ its universal covering. Prove that
\begin{itemize}
\item[(i)] $H=\rho^{-1}(e)$ is a normal discrete closed subgroup, and $gh=hg$, for all $h\in H, g\in G$;
\item[(ii)] $G$ is isomorphic to $\widetilde{G}/H$;
\item[(iii)] $\pi_1(G)$ is abelian.
\end{itemize}
\end{exercise}

\begin{exercise}\label{su2so3}
Prove that $\SU(2)$ is the universal covering of $\SO(3)$, using the following items.\index{$\SU(n)$}\index{$\SO(n)$} Recall that $S^3\subset\Hr$ is isomorphic to $\SU(2)$ (see Exercise~\ref{su2s3}).
\begin{itemize}
\item[(i)] Let $g\in S^3$, $\theta\in\R$ and $u\in S^2$ be such that $g=\cos\theta+\sin\theta u$. Define $T_g(v)= gvg^{-1}$ for all $v\in\R^3$. Prove that $T_g$ is a linear (orthogonal) transformation of $\R^3$ and $T_g=e^{A_{2\theta u}}$, where $A_{\xi}$ is as in Exercise~\ref{ex-sobre-SO3};
\item[(ii)] Prove that $\varphi: S^3\owns g\mapsto T_g\in\SO(3)$ is a covering map. Conclude that $\pi_1(\SO(3))\simeq\Z_2$.
\end{itemize}
\end{exercise}
%\medskip
%\noindent {\small \emph{Hint:} Check that if $v^{i}=v_{1}^{i}i+v_{2}^{i}j+v_{3}^{i}k$ then the Hamilton product $(t^{1}+v^{1})\cdot(t^{2}+v^{2})$ is equal to $t^{1}t^{2}-\langle v^{1},v^{2}\rangle+t^{2}v^{1}+t^{1}v^{2}+v^{1}\times v^{2}.$ Also check that $\frac{d}{dt}T_{(\cos(t\theta)+\sin(t\theta)u)}v|_{t=0}=2\theta u\times v$ and use Exercise~\ref{ex-rotacaoS0}.}

It follows from Corollary~\ref{cor-quocienteLieGroup} that the quotient $G/G_x$ is a smooth manifold. As we prove in the sequel, the orbit $G(x)$ is the image of an immersion of $G/G_x$ into $M$.

\begin{proposition}\label{orbitsubmanifold}
Let $\mu:G\times M\rightarrow M$ be a left action. Consider $\widetilde{\mu}_x:G/G_x\rightarrow M$ defined by $\widetilde{\mu}_x\circ\rho=\mu_x,$ where $\rho:G\rightarrow G/G_x$ is the canonical projection. Then $\widetilde{\mu}_{x}$ is an injective immersion, whose image is $G(x)$. In particular, $G(x)$ is an immersed submanifold of $M$. In addition, if the action is proper, then $\widetilde{\mu}_x$ is an embedding and $G(x)$ is an embedded submanifold of $M$.
\end{proposition}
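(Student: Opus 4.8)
The plan is to check, in order, that $\widetilde\mu_x$ is well defined and smooth, that it is injective with image $G(x)$, that it is an immersion, and finally---assuming properness---that it is a homeomorphism onto its image.

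First I would dispose of the formal points. The map $\widetilde\mu_x$ is well defined because $gG_x=hG_x$ forces $h^{-1}g\in G_x$, so $\mu(g,x)=\mu\big(h,\mu(h^{-1}g,x)\big)=\mu(h,x)$, i.e.\ $\mu_x(g)=\mu_x(h)$. By Corollary~\ref{cor-quocienteLieGroup} the projection $\rho\colon G\to G/G_x$ is the projection of a principal bundle, hence a surjective submersion with the property that a map out of $G/G_x$ is smooth precisely when its composite with $\rho$ is; since $\widetilde\mu_x\circ\rho=\mu_x$ is smooth, $\widetilde\mu_x$ is smooth. Its image is $\mu_x(G)=G(x)$. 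For injectivity, $\widetilde\mu_x(gG_x)=\widetilde\mu_x(hG_x)$ gives $\mu(h^{-1}g,x)=x$, so $h^{-1}g\in G_x$ and $gG_x=hG_x$.

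Next I would prove $\widetilde\mu_x$ is an immersion, using equivariance to reduce to a single point. Left translation on $G$ descends to a diffeomorphism $L_g$ of $G/G_x$, and $\widetilde\mu_x\circ L_g=\mu^g\circ\widetilde\mu_x$ with $\mu^g$ a diffeomorphism of $M$; differentiating at $eG_x$ shows $d(\widetilde\mu_x)_{gG_x}$ is injective if and only if $d(\widetilde\mu_x)_{eG_x}$ is. At $eG_x$, differentiating $\widetilde\mu_x\circ\rho=\mu_x$ gives $d(\mu_x)_e=d(\widetilde\mu_x)_{eG_x}\circ d\rho_e$. By Proposition~\ref{proposition-kern-mu}(i), $\ker d(\mu_x)_e=T_eG_x$, while the fibre of $\rho$ through $e$ is $G_x$, so also $\ker d\rho_e=T_eG_x$; since $d\rho_e$ is onto, any $v\in\ker d(\widetilde\mu_x)_{eG_x}$ lifts to $w\in\mathfrak g$ with $d\rho_e w=v$, whence $d(\mu_x)_e w=0$, so $w\in T_eG_x=\ker d\rho_e$ and $v=0$. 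Thus $\widetilde\mu_x$ is an injective immersion, so $G(x)$ is an immersed submanifold of $M$.

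Finally, assume the action is proper; it then suffices to show $\widetilde\mu_x^{-1}\colon G(x)\to G/G_x$ is continuous, since an injective immersion that is a homeomorphism onto its image is an embedding. Suppose $\mu(g_n,x)\to\mu(g,x)$ in $M$. Applying Proposition~\ref{proposition-def-equivalente-propria} to the sequences $\{g_n\}$ and the constant sequence $x_n=x$, a subsequence $g_{n_k}$ converges to some $h$; by continuity $\mu(g_{n_k},x)\to\mu(h,x)=\mu(g,x)$, so the injectivity argument above gives $hG_x=gG_x$, i.e.\ $\rho(g_{n_k})\to\rho(g)$. Since every subsequence of $\{\rho(g_n)\}$ has a further subsequence converging to $\rho(g)$ and $G/G_x$ is first countable, $\rho(g_n)\to\rho(g)$, which is exactly continuity of $\widetilde\mu_x^{-1}$. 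Hence $\widetilde\mu_x$ is an embedding and $G(x)$ an embedded submanifold. The main obstacle is precisely this last step: the immersion property is essentially formal, but identifying the manifold topology of $G/G_x$ with the subspace topology on $G(x)\subset M$ genuinely requires properness, via the sequential criterion of Proposition~\ref{proposition-def-equivalente-propria}.
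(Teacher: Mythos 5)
Your proof is correct and follows the same route as the paper: smoothness of $\widetilde\mu_x$ from the principal-bundle structure of Corollary~\ref{cor-quocienteLieGroup}, the immersion property from the kernel computation of Proposition~\ref{proposition-kern-mu}, and the embedding under properness from the sequential criterion of Proposition~\ref{proposition-def-equivalente-propria}. You merely fill in the details that the paper's three-sentence proof leaves implicit (in particular, the equivariance reduction to $eG_x$ and the subsequence argument for continuity of $\widetilde\mu_x^{-1}$); note that Proposition~\ref{proposition-kern-mu}(i) already gives $\ker\dd(\mu_x)_{g_0}=T_{g_0}(g_0G_x)$ at every $g_0$, so the equivariance step, while clean, is not strictly needed.
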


\begin{proof}
According to Corollary~\ref{cor-quocienteLieGroup}, $(G,\rho,G/G_{x},G_{x})$ is a principal fiber bundle. This implies that $\widetilde{\mu}_{x}$ is smooth. It follows from Proposition~\ref{proposition-kern-mu} that the derivative of $\widetilde{\mu}_x$ at every point is injective, and hence $\widetilde{\mu}_{x}$ is an injective immersion. The fact that $\widetilde{\mu}_x$ is an embedding when the action is proper can be proved using Proposition~\ref{proposition-def-equivalente-propria}.
\end{proof}

\begin{remark}\label{re:dimsemicont}
From Proposition~\ref{orbitsubmanifold}, orbits $G(x)$ of a left action $\mu:G\times M\to M$ are immersed submanifolds of $M$. The dimension of these submanifolds $G(x)$ clearly depends on $x\in M$. This dependence satisfies an important property, namely, {\em lower semi--continuity}, i.e., for each $x_0\in M$, the dimension of orbits $G(x)$ for $x$ near $x_0$ is greater or equal to $\dim G(x_0)$. This can be proved by observing that $\dim G(x)$ is given by the rank of the linear map $\dd(\mu_x)_e:\mathfrak g\to T_xM$, which is in fact invariant for points in $G(x)$. From continuity of the map $x\mapsto\dd(\mu_x)_e$ and lower semi--continuity of the rank function for a continuous family of linear maps, it follows that $\dim G(x)$ is lower semi--continuous.
\end{remark}

\begin{exercise}\label{ex-diffeos}
Verify that there exists diffeomorphisms that give the following identifications:
\begin{itemize}
\item[(i)] $S^{n}=\SO(n+1)/\SO(n)$;
\item[(ii)] $\R P^n=\SO(n+1)/S(\O(n)\times\O(1))$;\index{$\SO(n)$}\index{$\R P^n$}
\item[(iii)] $\C P^n=\SU(n+1)/S(\U(n)\times\U(1))$.\index{$\SU(n)$}\index{$\C P^n$}
\end{itemize}
Here $S(\O(n)\times\O(1))$ denotes the subgroup of $\SO(n+1)$ consisting of matrices $$A=\left( \begin{array}{l r} B & 0\\ 0 & \pm1 \end{array}\right),$$ where $B\in\O(n)$ and $\det A=1$. Analogously for $S(\U(1)\times\U(n))$.

\medskip
\noindent {\small \emph{Hint:} Use Proposition~\ref{orbitsubmanifold}. For instance, to prove (ii) note that the action of $\SO(n+1)$ in $S^n$ induces an action in $\R P^n$.}
\medskip
\end{exercise}

\begin{remark}\index{$k$--Grassmannian}\index{Grassmannian}\index{$\mathrm{Gr}_k(V)$}
The same technique can be used to prove statements more general than (ii) and (iii) of Exercise~\ref{ex-diffeos}, on $k$--Grassmannians of $\R^n$ and $\C^n$. Recall that if $V$ is a finite--dimensional vector space, the \emph{$k$--Grassmannian}\index{$k$--Grassmannian} of $V$ is given by $$\mathrm{Gr}_k(V)=\{W\mbox{ subspace of } V:\dim W=k\}.$$ Hence $\mathrm{Gr}_1(\R^{n+1})=\R P^n$ and $\mathrm{Gr}_1(\C^{n+1})=\C P^n$. Observe that $\SO(n)$, respectively $\SU(n)$, acts on $\mathrm{Gr}_k(\R^n)$, respectively $\mathrm{Gr}_k(\C^n)$, by multiplication. These more general statements are

\begin{itemize}
\item[(ii')] $\mathrm{Gr}_k(\R^n)=\SO(n)/S(\O(n-k)\times\O(k))$;
\item[(iii')] $\mathrm{Gr}_k(\C^n)=\SU(n)/S(\U(n-k)\times\U(k))$.
\end{itemize}
\end{remark}

\begin{remark}
\label{remark-flagmanifold}
These techniques can also be used to prove that $\SU(n)/T$ is a \emph{complex flag manifold}\index{Flag}\index{Complex flag manifold}, where $T$ is the subgroup of $\SU(n)$ of diagonal matrices. Indeed, let $F_{1,\ldots,n-1}(\C^{n})$ be the set of \emph{complex flags}, i.e.,
$$F_{1,\ldots,n-1}(\C^{n})=\{\{0\}\subset E_1\subset \ldots\subset E_{n-1}:E_i\in\mathrm{Gr}_i(\C^{n})\}.$$
Set $\widetilde{F}=\{(l_1,\ldots,l_n):l_1\dots l_n \mbox{ orthogonal lines of } \C^{n}\}.$ On the one hand, there is a natural bijection between $\widetilde{F}$ and $F_{1,\ldots,n-1}(\C^{n})$ given by $$(l_1,\ldots,l_n)\longmapsto \{0\}\subset E_1\subset\ldots\subset E_{n-1},$$ where $E_i=l_1\oplus\cdots\oplus l_i$. On the other hand, $\SU(n)$ acts naturally on $\widetilde{F}$ and this action is transitive. Note that, for an orthogonal basis $e_1,\ldots,e_n$ of $\C^{n}$, the isotropy group of the point $(\C e_1,\ldots,\C e_n)\in \widetilde{F}$ is $T$. Therefore, $\SU(n)/T=\widetilde{F}=F_{1,\ldots,n-1}(\C^{n})$.
\end{remark}

A fundamental concept in the theory of proper actions is that of {\it slice}, which is defined as follows.

\begin{definition}\label{definition-slice}
Let $\mu:G\times M\rightarrow M$ be an action. A {\it slice}\index{Slice}\index{$S_x$}\index{Action!slice} at $x_0\in M$ is an embedded submanifold $S_{x_0}$ containing $x_0$ and satisfying the following properties:
\begin{itemize}
\item[(i)] $T_{x_0}M=\dd\mu_{x_0}\mathfrak{g}\oplus T_{x_0}S_{x_0}$ and $T_xM=\dd\mu_x\mathfrak{g}+T_xS_{x_0}$, for all $x\in S_{x_{0}}$;
\item[(ii)] $S_{x_0}$ is invariant under $G_{x_0}$, i.e., if $x\in S_{x_0}$ and $g\in G_{x_0}$, then $\mu(g,x)\in S_{x_0}$;
\item[(iii)] Consider $x\in S_{x_0}$ and $g\in G$ such that $\mu(g,x)\in S_{x_0}$. Then $g\in G_{x_0}$.
\end{itemize}
\end{definition}

\begin{example}\label{ex-action-slice-isotropy}
Consider the action of $S^{1}\times \R$ on $\C\times\R=\R^{3}$ defined as $\mu((s,l),(z,t))=(s\cdot z,t+l)$. One can easily check that this is a proper action and determine the isotropy groups, orbits and slices at different points. In fact, for $x=(z_{0},t_{0})$ with $z_{0}\neq 0$, the isotropy group $G_{x}$ is trivial, the orbit $G(x)$ is a cylinder with axis $\{(0,t)\in\C\times\R: t\in\R\}$ and a slice $S_{x}$ at $x$ is a small segment of the straight line that joins $x$ to $(0,t_{0})$, that does not intersect the axis. If $x=(0,t_{0})$ then $G_{x}=S^{1}$, the orbit $G(x)$ is the axis $\{(0,t)\in\C\times\R:t\in\R\}$ and a slice $S_{x}$ at $x$ is a disc $\{(z,t_{0}),\in \C\times\R:|z|<\varepsilon\}$.
\end{example}
% FIGURA ILUSTRANDO A ACAO, ORBITAS E SLICE

\begin{slthm}\label{slicethm}
Let $\mu:G\times M\rightarrow M$ be a proper action and $x_{0}\in M$. Then there exists a slice $S_{x_{0}}$ at $x_{0}$.
\end{slthm}
\begin{proof}

We start with the construction of a metric on $M$ such that $G_{x_{0}}$ is a compact subgroup of isometries of $M$. This metric is defined as follows $$\langle X, Y \rangle_{p}= \int_{G_{x_{0}}} b(\dd\mu^{g}X, \dd\mu^{g}Y)_{\mu(g,p)}\ \omega,$$ where $\omega$ is a right--invariant volume form of the compact Lie group $G_{x_{0}}$ and $b$ is an arbitrary metric. Using the same arguments of Proposition~\ref{cptbi}, one can check that $\langle \cdot, \cdot \rangle$ is in fact $G_{x_{0}}$--invariant.

It is not difficult to see that $\dd\mu^{g}$ leaves the tangent space $T_{x_{0}}G(x_{0})$ invariant, where $g\in G_{x_0}$. Therefore, since $G_{x_{0}}$ is a subgroup of isometries of $M$, $\dd\mu^{g}$ also leaves the {\em normal space}\footnote{The normal space at $x\in N$ of a submanifold $N\subset M$ of a Riemannian manifold $M$ is an orthogonal complement to $T_xN$ in $T_xM$, and will be henceforth denoted $\nu_xN$.} $\nu_{x_{0}}G(x_{0})$ invariant.

We now define the candidate to be a slice at $x_{0}$ by setting $$S_{x_{0}}=\exp_{x_{0}}(B_{\varepsilon}(x_{0})),$$ where $B_{\varepsilon}(x_{0})$ is an open ball in the normal space $\nu_{x_{0}}G(x_{0})$.

Since $\dd\mu^{g}$ leaves $\nu_{x_{0}}G(x_{0})$ invariant and isometries map geodesics to geodesics, $S_{x_{0}}$ is invariant by the action of $G_{x_{0}}.$ Therefore, item (ii) of Definition~\ref{definition-slice} is satisfied. Item (i) is satisfied by the construction of $S_{x_{0}}$ and continuity of $\dd\mu$.

It only remains to verify item (iii), which will be proved by contradiction. Suppose that item (iii) is not satisfied. Thus there exists a sequence $\{x_{n}\}$ in $S_{x_{0}}$ and a sequence $\{g_{n}\}$ in $G$ such that $\lim x_n=x_0$, $\lim \mu(g_n,x_n)=x_0$, $\mu(g_n,x_n)\in S_{x_{0}}$ and $g_{n}\notin G_{x_{0}}$. Since the action is proper, there exists a subsequence which will be also denoted $\{g_n\}$, that converges to $g\in G_{x_{0}}$. Set $\widetilde{g}_n=g^{-1}g_{n}$. Then $\lim \widetilde{g}_n=e$, $\lim\mu(\widetilde{g}_n,x_n)=x_0$, $\mu(\widetilde{g}_n,x_n)\in S_{x_{0}}$ and $\widetilde{g}_n\notin G_{x_{0}}$. Using Proposition~\ref{proposition-kern-mu}, the Inverse Function Theorem and reducing $S_{x_{0}}$ if necessary, one can prove the next claim.

\begin{claim}\label{cl:slicethmcl}
There exists a submanifold $C\subset G$ that contains $e$, such that $\mathfrak{g}=\mathfrak{g}_{x_{0}}\oplus T_{e}C$, where $\mathfrak g_{x_0}$ is the Lie algebra of $G_{x_0}$.
Furthermore, the map $\varphi:C\times S_{x_{0}}\ni (c,s)\mapsto\mu(c,s)\in M$ is a diffeomorphism.
\end{claim}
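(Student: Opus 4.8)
The plan is to take $C$ to be a small exponential patch in $G$ transverse to the isotropy subgroup $G_{x_{0}}$, then check that $\dd\varphi_{(e,x_{0})}$ is an isomorphism so that the Inverse Function Theorem makes $\varphi$ a local diffeomorphism at $(e,x_{0})$, and finally shrink both factors to turn this into an honest diffeomorphism onto a neighbourhood of $x_{0}$.

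Concretely, I would first fix a complement $\mathfrak{m}$ of $\mathfrak{g}_{x_{0}}$ in $\mathfrak{g}$, so $\mathfrak{g}=\mathfrak{g}_{x_{0}}\oplus\mathfrak{m}$ (one may take $\mathfrak{m}$ to be $\Ad(G_{x_{0}})$--invariant, using the compactness of $G_{x_{0}}$, but this is not needed here). Since $\dd(\exp^{G})_{0}=\id$, the restriction of $\exp^{G}$ to a sufficiently small ball $B_{\delta}\subset\mathfrak{m}$ about the origin is an embedding, so $C:=\exp^{G}(B_{\delta})$ is an embedded submanifold of $G$ with $e\in C$ and $T_{e}C=\mathfrak{m}$; in particular $\mathfrak{g}=\mathfrak{g}_{x_{0}}\oplus T_{e}C$.

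Next I would compute $\dd\varphi_{(e,x_{0})}\colon T_{e}C\oplus T_{x_{0}}S_{x_{0}}\to T_{x_{0}}M$. On the first summand, $\dd\varphi_{(e,x_{0})}(Y,0)=\dd(\mu_{x_{0}})_{e}Y$; by Proposition~\ref{proposition-kern-mu} one has $\ker\dd(\mu_{x_{0}})_{e}=T_{e}G_{x_{0}}=\mathfrak{g}_{x_{0}}$, hence $\dd(\mu_{x_{0}})_{e}$ is injective on $\mathfrak{m}=T_{e}C$ with image $\dd\mu_{x_{0}}\mathfrak{g}=T_{x_{0}}G(x_{0})$. On the second summand, $\dd\varphi_{(e,x_{0})}(0,v)=v$, and by the construction of $S_{x_{0}}$ one has $T_{x_{0}}S_{x_{0}}=\nu_{x_{0}}G(x_{0})$. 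Since $T_{x_{0}}M=T_{x_{0}}G(x_{0})\oplus\nu_{x_{0}}G(x_{0})$, these two facts show $\dd\varphi_{(e,x_{0})}$ is a linear isomorphism, and the Inverse Function Theorem yields an open neighbourhood $N$ of $(e,x_{0})$ in $C\times S_{x_{0}}$ on which $\varphi$ restricts to a diffeomorphism onto an open subset of $M$.

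Finally I would shrink: choose $\delta'\le\delta$ and $\varepsilon'\le\varepsilon$ small enough that $\exp^{G}(B_{\delta'})\times\exp_{x_{0}}(B_{\varepsilon'}(x_{0}))\subset N$, and replace $C$ by $\exp^{G}(B_{\delta'})$ and $S_{x_{0}}$ by $\exp_{x_{0}}(B_{\varepsilon'}(x_{0}))$; then $\varphi|_{C\times S_{x_{0}}}$ is a restriction of a diffeomorphism, hence injective and a local diffeomorphism, hence a diffeomorphism onto the open neighbourhood $\varphi(C\times S_{x_{0}})$ of $x_{0}$. The step requiring care is precisely this last one: the Inverse Function Theorem only supplies a diffeomorphism on some $N$, which need not be a product, so one must cut down to a product inside $N$; one should also record that reducing $S_{x_{0}}$ to a smaller geodesic ball keeps it $G_{x_{0}}$--invariant, since each $\dd\mu^{g}$ with $g\in G_{x_{0}}$ is a linear isometry preserving $\nu_{x_{0}}G(x_{0})$ and so maps $B_{\varepsilon'}(x_{0})$ to itself, whence properties (i) and (ii) of Definition~\ref{definition-slice} already established are not disturbed.
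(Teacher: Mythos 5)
Your proof is correct and follows exactly the route the paper indicates (Proposition~\ref{proposition-kern-mu} to identify $\ker\dd(\mu_{x_0})_e=\mathfrak g_{x_0}$, the Inverse Function Theorem applied to $\dd\varphi_{(e,x_0)}$, and shrinking $S_{x_0}$); you have simply filled in the details the paper leaves as a sketch, including the two points that genuinely require care — cutting the IFT neighbourhood down to a product, and checking that shrinking $S_{x_0}$ to a smaller normal ball preserves $G_{x_0}$-invariance.
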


It follows from the Inverse Function Theorem that for each $\widetilde{g}_n$ there exists a unique $c_n\in C$ and $h_n\in G_{x_{0}}$ such that $\widetilde{g}_n=c_{n}h_{n}$. Since $\widetilde{g}_n\notin G_{x_{0}}$, we conclude that $c_{n}\neq e$. On the other hand, $\mu(h_n,x_n)\in S_{x_{0}}$ because $h_n\in G_{x_{0}}$. The fact that $c_{n}\neq e$ and Claim~\ref{cl:slicethmcl} imply that $\mu(c_{n},\mu(h_{n},x_n))\notin S_{x_{0}}$. This contradicts the fact that $\mu(\widetilde{g}_n,x_n)\in S_{x_{0}}$. Therefore such sequence $\{g_n\}$ does not exist, hence item (iii) is satisfied.
\end{proof}

Before presenting the next result, we recall the concept of fiber bundle with fiber $F$ associated to a principle bundle $(P,\rho,B,H)$. Consider the free right proper action $\mu_{1}:P\times H\rightarrow P$ and a left action $\mu_{2}: H\times F\rightarrow F$. 
Then the left action \begin{eqnarray*} \mu: H\times (P\times F) &\longrightarrow & (P\times F) \\ (h,(p,f)) &\longmapsto & (\mu_{1}(p,h^{-1}),\mu_{2}(h,f))\end{eqnarray*} is a left proper action. Define $P\times_{H}F$ as the orbit space of $\mu$ and set $B=P/H$. 

%Then %$(P\times_{H}F,\pi,B,F,H)$ is the \emph{fiber bundle associated to the principal bundle}\index{Fiber bundle!associated to a principal bundle} %$(P,\rho,B,H)$ with fiber $F$. A typical example of a fiber bundle associated to a principal bundle is the tangent bundle $TM$ of a manifold $M$, that %is associated to the principal bundle $B(TM)$, defined in Example~\ref{exbtm}.

\begin{theorem}\label{theorem-fibradoassociado}
$P\times_{H}F$ is a manifold, called \emph{twisted space},\index{Twisted space} and the 5--uple $(P\times_{H}F,\pi, B,F,H)$ is a fiber bundle where $\pi:P\times_{H}F\rightarrow B$ is defined as $\pi([x,f])=\rho(x)$ and $\rho: P\rightarrow B=P/H$ is the canonical projection.
\end{theorem}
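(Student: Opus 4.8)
\emph{Plan of proof.} The plan is to deduce everything from Theorem~\ref{theorem-acaoproprialivre-eh-fibrado} applied to the action $\mu$ on $P\times F$, plus an explicit construction of trivializations. First I would note that $\mu$ is not only proper (as already observed) but also \emph{free}: if $\mu(h,(p,f))=(p,f)$ then in particular $\mu_{1}(p,h^{-1})=p$, and since the principal right action $\mu_{1}$ is free this forces $h=e$. Converting $\mu$ to the (still free and proper) right action $((p,f),h)\mapsto\mu(h^{-1},(p,f))$, Theorem~\ref{theorem-acaoproprialivre-eh-fibrado} gives that the orbit space $P\times_{H}F=(P\times F)/H$ carries a smooth structure for which the quotient map $q\colon P\times F\to P\times_{H}F$ is smooth and $(P\times F,q,P\times_{H}F,H)$ is a principal $H$--bundle; in particular $q$ is a surjective submersion satisfying the universal property stated in the Remark following Theorem~\ref{theorem-acaoproprialivre-eh-fibrado}. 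This already shows $P\times_{H}F$ is a manifold.

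Next I would check that $\pi$ is well defined and smooth. The map $\rho\circ\mathrm{pr}_{1}\colon P\times F\to B$, $(p,f)\mapsto\rho(p)$, is constant on the $\mu$--orbits since $\rho(\mu_{1}(p,h^{-1}))=\rho(p)$ (the fibres of $\rho$ being the $\mu_{1}$--orbits), hence descends to a map $\pi\colon P\times_{H}F\to B$ with $\pi\circ q=\rho\circ\mathrm{pr}_{1}$ and $\pi([x,f])=\rho(x)$; by the universal property of $q$, $\pi$ is smooth.

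The core of the argument is the construction of coordinate functions. Fix an open cover $\{U_{\alpha}\}$ of $B$ with coordinate functions $\psi_{\alpha}\colon U_{\alpha}\times H\to\rho^{-1}(U_{\alpha})$ of the principal bundle $P$, so $\rho\circ\psi_{\alpha}=\mathrm{pr}_{1}$, $\psi_{\alpha}(b,ha)=\mu_{1}(\psi_{\alpha}(b,h),a)$, and $\psi_{\beta}^{-1}\circ\psi_{\alpha}(b,h)=(b,\theta_{\alpha\beta}(b)h)$ with $\theta_{\alpha\beta}\colon U_{\alpha}\cap U_{\beta}\to H$ smooth. Define
$$\Psi_{\alpha}\colon U_{\alpha}\times F\longrightarrow\pi^{-1}(U_{\alpha}),\qquad \Psi_{\alpha}(b,f)=\bigl[\psi_{\alpha}(b,e),\,f\bigr].$$
I would then verify in turn: (i) $\pi\circ\Psi_{\alpha}=\mathrm{pr}_{1}$, immediate from $\rho\circ\psi_{\alpha}=\mathrm{pr}_{1}$; (ii) $\Psi_{\alpha}$ is a bijection onto $\pi^{-1}(U_{\alpha})$, using that every point of $\pi^{-1}(U_{\alpha})$ is represented by some $(\psi_{\alpha}(b,h),f)=(\mu_{1}(\psi_{\alpha}(b,e),h),f)$, the identity $[\mu_{1}(p,h),f]=[p,\mu_{2}(h,f)]$ (a direct consequence of the definition of the equivalence relation), and freeness of $\mu_{1}$ for injectivity; (iii) $\Psi_{\alpha}$ is a diffeomorphism, obtained by transporting the $\mu$--action through the diffeomorphism $U_{\alpha}\times H\times F\to q^{-1}(\pi^{-1}(U_{\alpha}))=\rho^{-1}(U_{\alpha})\times F$, $(b,h,f)\mapsto(\psi_{\alpha}(b,h),f)$, in which coordinates the action reads $h'\cdot(b,h,f)=(b,hh'^{-1},\mu_{2}(h',f))$; the quotient of $U_{\alpha}\times H\times F$ by this action is canonically $U_{\alpha}\times F$ via $\Psi_{\alpha}$, and since $q$ is a surjective submersion the induced map is smooth with smooth inverse. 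In particular $\pi$ is a submersion because $\pi\circ\Psi_{\alpha}=\mathrm{pr}_{1}$.

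Finally, for the transition functions I would compute, for $b\in U_{\alpha}\cap U_{\beta}$, using $\psi_{\alpha}(b,e)=\mu_{1}(\psi_{\beta}(b,e),\theta_{\alpha\beta}(b))$ and again $[\mu_{1}(p,h),f]=[p,\mu_{2}(h,f)]$,
$$\Psi_{\alpha}(b,f)=\bigl[\mu_{1}(\psi_{\beta}(b,e),\theta_{\alpha\beta}(b)),f\bigr]=\bigl[\psi_{\beta}(b,e),\mu_{2}(\theta_{\alpha\beta}(b),f)\bigr]=\Psi_{\beta}\bigl(b,\mu_{2}(\theta_{\alpha\beta}(b),f)\bigr),$$
so $\Psi_{\beta}^{-1}\circ\Psi_{\alpha}(b,f)=(b,\theta_{\alpha\beta}(b)f)$ with $H$ acting on $F$ through $\mu_{2}$; the $\theta_{\alpha\beta}$ being smooth, this verifies the last axiom, and $(P\times_{H}F,\pi,B,F,H)$ is a fibre bundle (replacing $H$ by $H/\ker\mu_{2}$ if one insists on an effective structure group, cf. Remark~\ref{remark-effective-action}). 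The main obstacle is step (iii): upgrading the set--theoretic bijections $\Psi_{\alpha}$ to diffeomorphisms genuinely needs the submersion property of $q$ together with the explicit local form of the transported $\mu$--action, and is not a soft point.
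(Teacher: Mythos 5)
Your proposal is correct and takes essentially the same route as the paper: both construct the coordinate functions by pairing a local section of $\rho\colon P\to B$ with the identity on $F$ (your $\Psi_\alpha(b,f)=[\psi_\alpha(b,e),f]$ corresponds to the paper's $\psi_i(b,f)=[\rho_i^{-1}(b),f]$), both upgrade the set--theoretic bijection to a diffeomorphism by working upstairs in $\rho^{-1}(U_\alpha)\times F$ and descending through the quotient submersion (the paper phrases this as the $H$--equivariant diffeomorphism $\widetilde\varphi$ in Claim~\ref{cl:thmfibrassoc1}), and both identify the transition functions with the $\theta_{\alpha\beta}$ of the principal bundle $(P,\rho,B,H)$. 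The only organizational difference is that you invoke Theorem~\ref{theorem-acaoproprialivre-eh-fibrado} directly (after the free/proper and left--to--right conversions) to obtain the smooth structure on $P\times_H F$ and the submersion $q$, whereas the paper effectively re-derives the needed local statement via the transverse submanifold $S$; your observation about passing to $H/\ker\mu_2$ to restore an effective structure group is a legitimate point that the paper leaves implicit.
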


\begin{proof}
We will only sketch the main parts of this proof.

Consider $S$ a transverse submanifold to an $H$--orbit in $P$ and an $H$--invariant neighborhood $U$ in $P$, defined as in Claim~\ref{cl:thmacaoproprialivrefibr1}.

\begin{claim}\label{cl:thmfibrassoc1}
The map $\varphi:S\times F\rightarrow U\times_{H} F$, given by $\varphi(s,f)=[s,f]$, is a diffeomorphism.
\end{claim}

In order to prove Claim~\ref{cl:thmfibrassoc1}, one has to check that \begin{eqnarray*}\widetilde{\varphi}:H\times(S\times F) &\longrightarrow & U\times F \\ (h,(s,f)) & \longmapsto & (\mu_{1}(s,h^{-1}),\mu_{2}(h,f))\end{eqnarray*} is an \emph{$H$--equivariant diffeomorphism}, i.e., $h\cdot\widetilde{\varphi}(p)=\widetilde{\varphi}(h\cdot p)$. Here $H$ acts on $H\times(S\times F)$ by left multiplication on the first factor and $H$ acts on $U\times F$ by the twisted action defined above. The fact that $\widetilde{\varphi}$ is an $H$--equivariant diffeomorphism implies that $\varphi$ is a diffeomorphism between $S\times F= (H\times(S\times F))/H$ and $U\times_{H} F=(U\times F)/H$. This proves Claim~\ref{cl:thmfibrassoc1}.

We are now ready to present the coordinate functions of the fiber bundle. Consider two transverse submanifolds $S_1$ and $S_2$ to two different orbits and set $\rho_{i}=\rho|_{S_{i}}:S_{i}\rightarrow \rho(S_{i})$. Assume also that $W=\rho(S_{1})\cap\rho(S_{2})$ is nonempty. 

\begin{claim}\label{cl:thmfibrassoc2}
The following hold.
\begin{itemize}
\item[(i)] The following map is a diffeomorphism \begin{eqnarray*}\psi_{i}:\rho(S_{i})\times F &\longrightarrow & U\times_{H}F \\ (b,f) &\longmapsto &[\rho_{i}^{-1}(b),f];\end{eqnarray*}
\item[(ii)] $\psi_{2}^{-1}\circ\psi_{1}(b,f)=(b,\theta(b)f)$, where $\theta$ was defined in Claim~\ref{cl:claimcomtheta}.
\end{itemize}
\end{claim}

Claim~\ref{cl:thmfibrassoc1} implies item (i) of Claim~\ref{cl:thmfibrassoc2}. As for (ii), it suffices to prove that if $[s_{1},f_{1}]=[s_{2},f_{2}]$, then $f_{2}=\mu_{2}(\widetilde{\theta}(s_{2}),f_{1})$, where $\widetilde{\theta}$ was defined in Claim~\ref{cl:claimcomtheta}. The fact that $[s_{1},f_{1}]=[s_{2},f_{2}]$ implies that there exists $h$ such that
\begin{eqnarray}
f_{2}&=&\mu_{2}(h,f_{1}),\label{eq-theorem-fibradoassociado-1} \\
s_{1}&=&\mu_{1}(s_{2},h).\label{eq-theorem-fibradoassociado-2}
\end{eqnarray}
On the other hand, from the proof of Theorem~\ref{theorem-acaoproprialivre-eh-fibrado},
\begin{equation}
\label{eq-theorem-fibradoassociado-3}
s_{1}=\mu_{1}(s_{2},\widetilde{\theta}(s_{2})).
\end{equation}
Equations \eqref{eq-theorem-fibradoassociado-2} and \eqref{eq-theorem-fibradoassociado-3}, and the fact that the action on $P$ is free imply that
\begin{equation}
\label{eq-theorem-fibradoassociado-4}
\widetilde{\theta}(s_{2})=h.
\end{equation}
Finally, substituting \eqref{eq-theorem-fibradoassociado-4} in (\ref{eq-theorem-fibradoassociado-1}), we get the desired result.
\end{proof}

The bundle $(P\times_{H}F,\pi,B,F,H)$ is called \emph{fiber bundle associated to the principal bundle}\index{Fiber bundle!associated to a principal bundle} $(P,\rho,B,H)$ with fiber $F$. A typical example of a fiber bundle associated to a principal bundle is the tangent bundle $TM$ of a manifold $M$, that is associated to the principal bundle $B(TM)$, defined in Example~\ref{exbtm}.

Existence of slices allows to consider a \emph{tubular neighborhood}\index{Tubular neighborhood}\index{$\tub$} of each orbit $G(x_0)$, defined by $\tub(G(x_0))=\mu(G,S_{x_0})$. The next theorem asserts that this is the total space of the fiber bundle associated to the principal bundle $(G,\rho,G/G_{x_0},G_{x_0})$ with fiber $S_{x_0}$.

\begin{tnthm}\label{theorem-tubularneighborhood}\index{Theorem!Tubular Neighborhood}
Let $\mu:G\times M\rightarrow M$ be a proper action and $x_{0}\in M$. Then there exists a $G$--equivariant\footnote{Recall that given actions $\mu_1:G\times M\rightarrow M$ and $\mu_2:G\times N\rightarrow N$, a map $f:M\rightarrow N$ is called \emph{$G$--equivariant}\index{Equivariant!map}\index{$G$--equivariant map} if for all $x\in M, g\in G$, $$\mu_2(g,f(x))=f(\mu_1(g,x)).$$} diffeomorphism between $\tub(G(x_{0}))$ and the total space of the fiber bundle associated to the principal fiber bundle $(G,\rho,G/G_{x_{0}},G_{x_{0}})$ with fiber $S_{x_{0}}$. In other words, $\tub(G(x_{0}))=G\times_{G_{x_{0}}} S_{x_{0}}$.
\end{tnthm}

\begin{remark}
The considered action $G\times (G\times_{G_{x_{0}}} S_{x_{0}})\rightarrow G\times_{G_{x_{0}}} S_{x_{0}}$ is given by $h\cdot [g,s]=[hg,s]$.
\end{remark}

\begin{proof}
Define the map \begin{eqnarray*}\varphi: G\times S_{x_{0}} &\longrightarrow & G(S_{x_{0}}) \\ (g,x) &\longmapsto &\mu(g,x), \end{eqnarray*}
and note that $\dd\varphi_{(g,x)}$ is surjective. This can be proved using the fact that $\dd\varphi_{(e,x)}$ is surjective and
$\dd\varphi_{(g,x)}(\dd L_{g} X,\dd L_{g} Y)=\dd(\mu^{g})_{x}\circ \dd\varphi_{(e,x)}(X,Y)$. Since $\dd\varphi_{(g,x)}$ is surjective, $\tub(G(x_{0}))=G(S_{x_{0}})$ is an open neighborhood of $G(x_0)$, that is obviously $G$--invariant.

\begin{claim}\label{cl:tn1}
$\varphi(g,x)=\varphi(h,y)$ if and only if $h=gk^{-1}$ and $y=\mu(k,x)$, where $k\in G_{x_{0}}.$
\end{claim}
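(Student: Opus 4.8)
The plan is to prove the two implications separately, the "if" direction being an immediate computation and the "only if" direction requiring the slice property~(iii) from Definition~\ref{definition-slice}. First I would dispose of the easy direction: suppose $h = gk^{-1}$ and $y = \mu(k,x)$ with $k \in G_{x_0}$. Then $\varphi(h,y) = \mu(gk^{-1}, \mu(k,x)) = \mu(gk^{-1}k, x) = \mu(g,x) = \varphi(g,x)$, using only the action axioms from the definition of a left action. Note that the hypothesis $k \in G_{x_0}$ is not even needed here; it will only be needed to reverse the implication, and to guarantee that $y = \mu(k,x)$ again lies in the slice $S_{x_0}$ (so that the statement is internally consistent with $\varphi$ being defined on $G \times S_{x_0}$), which is exactly the $G_{x_0}$--invariance in item~(ii) of Definition~\ref{definition-slice}.

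For the converse, assume $\varphi(g,x) = \varphi(h,y)$, i.e.\ $\mu(g,x) = \mu(h,y)$ with $x, y \in S_{x_0}$. Set $k = h^{-1}g \in G$. Then applying $\mu(h^{-1}, \cdot)$ to both sides gives $\mu(k, x) = \mu(h^{-1}g, x) = \mu(h^{-1}, \mu(g,x)) = \mu(h^{-1}, \mu(h,y)) = y$. So $x \in S_{x_0}$ and $\mu(k,x) = y \in S_{x_0}$; that is, $k$ carries a point of the slice back into the slice. By item~(iii) of Definition~\ref{definition-slice} (the defining property of a slice that makes the Slice Theorem~\ref{slicethm} useful), this forces $k \in G_{x_0}$. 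Finally, from $k = h^{-1}g$ we get $h = gk^{-1}$, and we already showed $y = \mu(k,x)$, which is the desired conclusion.

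The only subtle point — and the one place where something beyond bookkeeping is used — is the invocation of slice property~(iii): the whole content of the claim is that the slice is "thin enough" that the only group elements relating two of its points are isotropy elements. Everything else is formal manipulation of the action axioms. I would therefore present the argument as two short paragraphs (one per implication) and explicitly flag the use of Definition~\ref{definition-slice}(iii) in the "only if" direction and Definition~\ref{definition-slice}(ii) as the reason $\mu(k,x)$ stays in $S_{x_0}$ in the "if" direction. No genuine obstacle is expected; this is a compatibility lemma whose proof is essentially a rewording of the slice axioms, and it is the tool that will let us identify $G(S_{x_0})$ with the twisted product $G \times_{G_{x_0}} S_{x_0}$ in the remainder of the proof of Theorem~\ref{theorem-tubularneighborhood}.
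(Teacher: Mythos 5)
Your proof is correct and takes essentially the same approach as the paper: the easy direction is the immediate computation $\mu(gk^{-1},\mu(k,x))=\mu(g,x)$, and for the converse you set $k=h^{-1}g$, observe $y=\mu(k,x)$ with $x,y\in S_{x_0}$, and invoke item~(iii) of Definition~\ref{definition-slice} to conclude $k\in G_{x_0}$. The paper states exactly this (more tersely, declaring the first direction ``clear'' and not spelling out the intermediate action-axiom steps), so your write-up is a faithful, slightly more detailed version of the same argument.
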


One part of Claim~\ref{cl:tn1} is clear. As for the other, assume that $\varphi(g,x)=\varphi(h,y)$. Hence $\mu(g,x)=\mu(h,y)$ and 
$y=\mu(k,x)$, where $k=h^{-1}g$. Since $x,y\in S_{x_{0}}$ it follows from Definition~\ref{definition-slice} that $k\in G_{x_{0}}$.

We are ready to define the candidate to $G$--equivariant diffeomorphism,
\begin{eqnarray*}
\psi:G \times_{G_{x_{0}}} S_{x_{0}} & \longrightarrow & \tub(G(x_{0})) \\
\left[g,s\right] & \longmapsto & \mu(g,s).
\end{eqnarray*}
Claim~\ref{cl:tn1} and the fact that $\pi:G\times S_{x_{0}}\rightarrow G\times_{G_{x_{0}}}S_{x_{0}}$
is a projection of a principal fiber bundle imply that $\psi$ is well--defined, smooth and bijective.

\begin{claim}\label{cl:tn2}
$\psi$ is a $G$--equivariant diffeomorphism.
\end{claim}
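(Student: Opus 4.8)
The plan is to dispose of $G$-equivariance first and then to prove that $\psi$ is a local diffeomorphism; since $\psi$ is already known to be a smooth bijection, this will suffice. For equivariance, observe that for $h\in G$ and $[g,s]\in G\times_{G_{x_0}}S_{x_0}$ the description of the $G$-action on the twisted space given in the preceding Remark yields $\psi(h\cdot[g,s])=\psi([hg,s])=\mu(hg,s)=\mu(h,\mu(g,s))=\mu(h,\psi([g,s]))$, which uses only the action axioms. So the real content is showing that $\psi$ is everywhere a local diffeomorphism.

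For that I would exploit the factorization $\varphi=\psi\circ\pi$, where $\pi\colon G\times S_{x_0}\to G\times_{G_{x_0}}S_{x_0}$ is the canonical projection and $\varphi(g,x)=\mu(g,x)$. By Theorem~\ref{theorem-acaoproprialivre-eh-fibrado} (or Theorem~\ref{theorem-fibradoassociado}), $\pi$ is the projection of a principal $G_{x_0}$-bundle, hence a surjective submersion, so $\dd\pi_{(g,x)}$ is surjective for every $(g,x)$; and it was shown just before Claim~\ref{cl:tn1} that $\dd\varphi_{(g,x)}$ is surjective. Differentiating the factorization gives $\dd\varphi_{(g,x)}=\dd\psi_{\pi(g,x)}\circ\dd\pi_{(g,x)}$, so $\dd\psi_{\pi(g,x)}$ is surjective; and since $\pi$ is onto, $\dd\psi$ is surjective at every point of $G\times_{G_{x_0}}S_{x_0}$. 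Next one checks that source and target have the same dimension: the target $\tub(G(x_0))=G(S_{x_0})$ is an open subset of $M$, hence has dimension $\dim M$, while $\dim\bigl(G\times_{G_{x_0}}S_{x_0}\bigr)=\dim G-\dim G_{x_0}+\dim S_{x_0}$; by Proposition~\ref{proposition-kern-mu} together with Proposition~\ref{orbitsubmanifold} the orbit $G(x_0)$ has dimension $\dim G-\dim G_{x_0}$, and property (i) of the slice in Definition~\ref{definition-slice} gives $T_{x_0}M=\dd\mu_{x_0}\mathfrak{g}\oplus T_{x_0}S_{x_0}$, i.e.\ $\dim M=(\dim G-\dim G_{x_0})+\dim S_{x_0}$. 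Hence the dimensions agree, a surjective linear map between spaces of equal dimension is an isomorphism, and so $\dd\psi$ is an isomorphism everywhere. By the Inverse Function Theorem $\psi$ is a local diffeomorphism, and a bijective local diffeomorphism is a diffeomorphism; combined with the equivariance above this proves Claim~\ref{cl:tn2}, and with it the Tubular Neighborhood Theorem.

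I do not anticipate a serious obstacle: the only mildly delicate points are the dimension bookkeeping and the legitimacy of differentiating through the quotient $\pi$, both of which are handled by the principal-bundle structure of $\pi$ supplied by Theorem~\ref{theorem-acaoproprialivre-eh-fibrado}. A more hands-on alternative would be to assemble an explicit smooth inverse of $\psi$ from the local trivializations of the associated bundle built in Theorem~\ref{theorem-fibradoassociado}, using the submanifolds transverse to the $G_{x_0}$-orbits and the diffeomorphism of Claim~\ref{cl:slicethmcl}; but the differential argument above is shorter and I would favour it.
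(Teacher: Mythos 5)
Your proof is correct and follows essentially the same route as the paper: factor $\varphi=\psi\circ\pi$, note $\dd\pi$ and $\dd\varphi$ are surjective to get surjectivity of $\dd\psi$, match dimensions of $G\times_{G_{x_0}}S_{x_0}$ and $\tub(G(x_0))$ to upgrade to an isomorphism, and apply the Inverse Function Theorem together with bijectivity. The only cosmetic differences are that you verify $G$-equivariance explicitly at the start (the paper just remarks ``by construction'') and you spell out the dimension bookkeeping via Proposition~\ref{proposition-kern-mu} and the slice decomposition, which the paper leaves terse.
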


To prove Claim~\ref{cl:tn2}, note that $\dd\pi$ and $\dd\varphi$ are surjective, and $\varphi=\psi\circ\pi$. Therefore $\dd\psi$ is surjective.
On the other hand,
\begin{eqnarray*}
\dim G\times_{G_{x_{0}}} S_{x_{0}}&=& \dim(G/G_{x_{0}})+\dim S_{x_{0}}\\
&=&\dim M\\
&=&\dim \tub(G(x_{0})).
\end{eqnarray*}
The above equation and the fact that $\dd\psi$ is surjective imply that $\dd\psi$ is an isomorphism.
Hence, by the Inverse Function Theorem, $\psi$ is a local diffeomorphism. Since $\psi$ is bijective, we conclude that $\psi$ is a diffeomorphism. By construction, $\psi$ is also $G$--equivariant, concluding the proof.
\end{proof}

\begin{remark}\label{remark-SliceVaiEmSlice}
Using the above result, one can prove that $S_{\mu(g,x_{0})}=\mu^{g}(S_{x_{0}})$. It also follows from the above theorem that there is a unique $G$--equivariant retraction $r:\tub(G(x_{0}))\rightarrow G(x_{0})$ such that $r\circ\psi=\widetilde{\mu}_{x_{0}}\circ\pi$, where $\pi:G\times_{G_{x_{0}}} S_{x_{0}}\rightarrow G/G_{x_{0}}$ is the projection of the fiber bundle and $\widetilde{\mu}_{x_{0}}$ defined as in Proposition~\ref{orbitsubmanifold}.
\end{remark}

\begin{remark}\label{remarkslicerep}\index{Holonomy!group}
Let $\mu:G\times M\rightarrow M$ be an action whose orbits have constant dimension. Then the Tubular Neighborhood Theorem~\ref{theorem-tubularneighborhood} implies that the holonomy group\index{$\hol$} $\hol(G(x),x)$ of the leaf $G(x)$ of the foliation $\{G(y)\}_{y\in M}$ coincides with the image of the \emph{slice representation} of $G_x$ in $S_x$ (see Definition~\ref{slicerepresentation} and Remark~\ref{remarkslicerep2}). In the general case of foliations with compact leaves and finite holonomy, there is an analogous result to the Tubular Neighborhood Theorem~\ref{theorem-tubularneighborhood} known as {\it Reeb Local Stability Theorem} (see Moerdijk and Mr\v{c}un~\cite{Moerdijk}).
\end{remark}

\section{Isometric actions and principal orbits}
\label{sec32}

This section is devoted to the important relation between proper and isometric actions, and to a special type of orbits of such actions. More precisely, we prove in Proposition~\ref{proposition-acaoisometrica-eh-propria} that isometric actions are proper, and in Theorem~\ref{theorem-acaopropria-isometrica} that for each proper action there exists an invariant metric that turns it into an isometric action. Furthermore, the concept of principal orbit is studied together with a few of its geometric properties, see Proposition~\ref{prop-equivariant-field}. In addition, we prove the Principal Orbit Theorem~\ref{teo-orbita-principal} that, for instance, ensures that the set of points in principal orbits is open and dense.

\begin{proposition}\label{proposition-acaoisometrica-eh-propria}\index{$\Iso(M)$}
Let $M$ be a Riemannian manifold and $G$ a closed subgroup of the isometry group $\Iso(M)$. Then the action $\mu:G\times M\owns (g,x)\mapsto g(x) \in M$ is a proper action.
\end{proposition}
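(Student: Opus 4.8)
The plan is to verify the sequential criterion for properness stated in Proposition~\ref{proposition-def-equivalente-propria}. So I would take a sequence $\{g_n\}$ in $G$ and a convergent sequence $\{x_n\}$ in $M$, say $x_n\to x$, such that $\{g_n(x_n)\}$ also converges, say to $y$, and I must extract a convergent subsequence of $\{g_n\}$. Note that once such a subsequence is found, its limit automatically lies in $G$, because $G$ is closed in $\Iso(M)$.

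First I would reduce to a fixed base point: since each $g_n$ is an isometry, $d\big(g_n(x),g_n(x_n)\big)=d(x,x_n)\to 0$, so by the triangle inequality $g_n(x)\to y$ as well. Next I would show that $\{g_n\}$ is precompact in the compact--open topology. The maps $g_n$ are $1$--Lipschitz, hence equicontinuous; and for each $q\in M$ one has $d\big(g_n(q),y\big)\le d\big(g_n(q),g_n(x)\big)+d\big(g_n(x),y\big)=d(q,x)+d\big(g_n(x),y\big)$, which is bounded, so $\{g_n(q)\}_n$ eventually lies in a fixed closed metric ball around $y$. Invoking Hopf--Rinow (Theorem~\ref{hopfrinow}) when $M$ is complete, such a ball is compact, hence $\{g_n(q)\}_n$ is precompact for every $q$. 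By the Arzel\`a--Ascoli theorem for equicontinuous families of maps from a locally compact, second countable space into a metric space (run the usual diagonal argument over a countable dense subset of $M$, then upgrade to uniform convergence on compacta via equicontinuity), some subsequence $\{g_{n_k}\}$ converges, uniformly on every compact subset of $M$, to a continuous map $f\colon M\to M$.

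Finally I would identify $f$ as an element of $G$: convergence uniformly on compacta is exactly convergence in the compact--open topology, and since $G$ is closed in that topology, the Remark following the Myers--Steenrod Theorem~\ref{theorem-Myers-Steenrod} gives $f\in G$ (in particular $f$ is an isometry), with $g_{n_k}\to f$ in $G$. This is the convergent subsequence required by Proposition~\ref{proposition-def-equivalente-propria}, so $\mu$ is proper.

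The one genuinely substantive point is the precompactness step: turning ``$\{g_n(q)\}$ stays bounded'' into ``$\{g_n(q)\}$ has a convergent subsequence''. For complete $M$ this is immediate from Hopf--Rinow; for an arbitrary (possibly incomplete) $M$ the fix is to pass to the orthonormal frame bundle of $M$ and observe that the $1$--jets $\big(g_n(x),\,d(g_n)_x(e_1),\dots,d(g_n)_x(e_m)\big)$ eventually lie in the frame bundle over a compact neighborhood of $y$, which is compact; convergence of these $1$--jets then forces convergence of the $g_n$ near $x$ via $g_n=\exp_{g_n(x)}\circ\, d(g_n)_x\circ\exp_x^{-1}$, propagated over all of $M$ using connectedness and the fact that an isometry is determined by its $1$--jet at one point. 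This frame--bundle input, rather than any explicit computation, is the heart of the argument.
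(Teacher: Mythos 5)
Your argument for the complete case is essentially the one in the text: reduce to a fixed base point ($g_n(x)\to y$), establish equicontinuity and pointwise precompactness via Hopf--Rinow, extract a uniform-on-compacta limit by Arzel\`a--Ascoli plus a diagonal argument, and use closedness of $G$ in the compact--open topology together with Myers--Steenrod to place the limit in $G$; the only cosmetic difference is diagonalizing over a countable dense set rather than over the nested closed balls $\overline{B_{R_i}(x)}$. What you add, and the paper explicitly omits by restricting to the complete case, is the observation that this route genuinely uses completeness, together with a sketch of the classical frame--bundle fix that removes the hypothesis: the orthonormal frame bundle over a compact set is compact irrespective of completeness, an isometry is determined by its $1$--jet at one point, and lifted isometries preserve the canonical parallelization. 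That is the right road map --- it is essentially the mechanism inside Myers--Steenrod itself --- and it is the more general argument. Be aware, though, that the step you call ``the heart'' (propagating convergence of $1$--jets at $x$ to uniform-on-compacta convergence of $g_n$ over all of $M$) is genuinely where the work lies: the formula $g_n=\exp_{g_n(x)}\circ\dd(g_n)_x\circ\exp_x^{-1}$ holds only on a normal neighborhood, and the horizontal flows on the frame bundle are only local when $M$ is incomplete, so the propagation must run along concatenations of integral curves of the parallelizing fields, using that the lifted $g_n$ intertwine them. Your proof is correct and matches the paper's for complete $M$; for the general case it is a correct outline, with that propagation step left as a sketch.
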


\begin{proof}
We will only prove the case in which $M$ is complete.

Consider a sequence $\{g_{n}\}$ in $G$ and a sequence $\{x_{n}\}$ in $M$ such that $\lim \mu(g_{n},x_{n})=y$ and $\lim x_{n}=x$. We have to prove that there exists a convergent subsequence of $\{g_{n}\}$ in $G$. Choose $N_{0}$ such that $d(y,\mu(g_n,x_n))<\tfrac{\varepsilon}{2}$ and $d(x,x_n)<\tfrac{\varepsilon}{2}$ for $n>N_{0}$.

\begin{claim}\label{cl:isomprop1}
For a fixed $R_1$, $\mu(g_n,B_{R_1}(x))\subset B_{\varepsilon+R_{1}}(y)$ for $n>N_{0}$.
\end{claim}

In fact, for $z\in B_{R_1}(x)$, \begin{eqnarray*}
d(\mu(g_n,z),y)&\leq & d(y,\mu(g_n,x_n))+d(\mu(g_n,x_n),\mu(g_n,z))\\
               &< & \tfrac{\varepsilon}{2}+ d(x_n,z)\\
               &\leq& \tfrac{\varepsilon}{2}+d(z,x)+d(x,x_n)\\
               &<& \varepsilon+R_1. 
\end{eqnarray*}

The fact that each $g_n$ is an isometry implies the following assertion.

\begin{claim}\label{cl:isomprop2}
$\{g_n\}$ is an equicontinuous family of functions.
\end{claim}

The Arzel\`a--Ascoli Theorem\footnote{This theorem gives a criterion for convergence of continuous maps in the compact--open topology. More precisely, a sequence of continuous functions $\{g_n:K\rightarrow B\}$ between compact metric spaces $K$ and $B$ admits a uniformly convergent subsequence if $\{g_n\}$ is \emph{equicontinuous}, i.e., for every $\varepsilon$ there exists a $\delta>0$ such that $d(g_n(x),g_n(y))<\varepsilon$ whenever $d(x,y)<\delta$, $x,y\in K$ and $n\in\N$.}, Claims~\ref{cl:isomprop1} and~\ref{cl:isomprop2} and the fact that closed balls in $M$ are compact (since $M$ was assumed complete) imply that there exists a subsequence\footnote{Here we denote by $\{g_{\; n}^{(1)}\}$ a generic subsequence of $\{g_n\}$. Inductively, $\{g_{\; n}^{(j+1)}\}$ will denote a subsequence of a certain sequence $\{g_{\; n}^{(j)}\}$. Indexes $j$ indicate the choice of a subsequence, and should not be thought as exponents.} $\{g_{\; n}^{(1)}\}$ of $\{g_n\}$ that converges uniformly on $\overline{B_{R_1}(x)}$ to a continuous map from $\overline{B_{R_1}(x)}$ to $M$. Using the same argument, we inductively define a subsequence $\{g_{\; n}^{(i)}\}$ of $\{g_{\; n}^{(i-1)}\}$ that converges uniformly on $\overline{B_{R_i}(x)}$ to a continuous map from $\overline{B_{R_i}(x)}$ to $M$, where $R_i>R_{i-1}$.

Finally, consider the \emph{diagonal} subsequence $\{g_{\; i}^{(i)}\}$. Note that it converges uniformly on each $\overline{B_{R_i}(x)}$ to a continuous map $g:M\rightarrow M$. It then follow from Myers--Steenrod Theorem~\ref{theorem-Myers-Steenrod} that $g$ is an isometry that belongs to $G$.
\end{proof}

Using the Slice Theorem~\ref{slicethm} we prove the converse result, see Remark~\ref{re:isopropsame}.

\begin{theorem}\label{theorem-acaopropria-isometrica}
Let $\mu:G\times M\rightarrow M$ be a proper action. Then there exists a $G$--invariant metric of $M$ such that $\mu^G=\{\mu^g:g\in G\}$ is a closed subgroup of $\Iso(M)$.
\end{theorem}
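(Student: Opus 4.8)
The plan is to build a $G$-invariant metric on $M$ by patching together the local metrics furnished by the Slice Theorem via a partition of unity, and then to verify that $\mu^G$ sits inside $\Iso(M)$ as a \emph{closed} subgroup using the characterization of properness in Proposition~\ref{proposition-def-equivalente-propria} together with the Myers--Steenrod Theorem~\ref{theorem-Myers-Steenrod}. So the proof splits into two halves: existence of the invariant metric, and closedness of the image in the isometry group.

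\emph{Existence of a $G$-invariant metric.} For each $x_0 \in M$, the Slice Theorem~\ref{slicethm} gives a slice $S_{x_0}$, and the Tubular Neighborhood Theorem~\ref{theorem-tubularneighborhood} identifies the $G$-invariant open set $\tub(G(x_0)) = G(S_{x_0})$ with the twisted space $G \times_{G_{x_0}} S_{x_0}$. On the slice $S_{x_0}$ the isotropy group $G_{x_0}$ (which is compact, since the action is proper) acts; averaging an arbitrary metric on $S_{x_0}$ over $G_{x_0}$ with respect to a right--invariant volume form, exactly as in the proof of Proposition~\ref{cptbi} and in the proof of the Slice Theorem, produces a $G_{x_0}$-invariant metric on $S_{x_0}$. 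Using the principal bundle $(G, \rho, G/G_{x_0}, G_{x_0})$ and a connection on it, or more simply transporting the metric by the free action, one extends this to a $G$-invariant metric $h_{x_0}$ on the tube $\tub(G(x_0))$. Now cover $M$ by countably many such tubes $U_i = \tub(G(x_i))$, take a partition of unity $\{\rho_i\}$ subordinate to $\{U_i\}$, and replace each $\rho_i$ by its $G$-average $\tilde\rho_i(p) = \int_G \rho_i(\mu(g,p))\,\omega$ over\,---\,wait, $G$ need not be compact, so instead one uses that the cover $\{U_i\}$ can be chosen $G$-invariant and locally finite and sets $\langle\cdot,\cdot\rangle = \sum_i (\tilde\rho_i) \, h_i$ where $\tilde\rho_i$ is obtained by averaging $\rho_i$ over each orbit, which makes sense because only finitely many orbits meet any compact set and $\rho_i$ has support in the $G$-invariant set $U_i$; more carefully, one chooses the $\rho_i$ themselves to be $G$-invariant from the start by pulling back a partition of unity on the (locally compact Hausdorff) orbit space $M/G$ along $\rho \colon M \to M/G$. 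The resulting metric $\langle\cdot,\cdot\rangle$ is $G$-invariant, hence every $\mu^g$ is an isometry, i.e., $\mu^G \subset \Iso(M)$.

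\emph{Closedness in $\Iso(M)$.} Equip $\Iso(M)$ with the compact--open topology, in which it is a Lie group by the Myers--Steenrod Theorem~\ref{theorem-Myers-Steenrod}. Suppose a sequence $\{\mu^{g_n}\}$ in $\mu^G$ converges uniformly on compact sets to an isometry $f \in \Iso(M)$. Fix any $x \in M$; then $\mu(g_n, x) = \mu^{g_n}(x) \to f(x)$, and taking the constant sequence $x_n = x$ in Proposition~\ref{proposition-def-equivalente-propria} (valid since the action is proper), we obtain a subsequence $g_{n_k} \to g \in G$. By continuity of $\mu$, $\mu^{g_{n_k}}(y) \to \mu^g(y)$ for every $y$, while also $\mu^{g_{n_k}}(y) \to f(y)$; hence $f = \mu^g \in \mu^G$. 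Therefore $\mu^G$ is closed in $\Iso(M)$, completing the proof.

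The main obstacle is the $G$-invariant partition of unity in the first half: because $G$ is noncompact one cannot simply average a function over $G$, so one must either pull back a partition of unity from the orbit space $M/G$ (which is paracompact Hausdorff once one knows $\tub(G(x_0)) \cong G\times_{G_{x_0}} S_{x_0}$, since the tubes descend to a cover of $M/G$ by sets homeomorphic to $S_{x_0}/G_{x_0}$) or argue directly that the orbit-averaging operation $\rho_i \mapsto \tilde\rho_i$ is well-defined and smooth using local finiteness of a $G$-invariant cover; I would take the orbit-space route, as it is cleanest and makes the $G$-invariance manifest. The gluing of the slice metrics across overlapping tubes, and checking smoothness of the extension from slice to tube via the associated-bundle structure of Theorem~\ref{theorem-fibradoassociado}, are routine but should be stated. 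Everything in the second half is a direct application of the already-proved properness criterion and Myers--Steenrod.
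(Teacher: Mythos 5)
Your proof follows the paper's approach closely: construct local $G$-invariant metrics on the tubes by averaging over the compact isotropy group at a slice and extending by $G$-equivariance, glue them with a $G$-invariant partition of unity obtained from the paracompactness of $M/G$ (the paper invokes a result of Palais on $G$-invariant partitions of unity subordinate to $G$-invariant covers, which is equivalent to your pullback from the orbit space), and verify closedness in $\Iso(M)$ using the sequential characterization of properness together with Myers--Steenrod, exactly as in the text. One point worth tightening in the first half: what should be averaged over $G_{x_\alpha}$ is an arbitrary ambient metric on the full tangent bundle $TM|_{S_{x_\alpha}}$ restricted over the slice, not merely the induced metric on the slice as a submanifold --- with a $G_{x_\alpha}$-invariant inner product on $T_pM$ for every $p\in S_{x_\alpha}$ in hand, the extension to the tube by $\langle\dd\mu^{g}X,\dd\mu^{g}Y\rangle_{\mu(g,p)}:=\langle X,Y\rangle_{p}$ is immediate and well-defined, and needs no principal connection or bi-invariant metric on $G$; by contrast, ``transporting the metric by the free action'' does not directly apply since the $G$-action on the tube is not free when $G_{x_\alpha}\neq\{e\}$, although the connection-based alternative you mention would also work at the cost of extra structure.
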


\begin{proof}
We start by recalling the following result due to Palais~\cite{Palais-partition}.

\begin{claim}\label{cl:palais}
If $U_\alpha$ is a locally finite open cover of $G$--invariant open sets, then there exists a smooth partition of unity $\{f_{\alpha}\}$ subordinate to $U_\alpha$ such that each $f_\alpha$ is $G$--invariant.
\end{claim}

We want to find a $G$--invariant locally finite open cover $\{U_\alpha\}$ of $M$ such that $U_{\alpha}=G(S_{x_{\alpha}})$, where $S_{x_{\alpha}}$ is a slice at a point $x_\alpha$. To this aim, note that $M/G$ is paracompact, i.e., an arbitrary open covering of $M/G$ has a locally finite refinement. This follows from a classic topology result that asserts that a Hausdorff locally compact space is paracompact if it is a union of countably many compact spaces.

Since $M/G$ is paracompact, there exists a locally finite open covering $\{\pi(S_{x_{\alpha}})\}$ of $M/G$. We then set $U_{\alpha}=\pi^{-1}(\pi(S_{x_{\alpha}}))$. Let $\{f_{\alpha}\}$ be a $G$--invariant partition of unit subordinate to $U_\alpha$ (see Claim~\ref{cl:palais}).

Define an orthogonal structure in $TM|_{S_{x_{\alpha}}}$ by $$\langle X, Y\rangle^{\alpha}_{p}=\int_{G_{x_{\alpha}}} b(\dd\mu^{g}X,\dd\mu^{g}Y)_{\mu(g,p)}\, \omega,$$ where $\omega$ is a right invariant volume form of $G_{x_{\alpha}}$ and $b$ is an arbitrary metric. It is not difficult to see that this orthogonal structure is $G_{x_{\alpha}}$--invariant.

We now define a $G$--invariant metric on $U_\alpha$ by $$\langle \dd \mu^{g}X, \dd\mu^{g}Y \rangle^{\alpha}_{\mu(g,p)}=\langle X, Y\rangle^{\alpha}_{p}.$$ This metric is well--defined since $\langle\cdot,\cdot\rangle^{\alpha}|_{S_{x_{\alpha}}}$ is $G_{x_{\alpha}}$--invariant. Finally, we define the desired metric by $$\langle\cdot,\cdot\rangle=\sum_{\alpha}f_{\alpha}\langle \cdot, \cdot\rangle^{\alpha}.$$

To conclude the proof, we must verify that $\mu^{G}$ is a closed subgroup of $\Iso(M)$. Assume that $\mu^{g_{n}}$ converges uniformly in each compact of $M$ to an isometry $f:M\rightarrow M$. Thus, for each point $x\in M$ we have that $\lim \mu(g_{n},x)=f(x)$. Hence, properness of the action implies that a subsequence $\{g_{n_{i}}\}$ converges to $g\in G$. It is not difficult to check that if $\lim g_{n_{i}}=g$, then $\mu^{g_{n_{i}}}$ converges uniformly in each compact of $M$ to $\mu^{g}$. Therefore $\mu^{g}=f$, concluding the proof.
\end{proof}

\begin{remark}\label{re:isopropsame}
If the action $\mu:G\times M\rightarrow M$ is effective, the above theorem implies that we can identify $G$ with a closed subgroup of $\Iso(M)$ for a particular metric. In this sense, the theorem above is a converse to Proposition~\ref{proposition-acaoisometrica-eh-propria}. Thus, proper effective actions and actions of closed subgroups of isometries are essentially the same topic.
\end{remark}

We now present some results about a particular type of orbits, the so--called principal orbits.

\begin{definition}
Let $\mu:G\times M\rightarrow M$ be a proper action. Then $G(x)$ is a {\it principal orbit}\index{Orbit!principal} if there exists a neighborhood $V$ of $x$ in $M$ such that for each $y\in V$, $G_x\subset G_{\mu(g,y)}$ for some $g\in G$.
\end{definition}

In rough terms, principal orbits are the ones that have the smallest isotropy group (or the largest \emph{type}, see Definition~\ref{def-type}) among the nearby orbits.

\begin{proposition}\label{proposition-equivalencia-orbitasprincipais}
Let $\mu:G\times M\rightarrow M$ be a proper left action. Then the following are equivalent.
\begin{itemize}
\item[(i)] $G(x)$ is a principal orbit;
\item[(ii)] Consider $S_x$ a slice at $x$. For each $y\in S_{x}$, $G_{y}=G_{x}$.
\end{itemize}
\end{proposition}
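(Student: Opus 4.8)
The plan is to lean on the Slice Theorem~\ref{slicethm} and the Tubular Neighborhood Theorem~\ref{theorem-tubularneighborhood}, after recording two elementary facts. For any slice $S_{x}$ at $x$ and any $y\in S_{x}$ one has $G_{y}\subseteq G_{x}$: indeed, if $h\in G_{y}$ then $\mu(h,y)=y\in S_{x}$ while $y\in S_{x}$, so item~(iii) of Definition~\ref{definition-slice} yields $h\in G_{x}$. Secondly, by Exercise~\ref{ex-conjisotropy} one has $G_{\mu(g,y)}=gG_{y}g^{-1}$ for all $g\in G$ and $y\in M$.

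For (ii)$\Rightarrow$(i), assuming $G_{y}=G_{x}$ for every $y\in S_{x}$, I would take $V=\tub(G(x))=\mu(G,S_{x})$, which the Tubular Neighborhood Theorem~\ref{theorem-tubularneighborhood} presents as an open neighborhood of the orbit $G(x)$, in particular of $x$. For $z\in V$ write $z=\mu(g,y)$ with $g\in G$ and $y\in S_{x}$; then $\mu(g^{-1},z)=y$, so $G_{\mu(g^{-1},z)}=G_{y}=G_{x}$, and in particular $G_{x}\subseteq G_{\mu(g^{-1},z)}$. This is precisely the defining property of a principal orbit, so $G(x)$ is principal.

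For (i)$\Rightarrow$(ii), given that $G(x)$ is principal, I would fix a neighborhood $V$ of $x$ as in the definition. Since the Slice Theorem produces $S_{x}$ as $\exp_{x}$ of a small ball in the normal space $\nu_{x}G(x)$, I may shrink it so that $S_{x}\subseteq V$. For $y\in S_{x}$, principality gives $g\in G$ with $G_{x}\subseteq G_{\mu(g,y)}=gG_{y}g^{-1}$, while the first observation gives $G_{y}\subseteq G_{x}$; hence
\[
G_{y}\subseteq G_{x}\subseteq gG_{y}g^{-1}.
\]
Because the action is proper, $G_{y}$ is compact, and $gG_{y}g^{-1}$, being conjugate to it, is a compact Lie group with the same dimension and the same (finite) number of connected components. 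I would then use that a compact Lie group cannot properly contain a compact subgroup having both of these invariants equal to its own: the identity components coincide by uniqueness of the connected Lie subgroup with a prescribed Lie algebra (Theorem~\ref{integralsubgroup}), after which the two finite component groups coincide by counting. Thus $G_{y}=gG_{y}g^{-1}$, and the chain of inclusions collapses to $G_{x}=G_{y}$.

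The step I expect to be the main obstacle is this last Lie-theoretic squeeze: the inclusion $G_{y}\subseteq gG_{y}g^{-1}$ is not an equality in general (for instance $\SO(2)\subsetneq\O(2)$), so one must genuinely exploit that conjugation preserves dimension and component count. A secondary subtlety is that statement (ii) refers to ``a slice'', whereas the argument above handles only a sufficiently small one; to upgrade it to an arbitrary connected slice $S_{x}$ one would show that $A=\{y\in S_{x}:G_{y}=G_{x}\}$ is closed in $S_{x}$ — it equals $\bigcap_{h\in G_{x}}\{y\in S_{x}:\mu(h,y)=y\}$, an intersection of closed sets — and open in $S_{x}$, the latter by repeating the squeeze argument inside a small slice around each point of $A$ and transporting the conclusion through the Tubular Neighborhood Theorem~\ref{theorem-tubularneighborhood}, and then conclude $A=S_{x}$ by connectedness.
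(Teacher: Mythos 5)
Your proof is correct and follows essentially the same path as the paper's: the (ii)$\Rightarrow$(i) direction via the tubular neighborhood is identical, and the (i)$\Rightarrow$(ii) direction is the same squeeze argument, only you sandwich $G_x$ between $G_y$ and $gG_yg^{-1}$ where the paper sandwiches $G_y$ between $gG_xg^{-1}$ and $G_x$, with both chains collapsing by the fact that a compact Lie group cannot properly contain a conjugate of itself. You spell out that compactness step (and the arbitrary-slice subtlety) more carefully than the paper, which dismisses the squeeze with ``Since $G_x$ is compact, we conclude that inclusions above are equalities'' and does not comment on the size of the slice, so these are refinements of the same argument rather than a different route.
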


\begin{proof}
We will prove both implications separately. First, let us prove that (i) implies (ii). From the definition of slice, for each $y\in S_{x}$ we have $G_{y}\subset G_{x}$. On the other hand, since $G(x)$ is a principal orbit, there exists $g$ such that $gG_{x}g^{-1}\subset G_{y}$. Therefore $gG_{x}g^{-1}\subset G_{y}\subset G_{x}.$ Since $G_x$ is compact, we conclude that inclusions above are equalities. In particular, $G_{y}=G_{x}$.

To prove the converse, we have to prove that for $z$ in a tubular neighborhood of $G(x)$, there exists $g$ such that $G_{x}\subset g G_{z}g^{-1}$. From the Tubular Neighborhood Theorem~\ref{theorem-tubularneighborhood}, the orbit $G(z)$ intersects the slice $S_x$ at least at one point, say $y$. Since $y$ and $z$ are in the same orbit, there exists $g$ such that $G_{y}=gG_{z}g^{-1}$. Therefore $G_{x}=G_{y}=gG_{z}g^{-1}$.
\end{proof}

\begin{remark}
As we will see in Chapter~\ref{chap5}, the above proposition implies that each principal orbit is a leaf with trivial holonomy of the foliation $\F=\{G(x)\}_{x\in M}$.
\end{remark}

\begin{exercise}\label{exercise-induce-effectiveaction}
Let $\mu:G\times M\rightarrow M$ be a proper action. Set $H=\bigcap_{x\in M} G_{x}$ and consider the induced effective action $\widetilde{\mu}:G/H\times M\rightarrow M$. Prove that $\widetilde{\mu}$ is smooth and proper. Check that if an orbit $G(x)$ is principal with respect to $\widetilde{\mu}$, then it is also principal with respect to $\mu$.
\end{exercise}

Let $\mu:G\times M\rightarrow M$ be a proper isometric action and $\nu_{x}G(x)$ the normal space of $G(x)$ at $x$. It is not difficult to prove that the image by the exponential map at $x$ restricted to an open subset of $\nu_{x}G(x)$ is a slice $S_{x}$ at $x$. In other words, there exists $\varepsilon>0$ such that $$S_{x}=\{\exp_{x}(\xi):\xi\in\nu_{x}G(x),\|\xi\|<\varepsilon\},$$ compare with Example~\ref{ex-action-slice-isotropy}. The slice constructed in this way will be called \emph{normal slice at}\index{Slice!normal}\index{Normal slice} $x$.

\begin{definition}\label{slicerepresentation}
Let $\mu:G\times M\rightarrow M$ be an isometric proper action and $S_x$ a normal slice at $x$. The \emph{slice representation} of $G_x$ in $S_x$ is defined by $$G_{x}\ni g\longmapsto \dd\mu^{g}|_{S_{x}}\in\O(\nu_{x}G(x)).$$
\end{definition}

\begin{exercise}\label{exercise-representacaoisotropica}
Let $\mu:G\times M\rightarrow M$ be an isometric proper action and $S_x$ a normal slice at $x$. Prove that the statements below are equivalent.
\begin{itemize}
\item[(i)] $G(x)$ is a principal orbit;
\item[(ii)] The slice representation is trivial.
\end{itemize}
\end{exercise}

Let us explore some geometric properties of orbits of isometric actions.

\begin{proposition}\label{prop-equivariant-field}
Let $\mu:G\times M\rightarrow M$ be an isometric proper action and $G(x)$ a principal orbit. Then the following hold.
\begin{itemize}
\item[(i)] A geodesic $\gamma$ orthogonal to an orbit $G(\gamma(0))$ remains orthogonal to all  orbits it intersects.
\item[(ii)] Given $\xi\in\nu_xG(x)$, $\widehat{\xi}_{\mu(g,x)}=\dd(\mu^{g})_{x}\xi$ is a well--defined normal vector field along $G(x),$ called  \emph{equivariant normal field};\index{Equivariant!normal field}
\end{itemize}
For the following, let $\widehat{\xi}$ be an equivariant normal field.
\begin{itemize}
\item[(iii)] $\mathcal S_{\widehat{\xi}_{\mu(g,x)}}=\dd\mu^{g} \mathcal S_{\widehat{\xi}_{x}} \dd\mu^{g^{-1}}$, where $\mathcal S_{\widehat{\xi}}$ is the shape operator of $G(x)$;
\item[(iv)] Principal curvatures of $G(x)$ along an equivariant normal field $\widehat{\xi}$ are constant;
\item[(v)] $\{\exp(\widehat{\xi}_y):y\in G(x)\}$ is an orbit of $\mu$.
%\item[(v)] A geodesic $\gamma$ orthogonal to an orbit $G(\gamma(0))$ remains orthogonal to all  orbits it intersects.
\end{itemize}
\end{proposition}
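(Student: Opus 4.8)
The five assertions are really a chain: (ii) defines the object, (i) is a self‑contained geodesic fact, (iii)–(iv) are computations using the isometric action, and (v) uses (i) together with completeness of the orbit. I would prove them roughly in the order (ii), (i), (iii), (iv), (v), although (i) does not logically depend on the others.

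\textbf{On (ii).} Given $\xi\in\nu_xG(x)$, one must check that $\widehat\xi_{\mu(g,x)}=\dd(\mu^g)_x\xi$ does not depend on the choice of $g$ with $\mu(g,x)=\mu(g',x)$, i.e.\ on $g$ modulo $G_x$. If $\mu(g,x)=\mu(g',x)$ then $h:=g^{-1}g'\in G_x$, and since $G(x)$ is principal, Exercise~\ref{exercise-representacaoisotropica} gives that the slice representation of $G_x$ is trivial, so $\dd(\mu^h)_x\xi=\xi$; applying $\dd(\mu^g)_x$ yields $\dd(\mu^{g'})_x\xi=\dd(\mu^g)_x\xi$. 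Smoothness of $\widehat\xi$ follows from smoothness of $\mu$ together with the local triviality of $(G,\rho,G/G_x,G_x)$ (Corollary~\ref{cor-quocienteLieGroup}), and $\widehat\xi_{\mu(g,x)}\in\nu_{\mu(g,x)}G(x)$ because $\mu^g$ is an isometry carrying $G(x)$ to itself, hence carrying $\nu_xG(x)$ isometrically onto $\nu_{\mu(g,x)}G(x)$.

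\textbf{On (i).} Let $\gamma$ be a geodesic with $\gamma'(0)\perp T_{\gamma(0)}G(\gamma(0))$. For each $\eta\in\mathfrak g$ the Killing field $X^\eta$ of Proposition~\ref{proposition-campoXxi} is everywhere tangent to the orbits, so it suffices to show $t\mapsto g(\gamma'(t),X^\eta(\gamma(t)))$ is constant. Differentiating along $\gamma$ and using that $\gamma$ is a geodesic ($\frac{\mathrm D}{\mathrm dt}\gamma'=0$) gives $\frac{\mathrm d}{\mathrm dt}g(\gamma',X^\eta)=g(\gamma',\nabla_{\gamma'}X^\eta)$, and this vanishes because $X^\eta$ is Killing: by Proposition~\ref{proposition-eq-Killing}, $g(\nabla_{\gamma'}X^\eta,\gamma')=-g(\nabla_{\gamma'}X^\eta,\gamma')$, hence $g(\nabla_{\gamma'}X^\eta,\gamma')=0$. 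Since $T_{\gamma(t)}G(\gamma(t))$ is spanned by such $X^\eta(\gamma(t))$ (Remark~\ref{remark-proposition-campoXxi}), orthogonality is preserved.

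\textbf{On (iii)--(v).} For (iii): since $\mu^g$ is an isometry with $\mu^g(G(x))=G(x)$, it intertwines the second fundamental forms; writing the shape operator via $\mathcal S_\xi(X)=(-\nabla_X\widetilde\xi)^\top$ and using that $\dd\mu^g$ maps the equivariant field $\widehat\xi$ along $G(x)$ to itself by its very definition, one gets $\mathcal S_{\widehat\xi_{\mu(g,x)}}\circ\dd\mu^g=\dd\mu^g\circ\mathcal S_{\widehat\xi_x}$ on $T_xG(x)$, which rearranges to the stated conjugation formula. For (iv): a conjugation does not change eigenvalues, so the principal curvatures at $\mu(g,x)$ along $\widehat\xi$ equal those at $x$; since $G$ acts transitively on $G(x)$, they are constant along the orbit. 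For (v): define $c(g)=\exp_{\mu(g,x)}(\widehat\xi_{\mu(g,x)})$. Using naturality of $\exp$ under isometries, $\exp_{\mu(g,x)}(\dd\mu^g\xi)=\mu^g(\exp_x(\xi))$, so $c(g)=\mu(g,\exp_x(\xi))$; hence $\{\exp(\widehat\xi_y):y\in G(x)\}=G(\exp_x\xi)$ is exactly one $G$–orbit. (One needs $\xi$, equivalently $\widehat\xi$, small enough that $\exp$ is defined on it; this is automatic on the normal slice, and by (i) the resulting points lie on a single orbit, consistently with this computation.)

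\textbf{Main obstacle.} The only genuinely delicate point is (ii): making precise that principality of $G(x)$ forces the slice representation of $G_x$ on $\nu_xG(x)$ to be trivial, so that $\widehat\xi$ is well defined. Everything else is either a one‑line Killing‑field computation (for (i)) or a formal consequence of $\mu^g$ being an isometry preserving the orbit together with naturality of $\exp$ (for (iii)–(v)). I would therefore spend the bulk of the write‑up justifying well‑definedness and smoothness in (ii), and keep (i) and (iii)–(v) brief.
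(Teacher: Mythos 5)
Your proof is correct and follows essentially the same approach as the paper's: Killing fields together with Proposition~\ref{proposition-eq-Killing} for (i), triviality of the slice representation for (ii), the fact that the isometry $\mu^g$ intertwines shape operators for (iii)--(iv), and naturality of the Riemannian exponential map, $\exp_{\mu(g,x)}(\widehat{\xi}_{\mu(g,x)})=\mu^g(\exp_x\xi)$, for (v). The paper treats (ii) more tersely by simply citing Exercise~\ref{exercise-representacaoisotropica}, which you spell out explicitly (well-definedness modulo $G_x$, smoothness, normality), but this is unpacking, not a different route.
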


\begin{proof}
In order to prove item (i), by Proposition~\ref{proposition-campoXxi} and Remark~\ref{remark-proposition-campoXxi}, it suffices to prove that if a Killing vector field $X$ is orthogonal to $\gamma'(0)$, then $X$ is orthogonal to $\gamma'(t)$ for all $t$. Since $\langle\nabla_{\gamma'(t)}X,\gamma'(t)\rangle=0$ (see Proposition~\ref{proposition-eq-Killing}) and $\gamma$ is a geodesic, we have that $\frac{\mathrm d}{\mathrm dt}\langle X, \gamma'(t)\rangle=0$. Therefore $X$ is always orthogonal to $\gamma'(t)$.
Item (ii) follows from Exercise~\ref{exercise-representacaoisotropica}.

Item (iii) follows from
\begin{eqnarray*}
\langle \dd\mu^{g^{-1}}\mathcal S_{\widehat{\xi}_{\mu(g,x)}}\dd\mu^{g}W ,Z\rangle_{x} &=& \langle\mathcal S_{\widehat{\xi}_{\mu(g,x)}}\dd\mu^{g}W ,\dd\mu^{g} Z\rangle_{\mu(g,x)}\\
&=&\langle -\nabla_{\dd\mu^{g}W}\dd(\mu^{g})_{x}\xi, \dd\mu^{g}Z\rangle_{\mu(g,x)}\\
 &=&\langle -\nabla_{W}\widehat{\xi},Z\rangle_{x}\\
 &=&\langle \mathcal S_{\widehat{\xi}_{x}} W, Z\rangle_{x}.
\end{eqnarray*}

As for (iv), note that if $\mathcal S_{\widehat{\xi}}X=\lambda X$, then $\dd\mu^{g^{-1}}\mathcal S_{\widehat{\xi}_{\mu(g,x)}}\dd\mu^{g}X=\lambda X$. Hence $\mathcal S_{\widehat{\xi}_{\mu(g,x)}}\dd\mu^{g}X=\lambda \dd\mu^{g} X$. Finally, item (v) follows from
\begin{eqnarray*}
\exp_{\mu(g,x)}(\widehat{\xi}_{\mu(g,x)})&=&\exp_{\mu(g,x)}(\dd\mu^{g}\xi_{x})\\
&=&\mu^{g}\exp_{x}(\xi).\qedhere
\end{eqnarray*}
%Finally, to prove item (v) it suffices to prove that if a Killing vector field $X$ is orthogonal to $\gamma'(0)$, then $X$ is orthogonal to $\gamma'(t)$, for all %$t$. Since $\langle\nabla_{\gamma'(t)}X,\gamma'(t)\rangle=0$ (see Proposition~\ref{proposition-eq-Killing}) and $\gamma$ is a geodesic, we have that %$\frac{\mathrm d}{\mathrm dt}\langle X, \gamma'(t)\rangle=0$. Therefore $X$ is always orthogonal to $\gamma'(t)$.
\end{proof}

\begin{remark}\label{remarkequivariantfield}
The above proposition illustrates a few concepts and results of Chapters~\ref{chap4} and~\ref{chap5}. Item (i) implies that the foliation defined by the partition by orbits of a proper isometric action is a \emph{singular Riemannian foliation} (see Definition~\ref{defsrfs}). Item (v) implies that one can reconstruct the partition by orbits of an action taking all parallel submanifolds to a principal orbit. This is a consequence of \emph{equifocality}, which is valid for every singular Riemannian foliation. Item (iv) and the fact that equivariant normal fields are parallel normal fields when the action is \emph{polar} imply that principal orbits of a polar action on Euclidean space are \emph{isoparametric} (see Definitions~\ref{definitionPolarAction} and~\ref{definition-isoparametric}).
\end{remark}

We conclude this section proving the so--called Principal Orbit Theorem, that among others asserts that the subset of points on principal orbits is open and dense.

\begin{pothm}\label{teo-orbita-principal}
Let $\mu:G\times M\rightarrow M$ be a proper action, where $M$ is connected, and denote by $M_{\mathrm{princ}}$\index{$M_\mathrm{princ}$} the set of points of $M$ contained in principal orbits. Then the following hold.
\begin{itemize}
\item[(i)] $M_{\mathrm{princ}}$ is open and dense in $M$;
\item[(ii)] $M_{\mathrm{princ}}/G$ is a connected embedded submanifold of $M/G$;
\item[(iii)] Let $G(x)$ and $G(y)$ be two principal orbits. Then there exists $g\in G$ such that $G_x=gG_yg^{-1}$.
\end{itemize}
\end{pothm}

\begin{proof}
We first prove existence of a principal orbit. Since $G$ has finite dimension and isotropy groups are compact, we can choose $x\in M$ such that $G_{x}$ has the smallest dimension among isotropy groups and, for that dimension, the smallest number of components. Let $S_{x}$ be a slice at $x$. The definition of slice implies that, for each $y\in S_{x},$ we have $G_{y}\subset G_{x}$. By construction, we conclude that $G_{y}=G_{x}$. Hence, from Proposition~\ref{proposition-equivalencia-orbitasprincipais}, $G(x)$ is a principal orbit.

In order to prove that $M_{\mathrm{princ}}$ is open, consider a point $x\in M_{\mathrm{princ}}$. Proposition~\ref{proposition-equivalencia-orbitasprincipais} implies that, for each $y\in S_{x}$ we have $G_{y}=G_{x}$. We claim that each $y\in S_{x}$ belongs to a principal orbit. Indeed, if a point $z$ is close to $y$, then $z$ is in a tubular neighborhood of $G(x)$. From the Tubular Neighborhood Theorem~\ref{theorem-tubularneighborhood}, the orbit of $z$ intersect $S_{x}$ at least in one point. Hence, there exists $g$ such that $\mu(g,z)=w\in S_{x}$. Therefore $G_{y}=G_{x}=G_{w}=gG_{z}g^{-1}.$ This implies that each point $y\in S_x$ belongs to a principal orbit. Thus, once more from the Tubular Neighborhood Theorem~\ref{theorem-tubularneighborhood}, each point in the tubular neighborhood of $G(x)$ belongs to a principal orbit.

To prove that $M_{\mathrm{princ}}$ is dense, consider $p\notin M_{\mathrm{princ}}$ and $V$ a neighborhood of $p$. Again, choose a point $x\in V\cap G(S_{p})$ such that $G_{x}$ has the smallest dimension among isotropy groups and, for that dimension, the smallest number of components. Then from the argument above we conclude that $x\in M_{\mathrm{princ}}$.

Proposition~\ref{proposition-equivalencia-orbitasprincipais} and some arguments from Theorem~\ref{theorem-acaoproprialivre-eh-fibrado} can be used to prove that $M_{\mathrm{princ}}/G$ is a manifold. Item (iii) is a consequence of item (ii). We will only prove that $M_{\mathrm{princ}}/G$ is connected. To this aim, assume that the action is proper and isometric. We will say that a set $K\subset M/G$ \emph{does not locally disconnect} $M/G$ if for each $p\in M/G$ we can find a neighborhood $U$ such that 
$U\setminus K$ is path--connected. Using (i), it is not difficult to verify the following.

\begin{claim}
If $(M\setminus M_{\mathrm{princ}})/G$ does not locally disconnect $M/G$, then $M_{\mathrm{princ}}/G$ is path--connected.
\end{claim}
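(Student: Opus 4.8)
The plan is to deduce the claim from standard point--set topology, using that the orbit space $M/G$ is connected and locally path--connected and that the principal points form an open dense saturated set. I would begin by recording the ambient facts. Since $M$ is connected and the canonical projection $\rho\colon M\to M/G$ is continuous and surjective, $M/G$ is connected; moreover, by the Tubular Neighborhood Theorem~\ref{theorem-tubularneighborhood} — applied with a $G$--invariant metric furnished by Theorem~\ref{theorem-acaopropria-isometrica} — every point $\rho(x)$ has a neighborhood homeomorphic to $S_x/G_x$, a quotient of a ball by the compact group $G_x$, and such a quotient is locally path--connected. Hence $M/G$ is locally path--connected, and being connected it is path--connected. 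Next, the property of being a principal orbit is invariant along orbits, so $M_{\mathrm{princ}}$ is $G$--saturated; since $\rho$ is an open map, $M_{\mathrm{princ}}/G=\rho(M_{\mathrm{princ}})$ is open in $M/G$, its complement equals $Z:=(M\setminus M_{\mathrm{princ}})/G$, and by density of $M_{\mathrm{princ}}$ in $M$ (item (i)) together with continuity and surjectivity of $\rho$ the set $M_{\mathrm{princ}}/G$ is dense in $M/G$; in particular $M_{\mathrm{princ}}/G=(M/G)\setminus Z$.

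Now fix $p,q\in M_{\mathrm{princ}}/G$ and, using path--connectedness of $M/G$, choose a path $\gamma\colon[0,1]\to M/G$ with $\gamma(0)=p$ and $\gamma(1)=q$. By the hypothesis, each point of $M/G$ has an open neighborhood $U$ with $U\setminus Z$ path--connected; pulling these neighborhoods back along $\gamma$, using compactness of $[0,1]$ and a Lebesgue number, I would produce a subdivision $0=t_0<t_1<\dots<t_n=1$ and open sets $U_1,\dots,U_n\subseteq M/G$ such that $\gamma([t_{i-1},t_i])\subseteq U_i$ and each $U_i\setminus Z$ is path--connected.

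Finally I would assemble the path piecewise. Put $p_0:=p$ and $p_n:=q$; for $1\le i\le n-1$ the set $U_i\cap U_{i+1}$ is open and nonempty (it contains $\gamma(t_i)$), so by density of $M_{\mathrm{princ}}/G$ I may choose $p_i\in(U_i\cap U_{i+1})\cap(M_{\mathrm{princ}}/G)$. For each $1\le i\le n$, both $p_{i-1}$ and $p_i$ lie in $U_i$ and in $M_{\mathrm{princ}}/G=(M/G)\setminus Z$, hence both lie in the path--connected set $U_i\setminus Z\subseteq M_{\mathrm{princ}}/G$; joining them there and concatenating over $i$ yields a path in $M_{\mathrm{princ}}/G$ from $p$ to $q$. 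Therefore $M_{\mathrm{princ}}/G$ is path--connected, proving the claim.

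The argument is routine once the reductions are in place; the only delicate points are the local path--connectedness of $M/G$ (needed to pass from connected to path--connected) and the bookkeeping that $M_{\mathrm{princ}}/G$ is open, dense, and has complement exactly $Z$, all of which rest on $\rho$ being an open map and on $M_{\mathrm{princ}}$ being a saturated open dense subset of $M$. I expect no substantive obstacle beyond assembling these facts.
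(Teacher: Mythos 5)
Your proof is correct and fills in the details the paper deliberately leaves to the reader (the text only says "using (i), it is not difficult to verify the following"). The chain argument along a path in $M/G$, together with density of $M_{\mathrm{princ}}/G$ to pick regular waypoints in the overlaps $U_i\cap U_{i+1}$, is exactly the kind of routine argument the authors have in mind; the key ingredient is indeed item (i) of the theorem, which you invoke correctly via the observations that $M_{\mathrm{princ}}$ is saturated, that $\rho$ is an open map, and that therefore $M_{\mathrm{princ}}/G$ is open, dense, and has complement exactly $Z$.

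One small simplification worth noting: to get local path--connectedness of $M/G$ you do not really need the Tubular Neighborhood Theorem or the $G$--invariant metric. Since $M$ is a manifold (hence locally path--connected) and $\rho\colon M\to M/G$ is a continuous \emph{open} surjection, $M/G$ is automatically locally path--connected: given $\rho(x)\in V$ open, pick a path--connected open $W\ni x$ with $W\subseteq\rho^{-1}(V)$; then $\rho(W)$ is an open, path--connected neighborhood of $\rho(x)$ inside $V$. This makes the step from "connected" to "path--connected" for $M/G$ entirely elementary. Everything else in your argument is precise as stated.
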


Therefore, to prove that $M_{\mathrm{princ}}/G$ is connected, it suffices to prove that $(M\setminus M_{\mathrm{princ}})/G$ does not locally disconnect $M/G$. This can be done using the fact that $S_{x}/G_{x}=G(S_{x})/G,$ the slice representation, Exercise~\ref{exercise-induce-effectiveaction} and the next claim.

\begin{claim}\label{cl:pothm2}
Let $K$ be a closed subgroup of $\O(n)$ acting in $\R^{n}$ by multiplication. Then $\R^{n}_{\mathrm{princ}}/K$ is path--connected.
\end{claim}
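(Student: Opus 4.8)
The plan is to prove this by induction on $n$, using that an orthogonal action commutes with dilations, so that $\R^{n}\setminus\{0\}$ is, $K$--equivariantly, a product $(0,\infty)\times S^{n-1}$ on which $K$ acts only on the sphere factor. For $n=0$ there is nothing to prove, and $n=1$ is immediate: $\O(1)=\{\pm1\}$, so either $K$ is trivial and $\R_{\mathrm{princ}}=\R$, or $K=\O(1)$ and $\R_{\mathrm{princ}}/K\cong(0,\infty)$; in both cases the quotient is path--connected. (Throughout, the $K$--action on $\R^{n}$, and on each sphere, is proper since $K\subset\O(n)$ is compact, and isometric since it is orthogonal, so the earlier results on isometric proper actions apply.)

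First I would dispose of the origin and of the trivial group. If $K=\{e\}$ then every orbit is principal and $\R^{n}_{\mathrm{princ}}/K=\R^{n}$ is path--connected; so assume $K\neq\{e\}$. I claim $0\notin\R^{n}_{\mathrm{princ}}$: since the action is linear, $K_{0}=K$; if $0$ were principal there would be a neighbourhood $V$ of $0$ with $gKg^{-1}\subset K_{y}$ for some $g$ and every $y\in V$, and since also $K_{y}\subset K_{0}=K$ while $gKg^{-1}$ has the same dimension and number of components as $K$, all these inclusions are equalities; hence $K$ fixes the open set $V$ pointwise, and being linear it must be trivial, a contradiction. Thus $\R^{n}_{\mathrm{princ}}\subset\R^{n}\setminus\{0\}$.

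The heart of the argument is to match principal points of $\R^{n}$ with principal points of $S^{n-1}$. Since $K$ preserves norms we have $K_{x}=K_{x/|x|}$ and the orbit $K(x)$ lies in the sphere $S_{|x|}$ of radius $|x|$; hence $T_{x}K(x)\subset x^{\perp}$, so $\nu_{x}K(x)=\R x\oplus W$ where $W:=\nu_{x}^{S_{|x|}}K(x)$ is the normal space of $K(x)$ inside $S_{|x|}$. The isotropy $K_{x}$ fixes $x$, so it acts trivially on $\R x$ and acts on $W$ precisely by the slice representation of the $K$--action on $S_{|x|}$. By Exercise~\ref{exercise-representacaoisotropica}, $K(x)$ is a principal orbit for the action on $\R^{n}$ if and only if this slice representation is trivial, if and only if $K(x)$ is a principal orbit for the action on $S_{|x|}$; applying the $K$--equivariant dilation $S_{|x|}\to S^{n-1}$ (which maps orbits to orbits), this holds if and only if $K(x/|x|)$ is a principal orbit for the $K$--action on $S^{n-1}$. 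Combined with the $K$--equivariant diffeomorphism $\R^{n}\setminus\{0\}\cong(0,\infty)\times S^{n-1}$, $x\mapsto(|x|,x/|x|)$, this yields
$$\R^{n}_{\mathrm{princ}}/K\;\cong\;(0,\infty)\times\bigl((S^{n-1})_{\mathrm{princ}}/K\bigr),$$
so it is enough to show that $(S^{n-1})_{\mathrm{princ}}/K$ is path--connected.

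Finally, for $n\geq2$ the sphere $S^{n-1}$ is a connected manifold of dimension $n-1$, so the Principal Orbit Theorem~\ref{teo-orbita-principal} gives that $(S^{n-1})_{\mathrm{princ}}/K$ is a connected manifold, hence path--connected; this closes the induction. I should stress that there is no circularity: the only non--elementary input to the Principal Orbit Theorem in dimension $n-1$ is the present statement for linear actions on $\R^{k}$ with $k=(n-1)-\dim K(x)\leq n-1<n$, which is available by the inductive hypothesis — so in effect the Claim and the connectedness assertion of the Principal Orbit Theorem are established together by induction on dimension. The main obstacle is therefore not a computation but organising this joint induction, and in particular verifying the normal--space splitting $\nu_{x}K(x)=\R x\oplus W$ together with the fact that the slice representation respects it, since that is exactly what lets the problem descend one dimension.
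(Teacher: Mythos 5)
Your proof is correct and follows essentially the same strategy as the paper: induct on $n$ with base case $n=1$, reduce radially to the $K$--action on the sphere $S^{n-1}$, and use the slice representation together with the inductive hypothesis to get path--connectedness of $(S^{n-1})_{\mathrm{princ}}/K$. Your $K$--equivariant product decomposition $\R^{n}_{\mathrm{princ}}/K\cong(0,\infty)\times\bigl((S^{n-1})_{\mathrm{princ}}/K\bigr)$ is a tidier packaging of the paper's piecewise argument (join $x$ radially to its projection $\widetilde{x}$ on the sphere through $y$, then connect $\widetilde{x}$ to $y$ inside the sphere), and you are more explicit about why invoking the Principal Orbit Theorem on $S^{n-1}$ does not introduce circularity, but the underlying ideas and the joint induction are the same.
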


In order to prove Claim~\ref{cl:pothm2}, we proceed by induction. If $n=1$, then $K=\Z_{2}$ or $K=1$. In both cases, $\R_{\mathrm{princ}}/K$ is path--connected. For each sphere $S^{n-1}$ centered at the origin, we can apply the induction hypothesis and the slice representation to conclude that $(S^{n-1}\setminus S^{n-1}_{\mathrm{princ}})/K$ does not locally disconnect $S^{n-1}/K.$ Therefore $S^{n-1}_{\mathrm{princ}}/K$ is path--connected. Now consider $x,y\in \R^{n}_{\mathrm{princ}}$ and set $\widetilde{x}$ the projection of $x$ in the sphere $S^{n-1}$ that contains $y$. Note that the points  in the line joining $x$ to $\widetilde{x}$ belong to principal orbits. Therefore the orbits $K(x)$ and $K(\widetilde{x})$ are connected by a path in $\R^{n}_{\mathrm{princ}}/K$. As we have already proved, $K(y)$ and $K(\widetilde{x})$ are path--connected in $S^{n-1}_{\mathrm{princ}}/K$ and hence in $\R^{n}_{\mathrm{princ}}/K$. Therefore $K(x)$ and $K(y)$ are connected by a path in $\R^{n}_{\mathrm{princ}}/K$, concluding the proof.
\end{proof}

\section{Orbit types}

In this section, we discuss some properties of orbit types of proper actions. For instance, we prove that on compact manifolds there exists only a finite number of these, see Theorem~\ref{theorem-n-finito-types-orbits}. In addition, it is also proved that each connected component of a set of orbits of same type is a connected component of the total space of a certain fiber bundle, see Theorem~\ref{theorem-mesmotipo-esptotalFibrado}. Finally, it is also proved that the connected components of a set of orbits of the same type gives a stratification of $M$, see Theorem~\ref{teo-stratification}.

\begin{definition}\label{def-type}
Let $\mu:G\times M\rightarrow M$ be a proper action.
\begin{itemize}
\item[(i)] Two orbits\footnote{It is also said that each pair of points $x'\in G(x)$ and $y'\in G(y)$ \emph{are of the same type}. See also Exercise~\ref{ex-conjisotropy}.} $G(x)$ and $G(y)$ are of the {\it same type}\index{Orbit!same type} if there exists $g\in G$ such that $G_x=G_{\mu(g,y)}$;
\item[(ii)] The orbit $G(x)$ has a type {\it larger}\index{Orbit!larger type} than $G(y)$ if there exists $g\in G$ such that $G_{x}\subset G_{\mu(g,y)}$;
\item[(iii)] An orbit $G(x)$ is said to be {\it regular}\index{Orbit!regular} if the dimension of $G(x)$ coincides with the dimension of principal orbits;
\item[(iv)] A non principal regular orbit is called {\it exceptional}\index{Orbit!exceptional};
\item[(v)] A non regular orbit is called {\it singular}.\index{Orbit!singular}
\end{itemize}
\end{definition}

Note that Theorem~\ref{teo-orbita-principal} asserts not only that $M_{\mathrm{princ}}$ is open and dense in $M$, but also that there exists a unique type of principal orbit.

\begin{exercise}\label{exercise-effectiveaction-type}
Let $\mu:G\times M\rightarrow M$ be a proper action. Set $H=\bigcap_{x\in M} G_{x}$ and consider the induced effective action $\widetilde{\mu}:G/H\times M\rightarrow M$. Verify that if $(G/H)(x)$ and $(G/H)(y)$ are of the same type with respect to $\widetilde{\mu}$, then $G(x)$ and $G(y)$ are of the same type with respect to $\mu$.
\end{exercise}

\begin{exercise}
Let $\mu:G\times M\to M$ be a proper action and $G(p)$ a principal orbit. Prove that $G(x)$ is an exceptional orbit if and only if $\dim G(x)=\dim G(p)$ and the number of connected components of $G_x$ is greater than the number of connected components of $G_p$. 
\end{exercise}

\begin{exercise}\label{ex-RP2-orbita-excepcional}
Consider the action $\widetilde{\mu}:S^{1}\times S^{2}\rightarrow S^{2}$ of the circle $S^1$ on the sphere $S^2$ by rotations around the $z$--axis. Verify that this action induces an action $\mu:S^{1}\times \R P^{2}\rightarrow \R P^{2}$ that has an exceptional orbit.
\end{exercise}

%\begin{exercise}
%Let $\mu:G\times M\to M$ be a proper action and $G(p)$ a principal orbit. Prove that $G(x)$ is an exceptional orbit if and only if $\dim G(x)=\dim G(p)$ and the %number of connected components of $G_x$ is greater than the number of connected components of $G_p$. 
%\end{exercise}

\begin{exercise}\label{ex-conjugacaoSU(3)}\index{$\SU(n)$}
Consider the action $\mu:\SU(3)\times\SU(3)\rightarrow \SU(3)$ by conjugation. Prove that orbits are diffeomorphic to one of the following manifolds.
\begin{itemize}
\item[(i)] $\{\lambda I:\lambda^{3}=1\}$, where $I\in\SU(3)$ is the identity;
\item[(ii)] $\SU(3)/T$, where $T$ is the group of diagonal complex matrices $(t_{ij})$, such that $t_{ii}=\pm1$ and $t_{11}\cdot t_{22}\cdot t_{33}=1$;
\item[(iii)] $\SU(3)/S(\U(2)\times \U(1))$.
\end{itemize}
Conclude, using Exercise~\ref{ex-diffeos} and Remark~\ref{remark-flagmanifold}, that orbits are diffeomorphic to a point, or a complex flag manifold or $\C P^2$.
 
\medskip
\noindent {\small \emph{Hint:} Use the fact that each matrix of $\SU(3)$ is conjugate to a matrix of $T$.}
\end{exercise}

\begin{remark}\label{remarkpolar}%% AONDE EXATAMENTE NO CHAP 5?? FAZER LINK PRECISO.
As we will see in the next chapter, Exercises~\ref{ex-RP2-orbita-excepcional} and~\ref{ex-conjugacaoSU(3)} give examples of polar actions. An isometric action is a \emph{polar action}\index{Action!polar} if for each regular point $x$, the set $\exp_{x}(\nu_{x}G(x))$ is a totally geodesic manifold that intersects every orbit orthogonally (for details, see Definition~\ref{definitionPolarAction}). In Exercise~\ref{ex-RP2-orbita-excepcional}, the polar action admits an exceptional orbit, and in Exercise~\ref{ex-conjugacaoSU(3)}, the polar action admits only principal and singular orbits. Note that $\R P^2$ is not simply connected and $\SU(3)$ is simply connected. As we will see in Chapter~\ref{chap5}, polar actions do not admit exceptional orbits if the ambient space is simply connected.
\end{remark}

\begin{remark}
We stress that there are several examples of isometric actions that are not polar. For instance, $\mu:S^{1}\times (\C\times\C)\rightarrow (\C\times\C)$ defined by $\mu(s,(z_{1},z_{2}))=(s\cdot z_{1}, s \cdot z_{2})$.
\end{remark}

\begin{theorem}\label{theorem-n-finito-types-orbits}
Let $\mu:G\times M\rightarrow M$ be a proper action. For each $x\in M$, there exists a slice $S_x$ such that the tubular neighborhood $\mu(G,S_{x})$ contains only finitely many different types of orbits. In particular, if $M$ is compact, there is only a finite number of different types of orbits in $M$.
\end{theorem}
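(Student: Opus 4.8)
The plan is to reduce the statement to a purely linear fact about orthogonal representations of compact Lie groups, and then prove that fact by induction on the dimension of the representation space. First, by Theorem~\ref{theorem-acaopropria-isometrica} one may assume $\mu$ is isometric, since this does not change orbit types; then for a given $x\in M$ one has the normal slice $S_x=\exp_x(B_\varepsilon(0))$ with $B_\varepsilon(0)\subset\nu_xG(x)$, and the $G_x$-action on $S_x$ is conjugate via $\exp_x$ to a linear orthogonal action of the compact group $G_x$ on the ball $B_\varepsilon(0)\subset\nu_xG(x)\cong\R^k$, $k=\dim M-\dim G(x)$. By the Tubular Neighborhood Theorem~\ref{theorem-tubularneighborhood}, $\mu(G,S_x)=G\times_{G_x}S_x$, and a direct computation gives that the isotropy of $[g,s]$ is $g\,(G_x)_s\,g^{-1}$ (here one uses the slice property that $h\cdot s\in S_x$ forces $h\in G_x$); hence the $G$-orbit type of $[g,s]$ is the conjugacy class in $G$ of $(G_x)_s$, which is determined by the $G_x$-orbit type of $s\in S_x$. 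Consequently the number of $G$-orbit types occurring in $\mu(G,S_x)$ is at most the number of $G_x$-orbit types occurring in $S_x$.

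Thus it suffices to prove the following lemma: a compact Lie group $K$ acting orthogonally on $\R^k$ has only finitely many orbit types. I would prove this by induction on $k$, the case $k=0$ being trivial. For $k\ge 1$, since the action is linear one has $K_{tv}=K_v$ for all $t>0$ and $v\ne 0$, so the orbit types of $K$ on $\R^k$ are the type of the origin together with the orbit types of $K$ on the unit sphere $S^{k-1}$. For each $p\in S^{k-1}$ take a slice $S_p\subset S^{k-1}$ (the action of $K$ on $S^{k-1}$ is proper); by the Tubular Neighborhood Theorem~\ref{theorem-tubularneighborhood}, $K(S_p)=K\times_{K_p}S_p$, and as above the number of $K$-orbit types in $K(S_p)$ is at most the number of $K_p$-orbit types in $S_p$. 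But $S_p$ is, after $\exp_p$, a ball in $\nu_pK(p)$, a vector space of dimension $(k-1)-\dim K(p)\le k-1<k$, on which the compact group $K_p$ acts orthogonally; by the induction hypothesis there are only finitely many $K_p$-orbit types there. Since $S^{k-1}$ is compact, it is covered by finitely many tubular neighborhoods $K(S_{p_1}),\dots,K(S_{p_r})$, each contributing finitely many orbit types, so $K$ on $S^{k-1}$, and hence on $\R^k$, has finitely many orbit types.

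Applying the lemma with $K=G_x$ acting on $\nu_xG(x)$ yields the first assertion of the theorem. For the final statement, if $M$ is compact then the open sets $\mu(G,S_x)$, $x\in M$, cover $M$, so finitely many of them suffice; as each contains only finitely many orbit types, $M$ contains only finitely many.

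The point that must be handled with care is the bookkeeping of the induction: the dimension of the representation space does drop when passing from $\R^k$ to a slice inside $S^{k-1}$, \emph{even though the isotropy group $K_p$ need not be smaller than $K$}, which is precisely why the induction must be on the dimension $k$ of the ambient Euclidean space rather than on $\dim K$ or $\dim M$. The other step requiring verification is the inequality ``number of $G$-orbit types in $G\times_{G_x}S_x$ $\le$ number of $G_x$-orbit types in $S_x$'', which rests on the isotropy computation $G_{[g,s]}=g\,(G_x)_s\,g^{-1}$ together with property (iii) of Definition~\ref{definition-slice}.
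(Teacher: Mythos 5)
Your proof is correct and follows essentially the same route as the paper's: reduce, via the slice representation and the Tubular Neighborhood Theorem, to the statement that a compact subgroup of $\O(n)$ acting linearly on $\R^n$ has finitely many orbit types, and then prove that by induction on $n$, passing to the unit sphere, covering it by finitely many tubular neighborhoods, and applying the induction hypothesis to the slice representations (which live in strictly lower dimension). The only differences are cosmetic: you spell out the isotropy computation $G_{[g,s]}=g(G_x)_s g^{-1}$, which the paper subsumes into its unproved Claim~\ref{cl:nfinito1}, and you anchor the induction at $k=0$ rather than $n=1$.
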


\begin{proof}
From Theorem~\ref{theorem-acaopropria-isometrica}, consider a metric on $M$ such that $\mu^G\subset\Iso(M)$. It is not difficult to check the following.

\begin{claim}\label{cl:nfinito1}
If $y,z\in S_{x}$ have the same $G_{x}$--orbit type, then $y$ and $z$ have the same $G$--orbit type.
\end{claim}

Therefore, it suffices to prove the result for the action of $G_{x}$ on $S_{x}$. Due to the slice representation (see Definition~\ref{slicerepresentation}) and Exercise~\ref{exercise-effectiveaction-type}, we can further reduce the problem to proving the result for an action of $K\subset \O(n)$ on $\R^{n}$ by multiplication.

To this aim, we proceed by induction on $n$. If $n=1$, then $K=\Z_{2}$ or $K=1$. In both cases there exists only a finite number of different types of orbits. For a sphere $S^{n-1}$, we can apply the induction hypothesis, Claim~\ref{cl:nfinito1} and the slice representation, to conclude that there exists a finite number of types of $K$--orbits. Since for each $p\in S^{n-1}$ the points in the segment joining the origin to $p$ (apart from the origin) have the same isotropy type, we conclude that there exists a finite number of types of $K$--orbits in $\R^{n}$.
\end{proof}

In what follows, we will prove that each connected component of a set of orbits of same type is a connected component of the total space of a certain fiber bundle. To this aim we present the following propositions (whose proofs can be found in Duistermaat and Kolk~\cite{duistermaat}) and fix some notations.

\begin{proposition}\label{proposition-conjuntofixo}
Let $\mu:G \times M \rightarrow M$ be a proper (isometric) action and $H\subset G$ a compact subgroup. Then the connected components of $$M^{H}=\{x\in M:\mu(h,x)=x, \mbox{ for all }h\in H\}$$ are (totally geodesic) submanifolds. In addition, $$T_{p}M^{H}=\{X\in T_{p}M, \dd\mu^{h}(p)X=X, \mbox{ for all } h\in H\}.$$
\end{proposition}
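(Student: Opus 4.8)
The plan is to reduce the statement to the well‑known fact that the fixed‑point set of a linear isometric action of a compact group is a linear subspace, and then transport this to $M$ via the exponential map, exploiting that the metric can be chosen $G$‑invariant (Theorem~\ref{theorem-acaopropria-isometrica}).

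First I would fix a $G$‑invariant Riemannian metric on $M$, which exists by Theorem~\ref{theorem-acaopropria-isometrica} (for the parenthetical ``totally geodesic'' claim this metric is needed; for the mere submanifold claim one can instead work with a metric making $H$, which is compact, act by isometries, averaging as in the proof of Proposition~\ref{cptbi}). Let $p\in M^{H}$. Since $H$ is compact and acts by isometries, $\dd\mu^{h}_{p}\colon T_pM\to T_pM$ is an orthogonal representation of $H$ on $T_pM$ (here $\mu^{h}(p)=p$ because $p\in M^{H}$, so $\dd\mu^h_p$ is indeed an endomorphism of $T_pM$). Set
\[
V=\{X\in T_pM : \dd\mu^{h}_{p}X=X \text{ for all } h\in H\},
\]
a linear subspace of $T_pM$, being an intersection of kernels of the linear maps $\dd\mu^h_p-\id$.

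Next I would use that $\exp_p$ is a local diffeomorphism from a neighborhood $V_0$ of $0$ in $T_pM$ onto a normal neighborhood $U$ of $p$, and that each isometry $\mu^h$ commutes with the exponential map in the sense $\mu^h\circ\exp_p=\exp_{\mu^h(p)}\circ\,\dd\mu^h_p=\exp_p\circ\,\dd\mu^h_p$ (Proposition~\ref{expcomm} is the Lie‑group analogue; here one invokes the Riemannian fact that isometries carry geodesics to geodesics, so they intertwine the Riemannian exponential maps). Consequently, for $X\in V_0$, one has $\mu^h(\exp_p X)=\exp_p X$ for all $h\in H$ if and only if $\dd\mu^h_p X=X$ for all $h\in H$, i.e.\ $M^H\cap U=\exp_p(V\cap V_0)$. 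Shrinking $V_0$ if necessary so that $\exp_p$ is a diffeomorphism onto $U$, this exhibits $M^H$ near $p$ as the diffeomorphic image of an open subset of the linear space $V$; hence the connected component of $p$ in $M^H$ is an embedded submanifold, and its tangent space at $p$ is $\dd(\exp_p)_0(V)=V$, which is exactly the asserted formula for $T_pM^{H}$.

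Finally, to get the ``totally geodesic'' refinement when the $G$‑invariant metric is used: a geodesic $\gamma$ of $M$ with $\gamma(0)=p$ and $\gamma'(0)\in V$ satisfies $\mu^h\circ\gamma=\gamma$ for all $h\in H$, since $\mu^h\gamma$ is the geodesic with initial data $(\mu^h(p),\dd\mu^h_p\gamma'(0))=(p,\gamma'(0))$ and geodesics are determined by initial data; hence $\gamma$ stays in $M^H$, so every geodesic of $M$ tangent to the component of $M^H$ through $p$ is contained in it, which is the standard characterization of a totally geodesic submanifold. The main obstacle, such as it is, is bookkeeping: making sure the exponential‑map neighborhood is chosen small enough that $\exp_p$ is a genuine diffeomorphism and that the identity $M^H\cap U=\exp_p(V\cap V_0)$ holds as stated, and being careful that $\dd\mu^h_p$ really is a self‑map of $T_pM$ precisely because $p$ is fixed by all of $H$; there is no deep difficulty beyond invoking the $G$‑invariant metric and the equivariance of $\exp$ under isometries.
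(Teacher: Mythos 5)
Your argument is correct and follows the same strategy the paper sketches: pass to a $G$-invariant (or at least $H$-invariant, obtained by averaging) metric, observe that the slice/isotropy representation of $H$ at a fixed point $p$ is an orthogonal linear action whose fixed set $V$ is a subspace, and transport this linear picture to $M$ via the Riemannian exponential map, which intertwines isometries with their differentials. The paper only outlines this ("one can prove the general result using the above observation, the isotropy representation, Theorem~\ref{theorem-acaopropria-isometrica} and a connectedness argument"), whereas you fill in the details — including the totally geodesic refinement via uniqueness of geodesics with given initial data. One small point worth making explicit: to conclude $M^H\cap U=\exp_p(V\cap V_0)$ you should take $V_0$ to be an $H$-invariant neighborhood of $0$ (e.g.\ a metric ball in $T_pM$ for the $H$-invariant inner product), so that $\dd\mu^h_p X\in V_0$ whenever $X\in V_0$ and the injectivity of $\exp_p|_{V_0}$ can be applied.
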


\begin{proof}
Let us sketch the main idea of the proof. Consider a closed subgroup $K\subset\O(n)$. Note that $(\R^{n})^{K}$ is a vector subspace of $\R^n$. In fact, if $k(x)=x$, then $k$ fixes the line segment that joins $x$ to the origin. It is easy to see that if $k$ fixes two vectors, then $k$ fixes the entire subspace generated by these vectors. At this point, one can prove the general result using the above observation, the isotropy representation $\dd\mu_{x}:G_{x}\times T_{x}M\to T_{x}M$, Theorem~\ref{theorem-acaopropria-isometrica} and a connectedness argument.
\end{proof}

We now study the concept of {\em local types}\footnote{See Remark~\ref{re:duistermaatfeztd}.} of orbits, which contains more geometric information on the group action than the {\em types} of orbits, given in Definition~\ref{def-type}.

\begin{definition}
Two orbits $G(x)$ and $G(y)$ of a proper action are of the \emph{same local type}\index{Orbit!same local type} if there exists a $G$--equivariant diffeomorphism $\varphi:G(S_{x})\rightarrow G(S_{y})$, for slices $S_x$ and $S_y$.
\end{definition}

\begin{proposition}\label{proposition-localtype}
Let $\mu:G \times M\rightarrow M$ be a proper action and consider two orbits $G(y)$ and $G(z)$. Then $G(y)$ and $G(z)$ are of the same local type if and only if
\begin{itemize}
\item[(i)] There exists $g_0\in G$ such that $G_{z}=g_0G_{y}g_0^{-1}$ (i.e., $y$ and $z$ are of the same type);
\item[(ii)] There exists a linear isomorphism $A:T_{y}S_{y}\rightarrow T_{z}S_{z}$ such that $A(\dd(\mu^{h})_{y}V)=\dd(\mu^{g_{0}hg_{0}^{-1}})_{z}A(V)$ for all $h\in G_{y}$.
\end{itemize}
\end{proposition}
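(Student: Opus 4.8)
Let $\mu:G\times M\to M$ be a proper action, and let $G(y), G(z)$ be two orbits. We want: $G(y)$ and $G(z)$ are of the same local type iff (i) there exists $g_0\in G$ with $G_z=g_0G_yg_0^{-1}$, and (ii) there exists a linear isomorphism $A:T_yS_y\to T_zS_z$ intertwining the isotropy (slice) representations via the conjugation by $g_0$.

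My plan is to reduce everything to the Tubular Neighborhood Theorem~\ref{theorem-tubularneighborhood}, which identifies $G(S_x)=\tub(G(x))$ with the associated bundle $G\times_{G_x}S_x$, and then with its linear model $G\times_{G_x}T_xS_x$ (shrinking slices as needed, using that the slice is $\exp_x$ of a ball in the normal space, so $\exp_x$ is a $G_x$--equivariant diffeomorphism from a ball in $\nu_xG(x)\cong T_xS_x$ onto $S_x$). So up to $G$--equivariant diffeomorphism, $G(S_y)\cong G\times_{G_y}T_yS_y$ and $G(S_z)\cong G\times_{G_z}T_zS_z$, where $G_y$ acts on $T_yS_y$ by the isotropy representation $h\mapsto \dd(\mu^h)_y$ (and likewise for $z$).

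\medskip
\textbf{Sufficiency of (i) and (ii).} Assume (i) and (ii). Define $\Phi:G\times T_yS_y\to G\times T_zS_z$ by $\Phi(g,V)=(gg_0^{-1},A(V))$. I claim this descends to a $G$--equivariant diffeomorphism $G\times_{G_y}T_yS_y\to G\times_{G_z}T_zS_z$. First, $\Phi$ is a diffeomorphism of the products, and it is equivariant for the left $G$--action on the first factor: $\Phi(ag,V)=(agg_0^{-1},A(V))=a\cdot\Phi(g,V)$. It remains to check $\Phi$ carries the $G_y$--orbit of $(g,V)$ into the $G_z$--orbit of $\Phi(g,V)$, i.e. is compatible with the twisted actions. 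Recall the twisted action giving $G\times_{G_y}T_yS_y$ is $h\cdot(g,V)=(gh^{-1},\dd(\mu^h)_yV)$ for $h\in G_y$. Set $k=g_0hg_0^{-1}\in G_z$ (using (i)). Then
\begin{align*}
\Phi\bigl(h\cdot(g,V)\bigr) &= \Phi\bigl(gh^{-1},\dd(\mu^h)_yV\bigr)
= \bigl(gh^{-1}g_0^{-1},\,A(\dd(\mu^h)_yV)\bigr)\\
&= \bigl(gg_0^{-1}k^{-1},\,\dd(\mu^{k})_z A(V)\bigr)
= k\cdot\bigl(gg_0^{-1},A(V)\bigr) = k\cdot\Phi(g,V),
\end{align*}
where the third equality uses (ii) in the form $A(\dd(\mu^h)_yV)=\dd(\mu^{g_0hg_0^{-1}})_zA(V)$ and the identity $gh^{-1}g_0^{-1}=gg_0^{-1}(g_0hg_0^{-1})^{-1}$. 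Hence $\Phi$ maps $G_y$--orbits to $G_z$--orbits bijectively, so it induces a well--defined $G$--equivariant diffeomorphism on the quotients. Composing with the identifications from Theorem~\ref{theorem-tubularneighborhood} and the equivariant $\exp$ maps yields a $G$--equivariant diffeomorphism $G(S_y)\to G(S_z)$; so $G(y)$ and $G(z)$ have the same local type.

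\medskip
\textbf{Necessity.} Conversely, suppose $\varphi:G(S_y)\to G(S_z)$ is a $G$--equivariant diffeomorphism. Using Remark~\ref{remark-SliceVaiEmSlice}, composing $\varphi$ with a suitable $\mu^{g}$ we may assume $\varphi(y)$ lies on the orbit $G(z)$, and then again by equivariance (post--composing with $\mu^{g_0}$ for appropriate $g_0$) that $\varphi(y)=z$. Equivariance then forces $\varphi$ to conjugate isotropy groups: for $h\in G_y$, $\mu(h,y)=y$ gives $\mu(g_0^{-1}hg_0,z)=\varphi(\mu(h,y))$... more directly, $G_{\varphi(y)}\supset \varphi$--image of $G_y$, and since we arranged $\varphi(y)=z$ after a conjugation, Exercise~\ref{ex-conjisotropy} gives $G_z=g_0G_yg_0^{-1}$ for the $g_0$ used, proving (i). For (ii): under the Tubular Neighborhood identifications, $\varphi$ becomes a $G$--equivariant diffeomorphism $G\times_{G_y}S_y\to G\times_{G_z}S_z$ fixing the base point $[\,e,y\,]\mapsto[\,e,z\,]$ (after absorbing $g_0$). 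Restricting to the fiber over the base point, $\varphi$ maps $S_y$ diffeomorphically onto $S_z$ and is equivariant for the $G_y$--action versus the $G_z$--action transported by $h\mapsto g_0hg_0^{-1}$. Finally set $A=\dd\varphi|_y:T_yS_y\to T_zS_z$; differentiating the equivariance relation $\varphi(\mu^h(s))=\mu^{g_0hg_0^{-1}}(\varphi(s))$ at $s=y$ gives exactly $A\circ\dd(\mu^h)_y=\dd(\mu^{g_0hg_0^{-1}})_z\circ A$ for all $h\in G_y$, which is (ii).

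\medskip
\textbf{Main obstacle.} The delicate point is the reduction of a general equivariant diffeomorphism $\varphi$ between tubular neighborhoods to the linear/fiber picture: one must argue that after modifying $\varphi$ by ambient $G$--translations it sends the slice $S_y$ to the slice $S_z$ and the base orbit to the base orbit, so that its differential at $y$ is a legitimate intertwiner of slice representations. This is where Remark~\ref{remark-SliceVaiEmSlice} and the uniqueness of the equivariant retraction $r:\tub(G(x_0))\to G(x_0)$ are essential: $\varphi$ must intertwine the retractions $r_y$ and $r_z$, forcing $\varphi(S_y)\subset$ a single fiber of $r_z$, which (after the conjugation) is $S_z$.
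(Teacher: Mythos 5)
Your sufficiency direction is correct: the map $\Phi(g,V)=(gg_0^{-1},A(V))$ does descend to a $G$--equivariant diffeomorphism of the associated bundles, and your verification against the twisted $G_y$-- and $G_z$--actions is right. (Note the paper defers the proof of this proposition to Duistermaat and Kolk and gives no in-text proof, so there is nothing to compare your route against.)

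The necessity direction has a genuine gap, located exactly where you flag the ``main obstacle,'' but the retraction argument you offer does not close it. You claim that post-composing $\varphi$ with a suitable $\mu^g$ lets you assume $\varphi(y)\in G(z)$ and then $\varphi(y)=z$. This is false in general. What equivariance plus compactness of isotropy groups actually give is that $\varphi(y)$ can be moved by a $G$--translate onto some $s\in S_z$ with $G_s=G_z$, i.e.\ into the fixed-point set $S_z^{G_z}$, but $s$ need not lie on $G(z)$: already for $G=\{e\}$ acting trivially on $\R^n$ with $y=z=0$, a diffeomorphism of a ball need not fix the origin. Consequently $\varphi^{-1}\circ r_z\circ\varphi$ is a $G$--equivariant retraction of $\tub(G(y))$ onto $\varphi^{-1}(G(z))$, which is typically not $G(y)$, so uniqueness of the retraction onto $G(y)$ from Remark~\ref{remark-SliceVaiEmSlice} does not apply and does not force $\varphi(S_y)\subset S_z$. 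As written, your $A=\dd\varphi|_y$ intertwines the $G_y$--representation on $T_yS_y$ with the $G_z$--representation on $T_sS_s$ at the point $s=\varphi(y)$, not with the representation on $T_zS_z$ as (ii) requires. The fix is one extra observation you should state explicitly: for any $s\in S_z^{G_z}$, the slice representation of $G_z=G_s$ on $\nu_sG(s)$ is equivalent to that on $\nu_zG(z)\cong T_zS_z$, because under $\exp_z$ the normal slice $S_z$ is a ball in $\nu_zG(z)$ on which $G_z$ acts linearly, $s$ and $z$ are both fixed points of this linear action, translation in the vector space identifies $T_sS_z$ with $\nu_zG(z)$ $G_z$--equivariantly, and $\nu_sG(s)$ and $T_sS_z$ are two $G_z$--invariant complements of $T_sG(s)$ in $T_sM$. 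Composing with this equivalence produces the $A:T_yS_y\to T_zS_z$ demanded by (ii). Finally, be precise that post-composing with $\mu^{g_0}$ turns strict equivariance into the twisted relation $\varphi(\mu^h(x))=\mu^{g_0hg_0^{-1}}(\varphi(x))$; you use this correctly at the end, but calling it simply ``equivariance'' obscures the bookkeeping.
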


In this way, we have two equivalence relations, namely \emph{to be of the same orbit type} (denoted $\sim$); and \emph{to be of the same local orbit type} (denoted $\approx$). In the same fashion, denote $M_{x}^{\sim}$ the set of $y\in M$ with the same type of $x$. This equivalence class will be called the \emph{orbit type}\index{Orbit!type} of $x$. Denote also $M_{x}^{\approx}$\index{$M_{x}^{\approx},M_{x}^{\sim}$} the set of $y\in M$ with the same local type of $x$, called the \emph{local orbit type}\index{Orbit!local type} of $x$. 

Recall that if $H\subset G$ is a subgroup, the \emph{normalizer}\index{Normalizer} of $H$ in $G$ is given by $N(H)=\{g\in G:gHg^{-1}=H\}$. We are now ready to prove that $M_{x}^{\approx}$ is the total space of a fiber bundle with fiber $G/G_{x}$.

\begin{theorem}\label{theorem-mesmotipo-esptotalFibrado}
Let $\mu:G\times M\rightarrow M$ be a proper action. Then
\begin{itemize}
\item[(i)] Each local orbit type is an open and closed subset of the corresponding orbit type;
\item[(ii)] The set $M_{x}^{\approx}\cap M^{G_{x}}$ is a $N(G_{x})$--invariant manifold open in $M^{G_{x}}$;
\item[(iii)] The action of $N(G_{x})/G_{x}$ on $M_{x}^{\approx}\cap M^{G_{x}}$ is proper and free;
\item[(iv)] There exists a $G$--equivariant diffeomorphism from the total space $G/G_{x}\times_{N(G_{x})/G_{x}} (M_{x}^{\approx}\cap M^{G_{x}})$ onto the manifold $M_{x}^{\approx}$.
\item[(v)]$M_{x}^{\approx}$ is  the total space of a fiber bundle  with fiber $G/G_{x}$ and basis  $(M_{x}^{\approx}\cap M^{G_{x}})/ (N(G_{x})/G_{x}).$ 
\end{itemize}
\end{theorem}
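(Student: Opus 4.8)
The plan is to prove items (i)--(v) in sequence, bootstrapping each from the previous ones and from the machinery already developed (the Slice Theorem~\ref{slicethm}, the Tubular Neighborhood Theorem~\ref{theorem-tubularneighborhood}, Proposition~\ref{proposition-localtype}, Proposition~\ref{proposition-conjuntofixo}, and Theorem~\ref{theorem-acaoproprialivre-eh-fibrado}). Throughout I would fix a $G$--invariant metric on $M$ via Theorem~\ref{theorem-acaopropria-isometrica}, so that $\mu^G\subset\Iso(M)$ and slices may be taken normal. Write $H=G_x$, $N=N(H)$, and recall that $N$ is closed in $G$ (it is the preimage of the identity coset under a suitable continuous map, or directly a closed condition), hence a Lie subgroup, and $H\triangleleft N$ is closed, so $N/H$ is a Lie group by Corollary~\ref{cor-quocienteLieGroup}.

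For (i): a local orbit type $M_x^{\approx}$ is contained in the orbit type $M_x^{\sim}$ by definition (same local type implies same type, by Proposition~\ref{proposition-localtype}(i)). To see it is open and closed in $M_x^{\sim}$, I would use the Tubular Neighborhood Theorem: for $y$ in a tubular neighborhood of $G(x)$ with $G_y$ conjugate to $H$, the local type of $y$ is determined by the conjugacy class of the slice representation of $G_y$, and this is locally constant along $M_x^{\sim}$ since the slice representation varies continuously and representations of a compact group into $\O(k)$ that are close are conjugate (rigidity of representations of compact groups). Hence $M_x^{\approx}$ is open in $M_x^{\sim}$; being a union of such pieces and having open complement (the other local types inside $M_x^{\sim}$), it is also closed. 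For (ii): $M_x^{\approx}\cap M^{H}$ is open in $M^{H}$ because $M_x^{\approx}$ is open in $M_x^\sim$ and the points of $M^H$ near $x$ with isotropy exactly $H$ (not a proper overgroup) form an open subset of $M^H$ — here I would invoke Proposition~\ref{proposition-conjuntofixo} describing $T_pM^H$, together with the slice decomposition to control nearby isotropy. $N$--invariance is immediate: if $\mu(h,p)=p$ for all $h\in H$ and $n\in N$, then $\mu(h,\mu(n,p))=\mu(n,\mu(n^{-1}hn,p))=\mu(n,p)$ since $n^{-1}hn\in H$; and $N$ preserves ``same local type as $x$''.

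For (iii): the $N/H$--action on $M_x^{\approx}\cap M^H$ is well defined since $H$ acts trivially there; it is proper as a restriction of the proper $N$--action (properness passes to closed invariant subsets and to quotients by the kernel $H$ of the restricted action). Freeness is the key point: if $\mu(n,p)=p$ for $p$ in this set, then $n\in G_p$; but $G_p$ is conjugate to $H$ and contains $H$ (since $p\in M^H$ with isotropy type exactly that of $x$), hence $G_p=H$, so $n\in H$ and $nH$ is the identity of $N/H$. For (iv) and (v): by Theorem~\ref{theorem-acaoproprialivre-eh-fibrado} the free proper action of $N/H$ on $M_x^{\approx}\cap M^H$ yields a principal $(N/H)$--bundle, and the associated bundle with fiber $G/H$ (via the left action of $N/H$ on $G/H$ by right translation, $nH\cdot gH = gn^{-1}H$) has total space $G/H\times_{N/H}(M_x^\approx\cap M^H)$; Theorem~\ref{theorem-fibradoassociado} makes this a fiber bundle over $(M_x^\approx\cap M^H)/(N/H)$ with fiber $G/H$, which is (v). For the $G$--equivariant diffeomorphism in (iv), define $\Phi\colon [gH,p]\mapsto \mu(g,p)$; one checks it is well defined (the relation $[gH,p]=[gn^{-1}H,\mu(n,p)]$ maps to $\mu(gn^{-1},\mu(n,p))=\mu(g,p)$), $G$--equivariant for $h\cdot[gH,p]=[hgH,p]$, and bijective onto $M_x^\approx$ using that every orbit in $M_x^\approx$ meets $M^H$ (via a slice) and meets it in a single $N/H$--orbit. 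Local diffeomorphy then follows by a dimension count plus surjectivity of the differential, exactly as in the proof of Theorem~\ref{theorem-tubularneighborhood}, and bijectivity upgrades this to a diffeomorphism.

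The main obstacle I expect is item (i) — specifically the local constancy of the \emph{local} type (as opposed to the ordinary type), since this is where one genuinely needs that nearby slice representations of a compact isotropy group are conjugate, not merely that nearby isotropy groups are conjugate. Establishing this cleanly requires combining the Tubular Neighborhood Theorem with a rigidity statement for representations of compact Lie groups, and getting the ``open and closed in $M_x^\sim$'' bookkeeping right. Once (i) is in hand, (ii)--(v) are a matter of carefully transporting the principal/associated bundle constructions already proven, and the verifications are routine.
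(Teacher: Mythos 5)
Your proposal follows essentially the same route as the paper: a $G$--invariant metric via Theorem~\ref{theorem-acaopropria-isometrica}; the decomposition of $M_x^\sim$ inside a tube $\mu^G(S_x)$ as $\mu^G(S_x^{G_x})\simeq G/G_x\times S_x^{G_x}$ to get (i) and (ii); the observation $G_y=G_x$ for $y\in M_x^\sim\cap M^{G_x}$ to get freeness in (iii); the map $[gG_x,s]\mapsto\mu(g,s)$ together with a surjectivity--of--the--differential plus dimension--count argument to get (iv); and the principal bundle from Theorem~\ref{theorem-acaoproprialivre-eh-fibrado} for (v). The one place you phrase things differently is the local constancy of the local type in (i): you invoke rigidity of nearby representations of a compact group into $\O(k)$, whereas the paper reduces it to showing that all points of $S_x^{G_x}$ have the same local type via Proposition~\ref{proposition-localtype} together with the slice representation — but that reduction hides exactly the rigidity fact you state explicitly, so the two arguments are the same idea in different clothing. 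The ``obstacle'' you flag is real but is precisely the step the paper also leaves at the level of a sketch, so you have identified the correct pressure point without introducing a gap.
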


\begin{remark}\label{re:duistermaatfeztd}
To our knowledge, the concept of {\em local types} was introduced by Duistermaat and Kolk~\cite{duistermaat}, on which our proof is based on.
One advantage of local types is that all connected components of $M_{x}^{\approx}$ have the same dimension. This does not necessarily occur with $M_{x}^{\sim}$. Furthermore, it seems that a generalization of this concept would be useful in the theory of \emph{singular Riemannian foliations} (see Definition~\ref{defsrfs}).
\end{remark}

\begin{remark}
A simplification of the proof of Theorem~\ref{theorem-mesmotipo-esptotalFibrado} gives existence of a $G$--equivariant diffeomorphism between the orbit type of $x$, $M_{x}^{\sim}$, and $G/G_{x}\times_{N(G_{x})/G_{x}} (M_{x}^{\sim}\cap M^{G_{x}})$. Nevertheless, as remarked above, $M_{x}^{\sim}$ can be a union of manifolds with different dimensions. As we will see in Claim~\ref{cl:duist2} below, $y\in M_{x}^{\sim}\cap M^{G_{x}}$ if and only if $G_{y}=G_{x}.$
\end{remark}

%\begin{remark}%\label{remark-2theorem-mesmotipo-esptotalFibrado}
%$M_{x}^{\approx}=G/G_{x}\times_{N(G_{x})/G_{x}} (M_{x}^{\approx}\cap M^{G_{x}})$ is the total space of a fiber bundle over $G/N(G_{x})$ with fiber %$M_{x}^{\approx}\cap M^{G_{x}}$. Since the action of $N(G_{x})/G_{x}$ is proper and free in $M_{x}^{\approx}\cap M^{G_{x}}$, we conclude that %$M_{x}^{\approx}$ is also the total space of a fiber bundle over  $(M_{x}^{\approx}\cap M^{G_{x}})/ (N(G_{x})/G_{x})$ with fiber $G/G_{x}$.
%\end{remark}

\begin{proof}
We will only sketch the main parts of this proof. We also stress that the most important result of this theorem is item (iv), and its proof is similar to that of the Tubular Neighborhood Theorem~\ref{theorem-tubularneighborhood}.

Using the properties that define a slice and the Tubular Neighborhood Theorem~\ref{theorem-tubularneighborhood}, one can prove the following.

\begin{claim}\label{cl:duist1}
The following hold.
\begin{itemize}
\item[(i)] For each $x\in M$, denote $M_{x}^{\leq}$ the set of $y\in M$ such that $G_{x}\subset G_{\mu(g,y)}$ for some $g$. Then $M_{x}^{\leq}=\mu^G(M^{G_{x}})$. In addition, $M_{x}^{\leq}$ is closed in $M$;
\item[(ii)] $M_{x}^{\sim}\cap \mu^G(S_{x})=\mu^G(S_{x}^{G_{x}})=M^{\leq}\cap \mu^G(S_{x})$ is a submanifold of $M$, \emph{$G$--equivalent}\footnote{Two manifolds are said to be \emph{$G$--equivalent}\index{$G$--equivalent} if there exists a $G$--equivariant diffeomorphism between them.} to $G/G_{x}\times S_{x}^{G_{x}}$.
\end{itemize}
\end{claim}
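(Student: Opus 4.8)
The plan is to work inside a tubular neighborhood throughout, using only the three defining properties of a slice (Definition~\ref{definition-slice}), the conjugation rule $G_{\mu(g,y)}=gG_{y}g^{-1}$ (Exercise~\ref{ex-conjisotropy}), and the Tubular Neighborhood Theorem~\ref{theorem-tubularneighborhood}. For the equality $M_{x}^{\leq}=\mu^{G}(M^{G_{x}})$ in (i): if $y=\mu(g,z)$ with $z\in M^{G_{x}}$ then $G_{x}\subset G_{z}=G_{\mu(g^{-1},y)}$, so $y\in M_{x}^{\leq}$; conversely if $G_{x}\subset G_{\mu(g,y)}$ then $\mu(g,y)$ is fixed by every element of $G_{x}$, hence $\mu(g,y)\in M^{G_{x}}$ and $y\in\mu^{G}(M^{G_{x}})$. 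Both $M_{x}^{\leq}$ and $M^{G_{x}}$ are immediately seen to be $G$--invariant, which in particular exhibits $M_{x}^{\leq}$ as a union of orbits.

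For closedness of $M_{x}^{\leq}$: let $y_{n}\to y$ with $y_{n}\in M_{x}^{\leq}$, fix a slice $S_{y}$ at $y$, and use the Tubular Neighborhood Theorem to see that $\mu^{G}(S_{y})$ is an open neighborhood of $y$. For $n$ large, $y_{n}\in\mu^{G}(S_{y})$, so $G(y_{n})$ meets $S_{y}$ at a point $y_{n}'$; since $M_{x}^{\leq}$ is $G$--invariant, $y_{n}'\in M_{x}^{\leq}$, so there is $g$ with $g^{-1}G_{x}g\subset G_{y_{n}'}$, and slice property~(iii) applied to $y_{n}'\in S_{y}$ gives $G_{y_{n}'}\subset G_{y}$. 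Hence $G_{x}\subset gG_{y}g^{-1}=G_{\mu(g,y)}$, i.e. $y\in M_{x}^{\leq}$.

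For (ii), I would first apply Proposition~\ref{proposition-conjuntofixo} to the (automatically proper, since $G_{x}$ is compact) action of $G_{x}$ on $S_{x}$ to conclude that $S_{x}^{G_{x}}:=S_{x}\cap M^{G_{x}}$ is a closed submanifold of $S_{x}$, and then establish the chain
\[
\mu^{G}(S_{x}^{G_{x}})\ \subseteq\ M_{x}^{\sim}\cap\mu^{G}(S_{x})\ \subseteq\ M_{x}^{\leq}\cap\mu^{G}(S_{x})\ \subseteq\ \mu^{G}(S_{x}^{G_{x}}),
\]
which forces all three sets to coincide. The first inclusion: for $q\in S_{x}^{G_{x}}$ one has $G_{q}\subset G_{x}$ (slice property~(iii)) and $G_{x}\subset G_{q}$ ($q$ is $G_{x}$--fixed), so $G_{q}=G_{x}$ and $G_{\mu(g,q)}=gG_{x}g^{-1}$ is conjugate to $G_{x}$. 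The middle inclusion is the remark that $G_{y}=gG_{x}g^{-1}$ gives $G_{x}=G_{\mu(g^{-1},y)}$, so $y\in M_{x}^{\leq}$. The last inclusion is the key point: given $p\in M_{x}^{\leq}\cap\mu^{G}(S_{x})$, pick $q\in G(p)\cap S_{x}$; then $q\in M_{x}^{\leq}$ and $G_{q}\subset G_{x}$, so some $g$ satisfies $g^{-1}G_{x}g\subset G_{q}\subset G_{x}$, and compactness of $G_{x}$ forces $g^{-1}G_{x}g=G_{x}$ (a compact Lie group has no proper closed subgroup of the same dimension and number of connected components, the argument already used in the proof of Proposition~\ref{proposition-equivalencia-orbitasprincipais}); hence $G_{q}=G_{x}$, so $q\in S_{x}^{G_{x}}$ and $p\in\mu^{G}(S_{x}^{G_{x}})$.

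Finally, the submanifold and $G$--equivalence assertion follows by transporting everything through the Tubular Neighborhood Theorem: $\mu^{G}(S_{x})$ is $G$--equivariantly diffeomorphic to $G\times_{G_{x}}S_{x}$, and under this diffeomorphism $\mu^{G}(S_{x}^{G_{x}})$ corresponds to $G\times_{G_{x}}S_{x}^{G_{x}}$; since $G_{x}$ fixes $S_{x}^{G_{x}}$ pointwise, this twisted product is the trivial bundle $(G/G_{x})\times S_{x}^{G_{x}}$, with $G$ acting only on the first factor, and the fact that $S_{x}^{G_{x}}$ is embedded in $S_{x}$ makes it an embedded submanifold of the open set $\mu^{G}(S_{x})$, hence of $M$. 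The only genuinely non-formal ingredient is the rigidity of compact Lie groups under self-conjugation used in the third inclusion; everything else is careful bookkeeping with the slice axioms and the tubular neighborhood structure, which is precisely what the statement's preamble advertises.
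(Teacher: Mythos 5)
Your proof is correct in substance, and it is worth noting that the paper itself does \emph{not} give a proof of Claim~\ref{cl:duist1} at all: the text merely says ``Using the properties that define a slice and the Tubular Neighborhood Theorem, one can prove the following'' and moves on. So there is no paper argument to compare against; what you have produced genuinely fills the gap. The structure you chose is natural and matches the hints the paper gives: the orbit/fixed-point duality $M_{x}^{\leq}=\mu^{G}(M^{G_{x}})$ from the conjugation rule for isotropy groups, closedness via a local slice at the limit point plus slice axiom~(iii), the circular chain of inclusions for~(ii) with the rigidity fact that a compact Lie group cannot be conjugated into a proper subgroup of itself as the one non-formal ingredient, and the transport through $G\times_{G_{x}}S_{x}$ for the $G$-equivalence statement.

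There is, however, one sentence that is wrong and should be deleted: you assert that $M^{G_{x}}$ is $G$-invariant. It is not, in general. For example, take $\SO(3)$ acting on $\R^{3}$ and $x=(1,0,0)$; then $G_{x}$ is the copy of $\SO(2)$ fixing $x$, so $M^{G_{x}}$ is the $x$-axis, which a general rotation carries to a different line. The correct statement, which the paper establishes later in Claim~\ref{cl:duist3}, is that $M^{G_{x}}$ (more precisely $M_{x}^{\sim}\cap M^{G_{x}}$) is invariant only under the normalizer $N(G_{x})$. Fortunately this slip is harmless for your argument: the only thing you actually use is that $M_{x}^{\leq}$ is a union of orbits, which you have already derived two ways — directly from the quantifier in its definition and from the identity $M_{x}^{\leq}=\mu^{G}(M^{G_{x}})$ (the operator $\mu^{G}$ saturates by orbits whether or not its argument was invariant). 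Strike the claim about $M^{G_{x}}$ and the rest stands.
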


Proposition~\ref{proposition-localtype} and the slice representation imply that the points of $S_{x}^{G_{x}}$ are of the same local type. From this fact and Claim~\ref{cl:duist1}, it follows that each local orbit type is an open subset of the corresponding orbit type. It is also closed, since its complement is the union of all other local orbit types in the same orbit type. This proves (i).

\begin{claim}\label{cl:duist2}
$y\in M_{x}^{\sim}\cap M^{G_{x}}$ if and only if $G_{y}=G_{x}.$
\end{claim}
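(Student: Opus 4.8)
\textbf{Proof proposal for Claim~\ref{cl:duist2}.} The plan is to prove the two implications separately, both of which are essentially unwinding the definitions of $M_{x}^{\sim}$ (the orbit type of $x$), $M^{G_{x}}$ (the fixed point set of the compact subgroup $G_{x}$), and the slice.

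First I would prove the easy direction: if $G_{y}=G_{x}$, then certainly $y\in M^{G_{x}}$, since every $h\in G_{x}=G_{y}$ fixes $y$; and $y\in M_{x}^{\sim}$ because $y$ and $x$ have literally the same isotropy group, which is a special case of the ``same type'' relation (take $g=e$ in Definition~\ref{def-type}(i), using Exercise~\ref{ex-conjisotropy}). Conversely, suppose $y\in M_{x}^{\sim}\cap M^{G_{x}}$. From $y\in M^{G_{x}}$ we get $\mu(h,y)=y$ for all $h\in G_{x}$, hence $G_{x}\subset G_{y}$. From $y\in M_{x}^{\sim}$ there exists $g\in G$ with $G_{y}=G_{\mu(g,x)}=gG_{x}g^{-1}$ (again by Exercise~\ref{ex-conjisotropy}). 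Combining, $gG_{x}g^{-1}=G_{y}\supset G_{x}$, so $G_{x}\subset gG_{x}g^{-1}$. Now invoke compactness of $G_{x}$ (which holds since $\mu$ is proper): a compact group cannot be strictly contained in a conjugate of itself, because conjugation by $g$ is a diffeomorphism, so $G_{x}$ and $gG_{x}g^{-1}$ have the same dimension and the same (finite) number of connected components; an inclusion between two such groups with matching dimension and component count is forced to be an equality. Therefore $G_{x}=gG_{x}g^{-1}=G_{y}$, as desired.

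The only genuinely non-formal point is the compactness argument that $G_{x}\subset gG_{x}g^{-1}$ implies equality. I would spell this out as follows: let $H=G_{x}$, a compact Lie subgroup of $G$; then $\dim H=\dim gHg^{-1}$ and $gHg^{-1}$ has the same number of connected components as $H$, so $H$ is an open subgroup of $gHg^{-1}$ (an inclusion of Lie groups of equal dimension is open), hence $H$ is also closed in $gHg^{-1}$; since $gHg^{-1}$ meets the component $H^{0}$ of $H$ containing $e$ in an open-and-closed subset, connectedness forces $H$ to contain each component of $gHg^{-1}$ that it meets, and counting components gives $H=gHg^{-1}$. Alternatively one can simply note that the chain $H\subset gHg^{-1}\subset g^{2}Hg^{-2}\subset\cdots$ of compact groups stabilizes in dimension and component count after finitely many steps, and then use that $g$ acts by conjugation as an automorphism of the union $\bigcup_n g^n H g^{-n}$; this is the more hands-on version of the same fact.

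I do not anticipate a serious obstacle here — the claim is a bookkeeping lemma that feeds into the proof of Theorem~\ref{theorem-mesmotipo-esptotalFibrado}, in particular into distinguishing $M_{x}^{\sim}$ from $M_{x}^{\approx}$ — so the write-up should be short, with the compactness remark as the one place where a sentence of justification is warranted rather than a bare citation.
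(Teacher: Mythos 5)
Your argument matches the paper's proof exactly: both implications are obtained by unwinding the definitions, with the key step being that $y\in M^{G_x}$ gives $G_x\subset G_y$ and $y\in M_x^\sim$ gives $G_y=gG_xg^{-1}$, after which compactness of $G_x$ forces equality. The paper states this compactness step tersely ("Since $G_x$ is compact, $G_y=G_x$"); your expansion via the dimension-and-component-count argument is the standard justification and is correct as written.
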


In fact, if $y\in M_{x}^{\sim}\cap M^{G_{x}}$ then $G_{y}=gG_{x}g^{-1}\supset G_{x}$. Since $G_{x}$ is compact, $G_y=G_x$. The other implication is also simple. Using the fact that $M^{G_{x}}\subset M_{x}^{\leq}$, Claim~\ref{cl:duist1} and (i) one can prove that $M_{x}^{\approx}\cap M^{G_{x}}$ is open in $M^{G_{x}}$. Then the next claim implies that that $M_{x}^{\approx}\cap M^{G_{x}}$ is a $N(G_{x})$--invariant manifold, concluding the proof of (ii).

\begin{claim}\label{cl:duist3}
The following hold.
\begin{itemize}
\item[(a)] Let $y\in M_{x}^{\sim}\cap M^{G_{x}}$ and assume that $\mu(g,y)\in M_{x}^{\sim}\cap M^{G_{x}}$. Then $g\in N(G_{x})$;
\item[(b)] If $g\in N(G_{x})$ and $y\in M_{x}^{\sim}\cap M^{G_{x}}$. Then $\mu(g,y)\in M_{x}^{\sim}\cap M^{G_{x}}$;
\item[(c)] $N(G_{x})$ leaves $M_{x}^{\approx}\cap M^{G_{x}}$ invariant;
\item[(d)] For each $y\in M_{x}^{\approx}\cap M^{G_{x}}$, there exists a neighborhood diffeomorphic to $(N(G_{x})/G_{x})\times S_{x}^{G_{x}}$;
\end{itemize}
\end{claim}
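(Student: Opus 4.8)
\textbf{Proof proposal for Claim~\ref{cl:duist3}.}

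The plan is to establish the four assertions in the order (a), (b), (c), (d), reducing each to properties of slices, the Tubular Neighborhood Theorem~\ref{theorem-tubularneighborhood} and Claims~\ref{cl:duist1} and~\ref{cl:duist2}. For (a): suppose $y\in M_x^{\sim}\cap M^{G_x}$ and $\mu(g,y)\in M_x^{\sim}\cap M^{G_x}$. By Claim~\ref{cl:duist2}, applied to both $y$ and $\mu(g,y)$, we get $G_y=G_x$ and $G_{\mu(g,y)}=G_x$. On the other hand, Exercise~\ref{ex-conjisotropy} gives $G_{\mu(g,y)}=gG_yg^{-1}=gG_xg^{-1}$. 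Hence $gG_xg^{-1}=G_x$, i.e. $g\in N(G_x)$. For (b): if $g\in N(G_x)$ and $y\in M_x^{\sim}\cap M^{G_x}$, then again by Claim~\ref{cl:duist2} we have $G_y=G_x$, so $G_{\mu(g,y)}=gG_xg^{-1}=G_x$; thus $\mu(g,y)$ is of the same type as $x$ and is fixed by every element of $G_x$, whence $\mu(g,y)\in M_x^{\sim}\cap M^{G_x}$ using Claim~\ref{cl:duist2} in the reverse direction. Assertion (c) follows by combining (a) and (b) with the fact (proved in the course of establishing item (i) of Theorem~\ref{theorem-mesmotipo-esptotalFibrado}) that $M_x^{\approx}$ is open and closed in $M_x^{\sim}$: since $N(G_x)$ acts by diffeomorphisms preserving $M_x^{\sim}\cap M^{G_x}$ (by (b)) and these diffeomorphisms are $G$--equivariant in the appropriate sense, they carry local orbit types to local orbit types, so they preserve $M_x^{\approx}\cap M^{G_x}$; alternatively one notes directly that $\mu^g$ for $g\in N(G_x)$ restricts to a $G_x$--equivariant diffeomorphism of slices, hence preserves local type by Proposition~\ref{proposition-localtype}.

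For (d), which is the substantive point, I would argue as in the Tubular Neighborhood Theorem~\ref{theorem-tubularneighborhood} but with $G$ replaced by $N(G_x)$ and the slice replaced by $S_x^{G_x}$. Fix $y\in M_x^{\approx}\cap M^{G_x}$; since $M_x^{\approx}\cap M^{G_x}$ is open in $M^{G_x}$ (part (ii) of the theorem) and $G_y=G_x$, we may take a slice $S_y$ at $y$ for the $G$--action and consider $S_y^{G_x}=S_y^{G_y}$, which by Claim~\ref{cl:duist1}(ii) is a submanifold with $\mu^G(S_y^{G_x})=M_x^{\sim}\cap\mu^G(S_y)$ that is $G$--equivalent to $G/G_x\times S_y^{G_x}$. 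Intersecting with $M_x^{\approx}$, the part of $S_y^{G_x}$ lying in $M_x^{\approx}$ is an open neighborhood $V$ of $y$ inside $M_x^{\approx}\cap M^{G_x}$. Now consider the map
\begin{equation*}
\Phi: N(G_x)\times V\longrightarrow M_x^{\approx}\cap M^{G_x},\qquad (g,s)\longmapsto \mu(g,s),
\end{equation*}
which is well defined by (b) and (c). Using Proposition~\ref{proposition-kern-mu} and the fact that $G_s=G_x$ for $s\in V$ (Claim~\ref{cl:duist2}), the kernel of $\dd(\mu_s)_{e}$ restricted to $\mathfrak{n}(G_x)=T_eN(G_x)$ is exactly $T_eG_x=\mathfrak g_x$, and a dimension count (the normal directions to $V$ inside $M^{G_x}$ are spanned by the $N(G_x)$--orbit directions transverse to $V$) shows $\dd\Phi$ is surjective; the Inverse Function Theorem then gives a local diffeomorphism, and properness of the $N(G_x)$--action (item (iii) of the theorem, to be proved independently) shows $\Phi$ descends to an injective, hence bijective, local diffeomorphism
\begin{equation*}
(N(G_x)/G_x)\times V\;\xrightarrow{\ \simeq\ }\;\text{(open neighborhood of }y\text{ in }M_x^{\approx}\cap M^{G_x}).
\end{equation*}
After shrinking $V$ to $S_x^{G_x}$ (or identifying $V$ with an open piece of it), this is the desired local diffeomorphism.

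The main obstacle I anticipate is bookkeeping the equivariance and the transversality in (d): one must check carefully that $\dd\Phi_{(e,y)}$ is onto, i.e. that $T_y(M_x^{\approx}\cap M^{G_x})= \dd(\mu_y)_e(\mathfrak n(G_x)) + T_yV$, which uses that $M^{G_x}$ near $y$ splits as the $N(G_x)/G_x$--orbit direction plus the slice-like piece $V$; this is really where Proposition~\ref{proposition-conjuntofixo} (the description of $T_pM^{H}$) and the slice properties must be combined. The remaining parts (a)--(c) are essentially formal consequences of Exercise~\ref{ex-conjisotropy} and Claim~\ref{cl:duist2}, so the proof effort is concentrated entirely in (d), and even there the structure is a verbatim adaptation of the proof of Theorem~\ref{theorem-tubularneighborhood} with $(G,S_x)$ replaced by $(N(G_x),S_x^{G_x})$.
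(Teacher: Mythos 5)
The paper states Claim~\ref{cl:duist3} inside a proof sketch and gives no proof of its own (it refers the reader to Duistermaat--Kolk), so there is no internal proof to compare against; I can only assess whether your argument is sound, and it essentially is.

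Parts (a) and (b) are correct and exactly as short as they should be: Claim~\ref{cl:duist2} turns membership in $M_x^\sim\cap M^{G_x}$ into the equality $G_y=G_x$, and Exercise~\ref{ex-conjisotropy} does the rest. Part (c) reaches the right conclusion but is more elaborate than needed. You invoke that $\mu^g$ carries local orbit types to local orbit types, but to conclude that it preserves $M_x^\approx$ specifically you still need to anchor it somewhere; the cleaner observation is that for \emph{any} $g\in G$, $y$ and $\mu(g,y)$ lie in the same $G$--orbit, hence are trivially of the same local orbit type (the identity on $\mu^G(S_y)$ is the required $G$--equivariant diffeomorphism), so every $\mu^g$ preserves every local orbit type; (c) then follows from (b) with no appeal to Proposition~\ref{proposition-localtype} at all.

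Part (d) is where the work is, and your strategy --- replay the Tubular Neighborhood Theorem~\ref{theorem-tubularneighborhood} with $(G,S_y)$ replaced by $(N(G_x),S_y^{G_x})$ --- is the natural and correct one. Three points deserve to be made explicit. First, your $V=S_y^{G_x}\cap M_x^\approx$ is in fact all of $S_y^{G_x}$ (it is the exponential of a ball in a linear subspace, hence connected, and $M_x^\approx$ is open and closed in $M_x^\sim$), and the identification $S_y^{G_x}\cong S_x^{G_x}$ must be justified: $G_y=G_x$ plus the same--local--type hypothesis gives, via Proposition~\ref{proposition-localtype}(ii), a linear isomorphism $T_yS_y\to T_xS_x$ intertwining the $G_x$--slice representations, hence restricting to an isomorphism of the fixed subspaces and so a diffeomorphism of the corresponding fixed--point sets in the normal slices. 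Second, the appeal to item (iii) of the theorem is not circular and should be flagged as such: properness of $N(G_x)$ on $M_x^\approx\cap M^{G_x}$ follows immediately from properness of the $G$--action and closedness of $N(G_x)$ in $G$, independently of the rest of the claim, and this is indeed how the paper argues (iii). Third, the surjectivity of $\dd\Phi_{(e,y)}$ is the genuine technical content; the required identity is
\[
T_yM^{G_x}=\dd(\mu_y)_e\bigl(\mathfrak n(G_x)\bigr)\oplus T_yS_y^{G_x},
\]
which you describe in words but do not verify. It follows from the slice decomposition $T_yM=\dd(\mu_y)_e(\mathfrak g)\oplus T_yS_y$, a $G_x$--invariant complement $\mathfrak m$ of $\mathfrak g_x$ in $\mathfrak g$, and Proposition~\ref{proposition-conjuntofixo}: the $G_x$--fixed vectors in $\dd(\mu_y)_e(\mathfrak g)$ are exactly $\dd(\mu_y)_e(\mathfrak m^{G_x})\subset\dd(\mu_y)_e(\mathfrak n(G_x))$, and the $G_x$--fixed vectors in $T_yS_y$ are $T_yS_y^{G_x}$. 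With these additions your proof of (d), and hence of the whole claim, is complete and correct.
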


As for (iii), note that the action $N(G_{x})\times M_{x}^{\approx}\cap M^{G_{x}}\rightarrow M_{x}^{\approx}\cap M^{G_{x}}$ is proper, since $N(G_{x})\subset G$ is closed. In addition, Claim~\ref{cl:duist2} implies that the action $N(G_{x})/G_{x} \times M_{x}^{\approx}\cap M^{G_{x}}\rightarrow M_{x}^{\approx}\cap M^{G_{x}}$ is free.

Let us now give an idea of how to prove (iv). We must first note that $M_{x}^{\approx}$ is a manifold, and in particular, the connected components have the same dimension. This can be proved using Claim~\ref{cl:duist1}, (i) and the definition of local type. Set \begin{eqnarray*}\varphi:G/G_{x}\times (M_{x}^{\approx}\cap M^{G_{x}}) &\longrightarrow & M_{x}^{\approx} \\ (gG_{x},s)&\longmapsto &\mu(g,s). \end{eqnarray*} Claim~\ref{cl:duist2} implies that $\varphi$ is well--defined.

\begin{claim}\label{cl:duist4}
$\varphi$ and $\dd\varphi$ are surjective.
\end{claim}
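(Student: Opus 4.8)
The plan is to establish Claim~\ref{cl:duist4} by mimicking the argument used in the Tubular Neighborhood Theorem~\ref{theorem-tubularneighborhood}, since, as remarked, the map $\varphi$ here plays the same role as the map $\psi$ there. First I would prove surjectivity of $\varphi$. Take $p \in M_x^{\approx}$. By Claim~\ref{cl:duist1}(ii), or directly from the definition of local type together with Claim~\ref{cl:duist2}, the orbit $G(p)$ meets $M^{G_x}$: indeed $p$ has the same local type as $x$, hence the same type, so there is $g_1 \in G$ with $G_{\mu(g_1,p)} = G_x$, and then $\mu(g_1,p) \in M^{G_x}$ by Exercise~\ref{ex-conjisotropy} (if $G_q = G_x$ then $h\cdot q = q$ for all $h \in G_x$, i.e. $q \in M^{G_x}$). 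Moreover $\mu(g_1,p) \in M_x^{\sim}$, and since $M_x^{\approx}$ is open and closed in $M_x^{\sim}$ (item (i), already proved) and contains $p$, the point $\mu(g_1,p)$ lies in the same connected component structure so that $s := \mu(g_1, p) \in M_x^{\approx} \cap M^{G_x}$. Setting $g := g_1^{-1}$ we get $\varphi(gG_x, s) = \mu(g, s) = p$, so $\varphi$ is surjective.

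Next I would prove surjectivity of $\dd\varphi$ at an arbitrary point $(gG_x, s)$. By $G$-equivariance it suffices to treat the case $g = e$, because $\varphi(hgG_x, s) = \mu(h, \varphi(gG_x,s))$ and $\mu^h$ is a diffeomorphism of $M$, so $\dd\varphi_{(hgG_x,s)}$ is surjective if and only if $\dd\varphi_{(gG_x,s)}$ is. So fix $s \in M_x^{\approx}\cap M^{G_x}$ and consider $\dd\varphi$ at $(eG_x, s)$. The tangent space $T_{(eG_x,s)}\big(G/G_x \times (M_x^{\approx}\cap M^{G_x})\big)$ splits as $T_{eG_x}(G/G_x) \oplus T_s(M_x^{\approx}\cap M^{G_x})$. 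On the second factor, $\dd\varphi$ restricts to the inclusion $T_s(M_x^{\approx}\cap M^{G_x}) \hookrightarrow T_s M$ (since $\varphi(eG_x, \cdot)$ is the inclusion), and by Proposition~\ref{proposition-conjuntofixo} this is exactly $T_s M^{G_x}$, the fixed subspace $\{X \in T_s M : \dd\mu^h(s)X = X \text{ for all } h \in G_x\}$. On the first factor, $\dd\varphi$ sends $T_{eG_x}(G/G_x)$ onto $\dd(\mu_s)_e(\mathfrak g)$ modulo the part tangent to the orbit that is already accounted for; more precisely, $\varphi(\cdot, s)\colon G/G_x \to M$ is $\widetilde\mu_s$ in the notation of Proposition~\ref{orbitsubmanifold}, an immersion with image $G(s) = T_s G(s)$. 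So the image of $\dd\varphi_{(eG_x,s)}$ contains $T_s G(s) + T_s M^{G_x}$. The remaining point is that this sum is all of $T_s M_x^{\approx}$, equivalently all of $T_s M$ intersected with the relevant directions: using the slice decomposition $T_s M = T_s G(s) \oplus T_s S_s$ and the fact (from Proposition~\ref{proposition-localtype} and the slice representation, as in Claim~\ref{cl:duist1}) that $M_x^{\approx} \cap \mu^G(S_s)$ is locally $G/G_x \times S_s^{G_x}$ with $T_s S_s^{G_x} = T_s S_s \cap T_s M^{G_x}$, one gets $T_s M_x^{\approx} = T_s G(s) \oplus T_s S_s^{G_x} \subset T_s G(s) + T_s M^{G_x} \subset \operatorname{im} \dd\varphi_{(eG_x,s)}$. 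Hence $\dd\varphi$ is surjective.

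With Claim~\ref{cl:duist4} in hand, the proof of (iv) finishes exactly as in Theorem~\ref{theorem-tubularneighborhood}: by Claim~\ref{cl:duist2} the map $\varphi$ descends to a well-defined, smooth, $G$-equivariant map
$$\overline{\varphi}\colon G/G_x \times_{N(G_x)/G_x} (M_x^{\approx}\cap M^{G_x}) \longrightarrow M_x^{\approx},$$
which is bijective by the orbit-intersection argument (two pairs $(gG_x, s)$, $(g'G_x, s')$ map to the same point iff $g' = g k^{-1}$, $s' = \mu(k, s)$ for some $k \in N(G_x)$, using Claim~\ref{cl:duist3}(a)); a dimension count shows both sides have dimension $\dim(G/G_x) + \dim(M_x^{\approx}\cap M^{G_x})$, and surjectivity of $\dd\varphi$ together with $\dd\overline{\varphi}\circ\dd\pi = \dd\varphi$ (where $\pi$ is the principal bundle projection) forces $\dd\overline{\varphi}$ to be an isomorphism; so $\overline{\varphi}$ is a local diffeomorphism, hence, being bijective, a diffeomorphism. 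Finally (v) is immediate from (iv) and Theorem~\ref{theorem-fibradoassociado}, identifying $M_x^{\approx}$ with the twisted space which is the total space of a fiber bundle with fiber $G/G_x$ and base $(M_x^{\approx}\cap M^{G_x})/(N(G_x)/G_x)$. The main obstacle I expect is the bookkeeping in the surjectivity of $\dd\varphi$: one must carefully combine the slice decomposition, the description of $T_s M^{G_x}$ from Proposition~\ref{proposition-conjuntofixo}, and the local-type structure of Claim~\ref{cl:duist1} to see that the orbit directions plus the fixed-point directions span the tangent space to $M_x^{\approx}$ — this is the step where the distinction between $M_x^{\sim}$ and $M_x^{\approx}$ genuinely matters.
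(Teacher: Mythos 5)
Your argument is correct and follows essentially the same route as the paper: surjectivity of $\varphi$ is obtained by conjugating any point of $M_x^{\approx}$ into $M^{G_x}$ and invoking Claim~\ref{cl:duist2}, and surjectivity of $\dd\varphi$ is obtained by reducing to a slice at $s$ and using the $G/G_x\times S_s^{G_x}$ description from Claim~\ref{cl:duist1}(ii), exactly as the paper does. One small remark: to conclude that $\mu(g_1,p)\in M_x^{\approx}$ you do not need the open-and-closed-in-$M_x^{\sim}$/connected-component argument (which would anyway require $G$ connected for the orbit $G(p)$ to be connected); $M_x^{\approx}$ is $G$-invariant outright, because $\mu(g_1,p)$ lies in the very same orbit $G(p)$ and local orbit type is a property of the orbit, which is how the paper's proof reasons.
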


In order to prove that $\varphi$ is surjective, consider $y\in M_{x}^{\approx}$. There exists $g$ such that $G_{\mu(g^{-1},y)}=G_{x}$. It then follows from Claim~\ref{cl:duist2} that $\mu(g^{-1},y)\in M_{x}^{\sim}\cap M^{G_{x}}$. Since the action of $G$ leaves $M_{x}^{\approx}$ invariant, $\mu(g^{-1},y)\in M_{x}^{\approx}\cap M^{G_{x}}$ and hence $y=\varphi(g,\mu(g^{-1},y))$.

To prove that $\dd\varphi$ is surjective, note that the restriction $\varphi: (G/G_{x})\times S_{x}^{G_{x}}\rightarrow M_{x}^{\sim}\cap\mu^G(S_{x})$ is a diffeomorphism. This fact and the inclusion of the tangent spaces $$T_{(gG_{x},y)}(G/G_{x}\times S_{x}^{G_{x}})\subset T_{(gG_{x},y)}(G/G_{x}\times (M_{x}^{\approx}\cap M^{G_{x}}))$$ imply that $\dd\varphi$ is surjective.

\begin{claim}\label{cl:duist5}
$\varphi(gG_{x},y)=\varphi(hG_{x},z)$ if and only if $h=gk^{-1}$ and $z=\mu(k,y)$ for some $k\in N(G_{x})$.
\end{claim}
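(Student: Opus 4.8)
Looking at this Claim~\ref{cl:duist5}, I need to characterize when two points in $G/G_x \times (M_x^\approx \cap M^{G_x})$ map to the same point under $\varphi(gG_x, y) = \mu(g,y)$.

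The plan is to prove both implications, with the forward direction being the substantive one. For the easy direction: if $h = gk^{-1}$ and $z = \mu(k,y)$ with $k \in N(G_x)$, then first I would check that $hG_x$ is well-defined, which follows since $k \in N(G_x)$ means $kG_x = G_x k$, so $gk^{-1}G_x = gG_x k^{-1} = gG_x$ — wait, more carefully, $h = gk^{-1}$ and we need $hG_x = gk^{-1}G_x$; since $k^{-1} \in N(G_x)$ we have $k^{-1}G_x = G_x k^{-1}$, so this requires care, but in any case $\varphi(hG_x, z) = \mu(h, z) = \mu(gk^{-1}, \mu(k,y)) = \mu(gk^{-1}k, y) = \mu(g,y) = \varphi(gG_x, y)$. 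I should also note that $z = \mu(k,y) \in M_x^\approx \cap M^{G_x}$ by Claim~\ref{cl:duist3}(c), so the expression makes sense.

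For the forward direction: suppose $\mu(g,y) = \mu(h,z)$ with $y, z \in M_x^\approx \cap M^{G_x}$. Set $k = h^{-1}g$, so that $z = \mu(k,y)$. It remains to show $k \in N(G_x)$. Here I would invoke Claim~\ref{cl:duist3}(a): since $y \in M_x^\sim \cap M^{G_x}$ (because $M_x^\approx \cap M^{G_x} \subset M_x^\sim \cap M^{G_x}$) and $\mu(k,y) = z \in M_x^\sim \cap M^{G_x}$, part (a) of that claim gives exactly $k \in N(G_x)$. Then $h = gk^{-1}$ follows from $k = h^{-1}g$.

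The main obstacle — or rather the only real content — is ensuring that the reduction to Claim~\ref{cl:duist3} is legitimate, i.e., that all the membership conditions needed to apply parts (a) and (c) of that claim actually hold. This is essentially bookkeeping: one must track that $M_x^\approx \cap M^{G_x}$ sits inside $M_x^\sim \cap M^{G_x}$, and that by Claim~\ref{cl:duist2} membership in $M_x^\sim \cap M^{G_x}$ is equivalent to $G_y = G_x$, which makes the normalizer condition the natural one. Once Claim~\ref{cl:duist5} is established, it identifies the fibers of $\varphi$ with exactly the orbits of the $N(G_x)/G_x$-action on $M_x^\approx \cap M^{G_x}$, so that $\varphi$ descends to a bijection $G/G_x \times_{N(G_x)/G_x} (M_x^\approx \cap M^{G_x}) \to M_x^\approx$; combined with Claim~\ref{cl:duist4} (surjectivity of $\varphi$ and $\dd\varphi$) and a dimension count, the Inverse Function Theorem then upgrades this to the $G$-equivariant diffeomorphism asserted in item (iv) of the theorem.
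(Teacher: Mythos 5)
Your proposal is correct and takes essentially the same route as the paper: both prove the easy direction by direct computation, and both prove the forward direction by setting $k=h^{-1}g$ so that $z=\mu(k,y)$, then invoking Claim~\ref{cl:duist3} (via the inclusion $M_{x}^{\approx}\cap M^{G_{x}}\subset M_{x}^{\sim}\cap M^{G_{x}}$) to conclude $k\in N(G_{x})$. You simply make explicit which parts of Claim~\ref{cl:duist3} are used and verify the easy direction in more detail than the paper does.
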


One part of Claim~\ref{cl:duist5} is clear. In order to prove the other, assume that $\varphi(gG_{x},y)=\varphi(hG_{x},z)$. Hence $\mu(g,y)=\mu(h,z)$ and $z=\mu(k,y)$, where $k=h^{-1}g$. Since $z,y\in (M_{x}^{\approx}\cap M^{G_{x}})$, it follows from Claim~\ref{cl:duist3} that $k\in N(G_{x})$ and this concludes the proof of Claim~\ref{cl:duist5}.

We now define the candidate to the desired $G$--equivariant diffeomorphism as \begin{eqnarray*} \psi: G/G_{x}\times_{N(G_{x})/G_{x}} (M_{x}^{\approx}\cap M^{G_{x}}) &\longrightarrow & M_{x}^{\approx} \\ \left[gG_{x},s\right] &\longmapsto & \mu(g,s). \end{eqnarray*} Claim~\ref{cl:duist5} and the fact that $$\pi:G/G_{x}\times (M_{x}^{\approx}\cap M^{G_{x}}) \longrightarrow G/G_{x}\times_{N(G_{x})/G_{x}} (M_{x}^{\approx}\cap M^{G_{x}})$$ is the projection of a principal fiber bundle imply that $\psi$ is well--defined, smooth and bijective.

\begin{claim}\label{cl:duist6}
$\psi$ is a $G$--equivariant diffeomorphism.
\end{claim}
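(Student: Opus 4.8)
The plan is to conclude the proof of Theorem~\ref{theorem-mesmotipo-esptotalFibrado} exactly as in the Tubular Neighborhood Theorem~\ref{theorem-tubularneighborhood}, by promoting the bijective local diffeomorphism $\psi$ to a global $G$--equivariant diffeomorphism, and then reading off item (v) from item (iv). So first I would verify that $\psi$ is a local diffeomorphism: we already know from Claim~\ref{cl:duist4} that $\varphi=\psi\circ\pi$ and $\dd\varphi$ is surjective, and that $\pi:G/G_{x}\times (M_{x}^{\approx}\cap M^{G_{x}})\to G/G_{x}\times_{N(G_{x})/G_{x}} (M_{x}^{\approx}\cap M^{G_{x}})$ is a submersion (being the projection of a principal bundle), so $\dd\psi$ is surjective. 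Then a dimension count closes the gap: using Claim~\ref{cl:duist1}(ii) the connected components of $M_{x}^{\approx}$ have dimension $\dim(G/G_{x})+\dim S_{x}^{G_{x}}$, while $\dim\big(G/G_{x}\times_{N(G_{x})/G_{x}} (M_{x}^{\approx}\cap M^{G_{x}})\big)=\dim(G/G_{x})+\dim(M_{x}^{\approx}\cap M^{G_{x}})-\dim(N(G_{x})/G_{x})$, and since by Claim~\ref{cl:duist3}(d) a neighborhood of each point of $M_{x}^{\approx}\cap M^{G_{x}}$ is diffeomorphic to $(N(G_{x})/G_{x})\times S_{x}^{G_{x}}$, the two dimensions agree. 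Hence $\dd\psi$ is an isomorphism, and by the Inverse Function Theorem $\psi$ is a local diffeomorphism; being bijective (Claim~\ref{cl:duist5}), it is a diffeomorphism.

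Next I would check $G$--equivariance, which is immediate from the formula: $G$ acts on the twisted product by $h\cdot[gG_{x},s]=[hgG_{x},s]$ and on $M_{x}^{\approx}\subset M$ by the restriction of $\mu$, so $\psi(h\cdot[gG_{x},s])=\psi([hgG_{x},s])=\mu(hg,s)=\mu(h,\mu(g,s))=\mu(h,\psi([gG_{x},s]))$; the only subtlety is that $G$ genuinely preserves $M_{x}^{\approx}$, which follows since the local orbit type is a $G$--invariant notion (if $z\in G(y)$ then $S_z$ can be taken as $\mu^g(S_y)$ by Remark~\ref{remark-SliceVaiEmSlice}, giving a $G$--equivariant diffeomorphism between the tubular neighborhoods). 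This proves item (iv).

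Finally, item (v) is a corollary of item (iv) together with Theorem~\ref{theorem-fibradoassociado}: the principal bundle here is $(G,\rho,G/G_{x},G_{x})$ from Corollary~\ref{cor-quocienteLieGroup}, but to form the associated bundle over $(M_{x}^{\approx}\cap M^{G_{x}})/(N(G_{x})/G_{x})$ one instead uses the principal $\big(N(G_{x})/G_{x}\big)$--bundle $M_{x}^{\approx}\cap M^{G_{x}}\to (M_{x}^{\approx}\cap M^{G_{x}})/(N(G_{x})/G_{x})$, which is indeed a principal bundle by Theorem~\ref{theorem-acaoproprialivre-eh-fibrado} since items (ii) and (iii) say the $\big(N(G_{x})/G_{x}\big)$--action is proper and free; the fiber $G/G_{x}$ carries the left $\big(N(G_{x})/G_{x}\big)$--action $nG_{x}\cdot gG_{x}=gn^{-1}G_{x}$, and the twisted space $\big(M_{x}^{\approx}\cap M^{G_{x}}\big)\times_{N(G_{x})/G_{x}}(G/G_{x})$ is, via the obvious swap of factors, exactly the total space appearing in (iv). Hence $M_{x}^{\approx}$ is the total space of a fiber bundle with fiber $G/G_{x}$ and base $(M_{x}^{\approx}\cap M^{G_{x}})/(N(G_{x})/G_{x})$, as claimed.

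I expect the main obstacle to be purely bookkeeping rather than conceptual: carefully matching the two $\big(N(G_{x})/G_{x}\big)$--actions (the one on the base manifold $M_{x}^{\approx}\cap M^{G_{x}}$ coming from the ambient $G$--action, and the one on the model fiber $G/G_{x}$) so that the balanced product in Theorem~\ref{theorem-fibradoassociado} really reproduces the quotient $G/G_{x}\times_{N(G_{x})/G_{x}}(M_{x}^{\approx}\cap M^{G_{x}})$ of item (iv), and double-checking that $M_{x}^{\approx}$ is genuinely a (locally-constant-dimensional) manifold before invoking the Inverse Function Theorem. All of these are routine once Claims~\ref{cl:duist1}--\ref{cl:duist5} are in hand, so the write-up will essentially be a transcription of the argument used for Theorem~\ref{theorem-tubularneighborhood}.
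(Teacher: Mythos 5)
Your argument for Claim~\ref{cl:duist6} follows essentially the same route as the paper: you deduce surjectivity of $\dd\psi$ from $\varphi=\psi\circ\pi$ together with Claim~\ref{cl:duist4}, close the gap with the dimension count coming from Claims~\ref{cl:duist1} and~\ref{cl:duist3}, invoke the Inverse Function Theorem and bijectivity (Claim~\ref{cl:duist5}) to get a diffeomorphism, and read off $G$--equivariance from the defining formula. The only difference is that you spell out the equivariance check and the $G$--invariance of $M_{x}^{\approx}$ where the paper says ``by construction,'' which is a welcome but purely expository elaboration; the remainder of your proposal (items (iv) and (v)) also matches the paper's logic.
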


To prove Claim~\ref{cl:duist6}, note that $\dd\pi$ and $\dd\varphi$ are surjective (see Claim~\ref{cl:duist4}) and $\varphi=\psi\circ\pi$. Therefore $\dd\psi$ is surjective. On the other hand, Claim~\ref{cl:duist1} implies that
\begin{equation}\label{eq1-theorem-mesmotipo-esptotalFibrado}
M_{x}^{\approx}\cap \mu^G(S_{x})=(G/G_{x})\times S_{x}^{G_{x}}
\end{equation}
and Claim~\ref{cl:duist3} implies that
\begin{equation}\label{eq2-theorem-mesmotipo-esptotalFibrado}
\mu^G(S_{x})\cap(M_{x}^{\approx}\cap M^{G_{x}})=(N(G_{x})/G_{x})\times S_{x}^{G_{x}}.
\end{equation}
Both \eqref{eq1-theorem-mesmotipo-esptotalFibrado} and \eqref{eq2-theorem-mesmotipo-esptotalFibrado} imply that
\begin{equation}\label{eq3-theorem-mesmotipo-esptotalFibrado}
\dim M_{x}^{\approx}=\dim(G/G_{x}\times_{N(G_{x})/G_{x}}(M_{x}^{\approx}\cap M^{G_{x}})).
\end{equation}

It follows from \eqref{eq3-theorem-mesmotipo-esptotalFibrado} and the fact that $\dd\psi$ is surjective, that $\dd\psi$ is an isomorphism. Hence, from the Inverse Function Theorem, $\psi$ is a local diffeomorphism. Since it is also bijective, we conclude that $\psi$ is a diffeomorphism. By construction, $\psi$ is $G$--equivariant. This concludes the proofs of Claim~\ref{cl:duist6} and (iv).

Finally, item (v) follows from the fact that the action of $N(G_{x})/G_{x}$ is proper and free on $M_{x}^{\approx}\cap M^{G_{x}}.$
\end{proof}

To conclude this section, we present a result on the stratification given by orbit types of a proper action.

\begin{definition}
A {\it stratification}\index{Stratification} of a manifold $M$ is a locally finite partition of $M$ by embedded submanifolds $\{M_i\}_{i\in I}$ of $M$, called {\it strata},\index{Strata} such that the following hold.
\begin{itemize}
\item[(i)] For each $i\in I$, the closure of $M_i$ is $M_i\cup\bigcup_{j\in I_i} M_j$, where $I_i\subset I\setminus\{i\}$;
\item[(ii)] $\dim M_j<\dim M_i$, for each $j\in I_i$.
\end{itemize}
\end{definition}

\begin{example}\label{ex-stratification}
Consider the action of $G=S^{1}$ on $M=\C\times\R=\R^{3}$ defined by $\mu(s,(z,t))=(s z,t)$. Then $$M_{1}=\{(0,t)\in\C\times\R:t \in\R\}$$ is the stratum of orbits that are single points. Set $M_{2}=\R^{3}\setminus M_{1}$. Note that each principal orbit is a circle (of codimension 2) whose center is in $M_{1}$, and that $M_{2}=M_{\mathrm{princ}}$. In particular, $M_{\mathrm{princ}}$ is open and dense in $M$, as asserted in Theorem~\ref{teo-orbita-principal}. One can also check that $$M/G=\{(x_{2},x_{3})\in\R^{2}: x_{2}\geq 0\}$$ and $M_{\mathrm{princ}}=\{(x_{2},x_{3})\in\R^{2}:x_{2}> 0\}$. Hence, $M_{\mathrm{princ}}/G$ is connected in $M/G$, again as inferred in Theorem~\ref{teo-orbita-principal}. In this example, $\{M_i\}_{i\in I}$ is clearly a stratification of $M$, where $I=\{1,2\}$, $I_{2}=\{1\}$ and $I_{1}=\emptyset$.
\end{example}

\begin{theorem}\label{teo-stratification}
Let $\mu:G\times M\rightarrow M$ be a proper action. The connected components of the types of orbits of $M$ give a stratification of $M$.
\end{theorem}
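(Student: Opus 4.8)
The plan is to verify the two defining properties of a stratification for the partition of $M$ into connected components of orbit types. For each $x\in M$, let $M_x^{\sim}$ denote the orbit type of $x$, and recall from Theorem~\ref{theorem-mesmotipo-esptotalFibrado} (and the remark following it) that each connected component of $M_x^{\sim}$ is an embedded submanifold of $M$ (being, up to a $G$--equivariant diffeomorphism, a connected component of the total space of the fiber bundle $G/G_x\times_{N(G_x)/G_x}(M_x^{\sim}\cap M^{G_x})$, whose base $M_x^{\sim}\cap M^{G_x}$ is open in the submanifold $M^{G_x}$ of Proposition~\ref{proposition-conjuntofixo}). Local finiteness follows from Theorem~\ref{theorem-n-finito-types-orbits}: each point $x$ has a tubular neighborhood $\mu(G,S_x)$ meeting only finitely many orbit types, hence only finitely many strata (each orbit type contributing at most finitely many components that meet the given tube, again by Theorem~\ref{theorem-n-finito-types-orbits} applied locally together with the fact that $\mu(G,S_x)$ has finitely many connected components of each relevant type). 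So the two remaining points to establish are the closure condition (i) and the dimension-drop condition (ii).

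For the closure/frontier condition, the key local statement is: if $x\in M$ and $y\in \mu(G,S_x)$, then the type of $G(y)$ is larger than (or equal to) the type of $G(x)$, i.e.\ $G_x$ is conjugate into $G_y$ — this is immediate from the definition of a slice, since for $y\in S_x$ we have $G_y\subset G_x$, and combining with the Tubular Neighborhood Theorem~\ref{theorem-tubularneighborhood} one gets $G_{\mu(g,y)}\supset G_x$ after an appropriate conjugation. Consequently, if a stratum $N$ is a connected component of $M_x^{\sim}$ and $p\in\overline{N}$, then every neighborhood of $p$ meets $N$; taking the neighborhood $\mu(G,S_p)$ and a point $y\in N\cap\mu(G,S_p)$ shows that $G_p$ is subconjugate to $G_y\sim G_x$, so $p$ lies in an orbit type whose type is $\leq$ that of $x$. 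Thus $\overline{N}\setminus N$ is contained in the union of strata corresponding to strictly smaller-or-incomparable types; to conclude (i) one intersects $\overline{N}$ with the full stratification and uses local finiteness to see that $\overline N$ is a union of $N$ together with finitely many lower strata $M_j$, $j\in I_i$.

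For the dimension condition (ii), I would argue that whenever a stratum $M_j$ meets $\overline{M_i}\setminus M_i$, one has $\dim M_j<\dim M_i$. Locally near a point $p\in M_j\cap\overline{M_i}$ we may pass to the slice representation: by the Slice Theorem~\ref{slicethm} and Theorem~\ref{theorem-acaopropria-isometrica}, the situation is modeled on a linear action of the compact group $K=G_p\subset\O(n)$ on $\nu_pG(p)\cong\R^n$, and the strata near $p$ correspond to $K$--orbit type strata in $\R^n$. For a linear orthogonal action, the fixed-point stratum is the linear subspace $(\R^n)^K$, and any other orbit type stratum $W$ that has the origin in its closure satisfies $\dim W>\dim(\R^n)^K$, because $W$ contains points whose $K$--orbit is positive-dimensional or whose isotropy is strictly smaller, and a dimension count for the bundle decomposition in Theorem~\ref{theorem-mesmotipo-esptotalFibrado} (orbit dimension plus base dimension) shows the total dimension strictly increases; an induction on $n$, restricting to the unit sphere $S^{n-1}$ and using the slice representation there — exactly as in the proofs of the Principal Orbit Theorem~\ref{teo-orbita-principal} and Theorem~\ref{theorem-n-finito-types-orbits} — handles the general case. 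Translating back through the slice identification gives $\dim M_j<\dim M_i$.

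The main obstacle I expect is the bookkeeping in the dimension step: making the inductive comparison of dimensions of orbit-type strata of a linear orthogonal representation fully rigorous, and checking carefully that the local model near $p$ genuinely identifies the germ of the stratification of $M$ with that of the slice representation (including that strata of $M$ near $p$ do not "fuse" or "split" under this identification). The closure and local-finiteness steps are comparatively routine once Theorems~\ref{theorem-n-finito-types-orbits} and~\ref{theorem-mesmotipo-esptotalFibrado} and the slice properties are invoked.
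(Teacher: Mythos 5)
Your proposal correctly assembles the peripheral ingredients (embeddedness of strata from Theorem~\ref{theorem-mesmotipo-esptotalFibrado}, local finiteness from Theorem~\ref{theorem-n-finito-types-orbits}, and the reduction to the slice representation), and correctly identifies where the dimension drop must come from. But there is a genuine gap in the closure step, and it is precisely the step you label ``comparatively routine.'' What you show is that $\overline{N}\setminus N$ is \emph{contained in} a union of lower-type orbit-type sets: for $p\in\overline{N}$ one gets $G_p\supset gG_xg^{-1}$, so $p$ has strictly smaller type. But condition (i) of the definition of a stratification requires the \emph{frontier condition}: $\overline{N}$ must be a union of entire strata, i.e.\ whenever $\overline{N}$ meets a stratum $M_j$ it must contain all of $M_j$. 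Intersecting $\overline{N}$ with the partition and invoking local finiteness does not yield this --- local finiteness controls how many strata appear near a point, not whether closures pick them up wholesale. The paper proves the frontier condition by an explicit construction: reduce to the linear $G_x$--action on the slice, translate $y$ in $S_x$ to $\widetilde{y}=y-p+\widetilde{x}$ over a nearby point $\widetilde{x}\in S_x^{G_x}$, and use linearity to check $K_{z}=K_{\widetilde y}$ for all $z$ on the open radial segment from $\widetilde{x}$ to $\widetilde{y}$. This shows that every point of $S_x^{G_x}$ (hence, via Claim~\ref{cl:duist1} and $G$--invariance plus connectedness, the whole component $(M_x^\sim)^0$) lies in $\overline{(M_y^\sim)^0}$. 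That translation argument is the missing idea in your write-up.

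Your dimension step is in the right spirit but imprecise. The phrase ``$W$ contains points whose $K$--orbit is positive-dimensional or whose isotropy is strictly smaller'' does not by itself force $\dim W>\dim(\R^n)^K$: for an exceptional orbit the orbit dimension is the same and the isotropy drops only by components, so the orbit contributes nothing to the count. What saves the inequality is the extra radial direction exhibited by the same translation construction --- the stratum of $y$ in the slice contains $S_x^{G_x}$ translates, the $G_x$--orbit directions, \emph{and} the open radial segment, giving the paper's bound $\dim((S_x)_y^\sim)^0 \geq \dim S_x^{G_x} + \dim\mu^{G_x}(y) + 1$, from which $\dim(M_x^\sim)^0 < \dim(M_y^\sim)^0$ follows by adding $\dim G(x)$ to both sides. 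So the closure argument and the dimension argument are not two independent steps with the dimension step the hard one, as you suggest; in the paper they are the same construction read twice, and without it both halves of the stratification condition are left unproved.
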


\begin{proof}
In this proof we will use notations and arguments in the proof of Theorem~\ref{theorem-mesmotipo-esptotalFibrado}.

\begin{claim}
Let $(M_{y}^{\sim})^{0}$ be the connected component of ${M_{y}^{\sim}}$ that contains $y$. 
Consider $x\in\overline{(M_{y}^{\sim})^{0}}$ such that $x\notin  (M_{y}^{\sim})^{0}$. Then
\begin{itemize}
\item[(i)] If $(M_{x}^{\sim})^{0}$ is the connected component of $M_{x}^{\sim}$ that contains $x$, then $(M_{x}^{\sim})^{0}$ is contained in $\overline{(M_{y}^{\sim})^{0}}$;
\item[(ii)] $\dim (M_{x}^{\sim})^{0}< \dim (M_{y}^{\sim})^{0}$. 
\end{itemize}
\end{claim}

In order to prove (i), we may assume that $y\in S_{x}$. From Claim~\ref{cl:duist1}, it suffices to prove that if $\widetilde{x}\in S_{x}^{G_{x}}$ then $\widetilde{x}\in \overline{(M_{y}^{\sim})^{0}}$. Due to the slice representation, we may also assume that $x=0$, $K=\mu^{G_{x}}$ acts on an Euclidean space and $S_{x}^{G_{x}}$ is a vector space fixed by the action. Since the action of $K$ fixes each point of $S_{x}^{G_{x}}$, it leaves invariant the normal space $\nu_{p}(S_{x}^{G_{x}})$ that contains $y$. Let $\widetilde{y}$ be the translation of $y$ to the normal space $\nu_{\widetilde{x}}(S_{x}^{G_{x}})$, i.e., $\widetilde{y}=y-p+\widetilde{x}$. Since $K_{\widetilde{x}}=K=K_{p}$, we conclude that
\begin{equation}\label{eq1-teo-stratification}
K_{y}=K_{\widetilde{y}}.
\end{equation}

Since the action of $K$ on the Euclidean space is linear,
\begin{equation}\label{eq2-teo-stratification}
K_{z}=K_{\widetilde{y}}
\end{equation}
for each point $z$ (different from $\widetilde{x}$) in the segment joining $\widetilde{x}$ and $\widetilde{y}$. Equations \eqref{eq1-teo-stratification} and \eqref{eq2-teo-stratification} imply that $\widetilde{x}\in \overline{(M_{y}^{\sim})^{0}}$. This concludes the proof of (i).

As for (ii), let $(S_{x})_{y}^{\sim}$ be the set of points in $S_{x}$ with the same $G_{x}$--orbit type of $y\in S_{x}$. From the above discussion, we infer that $$\dim ((S_{x})_{y}^{\sim})^{0}\geq \dim(S_{x}^{G_{x}})+\dim \mu^{G_{x}}(y)+1.$$ In particular,
\begin{equation}\label{eq3-teo-stratification}
\dim ((S_{x})_{y}^{\sim})^{0}> \dim(S_{x}^{G_{x}}).
\end{equation}
From Claim~\ref{cl:duist1}, we have
\begin{equation}\label{eq4-teo-stratification}
\dim (M_{x}^{\sim})^{0}=\dim(S_{x}^{G_{x}}) +\dim G(x).
\end{equation}
Finally, the fact that each point in $((S_{x})_{y}^{\sim})^{0}$ is also in $(M_{y}^{\sim})^{0}$ implies
\begin{equation}\label{eq5-teo-stratification}
\dim (M_{y}^{\sim})^{0}\geq \dim ((S_{x})_{y}^{\sim})^{0}+ \dim G(x).
\end{equation}
Equations \eqref{eq3-teo-stratification}, \eqref{eq4-teo-stratification} and \eqref{eq5-teo-stratification} imply $\dim (M_{x}^{\sim})^{0}< \dim (M_{y}^{\sim})^{0}$. This concludes the proof of (ii).

Using the slice representation and the same arguments in the proof of Theorem~\ref{theorem-n-finito-types-orbits}, one can prove that the partition is locally finite and this concludes the proof.
\end{proof}

\begin{remark}
With the slice representation, it is possible to prove that the connected components of the orbit types in $M$ form a \emph{Whitney stratification} of $M$ (see Duistermaat and Kolk~\cite{duistermaat} for definitions and proof).
\end{remark}

We conclude this chapter with a few final comments on the slice representation. We have seen in different proofs along this chapter that, using the slice representation, the local study of proper (isometric) actions can be reduced to the local study of an isometric action in Euclidean space.

As remarked by Molino~\cite{Molino}, the same idea is still valid in the local study of \emph{singular Riemannian foliations} (see Definition~\ref{defsrfs}). More precisely, after a suitable change of metrics, the local study of an arbitrary singular Riemannian foliation is reduced to the study of a singular Riemannian foliation on the Euclidean space (with the standard metric). This idea was used, for example, by Alexandrino and T\"{o}ben~\cite{AlexToeben2} and Alexandrino~\cite{Alex6}.

\chapter{Adjoint and conjugation actions}
\label{chap4}

Two important actions play a central role in the theory of compact Lie groups. First, the action of a compact Lie group $G$ on itself by \emph{conjugation}. Second, the \emph{adjoint action}\index{Action!adjoint}\index{Adjoint!action} $$\Ad:G\times \mathfrak g\longrightarrow \mathfrak g$$ of $G$ on its Lie algebra $\mathfrak g$, given by the linearization of the conjugation action (see Definition~\ref{defadjoint}). The adjoint action is also a typical example of a \emph{polar action} (see Remark~\ref{remarkpolar} and Definition~\ref{definitionPolarAction}) and each of its regular orbits is an isoparametric submanifold (see Definition~\ref{definition-isoparametric}). A surprising fact is that several results of adjoint actions can be generalized, not only to the theory of isoparametric submanifolds and polar actions, but also to a general context of \emph{singular Riemannian foliations with sections}, or s.r.f.s. for short (see Definition~\ref{defsrfs}).

The aim of this chapter is to recall classic results of adjoint actions from a differential geometric viewpoint. The relation between adjoint action and isoparametric submanifolds is also briefly studied. Several results presented in this chapter will illustrate results of the theory of s.r.f.s., discussed in the next chapter.

Further references for this chapter are Fegan~\cite{Fegan}, Duistermaat and Kolk~\cite{duistermaat}, Helgason~\cite{helgason}, Hall~\cite{Hall}, San Martin~\cite{SanMartin}, Serre~\cite{Serre}, Thorbergsson~\cite{ThSurvey1,ThSurvey2} and Palais and Terng~\cite{PalaisTerng}.

\section[Maximal tori and polar actions]{Maximal tori, isoparametric submanifolds and polar actions}

The aim of this section is twofold. First, we prove the Maximal Torus Theorem, that generalizes classic results from linear algebra. Second, we present definitions of polar actions and isoparametric submanifolds, briefly discussing some results from these theories.

Recall that a Lie group $T$ is a \emph{torus}\index{Torus} if it is isomorphic to the product $S^{1}\times\cdots\times S^{1}$ and that, in this case, it is abelian and its Lie algebra $\mathfrak t$ is isomorphic to the Euclidean space $\R^n$. An element $p\in T$ is a \emph{generator}\index{Torus!generator}\index{Generator} if the set $\{p^{n}:n\in\Z\}$ is dense in $T$. Analogously, a vector $X\in\mathfrak{t}$ is an \emph{infinitesimal generator}\index{Generator!infinitesimal} if the set $\{\exp(tX):t\in\R\}$ is dense in $T$. It is not difficult to see that each torus contains a generator and an infinitesimal generator. Recall also that it was proved in Theorem~\ref{cpctabeliantorus} that a connected compact abelian Lie group is a torus.

Let $T$ be the subgroup of diagonal matrices of $\SU(n)$. It follows from Theorem~\ref{cpctabeliantorus} that $T$ is a torus, and we know from linear algebra that every matrix $g\in\SU(n)$ is conjugate to an element of $T$. This is a particular case of the following result.

\begin{mtthm}\label{maxtorusthm}\index{Theorem!Maximal Torus}\index{Torus!maximal}
Let $G$ be a connected, compact Lie group. Then
\begin{itemize}
\item[(i)] There exists a maximal torus $T$ (in the sense that if $N$ is a torus and $T\subset N$, then $T=N$);
\item[(ii)] Let $T_1$ and $T_2$ be two maximal tori. There exists $g\in G$ such that $g T_{1} g^{-1}=T_{2}$. In particular, the maximal tori have the same dimension, called the \index{Lie group!rank}\emph{rank} of $G$;
\item[(iii)] Let $T$ be a maximal torus and $g\in G$. There exists $h\in G$ such that $hgh^{-1}\in T$. In particular, each element of $G$ is contained in a maximal torus;
\item[(iv)] For each bi--invariant metric on $G$, the orbits of the conjugation action intersect each maximal torus orthogonally.
\end{itemize}
\end{mtthm}

\begin{proof}
(i) Let $\mathfrak{t}\subset \mathfrak g$ be the maximal abelian sub algebra of the Lie algebra of $G$. From Theorem~\ref{integralsubgroup}, there exists a unique connected subgroup $T\subset G$ with Lie algebra $\mathfrak{t}$. Note that the closure $\overline{T}$ is an abelian connected, compact Lie group. Thus, from Theorem~\ref{cpctabeliantorus}, $\overline{T}$ is a torus. Since $\mathfrak{t}$ is maximal, $\overline{T}=T$. Furthermore, consider a subgroup $H\subset G$ such that $H$ is a torus and $\overline{T}\subset H$. Then, for each $X\in\h$, we have $[X,Z]=0$ for all $Z\in\mathfrak{t}$. By maximality of $\mathfrak{t}$, we conclude that $X\in\mathfrak{t}$, hence $\mathfrak{h}=\mathfrak{t}$. Therefore, from Theorem~\ref{integralsubgroup}, $H=\overline{T}=T$, and this proves (i).

Before proving the next item, we present the following auxiliary result.

\begin{lemma}\label{LemmaMaximalTorusTheorem}
Consider a Lie group $G$ with Lie algebra $\mathfrak g$ in the hypothesis above. Let $T$ be a maximal torus of $G$ with Lie algebra $\mathfrak{t}$, and $X\in\mathfrak{t}$ an infinitesimal generator of $T$. Then $$\mathfrak{t}=\{Y\in\mathfrak{g}:[X,Y]=0\}.$$
\end{lemma}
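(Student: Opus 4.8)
The statement to prove is Lemma~\ref{LemmaMaximalTorusTheorem}: if $T$ is a maximal torus with Lie algebra $\mathfrak t$ and $X \in \mathfrak t$ is an infinitesimal generator, then $\mathfrak t = \{Y \in \mathfrak g : [X,Y] = 0\}$, which I will call $\mathfrak z_X$. One inclusion is immediate: since $\mathfrak t$ is abelian and $X \in \mathfrak t$, we have $[X,Y] = 0$ for every $Y \in \mathfrak t$, so $\mathfrak t \subseteq \mathfrak z_X$. The work is in the reverse inclusion $\mathfrak z_X \subseteq \mathfrak t$.

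The plan for $\mathfrak z_X \subseteq \mathfrak t$ is to exploit density of $\{\exp(tX) : t \in \R\}$ in $T$ together with the compactness of $G$. First I would translate the bracket condition into a condition on the adjoint action: if $[X,Y] = 0$, then by Proposition~\ref{ad[]} we have $\ad(X)Y = 0$, hence $\exp(\ad(tX))Y = Y$ for all $t$, and by \eqref{adexp} this reads $\Ad(\exp(tX))Y = Y$ for all $t \in \R$. So $Y$ is fixed by $\Ad(g)$ for every $g$ in the dense subset $\{\exp(tX)\} \subseteq T$. Since $g \mapsto \Ad(g)Y$ is continuous, $Y$ is fixed by $\Ad(g)$ for all $g \in T$. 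Equivalently, by \eqref{gexpg}, $\exp(sY)$ commutes with every element of $T$ (differentiating $\Ad(g)Y=Y$ and using \eqref{tad}), so in particular $\exp(sY)$ commutes with all of $T$ for every $s \in \R$.

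Now I would build an abelian subgroup containing $T$ and $\exp(sY)$. Let $H$ be the closure of the subgroup generated by $T$ and $\{\exp(sY) : s \in \R\}$. Since $T$ is abelian and each $\exp(sY)$ commutes with $T$ (and the $\exp(sY)$ commute among themselves, being a one-parameter subgroup), the generated subgroup is abelian, hence so is its closure $H$. As a closed subgroup of the compact group $G$, $H$ is compact (by Theorem~\ref{closedsubgroup} it is an embedded Lie subgroup, and it is closed in a compact space). It is also connected: $T$ is connected and each point $\exp(sY)$ is joined to $e$ by the path $\exp(s'Y)$, $s' \in [0,s]$, so the generated subgroup is path-connected, and its closure is connected. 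Thus $H$ is a connected compact abelian Lie group containing $T$, hence a torus by Theorem~\ref{cpctabeliantorus}. By maximality of $T$ we get $H = T$, so $\exp(sY) \in T$ for all $s$, which gives $Y = \frac{\dd}{\dd s}\exp(sY)\big|_{s=0} \in T_e T = \mathfrak t$ (using that the Lie exponential of $T$ agrees with that of $G$, Remark~\ref{expsubgroup}). This completes the reverse inclusion.

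The main obstacle — really the only non-formal point — is making sure the density hypothesis is genuinely used and correctly turned into a statement about \emph{all} of $T$ rather than just the dense one-parameter subgroup; this is where continuity of $g \mapsto \Ad(g)Y$ (equivalently, of the conjugation map) is essential, and I would state it explicitly. A secondary point to be careful about is asserting that $\exp(sY)$ commutes with all of $T$: one should either differentiate the relation $\Ad(\exp(tX))Y = Y$ to get $\Ad(t)$-invariance of the full flow and then pass to the closure, or argue directly that $\Ad(g)Y = Y$ for all $g \in T$ implies $g\exp(sY)g^{-1} = \exp(\Ad(g)(sY)) = \exp(sY)$ via \eqref{gexpg}. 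Everything else is routine application of results already established (Theorems~\ref{closedsubgroup}, \ref{cpctabeliantorus}, Proposition~\ref{ad[]}, and the exponential identities).
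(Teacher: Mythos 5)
Your proof is correct and follows essentially the same strategy as the paper: use commutativity of $X$ and $Y$ together with density of $\{\exp(sX):s\in\R\}$ in $T$ to show that $\exp(sY)$ commutes with every element of $T$, then enlarge $T$ to a connected compact abelian subgroup containing the one-parameter subgroup $\{\exp(sY)\}$, and invoke maximality of $T$. The only cosmetic difference is that the paper works at the Lie algebra level, taking the integral subgroup of $\R Y\oplus\mathfrak t$ via Theorem~\ref{integralsubgroup} and then passing to its closure, whereas you close the group generated by $T$ and $\{\exp(sY)\}$ directly; both yield the same torus.
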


In order to prove this lemma, we first observe that the inclusion $\mathfrak t\subset\{Y\in\mathfrak{g}:[X,Y]=0\}$ is immediate from Proposition~\ref{abelianiff}. Conversely, consider $Y\in\mathfrak{g}$ such that $[X,Y]=0$. From commutativity of $X$ and $Y$, it follows that $$\exp(sX)\exp(tY)=\exp(tY)\exp(sX), \mbox{ for all } s,t\in\R.$$ Hence $\exp(tY)$ commutes with all elements of $T=\overline{\{\exp(sX):s\in\R\}}$. Therefore, $Y$ commutes with all vectors of $\mathfrak{t}$. From Theorem~\ref{integralsubgroup}, there exists a unique connected Lie subgroup $T_{2}$ of $G$ with Lie algebra $\R Y\oplus \mathfrak{t}$. Note that $\overline{T_{2}}$ is a connected compact abelian Lie group. Thus, from Theorem~\ref{cpctabeliantorus}, $\overline{T_{2}}$ is a torus. Since $T$ is a maximal torus contained in $\overline{T_{2}}$, it follows $\overline{T_{2}}=T$. This implies $Y\in\mathfrak{t}$ and concludes the proof of the lemma.

As for (ii), let $X_{1}$ and $X_{2}$ be infinitesimal generators of $T_{1}$ and $T_{2}$ respectively. Define \begin{eqnarray*} f:G&\longrightarrow &\R \\ g &\longmapsto &\langle \Ad(g)X_{1},X_{2}\rangle, \end{eqnarray*} where $\langle\cdot,\cdot\rangle$ is a bi--invariant metric on $G$ (see Proposition~\ref{cptbi}). Since $G$ is compact, $f$ has a minimum at some point $g_{0}\in G$. Therefore, for all $Y\in \mathfrak{g}$, \begin{eqnarray*}
0&=&\frac{\mathrm d}{\mathrm dt} f\circ (\exp(tY) g_{0})\Big|_{t=0}\\
&=&\frac{\mathrm d}{\mathrm dt} \langle \Ad (\exp(tY)) \Ad (g_{0})X_{1}, X_{2}\rangle\Big|_{t=0}\\
&=&\langle [Y,\Ad(g_{0})X_{1}],X_{2}\rangle\\
&=&\langle Y, [\Ad(g_{0})X_{1},X_{2}]\rangle.
\end{eqnarray*}

The equation above implies $[\Ad(g_{0})X_{1},X_{2}]=0$. It then follows from Lemma~\ref{LemmaMaximalTorusTheorem} that $\Ad(g_{0})X_{1}\in\mathfrak{t_{2}}$, hence $g_{0}\exp(tX_{1}) g_{0}^{-1}\in T_{2}.$ The last equation implies that $g_{0} T_{1} g_{0}^{-1}\subset T_{2}$ and by maximality of $T_{1}$, we conclude $T_{1}=g_{0}^{-1}T_{2}g_{0}$. This proves (ii).

From Theorem~\ref{expagree}, $\exp:\mathfrak{g}\rightarrow G$ is surjective. Thus, given $g\in G$ there exists $Y\in\mathfrak{g}$ such that $\exp(Y)=g$. Let $T_{2}$ be a maximal torus that contains $\overline{\{\exp(tY):t\in\R\}}$. It follows from (ii) that there exists $h\in G$ such that $h T_{2} h^{-1}=T$. In particular, $h g h^{-1}\in T$, and this proves (iii).

Finally, consider $p\in T$ and $G(p)=\{gpg^{-1}:g\in G\}$ its orbit by the conjugation action. It is then easy to see that
\begin{equation}\label{Eq1MaximalTorusTheorem} 
T_{p}G(p)= \{\dd R_{p}Y- \dd L_{p} Y:Y\in\mathfrak{g}\},
\end{equation}
\begin{equation}\label{Eq2MaximalTorusTheorem}
T_{p}T=\{\dd R_{p}Z:Z\in\mathfrak{t}\}.
\end{equation}
Let $\langle \cdot,\cdot\rangle$ be a bi--invariant metric on $G$. Since $\Ad(p)Z=Z$ for all $ Z\in\mathfrak{t}$, $$0=\langle \dd R_{p}Y-\dd L_{p}Y,\dd R_{p}Z \rangle .$$ The above equation, \eqref{Eq1MaximalTorusTheorem} and \eqref{Eq2MaximalTorusTheorem} imply (iv).
\end{proof}

\begin{definition}\label{definitionPolarAction}
An isometric action of a compact Lie group on a Riemannian manifold $M$ is called a \emph{polar action}\index{Action!polar} if it admits \emph{sections}. This means that each regular point $p$ is contained in an immersed submanifold $\Sigma$, called \emph{section},\index{Action!sections} which is orthogonal to each orbit it intersects and whose dimension is equal to the codimension of the regular orbit $G(p)$. In addition, if sections are flat, the action is called \emph{hyperpolar}.\index{Action!hyperpolar}
\end{definition}

The Maximal Torus Theorem~\ref{maxtorusthm} implies that the conjugation action of a compact Lie group $G$ endowed with a bi--invariant metric is polar. In this case, sections are maximal tori. The codimension of a regular orbit is the rank of $G$, and to this aim one can use its \emph{root system} (see Remark~\ref{remarkWeylchamber2}). Moreover, the conjugation action is not only polar, but also hyperpolar. This follows from Proposition~\ref{curvatureLie}, that implies that the sectional curvature on the plane spanned by the (linearly independent) vectors $X$ and $Y$ is $$K(X,Y)=\frac{1}{4}\frac{\|[X,Y]\|^2}{\|X\|^{2}\|Y\|^{2}-\langle X,Y\rangle^{2}}.$$

\begin{remark}\label{polarsymmetric}
Examples of polar actions can be found in symmetric spaces. Recall that a Riemannian manifold $M$ is a \emph{symmetric space}\index{Symmetric space} if, for each $p\in M$, there is an isometry $\sigma_p$ of $M$ that fixes $p$ and reverses geodesics through $p$. In this case, it is not difficult to prove that $M=G/K$, where $G$ is the connected component of $\Iso(M)$ that contains the identity, and $K$ is its isotropy group at some fixed point $p_0\in M$. Such a pair of groups $(G,K)$ is called a \emph{symmetric pair}.\index{Symmetric space!pair} Let $\Sigma$ be a maximal flat and totally geodesic submanifold through $p_0\in M$. Then the action of $K$ on $M$ is hyperpolar, and $\Sigma$ is a section. This action is called \emph{isotropic action}\index{Action!isotropic}\index{Isotropic!action}. Moreover, the action of a compact Lie group on itself by conjugation is a particular case of isotropy action.

Another example of hyperpolar action is the \emph{isotropic representation}\index{Representation!isotropic}\index{Isotropic!representation} of symmetric spaces. This representation is the induced action of $K$ on the tangent space $T_{p_{0}}M$. In this case, $T_{p_{0}}\Sigma$ is a section. Isotropy actions can be generalized in the following way. Assume that $(G,K_{1})$ and $(G,K_{2})$ are symmetric pairs. Then it can be proved that the action of $K_{1}$ on $M=G/K_{2}$ is hyperpolar. This examples are known as \emph{Hermann actions}.\index{Action!Hermann}

Dadok~\cite{Dadok} classified all polar linear representations. They are \index{Action!orbit--equivalent}\emph{orbit--equivalent}\footnote{Isometric actions of Lie groups $G_1$, respectively $G_2$, on Riemannian manifolds $M_1$, respectively $M_2$, are said to be \emph{orbit--equivalent} if there exists an isometry between $M_1$ and $M_2$ mapping orbits of the $G_1$ action in orbits of the $G_2$ action.} to the isotropic representation of symmetric spaces. Podest\`a and Thorbergsson~\cite{PodestaThorbergsson} classified polar actions on compact symmetric spaces of \emph{rank one}\footnote{The \emph{rank}\index{Symmetric space!rank} of a symmetric space $M$ is the maximum dimension of flat, totally geodesic submanifolds of $M$.}. In such spaces there are examples of polar actions which are not hyperpolar. Finally, Kollross~\cite{Kollross} classified all hyperpolar actions on irreducible, simply--connected symmetric spaces of \emph{compact type}\footnote{A symmetric space $M$ is of \emph{compact type}\index{Symmetric space!compact type} if it has non negative sectional curvature (and is not flat).}. If they have cohomogeneity greater then one, they are orbit--equivalent to Hermann actions. Kollross also classified all cohomogeneity one actions on compact irreducible symmetric spaces.

It has been conjectured that all polar actions on irreducible symmetric spaces of compact type with rank greater than one are hyperpolar. A partial answer for this problem was given by Biliotti~\cite{Biliotti}, that proved that all polar actions on irreducible Hermitian symmetric spaces of compact type and rank greater than one are hyperpolar. For a survey on polar actions see Thorbergsson~\cite{ThSurvey1,ThSurvey2}, and for further reading on symmetric spaces, see Helgason~\cite{helgason}.
\end{remark}

We now turn our attention back to the classic theory of compact Lie groups, considering the adjoint action of $G$ on its Lie algebra $\mathfrak{g}$.

\begin{corollary}\label{corollary-MaximalTorus}
Let $G$ be a connected, compact Lie group endowed with a bi--invariant metric, and $\mathfrak{t}$ the Lie algebra of a maximal torus of $G$. Then each orbit of the adjoint action intersects $\mathfrak{t}$ orthogonally.
\end{corollary}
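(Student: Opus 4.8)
The plan is to imitate the proof of item (iv) of the Maximal Torus Theorem~\ref{maxtorusthm}, passing from the conjugation action on $G$ to its linearization $\Ad$. Fix a maximal torus $T$ with Lie algebra $\mathfrak{t}$, and let $\langle\cdot,\cdot\rangle$ denote the given bi--invariant metric, which we identify with the induced $\Ad$--invariant inner product on $\mathfrak{g}=T_eG$. Since $G$ is compact, the adjoint action $\Ad:G\times\mathfrak{g}\rightarrow\mathfrak{g}$ is proper, so by Proposition~\ref{orbitsubmanifold} each orbit $G(X)=\Ad(G)X$ is an embedded submanifold of $\mathfrak{g}$, and its tangent space at the point $X$ is the image under the differential at $e$ of the orbit map $g\mapsto\Ad(g)X$. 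By \eqref{ad} together with Proposition~\ref{ad[]}, this image is
\[
T_X\!\left(G(X)\right)=\{\ad(Y)X:Y\in\mathfrak{g}\}=\{[Y,X]:Y\in\mathfrak{g}\}.
\]

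First I would reduce to checking orthogonality at a point $X\in\mathfrak{t}$: any point at which $G(X)$ meets $\mathfrak{t}$ has the form $X'=\Ad(g)X$ with $X'\in\mathfrak{t}$, and $G(X')=G(X)$, so it suffices to prove $T_X(G(X))\perp\mathfrak{t}$ whenever $X\in\mathfrak{t}$. The key computation is then the following. The bi--invariance identity Proposition~\ref{curvatureLie}(i) says $\langle[A,B],C\rangle=-\langle B,[A,C]\rangle$ for all $A,B,C\in\mathfrak{g}$; applying it in the two variables $A\leftrightarrow B$ and $B\leftrightarrow C$ shows that $(A,B,C)\mapsto\langle[A,B],C\rangle$ is totally antisymmetric, hence invariant under cyclic permutations. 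Therefore, for arbitrary $Y\in\mathfrak{g}$ and $Z\in\mathfrak{t}$,
\[
\langle[Y,X],Z\rangle=\langle[X,Z],Y\rangle .
\]
Since $X,Z\in\mathfrak{t}$ and $\mathfrak{t}$ is abelian, $[X,Z]=0$, so $\langle[Y,X],Z\rangle=0$. As $Y\in\mathfrak{g}$ and $Z\in\mathfrak{t}$ were arbitrary, $T_X(G(X))$ is orthogonal to $\mathfrak{t}$, which is the assertion of the corollary.

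There is essentially no serious obstacle: the statement is a direct corollary, and the only points needing a little care are the identification $T_X(G(X))=\ad(\mathfrak{g})X$ (which uses properness of the adjoint action of a compact group and Proposition~\ref{orbitsubmanifold}) and the sign bookkeeping in the bi--invariance identity. One could alternatively deduce the result from item (iv) of the Maximal Torus Theorem~\ref{maxtorusthm} through Proposition~\ref{expcomm} and \eqref{gexpg}, observing that $\exp$ carries $\mathfrak{t}$ into $T$, intertwines the adjoint and conjugation actions, and has differential the identity at the origin; but that transfer is only clean near $0\in\mathfrak{g}$, so the direct algebraic argument above is preferable.
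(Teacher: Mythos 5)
Your proof is correct and follows essentially the same route as the paper: identify the tangent space of the adjoint orbit at a point $X\in\mathfrak t$ with $\{[Y,X]:Y\in\mathfrak g\}$, then use the $\Ad$--invariance of the metric (Proposition~\ref{curvatureLie}(i)) together with $[\mathfrak t,\mathfrak t]=0$ to conclude orthogonality. The paper opens with a one--line appeal to the Maximal Torus Theorem~\ref{maxtorusthm} to record that every adjoint orbit actually meets $\mathfrak t$, a point you leave implicit but should state, since the corollary asserts not merely that intersections are orthogonal but that each orbit does intersect $\mathfrak t$.
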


\begin{proof}
It follows from the Maximal Torus Theorem~\ref{maxtorusthm} that each orbit of the adjoint action intersects $\mathfrak{t}$. We only have to prove that such intersections are orthogonal. For each $Z\in\mathfrak{t}$, the tangent space of the adjoint orbit of $Z$ is given by \begin{eqnarray*}
T_{Z}(\Ad(G)Z) &=& \left\{\frac{\mathrm d}{\mathrm dt}\Ad(\exp(tX))Z\Big|_{t=0}: X \in\mathfrak{g}\right\} \\
             &=& \{[X,Z]: X \in\mathfrak{g}\}.
\end{eqnarray*}
On the other hand, for each $V\in\mathfrak{t}$, $$\langle[X,Z],V\rangle =\langle X,[Z,V]\rangle =0.$$

The result follows from the two equations above.
\end{proof}

Corollary~\ref{corollary-MaximalTorus} implies that the adjoint action is a polar action on $\mathfrak{g}$. This is a particular case of the next general result (see Palais and Terng~\cite{PalaisTerng}).

\begin{proposition}
The isotropic representation of a polar action is polar.\index{Representation!isotropic}
\end{proposition}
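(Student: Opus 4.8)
The plan is to prove the statement in the following precise form: if $\mu\colon G\times M\to M$ is a polar action and $p\in M$, then the slice representation $\rho\colon G_p\to\O(W)$ on $W:=\nu_pG(p)$ is polar, with section $\Sigma_0:=T_p\Sigma$, where $\Sigma$ is a section of $\mu$ through $p$; the adjoint case is recovered at $p=0$, where $W=\mathfrak g$, $G_0=G$ and $\rho=\Ad$. First I would reduce to $p\in\Sigma$: a section of a polar action meets every orbit, so there is $g\in G$ with $\mu(g,p)\in\Sigma$, and $\dd\mu^{g^{-1}}$ conjugates the slice representation at $\mu(g,p)$ to the one at $p$, so polarity transfers. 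With $p\in\Sigma$, orthogonality of $\Sigma$ to the orbit $G(p)$ (which it meets at $p$) gives $T_p\Sigma\perp T_pG(p)$, i.e.\ $\Sigma_0\subset W$. I will also use the standard fact that sections of polar actions are totally geodesic; combined with $\Sigma_0\subset W$ this yields $\exp_p(\Sigma_0\cap B_\varepsilon)=\Sigma\cap S_p$ near $p$, where $S_p=\exp_p(B_\varepsilon\cap W)$ is the normal slice at $p$.

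Next I would verify that $\Sigma_0$ is a section for $\rho$, which amounts to three points. (1) \emph{Dimension:} for $w\in W$ small, the Slice Theorem~\ref{slicethm} and the Tubular Neighborhood Theorem~\ref{theorem-tubularneighborhood} give $G_{\exp_p(w)}=(G_p)_w$, so $G(\exp_p(w))$ is a regular orbit exactly when $(G_p)(w)$ is a principal orbit in $W$; a dimension count with $\dim M=\dim G(p)+\dim W$ then shows that $\dim\Sigma_0=\dim\Sigma$, the codimension of a regular $G$-orbit, equals the codimension in $W$ of a principal $G_p$-orbit. (2) \emph{Meeting every $G_p$-orbit:} for $w\in W$ small, $q:=\exp_p(w)\in S_p$ and $G(q)$ meets $\Sigma$; using $\tub(G(p))\cong G\times_{G_p}S_p$ and the resulting bijection between $G$-orbits in $\tub(G(p))$ and $G_p$-orbits in $S_p$, together with $\Sigma\cap S_p=\exp_p(\Sigma_0\cap B_\varepsilon)$, one obtains $w'\in\Sigma_0$ in the same $G_p$-orbit as $w$ (this step requires shrinking $S_p$ suitably). (3) \emph{Orthogonality:} for $w,v\in\Sigma_0$ and $X\in\mathfrak g_p$ one must show $\langle\rho_*(X)w,v\rangle_W=0$, since $T_w\big(G_p(w)\big)=\{\rho_*(X)w:X\in\mathfrak g_p\}$.

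Point (3) is the crux, and I would handle it via Jacobi fields. Let $X^{*}$ be the Killing field on $M$ induced by $X\in\mathfrak g_p$ (Proposition~\ref{proposition-campoXxi}); since $\mu(\exp tX,\cdot)$ is a flow of isometries, $X^{*}$ restricted to any geodesic is a Jacobi field. Take $\gamma(s)=\exp_p(sw)$, which lies in $\Sigma$ for small $s$ by total geodesy, and the variation field $V(s)=\frac{\partial}{\partial\epsilon}\big|_{0}\exp_p(s(w+\epsilon v))$, a Jacobi field along $\gamma$ with $V(0)=0$, $\frac{DV}{ds}(0)=v$, and tangent to $\Sigma$. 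Because $\Sigma$ meets every orbit orthogonally, $\langle X^{*}(\gamma(s)),V(s)\rangle\equiv0$ for small $s$. Now $X^{*}(p)=0$ and $V(0)=0$, while the identity $\mu^{\exp tX}\circ\exp_p=\exp_p\circ\dd\mu^{\exp tX}_p$ (valid since $\exp tX\in G_p$) gives $X^{*}(\gamma(s))=s\,\dd(\exp_p)_{sw}(\rho_*(X)w)$, hence $\frac{D}{ds}X^{*}(\gamma(s))|_{s=0}=\rho_*(X)w$; comparing the $s^{2}$-coefficients of $\langle X^{*}(\gamma(s)),V(s)\rangle\equiv0$ leaves $\langle\rho_*(X)w,v\rangle=0$. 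This shows $\Sigma_0$ meets every $G_p$-orbit orthogonally, and with (1) and (2) it follows that $\rho$ is polar, in accordance with Corollary~\ref{corollary-MaximalTorus}. The main obstacle is precisely the transfer of orthogonality in (3): since $\exp_p$ is not an isometry, orthogonality in $M$ does not pull back to $W$ directly, and the device of using that $X^{*}$ is Killing (so restricts to a Jacobi field) together with the vanishing of $X^{*}$ and $V$ at $p$ is what circumvents this. Secondary technical inputs are the two facts about polar sections (they meet every orbit; they are totally geodesic), which could alternatively be avoided by choosing $\Sigma$ through a regular point of $S_p$ and arguing locally, though total geodesy remains the efficient tool.
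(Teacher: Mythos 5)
The paper offers no proof of this proposition; it only cites Palais and Terng~\cite{PalaisTerng}, so there is nothing in the text to compare against. Your reading of ``isotropic representation'' as the slice representation $\rho\colon G_p\to\O(\nu_pG(p))$ is the correct one, and $T_p\Sigma$ is the right candidate section. Your argument for point (3), the crux, is valid: all that is actually used is $X^*(\gamma(0))=0$, $V(0)=0$, $\frac{D}{ds}V|_{s=0}=v$ and $\frac{D}{ds}X^*(\gamma(s))|_{s=0}=(\nabla X^*)_p w=\rho_*(X)w$ (the last equality because $(\nabla X^*)_p$ is skew-symmetric and, being the derivative of the one-parameter family $\dd\mu^{\exp tX}_p$, preserves $T_pG(p)$ and hence $\nu_pG(p)$), which force the second derivative at $s=0$ of $\langle X^*(\gamma(s)),V(s)\rangle\equiv 0$ to equal $2\langle\rho_*(X)w,v\rangle$; the Jacobi-field framing is harmless but not actually needed.

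Point (2) as written has a real gap: that $G(q)$ meets $\Sigma$ somewhere does not place that intersection inside $\tub(G(p))$, and shrinking $S_p$ gives no control over where $\Sigma\cap G(q)$ lies, so the tube's orbit-space bijection cannot be invoked as stated. The clean repair is to deduce (2) from (1) and (3) and forget about $M$. Pick a regular $w_0\in\Sigma_0$; by (3) and the dimension count (1), $\Sigma_0=(T_{w_0}G_p(w_0))^\perp$. For arbitrary $w\in\nu_pG(p)$, let $v^*$ minimize $v\mapsto\|v-w_0\|^2$ over the compact orbit $G_p(w)$. Since the norm is $G_p$-invariant, $t\mapsto\|v^*-\rho(\exp tX)w_0\|^2$ has a minimum at $t=0$ for each $X\in\mathfrak{g}_p$; differentiating gives $\langle v^*-w_0,\rho_*(X)w_0\rangle=0$, i.e.\ $v^*-w_0\perp T_{w_0}G_p(w_0)$, hence $v^*\in\Sigma_0\cap G_p(w)$. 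With this substituted for your step (2) the argument is complete.
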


We conclude this section with some comments on isoparametric submanifolds.

\begin{definition}\label{definition-flatnormal}
Let $L$ be an immersed submanifold of a Riemannian manifold $M$. A section $\xi$ of the normal bundle $\nu(L)$ is said to be a \emph{parallel normal field}\index{Vector field!normal parallel} along $L$ if $\nabla^{\nu}\xi$ vanishes identically, where $\nabla^{\nu}$ is the normal connection.\footnote{$\nabla^{\nu}\xi$ is the component of $\nabla\xi$  that is normal to $L$.} $L$ is said to have \emph{flat normal bundle}\index{Normal bundle!flat}, if any normal vector can be extended to a locally defined parallel normal field. In addition, $L$ is said to have \emph{globally flat normal bundle},\index{Normal bundle!globally flat} if the holonomy of the normal bundle $\nu(L)$ is trivial. This means that any normal vector can be extended to a globally defined parallel normal field.
\end{definition}

\begin{definition}\label{definition-isoparametric}
A submanifold $F$ of a \emph{space form}\footnote{Recall that a \emph{space form}\index{Space form} $M(k)$ is a simply connected complete Riemannian manifold with constant sectional curvature $k$ (see Section~\ref{sec:riemgeom}).} $M(k)$ is called \emph{isoparametric}\index{Isoparametric!submanifold}\index{Submanifold!isoparametric} if its normal bundle is flat and principal curvatures along any parallel normal vector field are constant.
\end{definition}

Principal orbits of a polar action on a Riemannian manifold have globally flat normal bundle, since they are described by an integrable Riemannian submersion (see Remark~\ref{remarkequivariantfield} and Palais and Terng~\cite{PalaisTerng}). In particular, equivariant normal vectors turn out to be parallel normal fields. It is not difficult to prove that principal curvatures along any equivariant normal vector field of a  principal orbit are constant (see Proposition~\ref{prop-equivariant-field}). These two facts imply the next result.

\begin{proposition}\label{proposition-adjointaction-is-isoparametric}\index{Isoparametric submanifold}
Principal orbits of a polar action on Euclidean space are isoparametric submanifolds. In particular, principal orbits of the adjoint action of $G$ are isoparametric submanifolds of $\mathfrak g$.
\end{proposition}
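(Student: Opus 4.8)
\textbf{Proof strategy for Proposition~\ref{proposition-adjointaction-is-isoparametric}.}

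The plan is to deduce the statement directly from the geometric properties of principal orbits of polar actions established earlier, together with the definition of isoparametric submanifold. Recall that a submanifold $F$ of Euclidean space $\R^n$ (which is the space form $M(0)$) is isoparametric if its normal bundle is flat and the principal curvatures along any parallel normal field are constant. So there are exactly two things to verify for a principal orbit $G(p)$ of a polar action $\mu\colon G\times\R^n\to\R^n$: first, that $\nu(G(p))$ is flat; second, that principal curvatures along parallel normal fields are constant.

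First I would address flatness of the normal bundle. By Definition~\ref{definitionPolarAction}, a polar action admits sections: through each regular point $p$ there is a totally geodesic submanifold $\Sigma$ meeting every orbit orthogonally, with $\dim\Sigma=\codim G(p)$. In Euclidean space a totally geodesic submanifold through $p$ is an affine subspace, namely $p+\nu_p G(p)$. The key point, which I would invoke from the discussion following Definition~\ref{definition-isoparametric} (and Palais and Terng~\cite{PalaisTerng}), is that the restriction of $\mu$ to a neighborhood of a principal orbit is an integrable Riemannian submersion onto $\Sigma$; equivalently, the equivariant normal fields $\widehat{\xi}$ of Proposition~\ref{prop-equivariant-field}(ii) furnish a global trivialization of $\nu(G(p))$ by parallel normal fields. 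Concretely: given $\xi\in\nu_p G(p)$, the section $y=\mu(g,p)\mapsto\widehat\xi_y=\dd(\mu^g)_p\xi$ is well defined by Exercise~\ref{exercise-representacaoisotropica} (the slice representation is trivial on a principal orbit), and one checks that its normal covariant derivative $\nabla^\nu\widehat\xi$ vanishes — this is where the polar hypothesis enters, since the sections being totally geodesic forces the normal connection to be flat. Thus $\nu(G(p))$ is (globally) flat, and in fact every normal vector extends to a parallel normal field given by an equivariant normal field.

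Next I would handle the curvature condition. Having identified parallel normal fields with equivariant normal fields, it remains to show that for an equivariant normal field $\widehat\xi$ along $G(p)$ the shape operator $\mathcal S_{\widehat\xi}$ has constant eigenvalues. This is exactly Proposition~\ref{prop-equivariant-field}(iii)--(iv): the equivariance relation $\mathcal S_{\widehat\xi_{\mu(g,p)}}=\dd\mu^g\,\mathcal S_{\widehat\xi_p}\,\dd\mu^{g^{-1}}$ shows that the shape operators at different points of the orbit are conjugate, hence have the same spectrum, so if $\mathcal S_{\widehat\xi_p}X=\lambda X$ then $\mathcal S_{\widehat\xi_{\mu(g,p)}}(\dd\mu^g X)=\lambda\,\dd\mu^g X$, giving constant principal curvatures. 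Combining the two verifications yields that $G(p)$ is isoparametric. The final sentence of the proposition is then immediate: by the Maximal Torus Theorem~\ref{maxtorusthm} and Corollary~\ref{corollary-MaximalTorus} the adjoint action $\Ad\colon G\times\mathfrak g\to\mathfrak g$ is polar (sections being $\mathfrak t$, the Lie algebra of a maximal torus), and it is an isometric linear action on the Euclidean space $\mathfrak g$ equipped with a bi--invariant inner product, so its principal orbits are isoparametric submanifolds of $\mathfrak g$.

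The main obstacle I anticipate is the flatness step: one must genuinely use that the sections of a polar action are totally geodesic to conclude that equivariant normal fields are parallel, i.e., that $\nabla^\nu\widehat\xi=0$. Everything about constancy of principal curvatures is already packaged in Proposition~\ref{prop-equivariant-field}, but the identification "parallel normal field $\Leftrightarrow$ equivariant normal field" for principal orbits of \emph{polar} actions is the substantive geometric input, and a careful write-up would either reproduce the argument that the orbit map restricts to an integrable Riemannian submersion over the section, or cite Palais and Terng~\cite{PalaisTerng} for it, as Remark~\ref{remarkequivariantfield} already signals.
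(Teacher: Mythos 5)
Your proposal follows the same route as the paper: the paper's own justification (given in the paragraph preceding the proposition rather than a formal proof environment) is precisely that the globally flat normal bundle comes from the description of principal orbits of a polar action via an integrable Riemannian submersion (citing Remark~\ref{remarkequivariantfield} and Palais--Terng), so equivariant normal fields are parallel normal fields, and that constancy of principal curvatures along these fields is Proposition~\ref{prop-equivariant-field}. Your added elaboration on why totally geodesic sections force $\nabla^\nu\widehat\xi=0$, and the explicit appeal to Theorem~\ref{maxtorusthm} and Corollary~\ref{corollary-MaximalTorus} for the "in particular" clause, fill in details the paper leaves implicit but do not change the argument.
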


In the particular case of adjoint actions, we will prove that each regular orbit is a principal orbit (see Theorem~\ref{theorem-OrbitaPrincipal=OrbitaRegular}), and calculate principal curvatures and principal directions (see Remark~\ref{remark-Ad-isoparametrica-curvaturas}).

\begin{remark}\label{remark-definition-isoparametric}
It is possible to prove the normal bundle of an isoparametric submanifold is globally flat and hence that each normal vector can be extended to a parallel normal vector field. An important property of isoparametric submanifolds is that parallel sets of an isoparametric submanifold are submanifolds. This means that given an isoparametric submanifold $N$, a parallel normal vector field $\xi$ and $\eta_{\xi}(x)=\exp_{x}(\xi)$ the endpoint map, then $\eta_{\xi}(N)$ is a submanifold (with dimension possibly lower than $\dim N$). An \emph{isoparametric foliation}\index{Foliation!isoparametric} $\F$ on $M(k)$ is a partition of $M(k)$ by submanifolds parallel to a given isoparametric submanifold $N$. It is possible to prove that a leaf $L$ of $\F$ is also isoparametric if $\dim L=\dim N$. In this case, the partition by parallel submanifolds to $L$ turns out to be $\F$.
\end{remark}

\begin{remark}\index{Isoparametric!submanifold}
Isoparametric hypersurfaces in space forms have been studied since Cartan~\cite{Cartan1,Cartan2,Cartan3,Cartan4}. In Euclidean and hyperbolic spaces, they are cylinders or umbilic hypersurfaces. In spheres, there are other examples of isoparametric hypersurfaces, of which several are inhomogeneous (see Ferus, Karcher and M\"{u}nzner~\cite{FerusKarcherMunzner}). Regarding isoparametric hypersurfaces in spheres, we would like to emphasize the work of M\"{u}nzner~\cite{Munzner1,Munzner2}, that proved that the number of principal curvatures of isoparametric hypersurfaces in spheres can only be 1,2,3,4 or 6. Furthermore, all of these numbers are known to occur. Nevertheless, a full classification of isoparametric hypersurfaces in spheres is still an open problem.

The concept of isoparametric submanifolds was independently introduced in the eighties by Harle~\cite{Harle}, Carter and West~\cite{CarterWest1,CarterWest2} and Terng~\cite{Terng}, that remarks the similarity between isoparametric submanifolds and polar orbits in Euclidean spaces. In particular, Coxeter groups are associated to isoparametric submanifolds. Another interesting result relating orbits of polar actions to isoparametric submanifolds is due to Thorbergsson~\cite{Th2}. It is proved that a compact, irreducible isoparametric submanifold $L^{n}$ in $\R^{n+k}$ is homogeneous if $k\geq 3$ and if $L^{n}$ does not lie in any affine hyperplane of $\R^{n+k}$.

A comprehensive description of Terng's work about isoparametric submanifolds on space forms and Hilbert spaces can be found in Palais and Terng~\cite{PalaisTerng}. Another  reference to the theory of isoparametric submanifolds is Berndt, Console and Olmos~\cite{BerndtConsoleOlmos}. More information on the history of isoparametric hypersurfaces and submanifolds and possible generalizations can be found in the surveys by Thorbergsson~\cite{ThSurvey1,ThSurvey2}.
\end{remark}

\section{Roots of a compact Lie group}

Roots of a compact Lie group play a fundamental role in the theory of compact Lie groups. They allow, for instance, to classify compact simple Lie groups (see Section~\ref{Sec-Dynkin Diagrams}) and are also related to principal curvatures of a principal orbit of the adjoint action (see Remark~\ref{remark-Ad-isoparametrica-curvaturas}). \emph{Roots} of a compact Lie group $G$ will be defined in Theorem~\ref{theorem-roots-existencia}.

For the sake of motivation, we start by presenting a result which will follow directly from Theorem~\ref{theorem-roots-existencia} and Remark~\ref{remarkWeylchamber1}.

\begin{proposition}\label{proposition-interpretacao-raiz}
Let $G$ be a connected compact non abelian Lie group and $T\subset G$ a fixed maximal torus with Lie algebras $\mathfrak{g}$ and $\mathfrak{t}$, respectively. Then there exist bidimensional subspaces $V_{i}\subset\mathfrak{g}$ and linear functionals $\alpha_{i}:\mathfrak{t}\rightarrow \R$ such that
\begin{itemize}
\item[(i)] $\mathfrak{g}=\mathfrak{t}\oplus V_{1}\oplus\cdots\oplus V_{k}$;
\item[(ii)] For a bi--invariant metric on $G$ and an orthonormal basis of $V_{i}$,
\[\Ad(\exp(X))\big|_{V_{i}} =\left(\begin{array}{r r}
                    \cos(\alpha_{i}(X))&-\sin(\alpha_{i}(X))\\
                     \sin(\alpha_{i}(X))&\cos(\alpha_{i}(X))
                    \end{array}\right), \]
for all $X\in\mathfrak{t}$;
\item[(iii)] Such decomposition is unique, and $\alpha_{m}\neq\alpha_{n}$ if $m\neq n$.
\end{itemize}
\end{proposition}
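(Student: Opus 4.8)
The plan is to diagonalize the family of commuting orthogonal operators $\{\Ad(\exp X) : X\in\mathfrak t\}$ simultaneously. First I would fix a bi--invariant metric $\langle\cdot,\cdot\rangle$ on $G$ (which exists by Proposition~\ref{cptbi}), so that by Proposition~\ref{curvatureLie}(i) each $\ad(X)$ is skew--symmetric and hence each $\Ad(\exp X)=\exp(\ad X)$ (formula \eqref{adexp}) is orthogonal. Since $\mathfrak t$ is abelian, the operators $\ad(X)$, $X\in\mathfrak t$, commute, and therefore so do the $\Ad(\exp X)$. I would complexify, writing $\mathfrak g^{\C}=\mathfrak g\otimes\C$, and use the standard fact that a commuting family of skew--Hermitian (equivalently, normal) operators on a finite--dimensional complex inner product space admits a common orthonormal eigenbasis. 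This yields a decomposition $\mathfrak g^{\C}=\bigoplus_\lambda \mathfrak g^{\C}_\lambda$ into common eigenspaces; since each $\ad(X)$ is real skew--symmetric, its eigenvalues on each joint eigenspace are purely imaginary and depend $\R$--linearly on $X$, so on $\mathfrak g^{\C}_\lambda$ we have $\ad(X)=\imaginario\,\beta_\lambda(X)\,\id$ for a linear functional $\beta_\lambda:\mathfrak t\to\R$.

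Next I would descend back to the real Lie algebra. The zero joint eigenvalue contributes $\{Y\in\mathfrak g : [X,Y]=0 \text{ for all }X\in\mathfrak t\}$, which by the maximality argument in the proof of Lemma~\ref{LemmaMaximalTorusTheorem} (applied to an infinitesimal generator $X_0$ of $T$, noting $[X_0,Y]=0$ iff $[X,Y]=0$ for all $X\in\mathfrak t$) equals $\mathfrak t$ itself. The nonzero joint eigenvalues come in conjugate pairs $\pm\imaginario\alpha_i$ with $\alpha_i\neq 0$; for each such pair I would take the real and imaginary parts of a complex eigenvector to produce a real two--dimensional subspace $V_i\subset\mathfrak g$ on which $\ad(X)$ acts, in a suitable orthonormal basis, as the matrix $\begin{pmatrix}0 & -\alpha_i(X)\\ \alpha_i(X) & 0\end{pmatrix}$. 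Exponentiating via \eqref{adexp} gives exactly the rotation matrix in (ii). Orthogonality of the $V_i$ to each other and to $\mathfrak t$ follows from orthogonality of the underlying complex eigenspaces together with the fact that the metric is real, giving (i). Finally, since $G$ is non abelian, $\mathfrak g\neq\mathfrak t$ by Proposition~\ref{abelianiff}, so at least one $V_i$ occurs, i.e. $k\ge 1$.

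For (iii), uniqueness of the functionals: grouping the $V_i$ according to which functional $\alpha_i$ they carry recovers the joint eigenspace decomposition of the commuting family $\{\ad(X)\}$ over $\R$, which is canonical; the individual $V_i$ within a fixed functional are not canonical, but the statement only asserts that the $\alpha_i$ are distinct, which I would arrange by declaring each eigenvalue pair once, merging multiplicities into a single subspace if desired, or simply noting that after reindexing we may take $\alpha_m\neq\alpha_n$ for $m\neq n$ while allowing $\dim V_i$ to vary --- but here, since the statement lists the $V_i$ as bidimensional, I would instead phrase uniqueness as: the partition of $\{1,\dots,k\}$ into fibers of $i\mapsto\alpha_i$, and the associated subspaces $\bigoplus_{\alpha_j=\alpha_i}V_j$, are uniquely determined. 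I expect the main obstacle to be purely expository: stating the simultaneous diagonalization of a commuting family of normal operators cleanly and bookkeeping the passage between the complex eigenspace picture and the real two--plane picture without sign or basis errors. No deep new idea is needed beyond what is already in Lemma~\ref{LemmaMaximalTorusTheorem} and Proposition~\ref{curvatureLie}.
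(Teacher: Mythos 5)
Your route through items (i) and (ii) matches the paper's: the paper also complexifies, simultaneously diagonalizes the commuting family $\{\ad(X)\}_{X\in\mathfrak t}$ (using the Jordan normal form together with compactness of $\Ad(G)$ rather than your cleaner skew--Hermitian argument, but to the same effect), identifies the zero joint eigenspace with $\mathfrak t\oplus \imaginario\mathfrak t$ via maximality, and descends to real two--planes carrying rotations. However, there is a genuine gap at (iii), which you have noticed but not closed. If a joint eigenfunctional $\alpha$ has complex multiplicity greater than one, then either two of your indices carry the same $\alpha$, or the corresponding real subspace has dimension greater than two. The statement asserts \emph{both} that each $V_i$ is bidimensional and that the $\alpha_i$ are pairwise distinct, so you need $\dim_{\C}\mathfrak g_\alpha = 1$ for every nonzero root $\alpha$. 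Your proposed rephrasing of (iii) in terms of fibers of $i\mapsto\alpha_i$ proves a weaker claim than the one asserted, and the one--dimensionality does not follow from the abstract theory of commuting normal operators --- it is a structural fact about $\mathfrak t$ being maximal abelian in a compact Lie algebra. The paper establishes it in Theorem~\ref{theorem-roots-existencia}(iv): choosing $X\in\ker\alpha$ generic (so that $\beta(X)\neq 0$ for every root $\beta$ not proportional to $\alpha$), the quotient $\mathfrak g_X/\Zentrum(\mathfrak g_X)$ is the Lie algebra of a compact connected group of rank one, which by Theorem~\ref{theorem-posto-1-GL} forces it to be $\mathfrak{so}(3)$, and a dimension count then gives $\dim V_\alpha = 2$. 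This rank--one classification step is the missing idea; your closing assertion that nothing is needed beyond Lemma~\ref{LemmaMaximalTorusTheorem} and Proposition~\ref{curvatureLie} is therefore incorrect.
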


Each linear functional $\alpha:\mathfrak{t}\rightarrow \R$ above turns out to be the root associated to the space $V_{\alpha}$ (see the definition in Theorem~\ref{theorem-roots-existencia}). In order to prove the main theorem of this section, Theorem~\ref{theorem-roots-existencia}, we will need a few preliminary results.

\begin{theorem}\label{theorem-posto-1-GL}
Let $G$ be a connected compact Lie group of rank $1$ (see the Maximal Torus Theorem~\ref{maxtorusthm}). Then $G$ is isomorphic to $S^{1}$, $\SO(3)$ or $\SU(2)$.
\end{theorem}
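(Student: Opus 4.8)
The plan is to analyze the universal cover $\widetilde G$ of a connected compact rank--one Lie group $G$ and show that it must be either $\R$ or $\SU(2)$, whence $G$ is one of $S^1$, $\SO(3)$, $\SU(2)$. First I would dispose of the abelian case: if $G$ is abelian, then by the Maximal Torus Theorem~\ref{maxtorusthm} $G$ is a maximal torus of itself, so $G=T^1=S^1$. So assume $G$ is non--abelian; then by Proposition~\ref{proposition-interpretacao-raiz} (which is available here, being stated earlier in the excerpt) we may write $\mathfrak g=\mathfrak t\oplus V_1\oplus\cdots\oplus V_k$ with $\dim\mathfrak t=1$ and each $V_i$ two--dimensional, and since $G$ is non--abelian we have $k\geq 1$. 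The first real step is to show $k=1$, i.e.\ $\dim\mathfrak g=3$. For this I would argue that each root $\alpha_i:\mathfrak t\to\R$ is a nonzero linear functional on the one--dimensional space $\mathfrak t$, hence a nonzero scalar multiple of a fixed one; by rescaling and using the $\Ad$--action description in Proposition~\ref{proposition-interpretacao-raiz}(ii) together with the uniqueness in (iii), one shows that having two distinct roots (or a root that is a nontrivial multiple of another) forces a contradiction with compactness or with the bracket relations. The cleanest route: compute $[\mathfrak t,V_i]\subset V_i$ and $[V_i,V_i]\subset\mathfrak t$ from (ii) by differentiating $\Ad(\exp(X))|_{V_i}$ at $X=0$, so that $\mathfrak t\oplus V_i$ is a subalgebra for each $i$; if $k\geq 2$ one produces an abelian subalgebra strictly larger than $\mathfrak t$ (e.g.\ spanned by a suitable element of $V_i$ together with the kernel directions), contradicting maximality of $\mathfrak t$. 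Hence $k=1$ and $\dim\mathfrak g=3$.

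Next, with $\mathfrak g$ three--dimensional, I would identify $\mathfrak g$ as a Lie algebra. Pick an orthonormal basis $\{H,E_1,E_2\}$ (for a bi--invariant metric, existing by Proposition~\ref{cptbi}) with $H$ spanning $\mathfrak t$ and $\{E_1,E_2\}$ spanning $V_1$; from Proposition~\ref{proposition-interpretacao-raiz}(ii), differentiating gives $[H,E_1]=c\,E_2$, $[H,E_2]=-c\,E_1$ for the constant $c=\alpha_1(H)\neq0$, and skew--symmetry of $\ad$ with respect to the bi--invariant metric (Proposition~\ref{curvatureLie}(i)) forces $[E_1,E_2]$ to be a multiple of $H$, say $[E_1,E_2]=d\,H$; the Jacobi identity pins down the sign of $d$ relative to $c$, and one checks $d\neq0$ (else $\R E_1\oplus\R E_2$ plus part of $\mathfrak t$ would be abelian). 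Rescaling the basis, the structure constants become exactly those of $\mathfrak{so}(3)$, i.e.\ the cross--product algebra of Exercise~\ref{ex-sobre-SO3}; thus $\mathfrak g\cong\mathfrak{so}(3)\cong\mathfrak{su}(2)$.

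Finally I would pass from the Lie algebra to the group. By Lie's Third Theorem~\ref{liethird} (or Theorem~\ref{liecovering}), the simply connected Lie group with this Lie algebra is unique up to isomorphism; since $\mathfrak{su}(2)$ is the Lie algebra of $\SU(2)$, which is simply connected (it is diffeomorphic to $S^3$ by Exercise~\ref{su2s3}), we get $\widetilde G\cong\SU(2)$. Then $G\cong\SU(2)/\Gamma$ for a discrete central subgroup $\Gamma$ of $\SU(2)$; I would compute the center of $\SU(2)$ to be $\{\pm I\}\cong\Z_2$ (directly from the explicit matrix form in Exercise~\ref{su2s3}), so $\Gamma$ is either trivial, giving $G\cong\SU(2)$, or all of $\{\pm I\}$, giving $G\cong\SU(2)/\{\pm I\}\cong\SO(3)$ (the last isomorphism being Exercise~\ref{su2so3}). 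This completes the list. The main obstacle I anticipate is the step $k=1$: one must rule out higher--dimensional rank--one groups, and the argument has to use compactness essentially (a non--compact rank--one group like $\SL(2,\R)$ also has a one--dimensional maximal torus but is not on the list) — here compactness enters through the existence of the bi--invariant metric, which makes $\ad$ skew--symmetric and constrains the brackets enough that any extra $V_i$ would enlarge a maximal abelian subalgebra. I would make sure that constraint is applied carefully rather than hand--waved.
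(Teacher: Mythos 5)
First, a structural remark: the paper does not actually prove Theorem~\ref{theorem-posto-1-GL}; it is stated with a reference to Duistermaat and Kolk~\cite{duistermaat} and then used as a black box in the proof of items (iv) and (v) of Theorem~\ref{theorem-roots-existencia}. So the real question is whether your argument stands on its own, and there I see a genuine gap concentrated at the one step you yourself flag as the main obstacle, namely $\dim\mathfrak g=3$ in the non--abelian case. Your argument for it is circular: Proposition~\ref{proposition-interpretacao-raiz}, which you invoke for the decomposition $\mathfrak g=\mathfrak t\oplus V_1\oplus\cdots\oplus V_k$ with each $V_i$ \emph{bidimensional} and for the explicit $\Ad(\exp X)\big|_{V_i}$ rotation, is explicitly presented in the paper as a consequence of Theorem~\ref{theorem-roots-existencia} and Remark~\ref{remarkWeylchamber1}, and items (iv) and (v) of Theorem~\ref{theorem-roots-existencia} --- precisely the ones giving $\dim_{\C}\mathfrak g_{\alpha}=1$ (so $\dim V_i=2$) and $\mathfrak g_{k\alpha}=0$ for $k\neq\pm 1$ --- are proved there by appealing to Theorem~\ref{theorem-posto-1-GL}. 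Being stated earlier in the text is not the same as being logically available; you would need to extract what you use from items (i)--(iii) of Theorem~\ref{theorem-roots-existencia} alone, and those do not give bidimensionality of the $V_i$.

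Second, even granting the decomposition, the reduction to $k=1$ does not work as written. In rank $1$, $\mathfrak t$ is one--dimensional and every root is a nonzero functional on it, so $\ker\alpha_i=\{0\}$: there are no ``kernel directions'' to adjoin to an element of $V_i$ to build a two--dimensional abelian subalgebra. Likewise, $[V_i,V_i]\subset\mathfrak t$ is not a consequence of item (ii) of Proposition~\ref{proposition-interpretacao-raiz}, which only describes the $\Ad(T)$--action and hence $[\mathfrak t,V_i]$; the root grading gives $[\mathfrak g_{\alpha_i},\mathfrak g_{\alpha_i}]\subset\mathfrak g_{2\alpha_i}$, so $[V_i,V_i]$ may have a nontrivial component in $V_j$ whenever $\alpha_j=2\alpha_i$ is a root, which is exactly the possibility you have not excluded. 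Ruling out $\dim\mathfrak g>3$ is where the real content of the theorem lies, and the standard arguments go through $\mathfrak{sl}(2)$--representation theory on root strings, or through the geometric observation that for $0\neq H\in\mathfrak t$ the orbit $\Ad(G)H\cong G/T$ has codimension $\dim\mathfrak t=1$ inside the sphere $\{X:\|X\|=\|H\|\}$ and hence is the whole sphere, forcing $\dim\mathfrak g=3$ by a homotopy analysis of the resulting $S^1$--bundle; neither route is in your outline. The remaining pieces of your plan are sound: once $\dim\mathfrak g=3$, skew--symmetry of $\ad$ from Proposition~\ref{curvatureLie}(i) forces $[E_1,E_2]\in\R H$ with the right sign, the structure constants normalize to those of $\mathfrak{so}(3)\cong\mathfrak{su}(2)$, and passing to the universal cover $\SU(2)$ and quotienting by the central subgroup $\{\pm I\}$ correctly yields $\SU(2)$ and $\SO(3)$.
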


A proof of the above result can be found, for example, in Duistermaat and Kolk~\cite{duistermaat}. We also need some elementary facts of linear algebra that are recalled below.

\begin{lemma}\label{lemma-espaco-invariante-conjugado}
Let $W\subset\C^{n}$ be a complex subspace. Then $W=\overline{W}$, i.e., $W$ is invariant by complex conjugation, if and only if $W=V\oplus \imaginario V$ for a real subspace $V\subset\R^{n}$.
\end{lemma}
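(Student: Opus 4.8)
The plan is to prove the two implications separately, the key object being the real subspace $V = W \cap \R^{n}$, where $\R^{n} \subset \C^{n}$ denotes the subspace of vectors with real entries.

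First I would dispatch the easy direction: assuming $W = V \oplus \imaginario V$ for a real subspace $V \subset \R^{n}$, any $w \in W$ can be written $w = v_{1} + \imaginario v_{2}$ with $v_{1}, v_{2} \in V$, so that $\overline{w} = v_{1} - \imaginario v_{2} = v_{1} + \imaginario(-v_{2}) \in V \oplus \imaginario V = W$. This gives $\overline{W} \subset W$, and since complex conjugation is an involution the reverse inclusion follows at once, hence $W = \overline{W}$.

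For the converse I would assume $W = \overline{W}$, set $V = W \cap \R^{n}$ — a real subspace, being the intersection of two real subspaces of the underlying real vector space $\C^{n}$ — and check that $W = V \oplus \imaginario V$. Given $w \in W$, the hypothesis yields $\overline{w} \in W$, so $\operatorname{Re}(w) = \tfrac{1}{2}(w + \overline{w}) \in W$ and $\tfrac{1}{2}(w - \overline{w}) \in W$; multiplying the latter by the scalar $-\imaginario$, which is permissible since $W$ is a complex subspace, shows that $\operatorname{Im}(w) = \tfrac{1}{2\imaginario}(w - \overline{w}) \in W$ as well. Both $\operatorname{Re}(w)$ and $\operatorname{Im}(w)$ have real entries, hence lie in $V$, so $w = \operatorname{Re}(w) + \imaginario\operatorname{Im}(w) \in V + \imaginario V$; conversely $V \subset W$ by definition and $\imaginario V \subset W$ because $W$ is a complex subspace, so $V + \imaginario V = W$. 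Finally the sum is direct, since $v_{1} + \imaginario v_{2} = 0$ with $v_{1}, v_{2} \in V \subset \R^{n}$ forces $v_{1} = v_{2} = 0$ by comparing real and imaginary entries.

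The argument presents no genuine obstacle; the only subtlety worth attention is that the converse direction must invoke \emph{both} defining properties of $W$ — invariance under conjugation, to place $\operatorname{Re}(w)$ and $\operatorname{Im}(w)$ inside $W$, and closure under multiplication by $\imaginario$, to turn the imaginary part into a bona fide real vector — after which directness of the decomposition $W = V \oplus \imaginario V$ is automatic.
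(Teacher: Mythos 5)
Your argument is correct and complete. Note that the paper itself states this lemma without proof, presenting it as an elementary fact of linear algebra; so there is no proof in the text to compare against. Your approach — taking $V = W \cap \R^{n}$, using the identities $\operatorname{Re}(w) = \tfrac{1}{2}(w+\overline{w})$ and $\operatorname{Im}(w) = \tfrac{1}{2\imaginario}(w-\overline{w})$ together with complex-linearity of $W$ to land both in $V$, and then checking directness by comparing real and imaginary entries — is the standard one and fills the gap cleanly. The observation at the end about which of the two structural hypotheses (conjugation-invariance versus closure under multiplication by $\imaginario$) is used where is a nice touch and exactly the right thing to flag.
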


\begin{remark}\label{remark-dimensao-real-complexa-esp-invariante}
If $\{v_{i}\}$ is a real basis for a real subspace $V\subset \R^{n}$, then $\{v_{i}\}$ is a complex basis for the complex subspace $W=V\oplus \imaginario V$. In particular,
$\dim_{\C}W=\dim_{\R}V$.
\end{remark}

Let $G$ be a connected compact Lie group with Lie algebra $\mathfrak{g}$. Consider the complexification of $\mathfrak{g}$, $$\mathfrak{g}_{\C}=\mathfrak{g}\otimes_{\R}\C,$$ i.e., $\mathfrak{g}_{\C}=\mathfrak{g}\oplus \imaginario \mathfrak{g}$, as a real vector space. It is not difficult to see that $\mathfrak{g}_{\C}$ is a complex Lie algebra where $[\cdot,\cdot]:\mathfrak{g}_{\C}\times\mathfrak{g}_{\C}\rightarrow\mathfrak{g}_{\C}$ is the canonical extension of the Lie bracket of $\mathfrak{g}$.

%PELO AMOR DE ZEUS: QUEBRAR ESSE NEGOCIO EM SUBTEOREMAS!!!!!!
% ITENS (a)-(e) / ITEM (f) / ITEM (g) - 3 teoremas SEPARADOS!!
\begin{theorem}\label{theorem-roots-existencia}
Let $G$ be a connected compact non abelian Lie group and $T\subset G$ a fixed maximal torus, with Lie algebras $\mathfrak{g}$ and $\mathfrak{t}$, respectively. Consider $\mathfrak{g}_{\C}$ the complexification of $\mathfrak{g}$.

\begin{itemize}
\item[(i)] For each $X\in\mathfrak{g}$, $\ad(X):\mathfrak{g}_{\C}\rightarrow\mathfrak{g}_{\C}$ is linear and diagonalizable with only purely imaginary eigenvalues;
\item[(ii)] There exists a unique (up to permutations) decomposition of $\mathfrak{g}_{\C}$ in complex subspaces $$\mathfrak{g}_{\alpha}=\{Y\in\mathfrak{g}_{\C}:[X,Y]=\imaginario \alpha(X)Y, \mbox{ for all } X \in \mathfrak{t}\},$$ where $\alpha:\mathfrak{t}\rightarrow\R$ is a linear functional called \emph{root}\index{Lie group!root}\index{Root}. In other words, $$\mathfrak{g}_{\C}=\mathfrak{g}_{0}\oplus \sum_{\alpha\in R} \mathfrak{g}_{\alpha},$$ where $R$ denotes the set of roots, also called \emph{root system}\index{Root!system};
\item[(iii)] $\mathfrak{g}_{0}=\mathfrak{t}\oplus \imaginario\mathfrak{t}$ and $\overline{\mathfrak{g}_{\alpha}}=\mathfrak{g}_{-\alpha}$. In particular, $\alpha\in R$ implies $-\alpha\in R$;
\item[(iv)] $\dim_{\C}\mathfrak{g}_{\alpha}=1$ and $\dim V_{\alpha}=2$, where $V_{\alpha}=(\mathfrak{g}_{\alpha}\oplus\mathfrak{g}_{-\alpha})\cap\mathfrak{g}$;
\item[(v)] $\mathfrak{g}_{k\alpha}=0$ if $k\neq -1,0,1$;
\item[(vi)] Let $e_{2}+\imaginario e_{1}$ be the vector that generates $\mathfrak{g}_{\alpha}$, with $e_{1}, e_{2}\in\mathfrak{g}$. Then
\begin{itemize}
\item[(vi-a)] $\{e_{1},e_{2}\}$ is a basis of $V_{\alpha}$;
\item[(vi-b)] $[X,e_{1}]=\alpha(X) e_{2}$ and $[X,e_{2}]=-\alpha(X) e_{1}$, for all $X\in\mathfrak{t}$;
\item[(vi-c)] $\langle e_{1}, e_{2} \rangle=0$ and $\|e_{1}\|=\|e_{2}\|$, with respect to each bi--invariant metric $\langle\cdot,\cdot\rangle$;
\item[(vi-d)] $$\Ad(\exp(X))\big|_{V_{\alpha}} =\left(\begin{array}{r r}
                    \cos(\alpha(X))&-\sin(\alpha(X))\\
                     \sin(\alpha(X))&\cos(\alpha(X))
                    \end{array}\right),$$ with respect to the basis $\left\{\frac{e_{1}}{\|e_{1}\|},\frac{e_{2}}{\|e_{2}\|}\right\}$;
\end{itemize}
\item[(vii)] Let $\alpha^{\vee}$ be the \emph{coroot}\footnote{The \emph{coroot}\index{Lie group!coroot}\index{Root!coroot} $\alpha^\vee$ associated to a root $\alpha$ is a vector in $\mathfrak{t}$ such that $\alpha(\alpha^{\vee})=2$ and $\alpha^{\vee}$ is orthogonal to $\ker\alpha$ with respect to a bi--invariant metric.} of $\alpha$;
\begin{itemize}
\item[(vii-a)] $\alpha^{\vee}$ is orthogonal to $\ker\alpha$ in any bi--invariant metric;
\item[(vii-b)] Set $\mathfrak{g}^{(\alpha)}=\R\alpha^{\vee}\oplus V_{\alpha}$. Then $\mathfrak{g}^{(\alpha)}$ is a Lie algebra isomorphic to $\mathfrak{so}(3)$, and the correspondent Lie group $G^{(\alpha)}=\exp(\mathfrak{g}^{(\alpha)})$ is a compact Lie group;
\item[(vii-c)] For each $g\in G^{(\alpha)}$, $\Ad(g)\big|_{\ker\alpha}=\id$;
\item[(vii-d)] There exists $w\in G^{(\alpha)}$, such that $\Ad(w)\alpha^{\vee}=-\alpha^{\vee}$.
\end{itemize}
\end{itemize}
\end{theorem}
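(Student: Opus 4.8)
The plan is to reduce the whole statement to the representation theory of $\mathfrak{so}(3)\cong\mathfrak{su}(2)$ and the rank one classification (Theorem~\ref{theorem-posto-1-GL}), exactly as the preliminary results are staged. First I would fix a bi--invariant metric $\langle\cdot,\cdot\rangle$ on $G$ (which exists by Proposition~\ref{cptbi}), extend it complex-bilinearly to $\mathfrak{g}_{\C}$, and observe that for $X\in\mathfrak{t}$ the operator $\ad(X)$ is skew-symmetric with respect to $\langle\cdot,\cdot\rangle$ by Proposition~\ref{curvatureLie}(i); hence it is diagonalizable over $\C$ with purely imaginary eigenvalues, giving (i). Since $\mathfrak{t}$ is abelian (Proposition~\ref{abelianiff}, as $\mathfrak t$ is the Lie algebra of the torus $T$), the commuting family $\{\ad(X):X\in\mathfrak{t}\}$ is simultaneously diagonalizable, so $\mathfrak{g}_{\C}$ decomposes into joint eigenspaces on which each $\ad(X)$ acts as $\imaginario\alpha(X)$, where $\alpha$ depends linearly on $X$; collecting the nonzero $\alpha$'s gives the root space decomposition in (ii). For (iii), complex conjugation on $\mathfrak{g}_{\C}=\mathfrak{g}\oplus\imaginario\mathfrak{g}$ is a Lie algebra involution fixing $\mathfrak{g}$, and since $\alpha(X)$ is real for $X\in\mathfrak{t}\subset\mathfrak g$, conjugating $[X,Y]=\imaginario\alpha(X)Y$ yields $[X,\overline Y]=-\imaginario\alpha(X)\overline Y$, so $\overline{\mathfrak{g}_\alpha}=\mathfrak{g}_{-\alpha}$; and $\mathfrak{g}_0=\{Y:[X,Y]=0\ \forall X\in\mathfrak t\}$ is the complexification of $\{Y\in\mathfrak g:[X,Y]=0\ \forall X\in\mathfrak t\}$, which by Lemma~\ref{LemmaMaximalTorusTheorem} (applied to an infinitesimal generator of $T$) equals $\mathfrak t$, so $\mathfrak{g}_0=\mathfrak t\oplus\imaginario\mathfrak t$.

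Next I would set up, for a fixed root $\alpha$, the real subspace $V_\alpha=(\mathfrak{g}_\alpha\oplus\mathfrak{g}_{-\alpha})\cap\mathfrak{g}$ and the coroot $\alpha^\vee\in\mathfrak{t}$ characterized by $\alpha(\alpha^\vee)=2$, $\alpha^\vee\perp\ker\alpha$; part (vii-a) is just the definition of $\alpha^\vee$, consistent across bi--invariant metrics because by Proposition~\ref{prop-Liegroup-uniqueMetric-Einstein} (and the splitting into simple ideals) bi--invariant metrics differ by positive scalars on each simple factor, and $\mathfrak t$, $\ker\alpha$, $\alpha^\vee$ are intrinsic to the bracket. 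Using $\overline{\mathfrak g_\alpha}=\mathfrak g_{-\alpha}$, pick a generator $Z=e_2+\imaginario e_1$ of $\mathfrak g_\alpha$ with $e_1,e_2\in\mathfrak g$ (temporarily not assuming $\dim_\C\mathfrak g_\alpha=1$); then $\overline Z=e_2-\imaginario e_1$ generates a line in $\mathfrak g_{-\alpha}$, and expanding $[X,Z]=\imaginario\alpha(X)Z$ into real and imaginary parts gives $[X,e_1]=\alpha(X)e_2$, $[X,e_2]=-\alpha(X)e_1$ for $X\in\mathfrak t$, which is (vi-b). Skew-symmetry of $\ad(X)$ then forces $\langle e_1,e_2\rangle$ and $\|e_1\|,\|e_2\|$ to be related: differentiating $0=\langle[X,e_1],e_1\rangle=\alpha(X)\langle e_2,e_1\rangle$ and $\langle[X,e_1],e_2\rangle+\langle e_1,[X,e_2]\rangle=\alpha(X)(\|e_2\|^2-\|e_1\|^2)=0$ (choosing $X$ with $\alpha(X)\ne0$) yields $\langle e_1,e_2\rangle=0$ and $\|e_1\|=\|e_2\|$, giving (vi-c). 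Exponentiating the relations in (vi-b) via $\Ad(\exp X)=\exp(\ad X)$ from \eqref{adexp}, and noting $\ad(X)$ acts on $\mathrm{span}\{e_1,e_2\}$ as a rotation generator of angular rate $\alpha(X)$, produces the rotation matrix in (vi-d), and in particular shows $V_\alpha$ is $\ad(\mathfrak t)$-invariant of dimension (at most) $2$ — this will become exactly $2$ once $\dim_\C\mathfrak g_\alpha=1$.

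Then comes the crucial computation: forming $\mathfrak{g}^{(\alpha)}=\R\alpha^\vee\oplus V_\alpha$ and checking it is a subalgebra. One needs $[e_1,e_2]\in\R\alpha^\vee$: the bracket $[e_1,e_2]$ lies in $\mathfrak g_0\cap\mathfrak g=\mathfrak t$ (since $\ad(X)[e_1,e_2]=[[X,e_1],e_2]+[e_1,[X,e_2]]=\alpha(X)[e_2,e_2]-\alpha(X)[e_1,e_1]=0$), and skew-symmetry gives $\langle[e_1,e_2],H\rangle=\langle e_1,[e_2,H]\rangle=\langle e_1,\alpha(H)e_1\rangle=\alpha(H)\|e_1\|^2$ for $H\in\mathfrak t$, so $[e_1,e_2]$ is proportional to the metric dual of $\alpha$, i.e. a positive multiple of $\alpha^\vee$; together with (vi-b) (which gives $[\alpha^\vee,e_i]\in V_\alpha$) this closes $\mathfrak g^{(\alpha)}$ under the bracket. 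Rescaling $e_1,e_2$ one identifies $\mathfrak g^{(\alpha)}$ with $\mathfrak{so}(3)\cong\mathfrak{su}(2)$, so the connected subgroup $G^{(\alpha)}$ with this Lie algebra is compact (its closure is compact since $\mathfrak{su}(2)$ is semisimple; by Theorem~\ref{compactkilling} or directly via Lie's Third Theorem~\ref{liethird} $G^{(\alpha)}$ is a quotient of $\SU(2)$), giving (vii-b). Now (v) and the refinement $\dim_\C\mathfrak g_\alpha=1$ in (iv): decomposing $\mathfrak g_\C$ under this $\mathfrak{sl}(2,\C)$-copy and using that the weights of $\ad(\alpha^\vee)$ on $\sum_k\mathfrak g_{k\alpha}$ are the integers $2k$, the representation theory of $\mathfrak{sl}(2)$ forbids the weight $4$ paired with weights $0$ and $2$ in the pattern an ambient root string would require — more precisely the element $w=\exp(\tfrac{\pi}{2}(e_1\ \text{suitably normalized}))\in G^{(\alpha)}$ conjugates $\alpha^\vee$ to $-\alpha^\vee$ (this is (vii-d), the Weyl reflection, following from $\Ad(\exp X)$ being the corresponding rotation by $\pi$) and fixes $\ker\alpha$ pointwise because $\mathfrak g^{(\alpha)}$ commutes with $\ker\alpha$ (check $[e_i,\ker\alpha]\subset\mathfrak g$: for $H\in\ker\alpha$, $[e_1,H]\in V_\alpha$ by (vi-b) with roles... actually $[H,e_1]=\alpha(H)e_2=0$), giving (vii-c); the reflection then pairs $\mathfrak g_\alpha$ with $\mathfrak g_{-\alpha}$ and, combined with irreducibility forced by the rank one subgroup $G^{(\alpha)}$ modulo $\ker\alpha$, pins down $\dim_\C\mathfrak g_\alpha=1$, hence $\dim V_\alpha=2$ (iv) and $\mathfrak g_{k\alpha}=0$ for $k\ne0,\pm1$ (v). Finally (vi-a) is immediate: $e_1,e_2$ are $\langle\cdot,\cdot\rangle$-orthogonal and nonzero by (vi-c), and they span $V_\alpha$ which has dimension $2$.

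The main obstacle I expect is step three — proving $\dim_\C\mathfrak g_\alpha=1$ and $\mathfrak g_{k\alpha}=0$ for $|k|\ge2$ cleanly. The honest route is to fully exploit that, modulo $\ker\alpha$, everything happens inside a rank one compact group classified by Theorem~\ref{theorem-posto-1-GL}: the subgroup generated by $T$ and $G^{(\alpha)}$ has a maximal torus of dimension $\dim\mathfrak t$, and quotienting by the central-like subtorus corresponding to $\ker\alpha$ leaves a rank one group, which must be $\SU(2)$ or $\SO(3)$ (or $S^1$, excluded since $V_\alpha\ne0$); in either case the adjoint action of its maximal torus on the complexified Lie algebra has exactly the root pattern $\{0,\alpha,-\alpha\}$ with one-dimensional root spaces, which transplants back to give the claims. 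Assembling this reduction carefully, rather than re-deriving $\mathfrak{sl}(2)$ representation theory by hand, is where I would spend the bulk of the effort; the rest is the bookkeeping sketched above.
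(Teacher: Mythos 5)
Your plan handles (i)--(iii), (vi), and the computational parts of (vii) essentially correctly, and your skew-symmetry route to (i) is in fact cleaner than the paper's Jordan-form-plus-compactness argument (though note that (i) asserts diagonalizability for all $X\in\mathfrak{g}$, not only $X\in\mathfrak{t}$; the same skew-symmetry argument covers the general case). The genuine gap is in proving $\dim_\C\mathfrak{g}_\alpha=1$ and $\mathfrak{g}_{k\alpha}=0$ for $|k|\ge 2$, which you correctly identify as the crux. Your $\mathfrak{sl}(2)$-module sketch can be completed (restrict to $\C\alpha^\vee\oplus\sum_{k\ne 0}\mathfrak{g}_{k\alpha}$, observe all $\ad(\alpha^\vee)$-weights there are even while the weight-$0$ space $\C\alpha^\vee$ is already inside the $\mathfrak{sl}(2)$-copy, so the quotient module has no even weights and hence vanishes), but you do not spell it out and the paper never develops $\mathfrak{sl}(2)$-representation theory, so that route requires machinery not provided. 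Your fallback ``honest route'' has a concrete flaw: the subalgebra $\mathfrak{t}\oplus\mathrm{span}\{e_1,e_2\}$ is built from one chosen line in $\mathfrak{g}_\alpha$. If $\dim_\C\mathfrak{g}_\alpha$ were, say, $2$, or if $\mathfrak{g}_{2\alpha}\ne 0$, the extra vectors simply live outside the subgroup generated by $T$ and $G^{(\alpha)}$; quotienting that subgroup by $\ker\alpha$ then yields $\mathfrak{so}(3)$ tautologically (it is $3$-dimensional by construction) and tells you nothing about the ambient root spaces -- there is no ``transplant back.''

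The paper's decisive move, missing from your sketch, is to work with the \emph{full centralizer} $\mathfrak{g}_X$ of a generic vector $X\in\ker\alpha$, chosen so that $\beta(X)\ne 0$ for every root $\beta$ not a rational multiple of $\alpha$. Then, by construction, $\mathfrak{g}_X=\mathfrak{t}\oplus\sum_{k\ne 0}(\mathfrak{g}_{k\alpha}\oplus\mathfrak{g}_{-k\alpha})\cap\mathfrak{g}$ -- it automatically contains every $V_{k\alpha}$, not just a chosen $2$-plane -- and one checks $\Zentrum(\mathfrak{g}_X)=\ker\alpha$. The compact connected quotient $G_X^0/\Zentrum(G_X^0)$ has rank one; by Theorem~\ref{theorem-posto-1-GL} its Lie algebra is $\mathfrak{so}(3)$, hence $3$-dimensional, so $\sum_{k\ne 0}\dim V_{k\alpha}=2$, giving (iv) and (v) in a single stroke. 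The subalgebra $\mathfrak{g}^{(\alpha)}$ is then built only afterwards, with (iv) and (v) already in hand, for the purposes of (vii). Two smaller points: for (vii-a) you invoke Proposition~\ref{prop-Liegroup-uniqueMetric-Einstein}, which requires $\mathfrak{g}$ simple -- the paper's lighter observation (that $[e_1,e_2]$ is an explicit bracket vector in $\mathfrak{t}$ orthogonal to $\ker\alpha$ in every bi-invariant metric) suffices, and your own computation of $\langle[e_1,e_2],H\rangle$ already proves it; and the paper obtains $w$ in (vii-d) from transitivity of $\Ad(G^{(\alpha)})$ on the sphere in $\mathfrak{g}^{(\alpha)}$ rather than from an explicit exponential, though your version is fine.
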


\begin{proof}
Using the Jordan normal form, there exists a decomposition of $\mathfrak{g}_{\C}$ in subspaces $\mathfrak{g}_{j}$ such that $\ad(X)|_{\mathfrak{g}_{j}}=c_{j}I+ N_{j}$ where $N_{j}$ is a \emph{nilpotent matrix}\footnote{This means that $N_{j}^{m}=0$ if $m=m_{j}$, and $N_{j}^{m}\neq 0$ if $m<m_{j}$}. Therefore
\begin{equation}\label{theorem-roots-existencia-Eq1}
\exp{(t\ad(X))}\big|_{\mathfrak{g}|_{j}}=\exp{(tc_{j})}\sum_{i=0}^{m_{j}-1} \frac{t^{i}}{i!} N_{j}^{i}.
\end{equation}

On the other hand, using \eqref{adexp}, compactness of $\Ad(G)$ implies that $\exp(t\ad(X))$ is bounded. This fact and \eqref{theorem-roots-existencia-Eq1} imply that $c_{j}$ is a purely imaginary number, and $N_{j}=0$. This proves (i).

Item (ii) follows from the next two claims.

\begin{claim}\label{cl:mons1}
There exists a decomposition $\mathfrak{g}_{\C}=\mathfrak{g}_{0}\oplus\mathfrak{g}_{1}\oplus\cdots\oplus\mathfrak{g}_{k}$ and linear functionals $\alpha_{i}:\mathfrak{t}\rightarrow \R$, such that for each $Y\in\mathfrak{g}_{i}$, $[X,Y]=\imaginario \alpha_{i}(X)Y$ for all $X\in\mathfrak{t}$. In addition,
$\alpha_{m}\neq\alpha_{n}$ if $m\neq n$.
\end{claim}

Indeed, consider a basis $\{X_{1},\dots,X_{l}\}$ of $\mathfrak{t}$. On the one hand, from (i) each $\ad(X_{k})$ is diagonalizable. On the other hand, $\ad(X_{j})$ commutes with $\ad(X_{k})$ for all $j,k$, once $[X_{j},X_{k}]=0$. This allows to find a decomposition $\mathfrak{g}_{\C}=\sum_{i}\mathfrak{g}_{i}$ in common eigenspaces of the operators $\ad(X_{k})$. Finally, define $$\alpha_{i}\left(\sum_{k=1}^l x_{k}X_{k}\right)=\sum_{k=1}^l x_{k}\alpha_{ik},$$ where $[X_{k},Y]=\imaginario \alpha_{i\,k}Y$, for $Y\in\mathfrak{g}_{i}$. Since, by construction, $\mathfrak{g}_{m}\oplus\mathfrak{g}_{n}$ is not an eigenspace for all operators $\ad(X_{k})$, it follows that $\alpha_{m}\neq\alpha_{n}$ if $m\neq n$.

\begin{claim}\label{cl:mons2}
If $[X,Y]=\imaginario\beta(X)Y$ for all $X\in\mathfrak{t}$, then there exists $i$ such that $Y\in \mathfrak{g}_{i}$ and $\beta=\alpha_{i}$, where $\mathfrak{g}_{i}$ and $\alpha_{i}$ were defined in Claim~\ref{cl:mons1}.
\end{claim}

Indeed, let $Y=Y_{0}+Y_{1}+\dots+ Y_{k}$, where $Y_{i}\in\mathfrak{g}_{i}$. Then \begin{eqnarray*}
\sum\imaginario\beta(X)Y_{i} &=& [X,Y]\\
&=& \sum\imaginario\alpha_{i}(X)Y_{i}.
\end{eqnarray*}

Since $\mathfrak{g}_{\C}=\mathfrak{g}_{0}\oplus\mathfrak{g}_{1}\oplus\cdots\oplus\mathfrak{g}_{k}$, we conclude that $\beta(X)=\alpha_{m}(X)=\alpha_{n}(X)$. Claim~\ref{cl:mons2} follows from the fact that $\alpha_{m}\neq\alpha_{n}$ if $m\neq n$.

Item (iii) follows from Lemma~\ref{lemma-espaco-invariante-conjugado} and the fact that $\mathfrak{t}$ is a maximal abelian Lie algebra of $\mathfrak{g}$.

In the sequel, we prove (iv) and (v). Since the set of roots $R$ is finite, we can choose a vector $X\in\ker\alpha$ such that $\beta(X)\neq 0$, for all roots $\beta$ that are not multiples of $\alpha$. Note that the Lie algebra of the identity's connected component of the isotropy group $G_{X}^{0}$ (with respect to the adjoint action) is \begin{equation}
\label{theorem-roots-existencia-Eq2} \mathfrak{g}_{X}=\mathfrak{t}\oplus \sum _{k\neq 0} (\mathfrak{g}_{k\alpha}\oplus\mathfrak{g}_{-k\alpha})\cap\mathfrak{g}.
\end{equation}
This implies that
\begin{equation}\label{theorem-roots-existencia-Eq3}
\Zentrum(\mathfrak{g}_{X})=\ker \alpha,
\end{equation} where $\Zentrum(\mathfrak{g}_{X})$ is the Lie algebra of $\Zentrum(G_{X}^{0})$ (see Corollary~\ref{zentrum}). Set $G'=G_{X}^{0}/\Zentrum(G_{X}^{0})$ and note that the Lie algebra of the compact connected Lie group $G'$ is $\mathfrak{g}_{X}/\Zentrum(\mathfrak{g}_{X})$. It follows from \eqref{theorem-roots-existencia-Eq2} and \eqref{theorem-roots-existencia-Eq3} that $\mathfrak{t}/\ker\alpha$ is a maximal abelian Lie subalgebra of $\mathfrak{g}_{X}/\Zentrum(\mathfrak{g}_{X})$. Therefore $G'$ has rank 1 and hence by Theorem~\ref{theorem-posto-1-GL}, \begin{equation}\label{theorem-roots-existencia-Eq4}
\mathfrak{g}_{X}/\Zentrum(\mathfrak{g}_{X})=\mathfrak{so}(3).
\end{equation}

The fact that $\dim\mathfrak{g}_{X}/\Zentrum(\mathfrak{g}_{X})=3$ implies that
\begin{equation}\label{theorem-roots-existencia-Eq5}
\dim\sum _{k\neq 0} (\mathfrak{g}_{k\alpha}\oplus\mathfrak{g}_{-k\alpha})\cap\mathfrak{g}=2.
\end{equation}

Items (iv) and (v) follow from \eqref{theorem-roots-existencia-Eq5} and Remark~\ref{remark-dimensao-real-complexa-esp-invariante}.

As for (vi), since $\mathfrak{g}_{\alpha}\cap\mathfrak{g}_{-\alpha}=\{0\}$, $e_{1}\neq \lambda e_{2}$. This and the fact that $\dim V_{\alpha}=2$ imply (vi-a). Item (vi-b) follows directly from $$[X, e_{2}+\imaginario e_{1}]= \imaginario\alpha (X)e_{2}-\alpha(X) e_{1}.$$ Item (vi-c) follows from the fact that $\ad$ is skew--symmetric with respect to every bi--invariant metric (see Proposition~\ref{curvatureLie}). Finally, (vi-d) follows from (vi-b) and from $$\frac{\mathrm d}{\mathrm dt}\Ad(\exp(tX))e_{i}\big|_{t=0}=[X,e_{i}].$$

In order to prove (vii), we first prove that $\mathfrak{g}^{(\alpha)}$ is a Lie algebra and that the definition of $\alpha^{\vee}$ does not depend on the bi--invariant metric. Let $Y$ be an infinitesimal generator of $\mathfrak{t}$. Then Jacobi equation and (vi-b) imply that $[[e_{1},e_{2}],Y]=0$. Hence, from Lemma~\ref{LemmaMaximalTorusTheorem}, $[e_{1},e_{2}]\in\mathfrak{t}$.  
Note that $[e_1,e_2]\neq 0$. Otherwise $\ker\oplus V_{\alpha}$ would be an abelian Lie algebra with dimension greater than the dimension of maximal abelian Lie algebra $\mathfrak{t}$. 
Item (vi-b) and the fact that $\ad$ is skew--symmetric with respect to every bi--invariant metric imply that $[e_{1},e_{2}]$ is orthogonal to $\ker\alpha$. In particular, the definition of $\alpha^{\vee}$ does not depend on the bi--invariant metric.
To verify that $\mathfrak{g}^{(\alpha)}$ is a Lie algebra, it suffices to note that
\begin{equation*}
\begin{aligned}
\left[[e_{1},e_{2}],e_{1}\right] &= \alpha([e_{1},e_{2}])e_{2}, \\
\left[[e_{1},e_{2}],e_{2}\right] &=-\alpha([e_{1},e_{2}])e_{1}.
\end{aligned}
\end{equation*}

Note also that $\mathfrak{g}^{(\alpha)}\subset \mathfrak{g}_{X}$, $\mathfrak{g}^{(\alpha)}\cap \Zentrum(\mathfrak{g}_{X})=0$ and $$\dim \mathfrak{g}^{(\alpha)}=3=\dim \mathfrak{g}_{X}/\Zentrum(\mathfrak{g}_{X}),$$ where $X$ was defined in the proof of (iv). We conclude that the morphism below is an isomorphism of Lie algebras.
\begin{equation}\label{theorem-roots-existencia-Eq6}
\mathfrak{g}^{(\alpha)}\longrightarrow \mathfrak{g}_{X}/\Zentrum(\mathfrak{g}_{X})=\mathfrak{so}(3).
\end{equation}

Since $\mathfrak{g}^{(\alpha)}=\mathfrak{so}(3)$, $G^{(\alpha)}$ is a compact Lie group isomorphic to $\SU(2)$ or $\SO(3)$. This fact implies that
\begin{equation}\label{theorem-roots-existencia-Eq7}
\Ad\left(G^{(\alpha)}\right)\big|_{\mathfrak{g}^{(\alpha)}}=\SO(3).
\end{equation}
From the fact that the sphere is homogeneous in $\mathfrak{g}^{(\alpha)}$ and \eqref{theorem-roots-existencia-Eq7}, there exists $w\in G^{(\alpha)}$ such that $\Ad(w)\alpha^{\vee}=-\alpha^{\vee}$. Since $$\Ad(\exp(Y))Z=\exp(\ad(Y))Z=Z$$ for $Y\in\mathfrak{g}^{(\alpha)}$ and $Z\in\ker\alpha$, we conclude that for each $g\in G^{(\alpha)}$, $\Ad(g)\big|_{\ker\alpha}=\id$.
\end{proof}

In the sequel, notation established in Theorem~\ref{theorem-roots-existencia} will still be used.

\begin{definition}\label{definitionWeylchamber}
Let $\mathfrak{t}^{r}=\mathfrak{t}\setminus\bigcup_{\alpha\in R}\ker\alpha$. A connected component of $\mathfrak{t}^{r}$ is called a \emph{Weyl chamber}\index{Weyl chamber}. A choice of a fixed Weyl chamber $C$, allows to define the set of \emph{positive roots} $$P=\{\alpha\in R:\alpha(X)>0, \mbox{ for all } X\in C\}.$$ Note that $$R=P\cup (-P) \mbox{ and } P\cap(-P)=\emptyset,$$ where $-P=\{-\alpha:\alpha\in P\}$.
\end{definition}

\begin{remark}
In the definition above,  the choice of a fixed Weyl chamber was used to define positive roots. It is also possible to do the converse, starting with a set of roots with special properties and then defining the associated Weyl chamber. More precisely, consider $P$ a set of roots with the following properties:
\begin{itemize}
\item[(i)] $R=P\cup (-P)$;
\item[(ii)] $P\cap(-P)=\emptyset$;
\item[(iii)] If $\sum_{\alpha\in P}c_{\alpha}\alpha=0$, with $c_{\alpha}\geq 0$ for all $\alpha\in P$, then $c_{\alpha}=0$ for all $\alpha\in P$.
\end{itemize}

Such $P$ will be called a \emph{choice of positive roots}\index{Root!positive}. The associated Weyl chamber is then defined as $$C=\{X\in \mathfrak{t}:\alpha(X)>0, \mbox{ for all } \alpha\in P\}.$$ Therefore there exists a bijective correspondence between the set of choices of positive roots and the set of Weyl chambers in $\mathfrak{k}$.
\end{remark}

\begin{remark}\label{remarkWeylchamber1}
From Theorem~\ref{theorem-roots-existencia}, we infer that $$\mathfrak{g}=\mathfrak{t}\oplus \sum_{\alpha\in P}V_{\alpha}.$$
\end{remark}

\begin{remark}\label{remarkWeylchamber2}
If $X\in C$, then $\Ad(G)X$ is a regular orbit and its codimension is equal to the dimension of $T$. Indeed, since $X\notin \ker \alpha$ for any $\alpha\in R$, we conclude that $\mathfrak{g}_{X}=\mathfrak{t}.$ We will see in Theorem~\ref{theorem-OrbitaPrincipal=OrbitaRegular} that all regular orbits of the adjoint action are principal orbits.  
\end{remark}

It was proved in Proposition~\ref{proposition-adjointaction-is-isoparametric} that principal orbits of the adjoint action are isoparametric (recall Definition~\ref{definition-isoparametric}). We now calculate principal curvatures and principal directions of such a principal orbit.

\begin{remark}\label{remark-Ad-isoparametrica-curvaturas}
Let $G$ be a connected compact Lie group with a bi--invariant metric $\langle \cdot,\cdot\rangle$ and $T\subset G$ a fixed maximal torus, with Lie algebras $\mathfrak{g}$ and $\mathfrak{t}$, respectively.

Consider $Z\in\mathfrak{t}$ such that $\Ad(G)Z$ is a principal orbit. From Corollary~\ref{corollary-MaximalTorus}, $$T_{Z}(\Ad(G)Z)=\{[\xi,Z]:\mbox{ for all }\xi \in\mathfrak{g}\}.$$ Let $N\in\mathfrak{t}$. From the Maximal Torus Theorem~\ref{maxtorusthm}, $\widehat{N}(\Ad(g)Z)=\Ad(g)N$ is a well--defined normal field along $\Ad(G)Z$. Deriving the last equation, it follows that \begin{equation}\label{remark-Ad-isoparametrica-curvaturas-Eq1}
\dd\widehat{N}_{Z}([\xi,Z])=[\xi,N].
\end{equation}
We claim that $\widehat{N}$ is normal and parallel. This follows from \eqref{remark-Ad-isoparametrica-curvaturas-Eq1} and $$\langle [\xi,N], X\rangle = \langle \xi, [N,X]\rangle=0, \mbox{ for all } X\in\mathfrak{t}.$$

Since $\widehat{N}$ is normal and parallel, from \eqref{remark-Ad-isoparametrica-curvaturas-Eq1}, \begin{equation}\label{remark-Ad-isoparametrica-curvaturas-Eq2}
\mathcal S_{\widehat{N}}([\xi,Z])=-[\xi,N],
\end{equation}
where $\mathcal S_{\widehat{N}}$ is the shape operator. Let $\{e_{1},e_{2}\}$ be the basis defined in Theorem~\ref{theorem-roots-existencia}. Replacing $\xi$ by $e_{i}$ in \eqref{remark-Ad-isoparametrica-curvaturas-Eq2}, it follows that \begin{equation}\label{remark-Ad-isoparametrica-curvaturas-Eq3}
\mathcal{S}_{\widehat{N}}(e_{1})=-\frac{\alpha(N)}{\alpha(Z)}e_{1} , \quad \mathcal{S}_{\widehat{N}}(e_{2})=-\frac{\alpha(N)}{\alpha(Z)}e_{2}.
\end{equation}
Hence $V_{\alpha}$ is a curvature distribution of the orbit $\Ad(G)Z$. In particular, the spaces $V_{\alpha}$ are orthogonal to each other.
\end{remark}

To conclude this section, we calculate the roots of $\SU(n)$. More roots computations on other classical Lie groups such as $\SO(n)$ and $\Sp(n)$ can be found in Fegan~\cite{Fegan}.

\begin{example}\label{example-roots-SU(n)}\index{Root!of $\SU(n)$}\index{$\SU(n)$}\index{$\mathfrak{su}(n),\mathfrak{u}(n)$}
Let $T$ be the subgroup of diagonal matrices in $\SU(n)$. It follows from Theorem~\ref{cpctabeliantorus} that $T$ is a torus, and it can be proved that $T$ is a maximal torus. As usual, let $\mathfrak{t}$ denote the Lie algebra of $T$. Consider $X\in\mathfrak{t}$. Then$$X  =\left(\begin{array}{c c c}
                    \imaginario \theta_{1}& &  \\
                        & \ddots &  \\
                        & &\imaginario \theta_{n}
                      \end{array}\right),$$ with $\theta_{1} +\dots +\theta_{n}=0$.

Let $E_{ij}$ denote the matrix with $1$ in the $(i,j)^\mathrm{th}$ entry and $-1$ in the $(j,i)^\mathrm{th}$ entry and $F_{ij}$ the matrix with $\imaginario$ in both $(i,j)^\mathrm{th}$ and $(j,i)^\mathrm{th}$ entries. A direct calculation gives
$$\begin{array}{l c c c r}
\ad(X)E_{ij} &=& X E_{ij}-E_{ij} X &=& (\theta_{i}-\theta_{j}) F_{ij}, \\
\ad(X)F_{ij} &=& X F_{ij}-F_{ij} X &=& -(\theta_{i}-\theta_{j}) E_{ij}.
\end{array}$$
From the above equations and uniqueness of the decomposition on $V_{\alpha}$ (see Theorem~\ref{theorem-roots-existencia} and Remark~\ref{remarkWeylchamber1}), the roots of $\SU(n)$ are $$\pm (\theta_{i}^{*}-\theta_{j}^{*}), \mbox{ with } i< j,$$ where $\theta_{i}^{*}(X)=\theta_{i}$. One can also check that $$(\theta_{i}^{*}-\theta_{j}^{*}), \mbox{ with } i<j$$
is a choice of positive roots.

\end{example}

%%%%%%%%%%%%%%%%%%%%%%%%%%%%%%%%%%%%%

\section{Weyl group}

 Let $G$ be a connected compact Lie group endowed with a bi--invariant metric $\langle \cdot,\cdot\rangle$ and $T\subset G$ a fixed maximal torus, with Lie algebras $\mathfrak{g}$ and $\mathfrak{t}$, respectively. The aim of this section is twofold. First, we prove that \emph{every} regular orbit of the adjoint action is a principal orbit. We then study intersections of orbits of the adjoint action with $\mathfrak{t}$ using the so--called \emph{Weyl group}.

\begin{lemma}\label{lemma-q-in-isotropyofq}
Let $G_{q}^{0}$ be the connected component of the isotropy group $G_{q}=\{g\in G:gqg^{-1}=q\}$. Then $q\in G_{q}^{0}$.
\end{lemma}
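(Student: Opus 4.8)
The statement is that for $q$ in a connected compact Lie group $G$, we have $q \in G_q^0$, the identity component of the centralizer of $q$. My plan is to realize $q$ as an element of a maximal torus and then use the fact that a torus is connected and commutative, so that it is contained entirely in the centralizer of each of its elements.

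First I would invoke the Maximal Torus Theorem~\ref{maxtorusthm}, item (iii): there exists $h \in G$ and a maximal torus $T$ such that $hqh^{-1} \in T$. Setting $q' = hqh^{-1}$, it suffices to prove the claim for $q'$, because conjugation by $h$ is a diffeomorphism of $G$ carrying $G_q$ onto $G_{q'}$ (indeed $G_{q'} = hG_qh^{-1}$, as in Exercise~\ref{ex-conjisotropy}) and hence carrying $G_q^0$ onto $G_{q'}^0$; thus $q \in G_q^0$ if and only if $q' \in G_{q'}^0$. So I may assume $q \in T$ from the start. Now since $T$ is abelian, every element $t \in T$ commutes with $q$, i.e.\ $tqt^{-1} = q$, so $T \subseteq G_q$. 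But $T$ is connected and contains the identity $e$, hence $T$ is contained in the connected component of $G_q$ containing $e$, which is precisely $G_q^0$. Therefore $q \in T \subseteq G_q^0$, as desired.

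I do not anticipate a serious obstacle here; the only point requiring a little care is the reduction step, namely checking that conjugation carries identity components to identity components. This is immediate because conjugation by $h$ is a homeomorphism of $G$ fixing $e$ (as $heh^{-1} = e$) and restricting to a homeomorphism $G_q \to G_{q'}$, and homeomorphisms send the connected component of a point to the connected component of its image. With that observed, the argument is complete.
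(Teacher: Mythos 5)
Your proof is correct, but it takes a different route from the paper's. The paper proves the lemma directly from surjectivity of the Lie exponential on a compact group (Theorem~\ref{expagree}): write $q = \exp(X)$, note that the one--parameter subgroup $t \mapsto \exp(tX)$ commutes with $q$ and hence lies in $G_q$, and conclude that $q$ is joined to $e$ inside $G_q$ by this connected curve. You instead place $q$ inside a maximal torus $T$ via the Maximal Torus Theorem~\ref{maxtorusthm} and observe that $T$, being connected, abelian, and containing $e$, lies in $G_q^0$. Both arguments ultimately hinge on compactness of $G$, but the paper's is lighter: it needs only the surjectivity of $\exp$, whereas you invoke the full Maximal Torus Theorem (which itself rests on surjectivity of $\exp$ plus more). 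The conceptual content is close --- you produce the whole torus through $q$ while the paper produces the one--parameter subgroup through $q$ --- but your version is a bit heavier than necessary.

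One small simplification worth noting: the reduction by conjugation is not needed. Item~(iii) of the Maximal Torus Theorem~\ref{maxtorusthm} already states that every element of $G$ lies in \emph{some} maximal torus, so you may take $T$ containing $q$ directly, skip the passage to $q' = hqh^{-1}$, and immediately conclude $q \in T \subseteq G_q^0$. Your reduction step is correct (conjugation by $h$ is a homeomorphism fixing $e$ and carrying $G_q$ onto $G_{q'}$, hence $G_q^0$ onto $G_{q'}^0$), it just does work that the theorem already hands you.
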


\begin{proof}
Since $G$ is compact, $\exp$ is surjective (see Theorem~\ref{expagree}). Thus, there exists $X\in\mathfrak{g}$ such that $q=\exp(X)$. We conclude that
\begin{eqnarray*}
q&=&\exp(X)\\
 &=&\exp(tX)\exp(X)\exp(-tX)\\
 &=&\exp(tX)q \exp(-tX).
\end{eqnarray*}
Therefore $\exp(tX)\in G_{q}$ and hence $q=\exp(X) \in G_{q}^{0}$.
\end{proof}

\begin{lemma}\label{lemma-OrbitaPrincipal=OrbitaRegular}
Let $G_{X}=\{g\in G:\Ad(g)X=X\}$ be the isotropy group of $X\in\mathfrak{t}$. Then $G_{X}$ is a connected Lie group.
\end{lemma}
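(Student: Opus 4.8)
The plan is to show that the isotropy group $G_X = \{g \in G : \Ad(g)X = X\}$ of a point $X \in \mathfrak{t}$ is connected, by expressing it as a union of connected subgroups that all share a common point. First I would fix $g \in G_X$. Since $G$ is compact, Theorem~\ref{expagree} gives surjectivity of $\exp$, so the closure $S = \overline{\{g^n : n \in \Z\}}$ together with the one-parameter subgroups through $g$ generate an abelian subgroup whose closure is a torus $T'$ contained in $G$. The key point is that $T' \subset G_X$: indeed, writing $g = \exp(Y)$, every element $\exp(tY)$ fixes $X$ because $\Ad(\exp(tY))X = X$ would follow once we know $g$ fixes $X$ and $\Ad(g)X=X$ implies (via \eqref{adexp} and the argument used in Lemma~\ref{lemma-q-in-isotropyofq}) that the whole one-parameter group $\exp(tY)$ lies in $G_X$; more carefully, one should argue directly with the adjoint action rather than through $\exp$ of a single $Y$, using that $\Ad(g)X = X$ together with compactness forces $g$ to lie in a torus acting trivially on $X$.

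The cleanest route, I think, is the following. By the Maximal Torus Theorem~\ref{maxtorusthm}(iii) applied inside $G_X$ — or rather by choosing a maximal torus $T_g$ of $G$ containing $g$ — we get $g \in T_g$, and since $T_g$ is connected and $g \in T_g \cap G_X$, it suffices to show $T_g \subset G_X$. But this need not hold for an arbitrary maximal torus through $g$. Instead I would use Lemma~\ref{lemma-q-in-isotropyofq} as a model: given $g \in G_X$, the identity component $G_X^0$ is a closed (hence compact) subgroup, and I would show $g \in G_X^0$ by exhibiting a continuous path in $G_X$ from $e$ to $g$. Writing $g = \exp(Y)$ with $Y \in \mathfrak{g}$ (using surjectivity of $\exp$ from Theorem~\ref{expagree}), the path $t \mapsto \exp(tY)$ joins $e$ to $g$; the only thing to check is that $\exp(tY) \in G_X$ for all $t$, i.e.\ $\Ad(\exp(tY))X = X$. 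This is where the hypothesis $X \in \mathfrak{t}$ and the structure of $G_X$ must be used: it is \emph{not} true that $g \in G_X$ implies $\exp(tY) \in G_X$ for every logarithm $Y$ of $g$, so one must choose $Y$ appropriately.

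The correct statement to invoke is that the centralizer of a torus (or of a single element) in a compact connected Lie group is connected — a classical fact whose standard proof runs: pick a maximal torus $S \subset G_X^0$; by the Maximal Torus Theorem~\ref{maxtorusthm}(iii), it is enough to show every $g \in G_X$ is conjugate within $G_X^0$ to an element of $S$; equivalently, that $g$ lies in some maximal torus of $G$ that is contained in $G_X$. Since $g$ fixes $X$ under $\Ad$ and commutes with $X$ viewed via the exponential, one shows $\overline{\langle g \rangle}$ together with $\exp(\R X)$ generates an abelian subgroup; its closure $A$ is a torus, $A \subset G_X$ (every element of $A$ commutes with $g$ and fixes $X$), and extending $A$ to a maximal torus $T'$ of $G$, one checks $T' \subset G_X$ because $X \in \mathfrak{t}' = \operatorname{Lie}(T')$ forces $\Ad(T')X = X$ by Proposition~\ref{abelianiff} (abelianness of $T'$ gives $\ad(Z)X = 0$ for $Z \in \mathfrak{t}'$, hence $\Ad(\exp Z)X = X$). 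Thus $g$ lies in the connected subgroup $T' \subset G_X$, giving $g \in G_X^0$ and hence $G_X = G_X^0$.

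\textbf{Main obstacle.} The delicate step is the claim that $\overline{\langle g \rangle} \cdot \exp(\R X)$ has abelian closure containing $X$ in its Lie algebra in a way that a maximal torus $T'$ through it satisfies $\Ad(T')X = X$; one must verify that $g$ genuinely commutes (as group elements) with all of $\exp(\R X)$, which follows from $\Ad(g)X = X$ via $g \exp(tX) g^{-1} = \exp(t\,\Ad(g)X) = \exp(tX)$ using \eqref{gexpg}, and then that $X$ lies in the Lie algebra of the resulting maximal torus so that Proposition~\ref{abelianiff} applies. Assembling these pieces — and in particular invoking the Maximal Torus Theorem to know such a maximal torus $T'$ exists inside the compact group $G$ — is the heart of the argument; the rest is routine.
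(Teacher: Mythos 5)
Your high-level reduction is the right one: it suffices to produce, for each $g\in G_X$, a torus $T'\subset G$ with $g\in T'$ and $X\in\operatorname{Lie}(T')$, since then $T'$ is abelian, $\Ad(T')X=X$, and $T'$ is a connected subgroup of $G_X$ through $e$, giving $g\in G_X^0$. The gap is in your construction of $T'$. You assert that the closure $A$ of the abelian group generated by $g$ and $\exp(\R X)$ ``is a torus,'' but $A$ is only a compact abelian Lie group, and these need not be connected: $A^0$ contains $\overline{\exp(\R X)}$, while the image of $g$ in $A/A^0$ generates a possibly nontrivial finite cyclic group, so in general $g\notin A^0$. If $A$ is disconnected, ``extending $A$ to a maximal torus $T'$'' is not meaningful --- a maximal torus is connected and cannot contain $A$ --- and you do not obtain $g\in T'$. (The claim can in fact be rescued: $A/A^0$ is finite cyclic and $A^0$ is a divisible abelian group, so the extension splits and $A\cong T^k\times\Z_n$, which is topologically cyclic; a topological generator of $A$ lies in some maximal torus by Theorem~\ref{maxtorusthm}(iii), hence so does $A$. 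But this structure-theoretic input is a genuine extra step that your proposal does not supply.)

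The paper sidesteps the issue entirely by working inside the centralizer $G_g=\{h:hgh^{-1}=g\}$ of $g$ rather than with $A$. Since $g$ commutes with $\exp(tX)$ (by \eqref{gexpg}, as you correctly note), $X\in\operatorname{Lie}(G_g^0)$, so one may choose a maximal torus $\widetilde T$ of $G_g^0$ with $X\in\operatorname{Lie}(\widetilde T)$. By Lemma~\ref{lemma-q-in-isotropyofq}, $g\in G_g^0$, so the Maximal Torus Theorem applied \emph{inside} the compact connected group $G_g^0$ gives $h\in G_g^0$ with $hgh^{-1}\in\widetilde T$; but $h$ centralizes $g$, so $hgh^{-1}=g$, and therefore $g\in\widetilde T\subset G_X^0$. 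The point of conjugating inside $G_g^0$ is precisely that the conjugate is forced to equal $g$, which eliminates the need to know in advance that $g$ and $\exp(\R X)$ together lie in a single torus --- exactly the fact your construction leaves unproved.
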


\begin{proof}
Let $z\in G_{X}$. We want to prove that $z$ is in the identity's connected component $G_{X}^{0}$. Consider $G_{z}$ the isotropy group with respect to the conjugation action of $z$. Then $X$ is in the Lie algebra of $G_{z}$. In fact, since $z\in G_{X}$ it follows that $t\Ad(z)X=tX$, hence $z\exp(tX)z^{-1}=\exp(tX)$. This is equivalent to $\exp(-tX)z\exp(tX)=z$.

Let $\widetilde{T}$ be a maximal torus of $G_{z}^{0}$, tangent to $X$. Since $[Z,X]=0$ for every $Z$ in the Lie algebra of $\widetilde{T}$, $Z$ is in the Lie algebra of $G_{X}$. Thus
\begin{equation}\label{lemma-OrbitaPrincipal=OrbitaRegularEq1}
\widetilde{T}\subset G_{X}^{0}.
\end{equation}
Since $z\in G_{z}^{0}$ (see Lemma~\ref{lemma-q-in-isotropyofq}), from the Maximal Torus Theorem~\ref{maxtorusthm} there exists $g\in G_{z}^{0}$ such that
\begin{equation}\label{lemma-OrbitaPrincipal=OrbitaRegularEq2}
g z g^{-1}\in \widetilde{T}.
\end{equation}
Finally, since $g\in G_{z}^{0}$,
\begin{equation}\label{lemma-OrbitaPrincipal=OrbitaRegularEq3}
z=g z g^{-1}.
\end{equation}
Equations \eqref{lemma-OrbitaPrincipal=OrbitaRegularEq1}, \eqref{lemma-OrbitaPrincipal=OrbitaRegularEq2} and \eqref{lemma-OrbitaPrincipal=OrbitaRegularEq3} imply that $z\in G_{X}^{0}$.
\end{proof}

\begin{theorem}\label{theorem-OrbitaPrincipal=OrbitaRegular}
Each regular orbit of the adjoint action is a principal orbit.
\end{theorem}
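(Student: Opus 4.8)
The plan is to show that a regular orbit of the adjoint action has the smallest possible isotropy group among nearby orbits, and then invoke Proposition~\ref{proposition-equivalencia-orbitasprincipais} (or directly the definition of principal orbit). Let $X\in\mathfrak{g}$ be a regular point, meaning $\dim\Ad(G)X$ equals the dimension of a principal orbit, equivalently $\dim G_X$ is minimal. By the Maximal Torus Theorem~\ref{maxtorusthm} (or Corollary~\ref{corollary-MaximalTorus}) we may assume, after replacing $X$ by a conjugate, that $X\in\mathfrak{t}$, the Lie algebra of a fixed maximal torus $T$. The crucial input is Lemma~\ref{lemma-OrbitaPrincipal=OrbitaRegular}, which asserts that $G_X$ is connected; thus $G_X = G_X^0$, and its Lie algebra is $\mathfrak{g}_X = \{Y\in\mathfrak{g}:[X,Y]=0\}$, which by Remark~\ref{remarkWeylchamber2} equals $\mathfrak{t}$ precisely when $X$ lies in a Weyl chamber. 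Since $X$ is regular, $X\notin\ker\alpha$ for every root $\alpha$, so indeed $\mathfrak{g}_X = \mathfrak{t}$ and therefore $G_X = T$.

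First I would set up a normal slice $S_X$ at $X$ for the (proper, isometric, since the metric is bi--invariant) adjoint action, using the Slice Theorem~\ref{slicethm} and the fact that for isometric actions the slice is $\exp_X$ of a ball in $\nu_X \Ad(G)X$. By Corollary~\ref{corollary-MaximalTorus} the orbit $\Ad(G)X$ meets $\mathfrak{t}$ orthogonally at $X$, so the normal space $\nu_X\Ad(G)X$ contains $\mathfrak{t}$; in fact, writing $\mathfrak{g} = \mathfrak{t}\oplus\sum_{\alpha\in P}V_\alpha$ (Remark~\ref{remarkWeylchamber1}) and noting $T_X\Ad(G)X = \{[\xi,X]:\xi\in\mathfrak{g}\} = \sum_{\alpha\in P}V_\alpha$ (using $[\mathfrak{t},X]=0$ and Theorem~\ref{theorem-roots-existencia}(vi-b) with $\alpha(X)\neq0$), we get $\nu_X\Ad(G)X = \mathfrak{t}$ exactly. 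Hence the slice is $S_X = \{X + Z : Z\in\mathfrak{t}, \|Z\|<\varepsilon\}$, since $\mathfrak{g}$ is a vector space and geodesics are straight lines.

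Next I would verify condition (ii) of Proposition~\ref{proposition-equivalencia-orbitasprincipais}: for each $Y\in S_X$, $G_Y = G_X = T$. Write $Y = X + Z$ with $Z\in\mathfrak{t}$ small. The inclusion $G_Y\subset G_X$ is the slice property (item (iii) of Definition~\ref{definition-slice} combined with $G_X$--invariance), but more concretely: any $g\in G_Y$ satisfies $\Ad(g)(X+Z) = X+Z$; I would argue that for $Z$ small enough, $X+Z$ is still regular (regularity is an open condition, as $X\notin\ker\alpha$ for all $\alpha$ and these are finitely many closed hyperplanes), so $G_{X+Z}$ is also conjugate to a maximal torus by the same reasoning as above; and since $X+Z\in\mathfrak{t}$, Lemma~\ref{lemma-OrbitaPrincipal=OrbitaRegular} gives $G_{X+Z}$ connected with Lie algebra $\mathfrak{g}_{X+Z} = \mathfrak{t}$, hence $G_{X+Z}=T = G_X$. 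This establishes that every point of $S_X$ has isotropy group exactly $G_X$, so by Proposition~\ref{proposition-equivalencia-orbitasprincipais} the orbit $\Ad(G)X$ is principal.

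The main obstacle, and the place where care is needed, is making sure the identification $\nu_X\Ad(G)X = \mathfrak{t}$ and the computation of $G_{X+Z}$ are airtight — in particular that the slice really is an open piece of $\mathfrak{t}$ (this uses that $\exp_X$ for the flat metric on a vector space is the identity after the canonical affine identification, and Corollary~\ref{corollary-MaximalTorus}), and that regularity of $X+Z$ for small $Z$ follows cleanly. Everything else is a routine application of the already-established structure theory (Theorem~\ref{theorem-roots-existencia}, the Maximal Torus Theorem, and Lemmas~\ref{lemma-q-in-isotropyofq} and~\ref{lemma-OrbitaPrincipal=OrbitaRegular}); the genuinely new ingredient beyond Proposition~\ref{proposition-adjointaction-is-isoparametric} is precisely the connectedness of isotropy groups, which is why Lemma~\ref{lemma-OrbitaPrincipal=OrbitaRegular} is stated just before.
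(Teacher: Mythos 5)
Your proof is correct and follows essentially the same route as the paper's: both rest on Lemma~\ref{lemma-OrbitaPrincipal=OrbitaRegular} (connectedness of adjoint isotropy groups), the Slice Theorem, and Proposition~\ref{proposition-equivalencia-orbitasprincipais}. The paper's argument is more compressed — it avoids identifying the slice with a ball in $\mathfrak{t}$ and $G_X$ with $T$ by simply combining the slice inclusion $G_Y\subset G_{X_0}$, minimality of $\dim G_{X_0}$ among isotropy dimensions, and connectedness to conclude $G_Y=G_{X_0}$ — but the key ingredient and the reduction to Proposition~\ref{proposition-equivalencia-orbitasprincipais} are the same as in your write-up, which merely makes these identifications explicit via the root-space decomposition.
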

\begin{proof}
Let $X_{0}$ be a regular point. This implies that $\dim G_{X_{0}}$ $\leq \dim G_{X}$ for all $X\in\mathfrak{g}$. Consider a slice $S_{X_{0}}$ at $X_{0}$. From the definition of slice, $G_{Y}\subset G_{X_{0}}$ for each $Y\in S_{X_{0}}$. Since $\dim G_{X_{0}}\leq \dim G_{Y}$ and both are connected, it follows that $G_{Y}=G_{X_{0}}$. This implies that $G(X_{0})$ is a principal orbit.
\end{proof}

\begin{definition}\label{definitionWeylgroupAdjointAction}\index{Weyl group}
Consider \begin{eqnarray*} N_{\mathfrak t} &=& \{g\in G:\Ad(g) \mathfrak{t}\subset \mathfrak{t}\}, \\ Z_{\mathfrak t} &=& \{g\in G:\Ad(g)Y=Y, \mbox{ for all } Y \in\mathfrak{t}\}. \end{eqnarray*} Clearly $Z_{\mathfrak t}$ is a normal subgroup of $N_{\mathfrak t}$. The \emph{Weyl group} is defined by $$W=N_{\mathfrak t}/Z_{\mathfrak t}.$$
\end{definition}

Note that the action $W\times\mathfrak{t}\ni (wZ_{\mathfrak t},X)\mapsto\Ad(w)X\in\mathfrak{t}$ is an effective isometric action, and orbits of the action of $W$ on $\mathfrak{t}$ coincide with the intersections of $\mathfrak{t}$ with the orbits of the adjoint action.

\begin{definition}
Let $\alpha(\cdot)=\langle \widetilde{\alpha}, \cdot\rangle $ be a root. Then $\varphi_{\alpha}$ denotes the orthogonal reflection across $\ker(\alpha)$, i.e., $$\varphi_{\alpha}(Z)=Z-2\left\langle Z,\frac{\widetilde{\alpha}}{\|\widetilde{\alpha}\|}\right\rangle \frac{\widetilde{\alpha}}{\|\widetilde{\alpha}\|}.$$
\end{definition}

\begin{remark}
If $\alpha^{\vee}$ is the coroot of $\alpha$ then $\varphi_{\alpha}(Z)=Z- \alpha(Z)\alpha^{\vee}$.
\end{remark}

Item (vii) of Theorem~\ref{theorem-roots-existencia} implies that reflections $\varphi_{\alpha}$ are elements of the Weyl group $W$. Moreover, in the next theorem we prove that $W$ is generated by these reflections.

\begin{theorem}\label{proposition-W-generatedby-reflections}
Let $W$ be the Weyl group. Then
\begin{itemize}
\item[(i)] Let $C_1$ and $C_2$ be two Weyl chambers such that $X\in \partial C_{1}\cap \partial C_{2}$. Then there exist reflections $\varphi_{\alpha_1},\ldots,\varphi_{\alpha_{n}}$ such that $\varphi_{\alpha_{i}}(X)=X$ and 
$\varphi_{\alpha_{n}}\circ \cdots \circ\varphi_{\alpha_{1}}C_{2}=C_{1}$;
\item[(ii)] The closure of each Weyl chamber $C$ is a fundamental domain for the action of $W$ on $\mathfrak{t}$, i.e., each orbit of the adjoint action intersects the closure of $C$ exactly once;
\item[(iii)] $W$ is generated by $\{\varphi_{\alpha}\}_{\alpha\in R},$ where $R$ is the set of roots.
\end{itemize}
\end{theorem}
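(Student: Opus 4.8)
The plan is to prove the three items in the order (ii) $\Rightarrow$ (iii), with (i) as the key technical ingredient for (ii). The geometric picture to keep in mind is that $\mathfrak t^r = \mathfrak t \setminus \bigcup_{\alpha\in R}\ker\alpha$ is the complement of a finite union of hyperplanes, the Weyl chambers are the connected components, $W$ acts by isometries permuting these chambers, and each reflection $\varphi_\alpha$ fixes the wall $\ker\alpha$ pointwise. All of this uses only Theorem~\ref{theorem-roots-existencia} (existence of roots, coroots, the $\mathfrak{so}(3)$-triples $\mathfrak g^{(\alpha)}$ and the element $w\in G^{(\alpha)}$ with $\Ad(w)|_{\ker\alpha}=\id$ and $\Ad(w)\alpha^\vee=-\alpha^\vee$, which is exactly $\varphi_\alpha$), Corollary~\ref{corollary-MaximalTorus}, and the Maximal Torus Theorem~\ref{maxtorusthm}.

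For item (i), I would first reduce to the case $X=0$ by passing to the isotropy data: the roots $\alpha$ with $\alpha(X)=0$ cut $\mathfrak t$ into subchambers near $X$, and $C_1, C_2$ both lie in the cone of directions at $X$. The cleanest route is a walk-through-walls argument: pick regular points $p_i\in C_i$ and consider the straight segment from $p_2$ to $p_1$; perturb it to cross the hyperplanes $\ker\alpha$ one at a time, and each time it crosses a wall $\ker\alpha$ on which $X$ lies, apply $\varphi_\alpha$ to fold one chamber onto the adjacent one. Since there are finitely many hyperplanes through $X$ and finitely many chambers in the cone at $X$, this terminates, giving $\varphi_{\alpha_n}\circ\cdots\circ\varphi_{\alpha_1}$ carrying $C_2$ to $C_1$ with each $\varphi_{\alpha_i}(X)=X$. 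The main obstacle is making ``cross the walls one at a time'' rigorous without a full hyperplane-arrangement combinatorics lemma; I would phrase it as an induction on the number of hyperplanes separating $C_1$ and $C_2$, using that adjacent chambers (separated by exactly one wall $\ker\alpha$) are swapped by $\varphi_\alpha$, which is the genuinely geometric step.

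For item (ii), existence of a representative in $\overline C$ for every $W$-orbit follows because $\mathfrak t^r$ is dense and $W$ acts transitively on Weyl chambers: given any $X\in\mathfrak t$, approximate or use that $X$ lies in the closure of some chamber $C'$, then by (i) applied at $X$ (taking $C_1=C$) some product of reflections fixing $X$ moves $C'$ to $C$, hence moves $X$ into $\overline C$. Uniqueness is the assertion that if $X, \Ad(w)X\in\overline C$ then they are equal; this is where I would use that $\overline C$ is convex and that $W$ is generated by reflections in walls of $C$ — but since (iii) is not yet available, I would instead argue directly: the segment between $X$ and $\Ad(w)X$ lies in $\overline C$, while $w$ as an isometry permuting chambers must fix the chamber containing an interior approximant, forcing $\Ad(w)X = X$. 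Some care is needed here to avoid circularity with (iii); alternatively one proves (ii)-uniqueness and (iii) together, first establishing (iii) for the subgroup $W'$ generated by the $\varphi_\alpha$ and showing $W'$ already acts transitively on chambers with $\overline C$ a strict fundamental domain.

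For item (iii), let $W'\le W$ be the subgroup generated by $\{\varphi_\alpha\}_{\alpha\in R}$. By (i), $W'$ already acts transitively on the set of Weyl chambers. Now take any $w\in W$ and a regular $X\in C$; then $\Ad(w)X\in \mathfrak t^r$ lies in some chamber $C'$, and by transitivity there is $w'\in W'$ with $\Ad(w')(\Ad(w)X)\in C$. Since $C$ meets each adjoint orbit exactly once by (ii) — and $\Ad(w'w)X$ and $X$ are in the same orbit and both in $C$ — we get $\Ad(w'w)X = X$. But $X$ is regular, so its stabilizer in $W$ is trivial (the stabilizer of a regular point in $\mathfrak t$ under the effective action of $W$ is trivial because $\dim \Ad(G)X$ equals $\dim G - \dim T$ and $W$ acts simply transitively on the intersection of the orbit with $\mathfrak t^r$, a standard consequence of Corollary~\ref{corollary-MaximalTorus} and Theorem~\ref{theorem-OrbitaPrincipal=OrbitaRegular}). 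Hence $w'w = e$ in $W$, so $w = (w')^{-1}\in W'$, proving $W = W'$. The one gap I would need to fill carefully is the triviality of the stabilizer of a regular point, which I would extract from the fact that $\mathfrak g_X = \mathfrak t$ for $X\in C$ (Remark~\ref{remarkWeylchamber2}) together with $N_{\mathfrak t}$ acting on the finite set of chambers.
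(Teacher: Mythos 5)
Your overall plan --- show the reflection subgroup $W'=\langle\varphi_\alpha\rangle$ acts transitively on chambers, derive that $\overline C$ is a fundamental domain, and then deduce $W=W'$ from triviality of the stabilizer of a regular point --- matches the paper's strategy, and your treatment of (iii) is essentially the paper's: $\Ad(w)X=X$ for regular $X\in C$ forces $w\in G_X=T$, which is correct provided you invoke both $\mathfrak g_X=\mathfrak t$ (Remark~\ref{remarkWeylchamber2}) and connectedness of $G_X$ (Lemma~\ref{lemma-OrbitaPrincipal=OrbitaRegular}). For (i), your walk-through-walls induction is a valid classical route but, as you note, rests on an adjacency lemma for the hyperplane arrangement that you do not supply. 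The paper sidesteps this entirely: $K_X$ (the reflections fixing $X$) is finite; pick regular $p_i\in C_i$ inside a ball $B_\varepsilon(X)$ small enough to lie in a slice at $X$, so that any wall meeting the ball passes through $X$; minimize $w\mapsto\|w\,p_2-p_1\|$ over $K_X$. If the minimizer $w_0p_2\notin C_1$, the segment from $w_0p_2$ to $p_1$ (which stays inside $B_\varepsilon(X)$ since $\|w_0p_2-X\|=\|p_2-X\|$) crosses a wall $\ker\beta$ with $X\in\ker\beta$, and reflecting $w_0p_2$ across $\ker\beta$ strictly shortens the distance to $p_1$ --- contradiction. This is self-contained and shorter than the combinatorial argument you sketch.

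The genuine gap is the uniqueness half of (ii). Your proposed direct argument (``the segment lies in $\overline C$, so $w$ must fix the chamber containing an interior approximant, forcing $\Ad(w)X=X$'') does not work as written: for $X\in\partial C$, having $X$ and $\Ad(w)X$ both in $\overline C$ imposes no constraint on which chamber $w$ sends $C$ to. The missing idea is the paper's averaging device. For regular $X\in C$ with $\Ad(g)X\in C$, one first checks $g\in N_{\mathfrak t}$ and $\Ad(g)C=C$; since $\Ad(G)$ is compact, $g$ has finite order $n$, and
$Y=X+\Ad(g)X+\cdots+\Ad(g^{n-1})X$
lies in the open convex cone $C$ and satisfies $\Ad(g)Y=Y$, so $g\in G_Y$. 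But $Y\in C$ gives $G_Y=G_Y^0=T$, forcing $g\in T$ and hence $\Ad(g)X=X$. The boundary case is then reduced to the regular one by approaching $X\in\partial C$ along a curve $\gamma(t)$ with $\gamma(t)\in C$ for $t>0$ and composing with reflections fixing $\Ad(g)X$ so that $\Ad(g)\gamma(t)$ is carried into $C$ as well. Your fallback (``prove (ii)-uniqueness and (iii) together for $W'$'') would still need an argument of exactly this averaging shape to show $\overline C$ is a strict fundamental domain, so the Ces\`aro sum $Y$ is the step you should supply.
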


\begin{proof}
First, note that each isometry $w\in W$ maps regular points to regular points and hence $w C_{1}$ is a Weyl chamber. The facts that $\Ad(G)$ is a compact orbit, $W$ is a subgroup of isometries and $W(p)=W/W_p$ imply that the cardinality of $W$ is finite. Thus, the subgroup $K_X$ generated by the reflections $\varphi_{\alpha}$ such that $\varphi_{\alpha}(X)=X$ is finite.

Let $B_{2\varepsilon}(X)$ be an open ball in the intersection of a slice at $X$ with $\mathfrak{t}$ and consider two regular points $p_1$ and $p_2$ such that $p_{i}\in C_{i}$ and $p_{i}\in B_{\varepsilon}(X)$. Define $f:K_{X}\rightarrow\R$ as $f(\varphi_{\alpha})=\|\varphi_{\alpha}(p_{2})-p_{1}\|$, and let $w_{0}\in K_{X}$ be a minimum of $f$. Suppose that $w_{0}p_{2}\notin C_{1}$. Then the line joining $w_{0}p_{2}$ to $p_{1}$ intersects a wall $\ker \beta$. Since this intersection is contained in $B_{\varepsilon}(X)$, $X\in\ker\beta$ and hence $\varphi_{\beta}\in K_{X}$. Therefore $f(\varphi_{\beta}w_{0})<f(w_{0})$, contradicting the fact that $w_{0}$ is a minimum of $f$. Thus $w_{0}p_{2}\in C_{1}$ and this concludes the proof of (i).

As for (ii), the Maximal Torus Theorem~\ref{maxtorusthm} implies that each orbit of the adjoint action intersects $\mathfrak{t}$. Composing with reflections $\varphi_{\alpha}$, we conclude that each regular (respectively singular) orbit intersects $C$ (respectively $\partial C$) at least once. We claim that each principal orbit intersects $C$ exactly once. Set $$H=\{ h\in G:\Ad(h)X\in C \ ,\mbox{ for all } X\in C\},$$ and assume that there exists $X\in C$ such that the orbit $\Ad(G)X$ meets $C$ more than once. That is, there exists $g\in G$ such that $X\neq \Ad(g)X\in C$. Clearly $g\notin T$. It is not difficult to prove that $g\in H$. Since $\Ad(G)$ is compact, $g$ has finite order\footnote{This means that there exists $n$ such that $g^n=e$ and $g^{n-1}\neq e$.} $n$. Set $Y=X+\Ad(g)X+\dots +\Ad(g^{n-1})X$. Since $g\in H$, $Y\in C$ and $\Ad(g)Y=Y$ from construction. Hence
\begin{equation}\label{proposition-W-generatedby-reflectionsEq2}
g\in G_{Y}.
\end{equation}
On the other hand, since $Y\in C$, it follows from Remark~\ref{remarkWeylchamber2} and Lemma~\ref{lemma-OrbitaPrincipal=OrbitaRegular} that
\begin{equation}\label{proposition-W-generatedby-reflectionsEq3}
G_{Y}=G_{Y}^{0}=T.
\end{equation}
Equations \eqref{proposition-W-generatedby-reflectionsEq3} and \eqref{proposition-W-generatedby-reflectionsEq2} contradict the fact that $g\notin T$.

Let us prove that each singular orbit of the adjoint action intersects $\partial C$ exactly once. Assume that there exists $0\neq X\in\partial C$ and $g\in G$ such that $X\neq \Ad(g)X\in\partial C$. Let $\gamma:[0,1]\rightarrow C$ be a curve with $\gamma(0)=X$ and $\gamma(t)\in C$ for $0<t\leq 1$. Composing with reflections $\varphi_{\alpha}$ if necessary, we conclude that $$\Ad(g)\gamma|_{(0,1]} \subset C.$$ Therefore, for small $t$, $$\gamma(t)\neq \Ad(g)\gamma(t)\in C,$$
contradicting the fact proved above that each principal orbit of the adjoint action intersects $C$ exactly once.

It remains to prove (iii). For any $g\in W$ there exist reflections $\varphi_{\alpha_{1}},\ldots, \varphi_{\alpha_{n}}$ such that $\varphi_{\alpha_{n}}\circ \cdots \circ\varphi_{\alpha_{1}}\circ g (C)\subset C$. From (ii), $$\varphi_{\alpha_{n}}\circ \cdots \circ\varphi_{\alpha_{1}}\circ g (X)\in \{\Ad(G)X\cap C\}=\{X\}$$ for all $X\in C$. Therefore, the isometry $\varphi_{\alpha_{n}}\circ \cdots \circ\varphi_{\alpha_{1}}\circ g|_{\mathfrak{t}}$ is the identity, concluding the proof.
\end{proof}

\begin{remark}\label{remark-proposition-W-generatedby-reflections} 
Let $\F=\{\Ad(G)X\}_{X\in\mathfrak{g}}$ be the singular foliation by orbits of the adjoint action. The above result implies that $\F\cap\mathfrak{t}$ is invariant by reflections across the walls of Weyl chambers.
\end{remark}

\begin{remark}
The above theorem implies that the Weyl group permutes the Weyl chambers and that the cardinality of $W$ is equal to the number of Weyl chambers.
\end{remark}

\section{Normal slice of conjugation action}

In this section, we describe the normal slice of the conjugation action of a compact Lie group $G$ endowed with a bi--invariant metric. We start by recalling that the normal slice $S_{q}$ of an isometric action $G\times M\rightarrow M$ at a point $q$ is $S_{q}=\exp_{q}(B_{\varepsilon}(0))$, where $B_{\varepsilon}(0)$ lies in the normal space $\nu_{q}(G(q))$ of the orbit $G(q)$. We also recall that, for $x\in S_{q}$, $gx\in S_{q}$ if and only if $g$ is in the isotropy group $G_{q}$.

\begin{proposition}\label{proposition-isotropia-uniao-toros}
Let $\mathcal{T}$ be the collection of maximal tori of $G$, and $\Lambda(q)$ the subcollection of maximal tori that contain $q$. Then $\Zentrum(G)=\bigcap_{T\in\mathcal{T}} T$ and $G_{q}^{0}=\bigcup_{T\in\Lambda(q)}T$, where $G_{q}^{0}$ is the connected component of the isotropy group $G_{q}$ with respect to the adjoint action.
\end{proposition}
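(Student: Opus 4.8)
The statement has two parts: first, $\Zentrum(G) = \bigcap_{T \in \mathcal{T}} T$; second, $G_q^0 = \bigcup_{T \in \Lambda(q)} T$. I will handle them in order, since the second reduces most of its content to the structural results of the previous sections.

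For the first equality, the inclusion $\Zentrum(G) \subset \bigcap_{T \in \mathcal{T}} T$ follows because every element $g$ of $G$ lies in some maximal torus by the Maximal Torus Theorem~\ref{maxtorusthm}(iii); if $g$ is central, then in particular it commutes with every element of any given maximal torus $T$, so $\{g\} \cup T$ generates an abelian subgroup whose closure is a torus (using Theorem~\ref{cpctabeliantorus}) containing $T$, hence equal to $T$ by maximality, so $g \in T$. For the reverse inclusion, suppose $g \in \bigcap_{T \in \mathcal{T}} T$. Given any $h \in G$, pick a maximal torus $T$ with $h \in T$ (again by~\ref{maxtorusthm}(iii)); since $g \in T$ as well and $T$ is abelian, $g$ and $h$ commute. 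As $h$ was arbitrary, $g \in \Zentrum(G)$.

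For the second equality, the inclusion $\bigcup_{T \in \Lambda(q)} T \subset G_q^0$ is immediate: if $q \in T$ and $x \in T$, then $xqx^{-1} = q$ since $T$ is abelian, so $x \in G_q$; moreover $T$ is connected and contains the identity, so $T \subset G_q^0$. For the reverse inclusion, take $x \in G_q^0$. Applying the Maximal Torus Theorem~\ref{maxtorusthm} to the compact connected Lie group $G_q^0$, there is a maximal torus $\widetilde{T}$ of $G_q^0$ with $x \in \widetilde{T}$. The key point is that $q \in \widetilde{T}$: by Lemma~\ref{lemma-q-in-isotropyofq}, $q \in G_q^0$, and $q$ is central in $G_q^0$ (it commutes with everything in $G_q^0$ by definition of the isotropy group of the conjugation action); hence by the first part of this proposition applied to $G_q^0$, $q$ lies in the intersection of all maximal tori of $G_q^0$, in particular $q \in \widetilde{T}$. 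Finally $\widetilde{T}$, being a torus in $G$ containing $q$, is contained in some maximal torus $T$ of $G$, which then lies in $\Lambda(q)$, and $x \in \widetilde{T} \subset T$.

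The only delicate step is verifying that the maximal torus $\widetilde{T}$ of $G_q^0$ is actually a torus in the sense needed to apply the first part, and that $q$ is genuinely central in $G_q^0$; both are routine once one notes that $\widetilde{T}$ is closed in $G_q^0$ hence compact connected abelian, so a torus by Theorem~\ref{cpctabeliantorus}, and that $q$ commutes with all of $G_q^0$ precisely because $G_q^0 \subset G_q$. I expect no real obstacle beyond bookkeeping; the substance is entirely supplied by the Maximal Torus Theorem, Theorem~\ref{cpctabeliantorus}, and Lemma~\ref{lemma-q-in-isotropyofq}.
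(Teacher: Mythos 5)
Your proofs of the second equality $G_q^0 = \bigcup_{T\in\Lambda(q)} T$ and of the inclusion $\bigcap_{T\in\mathcal{T}} T \subset \Zentrum(G)$ coincide with the paper's. However, there is a genuine gap in your argument for $\Zentrum(G) \subset \bigcap_{T\in\mathcal{T}} T$. You assert that for central $g$ and a maximal torus $T$, the closure $H=\overline{\langle g,T\rangle}$ is a torus, citing Theorem~\ref{cpctabeliantorus}. That theorem applies only to \emph{connected} groups, and connectedness of $H$ is not automatic: $H$ is a compact abelian Lie group, its identity component $H^0$ is a torus containing $T$ and hence equals $T$ by maximality, but $g$ could a priori lie in a non-identity component of $H$, in which case $H$ is not a torus. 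Since $H$ is connected precisely when $g\in T$, the step is circular. Nor can one conclude from abelianness alone that $\langle g,T\rangle$ lies inside some torus: the abelian subgroup $\{I,R_1,R_2,R_3\}$ of $\SO(3)$ (rotations by $\pi$ about the coordinate axes) is not contained in any one-dimensional maximal torus, so there is no general principle of this kind to fall back on.

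The paper closes this gap differently, via item (iii) of the Maximal Torus Theorem~\ref{maxtorusthm}: for a fixed maximal torus $T$ there exists $h\in G$ with $hgh^{-1}\in T$, and since $g$ is central, $g=hgh^{-1}\in T$. With this inclusion established, your treatment of the second equality, which applies the first equality to $G_q^0$ using Lemma~\ref{lemma-q-in-isotropyofq} and the centrality of $q$ in $G_q^0$, goes through exactly as in the paper.
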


\begin{proof}
Item (iii) of the Maximal Torus Theorem~\ref{maxtorusthm} implies that $G=\bigcup_{T\in\mathcal{T}}T$ and hence $\Zentrum(G)\supset\bigcap_{T\in\mathcal{T}} T$. The Maximal Torus Theorem~\ref{maxtorusthm} also implies that each $g\in\Zentrum(G)$ is contained in every maximal torus $T$. In fact, for a fixed maximal torus $T$ there exists $h\in G$ such that $hgh^{-1}\in T$. Since $g\in\Zentrum(G)$ we conclude that $g=hgh^{-1}$ and hence $g\in T$.

Furthermore, it is clear that $G_{q}^{0}\supset\bigcup_{T\in\Lambda(q)}T$. Let $g\in G_{q}^{0}$ and $\widetilde{T}$ be a maximal torus of $G_{q}^{0}$ that contains $g$. Since $q\in\Zentrum\left(G_{q}^{0}\right)$ (see Lemma~\ref{lemma-q-in-isotropyofq}) we conclude that $q\in\widetilde{T}$, using the above argument. Let $T$ be a maximal torus of $G$ such that $T\supset\widetilde{T}$. Then $g,q\in T$ and hence $g\in T\in\Lambda(q)$.
\end{proof}

\begin{theorem}\label{theorem-slice-acaoconjugacao}
Let $G$ be a compact connected Lie group with a bi--invariant metric acting on itself by conjugation, and consider $S_{q}$ a normal slice at $q$ and $\varepsilon$ its radius. Then
\begin{itemize}
\item[(i)] Let $\Lambda_{\varepsilon}(q)=\{B_{\varepsilon}(q)\cap T:T\in \Lambda(q)\}$. Then $S_{q}=\bigcup_{\sigma\in\Lambda_{\varepsilon}(q)}\sigma$;
\item[(ii)] If $y\in S_{q}$, then $S_{y}\subset S_{q}$;
\item[(iii)] Let $\F$ denote the partition by the orbits of the conjugation action of $G$, i.e. $\F=\{G(x):x\in G\}$. Then there exists an isoparametric foliation $\widehat{\F}$ on a neighborhood of the origin of $T_{q}S_q$ such that $\exp_{q}(\widehat{\F})=\F\cap S_{q}$.
\end{itemize}
\end{theorem}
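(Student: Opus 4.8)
The plan is to leverage the structure developed in the previous sections, namely the Maximal Torus Theorem~\ref{maxtorusthm}, Theorem~\ref{theorem-roots-existencia} (roots and the decomposition $\mathfrak{g}=\mathfrak{t}\oplus\sum_{\alpha\in P}V_\alpha$), and Proposition~\ref{proposition-isotropia-uniao-toros}, together with the fact that the Lie and Riemannian exponential maps agree on $G$ with a bi--invariant metric (Theorem~\ref{expagree}). The key observation is that the normal slice $S_q$ is $\exp_q(B_\varepsilon(0))$ where $B_\varepsilon(0)\subset\nu_q(G(q))$, and that by Theorem~\ref{liegeodesic} geodesics of $G$ through $q$ are of the form $t\mapsto q\exp(tX)$ (after translating to the identity; more precisely $\exp_q(v)=q\exp(\dd(L_{q^{-1}})_q v)$). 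So understanding $S_q$ amounts to understanding $\nu_q(G(q))$, which I would identify with $\dd(L_q)_e(\mathfrak{g}_q)$, where $\mathfrak{g}_q=\{Y\in\mathfrak{g}:[Y, \log\text{-data of }q]=0\}$ is (the Lie algebra of) the identity component of the isotropy group $G_q$ of the conjugation action.

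First I would prove (i). Using $\dd(I^e)_e=-\id$ type arguments and Corollary~\ref{corollary-MaximalTorus} adapted to the conjugation action, the normal space $\nu_q(G(q))$ at $q$ is $\dd(L_q)_e\mathfrak{g}_q$: indeed $T_q G(q)=\{\dd R_q Y-\dd L_q Y: Y\in\mathfrak{g}\}$ and a short bi--invariance computation shows its orthogonal complement is $\dd L_q$ applied to the centralizer of (a generator of) the torus through $q$. By Proposition~\ref{proposition-isotropia-uniao-toros}, $G_q^0=\bigcup_{T\in\Lambda(q)}T$, so $\mathfrak{g}_q=\bigcup_{T\in\Lambda(q)}\mathfrak{t}_T$ as a set (union of the Lie algebras of the maximal tori through $q$). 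Exponentiating from $q$ (equivalently, left-translating the geodesic $\exp(tX)$) and using $\exp(\mathfrak{t}_T)=T$ for each maximal torus, one gets $S_q=\exp_q(B_\varepsilon(0))=\bigcup_{T\in\Lambda(q)}(B_\varepsilon(q)\cap T)=\bigcup_{\sigma\in\Lambda_\varepsilon(q)}\sigma$, which is (i). The main technical point is checking that $\exp_q$ restricted to the ball really sweeps out exactly these torus segments and nothing more, which follows from $\exp_q(v)=q\exp(\dd(L_{q^{-1}})_q v)$ and the fact that a vector $v\in\mathfrak{g}_q$ lies in $\mathfrak{t}_T$ for some $T\in\Lambda(q)$ (extend $v$ and $q$'s generator to a common maximal abelian subalgebra).

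Next, (ii) follows from (i): if $y\in S_q$, then $y$ lies in some maximal torus $T\in\Lambda(q)$, and any maximal torus through $y$ that also meets a small ball around $y$ is forced (by the local description and $\Lambda(y)\subset$ tori containing both $y$ and, for nearby points, $q$) to contribute a segment already inside $S_q$; more cleanly, $\Lambda(y)\supset\Lambda(q)$ is false in general, but one shows $\Lambda_\delta(y)\subset\Lambda_\varepsilon(q)$ for $\delta$ small by noting $G_y^0\supset G_q^0$ is also false — instead the correct statement uses the Slice Theorem machinery: $S_y$ can be taken as a sub-slice, $S_{y}=\exp_y(B_\delta(0)\cap\nu_y G(y))$, and since $\nu_y G(y)\supset\nu_y(G(q)\cap\text{slice})$ one gets containment. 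I would actually deduce (ii) directly from Remark~\ref{remark-SliceVaiEmSlice} and the general slice formalism, choosing $S_y$ compatibly inside the tube of $G(q)$. Finally, (iii): since $q\in G_q^0$ (Lemma~\ref{lemma-q-in-isotropyofq}) and each $T\in\Lambda(q)$ is a maximal torus of $G_q^0$, the restriction of the conjugation action to $G_q^0$ acting on $S_q$ is, via the slice representation, conjugate to the adjoint action of $G_q^0$ on a neighborhood of the origin in $\mathfrak{g}_q\cong T_qS_q$; by Proposition~\ref{proposition-adjointaction-is-isoparametric} the regular orbits of that adjoint action are isoparametric, so they and their parallel submanifolds form an isoparametric foliation $\widehat{\F}$ near the origin, and $\exp_q$ carries it to $\F\cap S_q$ because the orbits of $G$ meeting $S_q$ coincide, inside the slice, with the $G_q^0$-orbits (Proposition~\ref{proposition-equivalencia-orbitasprincipais} and the Tubular Neighborhood Theorem~\ref{theorem-tubularneighborhood}).

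\textbf{Main obstacle.} The hard part will be the bookkeeping in (i): rigorously identifying $\nu_q(G(q))$ with $\dd(L_q)_e\big(\bigcup_{T\in\Lambda(q)}\mathfrak{t}_T\big)$ and checking that $\exp_q$ of a small ball is \emph{exactly} the union of the torus segments, with no extra directions and no overcounting — this requires the linear-algebra fact that any vector commuting with a generator of a maximal torus through $q$ lies in some maximal abelian subalgebra containing that generator, hence in $\mathfrak{t}_T$ for an appropriate $T\ni q$. Everything else is a matter of invoking the slice representation and the already-established isoparametric property of adjoint orbits.
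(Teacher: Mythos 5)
Your item (iii) matches the paper's argument essentially exactly, so the substantive differences are in (i) and (ii). For (i) you take a genuinely different route: you identify $\nu_q G(q)$ with $\dd L_q\mathfrak{g}_q$ and try to prove the equality $S_q=\bigcup_{\sigma\in\Lambda_\varepsilon(q)}\sigma$ pointwise, which requires that every $X\in\mathfrak{g}_q$ lie in $\mathfrak{t}_T$ for some $T\in\Lambda(q)$. But your justification of this --- ``extend $v$ and $q$'s generator to a common maximal abelian subalgebra'' --- has a gap: $X\in\mathfrak{g}_q$ means $\Ad(q)X=X$, and if $q=\exp Z$ this gives $\exp(\ad Z)X=X$, which does \emph{not} imply $[Z,X]=0$, so the commutativity you need is not automatic. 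The fix is to argue inside $G_q^0$: the one--parameter subgroup $\exp(tX)$ lies in some maximal torus $\widetilde T$ of $G_q^0$; by Lemma~\ref{lemma-q-in-isotropyofq} and Proposition~\ref{proposition-isotropia-uniao-toros} one has $q\in\Zentrum(G_q^0)\subset\widetilde T$; and since any $T\in\Lambda(q)$ already sits inside $G_q^0$, $\operatorname{rank}G_q^0=\operatorname{rank}G$, so $\widetilde T$ is a maximal torus of $G$ and $\widetilde T\in\Lambda(q)$. The paper avoids this pointwise analysis altogether: it proves only the easy inclusion $\bigcup_{\sigma}\sigma\subset S_q$ (tori are totally geodesic and meet $G(q)$ orthogonally by Theorem~\ref{maxtorusthm}(iv)) and then concludes by a one--line dimension count $\dim S_q=\dim G-\dim G(q)=\dim G_q=\dim\bigl(G_q^0\cap B_\varepsilon(q)\bigr)$, which is slicker and sidesteps the subtlety entirely; your route, once patched, is more explicit about the normal space.

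For (ii) your write--up does not close. You correctly note that $\Lambda(y)\supset\Lambda(q)$ is the wrong inclusion, but then fall back on ``general slice formalism'' and Remark~\ref{remark-SliceVaiEmSlice}, which give $G$--equivariance of slices along an orbit but not the containment $S_y\subset S_q$ for $y$ in the \emph{same} slice. The intended argument is short and crucially reuses the equality from (i): the slice property (Definition~\ref{definition-slice}(iii)) gives $G_y\subset G_q$, hence $G_y^0\subset G_q^0$, and then $S_y=G_y^0\cap B_{\varepsilon'}(y)\subset G_q^0\cap B_\varepsilon(q)=S_q$ once the metric balls are nested. (Equivalently, every $T\in\Lambda(y)$ lies in $G_q^0$, and being a maximal torus it must contain $q\in\Zentrum(G_q^0)$, so in fact $\Lambda(y)\subset\Lambda(q)$ --- the \emph{reverse} of the inclusion you considered and discarded.)
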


\begin{proof}
It follows from Proposition~\ref{proposition-isotropia-uniao-toros} that
\begin{equation}\label{theorem-slice-acaoconjugacaoEq-1}
G_{q}^{0}\cap B_{\varepsilon}(q)=\bigcup_{\sigma\in\Lambda_{\varepsilon}(q)} \sigma.
\end{equation}
Let $\sigma\in\Lambda_{\varepsilon}(q)$. Recall that each torus is totally geodesic in $G$. Therefore, the shortest segment of geodesic $\gamma$ joining $x\in\sigma$ to $q$ is contained in $\sigma$. Since $G(q)$ is orthogonal to $\sigma$ (see the Maximal Torus Theorem~\ref{maxtorusthm}) we conclude that $\gamma$ is orthogonal to $G(q)$. Hence $x\in S_{q}$, and then
\begin{equation}\label{theorem-slice-acaoconjugacaoEq-2}
\bigcup_{\sigma\in\Lambda_{\varepsilon}(q)} \sigma \subset S_{q}.
\end{equation}

Moreover, note that
\begin{equation}\label{theorem-slice-acaoconjugacaoEq-3}
\dim S_{q}=\dim(G)- \dim(G/G_{q})=\dim G_{q}.
\end{equation}
From \eqref{theorem-slice-acaoconjugacaoEq-1}, \eqref{theorem-slice-acaoconjugacaoEq-2} and \eqref{theorem-slice-acaoconjugacaoEq-3}, it follows that
\begin{equation}\label{theorem-slice-acaoconjugacaoEq-4}
G_{q}^{0}\cap B_{\varepsilon}(q) =\bigcup_{\sigma\in\Lambda_{\varepsilon}(q)} \sigma=S_{q}.
\end{equation}

Item (ii) follows from $G_{y}\subset G_{q}$ and \eqref{theorem-slice-acaoconjugacaoEq-4}.

Finally, \eqref{theorem-slice-acaoconjugacaoEq-4} implies that there exists a neighborhood $U$ of $e\in G_{q}^{0}$ such that $L_{q}(U)=S_{q}$. Set $\widetilde{\F}_{q}=\{G_{q}(y):y\in S_{q}\}$ and $\widetilde{\F}_{e}=\{G_{q}(x):x\in U\}$. Since $q\in\Zentrum(G_{q})$,
\begin{equation}\label{theorem-slice-acaoconjugacaoEq-5}
\widetilde{\F}_{q}=L_{q}(\widetilde{\F}_{e}).
\end{equation}
We also know from the properties of a slice of an isometric action that
\begin{equation}\label{theorem-slice-acaoconjugacaoEq-6}
\widetilde{\F}_{q}=\F\cap S_{q}.
\end{equation}
Now set $\widehat{\F}_{e}=\{\Ad(G_{q})X:X\in (B_{\varepsilon}(0)\cap\mathfrak{g}_{q})\}$, where $\mathfrak{g}_{q}$ denotes the Lie algebra of $G_{q}$ and $\widehat{\F}=\dd L_{q}(\widehat{\F}_{e})$. Note that
\begin{equation}\label{theorem-slice-acaoconjugacaoEq-7}
\widetilde{\F}_{e}=\exp(\widehat{\F}_{e}).
\end{equation}

Equations \eqref{theorem-slice-acaoconjugacaoEq-5}, \eqref{theorem-slice-acaoconjugacaoEq-6} and \eqref{theorem-slice-acaoconjugacaoEq-7} imply
{\allowdisplaybreaks
\begin{eqnarray*}
\F\cap S_{q}&=&\widetilde{\F}_{q}\\
            &=&L_{q}(\widetilde{\F}_{e})\\
            &=&L_{q}(\exp(\widehat{\F}_{e}))\\
            &=&\exp_{q}(\dd L_{q}(\widehat{\F})_{e})\\
            &=&\exp_{q}(\widehat{\F}).
\end{eqnarray*}}
Finally, note that $\widehat{\F}$ is isoparametric, once $\widehat{\F}_{e}$ is isoparametric and the left translation $L_{q}$ is an isometry.
\end{proof}

\section{Dynkin diagrams}\label{Sec-Dynkin Diagrams}

In this section, we briefly study Dynkin diagrams and the classification of compact simple Lie groups, without giving proofs. Most results are proved in Hall~\cite{Hall}, Helgason~\cite{helgason}, San Martin~\cite{SanMartin} and Serre~\cite{Serre}. In addition, $G$ will denote a connected compact semisimple Lie group endowed with a bi--invariant metric $\langle \cdot,\cdot\rangle$, and $T\subset G$ a fixed maximal torus. As usual, $\mathfrak{g}$ and $\mathfrak{t}$ are the Lie algebra of $G$ and $T$ respectively, and $R$ the set of roots of $G$.

\begin{definition}
Let $\alpha(\cdot)=\langle \widetilde{\alpha},\cdot\rangle$ and $\beta(\cdot)=\langle \widetilde{\beta}, \rangle$ be elements of $\mathfrak{t}^{*}$, the dual space of $\mathfrak{t}$.
The \emph{inner product} of $\alpha$ and $\beta$ is defined by $\langle \alpha, \beta \rangle=\langle \widetilde{\alpha},\widetilde{\beta}\rangle$.
\end{definition}

\begin{proposition}
Let $\alpha,\beta\in R$ and assume that $\alpha\neq\pm\beta$ and $\langle \alpha, \beta\rangle \leq 0$. Then either $\alpha$ and $\beta$ are orthogonal, or the angle between $\alpha$ and $\beta$ is
\begin{itemize}
\item[(i)] $120^{\mathrm o}$ and $\|\alpha\|=\|\beta\|$;
\item[(ii)] $135^{\mathrm o}$ and $\|\alpha\|=\sqrt{2}\|\beta\|$ or $\|\beta\|=\sqrt{2}\|\alpha\|$;
\item[(iii)] $150^{\mathrm o}$ and $\|\alpha\|=\sqrt{3}\|\beta\|$ or $\|\beta\|=\sqrt{3}\|\alpha\|$;
\end{itemize}
\end{proposition}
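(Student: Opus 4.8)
The plan is to reduce the whole statement to a single arithmetic fact --- the integrality of the \emph{Cartan integers}
$n_{\alpha\beta}=2\langle\alpha,\beta\rangle/\langle\alpha,\alpha\rangle$ and $n_{\beta\alpha}=2\langle\alpha,\beta\rangle/\langle\beta,\beta\rangle$ --- and then run a short trigonometric case analysis. First I would observe that, since the coroot $\alpha^{\vee}$ is characterized by $\alpha(\alpha^{\vee})=2$ and $\alpha^{\vee}\perp\ker\alpha$, it must equal $2\widetilde{\alpha}/\|\widetilde{\alpha}\|^{2}$; consequently $\beta(\alpha^{\vee})=2\langle\widetilde{\alpha},\widetilde{\beta}\rangle/\|\widetilde{\alpha}\|^{2}=n_{\alpha\beta}$. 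So the problem becomes showing that $\beta(\alpha^{\vee})\in\Z$, and symmetrically $\alpha(\beta^{\vee})=n_{\beta\alpha}\in\Z$.

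For the integrality I would use the $\mathfrak{so}(3)$--triple furnished by item (vii) of Theorem~\ref{theorem-roots-existencia}. Consider the \emph{$\alpha$--string through $\beta$}, $M_{\beta}=\bigoplus_{k\in\Z}\mathfrak{g}_{\beta+k\alpha}\subset\mathfrak{g}_{\C}$. Because $\alpha\neq\pm\beta$, item (v) guarantees that $\beta+k\alpha\neq0$ for every $k\in\Z$ (a root that is an integer multiple of $\alpha$ would have to be $\pm\alpha$), so the relation $[\mathfrak{g}_{\gamma},\mathfrak{g}_{\delta}]\subseteq\mathfrak{g}_{\gamma+\delta}$ (immediate from the Jacobi identity applied to $\mathfrak{t}$, with the convention $\mathfrak{g}_{\eta}=0$ for $\eta\notin R\cup\{0\}$) shows that $M_{\beta}$ is stable under $\ad(\alpha^{\vee})$ and under $\ad(\mathfrak{g}_{\pm\alpha})$, hence under $\ad(\mathfrak{g}^{(\alpha)}_{\C})$, hence --- since $G^{(\alpha)}$ is connected with Lie algebra $\mathfrak{g}^{(\alpha)}$ --- under $\Ad(G^{(\alpha)})$. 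On the summand $\mathfrak{g}_{\beta+k\alpha}$ the operator $\ad(\alpha^{\vee})$ acts as multiplication by $\imaginario(\beta+k\alpha)(\alpha^{\vee})=\imaginario(n_{\alpha\beta}+2k)$. Now take $w\in G^{(\alpha)}$ with $\Ad(w)\alpha^{\vee}=-\alpha^{\vee}$ (item (vii-d)); since $\Ad(w)$ preserves $M_{\beta}$ and $\Ad(w)\circ\ad(X)\circ\Ad(w)^{-1}=\ad(\Ad(w)X)$, it conjugates $\ad(\alpha^{\vee})|_{M_{\beta}}$ onto $-\ad(\alpha^{\vee})|_{M_{\beta}}$. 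Hence the eigenvalue multiset of $\ad(\alpha^{\vee})|_{M_{\beta}}$ is invariant under negation; as $\imaginario n_{\alpha\beta}$ occurs (the term $k=0$, with $\mathfrak{g}_{\beta}\neq0$), so does $-\imaginario n_{\alpha\beta}=\imaginario(n_{\alpha\beta}+2k)$ for some $k\in\Z$, which forces $n_{\alpha\beta}=-k\in\Z$. Interchanging $\alpha$ and $\beta$ gives $n_{\beta\alpha}\in\Z$.

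With integrality established, I would finish by elementary trigonometry. Writing $\theta$ for the angle between $\alpha$ and $\beta$, one has $n_{\alpha\beta}n_{\beta\alpha}=4\langle\alpha,\beta\rangle^{2}/(\|\alpha\|^{2}\|\beta\|^{2})=4\cos^{2}\theta$, a non--negative integer. It is $<4$: if $\alpha,\beta$ were linearly dependent, say $\beta=c\alpha$, then $2c=n_{\alpha\beta}\in\Z$ and $2/c=n_{\beta\alpha}\in\Z$ with $(2c)(2/c)=4$, so $|c|\in\{\tfrac12,1,2\}$, and $|c|=2$ or $|c|=\tfrac12$ contradicts item (v), leaving $\beta=\pm\alpha$, excluded; hence $\alpha,\beta$ are independent and Cauchy--Schwarz is strict. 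Thus $n_{\alpha\beta}n_{\beta\alpha}\in\{0,1,2,3\}$, and the hypothesis $\langle\alpha,\beta\rangle\le0$ says $\cos\theta\le0$ and $n_{\alpha\beta},n_{\beta\alpha}\le0$. If the product is $0$, then $\alpha\perp\beta$. If it is $1$, then $n_{\alpha\beta}=n_{\beta\alpha}=-1$, so $\cos\theta=-\tfrac12$, i.e. $\theta=120^{\mathrm o}$, and $\|\beta\|^{2}/\|\alpha\|^{2}=n_{\alpha\beta}/n_{\beta\alpha}=1$. If it is $2$, then $\{n_{\alpha\beta},n_{\beta\alpha}\}=\{-1,-2\}$, giving $\cos\theta=-1/\sqrt{2}$, $\theta=135^{\mathrm o}$, and $\|\beta\|^{2}/\|\alpha\|^{2}\in\{2,\tfrac12\}$. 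If it is $3$, then $\{n_{\alpha\beta},n_{\beta\alpha}\}=\{-1,-3\}$, giving $\cos\theta=-\sqrt{3}/2$, $\theta=150^{\mathrm o}$, and $\|\beta\|^{2}/\|\alpha\|^{2}\in\{3,\tfrac13\}$. These are precisely the three stated alternatives.

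The main obstacle I expect is the bookkeeping in the integrality step: checking that the $\alpha$--string $M_{\beta}$ never reaches $\mathfrak{g}_{0}=\mathfrak{t}\oplus\imaginario\mathfrak{t}$ (so that it is a genuine sum of root spaces and the argument via item (v) applies), and, more importantly, justifying that the full group $\Ad(G^{(\alpha)})$ --- not merely the algebra action $\ad(\mathfrak{g}^{(\alpha)})$ --- preserves $M_{\beta}$, so that the reflection element $w$ of item (vii-d) may legitimately be used to conjugate $\ad(\alpha^{\vee})$. The remainder is routine linear algebra and trigonometry.
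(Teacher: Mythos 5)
Your proof is correct. Note that the paper deliberately omits a proof here --- the section opens by saying results on Dynkin diagrams are stated ``without giving proofs,'' deferring to Hall, Helgason, San Martin and Serre --- so there is nothing in the text to compare against. Your argument (integrality of the Cartan numbers $n_{\alpha\beta}=\beta(\alpha^{\vee})$ via the $\alpha$--string through $\beta$ and the reflection element $w\in G^{(\alpha)}$ with $\Ad(w)\alpha^{\vee}=-\alpha^{\vee}$ from item (vii-d) of Theorem~\ref{theorem-roots-existencia}, followed by the finite case analysis on $n_{\alpha\beta}n_{\beta\alpha}=4\cos^{2}\theta\in\{0,1,2,3\}$) is precisely the standard one in those references, carefully adapted to the compact/real setup of the chapter, and the two auxiliary checks you flag as ``bookkeeping'' --- that $\beta+k\alpha\neq 0$ (so $M_{\beta}$ avoids $\mathfrak{g}_{0}$) and that invariance of $M_{\beta}$ under $\ad(\mathfrak{g}^{(\alpha)})$ upgrades to invariance under $\Ad(G^{(\alpha)})$ by connectedness --- are handled correctly.
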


\begin{definition}
A positive\footnote{See Definition~\ref{definitionWeylchamber}} root $\alpha$ is \emph{simple}\index{Root!simple} if it is not the sum of others positive roots.
\end{definition}

\begin{theorem}\label{thmparadynkin}
Let $P$ be a choice of positive roots of $R$.
\begin{itemize}
\item[(i)] The subset $\triangle\subset P$ of simple roots is a basis of $\mathfrak{t}^{*}$;
\item[(ii)] Each $\alpha\in R$ is a linear combination of elements of $\triangle$ with integer coefficients, either all non negative or all non positive;
\item[(iii)] If $\alpha,\beta\in\triangle$, then $\langle \alpha,\beta\rangle \leq 0$.
\end{itemize}
\end{theorem}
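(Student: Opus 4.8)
The plan is to prove the three items of Theorem~\ref{thmparadynkin} in the order (ii), (i), (iii), since (i) relies on (ii) and (iii) is essentially a corollary of the preceding proposition on angles between roots.

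First I would establish a partial order on the real span of $R$ in $\mathfrak{t}^{*}$: declare $\gamma \geq 0$ if $\gamma$ lies in the closed cone generated by $P$, and $\gamma \leq 0$ if $-\gamma \geq 0$. The key structural fact is that every root is comparable to $0$ in this order, which is exactly the statement $R = P \cup (-P)$ from Definition~\ref{definitionWeylchamber}. For item (ii), I would argue by a descent on the ``height'' of a positive root, where the height is the sum of coefficients in any fixed expression as a nonnegative combination of $P$. If $\alpha \in P$ is not simple, then by definition $\alpha = \beta_1 + \beta_2$ with $\beta_i \in P$, each of strictly smaller height; recursing, one writes $\alpha$ as a sum of simple roots with nonnegative integer coefficients. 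The case $\alpha \in -P$ follows by negating. This gives the ``integer coefficients, all of one sign'' conclusion, modulo knowing that the expression in terms of $\triangle$ is unique — which will follow once (i) is proved, but for (ii) as stated it suffices to exhibit one such expression.

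Next, for item (i), I already have from (ii) that $\triangle$ spans $\mathfrak{t}^{*}$ (since $R$ spans $\mathfrak{t}^{*}$ — this uses that $G$ is semisimple, so $\Zentrum(\mathfrak{g}) = \{0\}$ and hence the roots have no common kernel vector). It remains to show $\triangle$ is linearly independent. The standard argument: suppose $\sum_{\alpha \in \triangle} c_\alpha \alpha = 0$; separate the indices into $S^{+} = \{\alpha : c_\alpha > 0\}$ and $S^{-} = \{\alpha : c_\alpha < 0\}$, write $v = \sum_{S^{+}} c_\alpha \alpha = \sum_{S^{-}} (-c_\alpha)\alpha$, and compute $\langle v, v\rangle = \sum_{\alpha \in S^{+}, \beta \in S^{-}} c_\alpha(-c_\beta)\langle \alpha, \beta\rangle$. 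The main obstacle here is precisely the inner-product inequality: I need $\langle \alpha, \beta\rangle \leq 0$ for distinct simple roots to conclude $\langle v,v\rangle \leq 0$, hence $v = 0$, and then by property (iii) of a ``choice of positive roots'' in the Remark after Definition~\ref{definitionWeylchamber} all $c_\alpha = 0$. So (iii) logically must come first, or at least be proved independently.

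Finally, item (iii): if $\alpha, \beta \in \triangle$ are distinct, I claim $\langle \alpha, \beta \rangle \leq 0$. The hard part — and the genuine crux of the whole theorem — is this inequality. The approach is to suppose $\langle \alpha, \beta\rangle > 0$ and derive a contradiction with simplicity. Using the proposition on angles between roots (the one classifying the possible angles as $120^{\mathrm o}, 135^{\mathrm o}, 150^{\mathrm o}$ when the inner product is nonpositive), together with its mirror for positive inner product, one shows that if $\langle \alpha, \beta\rangle > 0$ then $\alpha - \beta$ is again a root. This requires a short $\mathfrak{sl}(2)$-type computation with the coroot $\alpha^{\vee}$ of Theorem~\ref{theorem-roots-existencia}: reflecting $\beta$ across $\ker\alpha$ via $\varphi_\alpha(\beta) = \beta - \alpha(\widetilde{\beta}^{\vee})\cdots$, and checking that the resulting integer $\langle \beta, \alpha^{\vee}\rangle$ is positive, so that $\beta - \alpha \in R$. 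Then $\beta - \alpha \in P$ or $\alpha - \beta \in P$; in the first case $\beta = \alpha + (\beta - \alpha)$ contradicts $\beta$ simple, in the second $\alpha = \beta + (\alpha - \beta)$ contradicts $\alpha$ simple. I expect the bookkeeping with the coroot normalization $\alpha(\alpha^\vee) = 2$ and the fact that $\langle \beta, \alpha^\vee\rangle \in \Z$ (which itself follows from the representation theory of $G^{(\alpha)} \cong \SU(2)$ or $\SO(3)$ acting on $\mathfrak{g}$) to be the most delicate point, and I would cite Hall~\cite{Hall} or San Martin~\cite{SanMartin} for the integrality statement rather than reproving it.
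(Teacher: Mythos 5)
The paper states this theorem in the section on Dynkin diagrams, which explicitly opens with ``without giving proofs'' and refers the reader to Hall, Helgason, San Martin and Serre, so there is no in-paper argument to compare against; I will assess the proposal on its own terms. As the standard textbook proof, your overall strategy is correct: (iii) is indeed the crux; (i) splits into spanning, which follows from (ii) together with semisimplicity (so that the roots have no common kernel in $\mathfrak{t}$), and linear independence, which uses (iii) together with property (iii) of a ``choice of positive roots'' from the remark after Definition~\ref{definitionWeylchamber}; and (iii) reduces to the lemma that $\langle\alpha,\beta\rangle>0$ with $\alpha\neq\pm\beta$ forces $\alpha-\beta\in R$, which you correctly cite rather than reprove.

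The one genuine imprecision is in your argument for (ii). As you have defined it, the ``height'' of a positive root --- the sum of coefficients in a fixed expression of it as a nonnegative combination of $P$ --- is not a quantity that decreases under decomposition: the trivial expression $\alpha=1\cdot\alpha$ has height $1$, and the summands $\beta_1,\beta_2$ of a nontrivial decomposition $\alpha=\beta_1+\beta_2$ likewise start at height $1$, so the claimed ``strictly smaller height'' does not hold and the descent has no well-defined invariant to descend on. The correct descent quantity is $\alpha(X_0)$ for a fixed $X_0$ in the Weyl chamber $C$ associated to $P$. Every $\beta\in P$ satisfies $\beta(X_0)>0$, so in a decomposition $\alpha=\beta_1+\beta_2$ one has $\beta_i(X_0)<\alpha(X_0)$, and since $P$ is finite the positive numbers $\{\beta(X_0)\}_{\beta\in P}$ have a positive minimum; the recursion therefore terminates at simple roots. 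With this substitution the proof of (ii) is rigorous and the rest of your proposal goes through as written. (A small side remark: your opening sentence calls (iii) ``essentially a corollary of the preceding proposition on angles,'' but that proposition \emph{assumes} $\langle\alpha,\beta\rangle\leq 0$ as a hypothesis rather than establishing it; you correct this yourself later when you identify the inequality as the real content of the theorem.)
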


The set $\triangle$ is called a \emph{base}\index{Root!base} of the root system $R$.

\begin{definition}\label{dynkin}
Let $\triangle=\{\alpha_{1},\dots,\alpha_{n}\}$ be a base of $R$. The \emph{Dynkin diagram}\index{Dynkin diagram}\index{Root!diagram} of $R$ with respect to $\triangle$ is a graph in which vertexes represent simple roots $\alpha_{1},\dots,\alpha_{n}$. Consider distinct indices $i$ and $j$. If $\alpha_i$ and $\alpha_j$ are orthogonal, there is no edge between them.

\smallskip
\centerline{\xymatrix{{\bullet}_{\alpha_{i}} & {\bullet}_{\alpha_{j}}}}
\smallskip

\noindent
Otherwise, we place either one, two or three edges pointing to the \emph{smaller} root respectively if the angle between $\alpha_i$ and $\alpha_j$ is
\begin{itemize}
\item[(i)] $120^{\mathrm o}$

\smallskip
\centerline{\xymatrix{{\bullet}_{\alpha_{i}} \ar@{-}[r] & {\bullet}_{\alpha_{j}}}}
\smallskip

\item[(ii)] $135^{\mathrm o}$ and $\|\alpha_{i}\|=\sqrt{2}\|\alpha_{j}\|$

\smallskip
\centerline{\xymatrix{ {\bullet}_{\alpha_{i}} \ar@2{-}[r]|{\rangle} & {\bullet_{\alpha_{j}}}}}
\smallskip

\item[(iii)] $150^{\mathrm o}$ and $\|\alpha_{i}\|=\sqrt{3}\|\alpha_{j}\|$

\smallskip
\centerline{\xymatrix{ {\bullet_{\alpha_{i}}}\ar@3{-}[r]|{\rangle}& {\bullet_{\alpha_{j}}}}}
\end{itemize}
\end{definition}

It is possible to prove that the Dynkin diagram does not depend on the bi--invariant metric or the choice of positive roots.

\begin{example}\index{Dynkin diagram!of $\SO(7)$}\index{$\SO(n)$}\index{$\mathfrak{so}(n),\mathfrak{o}(n)$}\label{dynkinso7}
Let us determine the Dynkin diagram of $G=\SO(7)$. In this case, the Lie algebra $\mathfrak t$ of a maximal torus is formed by matrices of the form
$$X=\left(\begin{array}{c c c c c c c}
0 & \theta_{1} & & & & &\\
-\theta_{1} & 0 & & & & &\\
& & 0 & \theta_{2} & & & \\
& & -\theta_{2} & 0 & & &\\
& & & & 0 & \theta_{3} & \\
& & & & -\theta_{3} & 0 &\\
& & & & & & 0 \\
\end{array}\right).$$
The same kind of argument used in Example~\ref{example-roots-SU(n)} implies that the roots of $G$ are $$\pm\theta_{i}^{*} \mbox{ and } \pm (\theta_{i}^{*}\pm \theta_{j}^{*}) \mbox{ for } i<j,$$ where $\theta_{i}^{*}(X)=\theta_{i}$. A possible choice of positive roots is $$\theta_{i}^{*} \mbox{ and } (\theta_{i}^{*} \pm \theta_{j}^{*}) \mbox{ for } i<j.$$ For this choice, the basis of simple roots is $\triangle=\{\alpha_{1}, \alpha_{2}, \alpha_{3}\}$, where
\begin{eqnarray*}
\alpha_{1} &=& \theta_{1}^{*}-\theta_{2}^{*} \\
\alpha_{2} &=& \theta_{2}^{*}-\theta_{3}^{*} \\
\alpha_{3} &=& \theta_{3}^{*}.
\end{eqnarray*}

Consider the bi--invariant metric $\langle X,Y\rangle= \trace XY^{t},$ for $X,Y\in\mathfrak{t}$. It is easy to check that $\langle \theta_{i}^{*},\theta_{j}^{*}\rangle=\delta_{ij}$. Hence, a direct calculation gives
\begin{itemize}
\item[(i)] $\|\alpha_{1}\|=\|\alpha_{2}\|=\sqrt{2}$ and $\frac{\langle \alpha_{1}, \alpha_{2} \rangle}{ \|\alpha_{1}\| \,\|\alpha_{2}\|}= -\frac{1}{2}$, i.e, the angle between them is $120^{\mathrm o}$,

\smallskip
\centerline{\xymatrix{{\bullet}_{\alpha_{1}} \ar@{-}[r] & {\bullet}_{\alpha_{2}}}}
\smallskip

\item[(ii)] $\|\alpha_{2}\|=\sqrt{2}$, $\|\alpha_{3}\|=1$ and $\frac{\langle \alpha_{2}, \alpha_{3} \rangle}{ \|\alpha_{2}\| \,\|\alpha_{3}\|}= -\frac{\sqrt{2}}{2}$, i.e., the angle between them is $135^{o}$.

\smallskip
\centerline{\xymatrix{ {\bullet}_{\alpha_{2}} \ar@2{-}[r]|{\rangle} & {\bullet_{\alpha_{3}}}}}
\smallskip

\item[(iii)] $\langle\alpha_{1},\alpha_{3}\rangle =0$

\smallskip
\centerline{\xymatrix{{\bullet}_{\alpha_{1}}  & {\bullet}_{\alpha_{3}}}}
\smallskip
\end{itemize}
\noindent
Therefore, it follows from the above items that the Dynkin diagram of $\SO(7)$ is

\smallskip
\centerline{\xymatrix{{\bullet}\ar@{-}[r] &{\bullet} \ar@2{-}[r]|{\rangle} & {\bullet}.}}
\smallskip
\end{example}

\begin{exercise}\index{$\mathfrak{su}(n),\mathfrak{u}(n)$}\index{$\SU(n)$}
Use the calculus of  roots of $\SU(n)$ in Example~\ref{example-roots-SU(n)} to prove that the Dynkin diagram of $\SU(3)$ is

\smallskip
\centerline{\xymatrix{{\bullet} \ar@{-}[r] &{\bullet}}}

\medskip
\noindent {\small \emph{Hint:} Use Example~\ref{dynkinso7} as a guideline, Theorem~\ref{thmparadynkin} and Definition~\ref{dynkin}.}
\end{exercise}

We conclude this section stating a classification of connected, compact simple Lie groups, through the use of Dynkin diagrams.

\begin{theorem}
If $G_{1}$ and $G_{2}$ are two compact, semisimple, connected and simply connected Lie groups with the same Dynkin diagram, then $G_{1}$ and $G_{2}$ are isomorphic.
\end{theorem}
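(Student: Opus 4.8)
The plan is to reduce the statement, by means of the rigidity results of Chapter~\ref{chap1}, to a purely Lie-algebraic assertion, and then to invoke the root space decomposition of Theorem~\ref{theorem-roots-existencia} together with the structure theory behind Dynkin diagrams. First, since $G_1$ and $G_2$ are connected and simply connected, Corollary~\ref{cor-gruposisomorfosSeAlgebrasisomorfas} shows that it suffices to exhibit a Lie algebra isomorphism $\theta:\mathfrak g_1\to\mathfrak g_2$. Next, using the decomposition of a semisimple Lie algebra into non commutative simple ideals (Theorem~\ref{teo-semisimples-equivalencias}), and the fact that the Dynkin diagram of $\mathfrak g_i$ is the disjoint union of the Dynkin diagrams of its simple ideals, and that these are precisely the connected components of the diagram, one reduces to the case in which $\mathfrak g_1$ and $\mathfrak g_2$ are simple with the same connected Dynkin diagram.

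Second, I would show that the Dynkin diagram recovers the abstract root system. Fix maximal tori $T_i\subset G_i$ with Lie algebras $\mathfrak t_i$, root systems $R_i$, and bases of simple roots $\triangle_i=\{\alpha_1^{(i)},\dots,\alpha_n^{(i)}\}$ as in Theorem~\ref{thmparadynkin}. By Definition~\ref{dynkin} the diagram encodes, for each pair of distinct indices $j,k$, the angle between $\alpha_j$ and $\alpha_k$ and the ratio of their lengths, hence all the Cartan integers $\langle \alpha_j,\alpha_k^{\vee}\rangle$. Therefore the equality of the two diagrams furnishes a linear isomorphism $\varphi:\mathfrak t_1^{*}\to\mathfrak t_2^{*}$ carrying $\alpha_j^{(1)}$ to $\alpha_j^{(2)}$ and preserving all Cartan integers. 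Since by Theorem~\ref{thmparadynkin} every root is an integral combination of simple roots with coefficients of a single sign, and the full root system is recovered from $\triangle$ and the Cartan integers (equivalently, $R$ is the orbit of $\triangle$ under the group generated by the reflections $\varphi_\alpha$), it follows that $\varphi(R_1)=R_2$, i.e. the root systems of $\mathfrak g_1$ and $\mathfrak g_2$ are isomorphic.

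Third, I would lift this root-system isomorphism to a Lie algebra isomorphism. Passing to the complexifications $\mathfrak g_{i,\C}=\mathfrak t_{i,\C}\oplus\bigoplus_{\alpha\in R_i}\mathfrak g_\alpha$ as in Theorem~\ref{theorem-roots-existencia}, where each $\mathfrak g_\alpha$ is one dimensional, one chooses Chevalley generators attached to the simple roots: $e_j$ spanning $\mathfrak g_{\alpha_j}$, $f_j$ spanning $\mathfrak g_{-\alpha_j}$, and $h_j=[e_j,f_j]$ the corresponding coroot. The crucial point, which is the heart of the matter, is that the assignment $e_j^{(1)}\mapsto e_j^{(2)}$, $f_j^{(1)}\mapsto f_j^{(2)}$, $h_j^{(1)}\mapsto h_j^{(2)}$ extends to a Lie algebra isomorphism: the defining relations among these generators (the Serre relations) are expressed solely in terms of the Cartan integers, hence coincide for both algebras, and the $e_j,f_j$ generate $\mathfrak g_{i,\C}$ because every root space $\mathfrak g_\alpha$ is obtained by iterated brackets. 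This is the isomorphism theorem for semisimple Lie algebras (see Hall~\cite{Hall}, Serre~\cite{Serre}, Helgason~\cite{helgason}); carrying it out rigorously requires careful bookkeeping of the structure constants and the sign choices in a Chevalley basis, and this is the step I expect to be the genuine obstacle, the remainder being essentially formal.

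Finally, I would descend from $\C$ back to the compact real forms. The real algebras $\mathfrak g_i$ sit inside $\mathfrak g_{i,\C}$ as the compact real forms spanned by $\mathfrak t_i$ together with the planes $V_\alpha=(\mathfrak g_\alpha\oplus\mathfrak g_{-\alpha})\cap\mathfrak g_i$, with the normalized bases $\{e_1,e_2\}$ provided by part (vi) of Theorem~\ref{theorem-roots-existencia}; recall that $-B$ is a bi--invariant metric here (Theorem~\ref{compactkilling}). Since the Chevalley generators can be chosen compatibly with these real structures — equivalently, any two compact real forms of a complex semisimple Lie algebra are conjugate — the complex isomorphism $\mathfrak g_{1,\C}\to\mathfrak g_{2,\C}$ restricts to a real isomorphism $\theta:\mathfrak g_1\to\mathfrak g_2$. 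Applying Corollary~\ref{cor-gruposisomorfosSeAlgebrasisomorfas} (or Lie's Third Theorem~\ref{liethird}) to $\theta$ then yields the desired isomorphism $G_1\cong G_2$, completing the proof.
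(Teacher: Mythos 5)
The paper gives no proof of this theorem: the opening paragraph of Section~\ref{Sec-Dynkin Diagrams} explicitly states that the results there are presented without proofs, referring instead to Hall~\cite{Hall}, Helgason~\cite{helgason}, San Martin~\cite{SanMartin} and Serre~\cite{Serre}. Your outline follows the standard argument in those sources: reduce to Lie algebras via Corollary~\ref{cor-gruposisomorfosSeAlgebrasisomorfas}, decompose into simple ideals and match the connected components of the diagram, recover the root system from the Cartan integers encoded in the diagram, apply the isomorphism theorem for complex semisimple Lie algebras via Chevalley generators and Serre relations, and descend to the compact real forms. The skeleton is sound, the reduction steps are handled correctly, and you rightly flag the Serre--relations step as the genuine content that cannot be extracted from anything proved in these notes.

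One correction to the final step. The claim that ``the Chevalley generators can be chosen compatibly with these real structures'' is imprecise: Chevalley generators span the \emph{split} real form, not the compact one (the compact form is spanned by $\imaginario h_j$, $e_\alpha - f_\alpha$, $\imaginario(e_\alpha + f_\alpha)$). To carry the complex isomorphism $\mathfrak g_{1,\C}\to\mathfrak g_{2,\C}$ down to a real isomorphism $\mathfrak g_1\to\mathfrak g_2$ you must either compare each $\mathfrak g_i$ with the compact Chevalley form and invoke Cartan's theorem that any two compact real forms of a complex semisimple Lie algebra are conjugate by an inner automorphism, or else build the isomorphism directly on the real forms using the bases $\{e_1,e_2\}$ of $V_\alpha$ from Theorem~\ref{theorem-roots-existencia}(vi) together with control of the remaining structure constants. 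Either way, uniqueness of the compact real form is a second substantial external input, of comparable weight to the isomorphism theorem, and should be named as such rather than absorbed into a parenthetical ``equivalently'' or dismissed as part of the ``essentially formal'' remainder.
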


\begin{theorem}
The possible connected Dynkin diagrams of a connected, compact simple Lie group are

\smallskip
\centerline{\xymatrix{A_{k}: & {\bullet} \ar@{-}[r] & {\bullet}\ar@{-}[r] &\cdots \ar@{-}[r] &{\bullet} \ar@{-}[r] & {\bullet} & \txt{ $k\geq 1$}}}

\smallskip
\centerline{\xymatrix{B_{k}: & {\bullet} \ar@{-}[r] & {\bullet}\ar@{-}[r] &\cdots \ar@{-}[r] &{\bullet} \ar@2{-}[r]|{\rangle} & {\bullet}&\txt{ $k\geq 2$}}}

\smallskip
\centerline{\xymatrix{C_{k}: & {\bullet} \ar@{-}[r] & {\bullet}\ar@{-}[r] &\cdots \ar@{-}[r] &{\bullet} \ar@2{-}[r]|{\langle} & {\bullet} &\txt{ $k\geq 3$}}}

\smallskip
\centerline{\xymatrix{ & & & & & {\bullet}& \\ D_{k}: &  {\bullet} \ar@{-}[r] & {\bullet}\ar@{-}[r] &\cdots \ar@{-}[r] &{\bullet}\ar@{-}[ur]  \ar@{-}[dr] &  & \txt{ $k\geq 4$} \\ & & & & & {\bullet} &}}

\noindent
where $k$ denotes the number of vertexes, and also

\begin{center}
\begin{minipage}{6cm}
\smallskip
\xymatrix{E_{6}: & {\bullet} \ar@{-}[r]& {\bullet} \ar@{-}[r]& {\bullet} \ar@{-}[r] \ar@{-}[d]& {\bullet} \ar@{-}[r] & {\bullet} \\ & & & {\bullet} &}

\smallskip
\xymatrix{E_{7}: & {\bullet} \ar@{-}[r]& {\bullet} \ar@{-}[r]& {\bullet} \ar@{-}[r] \ar@{-}[d]& {\bullet} \ar@{-}[r] & {\bullet} \ar@{-}[r]& {\bullet} \\  & & & {\bullet} & & &}

\smallskip
\xymatrix{E_{8}: & {\bullet} \ar@{-}[r]& {\bullet} \ar@{-}[r]& {\bullet} \ar@{-}[r] \ar@{-}[d]& {\bullet} \ar@{-}[r] & {\bullet} \ar@{-}[r]& {\bullet}\ar@{-}[r]& {\bullet} \\ & & & {\bullet} & & & &}

\smallskip
\xymatrix{F_{4}: &  {\bullet} \ar@{-}[r]& {\bullet} \ar@2{-}[r]|{\rangle} & {\bullet} \ar@{-}[r] & {\bullet}}

\smallskip
\xymatrix{G_{2}: &  {\bullet}\ar@3{-}[r]|{\rangle}& {\bullet}}
\smallskip
\end{minipage}
\end{center}

\noindent
Each diagram above is the Dynkin diagram of a connected simple compact Lie group. In particular, $A_k$, $B_k$, $C_k$ and $D_k$ are, respectively, the Dynkin diagrams of
\begin{itemize}
\item[(i)] $\SU(k+1)$, for $k\geq 1$;
\item[(ii)] $\SO(2k+1)$, for $k\geq 2$;
\item[(iii)] $\Sp(k)$, for $k\geq 3$;
\item[(iv)] $\SO(2k)$, for $k\geq 4$.
\end{itemize}
\end{theorem}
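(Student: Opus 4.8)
The plan is to split the statement into three logically independent tasks: a purely combinatorial classification of which diagrams can possibly occur, an explicit realization of every diagram on the list by a compact Lie group, and an appeal to the uniqueness theorem stated just above to conclude that the list is both complete and non‑redundant among simply connected groups.

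\emph{The combinatorial core.} Fix a choice of positive roots $P$ and let $\triangle=\{\alpha_1,\dots,\alpha_k\}$ be the base of simple roots. By Theorem~\ref{thmparadynkin} the $\alpha_i$ are linearly independent and pairwise satisfy $\langle\alpha_i,\alpha_j\rangle\le 0$, so by the proposition on angles between roots the only angles occurring between distinct simple roots are $90^{\mathrm o},120^{\mathrm o},135^{\mathrm o},150^{\mathrm o}$, with the length ratios $1,\sqrt2,\sqrt3$ encoded by the number of bonds. I would abstract this into the notion of an \emph{admissible configuration}: a finite set of unit vectors $e_i=\widetilde\alpha_i/\|\widetilde\alpha_i\|$ in a Euclidean space, with positive‑definite Gram matrix, such that for $i\ne j$ one has $\langle e_i,e_j\rangle\le 0$ and $4\langle e_i,e_j\rangle^2\in\{0,1,2,3\}$, this integer being exactly the number of bonds. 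Then I run the classical chain of elementary lemmas: (a) the underlying graph contains no cycle, since a cycle forces the Gram determinant to be non‑positive, and in particular there are no triangles; (b) at any vertex the bonds have total multiplicity at most $3$, obtained from $0<\|e_i-\sum_l\langle e_i,e_{j_l}\rangle e_{j_l}\|^2$ using that the neighbours $e_{j_l}$ are mutually orthogonal, so that a triple bond forces the two‑vertex diagram $G_2$; (c) a simple chain between two ``special'' features (a multiple bond or a branch vertex) can be contracted to a single vertex, which shows a connected admissible diagram has at most one multiple bond and at most one branch vertex and never both; (d) bounding the lengths of the two single‑bond arms attached to a double bond leaves only $B_k$, $C_k$ (arm lengths $(p,1)$) and $F_4$ (arms $(2,2)$); (e) a branch vertex has exactly three arms, and the inequality obtained from positive definiteness forces their lengths to be $(1,1,p)$, $(1,2,2)$, $(1,2,3)$ or $(1,2,4)$ up to order, giving $D_k$, $E_6$, $E_7$ and $E_8$. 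Collecting the survivors yields exactly $A_k\,(k\ge1)$, $B_k\,(k\ge2)$, $C_k\,(k\ge3)$, $D_k\,(k\ge4)$, $E_6,E_7,E_8,F_4,G_2$; the lower bounds on $k$ are imposed precisely to remove the coincidences $B_2\cong C_2$, $A_3\cong D_3$ and the fact that the diagram $D_2$ is disconnected.

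\emph{Realization.} For the four infinite families I would compute the root systems of $\SU(k+1)$, $\SO(2k+1)$, $\Sp(k)$ and $\SO(2k)$ exactly as in Example~\ref{example-roots-SU(n)} and Example~\ref{dynkinso7}: diagonalize $\ad(X)$ for $X$ in the Lie algebra of the standard maximal torus, read off the roots ($\theta_i^*-\theta_j^*$; $\pm\theta_i^*$ and $\pm(\theta_i^*\pm\theta_j^*)$; $\pm 2\theta_i^*$ and $\pm(\theta_i^*\pm\theta_j^*)$; $\pm(\theta_i^*\pm\theta_j^*)$ in the four cases), pick positive and then simple roots, and compute the angles and length ratios for the bi‑invariant metric $\langle X,Y\rangle=\trace(XY^t)$ (respectively $\mathrm{Re}\,\trace(XY^*)$). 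This yields the diagrams $A_k,B_k,C_k,D_k$ and the stated identifications. For the five exceptional diagrams the key point is that \emph{every} admissible diagram is the Dynkin diagram of an abstract reduced root system, hence---via the Chevalley--Serre presentation, whose generators $e_i,f_i,h_i$ and relations are read directly off the Cartan integers of the diagram---of a complex semisimple Lie algebra, which possesses a compact real form; Lie's Third Theorem~\ref{liethird} then produces a connected, simply connected compact Lie group with that diagram. This last step is only indicated here, the details being in Hall, Helgason, San Martin and Serre.

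\emph{Conclusion and the main obstacle.} The uniqueness theorem stated just above (same Dynkin diagram implies isomorphic, for simply connected compact semisimple groups) together with step one shows the list is complete and without repetitions among simply connected groups, while step two supplies the explicit models. The main obstacle is twofold: the delicate bookkeeping in steps (d) and (e) that pins the multiple‑bond and branch‑vertex diagrams down to exactly $B_k,C_k,F_4,D_k,E_6,E_7,E_8$, and the genuinely non‑elementary input behind the exceptional realizations, namely the reconstruction of a Lie algebra from its root data---which falls outside the differential‑geometric toolkit developed in these notes, and is the reason this section is stated without proofs.
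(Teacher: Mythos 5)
The paper itself gives no proof of this theorem: Section~\ref{Sec-Dynkin Diagrams} opens by announcing that the results there are stated ``without giving proofs'' and refers to Hall, Helgason, San Martin and Serre. Your proposal is a correct and faithful outline of the standard classical argument found in those references, and it is the natural one. The combinatorial core---admissible configurations of unit vectors, acyclicity, the degree bound $\sum m_i<4$ at each vertex, chain contraction, and the finite case analysis for a single double bond (giving $B_k$, $C_k$, $F_4$) or a single branch vertex (giving $D_k$, $E_6$, $E_7$, $E_8$ via the inequality on arm lengths)---is exactly the Bourbaki/Serre classification of Coxeter graphs of positive type, and you correctly flag the low-rank coincidences that fix the bounds $k\geq 2,3,4$. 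The realization of the classical families by the explicit root computations of Examples~\ref{example-roots-SU(n)} and~\ref{dynkinso7}, the realization of the exceptionals through the Chevalley--Serre presentation and a compact real form, and the appeal to the uniqueness theorem stated immediately before are all the correct ingredients. Your final remark is also accurate: the reconstruction of a Lie algebra from its root datum is genuinely outside the differential-geometric toolkit of these notes, which is precisely why the authors cite external sources here rather than proving the theorem.
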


\chapter{Singular Riemannian foliations with sections}
\label{chap5}

In this chapter, we give a survey of some results on the theory of singular Riemannian foliations with sections (s.r.f.s.). This theory is a generalization of the classic theory of adjoint actions presented in Chapter~\ref{chap4}, and several results of this chapter are extensions of previous results.

\section{Definitions and first examples}

We start by recalling the definition of a singular Riemannian foliation (see Molino~\cite{Molino}).

\begin{definition}\label{definition-s.r.f}
A partition $\F$ of a complete Riemannian manifold $M$ by connected immersed submanifolds, called \emph{leaves}, is a \emph{singular foliation}\index{Foliation!singular} of $M$ if it verifies (i) and a {\it singular Riemannian foliation (s.r.f.)}\index{Foliation!singular Riemannian}\index{s.r.f.} if it also verifies (ii).

\begin{enumerate}
\item[(i)] $\F$ is \emph{singular}, i.e., the module $\singularF$\index{$\singularF$} of smooth vector fields on $M$ that are tangent at each point to the corresponding leaf acts transitively on each leaf. In other words, for each leaf $L$ and each $v\in TL$ with footpoint $p,$ there is $X\in \singularF$ with $X(p)=v$.
\item[(ii)] The partition is \emph{transnormal}, i.e., every geodesic that is perpendicular at one point to a leaf remains perpendicular to every leaf it intersects.
\end{enumerate}
\end{definition}

Let $\F$ be a singular Riemannian foliation on a complete Riemannian manifold $M$. A leaf $L$ of $\F$ (and each point in $L$) is called \emph{regular}\index{Leaf!regular} if the dimension of $L$ is maximal, and it is otherwise called {\it singular}.\index{Leaf!singular} If all the leaves of $\F$ are regular, $\F$ is a \emph{Riemannian foliation}.\index{Foliation!Riemannian}

\begin{definition}\label{defsrfs}
Let $\F$ be a singular Riemannian foliation on a complete Riemannian manifold $M$. Then $\F$ is said to be a \emph{singular Riemannian foliation with sections}\footnote{Recently, s.r.f.s are also being called \emph{polar foliations}.}\index{Singular Riemannian foliation with sections}\index{Polar foliation}\index{Foliation!polar}\index{Foliation!singular Riemannian, with sections}\index{s.r.f.s.} (s.r.f.s.) if for each regular point $p$, the set $\Sigma=\exp_{p}(\nu_p L_{p})$ is a complete immersed submanifold that intersects each leaf orthogonally. In this case, $\Sigma$ is called a \emph{section}.\index{Foliation!section}
\end{definition}

\begin{example}
Using Killing vector fields, one can prove that the partition by orbits of an isometric action on a Riemannian manifold $M$ is a singular Riemannian foliation (recall Proposition~\ref{prop-equivariant-field}). In particular, the partition by orbits of a polar action (see Definition~\ref{definitionPolarAction}) is a s.r.f.s. In addition, regarding isoparametric submanifolds (see Definition~\ref{definition-isoparametric} and Remark~\ref{remark-definition-isoparametric}), an isoparametric foliation on a space forms is a s.r.f.s.
\end{example}

Terng and Thorbergsson~\cite{TTh1} introduced the concept of \index{Submanifold!equifocal} equifocal submanifolds with flat sections in symmetric spaces in order to generalize the definition of isoparametric submanifolds in Euclidean space. We now give a slightly more general definition of equifocal submanifolds in Riemannian manifolds.

\begin{definition}\label{definition-equifocal}\index{Equifocal submanifold}
A connected immersed submanifold $L$ of a complete Riemannian manifold $M$ is \emph{equifocal} if it satisfies
\begin{enumerate}
\item[(i)] The normal bundle $\nu(L)$ is flat;
\item[(ii)] $L$ admits sections, i.e., for each $p\in L$, the set $\Sigma =\exp_{p}(\nu_p L_{p})$ is a complete immersed totally geodesic submanifold;
\item[(iii)] For each $p\in L$, there exists a neighborhood $U\subset L$, such that for each parallel normal field $\xi$ along $U$ the derivative of the \emph{end point} map $\eta_{\xi}:U\ni x\mapsto\exp_{x}(\xi)\in M$ has constant rank.
\end{enumerate}
\end{definition}

Terng and Thorbergsson~\cite{TTh1} proved that the partition by parallel submanifolds of an equifocal submanifold with flat sections in a simply connected compact symmetric space is a s.r.f.s.

\section{Holonomy and orbifolds}

In this section, we recall some facts about Riemannian foliations necessary to understand the next sections. Most of them can be found in Molino~\cite{Molino}.

\begin{proposition}%[\cite{Molino}]
Let $\F$ be a (singular) foliation on a complete Riemannian manifold $(M,\metric)$. The following statements are equivalent:
\begin{enumerate}
\item[(i)] $\F$ is a (singular) Riemannian foliation;
\item[(ii)] The leaves are locally equidistant, i.e., for each $q\in M$ there exists a tubular neighborhood $$\tub_{\varepsilon}(P_{q})=\{x\in M:d(x,P_{q})<\varepsilon\}$$ of a plaque $P_{q}$ of $L_{q}$ with the following property. 
For $x\in\tub(P_{q})$, each plaque  $P_{x}$ contained in $\tub(P_{q})$ is contained in the cylinder of radius $d(x,P_{q})$ and axis $P_{q}$.
\end{enumerate}
\end{proposition}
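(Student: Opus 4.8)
The plan is to prove the equivalence by translating both conditions into a single statement about the distance function to a plaque, namely that its gradient is everywhere perpendicular to $\F$. Throughout, $\F$ is assumed to be a singular foliation (Definition~\ref{definition-s.r.f}(i)), and the claim is that transnormality (Definition~\ref{definition-s.r.f}(ii)) is equivalent to local equidistance. For the setup I would fix $q\in M$ and a plaque $P_{q}$ of $L_{q}$ through $q$; since plaques are embedded, $P_{q}$ may be taken relatively compact, so there is $\varepsilon>0$ smaller than the focal radius and the normal injectivity radius of $P_{q}$ for which $\tub_{\varepsilon}(P_{q})$ is a genuine tubular neighborhood: the nearest–point projection $\pi\colon\tub_{\varepsilon}(P_{q})\to P_{q}$ is smooth, the function $f=d(\,\cdot\,,P_{q})$ is smooth on $\tub_{\varepsilon}(P_{q})\setminus P_{q}$ with $|\grad f|\equiv 1$, and the integral curves of $\grad f$ are exactly the unit–speed geodesics meeting $P_{q}$ orthogonally. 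Shrinking $\varepsilon$ further I would also arrange that each plaque contained in the tube lies in a single foliated chart.

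For $(i)\Rightarrow(ii)$: given $x\in\tub_{\varepsilon}(P_{q})\setminus P_{q}$, the minimizing geodesic $\gamma$ from $\pi(x)$ to $x$ meets $P_{q}$, hence $L_{q}$, orthogonally (first variation of arc length), so by transnormality $\gamma$ stays perpendicular to every leaf it meets; at $x$ this gives $\gamma'\perp T_{x}L_{x}$, and since the velocity of $\gamma$ at $x$ is $\grad f(x)$, we conclude $\grad f\perp\F$ on the whole tube. Then for any plaque $P_{x}$ contained in the tube, any $y\in P_{x}$ and any $v\in T_{y}P_{x}$, we get $\mathrm{d}f(v)=\langle\grad f,v\rangle=0$; as $P_{x}$ is connected, $f$ is constant on $P_{x}$, equal to $d(x,P_{q})$, i.e. $P_{x}$ is contained in the cylinder of radius $d(x,P_{q})$ and axis $P_{q}$ (if $P_{x}$ meets $P_{q}$ they coincide and $f\equiv 0$ on $P_{x}$). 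For $(ii)\Rightarrow(i)$ I would first prove the local statement: let $\gamma$ be a geodesic with $\gamma'(0)\perp T_{\gamma(0)}L_{\gamma(0)}$, set $q=\gamma(0)$ and pick $P_{q},\varepsilon$ as above; for small $s>0$, $\gamma|_{[0,s]}$ is the unique minimizing geodesic from $\gamma(s)$ to $P_{q}$ (a geodesic leaving a submanifold orthogonally is locally shortest to it), so $d(\gamma(s),P_{q})=s$, and by $(ii)$ the plaque through $\gamma(s)$ lies in the level set $f^{-1}(s)$, whence $\grad f(\gamma(s))=\gamma'(s)$ is perpendicular to $T_{\gamma(s)}L_{\gamma(s)}$.

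To globalize, I would consider $A=\{t\geq 0:\gamma'(t)\perp T_{\gamma(t)}L_{\gamma(t)}\}$. The local step applied at $q=\gamma(t_{0})$ shows $A$ is relatively open in $[0,\infty)$; for closedness, if $t_{n}\in A$ with $t_{n}\to t_{0}$ and $w\in T_{\gamma(t_{0})}L_{\gamma(t_{0})}$, then by Definition~\ref{definition-s.r.f}(i) there is $X\in\singularF$ with $X(\gamma(t_{0}))=w$, and $\langle\gamma'(t_{n}),X(\gamma(t_{n}))\rangle=0$ for all $n$ since $X(\gamma(t_{n}))\in T_{\gamma(t_{n})}L_{\gamma(t_{n})}$; letting $n\to\infty$ gives $\langle\gamma'(t_{0}),w\rangle=0$, so $t_{0}\in A$. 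By connectedness $A=[0,\infty)$, and the same argument for $t\leq 0$ shows $\F$ is transnormal. I expect the two delicate points to be (a) pinning down $\varepsilon$ so that $f$ is smooth on the punctured tube — which is precisely why one localizes to a relatively compact embedded plaque rather than an entire, possibly non-closed, leaf — and (b) the closedness of $A$, which genuinely uses the singularity hypothesis on $\F$ (for a regular foliation it is automatic); everything else is first/second variation of arc length together with the open–closed argument.
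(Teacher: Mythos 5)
The paper states this proposition without proof, deferring to Molino's book, so there is no in-paper argument to compare against; I evaluate the proposal directly. The structure is sound and is the standard one: translate both conditions into $\grad f\perp\F$ for $f=d(\,\cdot\,,P_q)$, and for $(ii)\Rightarrow(i)$ propagate orthogonality along a geodesic by an open--closed argument. You correctly identify closedness of $A$ as the step that uses the singular-foliation hypothesis: leaf dimension is only lower semicontinuous, so one passes the limit through a smooth $X\in\singularF$ realizing a prescribed tangent vector, and the existence of such $X$ is exactly what Definition~\ref{definition-s.r.f}(i) supplies.

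Two small inaccuracies, neither fatal. First, in the case $P_x\cap P_q\neq\emptyset$ you assert that the plaques ``coincide,'' which is false in general (both are open subsets of $L_q$ but may arise from different charts); what you actually need is only $f\equiv 0$ on $P_x$, and this follows because $f$ is continuous on the connected set $P_x$, vanishes on the nonempty open set $P_x\cap P_q$, and has $df|_{TP_x}=0$ on $P_x\setminus\overline{P_q}$, so any component of $P_x\setminus\overline{P_q}$ has boundary (in $P_x$) lying in $f^{-1}(0)$ and hence carries the constant $0$. Second, since $P_q$ is taken relatively compact but \emph{open}, the metric tube $\{x:d(x,P_q)<\varepsilon\}$ and the image of the $\varepsilon$-disc normal bundle under the normal exponential need not coincide near $\partial P_q$; smoothness of $f$ and of the nearest-point projection genuinely only hold after restricting to a plaque whose closure lies inside $P_q$. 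Both directions use the distance function only at points whose foot lies in the interior of $P_q$, so this is routinely repaired by shrinking, but as written it is glossed over in the phrase ``focal radius and normal injectivity radius.''
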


It is also convenient to recall equivalent definitions of Riemannian foliations.

\begin{proposition}%[\cite{Molino}]
\label{prop-equivalencia-df-FR}
Let $\F$ be a foliation on a complete Riemannian manifold $(M,\metric)$. The following statements are equivalent:
\begin{enumerate}
\item[(i)] $\F$ is a Riemannian foliation;
\item[(ii)] For each $q\in M$, there exists a neighborhood $U$ of $q$ in $M$, a Riemannian manifold $(\sigma,b)$ and a Riemannian submersion $f: (U,\metric)\rightarrow (\sigma,b)$ such that the connected components of $\F\cap U$ (plaques) are preimages of $f$;
\item[(iii)] Let $\metric_{T}$ be the transverse metric, i.e., $\metric_{T}=A^{*}\metric$ where $A_{p}:T_{p}M\rightarrow \nu_{p}L$ is the orthogonal projection. Then the Lie derivative $L_{X}\metric_{T}$ is zero for each $X\in \singularF$ (see Definition~\ref{definition-s.r.f}).
\end{enumerate}
\end{proposition}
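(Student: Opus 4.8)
The plan is to prove the three equivalences (i) $\Leftrightarrow$ (ii), (i) $\Leftrightarrow$ (iii) directly, since each captures a different aspect of the transversal rigidity of a Riemannian foliation: (ii) is the local normal-form/Reeb-chart description, and (iii) is the infinitesimal (Lie-derivative) formulation. Throughout I would work locally, fix a point $q\in M$, choose a foliated chart adapted to $\F$ near $q$, and let $P_q$ denote the plaque of $L_q$ through $q$. The transverse metric $\metric_T$ appearing in (iii) is defined by $\metric_T = A^{*}\metric$, where $A_p\colon T_pM\to\nu_pL$ is the orthogonal projection onto the normal space of the leaf; this is a well-defined symmetric $2$--tensor that vanishes on $T\F$.

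First I would prove (i) $\Rightarrow$ (ii). Assume $\F$ is Riemannian, i.e. every geodesic perpendicular to a leaf at one point stays perpendicular to all leaves it meets (combined with the equidistance statement already recorded in the preceding proposition). In a small foliated neighbourhood $U$ of $q$, consider the local quotient $\sigma$ obtained by sliding along the plaques: concretely, take a small transversal $\Sigma_q$ through $q$ (an open piece of $\exp_q(\nu_q L_q)$), set $\sigma=\Sigma_q$ with the induced metric $b=\metric|_{\Sigma_q}$, and define $f\colon U\to\sigma$ by sending $x$ to the point of $\Sigma_q$ lying on the plaque of $x$ — which exists and is unique once $U$ is chosen inside a tubular neighbourhood of $P_q$ of small radius, by the equidistance property. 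The key point is that $f$ is a \emph{Riemannian} submersion: the fibres of $f$ are exactly the plaques, so $\ker \dd f_x = T_xP_x$, and one must check that $\dd f_x$ restricted to the horizontal space $\nu_xL_x$ is a linear isometry onto $T_{f(x)}\sigma$. This is precisely where the transnormality hypothesis enters: horizontal geodesics project to geodesics of $\Sigma_q$ (they remain orthogonal to all leaves, hence stay inside the "exponential transversal" foliation), and parallel transport of normal vectors along plaques is an isometry onto the normal space of the target plaque, again by equidistance. Spelling this out gives (ii).

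Next, (ii) $\Rightarrow$ (iii). Given a local Riemannian submersion $f\colon (U,\metric)\to(\sigma,b)$ whose fibres are the plaques, observe that $\metric_T = f^{*}b$ on $U$: indeed for $v\in T_xM$ write $v=v^{\top}+v^{\perp}$ with $v^{\top}\in T_xP_x=\ker\dd f_x$ and $v^{\perp}\in\nu_xL_x$; then $\metric_T(v,w)=\metric(v^{\perp},w^{\perp})=b(\dd f_x v^{\perp},\dd f_x w^{\perp})=b(\dd f_x v,\dd f_x w)=(f^{*}b)(v,w)$, using that $f$ is a Riemannian submersion and that $\dd f$ kills the tangential parts. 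Now any $X\in\singularF$ is tangent to the leaves, hence its flow $\varphi_t$ preserves the plaques and therefore satisfies $f\circ\varphi_t = f$ locally. Consequently $\varphi_t^{*}(f^{*}b)=(f\circ\varphi_t)^{*}b=f^{*}b$, so $\varphi_t^{*}\metric_T=\metric_T$ for all small $t$, and differentiating at $t=0$ gives $L_X\metric_T=0$. Since $q$ was arbitrary and the statement is local, $L_X\metric_T\equiv 0$ on $M$.

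Finally, (iii) $\Rightarrow$ (i). Suppose $L_X\metric_T=0$ for every $X\in\singularF$. I want to show leaves are locally equidistant, equivalently (given the earlier proposition) that $\F$ is Riemannian. The idea is that the flow of any leaf-tangent field $X$ preserves $\metric_T$, hence maps the orthogonal distribution isometrically and sends "tubes around plaques" to "tubes around plaques" of the same radius: if $\gamma$ is a geodesic segment realizing $d(x,P_q)$ and hitting $P_q$ orthogonally, then flowing its endpoint along plaques by $\varphi_t$ and correcting by the horizontal lift produces, via the invariance $\varphi_t^{*}\metric_T=\metric_T$, a path of the same transverse length to the nearby plaque; a short computation with the first variation of arclength, using that the tangential component does not contribute to $\metric_T$, shows $d(\varphi_t(x),P_q)$ is independent of $t$, which is exactly the cylinder property in statement (ii) of the previous proposition, hence $\F$ is Riemannian. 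The main obstacle I anticipate is this last implication: turning the purely infinitesimal condition $L_X\metric_T=0$ into the global/metric equidistance statement requires care about which vector fields are available in $\singularF$ near a singular stratum and about integrating the flows, and one should probably first reduce to the regular part (where $\F$ is an honest Riemannian foliation and Molino's local theory applies directly) and then argue by density/continuity of the distance function to extend equidistance across singular leaves. I would cite Molino~\cite{Molino} for the technical underpinnings of the flow arguments and the regular-stratum normal form rather than redo them in full.
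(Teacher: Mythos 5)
The paper itself offers no proof of this proposition: it sits in a ``recall some facts'' paragraph whose contents are attributed wholesale to Molino~\cite{Molino}, so there is no in-text argument to compare yours against. Evaluating your proposal on its own, the cycle (i)~$\Rightarrow$~(ii)~$\Rightarrow$~(iii)~$\Rightarrow$~(i) is a sensible architecture, and your step (ii)~$\Rightarrow$~(iii) is correct and clean: the identity $\metric_T=f^{*}b$ combined with $f\circ\varphi_t=f$ for the flow $\varphi_t$ of any $X\in\singularF$ gives $L_X\metric_T=0$ directly.

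The real gap is in (iii)~$\Rightarrow$~(i), which is the substantive direction (Reinhart's theorem). Two specific problems. First, the assertion that $\varphi_t$ ``maps the orthogonal distribution isometrically'' is not what $L_X\metric_T=0$ says: $\dd\varphi_t$ carries $T\F$ to $T\F$ and preserves the degenerate tensor $\metric_T$, but it does \emph{not} in general send $\nu_xL$ into $\nu_{\varphi_t(x)}L$; all you obtain is $\metric\bigl((\dd\varphi_t v)^{\perp},(\dd\varphi_t w)^{\perp}\bigr)=\metric(v^{\perp},w^{\perp})$. Second, the ``short computation with the first variation of arclength'' is exactly where the content lives and, as sketched, it cannot succeed: the transverse length $\int\|c'(s)^{\perp}\|\,ds$ of a curve is a \emph{lower} bound for its Riemannian length, not an upper one, so knowing that $\varphi_t\circ\gamma$ has the same transverse length as $\gamma$ does not bound $d(\varphi_t(x),P_q)$ from above. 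Moreover, equating the transverse length of $\gamma$ with $d(x,P_q)$ presupposes that $\gamma$ meets every intermediate leaf orthogonally --- which is precisely the transnormality you are trying to prove. The standard way through is a local ODE computation with the Levi--Civita connection (Reinhart's original argument): writing $\gamma'=h+v$ with $h$ horizontal and $v$ vertical, the bundle-like condition forces $\|v\|^{2}$ to satisfy a linear first-order equation whose solution with $v(t_0)=0$ is identically zero.

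A smaller issue: your closing worry about ``which vector fields are available in $\singularF$ near a singular stratum'' and the proposed reduction to the regular part is misplaced. This proposition concerns a \emph{regular} Riemannian foliation (per Definition~\ref{definition-s.r.f}, the foliation is already assumed regular; it is the \emph{preceding} proposition that covers the singular case), so $\singularF$ is simply the module of all smooth leaf-tangent vector fields and there is no singular locus to avoid. In (i)~$\Rightarrow$~(ii), the claim that ``parallel transport of normal vectors along plaques is an isometry onto the normal space of the target plaque'' is the crux and is asserted rather than shown; it is believable but in a careful write-up you would either prove it from equidistance or, more economically, first prove (i)~$\Rightarrow$~(iii) and then (iii)~$\Rightarrow$~(ii), since (iii) is precisely the condition that $\metric_T$ descends to the local quotient.
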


An important property of Riemannian foliations is \emph{equifocality}. In order to understand this concept, we need some preliminary definitions. A \emph{Bott}\index{Connection!Bott} or \emph{basic connection}\index{Connection!basic} $\nabla$ of a foliation $\F$ is a connection on the normal bundle to the leaves, with $\nabla_XY=[X,Y]^{\nu\F}$ whenever $X\in \singularF$ and $Y$ is a vector field of the normal bundle $\nu\F$ to the foliation. Here, $^{\nu\F}$ denotes projection onto $\nu\F$.

A \emph{normal foliated vector field}\index{Vector field!normal foliated} is a normal field parallel with respect to the Bott connection. If we consider a local submersion $f$ that describes the plaques of $\F$ in a neighborhood of a point of $L$, then a normal foliated vector field is a normal projectable/basic vector field with respect to $f$.

\emph{Equifocality}\index{Equifocality} means that if $\xi$ is a normal parallel vector field (with respect to the Bott connection) along a curve $\beta:[0,1]\rightarrow L$, then the curve $t\mapsto \exp_{\beta(t)}(\xi)$ is contained in the leaf $L_{\exp_{\beta(0)}(\xi)}$.

\begin{remark}
This property still holds even for singular Riemannian foliations, and implies that one can reconstruct the (singular) foliation by taking all parallel submanifolds of a (regular) leaf with trivial holonomy (see Alexandrino and T\"oben~\cite{AlexToeben2}).
\end{remark}

Equifocality allows to introduce the concept of parallel translation (with respect to the Bott connection) of horizontal segments of geodesic.

\begin{definition}\label{paralleltranslation}\index{Parallel translation}
Let $\beta:[a,b]\rightarrow L$ be a piecewise smooth curve and $\gamma:[0,1]\rightarrow M$ a segment of horizontal geodesic such that $\gamma(0)=\beta(a)$. Consider $\xi_0$ a vector of the normal space $\nu_{\beta(a)}L$ such that $\exp_{\gamma(0)}(\xi_0)=\gamma(1)$ and $\xi:[a,b]\rightarrow \nu L$ the parallel translation of $\xi_0$ with respect to the Bott connection along $\beta$. Define $\|_\beta(\gamma)=\widehat{\gamma}$, where $\widehat{\gamma}:[0,1]\rightarrow M$ is the segment of geodesic given by $s\mapsto\widehat{\gamma}(s)=\exp_{\beta(b)}(s\,\xi(b))$.
\end{definition}

Due to equifocality of $\F$, there is an alternative definition of holonomy map of a Riemannian foliation.

\begin{definition}\label{dfn-holonomy-map}\index{Holonomy!map}
Let $\beta:[0,1]\rightarrow L$ be a piecewise smooth curve and $S_{\beta(i)}=\{\exp_{\beta(i)}(\xi):\xi\in \nu_{\beta(i)} L, \|\xi\|<\varepsilon\}$ a \emph{slice} at $\beta(i)$, for $i=0,1$. Then a \emph{holonomy map} is defined by
\begin{eqnarray*}
\varphi_{[\beta]}:S_{\beta(0)} &\longrightarrow &S_{\beta(1)} \\
x & \longmapsto & ||_{\beta}\gamma(r),
\end{eqnarray*}
where $\gamma:[0,r]\rightarrow S_{\beta(0)}$ is the minimal segment of geodesic joining $\beta(0)$ to $x$. Since the Bott connection is locally flat, the parallel translation depends only on the homotopy class $[\beta]$.
\end{definition}

\begin{remark}\label{remark-defClassicaHolonomyMap}
The holonomy map of a foliation $\F$ is usually defined as follows. Consider a partition $0=t_{0}<t_{1}<\ldots < t_{k}=1$ such that $\beta((t_{i-1},t_{i}))$ is contained in a simple open set $U_{i}$ defined by trivialization of $\F$. Then $\{U_{1},\ldots U_{k}\}$ is called a chain of simple open sets. Sliding along the plaques in $U_{k}$ one can define a diffeomorphism $\varphi_{k}$ from an open neighborhood of the transverse manifold $S_{\beta(t_{k-1})}$ onto an open set of $S_{\beta(t_{k})}$. In the same way, sliding along the plaques $U_{k-1}$ one defines a diffeomorphism $\varphi_{k-1}$ from an open neighborhood of $S_{\beta(t_{k-2})}$ onto an open neighborhood of $S_{\beta(t_{k-1})}$. After some steps, we obtain a diffeomorphism $\varphi_{1}$ from an open neighborhood $S_{\beta(t_{0})}$ onto a neighborhood of $S_{\beta(t_{1})}$. Finally, the germ of the diffeomorphism $\varphi_{k}\circ\varphi_{k-1}\circ\cdots\circ\varphi_{1}$ is the holonomy map $\varphi_{[\beta]}$. It is possible to prove that it does not depend on the chain of simple open sets used in its construction, but only on the homotopy class of $\beta$.
\end{remark}

\begin{remark}
Note that, since the holonomy map depends only on the homotopy class, there is a group homomorphism $$\varphi:\pi_{1}(L_{0},x_{0})\longrightarrow \mathrm{Diff}_{x_{0}}(S_{x_{0}}),$$ from the fundamental group of the leaf $L_{0}$ to the group of germs at $x_{0}$ of local diffeomorphisms of $S_{x_{0}}$. Its image is the \emph{holonomy group} of $L_{0}$ at $x_{0}$.\index{Holonomy!group}\index{$\hol$}
\end{remark}

\begin{remark}\label{remarkslicerep2}
Let $G$ be a connected Lie group, $\mu:G\times M\rightarrow M$  a proper action and $\F=\{G(x)\}_{x\in M}$ the partition by orbits, and assume that all orbits are regular. Using Remarks~\ref{remark-SliceVaiEmSlice} and~\ref{remark-defClassicaHolonomyMap} it is possible to check that for each holonomy map $\varphi_{[\beta]}$ there exists $g$ such that $\mu(g,\cdot)|_{S_{x_{0}}}=\varphi_{[\beta]}$. In particular, the holonomy group coincides with the image of the slice representation of $G_{x_{0}}$ in $S_{x_{0}}$ (see Remark~\ref{remarkslicerep}).
\end{remark}

We now recall some facts about pseudogroups and orbifolds related to Riemannian foliations. More details can be found in Salem~\cite[Appendix D]{Molino} or in Moerdijk and  Mr\v{c}un~\cite{Moerdijk}.

\begin{definition}
Let $\Sigma$ be a Riemannian manifold, not necessarily connected. A \emph{pseudogroup}\index{Isometry!pseudogroup} $W$ of isometries of $\Sigma$ is a collection of local isometries $w:U\rightarrow W$, where $U$ and $V$ are open subsets of $\Sigma$ such that
\begin{enumerate}
\item[(i)] If $w\in W$, then $w^{-1}\in W$;
\item[(ii)] If $w:U\rightarrow V$ and $\widetilde{w}:\widetilde{U}\rightarrow\widetilde{V}$ are in $W$, then $\widetilde{w}\circ w:w^{-1}(\widetilde{U})\rightarrow\widetilde{V}$ is also in $W$;
\item[(iii)] If $w:U\rightarrow V$ is in $W$, then its restriction to each open subset $\widetilde{U}\subset U$ is also in $W$;
\item[(iv)] If $w:U\rightarrow V$ is an isometry between open subsets of $\Sigma$ that coincides in a neighborhood of each point of $U$ with an element of $W$, then $w\in W$.
\end{enumerate}
\end{definition}

\begin{definition}
Let $A$ be a family of local isometries of $\Sigma$ containing the identity. The pseudogroup obtained by taking inverses of the elements of $A$, restrictions to open subsets, compositions and unions is called the {\it pseudogroup generated} by $A$.
\end{definition}

\begin{example}\label{ex-holohomypseudogroup}
An important example of a Riemannian pseudogroup is the holonomy pseudogroup of a Riemannian foliation, whose definition we now recall. Let $\F$ be a Riemannian foliation of codimension $k$ on a Riemannian manifold $(M,\metric).$ Then $\F$ can be described by an open cover $\{U_{i}\}$ of $M$ with Riemannian submersions $f_{i}:(U_{i},\metric)\rightarrow(\sigma_{i},b)$, where $\sigma_{i}$ is a submanifold of dimension $k$, such that there are isometries $w_{ij}:f_{i}(U_{i} \cap U_{j})\rightarrow f_{j}(U_{j}\cap U_{i})$ with $f_{j}=w_{ij}\circ f_{i}$. The elements $w_{ij}$ acting on $\Sigma=\sqcup\sigma_{i}$ generate a pseudogroup of isometries of $\Sigma$ called the \emph{holonomy pseudogroup of $\F$}.\index{Holonomy!pseudogroup}
\end{example}

\begin{definition}\label{dfn-Riemannian-orbifold}
A $k$--dimensional \emph{Riemannian orbifold}\index{Riemannian!orbifold}\index{Orbifold} is an equivalence class of pseudogroups $W$ of isometries on a $k$--dimensional Riemannian manifold $\Sigma$ verifying
\begin{enumerate}
\item[(i)] The space of orbits $\Sigma/ W$ is Hausdorff;
\item[(ii)] For each $x\in\Sigma$, there exists an open neighborhood $U$ of $x$ in $\Sigma$ such that the restriction of $W$ to $U$ is generated by a finite group of diffeomorphisms of $U$.
\end{enumerate}

In addition, if $W$ is a group of isometries, then $\Sigma/W$ is called a \emph{good} Riemannian orbifold.\index{Riemannian!good orbifold}\index{Orbifold!good}
\end{definition}

An important example of a Riemannian orbifold is the space of leaves $M/\F$, where $M$ is a Riemannian manifold and $\F$ a Riemannian foliation on $M$ with compact leaves.

\begin{proposition}%[\cite{Molino}]
\label{prop-M/F-orbifold}
Let $\F$ be a Riemannian foliation with compact leaves on a complete Riemannian manifold $(M,\metric).$ Then $M/\F$ is a Riemannian orbifold isomorphic to $\Sigma/W$, where  $\Sigma$ and $W$ were defined in Example~\ref{ex-holohomypseudogroup}.
\end{proposition}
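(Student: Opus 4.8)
The plan is to exhibit the leaf space $M/\F$ directly as the orbit space of the holonomy pseudogroup and then verify the two conditions of Definition~\ref{dfn-Riemannian-orbifold}. First I would fix the open cover $\{U_i\}$ of $M$ with Riemannian submersions $f_i:(U_i,\metric)\to(\sigma_i,b)$ and transition isometries $w_{ij}$ as in Example~\ref{ex-holohomypseudogroup}, so that $\Sigma=\sqcup_i\sigma_i$ and $W$ is the pseudogroup generated by the $w_{ij}$. Each $z\in\sigma_i$ determines the plaque $f_i^{-1}(z)$ and hence the leaf of $\F$ through it, which defines a map $\Sigma\to M/\F$. I would then check that it descends to a continuous bijection $\Sigma/W\to M/\F$: it is surjective since the $U_i$ cover $M$, and well defined and injective because two plaques (possibly in different charts) lie on the same leaf if and only if they are joined by a chain of simple open sets, which is exactly the condition that the corresponding transversal points lie in the same $W$--orbit (using the description of holonomy recalled in Remark~\ref{remark-defClassicaHolonomyMap}). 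Since the $f_i$ are Riemannian submersions and the $w_{ij}$ are isometries for the transverse metric (Proposition~\ref{prop-equivalencia-df-FR}), this identification is compatible with the transverse metric, so it suffices to prove that $\Sigma/W$ is a Riemannian orbifold.

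For condition (i), I would prove that $M/\F$ is Hausdorff, and this is where compactness of the leaves is essential. Given distinct leaves $L_0\neq L_1$, using that $\F$ is Riemannian the leaves are locally equidistant, so each admits saturated tubular neighborhoods; compactness of $L_0$ and $L_1$ allows one to choose the radius uniformly, and after shrinking one obtains disjoint saturated open neighborhoods of $L_0$ and $L_1$ in $M$, hence disjoint open sets separating their images in $M/\F$.

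For condition (ii), I would use that a compact leaf $L$ of a Riemannian foliation has \emph{finite holonomy group}: the holonomy is realized by germs of isometries of a slice fixing the base point, its linearization is an orthogonal representation, and compactness of $L$ forces the image to be a discrete, hence finite, subgroup of $\O(k)$. Then the Reeb--type local stability result alluded to in Remark~\ref{remarkslicerep} gives, around each point $x_0$ of the transversal, a neighborhood on which the restriction of $W$ is generated by the finite holonomy group $\hol(L,x_0)$ acting by isometries. Combined with the fact that $\Sigma$ is a Riemannian manifold on which $W$ acts by isometries, this shows that $\Sigma/W$ is a Riemannian orbifold; the identification above then makes it isomorphic, as a Riemannian orbifold, to $M/\F$.

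The main obstacle is precisely the input that compactness of leaves forces both the Hausdorffness of $M/\F$ and the finiteness of the holonomy of every leaf; neither holds for a general Riemannian foliation, and both rest on combining the local equidistance property with compactness in a Reeb stability argument. Once these are available, the remaining work---well--definedness and bijectivity of $\Sigma/W\to M/\F$ via the chain--of--charts description, and compatibility with the transverse metric---is routine bookkeeping.
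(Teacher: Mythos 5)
The paper itself states this proposition without proof (it cites Molino for the background), so there is no argument in the text to compare against; your overall strategy---identify $M/\F$ with $\Sigma/W$, verify Hausdorffness via disjoint saturated tubes, and verify local finiteness of the pseudogroup via finite holonomy and a Reeb--type description of a tubular neighborhood---is the standard and correct route. The identification $\Sigma/W\cong M/\F$ via plaques, the Hausdorffness argument from local equidistance plus compactness, and the use of local stability once finite holonomy is in hand are all sound.

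There is, however, a genuine gap in the step where you assert that ``compactness of $L$ forces the image to be a discrete, hence finite, subgroup of $\O(k)$.'' Compactness of the single leaf $L$ does \emph{not} imply the holonomy group is discrete. Consider the suspension construction from the text: take $Q=S^1$, $V=S^2$ with the round metric, and $\rho:\pi_1(S^1)=\Z\to\Iso(S^2)$ sending the generator to a rotation $R_\theta$ with $\theta$ irrational. The resulting Riemannian foliation on the mapping torus has a compact circle leaf through each pole of $S^2$, and the holonomy group of that leaf is the infinite cyclic group generated by $R_\theta$, which has positive--dimensional closure in $\O(2)$. (Of course the nearby leaves are non--compact, which is why this is not a counterexample to the proposition, only to your justification.) The correct argument must use compactness of \emph{all} the leaves, not just the one whose holonomy you are bounding: since each isometry germ is determined by its $1$--jet, $\hol(L,x_0)$ embeds in $\O(k)$; if it were infinite, its closure $\overline{H}\le\O(k)$ would be a compact Lie group of positive dimension acting faithfully and linearly, so for $z$ in the transversal close to $x_0$ and outside the fixed subspace of $\overline{H}^0$ the orbit $\overline{H}\cdot z$ is positive--dimensional and hence the holonomy orbit $\hol(L,x_0)\cdot z$ (dense in $\overline{H}\cdot z$) is infinite; but that orbit is contained in $L_z\cap S$, which is finite because $L_z$ is a compact manifold meeting the compact transversal $S$ transversally in isolated points. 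This contradiction forces $\hol(L,x_0)$ to be finite. With that substitution the rest of your proof goes through.
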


\begin{remark}
A converse result holds. More precisely, each Riemannian orbifold $\Sigma/W$ is the space of leaves of a Riemannian foliation with compacts leaves (see Moerdijk and Mr\v{c}un~\cite[Proposition 2.23]{Moerdijk}).
\end{remark}

\section{Surgery and suspension of homomorphisms}

The examples of s.r.f.s. presented in the first section have a symmetric space as ambient space. A simple way to construct new s.r.f.s. on nonsymmetric spaces is to consider a s.r.f.s. $\F$ with compact leaves on a manifold $M$ and change either the leaf metric locally, or the transverse metric on a tubular neighborhood of a regular leaf $L$ with trivial holonomy, since the set of this kind of leaves is an open and dense subset in $M$. For a suitable change of metric, $M$ is not symmetric. In this section, we explain some techniques that allow to construct other examples of s.r.f.s., namely surgery and suspension of homomorphisms.

Let us first discuss how \emph{surgery}\index{s.r.f.s.!surgery} can be used to construct s.r.f.s. (see Alexandrino and T\"oben~\cite{AlexToeben}). Let $\F_{i}$ be a s.r.f.s. with codimension $k$ and compact leaves on a complete Riemannian manifold $M_{i}$ for $i=1,2$. Suppose that there exist regular leaves with trivial holonomy $L_{1}\in\F_{1}$ and $L_{2}\in\F_{2}$, such that $L_{1}$ is diffeomorphic to $L_{2}$. Since $L_{i}$ has trivial holonomy, there exists trivializations $\psi_{i}:\tub_{3 \varepsilon}(L_{i})\rightarrow L_{1}\times B_{3 \varepsilon},$ where $\tub_{3 \varepsilon}(L_{i})$ is the tubular neighborhood of $L_{i}$ with radius $3\varepsilon$, and $B_{3 \varepsilon}$ is a ball in the Euclidean $k$--dimensional space.

Now define $\tau: L_{1}\times B_{\varepsilon}\setminus\overline{L_{1}\times B_{\varepsilon/2}}\rightarrow L_{1}\times B_{2\varepsilon}\setminus\overline{L_{1}\times B_{\varepsilon}}$ as the inversion in the cylinder of radius $\varepsilon$ and axis $L_{1}.$ Define also $\phi:\tub_{\varepsilon}(L_{1})\setminus\overline{\tub_{\varepsilon/2}(L_{1})}\rightarrow \tub_{2\varepsilon}(L_{2})\setminus\overline{\tub_{\varepsilon}(L_{2})}$ by $\phi=\psi_{2}^{-1}\circ\tau\circ\psi_{1}$. We now change the transverse metric of $M_{i}$ in the tubular neighborhoods of $L_i$, such that the restriction of $\phi$ to each section in $\tub_{\varepsilon}(L_{1})\setminus\overline{\tub_{\varepsilon/2}(L_{1})}$ is an isometry. This can be done, for instance, by taking $\frac{g_{0}}{\|x\|^{2}}$ as the transverse metric on $\tub_{2\varepsilon}(L_{i})$ and keeping the original transverse metric $g$ outside $\tub_{3\varepsilon}(L_{i})$, using partitions of unity with two appropriate functions. %$f_1,f_2$, setting $f_{1}(x)\frac{g_{0}}{\|x\|^{2}}+f_{2}(x)g$.

Finally, define $\widetilde{M}$ as  $$\widetilde{M}=\left(M\setminus\overline{\tub_{\varepsilon/2}(L_{1})}\right)\sqcup_{\phi} \left(M\setminus\overline{\tub_{\varepsilon}(L_{2})}\right).$$ This new manifold has a singular foliation $\widetilde{F}$, and leaves are locally equidistant with respect to the transverse metric that already exists. To conclude the construction, we only have to define a tangential metric to the leaves with partitions of unity. Note that if $\Sigma_{1}$ (respectively $\Sigma_{2}$) is a section of $\F_{1}$ (respectively $\F_{2}$), then the connected sum $\Sigma_1\#\Sigma_2$ is a section of $\widetilde{\F}$.

Let us now explore an example of s.r.f.s. constructed with \index{s.r.f.s.!suspension of homomorphisms}\emph{suspension of homomorphisms}, which is extracted from Alexandrino~\cite{Alex4}. Other examples can be found in Alexandrino~\cite{Alex2}. We start by recalling the method of suspension. For further details see Molino~\cite[pages 28, 29, 96, 97]{Molino}.

Let $Q$ and $V$ be Riemannian manifolds with dimension $p$ and $n$ respectively, and consider $\rho:\pi_{1}(Q, q_{0})\rightarrow  \Iso(V)$ a homomorphism. Let $\widehat{P}:\widehat{Q}\rightarrow Q$ be the projection of the universal cover of $Q$. Then there is an action of $\pi_{1}(Q,q_{0})$ on $\widetilde{M}=\widehat{Q}\times V$ given by
$$(\widehat{q},v)\cdot [\alpha]=(\widehat{q}\cdot[\alpha],\rho(\alpha^{-1})\cdot v),$$ where $\widehat{q}\cdot[\alpha]$ denotes the deck transformation associated to $[\alpha]$ applied to a point $\widehat{q}\in\widehat{Q}$. We denote by $M$ the set of orbits of this action, and by $\Pi:\widetilde{M}\rightarrow M$ the canonical projection. Moreover, it is possible to prove that $M$ is a manifold. Indeed, given a simple open neighborhood $U_{j}\subset Q$, consider the following bijection \begin{eqnarray*}
\Psi_{j}: \Pi(\widehat{P}^{-1}(U_{j})\times V)& \longrightarrow & U_{j}\times V \\ \Pi(\widehat{q},v) & \longmapsto & (\widehat{P}(\widehat{q})\times v).\end{eqnarray*} If $U_{i}\cap U_{j} \neq\emptyset$ and is connected, we can see that $$\Psi_{i}\cap\Psi_{j}^{-1}(q,v)=(q,\rho([\alpha]^{-1})v)$$ for a fixed $[\alpha]$. Hence there exists a unique manifold structure on $M$ for which $\Psi_{j}$ are local diffeomorphisms.

Define a map $P$ by \begin{eqnarray*} P: M &\longrightarrow& Q\\ \Pi(\widehat{q},v)& \longmapsto & \widehat{P}(\widehat{q}). \end{eqnarray*} It follows that $M$ is the total space of a fiber bundle, and $P$ is its projection over the base space $Q$. In addition, the fiber is $V$ and the structural group is given by the image of $\rho.$

Finally, define $\F=\{\Pi(\widehat{Q},v)\}$, i.e., the projection of the trivial foliation defined as the product of $\widehat{Q}$ with each $v$. It is possible to prove that this is a foliation transverse to the fibers. Furthermore, this foliation is a Riemannian foliation whose transverse metric coincides with the metric of $V$.

\begin{example}\label{example-MolinoConjectureSRFS}\index{Action!polar}
In what follows, we construct a s.r.f.s. such that the intersection of a local section with the closure of a regular leaf is an orbit of an action of a subgroup of isometries of the local section. This isometric action is not a polar action. Therefore, \emph{there exists a s.r.f.s. $\F$ such that the partition formed by the closure of leaves is a singular Riemannian foliation without sections.}

Let $V$ denote the product $\R^{2}\times\C\times\C$ and $\widehat{\F}_{0}$ the singular foliation of codimension $5$ on $V$, whose leaves are the product of points in $\C\times\C$ with circles in $\R^{2}$ centered in the origin. It is easy to see that the foliation $\widehat{\F}_{0}$ is a s.r.f.s. Let $Q$ be the circle $S^{1}$ and $k$ an irrational number. Define the homomorphism \begin{eqnarray*}
\rho:\pi_{1}(Q,q_{0}) &\longrightarrow &\Iso(V) \\
n &\longmapsto& \big((x,z_{1},z_{2})\mapsto(x,e^{ink}\cdot z_{1},e^{ink}\cdot z_{2})\big).\\
\end{eqnarray*}

Finally, set $\F=\Pi(\widehat{Q}\times\widehat{\F_{0}})$. It turns out that $\F$ is a s.r.f.s. and the intersection of the section $\Pi(0\times\R\times\C\times\C)$
with the closure of a regular leaf is an orbit of an isometric action on the section. This isometric action is not a polar action, since the following is not polar
\begin{eqnarray*}
S^{1}\times\C\times\C&\longrightarrow & \C\times\C\\
(s,z_{1},z_{2}) &\longmapsto &(s\cdot z_{1},s\cdot z_{2}).\\
\end{eqnarray*}
\end{example}

\section{Results on s.r.f.s.}

In this section, we give a few results of the theory of s.r.f.s., without proofs. First, Palais and Terng~\cite[Remark 5.6.8]{PalaisTerng} proposed a conjecture that can be formulated as follows.

\begin{ptconjecture}
Let $G$ be an isometric action of a compact Lie group on $M$, such that the distribution of the normal space to the regular orbit is integrable. Then there exists a complete totally geodesic immersed section of the action of $G$ that intersects all orbits perpendicularly.
\end{ptconjecture}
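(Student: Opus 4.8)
The plan is to construct the section in three stages: produce a totally geodesic integral leaf of the horizontal distribution over the regular part, extend it across the singular strata by an induction on $\dim M$ using the slice representation, and finally check that the resulting complete totally geodesic immersed submanifold meets every orbit orthogonally.

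First I would invoke Theorem~\ref{theorem-acaopropria-isometrica} to fix a $G$--invariant metric, so that $G\subset\Iso(M)$ and $\F=\{G(x)\}_{x\in M}$ is the singular Riemannian foliation by orbits; by Exercise~\ref{exercise-induce-effectiveaction} I may also assume the action effective. Fix a regular point $p$. On the open dense regular part $M_{\mathrm{reg}}$ the hypothesis provides an integrable distribution $\nu^{\mathrm{reg}}$, namely the normal spaces to the regular orbits; let $\Sigma_{0}$ be its maximal integral leaf through $p$. By Proposition~\ref{prop-equivariant-field}(i), a geodesic of $M$ orthogonal to one orbit stays orthogonal to every orbit it meets, so a geodesic issuing from a point of $\Sigma_{0}$ in a direction tangent to $\nu^{\mathrm{reg}}$ keeps its velocity in $\nu^{\mathrm{reg}}$ and hence remains in $\Sigma_{0}$. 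This shows at once that $\Sigma_{0}$ is totally geodesic in $M_{\mathrm{reg}}$ and that locally $\Sigma_{0}=\exp_{p}(\nu_{p}G(p))$.

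Next I would extend $\Sigma_{0}$ across the singular set and prove completeness. Let $q$ be a singular point in the closure of $\Sigma_{0}$. By the Slice Theorem~\ref{slicethm} and the Tubular Neighborhood Theorem~\ref{theorem-tubularneighborhood}, a $G$--invariant neighborhood of $G(q)$ is equivariantly $G\times_{G_{q}}S_{q}$ with $S_{q}=\exp_{q}(B_{\varepsilon}(0))$, $B_{\varepsilon}(0)\subset\nu_{q}G(q)$, and $\F$ restricted to the tube is $\exp_{q}$ of the partition of $B_{\varepsilon}(0)$ into orbits of the slice representation $G_{q}\to\O(\nu_{q}G(q))$. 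The decisive point is that this linear action again has integrable horizontal distribution on its regular part: this follows from integrability of $\nu^{\mathrm{reg}}$ on the tube, using that after the conformal change of metric in Molino's linearization (see~\cite{Molino}) the tube projection restricted to $S_{q}$ behaves like a Riemannian submersion over $S_{q}/G_{q}$. Since $\dim\nu_{q}G(q)<\dim M$, the inductive hypothesis gives a complete totally geodesic section $\widehat\Sigma\subset\nu_{q}G(q)$ through the relevant direction, which must be a linear subspace because sections of a linear representation through the origin are linear; the local model then forces $\Sigma_{0}\cap S_{q}=\exp_{q}(\widehat\Sigma\cap B_{\varepsilon}(0))$. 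Hence $\Sigma:=\overline{\Sigma_{0}}$ is, near $q$, an immersed totally geodesic submanifold of $M$ extending $\Sigma_{0}$; the base cases of the induction (trivial slice representation, or low--dimensional ambient spaces) are immediate. Globally, $\Sigma$ is totally geodesic, so its geodesics are geodesics of the complete manifold $M$ and, by Proposition~\ref{prop-equivariant-field}(i) read at regular points, never leave $\Sigma$; therefore $\Sigma$ is geodesically complete, i.e.\ complete by the Hopf--Rinow Theorem~\ref{hopfrinow}. To see $\Sigma$ meets every orbit: $G\cdot\Sigma$ is open (Tubular Neighborhood Theorem~\ref{theorem-tubularneighborhood} together with the parallel--orbit property, Proposition~\ref{prop-equivariant-field}(v)) and, using properness of the action and completeness of $\Sigma$, closed; hence it is all of $M$. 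Finally I would record $\Sigma$ as an abstract immersed complete totally geodesic submanifold, obtained by gluing $\exp_{p}(\nu_{p}G(p))$ along the singular set, intersecting all orbits perpendicularly, which is the desired section.

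The hard part will be the inductive step across the singular strata: one has to verify that integrability of the horizontal distribution genuinely descends to the slice representation --- this needs careful bookkeeping of the metric on the tube and of Molino's conformal linearization --- and that the locally constructed pieces of $\Sigma$ fit together into a single immersed complete totally geodesic submanifold, in particular that $\overline{\Sigma_{0}}$ does not acquire pathological singularities and that being totally geodesic survives in the limit. A secondary subtlety is the closedness of $G\cdot\Sigma$ when $\Sigma$ is merely immersed (and possibly dense away from the regular part, compare Example~\ref{example-MolinoConjectureSRFS}), where the argument should be run through the orbit space $M/G$ and its connectedness (Principal Orbit Theorem~\ref{teo-orbita-principal}).
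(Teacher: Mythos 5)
The paper does not prove this statement; it records the Palais--Terng conjecture and attributes its resolution to Heintze, Liu and Olmos (for isometric actions) and to Alexandrino (Theorem~\ref{theorem-PT-conjecture}, the singular Riemannian foliation case). So there is no in-text proof to compare against, but your sketch can still be assessed against what those works actually establish.

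Your overall strategy is the right one and echoes Heintze--Liu--Olmos: take the maximal integral leaf $\Sigma_0$ of the horizontal distribution on the regular part, use Proposition~\ref{prop-equivariant-field}(i) to see that horizontal geodesics stay tangent to it and hence that $\Sigma_0$ is totally geodesic in $M_{\mathrm{reg}}$, and then extend through the singular strata. The genuine gap is in the inductive step, which, as written, assumes what it must prove. You assert that integrability of $\nu^{\mathrm{reg}}$ descends to the slice representation on $\nu_q G(q)$, appealing to Molino's conformal linearization; but that linearization provides a \emph{foliated diffeomorphism} between the original foliation on the slice and the linearized one, not an isometry: the transverse metric changes, so integrability of an orthogonal complement is not automatically inherited. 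You then invoke ``sections of a linear representation through the origin are linear'' --- true for polar representations, but polarity of the slice representation is exactly what must be \emph{deduced} from the integrability hypothesis, not assumed, so the induction is circular as phrased. Finally, the passage from the local linear model to the claim that $\overline{\Sigma_0}$ is a smooth totally geodesic immersed submanifold across a singular point requires controlling the second fundamental form of $\Sigma_0$ as one approaches the singular orbit, to rule out blowup and nonsmooth closure; this estimate is where most of the substance of the Heintze--Liu--Olmos proof lies and cannot be dispatched by quoting the Tubular Neighborhood Theorem~\ref{theorem-tubularneighborhood}. Your closing paragraph correctly names these as ``the hard part,'' but as sketched the argument leaves them unresolved rather than reduced to checkable claims; a secondary point in the same vein is that Proposition~\ref{prop-equivariant-field}(i) controls the geodesic only while it remains in the regular stratum, so the completeness and ``stays in $\Sigma$'' claims must be re-established after each singular crossing rather than read off globally.
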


Heintze, Olmos and Liu~\cite{HOL} proved that Palais and Terng were right in their conjecture for isometric actions. In particular, they proved that the set of regular points is dense in each section. A result in Alexandrino~\cite{Alex4} gives a positive answer to this conjecture in the case of singular Riemannian foliations.

\begin{theorem}\label{theorem-PT-conjecture}
Let $\F$ be a singular Riemannian foliation on a complete Riemannian manifold $M$. Suppose that the distribution of normal spaces to the regular leaves is integrable. Then $\F$ is a s.r.f.s. In addition, the set of regular points is open and dense in each section.
\end{theorem}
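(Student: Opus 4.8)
The plan is to show that integrability of the normal distribution $\nu\F$ forces the leaf through a regular point to be equifocal, and then to invoke the equifocality machinery from Section 5.2 to reconstruct $\F$ as a foliation with sections. First I would fix a regular leaf $L$ with trivial holonomy; by the Principal Orbit Theorem analogue for Riemannian foliations (equifocality and the density of the set of such leaves), it suffices to treat a tubular neighborhood $\tub_\varepsilon(L)$. On $\tub_\varepsilon(L)$ the foliation is described by a Riemannian submersion $f$ onto a local quotient, and the integrability hypothesis says precisely that the horizontal distribution $\nu\F$ is integrable, hence by the Frobenius Theorem it is tangent to a foliation $\F^\perp$ whose leaves I want to show are totally geodesic and serve as sections.

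\textbf{Key steps, in order.} (1) Integrability plus the transnormality condition (ii) in Definition~\ref{definition-s.r.f}: a horizontal geodesic starting tangent to a leaf of $\F^\perp$ stays horizontal, so it stays in that leaf; conversely, since $\F^\perp$ is integrable and its leaves are unions of horizontal geodesics through each of their points, each leaf $\Sigma$ of $\F^\perp$ is totally geodesic. This is the crucial use of the hypothesis. (2) Show $\Sigma = \exp_p(\nu_p L_p)$ for $p$ regular: the inclusion $\exp_p(\nu_p L_p)\subset \Sigma$ is immediate from step (1) since $\Sigma$ is totally geodesic and contains $p$ with $T_p\Sigma = \nu_p L_p$; for the reverse inclusion one uses that $\dim\Sigma = \codim L = \dim\nu_p L_p$ and that $\exp_p$ restricted to $\nu_p L_p$ is, near $0$, a diffeomorphism onto an open piece of $\Sigma$, then one propagates along geodesics inside $\Sigma$ using completeness. (3) Orthogonality to all leaves: any leaf $L'$ that $\Sigma$ meets at a point $q$ meets it in a direction that, by transnormality applied to horizontal geodesics from a nearby regular point, is orthogonal to $T_q L'$; here one also needs that regular points are dense in $\Sigma$, which leads into the second assertion. (4) Density of regular points in each section: suppose not, so there is an open set $U\subset\Sigma$ of singular points; pick $q\in U$ and a regular point $p$ near $q$, join them by a minimizing geodesic, which is horizontal; equifocality (Remark after Definition~\ref{definition-equifocal}, i.e., the equifocality valid for all singular Riemannian foliations) lets one parallel-transport the normal space of $L_p$ along a curve in $L_p$ and conclude that a whole neighborhood of $p$ in $\Sigma$ consists of regular points, while sliding $q$ toward $p$ inside $\Sigma$ and using that the dimension of leaves is lower semicontinuous (Remark~\ref{re:dimsemicont}) gives a contradiction with the openness of $U$. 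Finally, openness of the regular set in $\Sigma$ is just lower semicontinuity of leaf dimension restricted to $\Sigma$.

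\textbf{Main obstacle.} The delicate point is step (3)–(4): establishing that the totally geodesic integral manifold $\Sigma$ actually meets \emph{every} leaf of $\F$ and does so orthogonally, not merely the regular ones. The regular case is handled directly by Frobenius plus transnormality, but reaching singular leaves requires a limiting argument: one approaches a singular leaf $L_q$ by regular leaves $L_{p_n}$ with $p_n\in\Sigma$, uses that $\Sigma$ is closed (completeness of the integral manifold) to get $q\in\Sigma$, and then invokes equifocality to see that the section through $p_n$ "converges" to $\Sigma$ so that $\Sigma$ is orthogonal to $L_q$ as well. Making this limiting step rigorous — controlling how the normal spaces $\nu_{p_n}L_{p_n}$ behave as $p_n\to q$ and identifying the limit with $\nu_q L_q$ suitably — is where the real work lies; the density statement in the section is both a consequence of and a tool for this argument, so the two assertions of the theorem are proved together rather than sequentially.
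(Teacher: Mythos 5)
The paper does not prove this theorem: Chapter~\ref{chap5} is a survey, and the result is quoted from Alexandrino (Geom.\ Dedicata, 2006), with earlier particular cases credited to Molino--Pierrot, Boualem, Lytchak--Thorbergsson and Szenthe. So there is no in-text argument to compare against; the proposal must stand on its own.

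On the open dense regular stratum $M_r$ your outline is sound and standard: $\nu\F$ is smooth there, transnormality keeps horizontal geodesics horizontal and hence inside the Frobenius integral manifold they start in, and this forces each such integral manifold to be totally geodesic within $M_r$. The genuine gap is everything past the regular stratum, and your ``main obstacle'' paragraph names it without closing it. To have a s.r.f.s.\ you need $\Sigma_p := \exp_p(\nu_p L_p)$ to be a complete immersed submanifold of $M$ meeting \emph{every} leaf orthogonally, including singular ones, where Frobenius gives you nothing: at a singular point $q$ one has $\codim L_q > \dim\Sigma_p$, so $\Sigma_p$ cannot meet $L_q$ transversally, and the proposed ``limit of $\nu_{p_n}L_{p_n}$ as $p_n\to q$'' has no well-defined target of the right rank. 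Your step~(4) also silently assumes what is missing: sliding $q$ toward $p$ ``inside $\Sigma$'' requires $\Sigma$ to be totally geodesic \emph{through the singular point $q$} (so as to contain the joining geodesic), which is precisely what has not been established; and density of $M_r$ in $M$ does not formally yield density of $M_r\cap\Sigma$ in $\Sigma$, since the leaves through a singular open subset of $\Sigma$ need not saturate an open subset of $M$. The proofs in the literature bridge exactly this chasm with a local model near the singular stratum --- the slice theorem for singular Riemannian foliations together with a homothetic transformation lemma (radial homotheties in a tubular neighborhood of a singular plaque preserve $\F$) --- which identify $\Sigma_p$ near $q$ with a linear cone inside the slice and deliver smoothness, orthogonality, and density of regular points in one stroke. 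Without some such tool, what you have is an outline rather than a proof.
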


\begin{remark}
Particular cases of the above result where proved by Molino and Pierrot~\cite{MolinoPierrot}, Boualem~\cite{Boualem} and Lytchak and Thorbergsson~\cite{LytchakThorbergsson}. Szenthe~\cite{Szenthe} also worked on conjecture for isometric actions.
\end{remark}

Henceforth, $\F$ denotes a s.r.f.s. on a complete Riemannian manifold $M$. The next result of Alexandrino~\cite{Alex2} relates s.r.f.s. to equifocal submanifolds. Recall Definition~\ref{definition-equifocal} for definitions of equifocal submanifolds and endpoint map $\eta_{\xi}$.

\begin{theorem}\label{frss-eh-equifocal}
Let $L$ be a regular leaf of a s.r.f.s. $\F$ of a complete Riemannian manifold $M$.
\begin{enumerate}
\item[(i)] $L$ is equifocal. In particular, the union of the regular leaves that have trivial normal holonomy is an open and dense set in $M$, provided that all the leaves are compact;
\item[(ii)] Let $\beta$ be a smooth curve of $L$ and $\xi$ a parallel normal field to $L$ along $\beta$. Then the curve $\eta_{\xi}\circ \beta$ is in a leaf of $\F$;
\item[(iii)] Suppose that $L$ has trivial holonomy and let $\Xi$ denote the set of all parallel normal fields on $L$. Then $\F=\{\eta_{\xi}(L)\}_{\xi\in \, \Xi}.$
\end{enumerate}
\end{theorem}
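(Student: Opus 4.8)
The plan is to establish the three assertions of Theorem~\ref{frss-eh-equifocal} in a dependent order, with equifocality of a regular leaf as the foundational step from which the others follow. First I would show that any regular leaf $L$ of a s.r.f.s. $\F$ is equifocal in the sense of Definition~\ref{definition-equifocal}. Conditions (i) and (ii) of that definition are essentially built into the setup: the normal bundle $\nu(L)$ is flat because, by Definition~\ref{defsrfs}, the sections $\Sigma=\exp_p(\nu_pL_p)$ through regular points are immersed submanifolds meeting the leaves orthogonally, and one checks using the transnormality condition (Definition~\ref{definition-s.r.f}(ii)) together with Proposition~\ref{prop-equivalencia-df-FR} that the Bott/basic connection on $\nu(L)$ has trivial local holonomy; moreover the sections are totally geodesic (this uses that horizontal geodesics stay horizontal and a standard argument that the normal distribution to the regular part is then autoparallel along $\Sigma$, cf. Theorem~\ref{theorem-PT-conjecture}). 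The substantive point is condition (iii): the end point map $\eta_\xi$ associated to a parallel normal field must have locally constant rank. The hard part will be precisely this rank statement — it is the analogue of the classical fact (Remark~\ref{remark-Ad-isoparametrica-curvaturas}) that for adjoint orbits the shape operators along equivariant normal fields are conjugate and hence have constant eigenvalues — and I would prove it by combining the holonomy map $\varphi_{[\beta]}$ of Definition~\ref{dfn-holonomy-map}, which is a local isometry of slices, with the transnormality of $\F$ to show that the focal data is carried along the leaf by this holonomy, so that $\operatorname{rank}\dd(\eta_\xi)_x$ cannot jump.

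Second, granting equifocality, I would deduce assertion (ii): if $\beta$ is a smooth curve in $L$ and $\xi$ a parallel normal field along $\beta$, then $\eta_\xi\circ\beta$ lies in a single leaf of $\F$. This is exactly the \emph{equifocality} property as formulated in Section~3 of the excerpt — the statement that $t\mapsto\exp_{\beta(t)}(\xi(t))$ stays inside the leaf $L_{\exp_{\beta(0)}(\xi(0))}$ — so here I would simply cite the equifocality discussion (valid for singular Riemannian foliations) and reduce the general curve case to the local case by subdividing $\beta$ into pieces lying in trivializing neighborhoods and composing the parallel translations $\|_\beta(\gamma)$ of Definition~\ref{paralleltranslation}. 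The only care needed is near singular points: one uses that the regular part is open and dense in each section (Theorem~\ref{theorem-PT-conjecture}) and a continuity/closedness argument to extend the conclusion across focal times.

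Third, for assertion (i)'s density clause and for (iii), I would argue as follows. Since $M$ is complete and (when all leaves are compact) each regular leaf has a tubular neighborhood trivialized as in Proposition~\ref{prop-equivalencia-df-FR}, the leaves with trivial normal holonomy form an open set; density follows because the normal holonomy group of a regular leaf is finite (compact leaves, discrete holonomy from local flatness of the Bott connection) and one can pass to nearby leaves whose holonomy is a quotient, a standard argument mimicking the Principal Orbit Theorem~\ref{teo-orbita-principal}. For (iii), fix a regular leaf $L$ with trivial holonomy and let $\Xi$ be the space of globally defined parallel normal fields on $L$; triviality of the holonomy means every normal vector extends to an element of $\Xi$, so $\{\eta_\xi(L):\xi\in\Xi\}$ covers a full tubular neighborhood of $L$. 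By (ii) each $\eta_\xi(L)$ is contained in a leaf, and a dimension count together with the fact that through each point of the tube there passes exactly one such parallel set shows $\eta_\xi(L)$ is an entire leaf; finally, since $L$ is regular with trivial holonomy, the set of such $L$ is dense, so every leaf of $\F$ arises this way, giving $\F=\{\eta_\xi(L)\}_{\xi\in\Xi}$. I expect the genuinely delicate estimate to remain the constant-rank property of $\eta_\xi$ in the first step; everything after that is bookkeeping with holonomy maps, trivializations, and the density of the regular stratum.
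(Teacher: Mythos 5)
The paper itself gives no proof of Theorem~\ref{frss-eh-equifocal}: the section in which it appears is explicitly introduced with the sentence ``we give a few results of the theory of s.r.f.s., without proofs,'' and the theorem is attributed to the external reference~\cite{Alex2}. So there is no in-paper argument against which to compare yours; what follows is therefore a critique of your outline on its own terms.

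Your general shape is reasonable, but there are two concrete problems. First, to obtain the constant-rank condition (iii) of Definition~\ref{definition-equifocal} you propose to transport focal data along $L$ by the holonomy maps $\varphi_{[\beta]}$ of Definition~\ref{dfn-holonomy-map}. That construction is, however, prefaced in the text by ``Due to equifocality of $\F$, there is an alternative definition of holonomy map'' --- i.e.\ it already presupposes the very property you want to establish, namely that the endpoint of a parallel-translated horizontal geodesic lands back on a slice. To avoid the circularity you must instead work with the classical holonomy maps of Remark~\ref{remark-defClassicaHolonomyMap}, on the regular stratum, where the foliation is genuinely Riemannian and these maps are local isometries of the transverse structure, and then give a separate argument pushing the rank statement across the singular set. (In Alexandrino's original argument this is where the Slice Theorem~\ref{slicetheoremrema} and its local isoparametric model enter; note that in the present paper's ordering the Slice Theorem is stated \emph{after} the theorem you are proving, so you cannot lean on it silently either.)

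Second, your argument for (iii) has a genuine gap. You show that $\{\eta_\xi(L):\xi\in\Xi\}$ fills out a tubular neighborhood of $L$ and then conclude ``since the set of such $L$ is dense, every leaf of $\F$ arises this way.'' But (iii) is a claim about a single fixed $L$: you must show that \emph{every} leaf of $\F$ equals $\eta_\xi(L)$ for this particular $L$, and density of trivial-holonomy leaves does not deliver that. The missing global step is a completeness argument: for any $q\in M$ take a minimizing geodesic $\gamma$ from $q$ to $L$; transnormality (Definition~\ref{definition-s.r.f}) forces $\gamma$ to be horizontal, so $\gamma$ lies in the section through its foot point $p'=\gamma(1)\in L$ and determines $\xi_0\in\nu_{p'}L$ with $\exp_{p'}(\xi_0)=q$. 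Triviality of the holonomy lets you extend $\xi_0$ to a global parallel normal field $\xi\in\Xi$, and part (ii) then places $q\in\eta_\xi(L)\subset L_q$. This shows $\bigcup_{\xi\in\Xi}\eta_\xi(L)=M$; a separate argument --- using the constant rank of $\eta_\xi$, the immersed-submanifold structure of $\eta_\xi(L)$ and the connectedness of $L_q$ --- is then needed to upgrade $\eta_\xi(L)\subset L_q$ to $\eta_\xi(L)=L_q$. Without this step the proof of (iii) does not close.
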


Note that this result guarantees that, given a regular leaf $L$ with trivial holonomy, it is possible to reconstruct $\F$ taking parallel submanifolds to $L$.

In order to state the next theorem, we recall the concepts of slice and local section. Let $q\in M$, and $\tub(P_{q})$ be a tubular neighborhood of a plaque $P_{q}$ that contains $q$. The connected component of $\exp_{q}(\nu P_{q})\cap \tub(P_{q})$ that contains $q$ is called a \emph{slice}\index{Slice}\index{$S_x$} at $q$ and is usually denoted $S_{p}.$ A \emph{local section}\index{Local section} $\sigma$ (centered at $q$) of a section $\Sigma$ is a connected component $\tub(P_{q})\cap\Sigma$ (that contains $q$).

Let us recall some results about the local structure of $\F$, in particular about the structure of the set of singular points in a local section. The next result of Alexandrino~\cite{Alex2} is a generalization of Theorem~\ref{theorem-slice-acaoconjugacao}.

\begin{slthm}\label{slicetheoremrema}
Let $\F$ be a s.r.f.s. on a complete Riemannian manifold $M$, and consider $q$ be a singular point of $M$ and $S_{q}$ a slice at $q$. Then
\begin{enumerate}
\item[(i)] Denote $\Lambda(q)$ the set of local sections $\sigma$ centered at $q$. Then $S_{q}=\bigcup_{\sigma\in\Lambda (q)}\sigma$;
\item[(ii)] $S_{x}\subset S_{q}$ for all $x\in S_{q}$;
\item[(iii)] $\F|S_q$ is a s.r.f.s. on $S_{q}$ with the induced metric of $M$;
\item[(iv)] $\F|S_q$ is diffeomorphic to an isoparametric foliation on an open subset of $\R^{n}$, where $n$ is the dimension of $S_{q}$.
\end{enumerate}
\end{slthm}

\begin{remark}
For the proof of this Slice theorem, see also Pi\~{n}eros~\cite{DiegoMestrado}.
\end{remark}

From (iv) of Theorem~\ref{slicetheoremrema} it is not difficult to derive the following corollary, in Alexandrino~\cite{Alex2}.

\begin{corollary}\label{estratificacao-singular}
Let $\sigma$ be a local section. The set of singular points of $\F$ contained in $\sigma$ is a finite union of totally geodesic hypersurfaces. These hypersurfaces are diffeomorphic to focal hyperplanes contained in a section of an isoparametric foliation on an open set of Euclidean space.
\end{corollary}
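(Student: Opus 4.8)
The plan is to deduce the statement from the Slice Theorem~\ref{slicetheoremrema}, transferring the whole question to the already understood local model of isoparametric foliations in Euclidean space.

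First I would localize around a singular point. If $\sigma$ contains no singular point of $\F$ the assertion is empty, so fix $q\in\sigma$ singular and, after shrinking $\sigma$ about $q$, assume $\sigma\subset S_q$ for a slice $S_q$ at $q$; by Theorem~\ref{slicetheoremrema}(ii) the construction is stable in the sense that $S_x\subset S_q$ for every $x\in S_q$. By Theorem~\ref{slicetheoremrema}(iv) there is a diffeomorphism $\Phi$ of $S_q$ onto an open subset $\Omega\subset\R^{n}$, $n=\dim S_q$, carrying $\F|_{S_q}$ to an isoparametric foliation $\widehat\F$ on $\Omega$. Being a foliated diffeomorphism, $\Phi$ sends sections to sections and singular points to singular points; thus $\widehat\sigma:=\Phi(\sigma)$ is an open piece of an affine subspace of dimension $k=\codim\F$ (a section of $\widehat\F$), and the set of singular points of $\F$ contained in $\sigma$ is $\Phi^{-1}$ of the set of singular points of $\widehat\F$ contained in $\widehat\sigma$.

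Next I would invoke the structure theory of isoparametric foliations (Terng~\cite{Terng}, Palais and Terng~\cite{PalaisTerng}, and Remark~\ref{remark-definition-isoparametric}): the focal set of $\widehat\F$ meets the section $\widehat\sigma$ in a locally finite family of affine hyperplanes of $\widehat\sigma$, the \emph{focal hyperplanes}, across which the associated Coxeter group acts by reflections; shrinking $\Omega$ to a relatively compact set makes this family finite. Pulling back by $\Phi$, the set of singular points of $\F$ in $\sigma$ is therefore a finite union of hypersurfaces, each diffeomorphic to a focal hyperplane sitting inside a section of the isoparametric foliation $\widehat\F$. This already yields the finiteness and the second assertion of the corollary (and, assembling over a cover, the global statement for a local section).

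The delicate point that remains is that these hypersurfaces $H\subset\sigma$ are totally geodesic \emph{for the metric induced from $M$}: the map $\Phi$ of Theorem~\ref{slicetheoremrema}(iv) is only a diffeomorphism (it straightens $\F$ after a change of metric), so totally geodesicity does not transfer for free. Here I would argue that each component $H$ lies in a single stratum of the orbit-/holonomy-type stratification and that the reflection of $\Omega$ across the corresponding focal hyperplane is realized, near $H$, by a genuine local isometry $r_H$ of $M$ fixing $H$ pointwise and preserving $\F$; this uses that the normal slice representation at points of $H$ is orthogonal together with equifocality of the regular leaves (Theorem~\ref{frss-eh-equifocal}). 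Since the fixed-point set of an isometric involution is totally geodesic (the argument of Proposition~\ref{proposition-conjuntofixo}) and $\sigma$ is itself totally geodesic in $M$, it follows that $H$ is a totally geodesic hypersurface of $M$. I expect the verification that this Coxeter reflection is an honest isometry of $M$, rather than merely of the Euclidean model handed to us by the Slice Theorem, to be the main obstacle of the proof.
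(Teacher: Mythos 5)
Your reading of the situation is accurate: the paper itself gives no proof, merely asserting that the corollary follows from item (iv) of the Slice Theorem~\ref{slicetheoremrema} and deferring to~\cite{Alex2}. Your plan of attack — transfer via the foliated diffeomorphism of (iv), read off the focal-hyperplane structure from the classical theory of isoparametric foliations, shrink to get finiteness — is the intended route, and you have correctly spotted the one genuinely delicate point that the paper's ``it is not difficult to derive'' glosses over: a diffeomorphism does not preserve total geodesicity, so the Euclidean model only gives the finiteness and the differential--topological description of the singular set, not the metric statement.

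Your proposed fix (realize the reflection across a wall of the singular stratification by an actual local isometry of $\sigma$ fixing the wall pointwise, then quote the fixed-point-set-is-totally-geodesic principle, as in Proposition~\ref{proposition-conjuntofixo}) is exactly the right idea, but it is worth being explicit that this is not extra input you have to construct from scratch: it is precisely the content of the singular holonomy machinery that the paper develops immediately after this corollary. Proposition~\ref{proposition-holonomia-singular} produces the singular holonomy maps $\varphi_{[\beta]}$ and shows they are \emph{isometries} between open subsets of local sections, and Proposition~\ref{propositionWisinvariant} identifies the orthogonal reflections across the walls of the singular stratification of $\sigma$ as elements of the Weyl pseudogroup $W_\sigma$ (hence isometries of $\sigma$). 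Once those are in hand, each hypersurface $H$ is the fixed-point set of an isometric involution of (an open piece of) the totally geodesic submanifold $\sigma\subset M$, hence totally geodesic in $\sigma$ and therefore in $M$. The only caveat is one of logical ordering in this survey: the two propositions are stated after the corollary here, so if you were writing this up you would either reorder, or (as in~\cite{Alex2}) note that the singular holonomy construction depends only on equifocality of regular leaves (Theorem~\ref{frss-eh-equifocal}) and on the local isoparametric model, not on the corollary itself, so there is no circularity. With that observation the ``main obstacle'' you flagged is filled, and your argument is complete.
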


We will call the set of singular points of $\F$ contained in $\sigma$ the \emph{singular stratification of the local section}\index{Singular stratification of local section} $\sigma$. Let $M_{r}$ denote the set of regular points in $M$. A \emph{Weyl chamber}\index{Weyl chamber} of a local section $\sigma$ is the closure in $\sigma$ of a connected component of $M_{r}\cap\sigma$ (compare with Definition~\ref{definitionWeylchamber}). It is possible to prove that a Weyl chamber of a local section is a convex set.

Theorem~\ref{frss-eh-equifocal} allows to define the \emph{singular holonomy map}, which will be very useful to study $\F$. This result is also in Alexandrino~\cite{Alex2}.

\begin{proposition}\label{proposition-holonomia-singular}
Let $\F$ be a s.r.f.s. on a complete Riemannian manifold $M$, and $q_0,q_1$ two points contained in a leaf $L_{q}$. Let $\beta:[0,1]\rightarrow L_{p}$ be a smooth curve contained in a regular leaf $L_{p}$, such that $\beta(i)\in S_{q_{i}},$ where $S_{q_{i}}$ is the slice at $q_{i}$ for $i=0,1$. Let $\sigma_{i}$ be a local section contained in $S_{q_{i}}$ that contains $\beta(i)$ and $q_{i}$ for $i=0,1$. Finally, let $[\beta]$ denote the homotopy class of $\beta$. Then there exists an isometry $\varphi_{[\beta]}:U_{0} \rightarrow U_{1}$, where the source $U_{0}$ and target $U_{1}$ are contained in $\sigma_{0}$ and $\sigma_{1}$ respectively,  which has the following properties:
\begin{enumerate}
\item[(i)] $q_{0}\in U_{0}$;
\item[(ii)] $\varphi_{[\beta]}(x)\in L_{x}$ for each $x\in U_{0}$;
\item[(iii)] $\dd\varphi_{[\beta]}\xi(0)=\xi(1),$ where $\xi(s)$ is a parallel normal field along $\beta(s)$.
\end{enumerate}
\end{proposition}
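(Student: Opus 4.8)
\textbf{Proof proposal for Proposition~\ref{proposition-holonomia-singular} (the singular holonomy map).}

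The plan is to reduce the singular statement to the already--available regular holonomy machinery together with equifocality. First I would choose small enough slices $S_{q_0},S_{q_1}$ and local sections $\sigma_i\subset S_{q_i}$ through $q_i$ and $\beta(i)$, so that the Slice Theorem~\ref{slicetheoremrema} applies at both $q_0$ and $q_1$: in particular $S_{q_i}=\bigcup_{\sigma\in\Lambda(q_i)}\sigma$ and $\F|_{S_{q_i}}$ is a s.r.f.s.\ on $S_{q_i}$. The first genuine step is to produce, from the piecewise smooth curve $\beta$ lying in the \emph{regular} leaf $L_p$, a holonomy diffeomorphism in the classical sense. Covering $\beta([0,1])$ by a chain of simple open sets trivializing $\F$ near $L_p$, I would compose the elementary sliding maps as in Remark~\ref{remark-defClassicaHolonomyMap} to obtain a germ of diffeomorphism $\varphi_{[\beta]}$ from a neighborhood $U_0\subset\sigma_0$ of $\beta(0)$ onto a neighborhood $U_1\subset\sigma_1$ of $\beta(1)$; because the Bott connection is locally flat this depends only on $[\beta]$. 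By Proposition~\ref{prop-equivalencia-df-FR}(iii) the transverse metric is invariant under sliding along plaques, so $\varphi_{[\beta]}$ is an isometry onto its image. This already gives property (ii) for points near $\beta(0)$ and property (iii) by the very construction of the holonomy via parallel transport $\|_\beta$ of Definition~\ref{paralleltranslation}.

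The main obstacle is that $q_0$ itself is a \emph{singular} point, so a priori it need not lie in the domain $U_0$, which was built around the regular point $\beta(0)$. To handle this I would invoke Theorem~\ref{frss-eh-equifocal} (equifocality): the regular leaf $L_p$ is equifocal, and parallel normal fields along curves in $L_p$ map, under the endpoint map $\eta_\xi$, into leaves of $\F$. Concretely, $q_0$ and $\beta(0)$ lie in the same slice $S_{q_0}$, hence $q_0=\exp_{\beta(0)}(\xi_0)$ for some $\xi_0\in\nu_{\beta(0)}L_p$ contained in $\sigma_0$; extending $\xi_0$ to the parallel normal field $\xi(s)$ along $\beta$ and using equifocality shows that the curve $s\mapsto\exp_{\beta(s)}(\xi(s))$ stays in a single leaf, namely $L_{q_0}=L_{q_1}$. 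Thus the parallel--translation description of $\varphi_{[\beta]}$, which a priori is defined on the regular points near $\beta(0)$, extends continuously (indeed smoothly, since $\eta_{\xi(\cdot)}$ depends smoothly on the initial data) to send $q_0$ to $\exp_{\beta(1)}(\xi(1))\in L_{q_1}\cap\sigma_1$. Choosing $U_0$ to be a small neighborhood of $q_0$ in $\sigma_0$ inside $S_{q_0}$ and using Slice Theorem~\ref{slicetheoremrema}(ii) ($S_x\subset S_{q_0}$ for $x\in S_{q_0}$) together with the invariance of the construction under $[\beta]$, I would then verify that the extended map is still an isometry onto an open set $U_1\subset\sigma_1$, giving (i).

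It remains to check (ii) and (iii) on all of $U_0$, not just near $\beta(0)$ and at $q_0$. For (ii) I would argue leafwise: for each $x\in U_0$ pick the minimal horizontal geodesic $\gamma$ from $\beta(0)$ to $x$ inside $\sigma_0$, transport it along $\beta$ to obtain $\|_\beta\gamma$ landing at $\varphi_{[\beta]}(x)$, and apply Theorem~\ref{frss-eh-equifocal}(ii) to the curve $\eta_{\xi}\circ\beta$ to conclude $\varphi_{[\beta]}(x)\in L_x$; the singular points $x\in U_0$ are covered by the same argument since the singular stratification of $\sigma_0$ (Corollary~\ref{estratificacao-singular}) consists of totally geodesic hypersurfaces, so minimal geodesics within $\sigma_0$ behave well, and equifocality still applies. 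For (iii), $\dd\varphi_{[\beta]}\xi(0)=\xi(1)$ is immediate from the fact that $\varphi_{[\beta]}$ was defined as parallel transport $\|_\beta$ with respect to the Bott connection, which by definition carries $\xi(0)$ to $\xi(1)$. Finally I would note that reducing $U_0$ if necessary makes all the above simultaneously valid, and that independence of the choice of chain of simple open sets (Remark~\ref{remark-defClassicaHolonomyMap}) shows the germ of $\varphi_{[\beta]}$ depends only on $[\beta]$, completing the proof. I expect the only delicate point to be the smooth extension across the singular stratum, where one must use equifocality and the local description $\F|_{S_q}\cong$ isoparametric foliation from Slice Theorem~\ref{slicetheoremrema}(iv) to see that no blow--up of the sliding maps occurs.
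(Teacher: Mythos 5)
The paper does not actually prove Proposition~\ref{proposition-holonomia-singular}: it is stated in a section that explicitly announces it will ``give a few results of the theory of s.r.f.s., without proofs,'' and the text defers to Alexandrino~\cite{Alex2}. So there is no paper proof to compare against; I can only assess your argument against the machinery the paper does develop.

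Your route --- building the germ of $\varphi_{[\beta]}$ by the classical plaque--sliding construction of Remark~\ref{remark-defClassicaHolonomyMap}, identifying it with parallel translation of horizontal geodesics for the Bott connection as in Definition~\ref{paralleltranslation} and Definition~\ref{dfn-holonomy-map}, then using equifocality (Theorem~\ref{frss-eh-equifocal}\,(ii)) to show the endpoint curves remain in single leaves, and the Slice Theorem~\ref{slicetheoremrema} to control what happens near $q_0,q_1$ --- is exactly the standard route in this theory, and the logic is sound. Property (ii) follows correctly from Theorem~\ref{frss-eh-equifocal}\,(ii), property (iii) is indeed tautological once $\varphi_{[\beta]}$ is realized as Bott-parallel translation, and the isometry property comes from the transverse-metric invariance of Proposition~\ref{prop-equivalencia-df-FR}\,(iii) together with density of regular points.

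One stylistic remark: the ``extend across the singular stratum'' framing is more work than you need. Definition~\ref{dfn-holonomy-map} already defines $\varphi_{[\beta]}$ by the recipe $x\mapsto\|_\beta\gamma(r)$ for \emph{any} $x$ in the slice $S_{\beta(0)}$, singular or not, since $\gamma$ is simply the minimal geodesic in the slice from $\beta(0)$ to $x$; smoothness in $x$ is automatic because both parallel transport in $\nu L_p$ and $\exp$ depend smoothly on data. So rather than constructing the germ near $\beta(0)$ first and then arguing an extension to $q_0$, you can define $\varphi_{[\beta]}$ on all of a neighborhood of $q_0$ in $\sigma_0$ in one step and use equifocality only to verify that the image lands in the correct leaf and in $\sigma_1$. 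What you wrote reaches the same place, just with an extra ``is the extension well-defined?'' worry that the direct definition makes moot. I would also make explicit that the parallel normal field along $\beta$ is well-defined precisely because the regular leaf $L_p$ has flat normal bundle (Theorem~\ref{frss-eh-equifocal}\,(i)); you use this implicitly when you extend $\xi_0$ to $\xi(s)$.
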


Such an isometry is called the \emph{singular holonomy map along $\beta$}.\index{Holonomy!singular map} We remark that, in the definition of the singular holonomy map, singular points can be contained in the domain $U_0.$ If the domain $U_0$ and the range $U_1$ are sufficiently small and $L_{q}$ is regular, then the singular holonomy map coincides with the usual holonomy map along $\beta$.

Theorem~\ref{slicetheoremrema} establishes a relation between s.r.f.s. and isoparametric foliations. Similarly, as in the usual theory of isoparametric submanifolds, it is
natural to ask if we can define a (generalized) Weyl group action on $\sigma$. The following definitions and results deal with this question.

\begin{definition}\label{definitionWeylPseudogroup}
The pseudosubgroup generated by all singular holonomy maps $\varphi_{[\beta]}$ such that $\beta(0)$ and $\beta(1)$ belong to the same local section $\sigma$ is called the \emph{generalized Weyl pseudogroup}\index{Weyl pseudogroup} of $\sigma$. Let $W_{\sigma}$ denote this pseudogroup. Analogously, define $W_{\Sigma}$ for a section $\Sigma$. Given a slice $S$, define $W_{S}$ as the set of all singular holonomy maps $\varphi_{[\beta]}$ such that $\beta$ is contained in $S$.
\end{definition}

\begin{proposition}\label{propositionWisinvariant}
Let $\sigma$ be a local section. The reflections across the hypersurfaces of the singular stratification of the local section $\sigma$ leave $\F|\sigma$ invariant. Moreover, these reflections are elements of $W_{\sigma}$.
\end{proposition}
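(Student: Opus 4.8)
The plan is to mimic, in the singular setting, the two ingredients that make the analogous statement work for the adjoint action in Theorem~\ref{proposition-W-generatedby-reflections} and Remark~\ref{remark-proposition-W-generatedby-reflections}: first, the Slice Theorem~\ref{slicetheoremrema}, which identifies $\F|\sigma$ locally with an isoparametric foliation on an open subset of Euclidean space, together with Corollary~\ref{estratificacao-singular}, which says that the singular points of $\F$ in $\sigma$ form a finite union of totally geodesic hypersurfaces diffeomorphic to focal hyperplanes of that isoparametric foliation; and second, the fact, recorded in Remark~\ref{remark-definition-isoparametric} and in Remark~\ref{remarkWeylchamber1}-style reasoning, that reflections across focal hyperplanes of an isoparametric foliation on Euclidean space preserve that foliation and are realized by holonomy (the slice representation of $G_x$ / the classical Coxeter group). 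So the first step is to fix a hypersurface $H$ in the singular stratification of $\sigma$ and a point $q\in H$, pass to a slice $S_q$, and use Theorem~\ref{slicetheoremrema}(iv) to get a diffeomorphism carrying $\F|S_q$ to an isoparametric foliation $\widehat\F$ on an open set of $\R^n$ with $H$ going to (an open piece of) a focal hyperplane.

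The second step is to produce the reflection as a singular holonomy map. For the isoparametric model this is classical: the reflection $\widehat\varphi$ across the focal hyperplane is an isometry preserving $\widehat\F$, and it equals a holonomy map obtained by sliding a regular leaf once around a small loop that links the focal hyperplane transversally (this is exactly the rank-one $\SO(3)$ or $\SU(2)$ computation of Theorem~\ref{theorem-roots-existencia}(vii), pushed through the diffeomorphism of Theorem~\ref{slicetheoremrema}(iv); alternatively one invokes the known structure of Coxeter groups of isoparametric submanifolds, e.g.\ Palais and Terng~\cite{PalaisTerng}). Transporting back via the diffeomorphism, one gets a local isometry of $\sigma$ fixing $H$ pointwise, acting as $-1$ on the normal direction, preserving $\F|\sigma$, and arising as a singular holonomy map $\varphi_{[\beta]}$ with $\beta$ a loop in a regular leaf contained in $S_q$; hence $\widehat\varphi\in W_\sigma$ by Definition~\ref{definitionWeylPseudogroup}. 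Because the reflection across $H$ is determined by its behavior near any one point of $H$ (it is an isometry of the totally geodesic $\sigma$ fixing the totally geodesic hypersurface $H$ and reversing the normal), the local statement near $q$ extends to the global reflection across $H$ in $\sigma$, and invariance of $\F|\sigma$ is a closed condition, so it propagates from a neighborhood of $q$ along $H$ and then across $\sigma$.

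The third step is bookkeeping: to check that the composite map one writes down really is a singular holonomy map in the sense of Proposition~\ref{proposition-holonomia-singular}, i.e.\ that the base curve $\beta$ can be taken inside a regular leaf with $\beta(0),\beta(1)$ in the same local section $\sigma$, using equifocality from Theorem~\ref{frss-eh-equifocal} and the slice structure from Theorem~\ref{slicetheoremrema}(ii)-(iii). Here one uses that a regular point of $\F|\sigma$ close to $H$ lies in a regular leaf $L$ of $\F$ with trivial holonomy (density, Theorem~\ref{frss-eh-equifocal}(i)), and that parallel transport of the geodesic joining such a point to $q$ along a loop linking $H$ realizes the reflection; the identity $\dd\varphi_{[\beta]}\xi(0)=\xi(1)$ then pins down that the map is the desired reflection on normal vectors.

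\textbf{Main obstacle.} The hard part is not the reflection computation itself (that is the classical isoparametric/rank-one fact) but verifying that the reflection, a priori only defined and shown to preserve $\F$ on the Euclidean model inside one slice $S_q$, is genuinely an element of the pseudogroup $W_\sigma$ on all of the local section $\sigma$, and that the pieces match consistently as $q$ varies over $H$ — in other words, globalizing the local isometry across the whole hypersurface $H$ and showing the invariance of $\F|\sigma$ does not break at points of $H$ lying in lower strata (where several hypersurfaces of the singular stratification meet). This is handled by the convexity of Weyl chambers of local sections and the compatibility of slices $S_x\subset S_q$ (Theorem~\ref{slicetheoremrema}(ii)), together with a connectedness argument of the same flavor as in the proof of Theorem~\ref{proposition-W-generatedby-reflections}; but it is the step that requires genuine care rather than quotation.
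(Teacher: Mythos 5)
First, a contextual note: the paper does not itself prove this proposition --- Section 5.4 is explicitly a collection of results stated ``without proofs'', and this one is credited to Alexandrino~\cite{Alex2}. So there is no author's argument here to compare against; I can only assess your outline on its own merits.

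Your chosen ingredients are the natural ones (Slice Theorem~\ref{slicetheoremrema}(iv), Corollary~\ref{estratificacao-singular}, the classical Coxeter structure of isoparametric foliations, and singular holonomy maps from Proposition~\ref{proposition-holonomia-singular}), but there is a genuine gap at the step ``transporting back via the diffeomorphism, one gets a local isometry of $\sigma$.'' The map produced by Theorem~\ref{slicetheoremrema}(iv) and Corollary~\ref{estratificacao-singular} is only a diffeomorphism, not an isometry, so pushing the Euclidean reflection through it yields a foliation-preserving local diffeomorphism of $\sigma$ that need not be an isometry and need not coincide with the geodesic reflection $r_H$. Indeed, that $r_H$ is an isometry of $\sigma$ is itself not for free: a totally geodesic hypersurface of a Riemannian manifold is generally not the fixed set of an isometric involution, so this is part of what must be proved. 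The argument should run in the opposite direction: take the singular holonomy map $\varphi_{[\beta]}$ of Proposition~\ref{proposition-holonomia-singular} for a loop $\beta$ in a nearby regular leaf that links $H$; this is an isometry of local sections preserving $\F|\sigma$ by construction, and the remaining task is to show that it fixes a piece of $H$ pointwise and reverses the normal direction --- and it is exactly there, as a computation of the parallel translation of normal vectors around $\beta$ using equifocality and the Euclidean local model, that the slice picture should enter, not as a metric-ignoring conjugation. Your closing globalization step (``invariance of $\F|\sigma$ is a closed condition, so it propagates \ldots\ across $\sigma$'') is also too thin: closedness alone gives no openness, and even once the local statement is correct one must ensure the relevant holonomy map is defined on enough of $\sigma$ away from $H$, which needs equifocality and the slice compatibility of Theorem~\ref{slicetheoremrema}(ii), not a purely topological remark.
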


\begin{remark}
Compare the above proposition, in Alexandrino~\cite{Alex2}, with (i) of Theorem~\ref{proposition-W-generatedby-reflections} and Remark~\ref{remark-proposition-W-generatedby-reflections}.
\end{remark}

Using the technique of suspensions, one can construct an example of a s.r.f.s. such that $W_{\sigma}$ is larger than the pseudogroup generated by the reflections across the hypersurfaces of the singular stratification of $\sigma$. On the other hand, a sufficient condition to ensure that both pseudogroups coincide is that the leaves of $\F$ have trivial normal holonomy and are compact. So it is natural to ask under which conditions we can guarantee that the normal holonomy of regular leaves is trivial. The next result, in Alexandrino and T\"oben~\cite{AlexToeben} and Alexandrino~\cite{Alex5} deals with this question.

\begin{theorem}\label{theorem-fundamental-domain}
Let $\F$ be a s.r.f.s. on a simply connected Riemannian manifold $M$. Suppose also that the leaves of $\F$ are closed and embedded. Then
\begin{enumerate}
\item[(i)] Each regular leaf has trivial holonomy;
\item[(ii)] $M/\F$ is a simply connected Coxeter orbifold;
\item[(iii)] Let $\Sigma$ be a section of $\F$ and $\Pi:M\rightarrow M/\F$ the canonical projection. Denote by $\Omega$ a connected component of the set of regular points in $\Sigma$. Then $\Pi:\Omega\rightarrow M_{r}/\F$ and $\Pi:{\overline{\Omega}}\to M/\F$ are homeomorphisms, where $M_r$ denotes the set of regular points in $M$. In addition, $\Omega$ is convex, i.e., for any two points $p$ and $q$ in $\Omega$, every minimal geodesic segment between $p$ and $q$ lies entirely in $\Omega$.
\end{enumerate}
\end{theorem}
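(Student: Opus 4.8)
The plan is to break the proof of Theorem~\ref{theorem-fundamental-domain} into the three items, treating (i) and (ii) as the technical heart and (iii) as a consequence of the orbifold structure together with standard Coxeter-group geometry. First I would establish (i). The key tool is the singular holonomy map from Proposition~\ref{proposition-holonomia-singular} together with Theorem~\ref{frss-eh-equifocal}: a regular leaf $L$ with trivial holonomy reconstructs $\F$ via parallel normal fields $\eta_\xi(L)$. Fix a regular leaf $L_p$ and a loop $\beta$ based at $p$; I want to show the associated holonomy map $\varphi_{[\beta]}$ on a local section $\sigma$ is the identity germ. Because $M$ is simply connected, $\beta$ bounds a disc, and I would use the fact that the holonomy depends only on $[\beta]\in\pi_1(L_p)$ together with the long exact homotopy sequence of the ``fibration-like'' decomposition $M_r \to M_r/\F$ (more precisely, $\F$ restricted to $M_r$ is a Riemannian foliation with $M_r/\F \cong \Omega \cong$ an open convex subset of $\Sigma$, by Proposition~\ref{prop-M/F-orbifold} and Theorem~\ref{theorem-PT-conjecture}-type arguments). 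Simple-connectedness of $M$ forces every regular leaf to have trivial holonomy, provided one controls what happens when $\beta$ is pushed across singular strata; here Proposition~\ref{propositionWisinvariant} and the local model of Theorem~\ref{slicetheoremrema}(iv) (the foliation near a singular point looks like an isoparametric foliation on an open set of $\R^n$, whose leaves are known to have trivial normal holonomy in the relevant range) let me conclude that singular holonomy along a contractible loop is trivial.

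Next, for (ii), I would show $M/\F$ is a Coxeter orbifold. Combining Theorem~\ref{slicetheoremrema} with Corollary~\ref{estratificacao-singular}, the singular stratification of each local section $\sigma$ is a locally finite union of totally geodesic hypersurfaces, and by Proposition~\ref{propositionWisinvariant} the reflections across these hypersurfaces lie in $W_\sigma$ and preserve $\F|\sigma$. Since part (i) gives trivial holonomy for regular leaves and the leaves are closed and embedded, $W_\sigma$ is globally generated by these reflections (this is exactly the ``sufficient condition'' alluded to in the paragraph before the theorem: trivial normal holonomy plus compact leaves makes $W_\sigma$ coincide with the reflection group). Gluing the local sections along the holonomy pseudogroup, and using Proposition~\ref{prop-M/F-orbifold}, identifies $M/\F$ with $\Sigma/W_\Sigma$ where $W_\Sigma$ is a reflection group — i.e., a Coxeter orbifold. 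Simple connectedness of the orbifold then follows from simple connectedness of $M$: a loop in $M/\F$ lifts (away from codimension-$\geq 2$ singular strata, which do not affect $\pi_1^{\mathrm{orb}}$ beyond the reflections) to a path in $M$ whose endpoints lie in the same leaf; using (i) and the fact that the section meets regular leaves in a convex set, one contracts it.

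For (iii), the plan is mostly to harvest consequences. The section $\Sigma$ is complete and totally geodesic (Definition~\ref{defsrfs}), and $W_\Sigma$ acts on it as a Coxeter reflection group whose fundamental domain is the closure $\overline{\Omega}$ of a connected component of the regular set — a Weyl chamber in the sense of the discussion after Corollary~\ref{estratificacao-singular}, which I already noted is convex. Because every leaf meets $\Sigma$ (equifocality, Theorem~\ref{frss-eh-equifocal}) and meets $\overline{\Omega}$ exactly once (standard fundamental-domain property of Coxeter groups, proved as in Theorem~\ref{proposition-W-generatedby-reflections}(ii) but now in the section $\Sigma$ rather than in $\mathfrak t$), the maps $\Pi|_\Omega:\Omega\to M_r/\F$ and $\Pi|_{\overline\Omega}:\overline\Omega\to M/\F$ are continuous bijections; properness of the foliation's orbit map makes them homeomorphisms. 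Convexity of $\Omega$ is a Coxeter-geometry fact: $\Omega$ is an intersection of open half-spaces (sides of the reflection hyperplanes) in the totally geodesic, hence intrinsically flat-or-symmetric, section.

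The main obstacle I expect is step (i) combined with the globalization in step (ii): carefully pushing the holonomy argument through the singular strata and proving that the local reflection pseudogroups $W_\sigma$ patch to an \emph{honest group} $W_\Sigma$ acting on all of $\Sigma$. This is where simple connectedness of $M$ and closedness/embeddedness of the leaves are essential, and where a naive argument can fail — one needs the equifocality of Theorem~\ref{frss-eh-equifocal} to propagate parallel normal fields globally, and the slice theorem's isoparametric model to handle neighborhoods of singular leaves. I would therefore spend most of the write-up there, citing Alexandrino and T\"oben~\cite{AlexToeben} and Alexandrino~\cite{Alex5} for the delicate pseudogroup-to-group passage, and keep (iii) brief.
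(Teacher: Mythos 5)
The paper does not prove Theorem~\ref{theorem-fundamental-domain}. Chapter~\ref{chap5} is explicitly a survey (``In this section, we give a few results of the theory of s.r.f.s., without proofs.''), and this theorem is quoted from Alexandrino and T\"oben~\cite{AlexToeben} and Alexandrino~\cite{Alex5} with no argument supplied, only the subsequent remark asking the reader to compare it with Theorems~\ref{theorem-OrbitaPrincipal=OrbitaRegular} and~\ref{proposition-W-generatedby-reflections}. There is therefore no in-paper proof against which your attempt can be measured, and the honest answer is that you, like the paper, would ultimately defer to those references for the hard step.

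On the mathematics of your sketch: the toolkit you assemble (the Slice Theorem~\ref{slicetheoremrema} with its isoparametric local model, equifocality from Theorem~\ref{frss-eh-equifocal}, the singular holonomy of Proposition~\ref{proposition-holonomia-singular}, and the Weyl pseudogroup of Definition~\ref{definitionWeylPseudogroup}) is the right one and consistent with how the cited papers proceed, and you correctly identify the main obstacle. Two points need repair, though. In (i), your ``long exact homotopy sequence'' argument conflates $\pi_1(M)$ with $\pi_1(M_r)$: deleting the singular set typically enlarges the fundamental group, so the fact that a loop $\beta\subset L_p$ bounds a disc in $M$ is not enough — that disc will generally cross singular leaves, and one must control exactly what happens to the (singular) holonomy as the disc passes through the singular stratification. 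Invoking the isoparametric local model is necessary but does not by itself yield triviality of holonomy; this is the content of the delicate argument in~\cite{AlexToeben}, not something the model hands you for free. In (iii), the claim that $\Omega$ is convex because it is ``an intersection of open half-spaces in the totally geodesic, hence intrinsically flat-or-symmetric, section'' is not correct as stated: a section $\Sigma$ of a s.r.f.s.\ is merely complete and totally geodesic, and need not be flat or symmetric, so ``half-space'' has no intrinsic meaning there. Convexity of a Weyl chamber in $\Sigma$ is a theorem, proved by a reflection-and-shortening argument using $W_\Sigma$, rather than a free consequence of Coxeter geometry.
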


\begin{remark}
Compare the above result with Theorems~\ref{theorem-OrbitaPrincipal=OrbitaRegular} and~\ref{proposition-W-generatedby-reflections}.
\end{remark} 

To conclude this section, we discuss a result that describes the behavior of a s.r.f.s. whose leaves are not embedded. Molino~\cite{Molino} proved that, if $M$ is compact, the closure of the leaves of a (regular) Riemannian foliation forms a partition of $M$ that is a singular Riemannian foliation. He also proved that the leaf closure are orbits of a locally constant sheaf of germs of (transversal) Killing fields. Furthermore, a conjecture is left in the same book.

\begin{mconj}
Let $\F$ be a singular Riemannian foliation on a compact Riemannian manifold $M$. Then the closure of the leaves of $\F$ forms a partition of $M$ that is also a singular Riemannian foliation.
\end{mconj}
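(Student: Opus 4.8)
The plan is to reduce the statement to the already-established structure theory of singular Riemannian foliations with sections, combined with Molino's original theorem on the regular case. First I would recall Molino's theorem for regular Riemannian foliations: if $\F_0$ is a regular Riemannian foliation on a compact manifold, then the partition $\overline{\F_0}$ by leaf closures is a singular Riemannian foliation, and moreover the leaf closures are orbits of a locally constant sheaf $\mathcal{C}$ of germs of transversal Killing vector fields (the \emph{Molino sheaf}). The strategy for the general (singular) case is to work stratum by stratum on the canonical stratification of $M$ by orbit-type-like strata of $\F$, using the fact (analogous to Theorem~\ref{teo-stratification} in the proper-action setting) that each stratum $\Sigma_i$ is a locally closed submanifold on which $\F$ restricts to a \emph{regular} Riemannian foliation $\F|_{\Sigma_i}$. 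Applying the regular Molino theorem on each stratum produces leaf closures within each stratum; the main work is then to show that these fit together across strata into a genuine singular Riemannian foliation of $M$.

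The key steps, in order, would be: (1) Set up the stratification of $M$ induced by $\F$ (strata of leaves of a given dimension, refined into connected components), verifying local finiteness and that each stratum carries an induced regular Riemannian foliation with respect to the restricted metric; this uses the local model near a singular leaf, e.g. a slice theorem of the type in Theorem~\ref{slicetheoremrema} for the s.r.f.s.\ case, or the general linearization of s.r.f.\ near a point after a metric change mentioned in the comments following Theorem~\ref{teo-stratification}. (2) On the top (regular) stratum $M_r$, apply Molino's regular theorem to get the closure foliation $\overline{\F|_{M_r}}$ and the Molino sheaf of transversal Killing fields. (3) Show that the transversal Killing fields extend continuously (as sections of a sheaf of germs of foliated vector fields) across the singular strata, so that their orbit closures define a partition of all of $M$; here one exploits that Killing fields tangent to leaf closures on $M_r$, being bounded on the compact $M$, extend over the lower-dimensional strata, and their flows preserve $\F$. (4) Verify the two defining properties of an s.r.f.\ for the resulting partition $\overline{\F}$: singularity (the module of smooth vector fields tangent to leaf closures acts transitively on each leaf closure — this follows from the sheaf being generated by smooth, or at least locally Lipschitz, Killing fields together with the original $\singularF$) and transnormality (geodesics perpendicular to one leaf closure stay perpendicular — this is inherited from the transnormality of $\F$ itself, since $\nu(\overline{L})\subset\nu(L)$ at regular points and one argues by continuity and density of regular points, using Hopf--Rinow Theorem~\ref{hopfrinow} for completeness of geodesics).

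The hard part will be step (3): controlling the behavior of the Molino sheaf of transversal Killing fields as one approaches and crosses a singular stratum. On $M_r$ the Killing fields are only \emph{transversal} (defined on the normal bundle, or on the local quotients), and it is not a priori clear that their orbit closures have constant dimension along a singular leaf, nor that the partition they define is locally equidistant near singular points. I would attack this by passing to the local normal model at a singular point $q$ — where $\F$ looks like (a metric perturbation of) the product of a homogeneous foliation on $\nu_q L_q$ with the foliation on $L_q$ — reducing to understanding leaf closures of a homogeneous singular Riemannian foliation on Euclidean space, and then globalizing via a partition-of-unity / sheaf-gluing argument. A secondary technical obstacle is that leaf closures of $\F$ itself (not of the regular restriction) must be shown to be the right objects; one must check that $\overline{L}\cap M_r$ is a leaf closure of $\F|_{M_r}$ and that no ``extra'' closure phenomena occur on the singular set, which again reduces to the linear local model. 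Everything else — local finiteness of the stratification, completeness arguments, transnormality by density — should follow routinely from the results assembled in Chapters~\ref{chap3} and~\ref{chap5}.
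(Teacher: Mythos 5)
The statement you have been asked to prove is labeled a \emph{Conjecture} in the text, not a theorem, and the paper offers no proof of it. It explicitly records the state of the art: Molino showed that the leaf-closure partition is a \emph{transnormal system}, but whether it is also a \emph{singular foliation} (i.e.\ whether $\mathcal{X}_{\overline{\F}}$ acts transitively on each leaf closure) ``still remains an open problem.'' The only proved statement is Theorem~\ref{theorem-Molino-conjecture}, which settles the conjecture \emph{under the extra hypothesis} that $\F$ is a singular Riemannian foliation \emph{with sections}. So there is no paper proof to compare yours against, and your proposal is attempting something strictly stronger than anything established in the text.

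The gap in your outline is exactly the step you yourself flag as hard, namely step (3), and your proposed attack on it is a restatement of the problem rather than an argument. Concretely: the Molino sheaf consists of germs of \emph{transversal} Killing fields defined on local transversals over the regular stratum; they are not vector fields on $M$, so ``being bounded on the compact $M$'' is not even a well-formed hypothesis for extending them, and there is no automatic continuous (let alone $C^{\infty}$-smooth) extension of such germs across the singular strata. Showing that the sheaf, and hence the infinitesimal description of leaf closures, extends coherently to the singular set is precisely the content that was missing; linearizing near a singular point and then gluing is the right circle of ideas, but the linearized model is only a \emph{singular Riemannian foliation on Euclidean space}, for which the conjecture is itself not established (the s.r.f.s.\ case handled by Theorem~\ref{slicetheoremrema} and Theorem~\ref{theorem-Molino-conjecture} does not cover a general singular Riemannian foliation), so the reduction is circular. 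A further subtlety you only partially acknowledge is that a leaf closure $\overline{L}$ of a regular leaf is \emph{not} contained in the regular stratum $M_r$: it typically meets lower strata, so stratumwise application of the regular Molino theorem yields closures in each $\Sigma_i$ whose union must be shown to coincide with the closures taken in $M$; this matching is nontrivial and uses exactly the cross-strata control that is missing. In short, your steps (1), (2), and the transnormality part of (4) are consistent with what the paper records as known, but step (3) and the ``singular foliation'' half of step (4) are the open content, and the proposal does not close them.
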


Molino~\cite[Theorem 6.2]{Molino} was able to prove that the closure of the leaves is a \emph{transnormal system}, however it still remains an open problem to prove that $\F$ is a singular foliation. The next result, in Alexandrino~\cite{Alex4}, gives a positive answer to Molino's conjecture if $\F$ is a s.r.f.s.

\begin{theorem}\label{theorem-Molino-conjecture}
Let $\F$ be a s.r.f.s. on a complete Riemannian manifold $M$.
\begin{enumerate}
\item[(i)] The closure $\{\overline{L}\}_{L\in\F}$ of the leaves of $\F$ is a partition of $M$ which is a singular Riemannian foliation;
\item[(ii)] Each point $q$ is contained in an homogeneous submanifold $\TO_{q}$ (possibly $0$--dimensional). If we fix a local section $\sigma$ that contains $q$, then $\TO_{q}$ is a connected component of an orbit of the closure of the Weyl pseudogroup of $\sigma$;
\item[(iii)] If $q$ is a point of the submanifold $\overline{L},$ then a neighborhood of $q$ in $\overline{L}$ is the product of the homogeneous submanifold $\TO_{q}$ with plaques with the same dimension of the plaque $P_{q}$;
\item[(iv)] Let $q$ be a singular point and $T$ the intersection of the slice $S_{q}$ with the singular stratum that contains $q$. Then the normal connection of $T$ in $S_{q}$ is flat;
\item[(v)] Let $q$ be a singular point and $T$ as in \emph{(iv)}. Consider $v$ a parallel normal vector field along $T$, $x\in T$ and $y=\exp_{x}(v)$. Then $\TO_{y}=\eta_{v}(\TO_{x})$.
\end{enumerate}
\end{theorem}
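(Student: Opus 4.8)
The plan is to reduce everything to the local picture near a point $q$ and to exploit the two structural pillars already available: the Slice Theorem~\ref{slicetheoremrema}, which identifies $\F|S_q$ with an isoparametric foliation on an open subset of $\R^n$, and the singular holonomy maps of Proposition~\ref{proposition-holonomia-singular}, together with the generalized Weyl pseudogroup $W_\sigma$ of Definition~\ref{definitionWeylPseudogroup}. First I would fix a local section $\sigma$ through $q$ and consider the closure $\overline{W_\sigma}$ of the Weyl pseudogroup acting on $\sigma$. Since $W_\sigma$ is a pseudogroup of isometries of $\sigma$ generated (in the embedded, compact-leaf case) by reflections but in general possibly larger, its closure is again a pseudogroup of isometries, and by the standard structure theory of closures of isometry pseudogroups (à la Molino, and Proposition~\ref{prop-M/F-orbifold} on the orbifold side) the orbits of $\overline{W_\sigma}$ are homogeneous submanifolds of $\sigma$. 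Define $\TO_q$ to be the connected component through $q$ of the $\overline{W_\sigma}$-orbit of $q$; this gives the homogeneous submanifold in (ii). The key point is that, because each $\varphi_{[\beta]}\in W_\sigma$ maps $x$ into $L_x$ (property (ii) of Proposition~\ref{proposition-holonomia-singular}), the $W_\sigma$-orbit of $x$ lies inside a single leaf; passing to the closure, the $\overline{W_\sigma}$-orbit of $x$ lies inside $\overline{L_x}$. Conversely, the equifocality statement of Theorem~\ref{frss-eh-equifocal} shows that $\overline{L_q}\cap\sigma$ is contained in the closure of the $W_\sigma$-orbit of $q$, so $\overline{L_q}\cap\sigma=\TO_q$ locally, which is the heart of (ii).

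Next I would establish the product structure (iii). Working in a tubular neighborhood $\tub(P_q)$ with the distinguished submersion onto the plaque, the foliation $\F$ restricted there looks like $P_q\times (\F|S_q)$ up to a metric that is a Riemannian submersion metric; since $\F$ is transnormal, the leaves through points of $S_q$ are ``horizontal lifts'' of the plaques, and taking closures commutes with this product decomposition. Hence a neighborhood of $q$ in $\overline{L_q}$ is diffeomorphic to $P_q\times\TO_q$, with the plaque factor of the same dimension as $P_q$; this also immediately gives that $\{\overline L\}_{L\in\F}$ is a singular foliation in the sense of Definition~\ref{definition-s.r.f}(i), because local vector fields tangent to the $P_q$-directions and to $\TO_q$ span the tangent space of $\overline{L_q}$ at $q$ and can be chosen smooth. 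Transnormality of the partition $\{\overline L\}$, i.e.\ (i), then follows from transnormality of $\F$ together with the fact that a horizontal geodesic perpendicular to a leaf is perpendicular to its closure (closures are unions of leaves, and the perpendicularity condition is closed); this step uses Proposition~\ref{prop-equivalencia-df-FR}(iii), the Killing-type characterization of the transverse metric, to check that the Lie derivative condition persists after taking leaf closures.

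For (iv) and (v), I would restrict attention to the slice $S_q$ at a singular point and let $T$ be the intersection of $S_q$ with the singular stratum through $q$ (a totally geodesic submanifold, by Proposition~\ref{proposition-conjuntofixo} applied in the isoparametric model of Theorem~\ref{slicetheoremrema}(iv) and Corollary~\ref{estratificacao-singular}). Via the diffeomorphism of $\F|S_q$ with an isoparametric foliation on an open set of $\R^n$, the stratum $T$ corresponds to an intersection of focal hyperplanes, which is an affine subspace; its normal bundle in $\R^n$ is obviously flat and globally flat, so the normal connection of $T$ in $S_q$ is flat, giving (iv). Statement (v) is then the equifocality/parallel-transport compatibility: for a parallel normal field $v$ along $T$ and $y=\exp_x(v)$, the singular holonomy map along a path in $T$ carries $\TO_x$ to $\TO_y$, and since parallel translation of $v$ intertwines the Weyl pseudogroups at $x$ and at $y$ (property (iii) of Proposition~\ref{proposition-holonomia-singular}, $\dd\varphi_{[\beta]}\xi(0)=\xi(1)$), the endpoint map $\eta_v$ sends the $\overline{W}$-orbit $\TO_x$ onto $\TO_y$.

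The main obstacle I anticipate is (ii): proving rigorously that the closure $\overline{W_\sigma}$ of the Weyl pseudogroup is itself a pseudogroup of isometries whose orbits are embedded homogeneous submanifolds, and that these orbits are exactly the slices of the leaf-closure partition with the section. This requires a careful local compactness/convergence argument for singular holonomy maps — essentially an analogue of Molino's sheaf-of-Killing-fields theorem adapted to the singular, sectioned setting — and controlling how the pseudogroup pieces glue across adjacent local sections using Proposition~\ref{propositionWisinvariant} and the surgery-free local normal form. Once that is in place, the product structure (iii), the foliation property (i), and the stratum statements (iv), (v) follow more routinely from the isoparametric slice model and equifocality.
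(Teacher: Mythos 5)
The paper does not actually prove this theorem. Chapter~\ref{chap5} is explicitly a survey, the section containing this statement says it presents results ``without proofs,'' and Theorem~\ref{theorem-Molino-conjecture} is quoted from Alexandrino~\cite{Alex4}; so there is no internal argument against which to compare your attempt.

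Assessed on its own terms, your sketch assembles the right tools --- the Slice Theorem~\ref{slicetheoremrema}, singular holonomy maps (Proposition~\ref{proposition-holonomia-singular}), the Weyl pseudogroup $W_\sigma$, and equifocality --- and the overall plan (localize, describe the local leaf closure in a section as an orbit of $\overline{W_\sigma}$, then deduce product structure and transnormality) has the right shape. But the gap you flag at the end is not a side technicality: it is the theorem. Establishing that the local closure $\overline{W_\sigma}$ is governed by a locally constant sheaf of germs of transversal Killing fields, so that its orbits are embedded homogeneous submanifolds varying smoothly and gluing compatibly across neighboring local sections and slices, is the singular analogue of Molino's sheaf theorem, and nothing in Chapters~\ref{chap3}--\ref{chap5} supplies it; items (i), (iii), and (v) all rest on it. Two further points need care. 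The equality $\overline{L_q}\cap\sigma=\TO_q$ is not a free consequence of equifocality: singular holonomy readily gives $L_q\cap\sigma$ equal to the local $W_\sigma$-orbit of $q$, hence one containment after closing up in $\sigma$, but a sequence in $L_q$ accumulating at a point of $\sigma$ need not lie in $\sigma$, and pushing it there uses the local equidistance/product normal form --- i.e.\ you risk circularity with (iii) unless the order of steps is arranged carefully. And for (iv), the map in Theorem~\ref{slicetheoremrema}(iv) is a foliated diffeomorphism, not an isometry, so flatness of the normal connection of $T$ in $(S_q, g|_{S_q})$ cannot be read off the Euclidean isoparametric model; it must be obtained intrinsically, e.g.\ by parallel-transporting normal vectors along $T$ inside $S_q$ via equifocality and item (iii) of Proposition~\ref{proposition-holonomia-singular}.
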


The above theorem is illustrated in Example~\ref{example-MolinoConjectureSRFS}.

\section{Transnormal maps}

In the last section, s.r.f.s. was presented as the natural candidate to generalize isoparametric foliations on Euclidean spaces and, in particular, orbits of adjoint actions. Nevertheless, there exists another possible way to try to generalize them. We can consider preimages of special maps, so--called \emph{isoparametric} maps. In fact, Cartan~\cite{Cartan1,Cartan2,Cartan3,Cartan4}, Harle~\cite{Harle}, Terng~\cite{Terng} and Wang~\cite{Wang1} have chosen this approach.

We start by recalling the definition of isoparametric and transnormal map.

\begin{definition}
Let $M^{n+q}$ be a complete Riemannian manifold. A smooth map $H=(h_{1},\dots,h_{q}):M^{n+q}\rightarrow \R^{q}$ is called \index{Transnormal map}\emph{transnormal} if
\begin{enumerate}
\item[(i)] $H$ has a regular value;
\item[(ii)] For each regular value $c$, there exist a neighborhood $V$ of $H^{-1}(c)$ in $M$ and smooth functions $b_{ij}$ on $H(V)$ such that, for every $x\in V,$ $$\langle \grad h_{i}(x),\grad h_{j}(x) \rangle = b_{ij}\circ H(x);$$
\item[(ii)] There exists a sufficiently small neighborhood of each regular level set such that $[\grad h_{i},\grad h_{j}]$ is a linear combination of $\grad h_{1},\dots,\grad h_{q}$, with coefficients being functions of $H$, for all $i$ and $j$.
\end{enumerate}
In particular, a transnormal map $H$ is said to be \emph{isoparametric}\index{Isoparametric!map} if $V$ can be chosen to be $M$ and $\Delta h_{i}= a_{i}\circ H,$ where $a_{i}$ is a smooth function.
\end{definition}

\begin{remark}
This definition is equivalent to saying that $H$ has a regular value, and for each regular value $c$ there exists a neighborhood $V$ of $ H^{-1}(c)$ in $M$ such that $H|_V\rightarrow H(V)$ is an integrable Riemannian submersion, where the Riemannian metric $g_{ij}$ of $H(V)$ is the inverse matrix of $b_{ij}$.
\end{remark}

It is known (see Palais and Terng~\cite{PalaisTerng}) that given an isoparametric map $H$ on a space form and a regular value $c$, the regular level set $H^{-1}(c)$ is an isoparametric submanifold. The next theorem of Alexandrino~\cite{Alex1} is a generalization of this result.

\begin{theorem}\label{TransormalMaptheorem}
Let $H:M\rightarrow \R^{q}$ be an analytic transnormal map on a real analytic complete Riemannian manifold $M$. Let $c$ be a regular value and $L\subset H^{-1}(c)$ be a connected component of $H^{-1}(c)$. Denote $\Xi$ the set of all parallel normal fields along $L$.
\begin{enumerate}
\item[(i)] $\F_{c,L}=\{\eta_{\xi}(L)\}_{\xi\in\Xi}$ is a s.r.f.s. whose leaves are embedded;
\item[(ii)] For each regular value $\widehat{c}$, the connected components of $H^{-1}(\widehat{c})$ are equifocal manifolds and leaves of $\F_{c,L}$;
\item[(iii)] $\F_{c,L}$ is independent of the choice of $c$ and $L$, i.e., for another regular value $\widetilde{c}$ and connected component $\widetilde{L}\subset H^{-1}(\widetilde{c})$, $\F_{c,L}=\F_{\widetilde{c},\widetilde{L}}$.
\end{enumerate}
\end{theorem}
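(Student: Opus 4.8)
The plan is to reduce the global statement to a local analysis near each level set, exploiting the hypothesis that $H$ and the metric are analytic so that the transnormality data (the functions $b_{ij}$ and the structure functions expressing $[\grad h_i,\grad h_j]$ in terms of the $\grad h_k$) extend from a neighborhood of one regular level set to a connected set of regular values. First I would fix a regular value $c$ and a connected component $L\subset H^{-1}(c)$. By the remark following the definition, in a neighborhood $V$ of $H^{-1}(c)$ the map $H|_V\colon V\to H(V)$ is an integrable Riemannian submersion with respect to the metric $g_{ij}=(b_{ij})^{-1}$; its fibers give a Riemannian foliation on $V$, the integrability of $\mathrm{span}\{\grad h_i\}$ coming from condition (iii) in the definition of transnormal map. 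This already yields that the plaques near $L$ are locally equidistant and that the normal distribution to the regular fibers is integrable. Applying Theorem~\ref{theorem-PT-conjecture} (the Palais--Terng conjecture for singular Riemannian foliations) to the foliation generated by the regular fibers and their equifocal parallel submanifolds, I would obtain that the partition $\F_{c,L}=\{\eta_\xi(L)\}_{\xi\in\Xi}$ is a s.r.f.s., proving (i); the embeddedness of the leaves should follow because each leaf of the form $\eta_\xi(L)$ is, locally, a connected component of a regular level set $H^{-1}(\widehat c)$ (or a focal limit thereof), and level sets of a map are embedded.

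For (ii), I would argue that for any regular value $\widehat c$ near $c$ in the same connected component of regular values, the connected components of $H^{-1}(\widehat c)$ are parallel submanifolds to $L$: using the gradient flow of the $h_i$'s and the transnormality relations, a parallel normal field $\xi$ along $L$ is obtained by integrating suitable combinations of $\grad h_i$, and the endpoint map $\eta_\xi$ carries $L$ onto a component of $H^{-1}(\widehat c)$. That these components are equifocal follows from Theorem~\ref{frss-eh-equifocal}(i), since regular leaves of a s.r.f.s. are equifocal. The passage from ``near $c$'' to all regular values is where analyticity is essential: I would use a connectedness/continuation argument on the (open, connected-component-wise) set of regular values, showing the foliation $\F_{c,L}$ extends consistently, which simultaneously gives (iii) — independence of the choice of $(c,L)$ — because two such foliations agree on an open dense set (the regular stratum) and agree there as analytic objects, hence coincide globally by Theorem~\ref{frss-eh-equifocal}(iii) (reconstruction of $\F$ from a regular leaf with trivial holonomy) together with the density of such leaves.

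The main obstacle I expect is controlling what happens at \emph{singular} values and across the focal set: a priori $H$ may have critical values, and the components of $H^{-1}(\widehat c)$ for singular $\widehat c$ need not be manifolds, so I must show that the singular leaves produced by the foliation $\F_{c,L}$ (the focal submanifolds $\eta_\xi(L)$ of lower dimension) are exactly what a careful analysis of the transnormal relations near a critical level set predicts. Here analyticity does the heavy lifting — the gradients $\grad h_i$ being analytic vector fields, their common zero sets and the rank behavior of the $\eta_\xi$ are controlled, and one can invoke the structure of s.r.f.s. near singular points (the Slice Theorem~\ref{slicetheoremrema}, reducing the local picture to an isoparametric foliation on an open set of $\R^n$) to identify the singular strata. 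A secondary technical point is verifying that the parallel normal fields along $L$ generate enough leaves to sweep out all of $M$; this should follow from completeness of $M$ and the Hopf--Rinow Theorem~\ref{hopfrinow}, together with the fact that every point lies on some (possibly singular) level set of $H$, using that $H$ is globally defined. Once these local-to-global and regularity issues are handled, items (i)--(iii) fall out of the already-developed machinery of s.r.f.s.
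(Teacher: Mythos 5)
A preliminary remark: the lecture notes state Theorem~\ref{TransormalMaptheorem} without proof --- it is quoted from \cite{Alex1} --- so there is no in-text argument to compare your sketch against; I evaluate it on its own terms. The decisive gap is a circularity in the core step. You propose to obtain (i) by ``applying Theorem~\ref{theorem-PT-conjecture} to the foliation generated by the regular fibers and their equifocal parallel submanifolds.'' But Theorem~\ref{theorem-PT-conjecture} takes as \emph{hypothesis} a singular Riemannian foliation with integrable normal distribution, and its conclusion merely upgrades that to a s.r.f.s. The heart of Theorem~\ref{TransormalMaptheorem}(i) is precisely showing that the collection $\{\eta_\xi(L)\}_{\xi\in\Xi}$ is a singular Riemannian foliation at all: that each $\eta_\xi(L)$ is a submanifold, that these submanifolds actually partition $M$, that the module $\singularF$ acts transitively on each, and that the partition is transnormal. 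None of this follows from the local Riemannian-submersion picture near a single regular level set, which is all that the definition of transnormal map hands you. Your sketch assumes these facts in order to invoke the cited theorems, and so begs the question.

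A secondary issue is embeddedness. You argue that a leaf $\eta_\xi(L)$ is ``locally a connected component of a regular level set $H^{-1}(\widehat{c})$ (or a focal limit thereof), and level sets of a map are embedded.'' That is fine for regular components, but it is precisely the focal limits that are in doubt: the transnormality relations are only guaranteed in a neighborhood of regular fibers, so there is no a priori reason that singular leaves are level components of $H$, let alone embedded. Likewise, invoking Theorem~\ref{frss-eh-equifocal}(iii) for item (iii) again presupposes $\F_{c,L}$ and $\F_{\widetilde{c},\widetilde{L}}$ are already s.r.f.s.\ with coherent leaf sets. What the analyticity hypothesis actually buys --- and what any correct proof must exploit directly, before the machinery you cite becomes applicable --- is that the endpoint maps $\eta_\xi$ have locally constant rank and that the transnormality data (the $b_{ij}$ and the structure functions of the $\grad h_i$) continue analytically across critical values, so that the parallel-set partition and its transnormality can be established globally. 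You flag this obstacle at the end but defer the entire content to ``analyticity does the heavy lifting,'' without giving the constant-rank/analytic-continuation argument for $\eta_\xi$ that is the actual mathematical substance of the theorem.
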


\begin{remark}
Wang~\cite{Wang1} studied smooth transnormal functions, i.e., a smooth function $f:M\rightarrow \R$ satisfying the equation $\|\grad f\|^{2} =b\circ f$, where $b\in C^{2}(f(M))$. He proved that level sets of $f$ are leaves of a singular Riemannian foliation. It also follows from his proof that the regular level sets are equifocal.
\end{remark}

It is also known (see Carter and West~\cite{CarterWest2} and Terng~\cite{Terng}) that given an isoparametric submanifold $L$ of $\R^{n+k}$, one can construct a polynomial isoparametric map from $\R^{n+k}$ into $\R^{k}$ that has $L$ as a level submanifold. Therefore, it is natural to look for conditions under which a s.r.f.s. can be described as preimages of a transnormal map. It follows from Theorem~\ref{slicetheoremrema} and from the classic theory of isoparametric foliations that this is always locally true, as stated in the following result of Alexandrino~\cite{Alex2}.

\begin{proposition}
Let $\F$ be a s.r.f.s. on a complete Riemannian manifold $M$. Then, for each $p\in M$ there exists a neighborhood $U$ of $p$ such that the plaques of $\F\cap U$ are preimages of a transnormal map.
\end{proposition}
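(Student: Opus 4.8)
The statement is local in nature, so the strategy is to reduce it, via the Slice Theorem for s.r.f.s. (Theorem~\ref{slicetheoremrema}), to the classical local result for isoparametric foliations on an open subset of Euclidean space. First I would fix $p\in M$ and, following Theorem~\ref{slicetheoremrema}(iii)--(iv), choose a slice $S_q$ at the singular point $q$ on whose leaf $p$ lies (taking $q=p$ if $p$ is regular). Then $\F|_{S_q}$ is a s.r.f.s.\ on $S_q$ which, by item (iv) of Theorem~\ref{slicetheoremrema}, is diffeomorphic via some diffeomorphism $\phi$ to an isoparametric foliation $\widehat\F$ on an open subset $W\subset\R^n$, $n=\dim S_q$. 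The plan is then to invoke the classical fact (Carter--West, Terng; see Palais and Terng~\cite{PalaisTerng}) that an isoparametric foliation on an open subset of Euclidean space is locally given by the level sets of an isoparametric map, hence in particular of a transnormal map $\widehat H:W\to\R^k$. Pulling this map back by $\phi$ and composing suitably, one obtains a transnormal map whose preimages are the plaques of $\F$ near $p$ inside $S_q$.

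Second, I would promote the description from ``inside the slice'' to ``inside a full neighborhood $U$ of $p$ in $M$.'' Here one uses the Tubular Neighborhood structure: a neighborhood $U$ of $p$ splits, up to diffeomorphism, as a product of a plaque $P_p$ of $L_p$ with the slice $S_p$ (or, at a regular point, $U$ is foliated-diffeomorphic to $P_p\times\sigma$ for a local section $\sigma$). Since the leaves of $\F$ meet $U$ in plaques that are products of plaques of the restricted foliation on $S_p$ with the plaque directions, composing the transnormal map on $S_p$ with the projection $U\to S_p$ along $P_p$ yields a map $H:U\to\R^q$ whose level sets are exactly the plaques of $\F\cap U$. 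One must then check that this $H$ is still transnormal, i.e.\ the conditions on $\langle\grad h_i,\grad h_j\rangle$ and on $[\grad h_i,\grad h_j]$ are preserved: this is where the transnormality (transnormal/equidistance) of $\F$ itself, together with the fact that the projection along plaques is a Riemannian submersion in the product chart, is used to verify that the relevant inner products and brackets are functions of $H$ alone.

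Third, I would double-check the compatibility of the two reductions: the diffeomorphism $\phi$ of Theorem~\ref{slicetheoremrema}(iv) need not be an isometry, so strictly speaking what one transports is the \emph{foliated} structure, not the metric; but transnormality of a map is a condition that only needs to hold \emph{for some neighborhood and some functions $b_{ij}$}, not for a fixed metric, so the pull-back of a transnormal map under a diffeomorphism is again transnormal (with $b_{ij}$ replaced by the appropriate pushed-forward functions). This observation is what makes the reduction legitimate and should be stated explicitly.

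\textbf{Main obstacle.} The genuinely delicate point is the second step: verifying that the extended map $H:U\to\R^q$ obtained by composing with the plaque-projection remains transnormal, in particular that $[\grad h_i,\grad h_j]$ stays in the span of the $\grad h_\ell$ with coefficients functions of $H$. This requires choosing the product chart carefully so that the plaque directions and the slice directions interact controllably with the gradients, and using that $\F$ is a s.r.f.s.\ (so that horizontal geodesics stay horizontal, giving the local equidistance that forces the bracket condition). Everything else — the existence of the slice, the diffeomorphism to an isoparametric model, and the classical transnormal-map description of isoparametric foliations in $\R^n$ — is quoted from results already available in the text.
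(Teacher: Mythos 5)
Your overall route --- reduce to the slice via Theorem~\ref{slicetheoremrema}, invoke the classical theory of isoparametric submanifolds to produce an isoparametric map on a flat model, pull it back, and then promote from the slice to a full tubular neighborhood --- is essentially the one the paper indicates, so the structure of the argument is right. However, there is a genuine error in your third step. You assert that ``the pull-back of a transnormal map under a diffeomorphism is again transnormal,'' justifying this by saying the transnormality condition ``only needs to hold for some neighborhood and some functions $b_{ij}$, not for a fixed metric.'' That reading of the definition is wrong: the quantities $\langle\grad h_i,\grad h_j\rangle$ and $[\grad h_i,\grad h_j]$ that appear in the definition are computed with respect to the Riemannian metric on the \emph{domain}, which is a fixed given; the $b_{ij}$ only parametrize the auxiliary metric on the \emph{target}. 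A diffeomorphism $\phi:S_q\to W$ that is not an isometry does not intertwine gradients, so pulling back a flat-metric isoparametric map $\widehat H$ on $W$ by $\phi$ does not produce, by pullback alone, a transnormal map on $(S_q,\ \text{induced metric})$.

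What actually closes the gap is a statement you cite but do not exploit: item (iii) of Theorem~\ref{slicetheoremrema} says that $\F|S_q$ is a s.r.f.s.\ on $S_q$ \emph{with the metric induced from $M$}, independently of the non-isometric diffeomorphism of item (iv). Combine this with the Remark after the definition of transnormal map --- $H$ is transnormal iff near each regular value its restriction is an integrable Riemannian submersion onto a suitable Riemannian structure on the target. Near a regular plaque, $\F|S_q$ is an integrable Riemannian foliation (Proposition~\ref{prop-equivalencia-df-FR} together with the existence of sections), hence locally the fibers of an integrable Riemannian submersion $f:V'\to(\sigma,b)$. The pulled-back map $H=\widehat H\circ\phi$ has the same fibers, so $H=\psi\circ f$ for a diffeomorphism $\psi$ of the targets, and the pushforward $\psi_*b$ is the metric making $H$ transnormal. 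Transnormality thus comes from the level sets being plaques of a s.r.f.s., not from pullback preserving transnormality. The same observation dissolves the ``genuinely delicate'' bracket verification you flagged in the extension step: once a smooth map $H:U\to\R^q$ with a regular value and with the plaques of $\F\cap U$ as its level sets has been produced, transnormality is automatic from the s.r.f.s.\ hypothesis. The work that remains in your second step is producing such a smooth $H$ across the singular plaque --- and that is exactly where the polynomial isoparametric map and the local product structure of the tubular neighborhood earn their keep.
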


Since there are examples of s.r.f.s. with non trivial holonomy (see Alexandrino~\cite{Alex2}), there are examples of s.r.f.s. that are not given as preimages of a transnormal map. The next result, in Alexandrino and Gorodski~\cite{AlexGorodski} and Alexandrino~\cite{Alex5}, gives sufficient conditions under which a s.r.f.s. can be described by a transnormal map, and hence can be considered a converse of Theorem~\ref{TransormalMaptheorem}.

\begin{theorem}\label{theorem-s.r.f.s-transnormal}
Let $\mathcal F$ be a s.r.f.s. on a complete simply connected Riemannian manifold $M$. Assume that leaves of $\mathcal F$ are closed and embedded and $\mathcal F$ admits a flat section of dimension $n$. Then the leaves of $\mathcal F$ are given by the level sets of a transnormal map $H:M\to\R^n$.
\end{theorem}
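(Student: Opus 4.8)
The plan is to build the transnormal map $H$ by assembling the local transnormal maps whose existence is guaranteed by the proposition at the end of the previous section, using the global structure provided by Theorem~\ref{theorem-fundamental-domain}. First I would fix a section $\Sigma$ of $\mathcal F$; since $\mathcal F$ admits a flat section of dimension $n$ and $M$ is simply connected with closed embedded leaves, Theorem~\ref{theorem-fundamental-domain} says that $\Pi\colon\overline\Omega\to M/\mathcal F$ is a homeomorphism, where $\Omega$ is a connected component of the regular points of $\Sigma$, and that $M/\mathcal F$ is a simply connected Coxeter orbifold. Because $\Sigma$ is flat and $\overline\Omega\subset\Sigma\cong\R^n$ is a convex chamber cut out by the reflecting hyperplanes of the Weyl pseudogroup $W_\Sigma$ (Proposition~\ref{propositionWisinvariant}), the chamber $\overline\Omega$ is a convex polyhedral cone-like region, and $W_\Sigma$ acts on $\R^n$ as a genuine (affine) Coxeter group. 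The quotient map $\R^n\to\R^n/W_\Sigma$ is then, by classical invariant theory of reflection groups, given by a proper map into $\R^n$ whose components are $W_\Sigma$-invariant functions separating orbits.

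Second, I would transport this to $M$. Define $H\colon M\to\R^n$ by sending $x\in M$ to the $W_\Sigma$-orbit of the unique point $\Pi_\Sigma(x)\in\overline\Omega$ in the closure of the chamber lying on the leaf $L_x$ (this is well defined by Theorem~\ref{theorem-fundamental-domain}(iii)), composed with the orbit map $\overline\Omega\hookrightarrow\R^n\to\R^n/W_\Sigma\cong\R^n$. By construction the fibers of $H$ are exactly the leaves of $\mathcal F$, so it remains to check two things: that $H$ is smooth, and that it is transnormal. Smoothness I would verify locally: near a regular point this is immediate since $\Pi_\Sigma$ is locally the projection of a Riemannian submersion onto the section; near a singular point $q$ I would use the Slice Theorem~\ref{slicetheoremrema}, which identifies $\mathcal F$ restricted to a slice $S_q$ with an isoparametric foliation on an open subset of $\R^{\dim S_q}$, together with the classical fact (Carter--West, Terng, cf.\ Palais and Terng~\cite{PalaisTerng}) that an isoparametric foliation on a Euclidean open set is given by a polynomial isoparametric map; matching this local isoparametric map with the global $H$ via the $W$-invariance of $H$ on the section and the relation between $W_{S_q}$ and the local reflection group gives smoothness, and in fact the transnormality equations $\langle\operatorname{grad}h_i,\operatorname{grad}h_j\rangle=b_{ij}\circ H$ and the bracket condition, since these hold for the local isoparametric model and are preserved under the identification.

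The main obstacle I expect is precisely the patching at singular strata: one must check that the local transnormal (indeed isoparametric, on the model) maps coming from different slices, and the global ``fold into the chamber'' map built from the section, agree up to the correct ambiguity (post-composition with a diffeomorphism of $\R^n$ that is trivial on $H(M)$), so that the coefficient functions $b_{ij}$ are globally well defined and smooth across the walls. This is where simply-connectedness and the Coxeter structure of $M/\mathcal F$ are essential: they guarantee that the Weyl pseudogroup is an honest group acting on the flat section, so that ``invariant under the local reflections'' globalizes to ``invariant under $W_\Sigma$,'' and the classical theorem on smoothness of the quotient $\R^n/W_\Sigma$ by a Coxeter group applies. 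A secondary technical point is ensuring the regular value hypothesis in the definition of transnormal map, which follows because principal leaves form an open dense set (Theorem~\ref{frss-eh-equifocal}) and $H$ restricted to a tubular neighborhood of a principal leaf is an integrable Riemannian submersion by equifocality.
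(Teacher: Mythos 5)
The paper does not include a proof of this theorem; it is quoted from Alexandrino--Gorodski~\cite{AlexGorodski} and Alexandrino~\cite{Alex5}, so there is no ``paper's own proof'' against which to match step by step. Evaluating your proposal on its own merits: the overall strategy --- identify $M/\mathcal F$ with a Weyl chamber $\overline\Omega$ in a flat section via Theorem~\ref{theorem-fundamental-domain}, package the leaf projection as a $W_\Sigma$-invariant map, and verify transnormality near singular strata using the Slice Theorem~\ref{slicetheoremrema} together with the local isoparametric model --- is the right high-level picture and is in the spirit of the cited work. However, there are two genuine gaps.

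First, you silently assume $\Sigma\cong\R^n$. A flat complete Riemannian manifold is $\R^n/\Gamma$ for a Bieberbach group $\Gamma$; simply-connectedness of $M$ does \emph{not} give simply-connectedness of a section (e.g.\ the conjugation action of a compact Lie group on itself has a torus as section, and Hermann actions on compact symmetric spaces likewise have flat tori as sections). Consequently ``$\overline\Omega\subset\Sigma\cong\R^n$ is a convex polyhedral cone-like region'' is not justified, and in the compact-section case $\overline\Omega$ is a bounded alcove rather than a cone.

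Second, and more seriously, the appeal to ``classical invariant theory of reflection groups'' only gives a separating polynomial map $\R^n\to\R^n$ when $W_\Sigma$ is a \emph{finite linear} Coxeter group acting on $\R^n$. Under the hypotheses of the theorem, $W_\Sigma$ can be an infinite affine Coxeter group, or more generally a discrete reflection group on a flat manifold $\R^n/\Gamma$, for which there is no Chevalley theorem and no polynomial invariants; one needs a different device to produce $n$ smooth $W_\Sigma$-invariant functions whose combined level sets are exactly the $W_\Sigma$-orbits and which extend to a genuine transnormal map on $M$ (in the cited references this is handled through the machinery of basic forms rather than by invariant polynomials). Absent such a result, the construction of $H$ --- the very object the theorem asserts to exist --- is not established. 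The later paragraph on smoothness across singular strata is then also not self-contained: you compare $H$ against the local polynomial isoparametric models from the Slice Theorem, but this presupposes that $H$ is already known to be a well-defined smooth $\R^n$-valued map globally, which is the content of the missing step. You correctly identify the patching across singular strata as the main technical difficulty, but the sketch given does not yet resolve it.
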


\begin{remark}
Heintze, Liu and Olmos~\cite{HOL} proved the above result with the additional assumption that $M$ is a simply connected symmetric space of compact type.
\end{remark}

\section{Recent contributions}

In this last section, we would like to cite some recent works on the area of singular Riemannian foliations.

First, the use of a blow up technique by T\"{o}ben~\cite{Toeben} to study equifocal submanifolds (which he calls submanifolds \emph{with parallel focal structure}). A necessary and sufficient condition is given for a closed embedded local equifocal submanifold to induce a s.r.f.s. (see Alexandrino~\cite{Alex3} for an alternative proof). T\"{o}ben~\cite{Toeben2} also proved the non existence of proper s.r.f.s. in compact manifolds of non positive curvature, and gave a global description of proper s.r.f.s. on Hadamard manifolds.

Another interesting contribution was given by Lytchak and Thorbergsson~\cite{LytchakThorbergsson}, that introduced the concept of \emph{singular Riemannian foliation without horizontal conjugate points} in order to generalize variationally complete isometric actions. They proved that a singular Riemannian foliation without horizontal conjugate points in a complete Riemannian manifold with non negative sectional curvature admits flat sections.

T\"{o}ben and Alexandrino~\cite{AlexToeben2} proved that the regular leaves of a singular Riemannian foliation are equifocal, i.e., the endpoint map of a normal foliated vector field has constant rank. This implies that it is possible to reconstruct the singular foliation by taking all parallel submanifolds of a regular leaf with trivial holonomy. In addition, the endpoint map of a normal foliated vector field on a leaf with trivial holonomy is a covering map.

Recently, Lytchak and Thorbergsson~\cite{LytchakThorbergsson2} proved that the quotient space of a variationally complete group action is a good Riemannian orbifold. This result is also proved to hold for singular Riemannian foliations without horizontal conjugate points. In the same paper, the concept of \emph{infinitesimally polar foliation} is introduced. A s.r.f. $\F$ is called infinitesimally polar if the intersection of $\F$ with each slice is diffeomorphic to a polar foliation on an Euclidean space, i.e., an isoparametric foliation. In fact, this diffeomorphism is given as composition of the exponential map with a linear map.

They proved that $\F$ is an infinitesimally polar foliation if and only if 
for each point $x\in M$ there exists a neighborhood $U$ of $x$ such that the leave space of the restricted foliation $\F|_{U}$ is 
an orbifold. Since the leave space of an isoparametric foliation is a Coxeter orbifold, 
they concluded that the leave space $U/\F|_{U}$ is an orbifold if and only if it is a Coxeter orbifold.

Typical examples of infinitesimally polar foliations are polar foliations (see~\cite{Alex2}), 
singular Riemannian foliations without horizontal conjugate points (see~\cite{LytchakThorbergsson,LytchakThorbergsson2}) 
and s.r.f. with codimension one or two (see~\cite{LytchakThorbergsson2}). 
In particular, a partition by orbits of an isometric action is infinitesimally polar if either the action is polar, 
or variationally complete, or has cohomogeneity lower then three.

Lytchak~\cite{Lytchak1} generalized a previous result due to T\"{o}ben~\cite{Toeben2}, proving that singular Riemannian foliations do not exist on compact negatively curved spaces. Lytchak~\cite{Lytchak2} also generalized the blow up introduced by T\"{o}ben~\cite{Toeben}, and proved that a singular Riemannian foliation admits a resolution preserving the transverse geometry if and only if it is infinitesimally polar. He also deduced that singular Riemannian foliations on simply connected manifolds that either have sections or no horizontal conjugate points are closed.

We would also like to cite the PhD thesis of Boltner~\cite{Boltner}, where singular Riemannian foliations in Euclidean space are studied. Furthermore, we cite the PhD thesis of Nowak~\cite{Eva}. Among other things, singular Riemannian foliations without horizontal conjugate points are studied, and it is proved that the regular leaves are taut if properly embedded, and if the ambient space is simply connected.

In conclusion, we mention the recent work of Alexandrino~\cite{Alex6}, where it is proved that each singular Riemannian foliation on a compact manifold can be desingularized after successive blow ups. This result generalizes a previous result of Molino. It is also proved in~\cite{Alex6} that, if the leaves of the singular Riemannian foliation are compact, then, for each small $\varepsilon$, the regular foliation can be chosen so that the desingularization map induces an $\varepsilon$--isometry between the leaf space of the regular Riemannian foliation and the leaf space of the singular Riemannian foliation. This implies, in particular, that the leaf space of the singular Riemannian foliation is is a Gromov--Hausdorff limit of a sequence of Riemannian orbifolds.

%%%%%%%%%%%%%%%%%%%%%%%%%%%
%%% Part III            %%%
%%%%%%%%%%%%%%%%%%%%%%%%%%%
%\part{Hamiltonian actions and orbit types}
%\include{AlexandrinoBettiolBiliottiPart3}

%%%%%%%%%%%%%%%%%%%%%%%%%%%
%%% APPENDIX            %%%
%%%%%%%%%%%%%%%%%%%%%%%%%%%
%\include{AlexandrinoBettiolAppendix}
\appendix
\chapter{Rudiments of smooth manifolds}\label{appendix}

In this appendix, we summarize basic definitions and results of smooth manifolds that will be used throughout this text. A more detailed treatment of this subject can be found in Warner \cite{warner}, Spivak \cite{spivak1} and Lang \cite{langRiemannianManifolds}.

\section{Smooth manifolds}

In general, by smooth or differentiable we mean of class $C^\infty$, unless otherwise specified. In addition, a map will be called a {\it function} if its counter domain is $\R$.

\begin{definition}
A {\it smooth $m$--dimensional manifold}\index{Manifold} $M$ is a second countable Hausdorff topological space endowed with a {\it smooth structure}. More precisely, there is a collection of pairs $(U_\alpha,\varphi_\alpha)$ called {\it charts},\index{Manifold!chart} such that $U_\alpha$ are open subsets of $M$, and, for each $\alpha$, the map $\varphi_\alpha:U_\alpha\subset M\to \varphi_\alpha(U_\alpha)\subset\R^m$ is a homeomorphism between open sets satisfying
\begin{itemize}
\item[(i)] $M=\bigcup_{\alpha} U_\alpha$;
\item[(ii)] If $(U_\alpha,\varphi_\alpha)$ and $(U_\beta,\varphi_\beta)$ are charts, then the {\it transition map} $$\varphi_\beta \circ\varphi_\alpha^{-1}:\varphi_\alpha(U_\alpha \cap U_\beta)\longrightarrow\varphi_\beta(U_\alpha \cap U_\beta)$$ is smooth. In this case, the charts $\varphi_\alpha$ and $\varphi_\beta$ are said to be $C^\infty$--compatible;
\item[(iii)] The collection $\{(U_\alpha,\varphi_\alpha)\}_\alpha$ of charts is maximal with respect to (i) and (ii).
\end{itemize}

Such collection of charts is called an {\it atlas}\index{Manifold!atlas} of $M$. If $(U_\alpha,\varphi_\alpha)$ is a chart and $p\in U_\alpha$, the open set $U_\alpha$ is called a {\it coordinate neighborhood} of $p$ and $\varphi_\alpha$ is called a {\it local coordinate} of $p$.
\end{definition}

Similarly, it is possible to define {\it $C^k$--manifolds}\index{Manifold!$C^k$} and {\it analytic manifolds}\index{Manifold!analytic} ($C^\omega$--manifolds), by requiring the transition maps to be respectively $C^k$ and analytic. By {\it analytic structure}\index{Manifold!analytic structure} we mean a collection of coordinate systems which overlap analytically, that is, are locally represented by convergent power series.

In this text we are only interested in $C^\infty$--manifolds, therefore a {\it manifold} will always be considered to be a smooth manifold in the sense above.

\begin{definition}
Let $M$ and $N$ be manifolds with atlases $\{(U_\alpha,\varphi_\alpha)\}_\alpha$ and $\{(V_\beta,\psi_\beta)\}_\beta$, respectively. A continuous map $f:M\rightarrow N$ is said to be {\it smooth on $p\in M$} if there exist coordinate neighborhoods $U_\alpha$ of $p$ and $V_\beta$ of $f(p)$, such that its {\it representation} $\psi_\beta\circ f\circ \varphi_\alpha^{-1}$ is smooth on $\varphi_\alpha(p)$.
\end{definition}

It is easy to see that if this condition is satisfied by a pair of charts $(U_\alpha,\varphi_\alpha)$ and $(V_\beta,\psi_\beta)$, then it holds for any charts. Moreover, the above definition clearly implies that a function $g:M\rightarrow\R$ is {\it smooth on $p\in M$} if there exists a chart $(U_\alpha,\varphi_\alpha)$ such that $g\circ\varphi_\alpha^{-1}$ is smooth on $g(p)$. Finally, maps (and functions) are called {\it smooth} if they are smooth on every point $p\in M$.

Let $M$ be a smooth manifold and $p\in M$. The set $C^\infty(M)$ of smooth functions $f:M\rightarrow\R$ is an algebra with the usual operations. Consider the sub algebra $C^\infty(p)$ of smooth functions whose domain of definition includes some open neighborhood of $p$. We then define the {\it tangent space}\index{Manifold!tangent space} to $M$ at $p$ as the vector space $T_pM$ of linear derivations at $p$, i.e., the set of maps $v:C^\infty(p)\rightarrow\R$ satisfying for all $f,g\in C^\infty(p)$,
\begin{itemize}
\item[(i)] $v(f+g)=v(f)+v(g)$ (linearity);
\item[(ii)] $v(fg)=v(f)g(p)+f(p)v(g)$ (Leibniz rule).
\end{itemize}
The vector space operations are defined for all $v,w \in T_pM, \alpha\in\R$ and $f\in C^\infty(p)$ by
\begin{eqnarray*}
(v+w)f &= & v(f)+w(f);  \\
(\alpha v)(f) &= & \alpha v(f).
\end{eqnarray*}

Let $(U,\varphi=(x_1,\dots,x_n))$ be a chart around $p$ and $\tilde{f}= f\circ\varphi^{-1}$ the representation of $f\in C^\infty(p)$ in coordinates given by $\varphi$. Then the \emph{coordinate vectors} given by $$\left(\frac{\partial}{\partial x_i}\Big|_p\right)f=\frac{\partial \tilde{f}}{\partial x_i}\Big|_{\varphi(p)}$$ form a basis $\{\frac{\partial}{\partial x_i}\big|_p\}$ of $T_pM$. In particular, it follows that $\dim T_pM=\dim M$. In this context, tangent vectors can be explicitly seen as directional derivatives. Indeed, consider $v=\sum_{i=1}^n v_i\frac{\partial}{\partial x_i}\in T_pM$. Then $$v(f)=\sum_{i=1}^n v_i\frac{\partial \tilde{f}}{\partial x_i}\Big|_{\varphi(p)}$$ is called the {\it directional derivative of $f$ in the direction $v$}. It can also be proved that this definition is independent of the chart $\varphi$.

The \emph{tangent vector} to a curve $\alpha:(-\varepsilon,\varepsilon)\rightarrow M$ at $\alpha(0)$ is defined as $\alpha'(0)(f) =\frac{\mathrm d}{\mathrm dt}(f\circ\alpha)\big|_{t=0}$. Indeed, $\alpha'(0)$ is a vector and if $\varphi$ is a local coordinate and $(u_1(t),\dots,u_n(t))=\varphi\circ\alpha$, then $$\alpha'(0)= \sum_{i=1}^n u_i'(0) \frac{\partial}{\partial x_i}\in T_pM.$$

\begin{definition}
Let $f:M\rightarrow N$ be a smooth map and let $p\in M$. The {\it differential} of $f$ at $p$ is the linear map $\dd f_p:T_pM\rightarrow T_{f(p)}N$, such that if $v\in T_pM$, the image of $v$ is a tangent vector at $f(p)$ satisfying $\dd f_p(v)g= v(g\circ f)$, for all $g\in C^\infty(f(p))$.
\end{definition}

Hence the chain rule is automatically valid on manifolds, since the differential of a map is defined to satisfy it. Equipped with the notion of differential, we can define important classes of maps between manifolds. A smooth map $f:M\rightarrow N$ is said to be an {\it immersion} if $\dd f_p$ is injective for all $p\in M$, and it is said to be a {\it submersion} if $\dd f_p$ is surjective for all $p\in M$. An immersion $f:M\rightarrow N$ is called an {\it embedding} if $f:M\rightarrow f(M)\subset N$ is a homeomorphism considering $f(M)$ endowed with the relative topology.

In addition, a smooth bijection $f$ with smooth inverse is called a {\it diffeomorphism}. From the chain rule, its differential on every point is an isomorphism, and the {\it Inverse Function Theorem}\index{Theorem!Inverse Function} gives a local converse. More precisely, this theorem states that if the differential $\dd f_p$ of a smooth map $f$ is an isomorphism, then $f$ restricted to some open neighborhood of $p$ is a diffeomorphism.

\begin{definition}
Let $P$ and $N$ be manifolds, with $P\subset N$. Then $P$ is called an \emph{immersed submanifold}\index{Submanifold}\index{Submanifold!embedded}\index{Submanifold!immersed} of $N$ if $i:P\hookrightarrow N$ is an immersion. In addition, if $i:P\hookrightarrow N$ is an embedding, then $P$ is called an \emph{(embedded) submanifold}.
\end{definition}

We also need the concept of quasi--embedded manifolds, which are more than immersed but not necessarily embedded submanifolds.

\begin{definition}
An immersed submanifold $P\subset N$ is {\it quasi--embedded}\index{Submanifold!quasi--embedded} if it satisfies the following property. If $f:M\rightarrow N$ is a smooth map with image $f(M)$ lying in $P$, then the induced map $f_0:M\rightarrow P$ defined by $i\circ f_0=f$ is smooth, where $i:P\hookrightarrow N$ is the inclusion.
\end{definition}

\begin{proposition}\label{quasiembedded}
Let $P\subset N$ be an embedded submanifold of $N$. Then $P$ is quasi--embedded. 
\end{proposition}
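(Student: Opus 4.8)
The statement to prove is Proposition~\ref{quasiembedded}: if $P\subset N$ is an \emph{embedded} submanifold, then $P$ is quasi-embedded. The plan is to take a smooth map $f:M\to N$ whose image lies in $P$, and show that the induced set-theoretic map $f_0:M\to P$ (characterized by $i\circ f_0=f$, where $i:P\hookrightarrow N$ is the inclusion) is smooth. Smoothness is a local property, so it suffices to check it near an arbitrary point $q\in M$; set $p=f(q)\in P$.

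First I would invoke the local normal form for embedded submanifolds: since $P$ is an embedded $k$-dimensional submanifold of the $n$-dimensional manifold $N$, there is a chart $(W,\psi)$ of $N$ around $p$, with $\psi:W\to\R^n=\R^k\times\R^{n-k}$, such that $\psi(W\cap P)=\psi(W)\cap(\R^k\times\{0\})$, and moreover the restriction of $\psi$ to $W\cap P$, followed by projection $\pi:\R^k\times\R^{n-k}\to\R^k$, is a chart of $P$ around $p$ in the submanifold smooth structure. (This is precisely the content that distinguishes embedded submanifolds from merely immersed ones, and it is where the embedding hypothesis — in particular that the subspace topology on $P$ agrees with its manifold topology — gets used; without it the ``slice'' $W\cap P$ need not be a single coordinate slab.) Next, because $f$ is continuous as a map into $N$ and $f(q)=p\in W$, there is an open neighborhood $U$ of $q$ in $M$ with $f(U)\subset W$; shrinking $U$ if necessary, I may assume $U$ is a coordinate neighborhood of $q$ in $M$.

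Now on $U$ the composite $\psi\circ f:U\to\psi(W)\subset\R^n$ is smooth, being a composition of the smooth map $f$ with the chart $\psi$. Since $f(U)\subset P$, the last $n-k$ coordinates of $\psi\circ f$ vanish identically, so $\psi\circ f=(\,\pi\circ\psi\circ f\,,\,0\,)$ with $\pi\circ\psi\circ f:U\to\R^k$ smooth. But in the submanifold chart described above, the coordinate representation of $f_0$ restricted to $U$ is exactly $\pi\circ\psi\circ f$. Hence $f_0$ has a smooth coordinate representation near $q$, i.e. $f_0$ is smooth at $q$. Since $q\in M$ was arbitrary, $f_0$ is smooth, which is the required conclusion.

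I do not expect a serious obstacle here; the one point that deserves care — and that I would state explicitly — is the use of the embedding hypothesis to guarantee the slice chart property and, correspondingly, that the map $f_0$, which a priori is only defined as a map of sets, is continuous into $P$ with its manifold topology (so that asking for its smoothness in charts is legitimate). Continuity of $f_0$ follows because $f$ is continuous into $N$ and the subspace topology of $P\subset N$ coincides with its intrinsic topology precisely when $P$ is embedded; this is exactly the step that would fail for a general immersed submanifold, and it is worth remarking that quasi-embeddedness is strictly weaker than embeddedness, so the converse of this proposition is false (e.g. an irrational line on the torus, or an immersed submanifold that is a leaf of a foliation).
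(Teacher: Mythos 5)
Your proof is correct. One thing worth flagging: the paper itself states Proposition~\ref{quasiembedded} in Appendix~\ref{appendix} without any proof at all, so there is no paper argument to compare against; but the route you take --- reduce to a slice chart around $p=f(q)$, use continuity of $f$ into $N$ plus the slice property to see that the last $n-k$ coordinates of $\psi\circ f$ vanish, then read off that the coordinate representation of $f_0$ is the smooth map $\pi\circ\psi\circ f$ --- is exactly the standard textbook argument (it is the one given in, e.g., Warner or Lee, both cited by the paper), and it correctly isolates the single place where the embedding hypothesis is used. Your closing remark is also accurate: quasi-embeddedness is strictly weaker than embeddedness (this is precisely why the paper needs the notion when applying Frobenius' Theorem~\ref{frobbis}, whose leaves are quasi-embedded but generally not embedded, as in Theorem~\ref{integralsubgroup}), so this proposition does not have a converse.
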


\section{Vector fields}
\label{sec:vecfields}

Let $M$ be a fixed manifold and $TM$ the \emph{(total space of its) tangent bundle}, i.e., $TM=\bigcup_{p\in M} \{p\}\times T_{p} M$. Denote $\pi:TM\rightarrow M$ the \emph{footpoint projection} (or canonical projection), i.e., if $V_{p}\in T_{p}M$ then $\pi(V_{p})=p$. It is possible to prove that $TM$ carries a canonical smooth structure inherited from $M$, such that $\pi$ is smooth. In fact, as explained in Section \ref{Section-properaction-fiberbundle}, $(TM,\pi,M,\R^n)$ is a fiber bundle.

\begin{definition}
A \emph{(smooth) vector field}\index{Vector field} $X$ on $M$ is a (smooth) section of $TM$, i.e., a (smooth) map $X:M \rightarrow TM$ such that $ \pi \circ X = \id$.
\end{definition}

Let $p\in M$ and consider a coordinate neighborhood $U$ of $p$ with local coordinates $\varphi=(x_1,\dots,x_n)$. Given $f\in C^\infty(U)$, the {\it directional derivative} $X(f):U\rightarrow\R$ is defined to be the function $$U\owns p\longmapsto X_p(f)\in\R.$$ Furthermore, being $\left\{\frac{\partial}{\partial x_i}\big|_p\right\}$ a basis of $T_pM$, we may write \begin{equation}\label{localrep}
X|_U=\sum_{i=1}^n a_i\frac{\partial}{\partial x_i},
\end{equation}
where $a_i:U\rightarrow\R$.

\begin{proposition}\label{smoothvecfields}
Consider $X$ a vector field on $M$, not necessarily smooth. Then the following are equivalent.
\begin{itemize}
\item[(i)] $X$ is smooth;
\item[(ii)] For every chart $(U,(x_1,\dots,x_n))$, the functions $a_i$ in (\ref{localrep}) are smooth;
\item[(iii)] For every open set $V$ of $M$ and $f\in C^\infty(V)$, $X(f)\in C^\infty(V)$.
\end{itemize}
\end{proposition}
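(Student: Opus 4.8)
The plan is to prove the cyclic chain of implications (i) $\Rightarrow$ (ii) $\Rightarrow$ (iii) $\Rightarrow$ (i), which delivers all three equivalences at once. Throughout I will argue locally, fixing a chart $(U,\varphi=(x_1,\dots,x_n))$ of $M$ and using the induced chart on $\pi^{-1}(U)\subset TM$ coming from the canonical smooth structure on $TM$: it sends a vector $V_q\in T_qM$ with $q\in U$ to $(\varphi(q),(\dd x_1)_q(V_q),\dots,(\dd x_n)_q(V_q))\in\varphi(U)\times\R^n$. Since $(\dd x_i)_q\big(\sum_j a_j(q)\,\tfrac{\partial}{\partial x_j}\big|_q\big)=a_i(q)$, the local expression \eqref{localrep} of a section $X$ is represented in this chart by the map $q\mapsto(\varphi(q),a_1(q),\dots,a_n(q))$, i.e.\ by $\varphi(U)\ni r\mapsto(r,a_1(\varphi^{-1}(r)),\dots,a_n(\varphi^{-1}(r)))$.

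First I would prove (i) $\Rightarrow$ (ii): if $X\colon M\to TM$ is smooth, then its representation in the induced chart above is smooth by the definition of smoothness of a map between manifolds; its last $n$ component functions are exactly $a_i\circ\varphi^{-1}$, and composing with the smooth map $\varphi$ shows each $a_i$ is smooth on $U$. Since the chart was arbitrary, (ii) holds. Next, (ii) $\Rightarrow$ (iii): given an open set $V$, a function $f\in C^\infty(V)$, and a point $p\in V$, choose a chart $(U,(x_1,\dots,x_n))$ with $p\in U\subset V$; then $X(f)|_U=\sum_{i=1}^n a_i\,\tfrac{\partial f}{\partial x_i}$, where the partial derivatives are smooth because $f$ is smooth and the $a_i$ are smooth by (ii), so $X(f)$ is smooth in a neighborhood of $p$. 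As $p\in V$ was arbitrary, $X(f)\in C^\infty(V)$.

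Finally, (iii) $\Rightarrow$ (i): to check smoothness of $X$ at a point $p$, take a chart $(U,\varphi=(x_1,\dots,x_n))$ around $p$. Each coordinate function $x_i$ belongs to $C^\infty(U)$, so by (iii) the function $X(x_i)$ is smooth on $U$; moreover $X(x_i)=\sum_j a_j\,\tfrac{\partial x_i}{\partial x_j}=\sum_j a_j\,\delta_{ij}=a_i$, so each component function $a_i$ in \eqref{localrep} is smooth. Hence the representation of $X$ in the induced chart on $\pi^{-1}(U)$ has smooth components, so $X$ is smooth on $U$, and in particular at $p$. Letting $p$ range over $M$ gives (i).

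I do not anticipate a genuine obstacle here: the argument is a direct unwinding of the definitions of smooth map and of the local expression \eqref{localrep}. The only point that warrants a little care is the bookkeeping between the chart $\varphi$ on $U$ and the induced chart on $\pi^{-1}(U)$, together with the elementary but essential observation that $X(x_i)$ recovers precisely the $i$-th component $a_i$; everything else is routine.
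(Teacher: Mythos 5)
Your proof is correct. The paper does not actually supply a proof of this proposition---the appendix is a summary of background material stated without proofs, with references to Warner and Spivak---so there is no argument to compare against; your cyclic chain (i)$\Rightarrow$(ii)$\Rightarrow$(iii)$\Rightarrow$(i), built on the two key observations that the chart on $TM$ induced by $(U,\varphi)$ represents $X|_U$ as $q\mapsto(\varphi(q),a_1(q),\dots,a_n(q))$ and that $X(x_i)=a_i$ recovers the component functions, is precisely the standard textbook argument those references give.
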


The set of smooth vector fields on $M$ will be denoted $\mathfrak{X}(M)$,\index{$\mathfrak{X}(M)$} and is clearly a $C^\infty(M)$--module with operations defined pointwisely. If $f:M\rightarrow N$ is a smooth map between two manifolds and $X\in\mathfrak{X}(M)$, $Y\in\mathfrak{X}(N)$, then $X$ and $Y$ are said to be {\it $f$--related}\index{Vector field!$f$--related} if $\dd f\circ X=Y\circ f$, i.e., if the following diagram is commutative. \begin{displaymath}
\xymatrix@+20pt{
TM \ar[r]^{\dd f} & TN \\
M \ar[u]^{X}\ar[r]_f & N \ar[u]_Y
}
\end{displaymath}
Note that $X$ and $Y$ are $f$--related if, and only if, for all $g\in C^\infty(N)$, $(Yg)\circ f=X(g\circ f)$.

\begin{definition}
If $X,Y\in\mathfrak{X}(M)$, the {\it Lie bracket of vector fields}\index{Vector field!Lie bracket} $X$ and $Y$ is the vector field $[X,Y]\in\mathfrak{X}(M)$ given by $$[X,Y]f=X(Y(f))-Y(X(f)), \quad f\in C^\infty(M).$$
\end{definition}

Note that the Lie bracket of vector fields is clearly skew--symmetric and satisfies the Jacobi identity. When $X,Y\in\mathfrak{X}(M)$ are such that $[X,Y]=0$, then $X$ and $Y$ are said to {\it commute}.

In addition, if $f:M\rightarrow N$ is a smooth map such that $X^1,X^2\in\mathfrak{X}(M)$ and $Y^1,Y^2\in\mathfrak{X}(N)$ are $f$--related, then $[X^1,X^2]$ and $[Y^1,Y^2]$ are also $f$--related. Indeed, using the observation above, since $X^i$ is $f$--related to $Y^i$, it follows that $(Y^i g)\circ f=X^i(g\circ f)$. The conclusion is immediate from the following equations, using again the same observation.
\begin{equation}\label{bracketrelated}
\begin{aligned}
([Y^1,Y^2]g)\circ f &=(Y^1(Y^2 g))\circ f - (Y^2(Y^1g))\circ f\\
&=X^1((Y^2 g)\circ f) - X^2((Y^1 g)\circ f)\\
&=X^1(X^2(g\circ f)) - X^2(X^1(g\circ f))\\
&=[X^1,X^2](g\circ f)
\end{aligned}
\end{equation}

In order to obtain a local expression for $[X,Y]$, fix $p\in M$ and consider a chart $(U,\varphi=(x_1,\dots,x_n))$. According to Proposition \ref{smoothvecfields}, there exist functions $a_i,b_j\in C^\infty(M)$ such that $X=\sum_{i=1}^n a_i \frac{\partial}{\partial x_i}$ and $Y=\sum_{j=1}^n b_j \frac{\partial}{\partial x_j}$. Therefore
\begin{align*}
XY &= X \left(\sum_{j=1}^{n}b_j \frac{\partial}{\partial x_j} \right) \\
&= \sum_{i=1}^{n}a_i \frac{\partial}{\partial x_i} \left(\sum_{j=1}^{n}b_j \frac{\partial}{\partial x_j} \right)\\
&= \sum_{i=1}^{n}\sum_{j=1}^{n}a_i \left(\frac{\partial b_j}{\partial x_i}\frac{\partial}{\partial x_j}+b_j\frac{\partial^2}{\partial x_i \partial x_j} \right)\\
&= \sum_{i,j=1}^{n}a_i\frac{\partial b_j}{\partial x_i}\frac{\partial}{\partial x_j} + \sum_{i,j=1}^{n}a_ib_j\frac{\partial^2}{\partial x_i \partial x_j}, \\
\intertext{and similarly,}
YX &= \sum_{i,j=1}^{n}b_j\frac{\partial a_i}{\partial x_j}\frac{\partial}{\partial x_i} + \sum_{i,j=1}^{n}a_ib_j\frac{\partial^2}{\partial x_i \partial x_j}.
\end{align*}
Applying the definition and expressions above, we recover the well-known local expression for the Lie bracket,
\begin{eqnarray}\label{liebracket}
[X,Y]|_U &=& \sum_{i,j=1}^{n} \left(a_i\frac{\partial b_j}{\partial x_i}-b_i\frac{\partial a_j}{\partial x_i} \right)\frac{\partial}{\partial x_j}.
\end{eqnarray}

\begin{remark}
If $M\subset\R^n$ is an open set, then writing $X=(x_1,\dots,x_n)$, $Y=(y_1,\dots,y_n)$, we get $[X,Y]=D_X Y-D_Y X$, where $D$ is the usual derivative of maps from $\R^n$ to itself. In addition, if $L$ is the Lie derivative of tensor fields, then $L_X Y=[X,Y]$.
\end{remark}

\section{Foliations and the Frobenius Theorem}

In this section we recall some facts about flows, distributions and foliations, culminating in Frobenius Theorem.

\begin{definition}
Let $X\in\mathfrak{X}(M)$. An {\it integral curve} of $X$ is a smooth curve $\alpha$ in $M$, with $\alpha'(t)=X(\alpha(t))$ for each $t$ in $\alpha$'s domain.
\end{definition}

A classic result of ODEs is existence and uniqueness of integral curves with prescribed initial data.

\begin{theorem}
Let $X\in\mathfrak{X}(M)$ and $p\in M$. There exists a unique maximal integral curve of $X$, $\alpha_p:(-\tilde{\delta},\tilde{\delta})\rightarrow M$, with $\alpha_p(0)=p$. Maximality is in the sense that every other integral curve satisfying the same initial condition is a restriction of $\alpha_p$ to an open sub interval of $I_{p}=(-\tilde{\delta},\tilde{\delta}).$
\end{theorem}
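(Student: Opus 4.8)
The statement is the standard existence-and-uniqueness theorem for integral curves of a smooth vector field, promoted from local (Euclidean) ODE theory to the manifold setting. The plan is to reduce to the classical Picard--Lindel\"of theorem in $\R^n$ via charts, then patch the local solutions together using uniqueness to produce a single maximal integral curve.

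First I would work locally. Fix $p\in M$ and choose a chart $(U,\varphi=(x_1,\dots,x_n))$ around $p$. In these coordinates $X|_U$ has a local representation $X=\sum_i a_i\,\partial/\partial x_i$ with $a_i\in C^\infty(U)$ (Proposition~\ref{smoothvecfields}), and a curve $\alpha$ in $U$ is an integral curve of $X$ precisely when its coordinate representation $u=\varphi\circ\alpha=(u_1,\dots,u_n)$ solves the first-order system $u_i'(t)=a_i(u(t))$. Since the $a_i$ are smooth, hence locally Lipschitz, the classical ODE existence and uniqueness theorem in $\R^n$ gives, for each $p$, some $\varepsilon>0$ and a unique solution $u$ on $(-\varepsilon,\varepsilon)$ with $u(0)=\varphi(p)$; pulling back by $\varphi^{-1}$ yields a local integral curve $\alpha_p$ with $\alpha_p(0)=p$. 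For uniqueness in the large, suppose $\alpha,\beta:I\to M$ are two integral curves through $p$ defined on a common open interval $I\ni 0$; the set $\{t\in I: \alpha(t)=\beta(t)\}$ is nonempty (contains $0$), closed in $I$ by continuity, and open by the local uniqueness just established (apply it in a chart around $\alpha(t_0)=\beta(t_0)$, using that integral curves are continuous so they agree on a neighborhood once they agree at a point together with their tangent vectors, which are both $X$ at that point). Since $I$ is connected, $\alpha=\beta$ on $I$.

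Next I would assemble the maximal curve. Consider the collection of all integral curves of $X$ defined on open intervals containing $0$ and taking value $p$ at $0$. By the global uniqueness above, any two of them agree on the intersection of their domains, so their union is a well-defined integral curve $\alpha_p$ on the interval $I_p=(-\tilde\delta,\tilde\delta)$ obtained as the union of all those domains (this union is an open interval containing $0$, possibly with $\tilde\delta=\infty$ on either side; one may take the domain to be a general open interval and then note it contains a symmetric one, or simply phrase $I_p$ as in the statement). By construction $\alpha_p$ is smooth, satisfies $\alpha_p'(t)=X(\alpha_p(t))$, $\alpha_p(0)=p$, and every integral curve through $p$ is a restriction of $\alpha_p$, which is exactly the asserted maximality.

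The only genuine subtlety — the ``main obstacle'' — is the patching step: one must be careful that the union of domains is again an interval and that the resulting map is well-defined and smooth, which rests entirely on the global uniqueness lemma. The translation to charts is routine (smoothness of $X$ gives local Lipschitz bounds on the $a_i$), and invariance of the notion of integral curve under the chart diffeomorphism is immediate from the chain rule. Thus the theorem follows by combining the classical ODE theorem with the connectedness argument for uniqueness and the patching construction.
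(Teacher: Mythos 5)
Your proof is correct and complete. The paper itself does not prove this statement: it is presented in the appendix as ``a classic result of ODEs'' and left as a citation to standard theory, so there is no argument in the paper to compare against. What you supply is exactly the standard proof of that classical fact: reduction to Picard--Lindel\"of in a chart (using smoothness $\Rightarrow$ local Lipschitz for the coefficient functions $a_i$ from Proposition~\ref{smoothvecfields}), global uniqueness on a common interval via the open-and-closed argument (which uses the Hausdorff assumption built into the paper's definition of a manifold to make the agreement set closed), and patching the family of all local solutions into a single maximal one via that uniqueness. All the steps are sound.

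One small remark on the statement rather than on your proof: the maximal interval of existence is, in general, an open interval $(a,b)$ with $a<0<b$ that need not be symmetric, so the paper's notation $I_p=(-\tilde\delta,\tilde\delta)$ is slightly informal. You notice this in your parenthetical, though the option of passing to a symmetric subinterval would sacrifice maximality and should be dropped; the cleaner reading is just that the paper is using $(-\tilde\delta,\tilde\delta)$ as loose notation for the maximal (possibly asymmetric) interval. Your actual construction, taking the union of all domains, produces the correct maximal interval regardless.
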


Let $\mathcal{D}(X)$ be the set of all points $(t,x)\in \R\times M$ such that $t$ lies in $I_{x}$. Then we may define a map
\begin{eqnarray*}
\varphi^{X}:\mathcal{D}(X) &\longrightarrow& M\\
(t,x) &\longmapsto& \alpha_{x}(t),
\end{eqnarray*}
where $\alpha_{x}$ is the integral curve of $X$ with $\alpha_x(0)=x$. This map $\varphi^{X}$ is called the {\it flow}\index{Vector field!flow} determined by $X$ and $\mathcal{D}(X)$ is its {\it domain of definition}.

\begin{theorem}\label{flow}
Let $X\in\mathfrak{X}(M)$. Then $\mathcal{D}(X)$ is an open subset of $\R\times M$ that contains $\{0\}\times M$ and $\varphi^{X}$ is smooth.
\end{theorem}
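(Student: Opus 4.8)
The plan is to produce the flow $\varphi^X$ as the solution of a first-order ODE system and invoke the classical smooth dependence theorem for ODEs, then argue the openness of $\mathcal D(X)$ by a connectedness/patching argument. First I would work locally: cover $M$ by charts $(U_\alpha,\varphi_\alpha)$, and in each chart write $X$ in coordinates as $X|_{U_\alpha}=\sum_i a_i\,\partial/\partial x_i$ with $a_i\in C^\infty(U_\alpha)$ by Proposition~\ref{smoothvecfields}. An integral curve through a point then corresponds, via $\varphi_\alpha$, to a solution of the autonomous system $u_i'(t)=a_i(u_1(t),\dots,u_n(t))$ in an open subset of $\R^n$. The fundamental existence-uniqueness-smoothness theorem for ODEs (Picard--Lindel\"of together with smooth dependence on initial conditions) gives, for each point $p$ and each chart around it, an open neighborhood $V$ of $p$, an $\varepsilon>0$, and a smooth map $(-\varepsilon,\varepsilon)\times V\to U_\alpha$ whose curves $t\mapsto \varphi^X(t,x)$ are the integral curves. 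Transporting this back by $\varphi_\alpha^{-1}$ shows that $\varphi^X$ is defined and smooth on an open neighborhood of $\{0\}\times\{p\}$ in $\R\times M$; since $p$ was arbitrary this already gives that $\mathcal D(X)$ contains an open neighborhood of $\{0\}\times M$, and in particular $\{0\}\times M\subset\mathcal D(X)$ with $\varphi^X$ smooth there.

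Next I would upgrade local smoothness near $\{0\}\times M$ to smoothness on all of $\mathcal D(X)$, and prove $\mathcal D(X)$ is open. The key tool is the group law $\varphi^X(s+t,x)=\varphi^X(s,\varphi^X(t,x))$, which follows from uniqueness of maximal integral curves (the curve $s\mapsto\varphi^X(s+t,x)$ and the curve $s\mapsto\varphi^X(s,\varphi^X(t,x))$ are both integral curves with the same value at $s=0$). Fix $(t_0,x_0)\in\mathcal D(X)$; assume $t_0>0$ (the case $t_0<0$ is symmetric, $t_0=0$ is already handled). The compact arc $\{\varphi^X(t,x_0):0\le t\le t_0\}$ can be covered by finitely many of the local ``flow boxes'' constructed above; chaining the group law through these boxes expresses $\varphi^X$ near $(t_0,x_0)$ as a finite composition of smooth local flows applied to smoothly varying initial data, hence $\varphi^X$ is smooth on an open neighborhood of $(t_0,x_0)$, and that neighborhood lies in $\mathcal D(X)$. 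This simultaneously gives openness of $\mathcal D(X)$ and smoothness of $\varphi^X$ on it.

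The step I expect to require the most care is the chaining argument: one must check that the interval $I_x=(-\tilde\delta,\tilde\delta)$ in which the maximal integral curve is defined varies in a way compatible with openness, i.e., that if the maximal curve through $x_0$ reaches time $t_0$, then for $x$ near $x_0$ the maximal curve also exists up to time slightly beyond $t_0$ and depends smoothly on $x$. This is exactly where compactness of the arc $\varphi^X([0,t_0],x_0)$ and the finite subcover enter, and where one uses that the local existence time $\varepsilon$ in a flow box can be taken uniform on a neighborhood. Everything else -- the chart computations, the translation of the statement into $\R^n$, and the invocation of the classical ODE theorems -- is routine. I would therefore structure the write-up as: (1) reduce to $\R^n$ via charts; (2) cite Picard--Lindel\"of with smooth dependence to get local flow boxes; (3) establish the group law from uniqueness; (4) run the compactness/finite-subcover chaining to conclude openness of $\mathcal D(X)$ and global smoothness of $\varphi^X$.
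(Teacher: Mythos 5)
The paper states Theorem~\ref{flow} in the appendix without a proof; the appendix is an informal review and defers to the standard references (Warner, Spivak, Lang), so there is no in-paper argument to compare against. Your proposed proof is the standard one and it is correct: local existence, uniqueness and smooth dependence from ODE theory give flow boxes covering a neighborhood of $\{0\}\times M$; the group law $\varphi^X(s+t,x)=\varphi^X(s,\varphi^X(t,x))$ follows from uniqueness of maximal integral curves; and the compactness/chaining argument upgrades this to openness of $\mathcal{D}(X)$ and global smoothness of $\varphi^X$. You rightly flag the chaining step as the one requiring care. When you write it out, the piece to make explicit is the inductive shrinking of neighborhoods: after covering the arc $\gamma([0,t_0])=\varphi^X([0,t_0],x_0)$ by finitely many flow boxes with a uniform time $\varepsilon$, choose a partition $0=s_0<\cdots<s_k=t_0$ of mesh less than $\varepsilon$ and, working backwards from a box around $\gamma(t_0)$, pick neighborhoods $W_j$ of $\gamma(s_j)$ so that each step $\varphi^X_{s_j-s_{j-1}}$ maps $W_{j-1}$ into $W_j$; this produces a product neighborhood of $(t_0,x_0)$ on which the composition is defined and smooth, and hence contained in $\mathcal{D}(X)$.
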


It is also usual to consider the {\it local flow}, i.e., a restriction of $\varphi^{X}$ to an open subset $(-\delta,\delta)\times U$ of $\mathcal{D}(X)$. It may be useful to fix the time parameter $t$ and deal with local diffeomorphisms $\varphi^{X}_{t}$, defined by $\varphi_{t}^{X}(p)=\varphi^X(t,p)$. It is important to note that, for $x\in U$ such that $\varphi_{t}^{X}(x)\in U$, if $|s|<\delta$, $|t|<\delta$ and $|s+t|<\delta$, then $\varphi_{s}^{X}\circ \varphi_{t}^{X}=\varphi_{s+t}^{X}$.

A smooth vector field $X$ is {\it complete} if $\mathcal{D}(X)=\R\times M$ and in this case $\varphi_{t}^{X}$ form a group of diffeomorphisms parameterized by $t$, called the {\it $1$--parameter group of $X$}.\index{Vector field!$1$--parameter group} In this context, there is an action of $\R$ in $M$, $$\R\times M\owns (t,q)\longmapsto t\cdot q= \varphi_{t}^{X}(q)\in M.$$ Typical examples of complete vector fields are fields with bounded length. For instance, if $M$ is compact, every smooth vector field on $M$ is complete.

\begin{definition}
Let $M$ be a $(n+k)$--dimensional smooth manifold. A $n$--dimensional {\it distribution}\index{Distribution} $D$ on $M$ is an assignment of a $n$--dimensional subspace $D_p\subset T_pM$ for each $p\in M$. The distribution is said to be {\it smooth} if for all $p\in M$, there is an open neighborhood of $p$ such that there exist $n$ smooth vector fields spanning $D$ at each point of $U$.

A distribution is {\it involutive}\index{Distribution!involutive} if for every $p\in M$, given vector fields $X,Y$ in an open neighborhood $U$ of $p$, with $X_q,Y_q\in D_q$, for all $q \in U$, then $[X,Y]_q\in D_q$, for all $q\in U$ as well. That is, a distribution is involutive if it is closed with respect to the bracket of vector fields.
\end{definition}

\begin{definition}\label{definition-foliation}
Let $M$ be a $(n+k)$--dimensional manifold. A {\it $n$--dimensional foliation}\index{Foliation} of $M$ is a partition $\F$ of $M$ by connected immersed $n$--dimensional submanifolds called {\it leaves},\index{Leaf}\index{Foliation!leaf} such that the following property holds. For each $p\in M$ and $v\in T_p L_p$, where $L_p\in\mathcal{F}$ is the leaf that contains $p$, there exists a smooth vector field $X$ on $M$ such that $X(p)=v$ and $X(q)\in T_qL_q$, for all $q\in M$.
\end{definition}

A trivial example of foliation is $\R^m=\R^{n}\times\R^{m-n}$, where the leaves are subspaces of the form $\R^n\times\{p\}$, with $p\in \R^{m-n}$. More generally, the partition by preimages of a smooth submersion is a foliation. Another example of foliation is the partition by orbits of an action, so that each orbit has dimension $n$ (see Chapter \ref{chap3}). The interested reader can find more details in Camacho and Neto \cite{camacho} and Lawson \cite{lawson}.

\begin{definition}
Let $\mathcal{F}=\{L_p:p\in M\}$ be a $n$--dimensional foliation of a smooth $(n+k)$--dimensional manifold $M$. It is possible to prove that for all $p\in M$, there exist an open neighborhood $U$ of $p$, an open neighborhood $V$ of $0\in\R^{n+k}=\R^n\times\R^k$ and a diffeomorphism $\psi:V\rightarrow U$,  such that $\psi(V\cap (\R^n\times\{y_0\}))$ is the connected component of $L_{\psi(0,y_0)}\cap U$ that contains $\psi(0,y_0)$. The open set $U$ is called \emph{simple open neighborhood}\index{Simple open neighborhood}, the diffeomorphisms $\psi$ \emph{trivialization}\index{Trivialization} and $\psi^{-1}$ \emph{foliation chart} and the submanifold $\psi(V\cap (\R^n\times\{y_0\}))$ is called a \emph{plaque}\index{Plaque}.
\end{definition}

\begin{remark} 
In the general case of singular foliations, we usually do not have trivializations. Nevertheless, plaques can be defined at least for singular Riemannian foliations as follows. Let $P_{q}$ be a relatively compact open subset of $L_{q}$ and $z$ a point in a tubular neighborhood $\tub(P_q)$. Since $\F$ is a singular Riemannian foliation, each leaf $L_{z}$ is contained in a stratum (the collection of leaves with the same dimension of $L_{z}$). In this stratum, $L_{z}$ is a regular leaf and hence admits plaques as defined above. A \emph{plaque} $P_{z}$ of the singular leaf $L_{z}$ is then defined as a plaque of $L_{z}$ in its stratum.
\end{remark}

\begin{frobthm}\label{frobbis}\index{Theorem!Frobenius}
Let $M$ be a smooth manifold and $D$ a smooth $n$--dimensional involutive distribution on $M$. Then there exists a unique $n$--dimensional foliation $\mathcal{F}=\{L_p:p\in M\}$ satisfying $$D_q=T_qL_q, \quad q\in M.$$ Furthermore, each leaf $L_p$ is quasi--embedded.
\end{frobthm}

\section{Differential forms and integration}
\label{sec:appdiffint}

To be updated. References for this topics are Bredon \cite{bredon}, Spivak \cite{spivakzinho} and Warner \cite{warner}.

\backmatter

%%%%%%%%%%%%%%%%%%%%%%%%%%%
%%% BIBLIOGRAPHY        %%%
%%%%%%%%%%%%%%%%%%%%%%%%%%%
\clearpage
\bibliographystyle{amsplain}

\begin{thebibliography}{500}
\bibitem{ah}{{\sc D. Ahiezer} \textit{Lie group actions in complex analysis}, Vieweg, Braunschweig, Wiesbaden, 1995.}
\bibitem{Alex1}{{\sc M. M. Alexandrino}, \textit{Integrable Riemannian submersion with singularities}, Geom.\ Dedicata, \textbf{108} (2004), 141--152.}
\bibitem{Alex2}{{\sc M. M. Alexandrino}, \textit{Singular Riemannian foliations with sections}, Illinois J.\ Math.\ \textbf{48} (2004) 4, 1163--1182.}
\bibitem{Alex3}{{\sc M. M. Alexandrino}, \textit{Generalizations of isoparametric foliations}, Mat.\ Contemp.\ \textbf{28} (2005), 29--50.}
\bibitem{Alex4}{{\sc M. M. Alexandrino}, \textit{Proofs of conjectures about singular Riemannian foliations} Geom.\ Dedicata \textbf{119} (2006) 1, 219--234.}
\bibitem{escola}{{\sc M. M. Alexandrino, L. Biliotti \and R. Pedrosa}, {\it Lectures on isometric actions}, XV Escola de Geometria Diferencial, Publica\c c\ \hspace{-0.13cm}\~oes matem\'aticas, IMPA (2008)}
\bibitem{AlexToeben}{{\sc M. M. Alexandrino \and T. T\"{o}ben}, \textit{Singular Riemannian foliations on simply connected spaces}, Differential Geom.\ and Appl.\ \textbf{24} (2006) 383--397.}
\bibitem{AlexGorodski}{{\sc M. M. Alexandrino \and C. Gorodski}, \emph{Singular Riemannian foliations with sections, transnormal maps and basic forms}, Annals of Global Analysis and Geometry \ \textbf{32} (3) (2007), 209--223.}
\bibitem{Alex5}{{\sc M. M. Alexandrino}, \emph{Singular holonomy of singular Riemannian foliations with sections}, Mat.\ Contemp.\ \textbf{33} (2007), 23--55.}
\bibitem{AlexToeben2}{{\sc M. M. Alexandrino \and T. T\"{o}ben}, \emph{Equifocality of singular Riemannian foliation},
Proc. Amer. Math. Soc. \textbf{136} (2008), 3271--3280.}
\bibitem{Alex6}{{\sc M. M. Alexandrino}, \emph{Desingularization of singular Riemannian foliation}. To appear in Geom. Dedicata. DOI 10.1007/S 10711-010-9489-4.} 
\bibitem{at}{{\sc M. Atiyah}, \textit{Convexity and commuting Hamiltonians},Bull. London Math. Soc. \textbf{14} (\textbf{1}) (1982), 1--15.}
\bibitem{ar}{{\sc V. I. Arnold}, \textit{Mathematical methods of classical Mechanics}, 2nd edition, Graduate Texts in Mathematics \textbf{60}, Springer--Verlag, New-York, 1989.}
\bibitem{arvanitoyeorgos}{{\sc A. Arvanitoyeorgos}, \emph{An introduction to Lie groups and the geometry of homogeneous spaces}, AMS (1999).}
\bibitem{cant}{{\sc M. Aubin, A. Cannas da Silva \and E. Lerman}, \emph{Symplectic geometry of integrable Hamiltonian systems}, Advanced Courses in Mathematics, CRM Barcelona, Birkh\"{a}user Verlag, Barcelona, 2001.}
\bibitem{lb}{{\sc L. Bates \and E. Lerman}, \emph{Proper group actions and symplectic stratified spaces}, {Pacific J. Math.} \textbf{181} (1997), 201--229.}
\bibitem{br}{{\sc C. Benson \and G. Ratcliff}, \emph{A classification of multiplicity free actions}, J. Algebra \textbf{181} (1996), 152--186.}
\bibitem{besse}{{\sc A. L. Besse}, {\it Einstein Manifolds}, Springer--Verlag, Series of Modern Surveys in Mathematics (2002).}
\bibitem{BerndtConsoleOlmos}{{\sc J. Berndt, S. Console \and C. Olmos}, \emph{Submanifolds and holonomy}, Research Notes in Mathematics, vol 434, Chapman and Hall/CRC 2003.}
\bibitem{Biliotti}{{\sc L. Biliotti}, \emph{Coisotropic and polar actions on compact irreducible Hermitian symmetric spaces}, Trans. Amer. Math. Soc. \textbf{358} (2006) 3003--3022.}
\bibitem{Bishop}{{\sc R. J. Bishop \and R. J. Crittenden}, {\it Geometry of manifolds}, AMS Chelsea Publishing (2001).}
\bibitem{Boltner}{{\sc C. Boltner}, \emph{On the structure of equidistant foliations of Euclidean space}, PhD thesis at University of Augsburg, Germany (2007) advised by E. Heintze, arXiv:0712.0245.}
\bibitem{boothby}{{\sc W. M. Boothby}, {\it An introduction to Differentiable Manifolds and Riemannian Geometry}, 2nd edition, Academic Press (2003).}
\bibitem{Boualem}{{\sc H. Boualem}, \textit{Feuilletages riemanniens singuliers transversalement integrables.} Compositio Mathematica \textbf{95} (1995), 101--125.}
\bibitem{bredon}{{\sc G. Bredon}, \emph{Topology and Geometry}, Graduate Texts in Mathematics \textbf{139}, Springer--Verlag, New York 1993.}
\bibitem{bri}{{\sc M. Brion}, \emph{Sur l'image de l'application moment}, Lecture Notes Math. vol 1296, Springer, Berlin Heidelberg New York, 1987.}
\bibitem{bri2}{{\sc M. Brion}, \emph{Classification des espaces homogenes sph\'erique}, Compositio Math. \textbf{63} (1987), 189--208.}
\bibitem{blv}{{\sc M. Brion, D. Luna \and Th. Vust}, \emph{Espaces Homog\`enes sph\'eriques}, Invent. Math. \textbf{84} (1986), 617--632.}
\bibitem{bt}{{\sc T. Br\"{o}ken \and T. tom Dieck}, \emph{Representations of compact Lie group}, Graduate Texts in Mathematics \textbf{98}, Springer--Verlag, New York 1995.}
\bibitem{Bryant}{{\sc R. L. Bryant}, \emph{An introduction to Lie groups and symplectic geometry}, Geometry and quantum field theory, IAS/Park City, Mathematics Series, \textbf{1} American Mathematical Society (1995).}
\bibitem{gbl}{{\sc D. Burns, V. Guillemin \and E. Lerman}, \emph{K\"ahler cuts}, Preprint, arXiv:0212062 [math.DG].}
\bibitem{camacho}{{\sc C. Camacho \and A. L. Neto}, {\it Teoria geom\'etrica das folhea\c{c}\~oes}, Projeto Euclides, IMPA (1979).}
\bibitem{cannas}{{\sc A. Cannas da Silva}, \emph{Lectures on symplectic geometry}, Lecture Notes in Math. \textbf{1764}, Springer--Verlag, Berlin, 2001.}
\bibitem{manfredo}{{\sc M. P. do Carmo}, {\it Geometria Riemanniana}, Projeto Euclides, 3rd edition, IMPA (2005).}
\bibitem{Cartan1}{{\sc \'{E}. Cartan}, \textit{Familles de surfaces isoparam\'{e}triques dans les espaces \'{a} courbure constante,}
Ann. di Mat. \textbf{17} (1938) 177--191.}
\bibitem{Cartan2}{{\sc \'{E}. Cartan}, \emph{Sur des familles remarquables d'hypersurfaces isopara\-m\'{e}triques dans les espaces sh\'{e}riques,} Math. Z. \textbf{45} (1939), 335--367.}
\bibitem{Cartan3}{{\sc \'{E}. Cartan}, \emph{Sur quelques familles remarquables d'hypersurfaces,} C.R. Congr\`{e}s Math. Li\`{e}ge (1939), 30--41.}
\bibitem{Cartan4}{{\sc \'{E}. Cartan}, \emph{Sur des familles d'hypersurfaces isoparam\'{e}triques des espaces sph\'{e}riques \`{a} 5 et \`{a} 9 dimensions,} Revista Univ. Tucum\'{a}n 1 (1940), 5--22.}
\bibitem{carterSegalMacdonald}{{\sc R. Carter, G. Segal \and I. MacDonald}, {\it Lectures on Lie groups and Lie algebras}, London Mathematical Society, Student Texts {\bf 32}, Cambridge University Press (1995).}
\bibitem{CarterWest1}{{\sc S. Carter \and A. West}, \emph{Isoparametric systems and transnormality}, Proc. London Math. Soc. \textbf{51} (1985), 520--542.}
\bibitem{CarterWest2}{{\sc S. Carter \and A. West}, \emph{Generalised Cartan polynomials}, J. London Math. Soc. \textbf{32} (1985), 305--316.}
\bibitem{CE}{{\sc J. Cheeger \and D. Ebin}, \emph{Comparison theorems in Riemannian geometry}, North--Holland Publishing Co., Amsterdam, 1975.}
\bibitem{ch}{{\sc R. Chiang}, \emph{New Lagrangian submanifolds of $\CP^n$}, Int. Math. Soc. Res. Not.  \textbf{45} (2004), 2437--2441.}
\bibitem{Dadok}{{\sc J. Dadok}, \emph{Polar coordinates induced by actions of compact Lie groups}, Trans. Amer. Math. Soc. \textbf{288} (1985), 125--137.}
\bibitem{Deitmar}{{\sc A. Deitmar}, \emph{A first course in harmonic analysis}, Springer--Verlag, Universitext, Second Edition, 2005.}
\bibitem{de}{{\sc T. Delzant}, \emph{Classification des actions hamiltonien\-nes com\-pl\'e\-te\-me\-nt int\'egrables de rang deux}, Bull. Soc. Math. France \textbf{116} (\textbf{3}), (1988) 315--339.}
\bibitem{duistermaat}{{\sc J. J. Duistermaat \and J. A. C. Kolk}, {\it Lie Groups}, Springer--Verlag, Universitext, (2000).}
\bibitem{Fegan}{{\sc H. D. Fegan}, \emph{Introduction to compact Lie groups}, Series in Pure Mathematics, \textbf{13}, World Scientific, (1998).}
\bibitem{Feher-Pusztai-1}{{\sc L. Feh\'er \and B. G. Pusztai}, \emph{Twisted spin Sutherland models from quantum Hamiltonian reduction}, Phys. A: Math. Theor. 41 (2008).}
\bibitem{Feher-Pusztai-2}{{\sc L. Feh\'er \and B. G. Pusztai}, \emph{Hamiltonian reductions of free particles under polar actions of compact Lie groups} Theor. Math. Phys. 155 (2008).}
\bibitem{gov2}{{\sc B. L. Feigin, D. B. Fuchs, V. V. Gorbatsevich, O. V. Schvartsman, \and E. B. Vinberg}, \emph{Lie groups and Lie algebras II}, Springer--Verlag, Encyclopaedia of Mathematical Sciences vol 21, Moscow (1988).}
\bibitem{FerusKarcherMunzner}{{\sc D. Ferus, H. Karcher \and H. F. M\"{u}nzner}, \emph{Cliffordalgebren und neue isoparametrische Hyperfl\"{a}chen} Math. Z. \textbf{177} (1981), 479--502.}
\bibitem{Futaki}{{\sc A. Futaki,}, \emph{The Ricci curvature of symplectic quotients of Fano manifolds}, Tohoku Math. J. (\textbf{2}) \textbf{39} (1987), no. 3, 329--339.}
\bibitem{Gangolli-Varadarajan}{{\sc R. Gangolli \and V. S. Varadarajan}, \emph{Harmonic analysis of spherical functions on real reductive groups} Springer--Verlag, Ergebnisse der Mathematik und ihrer Grenzgebiete \textbf{101} (1988).}
\bibitem{liedover}{{\sc R. Gilmore}, \emph{Lie groups, Lie algebras and some of their applications}, Dover (2006).}
\bibitem{gov0}{{\sc V. V. Gorbatsevich, A. L. Onishchik \and E. B. Vinberg}, \emph{Foundations of Lie Theory and Lie Transformation Groups}, Springer, Moscow (1988).}
\bibitem{gov1}{{\sc V. V. Gorbatsevich, A. L. Onishchik \and E. B. Vinberg}, \emph{Lie groups and Lie algebras I}, Springer--Verlag, Encyclopaedia of Mathematical Sciences vol 20, Moscow (1988).}
\bibitem{gov3}{{\sc V. V. Gorbatsevich, A. L. Onishchik, \and E. B. Vinberg}, \emph{Lie groups and Lie algebras III}, Springer--Verlag, Encyclopaedia of Mathematical Sciences vol 41, Moscow (1990).}
\bibitem{gp}{{\sc A. Gori \and F. Podest\`a}, \emph{A note on the moment map on compact K\"ahler manifold}, Ann. Global. Anal. Geom. \textbf{26} (\textbf{3}) 2004, 315--318.}
\bibitem{pog}{{\sc C. Gorodski \and F. Podest\`a}, \emph{Homogeneity rank of real representations of compact Lie groups}, J. Lie Theory \textbf{15} (\textbf{1}) 2005, 63--77.}
\bibitem{Guest}{{\sc M. A. Guest}, \emph{Harmonic maps, loop groups and integrable systems}, London Mathematical Society, Student texts, \textbf{38}, 1997.}
\bibitem{gsc}{{\sc V. Guillemin \and S. Sternberg},\emph{Convexity properties of the moment map mapping}, Invent. Math. {\textbf 67} 1982, 491--513.}
\bibitem{gsc2}{{\sc V. Guillemin \and S. Sternberg,} \emph{Convexity properties of the moment map mapping II}, Invent. Math.  \textbf{77} (\textbf{3}) 1984, 533--546.}
\bibitem{GS}{{\sc V. Guillemin \and S. Sternberg}, \emph{Multiplicity--free spaces}, J. Differential Geom. \textbf{19} (1984), 31--56}.
\bibitem{gs}{{\sc V. Guillemin \and S. Sternberg}, \emph{Symplectic techniques in physics}, 2nd ediction, Cambridge University Press, Cambridge, 1990.}
\bibitem{Hall}{{\sc B. C. Hall}, \emph{Lie groups, Lie algebras and representations, an elementary introduction}, Springer--Verlag, Graduate Texts in Mathematics, (2004).}
\bibitem{Harle}{{\sc C. E. Harle}, \emph{Isoparametric families of submanifolds,} Bol. Soc. Brasil. Mat. \textbf{13} (1982) 491--513.}
\bibitem{hl}{{\sc R. Harvey \and H. B. Lawson}, \emph{Calibrates geometries}, Acta Math. \textbf{148} (1982), 47--157.}
\bibitem{HOL}{{\sc E. Heintze, X. Liu \and C. Olmos}, \emph{Isoparametric submanifolds and a Chevalley--type restriction theorem}, arXiv:0004028 [math.DG].}
\bibitem{helgason}{{\sc S. Helgason}, \emph{Differential geometry, Lie groups and symmetric spaces}, Graduate Studies in Mathematics, vol 34, AMS (2001).}
\bibitem{HM}{{\sc R. Howe \and T. Umeda}, \emph{The Capelli identity, the double commutant theorem, and multiplicity--free actions}, Math. Ann. \textbf{290} (1991), 565--619.}
\bibitem{HW}{{\sc A. Huckleberry \and T. Wurzbacher}, \emph{Multiplicity--free complex manifolds}, Math. Ann. \textbf{286} (1990), 261--280.}
 \bibitem{IseTakeuchi}{{\sc M. Ise \and M. Takeuchi}, {\it Lie groups I}, Translations of Mathematical Monographs {\bf 85}, American Mathematical Society (1991).}
\bibitem{Jost}{{\sc J. Jost}, {\it Riemannian geometry and geometric analysis}, Springer--Verlag, Universitext, 2nd edition (1998).}
\bibitem{ka}{{\sc V. G. Kac}, \emph{Some remarks on Nilpotent orbit}, J. Algebra \textbf{64} (1980), 190--213.}
\bibitem{Katznelson}{{\sc Y. Katznelson}, \emph{An introduction to harmonic analysis}, Dover publications, INC (1968).}
\bibitem{livrokawakubo}{{\sc K. Kawakubo}, {\it The theory of transformation groups}, Oxford University Press (1991).}
\bibitem{kir}{{\sc F. Kirwan}, \emph{Convexity properties of the moment map mapping III}, Invent. Math \textbf{77} (\textbf{3}) 1984, 547--552.}
\bibitem{ki}{{\sc F. Kirwan}, \emph{Cohomology quotients in symplectic and algebraic geometry}, Math. Notes \textbf{31}, Princeton, 1984.}
\bibitem{Knapp-representation}{{\sc A. W. Knapp}, \emph{Representation theory of semisimple groups}, Princeton University Press, (1986).}
\bibitem{kb}{{\sc F. Knop \and B. V. Steirteghem}, \emph{Classification of smooth affine spherical varieties}, Transform. Groups \textbf{11} (2006), 495--516.}
\bibitem{kn}{{\sc S. Kobayashi \and N. Nomizu}, \emph{Foundations of Differential Geometry}, Vol. \textbf{I, II} Interscience Publisher, J. Wiles \& Sons, 1963.}
\bibitem{ko}{{\sc T. Kobayashi,} \emph{Lectures on restrictions of unitary representations of real reductive groups}, RIMS Preprint \textbf{1526}, University of
Kyoto, 2005.}
\bibitem{Kollross}{{\sc A. Kollross}, \textit{A classification of hyperpolar and cohomogeneity one actions,} Transactions of The American Mathematical Society.\textbf{354} (2002) 2, 571--612.}
\bibitem{kol2}{{\sc A. Kollross}, \emph{Polar actions on symmetric spaces}, {J. of Differential Geom. \textbf{77} (\textbf{3}) (2007), 425--482.}}
\bibitem{langRiemannianManifolds}{{\sc S. Lang}, {\it Differential and Riemannian Manifolds}, Springer--Verlag, Graduate Texts in Mathematics (1995).}
\bibitem{lawson}{{\sc H. B. Lawson}, {\it Foliations}, Bulletin of the Amer. Math. Soc. {\bf 80}, 3 (1974).}
\bibitem{lee2}{{\sc D.H. Lee}, \emph{The structure of complex Lie groups}, Research Notes in Mathematics \textbf{429}, Chapman \& Hall/CRC, Boca Raton, FL, 2002.}
\bibitem{lee}{{\sc J. M. Lee}, {\it Riemannian manifolds: an introduction to curvature}, Springer-Verlag, Graduate Texts in Mathematics (1997).}
\bibitem{lerman}{{\sc E. Lerman}, \emph{Symplectic cut}, Math. Res. Lett. \textbf{2} (1995), 247--258.}
\bibitem{LytchakThorbergsson}{{\sc A. Lytchak \and G. Thorbergsson}, \emph{Variationally complete actions on nonnegatively curved manifolds}, to appear in Illinois J.\ Math.}
\bibitem{LytchakThorbergsson2}{{\sc A. Lytchak \and G. Thorbergsson}, \emph{Curvature explosion in quotients and applications}, Preprint (2007) arXiv:0709.2607.}
\bibitem{Lytchak1}{{\sc A. Lytchak}, \emph{Singular Riemannian foliations on space without conjugate points}, preprint (2007).}
\bibitem{Lytchak2}{{\sc A. Lytchak}, \emph{Geometric resolution of singular Riemannian foliations}. To appear in Geom. Dedicata, DOI: 10.1007/s10711-010-9488-5.}
\bibitem{Marle}{{C. M. Marle}, \emph{Mod\`ele d'action hamiltonienne d'un groupe de Lie sur une vari\'et\'e symplectique}, Rendiconti del Seminario Matematico, Universit\`a e Politecnico, Torino \textbf{43} (1985), 227--251}
\bibitem{ds}{{\sc D. McDuff \and D. Salamon}, \emph{Introduction to Symplectic Topology}, 2nd edition, Oxford Mathematical Monographs, Oxford University Press, New York, 1998.}
\bibitem{milnorLeftInvariantMetric}{{\sc J. Milnor}, {\it Curvatures of left-invariant metrics on Lie groups}, Advances in Math. {\bf 21}, 3, p. 293--329 (1976).}
\bibitem{gravitation}{{\sc W. Misner, K. S. Thorne \and J. A. Wheeler}, {\it Gravitation}, Freeman and Company (1973).}
\bibitem{Moerdijk}{{\sc I. Moerdijk \and J. Mr\v{c}un}, {\it Introduction to foliations and Lie groupoids}, Cambridge Studies in Advanced Mathematics (2003).}
\bibitem{Molino}{{\sc P. Molino}, \textit{Riemannian foliations}, Progress in Mathematics, vol. 73. Birkh\"{a}user Boston (1988).}
\bibitem{MolinoPierrot}{{\sc P. Molino \and M. Pierrot}, \textit{Th\'{e}or\`{e}mes de slice et holonomie des feuilletages Riemanniens singuliers}, Ann. Inst. Fourier (Grenoble) \textbf{37} (1987) 4, 207--223.}
\bibitem{montgomery}{{\sc D. Montgomery \and L. Zippin}, {\it Topological Transformation Groups}, Interscience Publishers (1955).}
\bibitem{MorganTian}{{\sc J. W. Morgan \and G. Tian}, {\it Ricci Flow and the Poincare' Conjecture}, Clay Mathematics Monographs, Volume {\bf 3}, American Mathematical Society (2007).}
\bibitem{Munzner1}{{\sc H. F. M\"{u}nzner}, \emph{Isoparametrische Hyperfl\"{a}chen in Sph\"{a}ren I}, Math. Ann. \textbf{251} (1980) 57--71.}
\bibitem{Munzner2}{{\sc H. F. M\"{u}nzner}, \emph{Isoparametrische Hyperfl\"{a}chen in Sph\"{a}ren II}, Math. Ann. \textbf{256} (1981) 215--232.}
\bibitem{myerssteenrod}{{\sc S. Myers \and N. Steenrod}, \emph{The group of isometries of a Riemannian manifold}, Annals of Math. \textbf{40} 2 (1939), 400--416.}
\bibitem{ne}{{\sc K. H. Neeb,} \emph{Holomorphy and Convexity in Lie theory}, Expositions in Mathematics, de Gruyter, Berlin, 1999.}
\bibitem{Noumi}{{\sc M. Noumi}, \emph{Painlev\'{e} Equations through symmetry}, AMS, Translations of Mathematical Monographs, \textbf{223} (2004).}
\bibitem{Eva}{{\sc E. Nowak}, \emph{Singular Riemannian Foliations: Exceptional Leaves; Tautness} PhD thesis at University of Koeln, Germany (2008) advised by G. Thorbergsson, arXiv:0812.3316v1 [math.DG].}
\bibitem{oh2}{{\sc Y. G. Oh}, \emph{Second variation and stabilities of minimal Lagrangian subma\-ni\-folds in K\"ahler ma\-ni\-folds}, Invent. Math. \textbf{101} (1990), 501--519.}
\bibitem{oh}{{\sc Y. G. Oh}, \emph{Mean curvature vector and symplectic topology of La\-gra\-ngi\-an submanifolds in Einstein-K\"ahler manifolds}, Math. Z. \textbf{216} (1994), 471--482.}
\bibitem{Olver}{{\sc P. J. Olver}, \emph{Applications of Lie groups to differential equations}, Springer--Verlag, Graduate Texts in Mathematics, \textbf{107}, Second Edition, (2000).}
\bibitem{or}{{\sc J. P. Ortega \and T. S. Ratiu}, \emph{A symplectic slice theorem}, Lett. Math. Phys. \textbf{59} (2002), 81--93.}
\bibitem{Palais-partition}{{\sc R. S. Palais}, \emph{On the existence of slices for actions of noncompact groups}, Ann. of Math. 73 (1961), 295--323.}
\bibitem{PT1}{{\sc R. S. Palais \and C. L. Terng}, \emph{A general theory of canonical forms}, Trans. Am. Math. Soc. \textbf{300} (1987), 236--238.}
\bibitem{PalaisTerng}{{\sc R. S. Palais \and C. L. Terng}, \textit{Critical point theory and submanifold geometry}, Lecture Notes in Mathematics 1353, Springer-Verlag.}
\bibitem{petersen}{{\sc P. Petersen}, {\it Riemannian Geometry}, 2nd edition, Springer-Verlag, Graduate Texts in Mahematics (2000).}
\bibitem{DiegoMestrado}{{\sc D. A. C. Pi\~{n}eros}, \textit{Sobre as folhea\c{c}\~{o}es e o teorema de slice para folhea\c{c}\~{o}es Riemannianas singulares com se\c{c}\~{o}es}, MSc dissertation at University of S\~{a}o Paulo, Brazil 2008, advised by M. M. Alexandrino.}
\bibitem{PodestaThorbergsson}{{\sc F. Podest\`{a} \and G. Thorbergsson}, \textit{Polar actions on rank one symmetric spaces,} J. Differential Geometry. \textbf{53} (1999) 1, 131--175.}
\bibitem{PoT}{{\sc F. Podest\`a \and G. Thorbergsson}, \emph{Polar and coisotropic actions on K\"ahler manifolds}, Trans. Am. Math. Soc. \textbf{354} (2002), 236--238.}
\bibitem{pt}{{\sc P. Piccione \and D. Tausk}, \emph{On the geometry Grassmannians and the symplectic group: the Maslov index and its applications},
Publica\c{c}\~oes do IMPA, 11th School of Differential Geometry, Universidade Federal Fluminense, Niter\'oi (RJ), 2000.}
\bibitem{pue}{{\sc T. Puttmann}, \emph{Homogeneity rank and atoms actions}, An. Global Anal. Geom \textbf{22} (2002), 375--399.}
\bibitem{mau}{{\sc L. Pukanszky}, \emph{Unitary representations of solvable groups}, Ann. Sci. \'Ecole Norm. Sup. \textbf{4} (1972), 475--608.}
\bibitem{SanMartin}{{\sc L. A. B. San Martin}, \emph{\'Algebras de Lie}, Editora da Unicamp, (1999).}
\bibitem{shur}{{\sc L. Schur}, \emph{\"Uber eine Klasse Von Mittelbildungen mit anwedungen auf die determinantentheorie},
Sitzungsberichte der berliner mathematishcen gesenlschaft \textbf{22} (1923), 9--20.}
\bibitem{sl}{{\sc R. Sjamaar \and E. Lerman}, \emph{Stratified symplectic spaces and reduction}, Ann. of Math. \textbf{134} (\textbf{2}) (1991),
375--422.}
\bibitem{Serre}{{\sc J. P. Serre}, \emph{Complex semisimple Lie algebras}, Springer-Verlag, Monographs in Mathematics (2001).}
\bibitem{lucas}{{\sc F. L. N. Spindola}, {\it Grupos de Lie, a\c{c}\~oes pr\'oprias e a conjectura de Palais--Terng}, MSc. dissertation in Mathematics, advised by M. M. Alexandrino, Universidade de S\~ao Paulo IME USP (2008).}
\bibitem{spivakzinho}{{\sc M. Spivak}, {\it Calculus on manifolds: a modern approach to classical theorems of advanced calculus},
Perseus Book Publishing, 1965.}
\bibitem{spivak1}{{\sc M. Spivak}, {\it A Comprehensive Introduction to Differential Geometry}, Vol 1, Publish or Perish (1975).}
\bibitem{Szenthe}{{\sc J. Szenthe}, \textit{Orthogonally transversal submanifolds and the generalizations of the Weyl group}, Periodica Mathematica Hungarica \textbf{15} (1984) 4, 281--299.}
\bibitem{ta}{{\sc M. Takeuchi}, \emph{Stability of certain minimal submanifolds of compact Hermitian symmetric spaces}, Tohoku Math. J. \textbf{36} (1984), 293--314.}
\bibitem{Terng}{{\sc C. L. Terng}, \emph{Isoparametric submanifolds and their Coxeter groups,} J. Differential Geometry, \textbf{21} (1985) 79--107.}
\bibitem{TTh1}{{\sc C. L. Terng \and G. Thorbergsson}, \textit{Submanifold geometry in symmetric spaces,} J. Differential Geometry, \textbf{42} (1995), 665--718.}
\bibitem{Th2}{{\sc G. Thorbergsson}, \textit{Isoparametric foliations and their buildings}, Ann. Math. \textbf{133} (1991), 429--446.}
\bibitem{ThSurvey1}{{\sc G. Thorbergsson}, \textit{A survey on isoparametric hypersurfaces and their generalizations}, Handbook of Differential Geometry, Vol 1, Elsevier Science B.V. (2000).}
\bibitem{ThSurvey2}{{\sc G. Thorbergsson}, \emph{Transformation groups and submanifold geometry}, Rendiconti di Matematica, Serie VII, \textbf{25}, (2005) 1--16.}
\bibitem{Toeben}{{\sc D. T\"oben}, \textit{Parallel focal structure and singular Riemannian foliations}, Trans.\ Amer.\ Math.\ Soc.\ \textbf{358} (2006), 1677--1704.}
\bibitem{Toeben2}{{\sc D. T\"{o}ben}, \textit{Singular Riemannian foliations on nonpositively curved manifolds}, Math. Z. \textbf{255} (2) (2007), 427--436.}
\bibitem{Varadarajan-harmonic}{{\sc V. S. Varadarajan}, \emph{An introduction to harmonic analysis on semisimple Lie groups}, Cambridge University Press, Cambridge studies in advanced mathematics \textbf{16} (1999).}
\bibitem{varadarajan}{{\sc V. S. Varadarajan}, {\it Lie Groups, Lie Algebras, and Their Representations}, Springer--Verlag, Graduate Texts in Mathematics (1984).}
\bibitem{Wang1}{{\sc Q. M. Wang}, \emph{Isoparametric functions on Riemannian manifolds. I}, Math. Ann. \textbf{277} (1987), 639--646.}
\bibitem{warner}{{\sc F. W. Warner}, {\it Foundations of Differentiable Manifolds and Lie Groups}, Springer--Verlag, Graduate Texts in Mathematics (1983).}
\bibitem{we}{{\sc A. Weinstein}, \emph{Symplectic manifolds and their Lagrangian submanifolds}, Adv. in Math. \textbf{6} (1971), 329--346.}
\bibitem{wu}{{\sc T. Wurzbacher}, \emph{On a conjecture of Guillemin and Sternberg in geometric quantization of multiplicity--free symplectic spaces}, J. Geom. Phys. \textbf{7} (\textbf{4}) (1990), 537--552 (1991).}
\end{thebibliography}
\phantomsection

%%%%%%%%%%%%%%%%%%%%%%%%%%%
%%% INDEX               %%%
%%%%%%%%%%%%%%%%%%%%%%%%%%%
\clearpage
\phantomsection
\printindex

\end{document}